\def\input@path{{figures/}}\makeatother
\newtheorem{theorem}{Theorem}[section]
\newtheorem{corollary}[theorem]{Corollary}
\newtheorem{proposition}[theorem]{Proposition}
\newtheorem{lemma}[theorem]{Lemma}
\newtheorem*{theorem*}{Theorem}%[section]
\theoremstyle{definition}
\newtheorem{definition}[theorem]{Definition}
\newtheorem{example}[theorem]{Example}
\newtheorem{remark}[theorem]{Remark}
\newtheorem{notation}[theorem]{Notation}
\crefname{equation}{Equation}{Equations}
\newcommand{\R}{\mathbb{R}} % reals
\newcommand{\N}{\mathbb{N}} % naturals
\newcommand{\Z}{\mathbb{Z}} % integers
\renewcommand{\c}[1]{{\mathcal{#1}}} % call letters
\renewcommand{\b}[1]{{\boldsymbol{#1}}} % bold letters
\newcommand{\set}[2]{\left\{ #1 \;\middle|\; #2 \right\}} % set notation
\newcommand{\bigset}[2]{\big\{ #1 \;\big|\; #2 \big\}} % big set notation
\newcommand{\ssm}{\smallsetminus} % small set minus
\newcommand{\dotprod}[2]{\langle \, #1 \; | \; #2 \, \rangle} % dot product
\newcommand{\bigdotprod}[2]{\big\langle \, #1 \; \big| \; #2 \, \big\rangle} % dot product
\newcommand{\one}{{\boldsymbol{1}}} % the all one vector
\newcommand{\zero}{{\boldsymbol{0}}} % the all zero vector
\newcommand{\eqdef}{\mbox{\,\raisebox{0.2ex}{\scriptsize\ensuremath{\mathrm:}}\ensuremath{=}\,}} % :=
\newcommand{\simplex}{\triangle} % simplex
\DeclareMathOperator{\conv}{conv} % convex hull
\DeclareMathOperator{\cone}{cone} % cone hull
\DeclareMathOperator{\suppmin}{\mu} % support minimal
\newcommand{\ie}{\textit{i.e.}~} % id est
\definecolor{darkblue}{rgb}{0,0,0.7} % darkblue color
\definecolor{yellow}{RGB}{252,209,33} % yellow color
\definecolor{green}{RGB}{57,181,74} % green color
\definecolor{violet}{RGB}{147,39,143} % violet color
\newcommand{\darkblue}{\color{darkblue}} % darkblue command
\newcommand{\defn}[1]{\textsl{\darkblue #1}} % emphasis of a definition
\newcommand{\para}[1]{\medskip\noindent\textsc{#1.}} % paragraph
\newcommandx{\polytope}[1][1 = P]{\mathsf{#1}} % polytope
\newcommandx{\Asso}[1][1=n]{\mathsf{Asso}(#1)} % associahedron
\newcommandx{\Zono}[1][1=\digraph]{\mathsf{Zono}(#1)} % zonotope
\newcommand{\Nest}{\mathsf{Nest}} % nestohedron
\newcommand{\Acycl}{\mathsf{Acyc}} % acyclonestohedron
\newcommand{\AcyclProj}{\mathsf{A\overline{cyc}}} % acyclonestohedron
\newcommand{\OrderPolytope}{\mathsf{OrdPol}} % order polytope
\newcommand{\ground}{S} % ground set
\newcommand{\connectedComponents}{\kappa} % connected components
\newcommand{\building}{\mathcal{B}} % building set
\newcommandx{\nested}[1][1=N]{\mathcal{#1}} % nested set
\newcommandx{\nestedComplex}[2][1=\lattice,2=\building]{\mathfrak{N}_{#1}(#2)} % nested complex
\newcommand{\seminestedComplex}{\mathfrak{SN}} % seminested complex
\newcommandx{\fbuilding}[1][1=F]{\mathcal{#1}} % facial building set
\newcommand{\acyclicNestedComplex}{\mathfrak{A}} % acyclic nested complex
\newcommandx{\restrGround}[2][1=B, 2=\nested]{{\ground_{#1 \in #2}}} % restricted ground set
\newcommandx{\complRestrGround}[3][1=R, 2=B, 3=\nested]{{#1_{#2 \in #3}}} % complement of restricted ground set
\newcommandx{\restrBuilding}[3][1=\building, 2=B, 3=\nested]{{#1_{#2 \in #3}}} % restricted building set
\newcommandx{\restrOM}[2][1=B, 2=\nested]{{\OM_{#1 \in #2}}} % restricted oriented matroid
\newcommand{\block}[2]{B(#1,#2)} % block of vertex of nested set
\newcommandx{\vertexNest}[2][1=\nested, 2=\b{\lambda}]{\b{v}({#1}, {#2})} % vertex of nestohedron
\newcommandx{\con}[1][1=B]{\mathsf{C}_{{#1}}}
\newcommandx{\nestedFan}[2][1=\lattice,2=\building]{\Sigma_{#1}(#2)}
\newcommandx{\graphG}[1][1=G]{#1} % graph
\newcommandx{\digraph}[1][1=D]{#1} % directed graph
\newcommandx{\hypergraph}[1][1=H]{\graphG[#1]} % hypergraph
\newcommandx{\tube}[1][1=T]{#1} % tube
\newcommandx{\tubes}[1][1=\graphG]{\building(#1)} % all tubes
\newcommandx{\tubing}[1][1=T]{\mathcal{#1}} % tubing
\newcommandx{\pipe}[1][1=Q]{#1} % pipe
\newcommandx{\piping}[1][1=Q]{\mathcal{#1}} % piping
\newcommand{\pipingComplex}{\mathfrak{P}} % piping complex
\newcommand{\affinePipingComplex}{\mathfrak{AP}} % piping complex
\newcommand{\precdot}{\mathrel{{\prec \!\! \cdot}}} % cover relation
\newcommandx{\OM}[1][1 = M]{\c{#1}} % oriented matroid
\newcommandx{\dependences}[1][1 = \b{A}]{\mathcal{D}(#1)} % dependences
\newcommandx{\evaluations}[1][1 = \b{A}]{\mathcal{D}^*(#1)} % evaluations
\newcommandx{\circuits}[1][1 = \b{A}]{\mathcal{C}(#1)} % circuits
\newcommandx{\cocircuits}[1][1 = \b{A}]{\mathcal{C}^*(#1)} % cocircuits
\newcommandx{\vectors}[1][1 = \b{A}]{\mathcal{V}(#1)} % vectors
\newcommandx{\covectors}[1][1 = \b{A}]{\mathcal{V}^*(#1)} % covectors
\newcommandx{\rank}[1][1 = \b{A}]{\mathrm{rk}(#1)} % rank
\newcommandx{\corank}[1][1 = \b{A}]{\mathrm{rk}^*(#1)} % corank
\newcommand{\signature}{\sigma} % signature
\newcommandx{\UOM}[1][1=\OM]{\underline{#1}} % underlying unoriented matroid
\newcommandx{\Ucircuits}[1][1 = \b{A}]{\underline{\mathcal{C}}(#1)} % circuits
\newcommandx{\Ucocircuits}[1][1 = \b{A}]{\underline{\mathcal{C}}^*(#1)} % cocircuits
\newcommand{\FL}{\FP} % Las Vergnas face lattice
\newcommandx{\sdOM}[2][1=\OM,2=\fbuilding]{\sd({#2},{#1})} % nested complex oriented matroid of a building set
\newcommandx{\circuitSupports}[1][1 = \b{A}]{\underline{\mathcal{C}}(#1)} % supports of circuits
\newcommand{\Flats}{\mathcal{F}l} % flat lattice
\newcommandx{\asd}[1][1=F]{a_{#1}} % points introduced in stellar subdivision
\newcommandx{\poset}[1][1 = P]{#1} % poset
\newcommand{\lattice}{\mathcal{L}} % lattice
\newcommand{\semi}{\mathcal{S}} % meet semilattice
\newcommand{\topone}{\hat{1}} % top
\newcommand{\botzero}{\hat{0}} % bottom
\DeclareMathOperator{\Blow}{Bl} %combinatorial blow-up
\newcommandx{\Bl}[2][1 = \semi, 2=X]{\Blow_{#2}({#1})} % combinatorial blow-up
\renewcommand{\t}[1]{{\smash{\tilde{#1}}}} % tiilde letters
\newcommand{\tb}[1]{\t{\b{#1}}} % tiilde bold letters
\newcommandx{\affinePoset}[1][1 = \poset]{\t{#1}} % affine poset
\renewcommand{\mod}[2]{\text{mod}(#1)} % modulus
\newcommand{\quo}[2]{\mathrm{quo}(#1)} % quotient
\newcommand{\cl}[2]{\text{cl}(#1)} % class
\newcommandx{\elev}[1][1 = \affinePoset]{E(#1)} % elevators 
\newcommandx{\quotientGraph}[1][1 = \affinePoset]{#1/\Z}
\newcommandx{\sph}[1][1 = n]{\mathbb{S}^{#1}} % unit n-sphere
\newcommandx{\ball}[1][1 = n]{\mathbb{B}^{#1}} % unit n-ball
\newcommand{\FP}{\mathcal{L}} % face poset
\newcommand{\CW}{\Delta} %CW complex
\newcommand{\inter}[1]{#1^\circ} %interior
\newcommandx{\spa}[1][1 = \CW]{\left\lVert{#1}\right\rVert} %underlying space
\newcommand{\join}{\ast} %join
\DeclareMathOperator{\sd}{sd} %stellar subdivision
\DeclareMathOperator{\ostar}{star} %open star
\newcommand{\cstar}{\overline \ostar} %closed star
\DeclareMathOperator{\ini}{in} %initial elements
\newcommandx{\Bergfan}[1][1 = \OM]{\c{B}({#1})} 
\newcommandx{\posBergfan}[1][1 = \OM]{\c{B}^+({#1})} 
\def\part{\@startsection{part}{1}%
\z@{.7\linespacing\@plus\linespacing}{.8\linespacing}%
{\LARGE\sffamily\centering}}
\def\l@section{\@tocline{1}{1pt}{0pc}{}{}}
\def\l@part{\@tocline{1}{7pt}{0pc}{}{}}
\let\oldtocpart=\tocpart
\renewcommand{\tocpart}[2]{\sc\large\oldtocpart{#1}{#2}}
\let\oldtocsection=\tocsection
\renewcommand{\tocsection}[2]{\bf\oldtocsection{#1}{#2}}
\let\oldtocsubsubsection=\tocsubsubsection
\renewcommand{\tocsubsubsection}[2]{\quad\oldtocsubsubsection{#1}{#2}}
\title{Facial nested complexes and acyclonestohedra}
\thanks{Partially supported by the Spanish project PID2022-137283NB-C21 of MCIN/AEI/10.13039/501100011033 / FEDER, UE, by the Spanish--German project COMPOTE (AEI PCI2024-155081-2 \& DFG 541393733), by the Severo Ochoa and María de Maeztu Program for Centers and Units of Excellence in R\&D (CEX2020-001084-M), by the Departament de Recerca i Universitats de la Generalitat de Catalunya (2021 SGR 00697), and by the French--Austrian project PAGCAP (ANR-21-CE48-0020 \& FWF I 5788).}
\author{Chiara Mantovani}
\address[Chiara Mantovani]{LIX, \'Ecole Polytechnique, Palaiseau}
\email{chiara.mantovani@lix.polytechnique.fr}
\urladdr{\url{http://www.lix.polytechnique.fr/~mantovani/}}
\author{Arnau Padrol}
\address[Arnau Padrol]{Universitat de Barcelona \& Centre de Recerca Matemàtica, Barcelona}
\email{arnau.padrol@ub.edu}
\urladdr{\url{https://www.ub.edu/comb/arnaupadrol/}}
\author{Vincent Pilaud}
\address[Vincent Pilaud]{Universitat de Barcelona \& Centre de Recerca Matemàtica, Barcelona}
\email{vincent.pilaud@ub.edu}
\urladdr{\url{https://www.ub.edu/comb/vincentpilaud/}}
\begin{document}

\begin{abstract}
Nested complexes of building sets on semilattices were studied by E.~M.~Feichtner and her coauthors as combinatorial abstractions of C.~De Concini and C.~Procesi's wonderful compactifications of complements of subspace arrangements.

We study nested complexes of building sets on the Las Vergnas face lattices of oriented matroids.
Such a nested complex is the face lattice of an oriented matroid, obtained by iterated stellar subdivisions of the positive tope.
It follows that if the oriented matroid is realizable, the nested complex is isomorphic to the boundary complex of a polytope.

We turn this into an explicit and combinatorially meaningful polytopal realization.
For this, we first prove that the facial nested complex can be embedded as the acyclic (with respect to the oriented matroid) subcomplex of the nested complex of a well-chosen boolean building set.
We then show that, in the realizable case, this acyclic subcomplex can be geometrically selected as the section of a nestohedron by the evaluation space of the vector configuration, which we call acyclonestohedron.
Moreover, by results of G.~Gaiffi, the interior of any polytope admits a stratified compactification that is diffeormorphic as a stratified space to the corresponding acyclonestohedron.

Our framework generalizes the poset associahedra and affine poset cyclohedra recently introduced by P.~Galashin, from order and affine order polytopes to any (oriented matroid) polytope.
Poset associahedra are the graphical acyclonestohedra, and our approach recovers as particular cases the main results of P.~Galashin (polytopality via stellar subdivisions and the connection to the compactification of the order polytope) and answers some of his open questions (explicit polytopal realizations and algebraic interpretations).
Besides poset associahedra, our framework unifies various other existing families of nested-like polytopes, such as the simple polytope nestohedra, the hyperoctahedral nestohedra, the design graph associahedra and the permutopermutohedra, to which our palette of results can be directly applied.

The core of our construction is the embedding of the facial nested complex inside a boolean nested complex.
More generally, we provide conditions that guarantee an embedding between nested complexes over two lattices.
For instance, any atomic nested complex has a canonical embedding inside a boolean nested complex.
As another application, we embed nested complexes over lattices of faces into nested complexes over lattices of flats, recovering as a particular case, the embedding of the positive Bergman complex into the Bergman complex.
\end{abstract}

\vspace*{-.3cm}
\maketitle

\vspace*{-.4cm}
\centerline{\includegraphics[scale=.45]{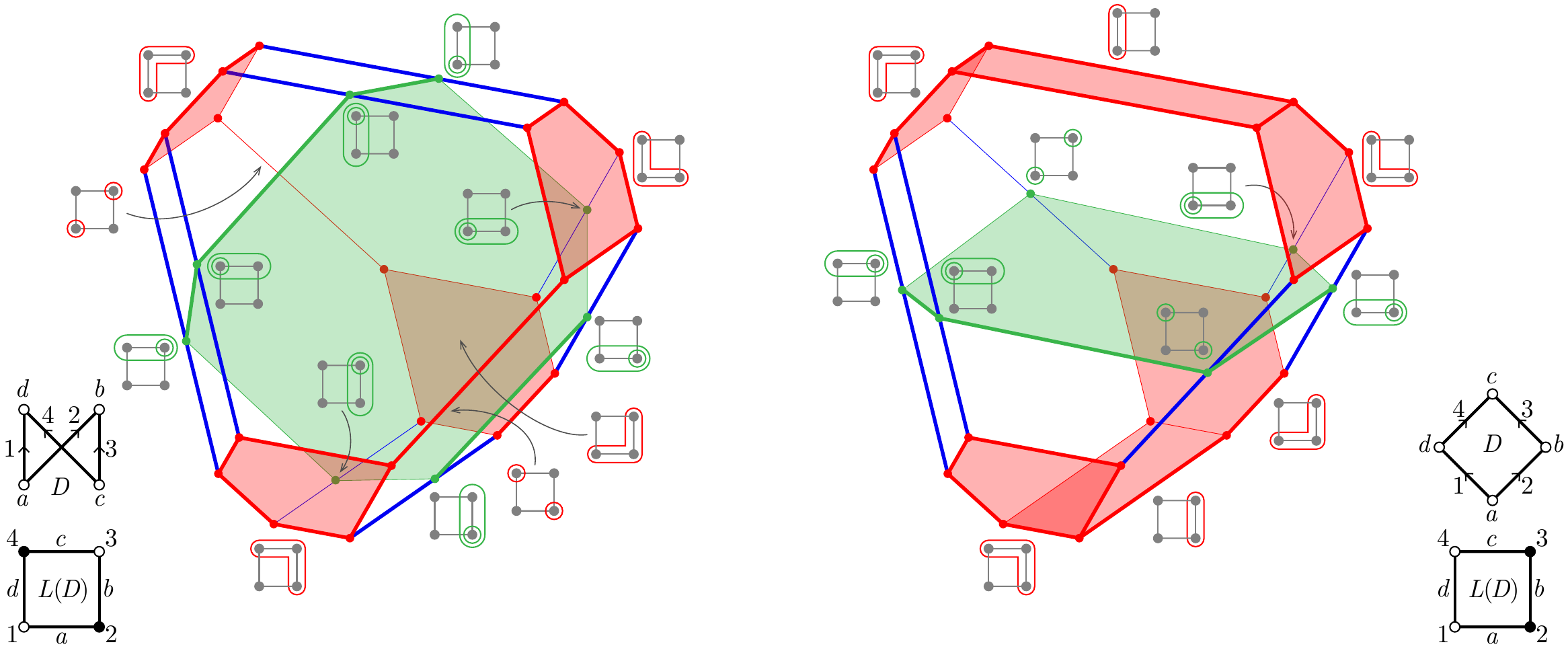}}

\clearpage
\enlargethispage{.4cm}
\tableofcontents

%%%%%%%%%%%%%%%%%%%%%%%%%%%%%%%%%%%%%%%

\section{Introduction}

\para{Lattice nested complexes}
In their influential paper~\cite{DeConciniProcesi1995}, C.~De Concini and C.~Procesi introduced \defn{wonderful compactifications} of the complement of a subspace arrangement. These are smooth compactifications in which the subspaces are replaced by a divisor with normal crossings via a sequence of blow-ups, as in W.~Fulton and R.~MacPherson's compactification~\cite{FultonMacPherson1994}. The boundary of the compactification is stratified, and the combinatorics of this stratification is governed by \defn{building sets} and \defn{nested complexes}: the building set determines which elements of the intersection lattice are blown up, and the nested complex describes the intersection pattern of the resulting strata.

The combinatorial structures underlying wonderful compactifications were further explored by E.~M.~Feichtner and her coauthors in a series of foundational papers~\cite{FeichtnerKozlov2004,FeichtnerYuzvinsky2004,FeichtnerMuller2005,FeichtnerSturmfels}, which developed into a combinatorial theory of building sets and nested complexes with rich connections to algebra, topology, and combinatorics.
We refer to~\cite{Feichtner} for a gentle survey.
In particular, E.~M.~Feichtner and D.~Kozlov extended the notions of building sets and nested complexes from intersection lattices to arbitrary meet-semilattices~\cite{FeichtnerKozlov2004}.
So far, two principal types of lattices have played a central role in applications within the geometric and algebraic combinatorics community. 

On the one hand, particular attention has been given to the \defn{lattice of flats of a matroid}. The nested complex of this lattice triangulates the \defn{Bergman fan} of the matroid~\cite{FeichtnerSturmfels, ArdilaKlivans2006}, providing a geometric realization of the nested complex as a fan. The associated toric variety gives rise to the \defn{Chow ring} of the matroid~\cite{FeichtnerYuzvinsky2004}, which in the realizable case specializes to the cohomology ring of the wonderful compactification.   
A key ingredient in the celebrated proof of the Heron--Rota--Welsh conjecture by K.~Adiprasito, J.~Huh, and E.~Katz~\cite{AdiprasitoHuhKatz} consists in proving the Hodge--Riemann relations for these Chow rings.

On the other hand, building sets and nested complexes on the \defn{boolean lattice} have been extensively studied, notably by A.~Postnikov~\cite{Postnikov} and~E.~M.~Feichtner and B.~Sturmfels~\cite{FeichtnerSturmfels}, to the point that the terms \emph{building set} and \emph{nested complex} often implicitly refer to this boolean case.
A remarkable property of boolean nested complexes is that they admit explicit geometric realizations as the boundary complexes of convex polytopes called \defn{nestohedra}~\cite{Postnikov, FeichtnerSturmfels, Zelevinsky}.
The nestohedron associated to a boolean building set is constructed as the Minkowski sum of (any positive dilation of) the faces of the standard simplex corresponding to the blocks of the building set.
This can also be seen as an explicit recipe for truncating the corresponding faces of the simplex.
A boolean building set can additionally be interpreted as the collection of (vertex sets of) connected subhypergraphs of a given hypergraph.
When the underlying hypergraph is a graph, the building set is said to be \defn{graphical}, and the nestohedron is the \defn{graph associahedron} of M.~Carr and S.~Devadoss~\cite{CarrDevadoss, Devadoss}.
Many important families of polytopes arise as special cases, including \defn{permutahedra} and \defn{associahedra}.

\para{Facial nested complexes}
In this paper, we focus on the \defn{Las Vergnas face lattices of oriented matroids}, which generalize face lattices of polytopes.
While a matroid abstracts the notion of dependence found in structures such as vector spaces or graphs~\cite{Oxley}, an oriented matroid additionally captures the sign information of dependencies, akin to distinguishing base orientations in geometry or to considering directed graphs~\cite{BjornerLasVergnasSturmfelsWhiteZiegler}.
The Las Vergnas face lattice of an oriented matroid encodes its acyclic quotients and, in the realizable case, coincides with the face lattice of the corresponding polytope.

We systematically develop the theory of nested complexes of building sets over Las Vergnas face lattices, which we call \defn{facial nested complexes}.
Beyond boolean nested complexes, which are the facial nested complexes of the simplex, we show that several families of simplicial complexes and polytopes previously studied in the literature are, in fact, facial nested complexes.
These include poset associahedra~\cite{Galashin}, simple polytope nestohedra~\cite{Almeter,Petric}, hyperoctahedral nestohedra~\cite{Almeter}, design graph associahedra~\cite{DevadossHeathVipismakul}, permutopermutahedra~\cite{Gaiffi-permutonestohedra,CastilloLiu-permutoAssociahedron}, and simple permutoassociahedra~\cite{BaralicIvanovicPetric,Ivanovic}.

Our framework thus unifies a range of previously disparate constructions under a single combinatorial and geometric perspective.
Many of the structural and geometric results in these works follow directly from their reinterpretation as facial nested complexes, which brings with it a rich package of combinatorial, topological, geometric, and algebraic consequences.
In fact, we believe that most `nested-like' polytopes are captured by our framework of facial nestohedra (although some clearly fall outside: for instance the permutoassociahedron of~\cite{Kapranov,ReinerZiegler,CastilloLiu-permutoAssociahedron} is not even simple).

\begin{figure}
	\capstart
	\centerline{
		\begin{tikzpicture}

			\node[violet] at (0,9.5) {LATTICE NESTED COMPLEXES \hypersetup{citecolor=violet}\cite{FeichtnerKozlov2004}};
			\draw[rounded corners=18pt, ultra thick, violet] (-8.2, -1.6) rectangle (8.2, 9.9);

			\node[text width=5cm, cyan] at (-5.3,8.7) {WONDERFUL MODELS};
			\node[text width=5cm, cyan] at (-5.3,8.3) {(Flat lattices of matroids)};
			\node[text width=5cm, cyan] at (-5.3,7.9) {\hypersetup{citecolor=cyan}\cite{DeConciniProcesi1995}};
			\draw[rounded corners=16pt, ultra thick, cyan] (-8, -1) -- (-8, 9.1) -- (-.6, 9.1) -- (-.6, 5.1) -- (4.1, 5.1) -- (4.1, -1) -- cycle;

			\node[text width=3cm, blue] at (-5.9,6.9) {NESTOHEDRA};
			\node[text width=3cm, blue] at (-5.9,6.5) {(Boolean lattices)};
			\node[text width=3cm, blue] at (-5.9,6.1) {\hypersetup{citecolor=blue}\cite{FeichtnerSturmfels,Postnikov}};
			\node at (-2.5,6) {\includegraphics[scale=.2]{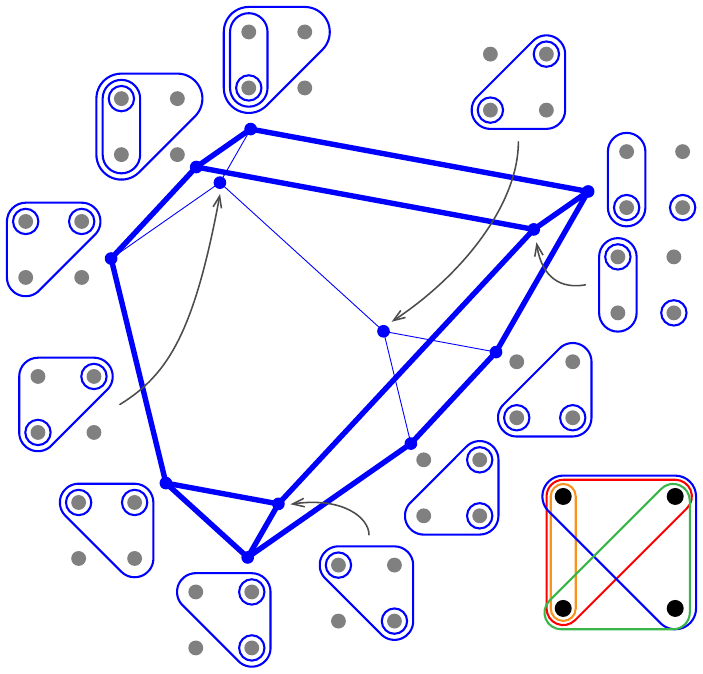}};
			\draw[rounded corners=14pt, ultra thick, blue] (-7.6, -.8) -- (-7.6, 7.3) -- (-.8, 7.3) -- (-.8, 4.9) -- (3.9, 4.9) -- (3.9, -.8) -- cycle;
			
			\node[text width=6cm, green] at (-4.2,4.1) {GRAPH};
			\node[text width=6cm, green] at (-4.2,3.7) {ASSOCIAHEDRA};
			\node[text width=6cm, green] at (-4.2,3.3) {\hypersetup{citecolor=green}\cite{CarrDevadoss}};
			\node at (-5.6,1.4) {\includegraphics[scale=.2]{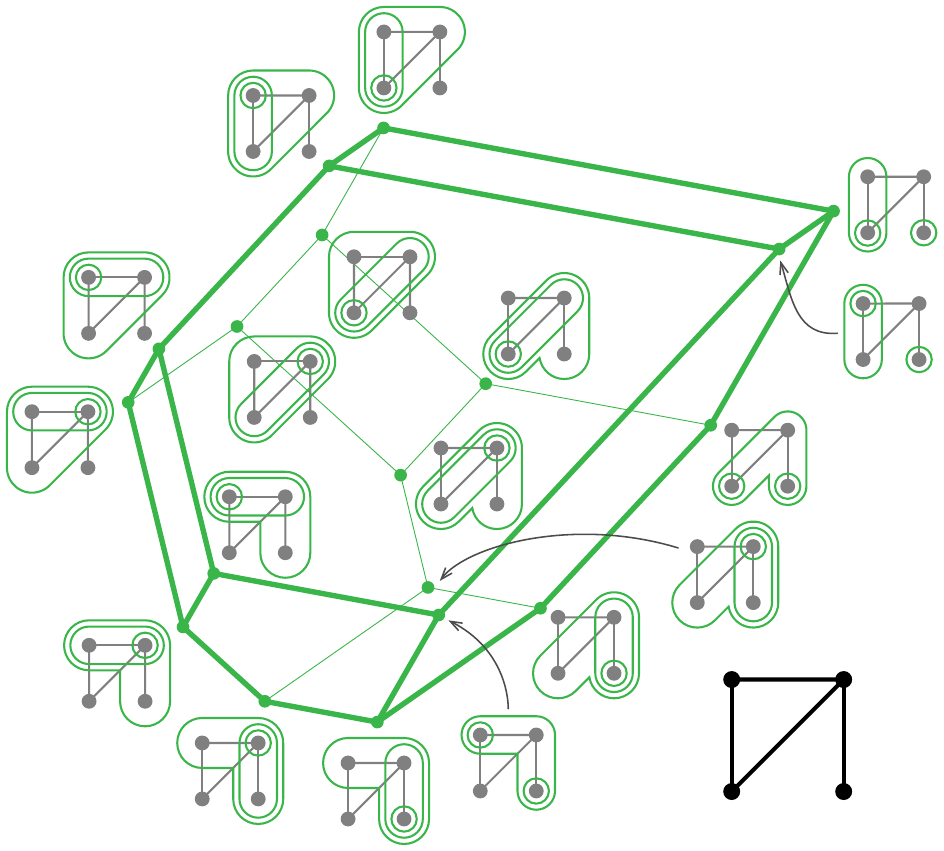}};
			\draw[rounded corners=12pt, ultra thick, green] (-7.4, -.6) rectangle (3.7, 4.5);

			\node[text width=6cm, align = right, red] at (4.8,8.7) {ACYCLONESTOHEDRA};
			\node[text width=6cm, align = right, red] at (4.8,8.3) {(Face lattices of oriented matroids)};
			\node[text width=6cm, align = right, red] at (2.8,7.8) {\tiny \rotatebox{90}{$\subseteq$}};
			\node[text width=6cm, align = right, red] at (4.8,7.4) {\tiny $\polytope$-nested complexes~\hypersetup{citecolor=red}\cite{Almeter,Petric}};
			\node[text width=6cm, align = right, red] at (4.8,7.1) {\tiny hyperoctahedral nested complexes};
			\node[text width=6cm, align = right, red] at (4.8,6.8) {\tiny type~$B$ permutahedron};
			\node[text width=6cm, align = right, red] at (4.8,6.5) {\tiny design graph associahedra~\hypersetup{citecolor=red}\cite{DevadossHeathVipismakul}};
			\node[text width=6cm, align = right, red] at (4.8,6.2) {\tiny biassociahedron \hypersetup{citecolor=red}\cite{BarnardReading}};
			\node[text width=6cm, align = right, red] at (4.8,5.9) {\tiny permutopermutohedron \hypersetup{citecolor=red}\cite{Gaiffi-permutonestohedra,CastilloLiu-permutoAssociahedron}};
			\node[text width=6cm, align = right, red] at (4.8,5.6) {\tiny simple permutoassociahedron \hypersetup{citecolor=red}\cite{BaralicIvanovicPetric,Ivanovic}};
			\node[text width=6cm, align = right, red] at (2.8,5.3) {\tiny \rotatebox{90}{...}};
			\node at (1.2,6.8) {\includegraphics[scale=.2]{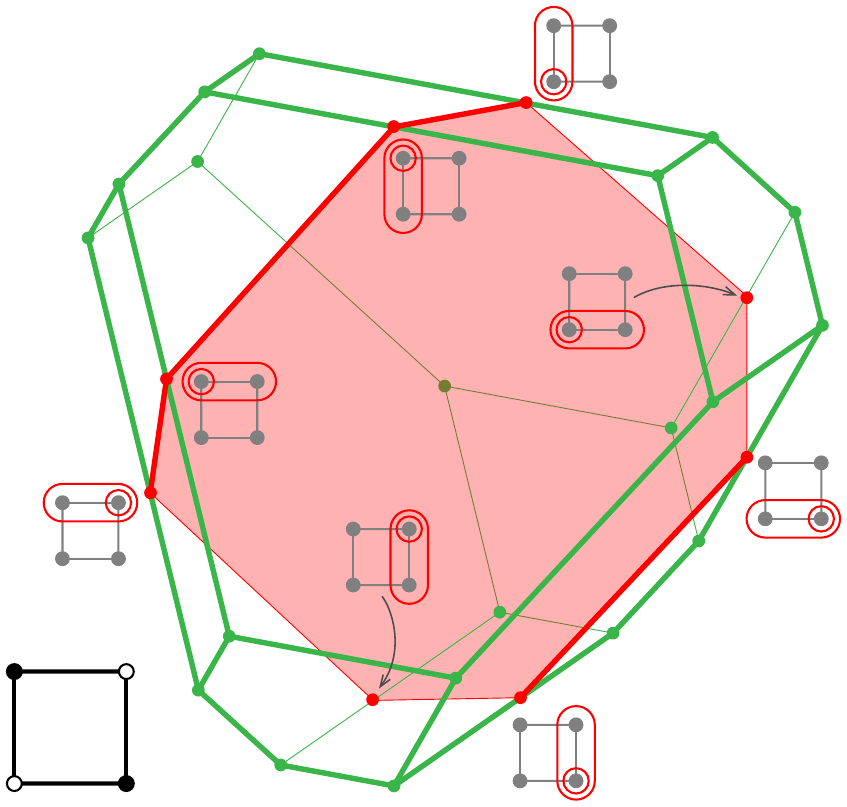}};
			\draw[rounded corners=16pt, ultra thick, red] (-7.8, -1.4) -- (-7.8, 7.5) -- (-.4, 7.5) -- (-.4, 9.1) -- (8, 9.1) -- (8, -1.4) -- cycle;

			\node[text width=4cm, align = right, orange] at (5.6,4.3) {POSET};
			\node[text width=4cm, align = right, orange] at (5.6,3.9) {ASSOCIAHEDRA};
			\node[text width=4cm, align = right, orange] at (5.6,3.5) {\hypersetup{citecolor=orange}\cite{Galashin}};
			\node at (5.9,1.2) {\includegraphics[scale=.2]{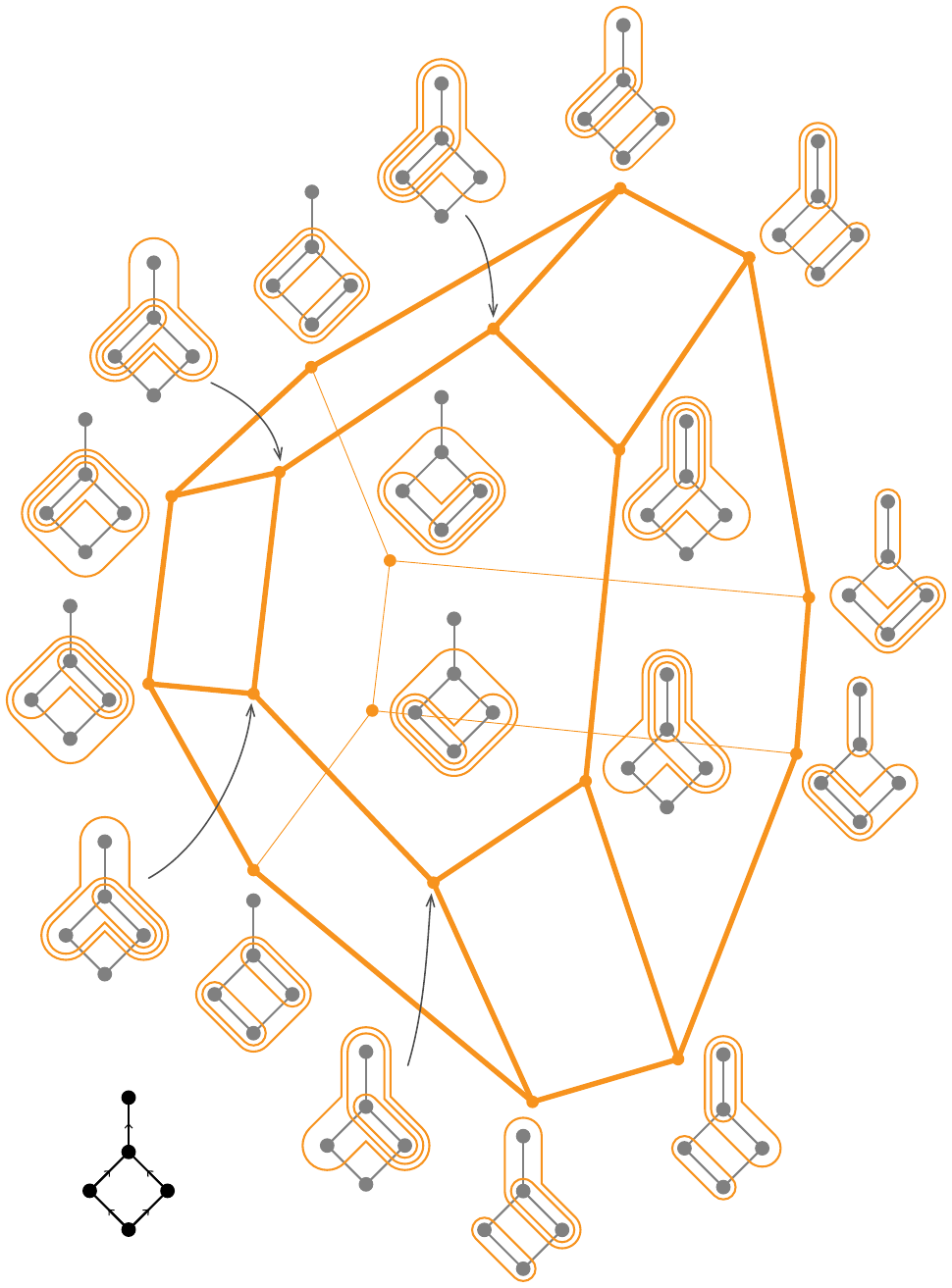}};
			\draw[rounded corners=12pt, ultra thick, orange] (-3.7, -1.2) rectangle (7.8, 4.7);

			\node[text width=6cm, yellow] at (-.3,3.9) {BLOCK};
			\node[text width=6cm, yellow] at (-.3,3.5) {GRAPH};
			\node[text width=6cm, yellow] at (-.3,3.1) {ASSOCIAHEDRA};
			\node[text width=6cm, yellow] at (4.5,3.7) {\tiny permutahedron};
			\node[text width=6cm, yellow] at (-.3,1.9) {\tiny associahedron};
			\node at (0,1.7) {\includegraphics[scale=.2]{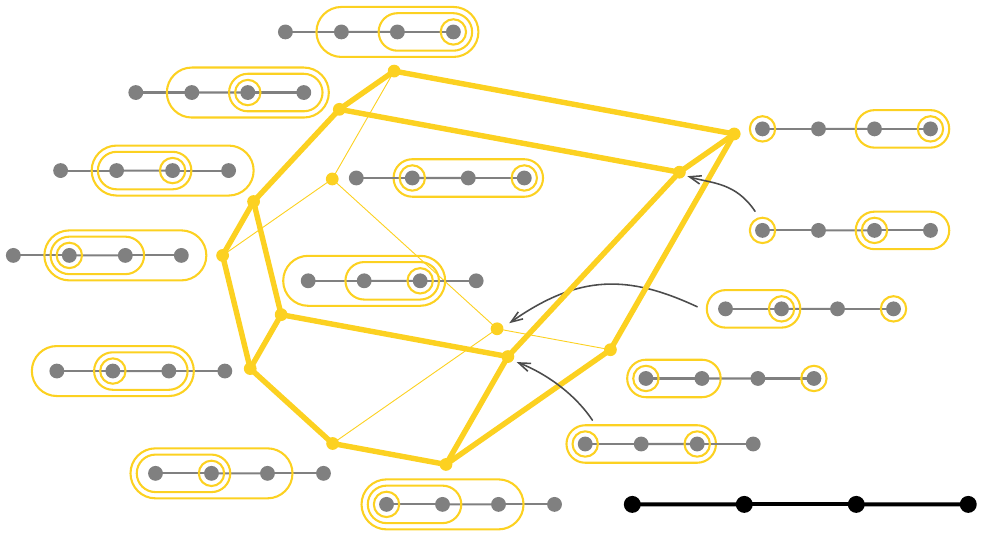} \hspace{-.6cm} \includegraphics[scale=.2]{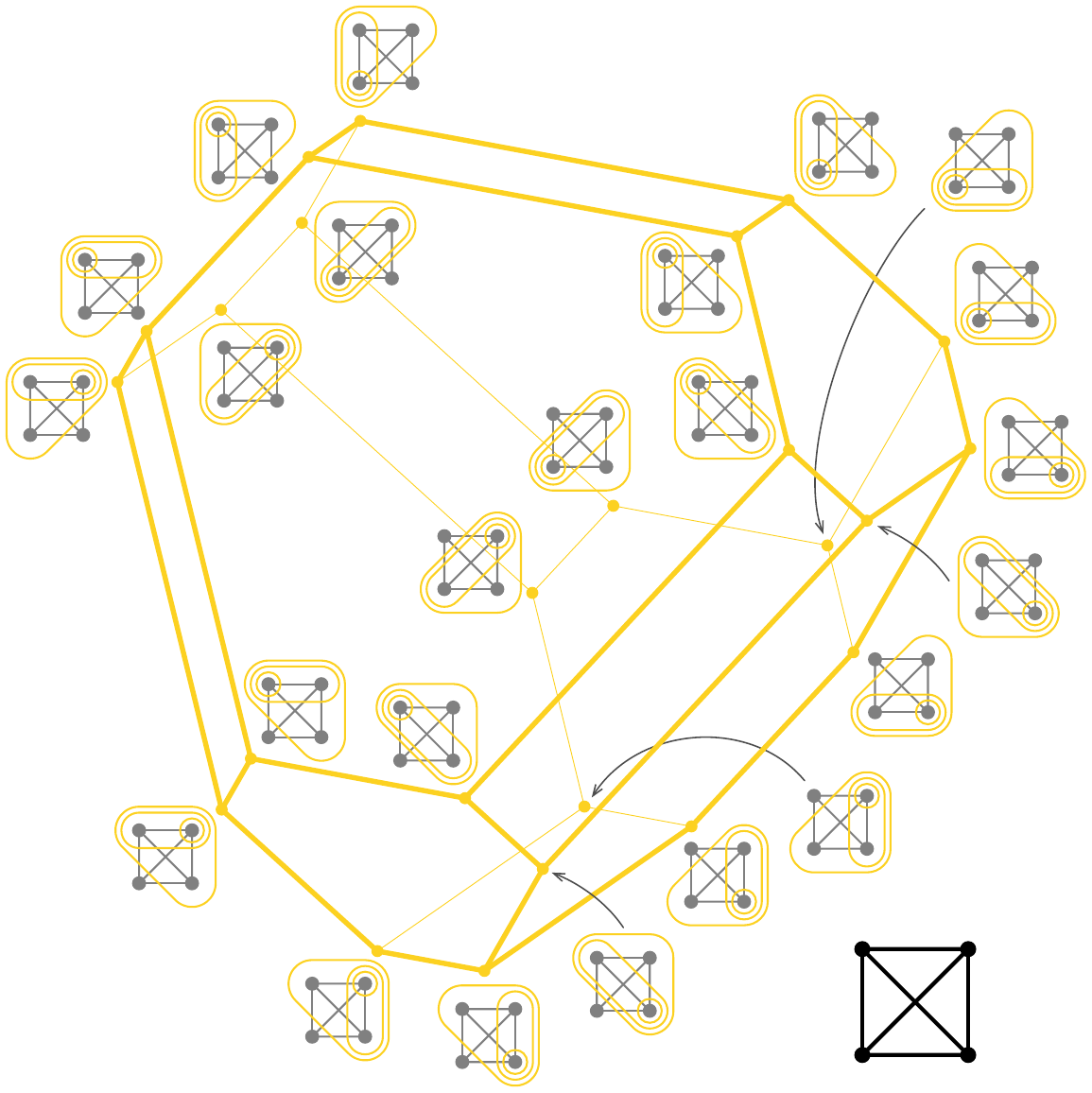}};
			\draw[rounded corners=10pt, ultra thick, yellow] (-3.5, -.4) rectangle (3.5, 4.3);
		\end{tikzpicture}
	}
	\caption{Inclusions and connections between different types of nested complexes.}
	\label{fig:intro}
\end{figure}

\para{Acyclonestohedra}
A central goal in many of the aforementioned works is to demonstrate that these nested-like complexes arise as boundary complexes of polytopes.
As it turns out, all facial nested complexes of an oriented matroid can be obtained through iterated stellar subdivisions of the positive tope.
It follows that, if the oriented matroid is realizable, the nested complex is dually realized by iterated face truncations of the polar of the underlying polytope.
This realization is not entirely satisfactory, however, as it does not provide explicit coordinates.
Namely, stellar subdivisions and face truncations rely on successive choices of sufficiently small parameters, which are challenging to control.

We provide effective and combinatorially meaningful polytopal realizations of facial nested complexes arising from realizable oriented matroids.
Specifically, using the fact that every polytope can be presented as a section of a simplex, we show that every facial nested complex on a polytope can be dually realized as a section of a boolean nestohedron.
This yields explicit coordinates for polytopes that we call \defn{acyclonestohedra}, as they geometrically embed facial nested complexes as the acyclic part (with respect to the underlying oriented matroid) of boolean nested complexes.
We also note that acyclonestohedra are removahedra of the omnitruncation of the polytope, for which our construction provides coordinates via an explicit non-iterative formula.

\cref{fig:intro} illustrates some examples of acyclonestohedra in the panorama of lattice nested complexes.

\para{Poset associahedra and compactifications}
Our original motivation for studying facial nested complexes of oriented matroids comes from P.~Gala\-shin's recent work on \defn{poset associahedra} and \defn{affine poset cyclohedra}~\cite{Galashin}.
In particular, acyclonestohedra specialize to poset associahedra when the oriented matroid is graphical.

Beyond the construction via stellar subdivisions, our approach allows us to extend most of the results in~\cite{Galashin} from order and affine order polytopes to arbitrary (oriented matroid) polytopes.
One of P.~Galashin's main results is that poset associahedra model compactifications of the space of order-preserving maps $P\to \R$, which can be identified with the interior of an order polytope.
We observe that results from~\cite{Gaiffi2003} imply that the interior of any polytope admits a stratified compactification as a $C^\infty$-manifold with corners, obtained via real blow-ups, that is diffeomorphic as a stratified space to the corresponding acyclonestohedron.
Facial nested complexes also appear to model the boundary structure of \defn{wondertopes} in a recent alternative compactification via projective blow-ups~\cite{BraunerEurPrattVlad}.

Furthermore, we are able to address some questions left open in~\cite{Galashin}.
First of all, acyclonestohedra provide the desired explicit realizations of (affine) poset associahedra.
An alternative explicit construction of poset associahedra was independently proposed by A.~Sack~\cite{Sack}, which we show can be recovered as a projection of our acyclonestohedron.
Secondly, the toric variety of the facial nested fan~\cite{FeichtnerYuzvinsky2004} yields an algebraic variety reflecting the combinatorics of the poset associahedron.

\para{Nested complex embeddings}
Our realization of acyclonestohedra is the geometric incarnation of the embedding of the facial nested complex as the acyclic subcomplex of a boolean nested complex, valid for every oriented matroid, realizable or not.
An analogous embedding of \defn{flatial nested complexes} (those defined over the lattice of flats of the matroid) inside boolean nested complexes for minimal and maximal flatial and boolean building sets was implicitly used to define the Bergman fan in~\cite{ArdilaKlivans2006}, and later extended to arbitrary building sets in~\cite{FeichtnerSturmfels}.
Similarly, embeddings of facial nested complexes into flatial nested complexes for the minimal and maximal facial and flatial building sets were exploited in~\cite{ArdilaKlivansWilliams2006,ArdilaReinerWilliams2006} to embed the positive Bergman complex into the Bergman complex.
Here, we extend this by showing that any flatial building set restricts to a facial building set, for which the associated facial nested complex embeds into the associated flatial nested complex.
If moreover every element of the matroid is a vertex of the oriented matroid polytope, then all facial building sets arise this way.
(Otherwise, elements that are not visible in the face lattice may break the building set structure when considered inside the flat lattice).

To prove these results, we more generally consider arbitrary order embeddings between lattices, and we study conditions on these embeddings and building sets that guarantee embeddings of the corresponding nested complexes.
As an application of their results on the convex geometry of building sets, this topic was also independently investigated by S.~Backman and R.~Danner in~\cite{BackmanDanner}, and we refer to the extensive list of examples in~\cite[Sect.~4]{BackmanDanner} for further illustrations of the ubiquity of these embeddings.
We identify some families of \defn{tame order embeddings} that behave well with respect to nested complexes, and we study when they allow for pushing or pulling building sets and nested complexes.
As a particular case, we prove that any atomic nested complex has a canonical embedding inside a boolean nested complex, generalizing the aforementioned embeddings of the facial and flatial nested complexes.
Our results are broader than these boolean embeddings, and cover in particular the aforementioned embeddings of the facial inside the flatial nested complex.
Finally, we note that the consistency criterion of~\cite[Def.~4.4 \& Thm.~4.5]{BackmanDanner} can a posteriori be derived from our study.

\para{Organization}
The paper is organized as follows.
\cref{part:preliminaries} is a gentle presentation of building sets and nested complexes (\cref{sec:buildingSetsNestedComplexes}) and of oriented matroids (\cref{sec:orientedMatroids}).
To build intuition, we start with boolean nested complexes and oriented matroids from vector configurations, and we further explore the graphical settings.
Our focus lies especially on restrictions and contractions to describe links (\cref{subsec:restrictionContractionNestedComplex,subsec:restrictionContractionOM}), and on specific operations (\cref{subsec:operationsNestedComplex,subsec:operationsOM}) to restrict to connected structures.
Since we could not find these results stated in full lattice generality in the literature at the time of writing, we give a unified presentation that includes detailed proofs for the construction of the lattice building closure (extending~\cite[Lem.~3.10]{FeichtnerSturmfels} and independently proven in~\cite{BackmanDanner}), the description of links in lattice nested complexes (extending~\cite[Prop.~3.2]{Zelevinsky}, \cite[Sect.~4.3]{DeConciniProcesi1995}, and~\cite[Thm~1.6]{BraunerEurPrattVlad}), and the treatment of direct sum and free sum operations.

In \cref{part:facialAcyclicNestedComplexes}, we consider the facial nested complex of a facial building set (\cref{sec:facialBuildingSetsFacialNestedComplexes}) and the acyclic nested complex of an oriented building set (\cref{sec:orientedBuildingSetsAcyclicNestedComplexes}).
We prove that these two settings essentially coincide (\cref{sec:facialNestedComplexesVsAcyclicNestedComplexes}).
Namely, the facial building sets are precisely the facial parts of the oriented building sets, and their nested complexes coincide.

\cref{part:realizations} addresses geometric realizations of facial nested complexes (or equivalently acyclic nested complexes).
First, the facial nested complexes of an oriented matroid~$\OM$ are shown to be face lattices of oriented matroids, which are realizable whenever~$\OM$ is (\cref{sec:orientedMatroidRealizations}). This follows from an oriented matroid interpretation of the combinatorial blow-up operation from \cite{FeichtnerKozlov2004}.
Second, and most important, we provide an alternative polytopal realization of facial nested complexes of realizable oriented matroids as sections of nestohedra, with explicit and combinatorially meaningful coordinates (\cref{sec:polytopalRealizations}).
Finally, we connect facial nested complexes to the work of G.~Gaiffi~\cite{Gaiffi2003} on stratified compactifications of polyhedral cones and to the work of S.~Brauner, C.~Eur, E.~Pratt, and R.~Vlad~\cite{BraunerEurPrattVlad} on wondertopes (\cref{sec:compactifications}).

\cref{part:posetAssociahedra} is devoted to the graphical case.
Namely, we prove that the poset associahedra (\cref{sec:posetAssociahedra}) and affine poset cyclohedra (\cref{sec:affinePosetCyclohedra}) defined by P.~Galashin in~\cite{Galashin} are facial nested complexes of graphical oriented building sets.
Applying \cref{sec:orientedMatroidRealizations,sec:polytopalRealizations,sec:compactifications}, we thus recover the polytopal realizations of~\cite[Thm.~2.1]{Galashin} by stellar subdivisions of order polytopes, we obtain nice explicit realizations as graphical acyclonestohedra, and we recover stratified compactifications of the order polytopes akin to~\cite[Thm.~1.9\,\&\,1.12]{Galashin}.

Lastly, \cref{part:embeddings} is devoted to more general nested complex embeddings and the resulting realizations.
We first investigate conditions on poset embeddings between two lattices that guarantee nested complex embeddings (\cref{sec:latticeEmbedding}).
We then apply these conditions to embed atomic nested complexes as subcomplexes of boolean nested complexes (\cref{sec:atomic}) and derive realizations of atomic nested complexes as subfans of boolean nested fans and as subcomplexes of boundary complexes of nestohedra, recovering the fan realizations of E.~M.~Feichtner and S.~Yuzvinsky~\cite{FeichtnerYuzvinsky2004}.
We finally apply our conditions to explain the embeddings of nested complexes over the face lattice inside nested complexes over the flat lattice of an oriented matroid that appears in the positive Bergman fan construction (\cref{sec:Bergman}).

\para{Preliminary versions}
The geometric realization of poset associahedra appeared as the Master thesis of C.~Mantovani~\cite{Mantovani}, co-supervised by the other authors.
Preliminary versions of our results were presented in the Simons Center Workshop on Combinatorics and Geometry of Convex Polyhedra (Mar.~2023), the Oberwolfach workshop on Geometric, Algebraic, and Topological Combinatorics (Dec.~2023), and of the 36th International Conference on Formal Power Series and Algebraic Combinatorics (FPSAC 24, Bochum, Jul.~2024).
Extended abstracts can be found in~\cite{Oberwolfach, FPSAC}.

%%%%%%%%%%%%%%%%%%%%%%%%%%%%%%%%%%%%%%%

\clearpage
\part{Nested complexes and oriented matroids}
\label{part:preliminaries}

In this part, we remind the reader of classical notions on building sets and nested complexes (\cref{sec:buildingSetsNestedComplexes}) and on oriented matroids (\cref{sec:orientedMatroids}).
We focus in particular on restrictions and contractions (\cref{subsec:restrictionContractionNestedComplex,subsec:restrictionContractionOM}) that enables us to describe links, and on some operations (\cref{subsec:operationsNestedComplex,subsec:operationsOM}) that enable us to consider only connected objects.

%%%%%%%%%%%%%%%%%%%%%%%%%%%%%%%%%%%%%%%

\section{Building sets, nested sets, and nested complexes}
\label{sec:buildingSetsNestedComplexes}

We now recall some aspects of building sets and nested sets.
In the original definition of \mbox{E.~M.~Feichtner} and D.~Kozlov~\cite{FeichtnerKozlov2004}, building sets and nested sets depend upon an underlying meet semilattice (generalizing the work on lattices of flats of subspace arrangements from~\cite{DeConciniProcesi1995}).
In this paper, we will work with (not-necessarily boolean) face lattices of polytopes and oriented matroids.
Our presentation starts with the classical setting on the boolean lattice (\cref{subsec:boolean}), and its generalization to the level of general lattices (\cref{subsec:latticeBuildingSets,subsec:latticeNestedSets}).
We then define restrictions and contractions to describe links in nested complexes (\cref{subsec:restrictionContractionNestedComplex}), recall the operation of combinatorial blow-up (\cref{subsec:combinatorialblowup}), and conclude with two operations on lattices and building sets that enable us to restrict to connected building sets (\cref{subsec:operationsNestedComplex}).

%%%%%%%%%%%%%%%%

\subsection{Building sets and nested sets}
\label{subsec:boolean}

We first recall the classical definitions of (boolean) building sets, nested sets, nested complexes, nested fans and nestohedra from~\cite{Postnikov, FeichtnerSturmfels, Zelevinsky, Pilaud-removahedra} and their specializations to the graphical case~\cite{CarrDevadoss}.
The presentation and some illustrations are borrowed from~\cite{PadrolPilaudPoullot-deformedNestohedra}.

\subsubsection{Building sets}

We start with the notion of building sets.

\begin{definition}[\cite{FeichtnerKozlov2004, FeichtnerSturmfels, Postnikov}]
\label{def:booleanBuildingSet}
A \defn{building set} on a ground set~$\ground$ is a set~$\building$ of non-empty subsets of~$\ground$ such that
\begin{itemize}
\item $\building$ contains all singletons~$\{s\}$ for~$s \in \ground$,
\item if~$B,B' \in \building$ and~$B \cap B' \ne \varnothing$, then~$B \cup B' \in \building$.
\end{itemize} 
We denote by~$\connectedComponents(\building)$ the set of \defn{$\building$-connected components} (\ie maximal elements of~$\building$), and we say that $\building$ is \defn{connected} if $\connectedComponents(\building)=\{\ground\}$.
\end{definition}

\begin{example}
\label{exm:minMaxBooleanBuildingSet}
The set~$2^\ground \ssm \{\varnothing\}$ of all non-empty subsets of~$\ground$ (resp.~the set~$\set{\{s\}}{s \in \ground}$ of singletons of~$\ground$) is a building set on~$\ground$, which contains (resp.~is contained in) any building set on~$\ground$.
\end{example}

\begin{example}[\cite{CarrDevadoss}]
\label{exm:graphicalBuildingSet}
Consider a graph~$\graphG$ on~$\ground$.
A \defn{tube} of~$\graphG$ is a non-empty subset of~$\ground$ which induces a connected subgraph of~$\graphG$.
The set~$\tubes$ of all tubes of~$\graphG$ is a building set, called the \defn{graphical building set} of~$\graphG$.
Moreover, the blocks of $\connectedComponents(\tubes)$ are the vertex sets of the connected components~$\connectedComponents(\graphG)$~of~$\graphG$.
\end{example}

\begin{remark}[\cite{DosenPetric}]
\label{rem:hypergraphicalBuildingSet}
More generally, a hypergraph~$\hypergraph$ on~$\ground$ defines a building set~$\building(\hypergraph)$ on~$\ground$ given by all non-empty subsets of~$\ground$ which induce connected subhypergraphs of~$\hypergraph$ (a path in~$\hypergraph$ is a sequence of vertices where any two consecutive ones belong to a common hyperedge of~$\hypergraph$).
Any building set~$\building$ on~$\ground$ is the building set of a hypergraph, but not always of a graph.
\end{remark}

\begin{example}
\label{exm:booleanBuildingSet}
For a more specific instance, consider the building set~$\building_\circ$ on~$\{1, \dots, 5\}$ defined by
\[
\building_\circ \eqdef \{1, 2, 3, 4, 5, 6, 12, 14, 25, 123, 124, 125, 456, 1234, 1235, 1245, 1456, 2456, 12345, 12456, 123456\}
\]
(since all labels have a single digit, we can abuse notation and write $123$ for $\{1,2,3\}$).
It has a single $\building_\circ$-connected component $\connectedComponents(\building_\circ) = \{123456\}$. 
It is the building set of the hypergraph~$\{12, 14, 25, 123, 456\}$ represented in \cref{fig:exmNested}\,(left).
\begin{figure}
	\capstart
	\centerline{\includegraphics[scale=.9]{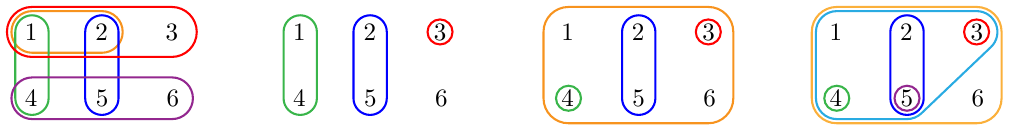}}
	\caption{A hypergraph generating the building set~$\building_\circ$ of \cref{exm:booleanBuildingSet} (left), a subset of~$\building_\circ$ which is not a nested set on~$\building_\circ$ (middle left), a nested set on~$\building_\circ$ (middle right), and a maximal nested set on~$\building_\circ$ (right).}
	\label{fig:exmNested}
\end{figure}
\end{example}

We immediately observe that any set of non-empty subsets of~$\ground$ can be completed to a minimal building set.

\begin{definition}[{\cite[Lem.~3.10]{FeichtnerSturmfels}}]
\label{def:booleanBuildingSetClosure}
The \defn{building closure} of $\c{X} \subseteq 2^\ground$ is the smallest building set $\building$ containing $\c{X}$.
It is composed of the singletons $\{s\}$ for $s\in \ground$ and the sets $\bigcup_{Y \in \c{Y}} Y$ for each $\c{Y} \subseteq \c{X}$ with a connected intersection graph.
\end{definition}

\subsubsection{Nested sets}

Once a building set is chosen, we consider its nested sets and nested complex.

\begin{definition}[\cite{FeichtnerKozlov2004, FeichtnerSturmfels, Postnikov}]
\label{def:booleanNestedSet}
Let~$\building$ be a building set on~$\ground$.
A \defn{nested set}~$\nested$ on~$\building$ is a subset~$\nested$ of~$\building$ containing~$\connectedComponents(\building)$ such that
\begin{itemize}
\item for any~$B,B' \in \nested$, either~$B \subseteq B'$ or~$B' \subseteq B$ or~$B \cap B' = \varnothing$,
\item for any~$k \ge 2$ pairwise disjoint~$B_1,\dots,B_k \in \nested$, the union~$B_1 \cup \dots \cup B_k$ is not in~$\building$.
\end{itemize}
The \defn{nested complex} of~$\building$ is the simplicial complex~$\nestedComplex[][\building]$ whose faces are $\nested \ssm \connectedComponents(\building)$ for all nested sets~$\nested$ on~$\building$.
\end{definition}

\begin{remark}
\label{rem:nestedComplex}
Note that it is convenient to include~$\connectedComponents(\building)$ in all nested sets (as in~\cite{Postnikov}), but to remove $\connectedComponents(\building)$ from all nested sets when defining the $\building$-nested complex (as in~\cite{Zelevinsky}).
In particular, the nested complex is a simplicial sphere of dimension~$|\ground| - |\connectedComponents(\building)|$.
\end{remark}

\begin{example}
\label{exm:orderComplexBoolean}
For the maximal building set~$2^\ground \ssm \{\varnothing\}$ (resp.~minimal building set~$\set{\{s\}}{s \in \ground}$) considered in \cref{exm:minMaxBooleanBuildingSet}, the nested sets are the chains in the boolean lattice on~$\ground$ (resp.~all subsets of~$\ground$), hence the nested complex is isomorphic to the refinement complex on proper ordered partitions of~$\ground$ (resp.~to the inclusion complex on proper subsets of~$\ground$).
\end{example}

\begin{example}[\cite{CarrDevadoss}]
\label{exm:graphicalNested}
Following on \cref{exm:graphicalBuildingSet}, consider a graph~$\graphG$ on~$\ground$.
Two tubes~$\tube, \tube'$ of~$\graphG$ are \defn{compatible} if they are either nested (\ie $\tube \subseteq \tube'$ or~$\tube' \subseteq \tube$), or disjoint and non-adjacent~(\ie~${\tube \cup \tube' \notin \tubes}$).
Note that any connected component of~$\graphG$ is compatible with any other tube of~$\graphG$.
A \defn{tubing} on~$\graphG$ is a set~$\tubing$ of pairwise compatible tubes of~$\graphG$ containing all connected components~$\connectedComponents(\graphG)$.
Examples are illustrated in \cref{fig:exmGraphicalNested}.
Tubings are precisely the nested sets of the graphical building set~$\tubes$.
The nested complex~$\nestedComplex[][\tubes]$ is a \defn{graphical nested complex}.
\begin{figure}
	\capstart
	\centerline{\includegraphics[scale=.5]{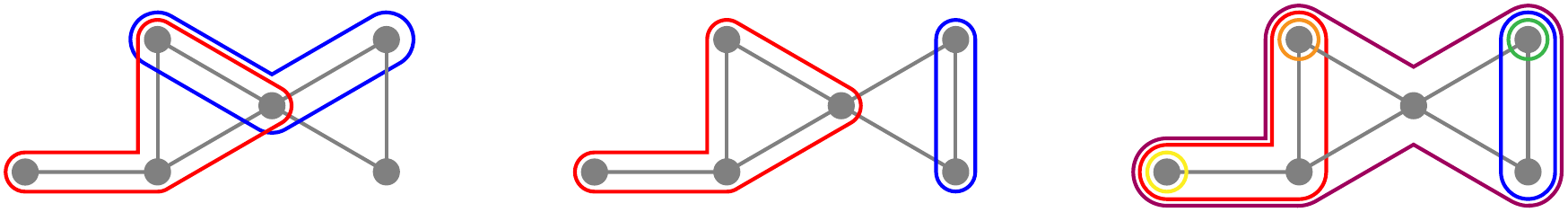}}
	\caption{Some incompatible tubes (left and middle), and a maximal tubing (right).}
	\label{fig:exmGraphicalNested}
\end{figure}
\end{example}

\begin{example}
\label{exm:nested}
Consider the building set~$\building_\circ$ of \cref{exm:booleanBuildingSet}.
The subset~$\{14, 25, 3\}$ is not a nested set on~$\building_\circ$ since its union is~$12345 \in \building_\circ$ (and it does not contain~$\connectedComponents(\building)$).
In contrast, the subsets~$\{3, 4, 25, 123456\}$ and~$\{3, 4, 5, 25, 12345, 123456\}$ are nested sets on~$\building_\circ$ and the latter is maximal.
These subsets of~$\building_\circ$ are illustrated in \cref{fig:exmNested}.
\end{example}

\subsubsection{Contractions, restrictions and links}

We now introduce restrictions and contractions of building sets to describe the links of nested complexes, following~\cite[Prop.~3.2]{Zelevinsky}.
We invite the reader to check with \cref{def:booleanBuildingSet} that the following define building sets.

\begin{definition}
\label{def:restrictionContractionBuildingSet}
For any building set~$\building$ on~$\ground$ and any~$R \subseteq \ground$, define
\begin{itemize}
\item the \defn{restriction} of~$\building$ to~$R$ as the building set~$\building_{|R} \eqdef \set{B \in \building}{B \subseteq R}$ on~$R$,
\item the \defn{contraction} of~$R$ in~$\building$ as the building set~$\building_{\!/R} \eqdef \set{B \ssm R}{B \in \building \text{ and } B \not\subseteq R}$ on~$\ground \ssm R$.
\end{itemize}
\end{definition}

\pagebreak

\begin{example}
\label{exm:restrictionContractionGraphicalBuildingSet}
Following on \cref{exm:graphicalBuildingSet,exm:graphicalNested}, consider a graph~$\graphG$ on~$\ground$ and $R \subseteq \ground$.
Denote~by
\begin{itemize}
\item $\graphG_{|R}$ the subgraph of~$\graphG$ induced by~$R$,
\item $\graphG_{\!/R}$ the \defn{reconnected complement} of~$R$ in~$\graphG$, \ie the graph on~$\ground \ssm R$ with an edge~$\{r,s\}$ if there is a path between~$r$ and~$s$ in~$\graphG$ with vertices in~$R \cup \{r,s\}$, see~\cite{CarrDevadoss}.
\end{itemize}
Then~$\tubes_{|R} = \tubes[\graphG_{|R}]$ and~$\tubes_{\!/R} = \tubes[\graphG_{\!/R}]$.
\end{example}

\begin{example}
For instance, consider the building set~$\building_\circ$ of \cref{exm:booleanBuildingSet}, illustrated in \cref{fig:exmNested}.
For~$R \eqdef 123$, we have~${\building_\circ}_{|R} = \{1, 2, 3, 12, 123\}$ and~${\building_\circ}_{\!/R} = \{4, 5, 6, 456, 45\}$.
\end{example}

Note that our definition of contraction in \cref{def:restrictionContractionBuildingSet} slightly differs from that of~\cite[Def.~3.1]{Zelevinsky}, in order to fit better with the contraction of oriented matroids, see \cref{rem:contraction}.
However, it coincides for~$R \in \building$, which enables us to borrow directly the following statement.
Recall that the \defn{link} of a face~$s$ in a simplicial complex~$\simplex$ is defined by $\set{t \in \simplex}{s \cap t = \varnothing \text{ and } s \cup t \in \simplex}$ and the \defn{join} of two simplicial complexes~$\simplex$ and~$\simplex'$ is defined by~$\simplex \join \simplex' \eqdef \set{s \sqcup s'}{s \in \simplex, \; s' \in \simplex'}$.

\begin{proposition}[{\cite[Prop.~3.2]{Zelevinsky}}]
\label{prop:linksNestedComplex}
For any~$R \in \building \ssm \connectedComponents(\building)$, the link~$\set{\nested \ssm \{B\}}{R \in \nested \in \nestedComplex[][\building]}$ is the join~$\nestedComplex[][\building_{|R}] \join \nestedComplex[][\building_{\!/R}]$.
\end{proposition}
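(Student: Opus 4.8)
The plan is to establish a bijection between the nested sets~$\nested$ on~$\building$ that contain the fixed block~$R$ (with~$\connectedComponents(\building)$ always included) and the pairs~$(\nested', \nested'')$ where~$\nested'$ is a nested set on~$\building_{|R}$ and~$\nested''$ is a nested set on~$\building_{\!/R}$. Since~$R \in \building \ssm \connectedComponents(\building)$, by the remark preceding the statement our contraction~$\building_{\!/R}$ agrees with the one in~\cite[Def.~3.1]{Zelevinsky}, so the link description we want is exactly~\cite[Prop.~3.2]{Zelevinsky}; concretely, the map should be~$\nested \mapsto \big(\set{B \in \nested}{B \subseteq R}, \set{B \ssm R}{B \in \nested, \; B \not\subseteq R}\big)$, with inverse~$(\nested', \nested'') \mapsto \nested' \cup \set{B \in \building}{B \ssm R \in \nested''}$. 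One should check carefully that the second component is well defined, \ie that for~$B \in \nested$ with~$B \not\subseteq R$ the set~$B \ssm R$ is nonempty (true since~$B \not\subseteq R$) and lies in~$\building_{\!/R}$ (immediate from \cref{def:restrictionContractionBuildingSet}), and that in the inverse map each~$C \in \nested''$ has a \emph{unique} preimage~$B \in \building$ with~$B \ssm R = C$ lying in the reconstructed nested set, which is where the nestedness of~$\nested$ relative to~$R$ is used.

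First I would verify that if~$\nested$ is a nested set on~$\building$ containing~$R$, then~$\nested' \eqdef \set{B \in \nested}{B \subseteq R}$ is a nested set on~$\building_{|R}$: it contains~$R = \connectedComponents(\building_{|R})$ since $R$ is $\building_{|R}$-connected, the pairwise laminarity condition is inherited, and a disjoint union of members of~$\nested'$ lying in~$\building_{|R}$ would in particular lie in~$\building$, contradicting the second nested-set axiom for~$\nested$. Next I would check that~$\nested'' \eqdef \set{B \ssm R}{B \in \nested, \; B \not\subseteq R}$ is a nested set on~$\building_{\!/R}$; here one uses that~$R \in \nested$ together with the laminarity of~$\nested$ to see that any~$B \in \nested$ either contains~$R$, is contained in~$R$, or is disjoint from~$R$ — so for the blocks contributing to~$\nested''$ the operation~$B \mapsto B \ssm R$ restricts to an order isomorphism onto its image and preserves disjointness, hence laminarity and the non-union axiom transport across. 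That~$\connectedComponents(\building_{\!/R}) \subseteq \nested''$ follows from~$\connectedComponents(\building) \subseteq \nested$.

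Then I would treat the reverse direction: given a nested set~$\nested'$ on~$\building_{|R}$ and a nested set~$\nested''$ on~$\building_{\!/R}$, set~$\nested \eqdef \nested' \cup \set{B \in \building}{B \ssm R \in \nested''}$ and show it is a nested set on~$\building$ containing~$R$. It contains~$\connectedComponents(\building)$ and~$R$, and checking the two nested-set axioms amounts to a case analysis on whether the blocks involved are inside~$R$, strictly contain~$R$, or are disjoint from~$R$, using that elements of the second family automatically either contain~$R$ (if they are not $\building_{|R}$-blocks) — this is the subtle point — or could also fail to meet~$R$; I expect the decisive observation is that laminarity of~$\nested''$ forces the corresponding preimages in~$\building$ to be laminar as well, because $B \ssm R \subseteq B' \ssm R$ with both not contained in~$R$ forces $B \subseteq B'$ once we know the preimages are the \emph{minimal} such $\building$-elements (and here the building-set union axiom guarantees a well-defined minimal preimage). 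Finally, one verifies that the two maps are mutually inverse and that, after removing~$\connectedComponents(\building)$ from~$\nested$ and~$R \cup \connectedComponents(\building)$ from the pair on the product side, this bijection is exactly the identification of the link of~$R$ in~$\nestedComplex[][\building]$ with~$\nestedComplex[][\building_{|R}] \join \nestedComplex[][\building_{\!/R}]$.

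The main obstacle is the well-definedness and faithfulness of the contraction map~$B \mapsto B \ssm R$ at the level of nested sets: one must pin down that, restricted to the blocks of a nested set containing~$R$, this map is injective and order-preserving, and that in the reverse direction each block of~$\nested''$ lifts to a canonical block of~$\building$ (its minimal preimage), so that laminarity and the forbidden-union condition really do transfer both ways. Once this bookkeeping about how blocks sit relative to~$R$ is nailed down, everything else is a routine axiom check, and indeed the statement is quoted verbatim from~\cite[Prop.~3.2]{Zelevinsky}, so one may also simply cite it after confirming the compatibility of the two notions of contraction for~$R \in \building$.
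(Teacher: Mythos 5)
Your forward map and its verification are fine, and your closing fallback---simply citing Zelevinsky after checking that the two notions of contraction agree for $R \in \building$---is in fact exactly what the paper does: it offers no independent proof of this proposition, only the remark that its contraction $\building_{\!/R}$ coincides with Zelevinsky's for $R \in \building$, and then borrows the statement. So the citation route is unimpeachable; the issue is with the direct argument you sketch.

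The gap sits precisely at the point you yourself flag as the main obstacle, namely the inverse map, and your proposed resolution is incorrect. As written, $(\nested', \nested'') \mapsto \nested' \cup \set{B \in \building}{B \ssm R \in \nested''}$ collects \emph{all} preimages of each $C \in \nested''$, which may include blocks partially meeting $R$ (hence not laminar with $R$) as well as both $C$ and $C \cup R$ simultaneously. Your canonical choice, the \emph{minimal} preimage, does not repair this: since every preimage of $C$ contains $C$, any two preimages intersect and the union axiom of \cref{def:booleanBuildingSet} makes the set of preimages closed under unions, so it guarantees a unique \emph{maximal} preimage, not a minimal one (when $C \notin \building$ there may be several incomparable minimal preimages); and when both $C$ and $C \cup R$ lie in $\building$, the minimal preimage is $C$, and lifting to $C$ places the disjoint blocks $C$ and $R$ in $\nested$ with $C \cup R \in \building$, violating the second nested-set axiom. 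A concrete instance: for the path $a - r - b$ with $R = \{r\}$ and $\nested'' = \{\{a\},\{a,b\}\}$, the minimal lifts give $\{\{a\},\{r\},\{a,b,r\}\}$, which is not nested since $\{a\} \cup \{r\} = \{a,r\} \in \building$. The correct rule is the opposite one: lift $C$ to $C \cup R$ if $C \cup R \in \building$, and to $C$ otherwise (equivalently, to the maximal preimage; note that if $C \cup R \notin \building$ then any preimage meeting $R$ would force $C \cup R \in \building$ by the union axiom, so $C$ is then the unique preimage). This is exactly the reverse map $\psi$ used in the paper's proof of the lattice generalization \cref{prop:linksLatticeNestedComplex}, specialized to the boolean case; with that lift in place of yours, the remaining case analysis you outline does go through.
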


\subsubsection{Polyhedral realizations}

Finally, we recall the known polyhedral realizations of nested complexes.
Namely, we show that the nested complex~$\nestedComplex[][\building]$ of a building set~$\building$ is realized by the nested fan~$\nestedFan[]$, or dually by the nestohedron~$\Nest(\building)$.
Examples of these fans and polytopes are illustrated in \cref{fig:nestedFans,fig:nestohedra,fig:graphicalNestedFans,fig:graphAssociahedra}.
We denote by~$(\b{e}_s)_{s \in \ground}$ the standard basis of~$\R^\ground$, and by~$\b{e}_X \eqdef \sum_{x \in X} \b{e}_x$ the characteristic vector of a subset~$X \subseteq \ground$.
We refer to~\cite{Ziegler,HenkRichterGebertZiegler1997} for basic notions on polyhedral geometry.

\begin{definition}[\cite{Postnikov, FeichtnerSturmfels, Zelevinsky}]
\label{def:nestedFan}
For a building set~$\building$, the \defn{nested fan}~$\nestedFan[]$ is the collection of cones
\[
\nestedFan[] \eqdef \set{\con[\nested]}{\nested \in \nestedComplex[][\building]},
\]
where~$\con[\nested] \eqdef \cone \bigl( \set{\b{e}_B}{B \in \nested}\bigr)$ is the cone generated by the characteristic vectors of the blocks in~$\nested$.
\end{definition}

\begin{theorem}[\cite{Postnikov, FeichtnerSturmfels, Zelevinsky}]
\label{thm:nestedFan}
For a building set~$\building$, the nested fan~$\nestedFan[]$ forms a simplicial fan of~$\R^\ground$, and thus provides a polyhedral realization of the nested complex~$\nestedComplex[][\building]$.
\end{theorem}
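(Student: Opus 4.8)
The plan is to verify the three defining properties of a simplicial fan for the collection $\nestedFan[]$: (i) every cone $\con[\nested]$ is simplicial, i.e. its generators $\set{\b{e}_B}{B \in \nested}$ are linearly independent; (ii) the face of any cone in the collection is again a cone in the collection; and (iii) any two cones intersect in a common face. Property (ii) is essentially immediate from the definition of nested set: if $\nested$ is nested on $\building$ then so is any subset of $\nested$ containing $\connectedComponents(\building)$, since both nestedness conditions in \cref{def:booleanNestedSet} are inherited by subsets. Thus $\nestedComplex[][\building]$ is genuinely a simplicial complex, and the faces of $\con[\nested]$ are exactly the $\con[\nested']$ for $\nested' \subseteq \nested$, which again lie in $\nestedFan[]$.

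For property (i), I would argue by induction on $|\nested|$, peeling off an inclusion-maximal block $B \in \nested$. By the nestedness conditions, the blocks of $\nested$ strictly contained in $B$ and maximal among those, say $B_1, \dots, B_k$, are pairwise disjoint, and since $\nested$ is nested we have $B_1 \cup \dots \cup B_k \subsetneq B$ (it cannot equal $B$, as that union would have to lie in $\building$ for $k \geq 2$, contradicting nestedness; for $k \leq 1$ it is trivially proper unless $B$ itself is a singleton, in which case $k = 0$). Hence some $s \in B$ lies in no other block of $\nested$, so $\b{e}_B$ is the only generator with nonzero $s$-coordinate; this shows $\b{e}_B$ is not in the span of the other generators, and linear independence follows by induction.

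The main obstacle is property (iii): showing that $\con[\nested_1] \cap \con[\nested_2]$ is a face of each, equivalently that it equals $\con[\nested_1 \cap \nested_2]$ and that $\nested_1 \cap \nested_2$ is again nested (hence a face of the complex). The standard approach here is to exhibit an explicit piecewise-linear "support" or "address" map $\R^\ground \to \nestedComplex[][\building]$ realizing the fan — for a point $\b{x}$ with nonnegative coordinates one reads off a canonical nested set by a greedy/decomposition procedure: iteratively take the $\building$-connected components of the support of $\b{x}$, subtract off appropriate multiples of their characteristic vectors, and recurse on the blocks where the minimum is attained. One must check that (a) this produces a nested set, (b) the point indeed lies in the relative interior of the corresponding cone, and (c) the decomposition of $\b{x}$ as a nonnegative combination of the $\b{e}_B$ over that nested set is unique. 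Property (c) together with (i) gives that each $\b{x}$ lies in exactly one cone's relative interior, which is precisely the statement that $\nestedFan[]$ is a fan covering the nonnegative orthant. I would then note that this is exactly the content of \cite{Postnikov, FeichtnerSturmfels, Zelevinsky}, where the greedy decomposition is worked out in detail (Postnikov's $\building$-forests and the associated simplicial cones), and cite those references for the routine verification of (a)–(c), since reproducing the full combinatorial bookkeeping would not be illuminating here.
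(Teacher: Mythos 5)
The paper does not prove this statement at all: it is recalled as a classical result and attributed to \cite{Postnikov, FeichtnerSturmfels, Zelevinsky}, so there is no internal proof to compare against, and your decision to delegate the hardest step to those same references is consistent with the paper's treatment. Your handling of (i) and (ii) is correct and is the standard argument: subsets of nested sets (re-adjoining $\connectedComponents(\building)$) are nested because both conditions of \cref{def:booleanNestedSet} are inherited, and the private-element argument for an inclusion-maximal block $B$ (its maximal proper sub-blocks are pairwise disjoint and cannot cover $B$, by the second nestedness condition when $k \ge 2$) does give linear independence of $\set{\b{e}_B}{B \in \nested}$ by peeling off maximal blocks. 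You also correctly identify that the real content is the intersection property (iii), which in \cite{Postnikov,Zelevinsky} is established via the greedy/triangular decomposition you describe (equivalently, via \cref{thm:nestohedron}, exhibiting the fan as the normal fan of a nestohedron). One inaccuracy to fix: with the normalization used here (\cref{def:nestedFan}), the cones are indexed by faces of $\nestedComplex[][\building]$, i.e.\ the blocks of $\connectedComponents(\building)$ are omitted, so $\nestedFan[]$ is \emph{not} complete and does not cover the nonnegative orthant --- the remark following \cref{thm:nestedFan} makes this explicit. Your "each point lies in the relative interior of exactly one cone" statement is the right criterion, but it applies to the completed fan $\nestedFan[] + \R^{\connectedComponents(\building)}$ (or to the quotient/normal-fan setting of the references), and the fan property for $\nestedFan[]$ itself is then obtained by restricting; as phrased, the covering claim contradicts the paper's convention even though it does not affect the validity of the overall strategy.
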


\begin{remark}
The nested fan~$\nestedFan[]$ is not complete nor flat (in the sense that the dimension of its affine span is larger than its intrinsic dimension).
Its projection along~$\R^{\connectedComponents(\building)} \eqdef \bigoplus_{B \in \connectedComponents(\building)} \R\b{e}_B$ is flat,
and the Minkowski sum~$\nestedFan[] + \R^{\connectedComponents(\building)} \eqdef \set{\con + \R^{\connectedComponents(\building)}}{\con \in \nestedFan[]}$ is a complete fan with the same combinatorial structure.
\end{remark}

For a building set~$\building$, we denote by~$\R^\building_+ \eqdef \set{\b{\lambda} \in \R^\building}{\lambda_B > 0 \text{ for all } B \in \building \text{ with } |B| \ge 2}$.

\begin{definition}[\cite{Postnikov, FeichtnerSturmfels}]
\label{def:nestohedron}
For a building set~$\building$ and any~$\b{\lambda} = (\lambda_B)_{B \in \building} \in \R^\building_+$, the \defn{nestohedron}~$\Nest(\building, \b{\lambda})$ is the Minkowski sum~$\sum_{B \in \building} \lambda_B \triangle_B$, where~$\triangle_B \eqdef \conv\set{\b{e}_b}{b \in B}$ denotes the face of the standard simplex~$\triangle_\ground$ corresponding to~$B$.
\begin{figure}
	\capstart
	\centerline{\includegraphics[scale=.45]{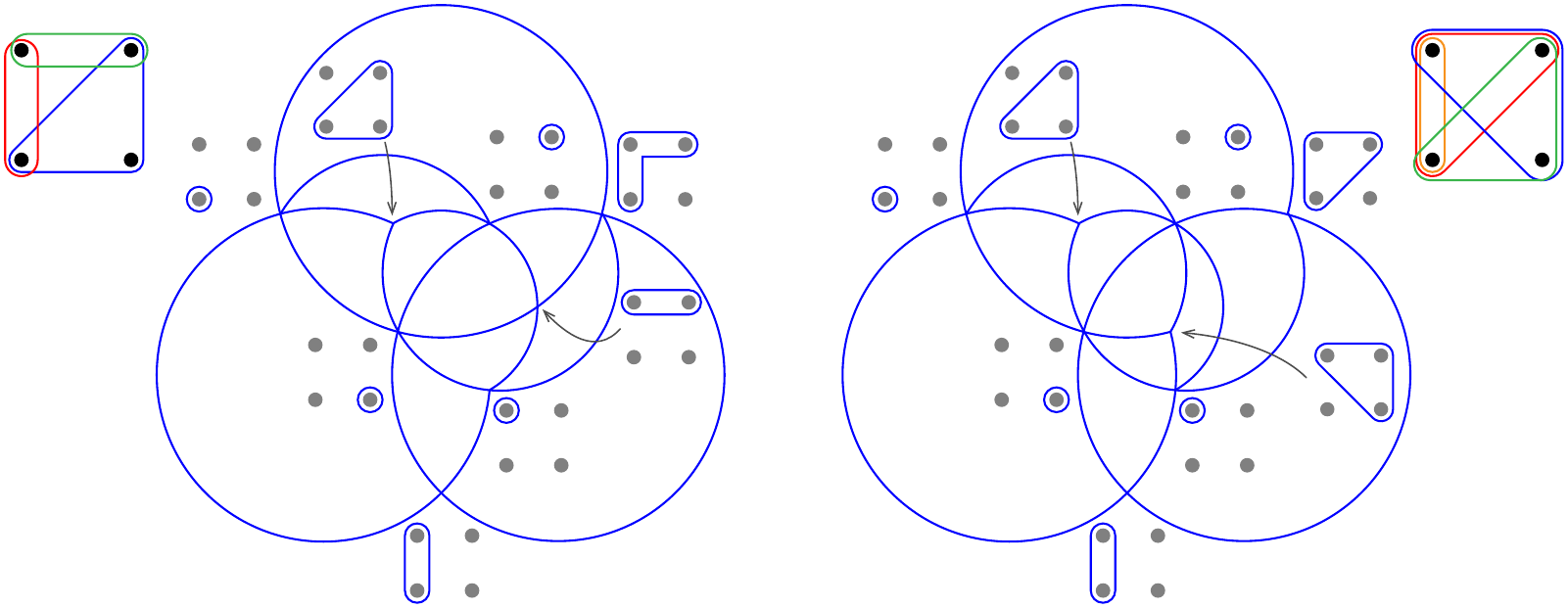}}
	\caption{Two nested fans. The rays are labeled by the corresponding blocks. As the fans are $3$-dimensional, we intersect them with the sphere and stereographically project them from the direction~$(-1,-1,-1)$. Illustration from~\cite{PadrolPilaudPoullot-deformedNestohedra}.}
	\label{fig:nestedFans}
\end{figure}
\begin{figure}
	\capstart
	\centerline{\includegraphics[scale=.45]{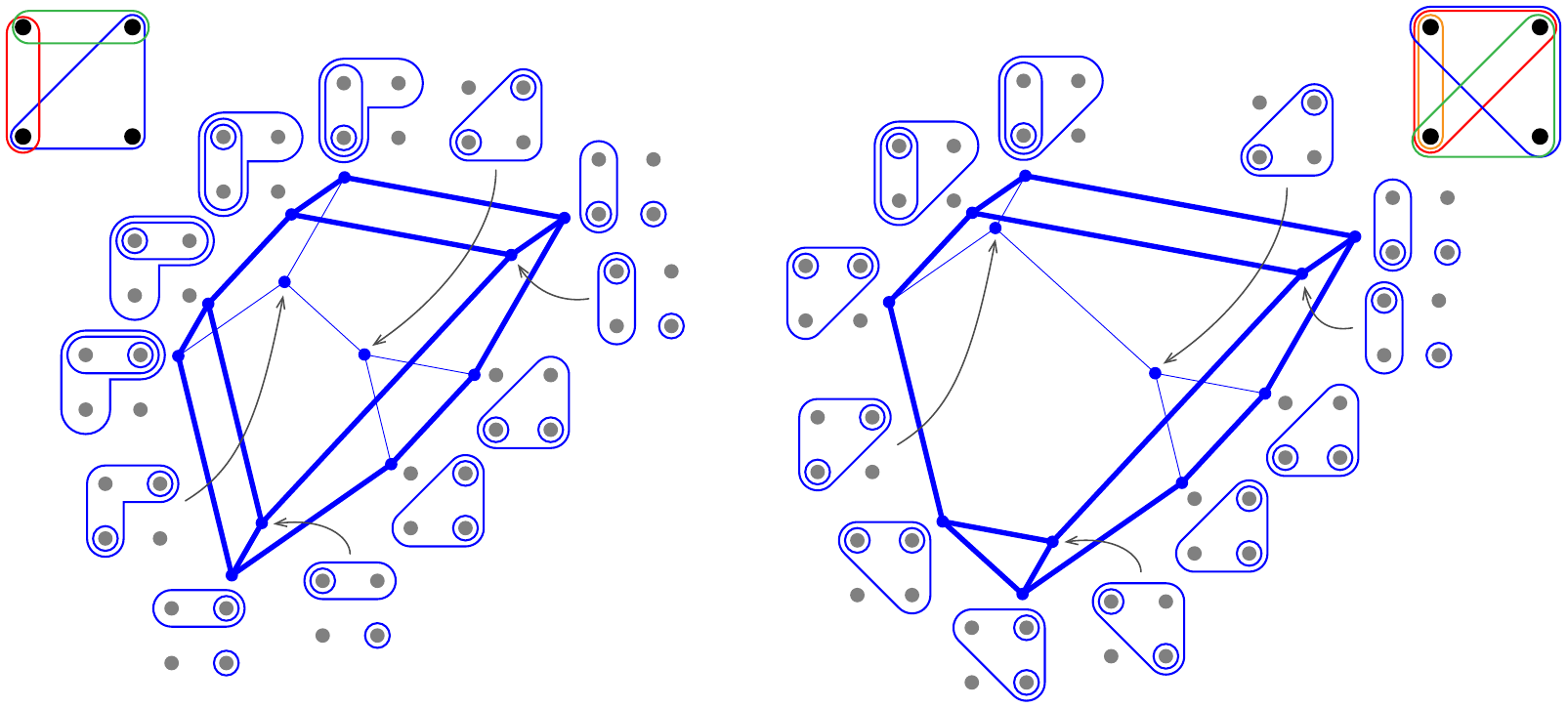}}
	\caption{Two nestohedra whose vertices are labeled by the corresponding maximal nested sets. The maximal block is omitted in all nested sets. Illustration from~\cite{PadrolPilaudPoullot-deformedNestohedra}.}
	\label{fig:nestohedra}
\end{figure}
\end{definition}

\begin{theorem}[\cite{Postnikov, FeichtnerSturmfels, Zelevinsky}]
\label{thm:nestohedron}
For a building set~$\building$ and any~$\b{\lambda} \in \R^\building_+$, the normal fan of the nestohedron~$\Nest(\building, \b{\lambda})$ is the nested fan~$\nestedFan[] + \R^{\connectedComponents(\building)}$, hence the nested complex~$\nestedComplex$ is isomorphic to the boundary complex of the polar of the nestohedron~$\Nest(\building, \b{\lambda})$.
\end{theorem}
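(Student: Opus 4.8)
Denote by $\c{N}(P)$ the normal fan of a polytope $P$. The plan is to compute $\c{N}(\Nest(\building,\b\lambda))$ directly and recognise it as the nested fan, using that the normal fan of a Minkowski sum is the common refinement of the normal fans of the summands; since a positive dilation does not change a normal fan, $\c{N}(\Nest(\building,\b\lambda))$ is the common refinement of the fans $\c{N}(\triangle_B)$, $B \in \building$. Each $\c{N}(\triangle_B)$ is a coarse braid fan on the coordinates in $B$: its maximal cones are the sets $\set{\b x \in \R^\ground}{x_b \le x_{b'} \text{ for all } b' \in B}$ for $b \in B$, with common lineality $\set{\b x}{x_b = x_{b'} \text{ for all } b, b' \in B}$. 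It is convenient first to reduce to the connected case: $\building$ is the direct sum of the connected building sets $\building_{|C}$ over $C \in \connectedComponents(\building)$ (\cref{subsec:operationsNestedComplex}), so $\Nest(\building,\b\lambda)$ is, up to translation, the product of the corresponding factor nestohedra, $\c{N}(\Nest(\building,\b\lambda))$ is the product of their normal fans, $\nestedFan[]$ and $\nestedComplex$ split as the corresponding product and join, and $\R^{\connectedComponents(\building)} = \bigoplus_{C \in \connectedComponents(\building)} \R\b e_C$ is exactly the direct sum of the lineality spaces of the factors. We may thus assume $\connectedComponents(\building) = \{\ground\}$, so that $\R^{\connectedComponents(\building)} = \R\one$ is the common lineality of all the $\c{N}(\triangle_B)$.

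The combinatorial heart is the following claim: for every maximal nested set $\nested$ on $\building$ and every $B \in \building$, some element $\ell_B(\nested) \in B$ lies in no block of $\nested$ that is properly contained in $B$. Indeed, the maximal blocks $B_1, \dots, B_k$ of $\nested$ properly contained in $B$ are pairwise incomparable, hence pairwise disjoint; were their union all of $B$, we would obtain $k \ge 2$ pairwise disjoint blocks of $\nested$ whose union $B$ lies in $\building$, contradicting the second nestedness axiom of \cref{def:booleanNestedSet} (the cases $k \le 1$ being absurd since $B_i \subsetneq B$ and $B \ne \varnothing$); so $B_1 \cup \dots \cup B_k \subsetneq B$ and any element of $B$ outside it can serve as $\ell_B(\nested)$. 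Granting the claim, take a point $\b x = \sum_{B' \in \nested} \mu_{B'} \b e_{B'}$ in the relative interior of $\con[\nested]$ (all $\mu_{B'} > 0$); then $x_b = \sum_{B' \in \nested,\, b \in B'} \mu_{B'}$, and for $b \in B$ this equals $\sum_{B' \in \nested,\, B' \supseteq B} \mu_{B'}$ (independent of $b$) plus a nonnegative quantity that vanishes at $b = \ell_B(\nested)$. Hence $x_{\ell_B(\nested)}$ is the minimum of $(x_b)_{b \in B}$, which places $\b x$ — and therefore all of $\con[\nested] + \R\one$ — in a single maximal cone of $\c{N}(\triangle_B)$. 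Letting $B$ range over $\building$, $\con[\nested] + \R\one$ lies in a single maximal cone of $\c{N}(\Nest(\building,\b\lambda))$; since the cones $\con[\nested]$ together with their faces form the complete fan $\nestedFan[] + \R^{\connectedComponents(\building)}$ (\cref{thm:nestedFan} and the remark following it), this exhibits $\nestedFan[] + \R^{\connectedComponents(\building)}$ as a refinement of $\c{N}(\Nest(\building,\b\lambda))$.

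It remains to promote this refinement to an equality, i.e. to prevent several maximal nested sets from collapsing onto the normal cone of a single vertex. One clean route is to invoke the classical facts that $\Nest(\building,\b\lambda)$ is a simple polytope whose facets are indexed by $\building \ssm \connectedComponents(\building)$, so that $\c{N}(\Nest(\building,\b\lambda))$ is a complete simplicial fan with the same rays $\b e_B$ ($B \in \building \ssm \connectedComponents(\building)$) as $\nestedFan[]$; a complete fan refining a complete simplicial fan and sharing its rays must coincide with it. A more self-contained alternative is to read off the vertex $\vertexNest = \sum_{B \in \building} \lambda_B\, \b e_{\ell_B(\nested)}$ selected by $\con[\nested]$ and to check that $\nested \mapsto \vertexNest$ is injective by recovering $\nested$ from the supports of the summands, as in \cite{Postnikov, Zelevinsky}; since the cones $\con[\nested] + \R\one$ cover $\R^\ground$, injectivity forces each of them to be an entire normal cone of $\Nest(\building,\b\lambda)$. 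Either way $\c{N}(\Nest(\building,\b\lambda)) = \nestedFan[] + \R^{\connectedComponents(\building)}$, and dualising identifies the nested complex $\nestedComplex$ with the boundary complex of the polar dual of $\Nest(\building,\b\lambda)$. I expect this last step to be the genuine obstacle: the refinement direction follows immediately from the one-line nestedness argument above, but ruling out that distinct maximal nested sets give the same vertex requires either importing or reproving the facet and vertex descriptions of nestohedra — the injectivity bookkeeping in the explicit formula, or the simplicity of $\Nest(\building,\b\lambda)$ together with the computation of its facet normals.
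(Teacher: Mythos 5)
The paper itself gives no proof of this theorem (it is recalled with citations to Postnikov, Feichtner--Sturmfels and Zelevinsky), so your proposal has to stand on its own, and its combinatorial heart has a genuine flaw. Your claim (some $\ell_B(\nested)\in B$ lies in no block of $\nested$ properly contained in $B$) is correct but too weak, and the deduction drawn from it is false: a block $B'\in\nested$ containing an element of $B$ need not contain $B$ nor be contained in $B$ --- it can straddle $B$ --- so the decomposition ``$x_b=\sum_{B'\supseteq B}\mu_{B'}$ plus a nonnegative quantity vanishing at $b=\ell_B(\nested)$'' does not hold. Concretely, take $\ground=\{1,2,3\}$, $\building=2^\ground\ssm\{\varnothing\}$, the maximal nested set $\nested=\{1,12,123\}$ and $B=23$. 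No block of $\nested$ is properly contained in $B$, so your claim allows $\ell_B(\nested)=2$; yet in the relative interior of $\con[\nested]$ one has $x_2=\mu_{12}+\mu_{123}>\mu_{123}=x_3$, so $x_{\ell_B(\nested)}$ is not the minimum of $(x_b)_{b\in B}$ and the extra term $\mu_{12}$ comes precisely from the straddling block $12$. Hence the step ``$\con[\nested]+\R\one$ lies in a single maximal cone of $\c{N}(\triangle_B)$'' is not justified as written.

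The lemma you actually need is stronger: for every $B\in\building$ there is $b_0\in B$ with $B\subseteq\block{b_0}{\nested}$, \ie every block of $\nested$ containing $b_0$ contains $B$; then each generator $\b{e}_{B'}$ of $\con[\nested]$ satisfies $(\b{e}_{B'})_{b_0}\le(\b{e}_{B'})_b$ for all $b\in B$, which is exactly what confines the cone to one maximal cone of $\c{N}(\triangle_B)$. To prove it, let $M$ be the inclusion minimal block of $\nested$ containing $B$ (the blocks of $\nested$ containing $B$ pairwise intersect, hence form a chain), and suppose every $b\in B$ lay in a maximal block of $\nested$ properly contained in $M$: if $B$ meets only one such child, that child contradicts the minimality of $M$; if it meets at least two, adding $B$ to them one at a time via the building set axiom and using $B\subseteq\bigcup_i B_i$ shows that the union of the children met by $B$ lies in $\building$, contradicting the second condition of \cref{def:booleanNestedSet}. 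With this repair the refinement direction goes through. Separately, be aware that your ``promotion to equality'' is not self-contained: both routes import either the simplicity and facet description of $\Nest(\building,\b{\lambda})$ or the vertex formula of \cref{prop:vertexDescriptionNestohedron} together with its injectivity, which is essentially the content of the theorem being proved; that is acceptable as a literature-based argument (as the paper itself only cites the result), but it should be flagged as such rather than presented as a proof from scratch.
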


Note that for this statement, it is important that~$\lambda_B > 0$ for~$|B| \ge 2$, while~$\lambda_B$ is irrelevant for~$|B| = 1$ (as it corresponds to translations in the Minkowski sum), hence the definition of~$\R^\building_+$.
We will also need the explicit vertex and facet descriptions of the nestohedron~$\Nest(\building, \b{\lambda})$.

\begin{proposition}
\label{prop:vertexDescriptionNestohedron}
For any~$\b{\lambda} \in \R^\building_+$, the vertex of the nestohedron~$\Nest(\building, \b{\lambda})$ corresponding to a maximal nested set~$\nested$ is
\[
\vertexNest = \sum_{s \in \ground} \sum_{\substack{B \in \building \\ s \in B \subseteq \block{s}{\nested}}} \lambda_B \b{e}_s,
\]
where~$\block{s}{\nested}$ denotes the inclusion minimal block of~$\nested$ containing~$s$.
\end{proposition}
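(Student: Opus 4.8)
The strategy is the standard one for computing vertices of a Minkowski sum of dilated simplices: a vertex of $\Nest(\building, \b{\lambda}) = \sum_{B \in \building} \lambda_B \simplex_B$ is obtained by choosing, for each generic linear functional $\b{c} \in \R^\ground$, the vertex of each summand $\lambda_B \simplex_B$ that maximizes $\b{c}$, and then summing. Since $\simplex_B = \conv\set{\b{e}_b}{b \in B}$, the $\b{c}$-maximal vertex of $\lambda_B \simplex_B$ is $\lambda_B \b{e}_{b(B)}$ where $b(B) = \arg\max_{b \in B} c_b$, so every vertex of the nestohedron has the form $\sum_{B \in \building} \lambda_B \b{e}_{b(B)}$ for some selection function $B \mapsto b(B) \in B$. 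I would first recall (citing the already-quoted \cref{thm:nestohedron}, or directly Postnikov/Feichtner--Sturmfels) that the vertices of $\Nest(\building,\b{\lambda})$ are in bijection with the maximal nested sets, and then identify which selection function corresponds to the maximal nested set $\nested$.

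The key combinatorial input is the structure of maximal nested sets: for a maximal nested set $\nested$ on $\building$, the blocks of $\nested$ form a forest under inclusion whose roots are the connected components $\connectedComponents(\building)$, and for each $s \in \ground$ there is a well-defined inclusion-minimal block $\block{s}{\nested} \in \nested$ containing $s$. The selection function attached to $\nested$ should be: $b(B) \defeq$ the unique element $s \in B$ such that $\block{s}{\nested}$ is the inclusion-minimal block of $\nested$ that is contained in $B$ — equivalently, $B \mapsto b(B)$ picks out the element of $B$ lying in the "smallest $\nested$-block inside $B$." One must check this is well-defined: among the blocks of $\nested$ contained in $B$, the maximal ones are pairwise disjoint, and if there were two of them their union would have to lie in $\building$ (as it sits inside $B \in \building$, using the building-set axiom and maximality of $\nested$), contradicting the second nested-set axiom; hence there is a unique maximal $\nested$-block $B^\star \subseteq B$, and $b(B)$ is the unique $s \in B^\star \ssm \bigcup\set{B' \in \nested}{B' \subsetneq B^\star}$... more cleanly, $b(B)$ is the unique element of $B$ whose minimal $\nested$-block $\block{s}{\nested}$ equals $B^\star$. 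Summing $\lambda_B \b{e}_{b(B)}$ over all $B \in \building$ and reorganizing the sum by the value $s = b(B)$, one sees that $\lambda_B$ contributes to the $\b{e}_s$-coordinate exactly when $s \in B$ and $B^\star = \block{s}{\nested}$, and the latter condition on $B \in \building$ with $s \in B$ is equivalent to $s \in B \subseteq \block{s}{\nested}$ — because $s \in B$ forces $\block{s}{\nested} \subseteq B^\star$, while $B^\star = \block{s}{\nested}$ forces $B \subseteq \block{s}{\nested}$ (any $\nested$-block inside $B$ is inside $B^\star$). This yields precisely the claimed formula $\vertexNest = \sum_{s} \sum_{s \in B \subseteq \block{s}{\nested}} \lambda_B \b{e}_s$.

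The remaining task is to produce, for each maximal nested set $\nested$, an actual generic functional $\b{c}$ realizing this selection function, to confirm that the vertex we wrote down really is the $\nested$-vertex rather than some other vertex (the bijection with maximal nested sets guarantees the right vertex count, but we still want to match labels correctly). I would build $\b{c}$ by choosing values $c_s$ increasing along the forest structure of $\nested$: assign to $s$ a value that is strictly larger than $c_{s'}$ for every $s'$ with $\block{s'}{\nested} \supsetneq \block{s}{\nested}$, i.e.\ make $c$ larger on deeper (smaller) blocks, breaking remaining ties generically; then for each $B \in \building$ the $\b{c}$-largest element of $B$ lies in the deepest $\nested$-block met by $B$, which is $B^\star$, recovering $b(B)$ as above. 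I expect the main obstacle to be exactly this bookkeeping step — verifying well-definedness of $B \mapsto b(B)$ (the uniqueness of the maximal $\nested$-block inside $B$) and checking that the equivalence "$B^\star = \block{s}{\nested}$ and $s \in B$" $\iff$ "$s \in B \subseteq \block{s}{\nested}$" holds in both directions; the Minkowski-sum and genericity arguments are routine. Alternatively, one can bypass the explicit functional by an induction on $|\ground|$ using \cref{prop:linksNestedComplex}: restricting/contracting at a maximal proper block $B^\star \in \nested$ decomposes $\Nest(\building,\b{\lambda})$ combinatorially, and the vertex formula is compatible with the corresponding splitting of the index sets $\set{B \in \building}{s \in B \subseteq \block{s}{\nested}}$, which may be the cleanest way to present the argument.
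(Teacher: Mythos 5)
Your overall strategy (decompose each vertex of the Minkowski sum $\sum_{B\in\building}\lambda_B\triangle_B$ as $\sum_B\lambda_B\b{e}_{b(B)}$ for a selection function coming from a generic functional, then identify the selection attached to $\nested$) is the standard route, and since the paper only recalls this statement from the literature this is the natural thing to do; but your identification of the selection function has the containments reversed, and this is a genuine error, not a cosmetic one. The formula forces $\lambda_B$ onto the coordinate $s$ with $s\in B\subseteq\block{s}{\nested}$, i.e.\ $b(B)$ must be the ``root'' of the inclusion-\emph{minimal} block of $\nested$ \emph{containing} $B$ (the unique element of that block lying in no smaller block of $\nested$ --- the boolean analogue of \cref{prop:rank1,coro:rank1}; that this root lies in $B$ follows because otherwise $B$ would be covered by at least two children of that block, and the building-set union axiom would put their union in $\building$, contradicting nestedness). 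You instead take $B^\star$ to be the \emph{maximal} $\nested$-block \emph{contained in} $B$. That object need not exist uniquely: in the paper's own running example (\cref{exm:booleanBuildingSet,exm:nested}), for $B=1456$ and $\nested_\circ=\{3,4,5,25,12345,123456\}$ both $\{4\}$ and $\{5\}$ are maximal $\nested_\circ$-blocks inside $B$, and their union $45\notin\building_\circ$; your uniqueness argument (``their union would have to lie in $\building$ as it sits inside $B$'') is a non sequitur, since disjoint blocks of $\nested$ inside a common block of $\building$ are exactly what the second nested-set axiom allows. Even after forcing a choice, your rule would place $\lambda_{1456}$ on coordinate $4$ or $5$, whereas the vertex $\vertexNest[\nested_\circ]$ displayed in the paper has $\lambda_{1456}$ on coordinate $6$, which is what the correct rule gives since $\block{6}{\nested_\circ}=123456\supseteq 1456$.

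The later steps inherit the same reversal: the claimed equivalence ``$s\in B$ and $B^\star=\block{s}{\nested}$ iff $s\in B\subseteq\block{s}{\nested}$'' fails in both directions (e.g.\ $s\in B$ does not force $\block{s}{\nested}\subseteq B^\star$), and your certifying functional, chosen larger on \emph{deeper} elements, attains its maximum over $B=1456$ at $4$ or $5$, not $6$, so it supports a different vertex of $\Nest(\building_\circ,\b{\lambda})$ than the one attached to $\nested_\circ$. The repair is local: set $b(B)$ equal to the root of the minimal $\nested$-block containing $B$, and take $\b{c}$ \emph{decreasing} with depth in the forest $\nested$ (for instance $c_s=-\bigl|\set{N\in\nested}{s\in N}\bigr|$, generically perturbed); then the $\b{c}$-maximum over each $\triangle_B$ is attained at $\b{e}_{b(B)}$, and regrouping $\sum_B\lambda_B\b{e}_{b(B)}$ according to $s=b(B)$ yields exactly $\sum_{s}\sum_{s\in B\subseteq\block{s}{\nested}}\lambda_B\b{e}_s$, because ``$s\in B$ and $B\subseteq\block{s}{\nested}$'' is equivalent to ``$\block{s}{\nested}$ is the minimal $\nested$-block containing $B$ and $s$ is its root''. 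Your alternative suggestion of an induction on links via \cref{prop:linksNestedComplex} would also work, but as written it is only a sketch.
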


\begin{proposition}
\label{prop:facetDescriptionNestohedron}
For any~$\b{\lambda} \in \R^\building_+$, the nestohedron~$\Nest(\building, \b{\lambda})$ is given by the equalities~$g_B(\b{x}) = 0$ for all~$B \in \connectedComponents(\building)$ and the inequalities~$g_B(\b{x}) \ge 0$ for all~$B \in \building$, where
\[
g_B(\b{x}) \eqdef \bigdotprod{\sum_{b \in B} \b{e}_b}{\b{x}} - \sum_{\substack{B' \in \building \\ B' \subseteq B}} \lambda_{B'}.
\]
\end{proposition}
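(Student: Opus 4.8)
The plan is to derive the facet description directly from the Minkowski-sum definition via the standard support-function computation, using \cref{thm:nestohedron} to know in advance which inequalities are facet-defining. First I would recall that the support function of a Minkowski sum is the sum of the support functions, so for a direction $\b{c} \in \R^\ground$ one has $h_{\Nest(\building,\b{\lambda})}(\b{c}) = \sum_{B \in \building} \lambda_B \, h_{\triangle_B}(\b{c}) = \sum_{B \in \building} \lambda_B \max_{b \in B} c_b$. Since each $\triangle_B$ lies in the affine hyperplane $\set{\b{x}}{\sum_{b\in B} x_b = 1}$, for $B \in \connectedComponents(\building)$ the whole polytope lies in $\set{\b{x}}{\dotprod{\sum_{b \in B}\b{e}_b}{\b{x}} = \sum_{B' \subseteq B}\lambda_{B'}}$ — simply because every $B' \in \building$ is either contained in $B$ or disjoint from it (as $B$ is a maximal block and $\building$ is a building set, two blocks meeting would force their union into $\building$), so the coordinate sum $\sum_{b \in B} x_b$ is constant equal to $\sum_{B' \subseteq B}\lambda_{B'}$ on the Minkowski sum. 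This gives the equalities $g_B(\b{x}) = 0$ for $B \in \connectedComponents(\building)$.

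Next I would establish the inequalities $g_B(\b{x}) \ge 0$ for every $B \in \building$. For a fixed block $B$, testing the direction $\b{c} = -\sum_{b \in B}\b{e}_b$ (i.e.\ $c_b = -1$ for $b \in B$ and $c_b = 0$ otherwise) in the support function: for each $B' \in \building$, $\max_{b' \in B'}c_{b'}$ equals $-1$ if $B' \subseteq B$ and $0$ otherwise (here I use that $c_{b'} \le 0$ always, and $c_{b'} = 0$ for some $b' \in B'$ as soon as $B' \not\subseteq B$). Hence $h_{\Nest(\building,\b{\lambda})}(-\sum_{b\in B}\b{e}_b) = -\sum_{B' \subseteq B}\lambda_{B'}$, which translates to $\dotprod{\sum_{b \in B}\b{e}_b}{\b{x}} \ge \sum_{B' \subseteq B}\lambda_{B'}$ for every $\b{x} \in \Nest(\building,\b{\lambda})$, i.e.\ $g_B(\b{x}) \ge 0$. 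So all listed equalities and inequalities are valid; it remains to check they cut out exactly $\Nest(\building,\b{\lambda})$.

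For the converse inclusion I would invoke \cref{thm:nestohedron}: the normal fan of $\Nest(\building,\b{\lambda})$ (after adding $\R^{\connectedComponents(\building)}$) is the nested fan, whose facet-defining inequalities correspond precisely to the rays of the nested fan. The rays of $\nestedFan[] + \R^{\connectedComponents(\building)}$ in the quotient by $\R^{\connectedComponents(\building)}$ are the images of $\b{e}_B$ for $B \in \building \ssm \connectedComponents(\building)$ (the singleton $\{s\}$ with $s$ in no larger block and the $B$ of size $\ge 2$), so every facet of the nestohedron is supported by a hyperplane whose outer normal is a negative multiple of $\sum_{b \in B}\b{e}_b$ for some $B \in \building$, modulo the lineality $\R^{\connectedComponents(\building)}$. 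Since we have already shown $g_B(\b{x}) \ge 0$ is valid for each such $B$, and the number of these inequalities matches the number of facets, and on each facet's supporting hyperplane the corresponding $g_B$ vanishes (the support-function computation above is tight: the face of $\Nest$ in direction $-\sum_{b \in B}\b{e}_b$ is nonempty and lies on $\{g_B = 0\}$), these are exactly the facet inequalities. Alternatively, to make the argument self-contained one can verify directly that any $\b{x}$ satisfying all the equalities and inequalities lies in $\Nest(\building,\b{\lambda})$ by checking, for every direction $\b{c}$, that $\dotprod{\b{c}}{\b{x}} \le h_{\Nest}(\b{c})$: decompose $\b{c}$ along the chain of blocks of a maximal nested set refining the ``level sets'' of $\b{c}$, and sum the inequalities $g_B(\b{x}) \ge 0$ with appropriate nonnegative coefficients together with the equalities — this is the classical argument reducing to \cref{prop:vertexDescriptionNestohedron}, showing every vertex $\vertexNest$ satisfies all the inequalities with equality on the nested set $\nested$.

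The main obstacle is the converse inclusion, i.e.\ confirming that the listed inequalities are not merely valid but \emph{sufficient} (equivalently, that they are all facet-defining and there are no missing facets). The cleanest route is to lean on \cref{thm:nestohedron} for the combinatorial type of the normal fan so that counting facets suffices; the more hands-on route requires the Minkowski-decomposition/telescoping argument matching $\b{x}$ against an arbitrary linear functional, where the bookkeeping of which blocks of a nested set contribute to which coordinate (precisely the quantity $\block{s}{\nested}$ of \cref{prop:vertexDescriptionNestohedron}) is the delicate part. I would present the short proof via \cref{thm:nestohedron,prop:vertexDescriptionNestohedron}: each $g_B \ge 0$ is valid, each vertex $\vertexNest$ satisfies $g_B(\vertexNest) = 0$ exactly when $B \in \nested$, so the inequalities $\{g_B \ge 0\}_{B \in \building}$ together with the equalities $\{g_B = 0\}_{B \in \connectedComponents(\building)}$ have the nestohedron's vertices among their feasible points and the right face lattice, hence define it.
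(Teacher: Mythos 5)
The paper itself gives no proof of \cref{prop:facetDescriptionNestohedron}: it is recalled in the background section as a classical fact in the spirit of \cite{Postnikov,FeichtnerSturmfels,Zelevinsky}, so there is no internal argument to compare yours with. Your support-function proof is the standard one and is essentially correct: the validity of the equalities for $B\in\connectedComponents(\building)$ and of the inequalities $g_B(\b{x})\ge 0$ is exactly as you compute (singleton blocks, for which $\lambda_B$ may be negative, are harmless since those Minkowski summands are points), and the converse inclusion does follow from \cref{thm:nestohedron} along the lines you sketch.

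Two steps should be tightened. First, counting facets is not the right mechanism (the listed inequalities outnumber the facets: those indexed by $B\in\connectedComponents(\building)$ are implied by the equalities), and your parenthetical identification of the relevant rays is off: the facets correspond to all $B\in\building\ssm\connectedComponents(\building)$, which includes the singletons contained in a larger block and excludes the maximal blocks. What the argument really needs is only that each facet of $\Nest(\building,\b{\lambda})$ has outer normal $-\sum_{b\in B}\b{e}_b$ modulo the lineality space for some such $B$, so that your tight valid inequality $g_B(\b{x})\ge 0$ coincides with the facet-defining one. Second, to conclude that the polyhedron you define is contained in $\Nest(\building,\b{\lambda})$ you must also know that the equalities cut out exactly the affine hull of the nestohedron; otherwise a feasible point could lie outside that affine hull and nothing you wrote excludes it. This is a one-line dimension count (the equalities are independent because the blocks of $\connectedComponents(\building)$ are pairwise disjoint, and $\dim\Nest(\building,\b{\lambda})=|\ground|-|\connectedComponents(\building)|$), but it must be said. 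Your closing alternative (vertices are feasible and the face lattice is right, hence done) only re-establishes the easy inclusion of $\Nest(\building,\b{\lambda})$ in the polyhedron you define; keep the normal-fan route, or carry out the classical telescoping argument against an arbitrary linear functional in full.
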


\begin{remark}
\label{rem:truncations}
Note that adding the inequalities of \cref{prop:facetDescriptionNestohedron} in reverse inclusion order of~$\building$ constructs the nestohedron~$\Nest(\building_\circ, \b{\lambda})$ by a sequence of face truncations (see \cref{exm:stellarSubdivisionTruncation}) of a simplex.
We will extend this perspective to all oriented matroids in \cref{sec:orientedMatroidRealizations}.
\end{remark}

\begin{example}
For the maximal building set~$2^\ground \ssm \{\varnothing\}$ (resp.~minimal building set~$\set{\{s\}}{s \in \ground}$) considered in \cref{exm:minMaxBooleanBuildingSet,exm:orderComplexBoolean},  the (projection of the) nested fan is the \defn{braid fan} defined the hyperplanes~$\set{\b{x} \in \R^\ground}{x_s = x_t}$ for all~$s \ne t \in \ground$ (resp.~the simplex fan), and the nestohedron is a permutahedron (resp.~a simplex).
\end{example}

\begin{example}
\label{exm:graphAssociahedron}
\begin{figure}
	\capstart
	\centerline{\includegraphics[scale=.45]{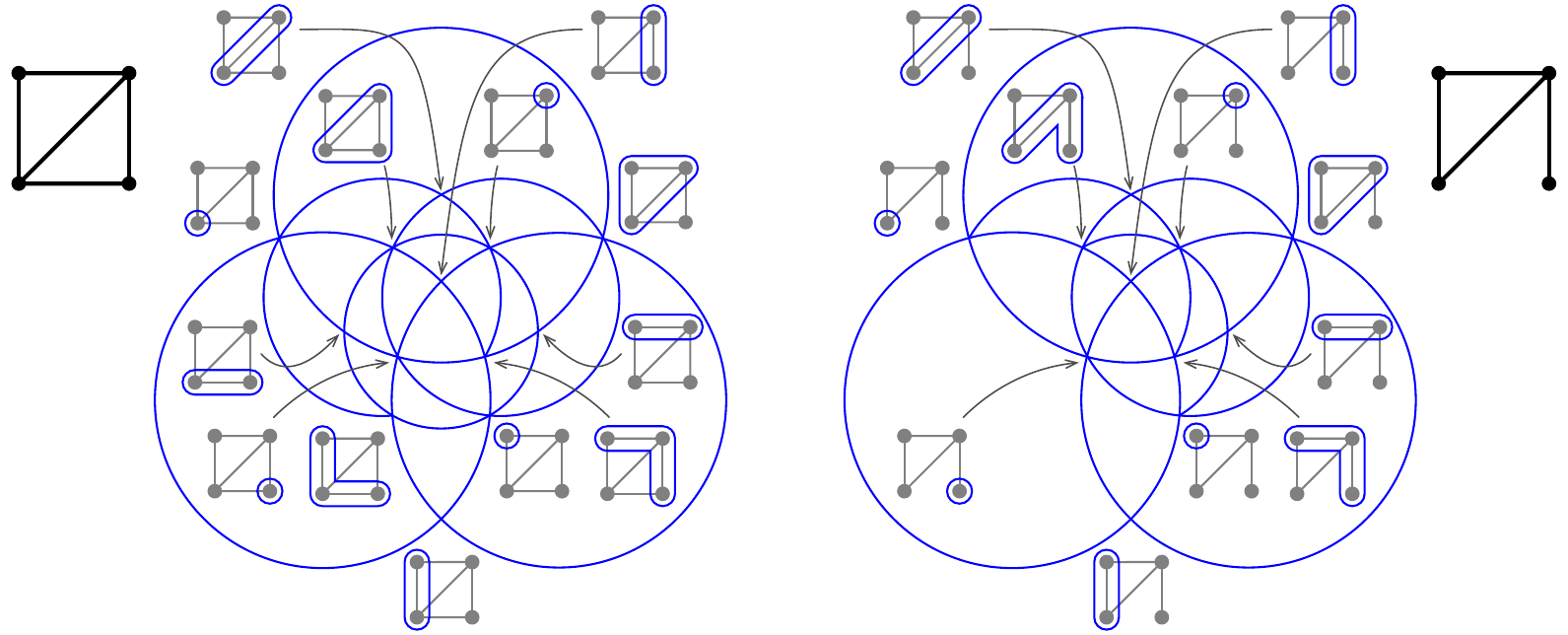}}
	\caption{Two graphical nested fans. The rays are labeled by the corresponding tubes. As the fans are $3$-dimensional, we intersect them with the sphere and stereographically project them from the direction~$(-1,-1,-1)$. Illustration from~\cite{PadrolPilaudPoullot-deformedNestohedra}.}
	\label{fig:graphicalNestedFans}
\end{figure}
\begin{figure}
	\capstart
	\centerline{\includegraphics[scale=.45]{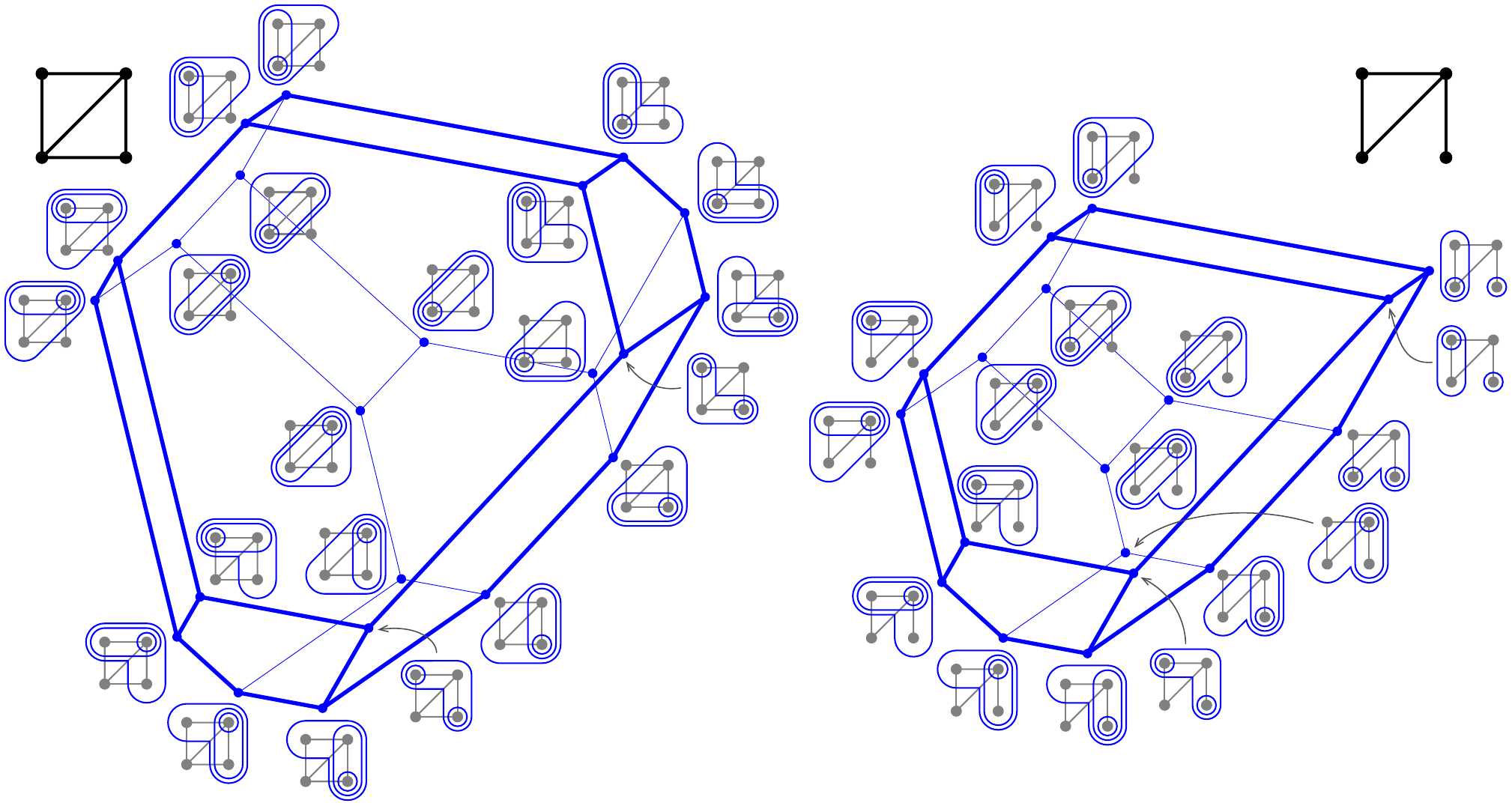}}
	\caption{Two graph associahedra, whose vertices are labeled by the corresponding maximal tubings. The maximal tube is omitted in all tubings. Illustration from~\cite{PadrolPilaudPoullot-deformedNestohedra}.}
	\label{fig:graphAssociahedra}
\end{figure}
\enlargethispage{.3cm}
Following on \cref{exm:graphicalBuildingSet,exm:graphicalNested,exm:restrictionContractionGraphicalBuildingSet}, for a graph~$\graphG$ on~$\ground$, the nested fan of~$\tubes$ is called \defn{graphical nested fan}, and the nestohedron of~$\tubes$ is the \defn{graph associahedron} of~$\graphG$, introduced in~\cite{CarrDevadoss,Devadoss}.
For instance, the associahedron of the complete graph is a permutahedron, the associahedron of a path graph is an associahedron, and the associahedron of a cycle graph is a cyclohedron.
See \cref{fig:permutahedronAssociahedronCyclohedron}.
Two other examples are illustrated \mbox{in \cref{fig:graphicalNestedFans,fig:graphAssociahedra}.}
\begin{figure}
	\capstart
	\centerline{\includegraphics[scale=.45]{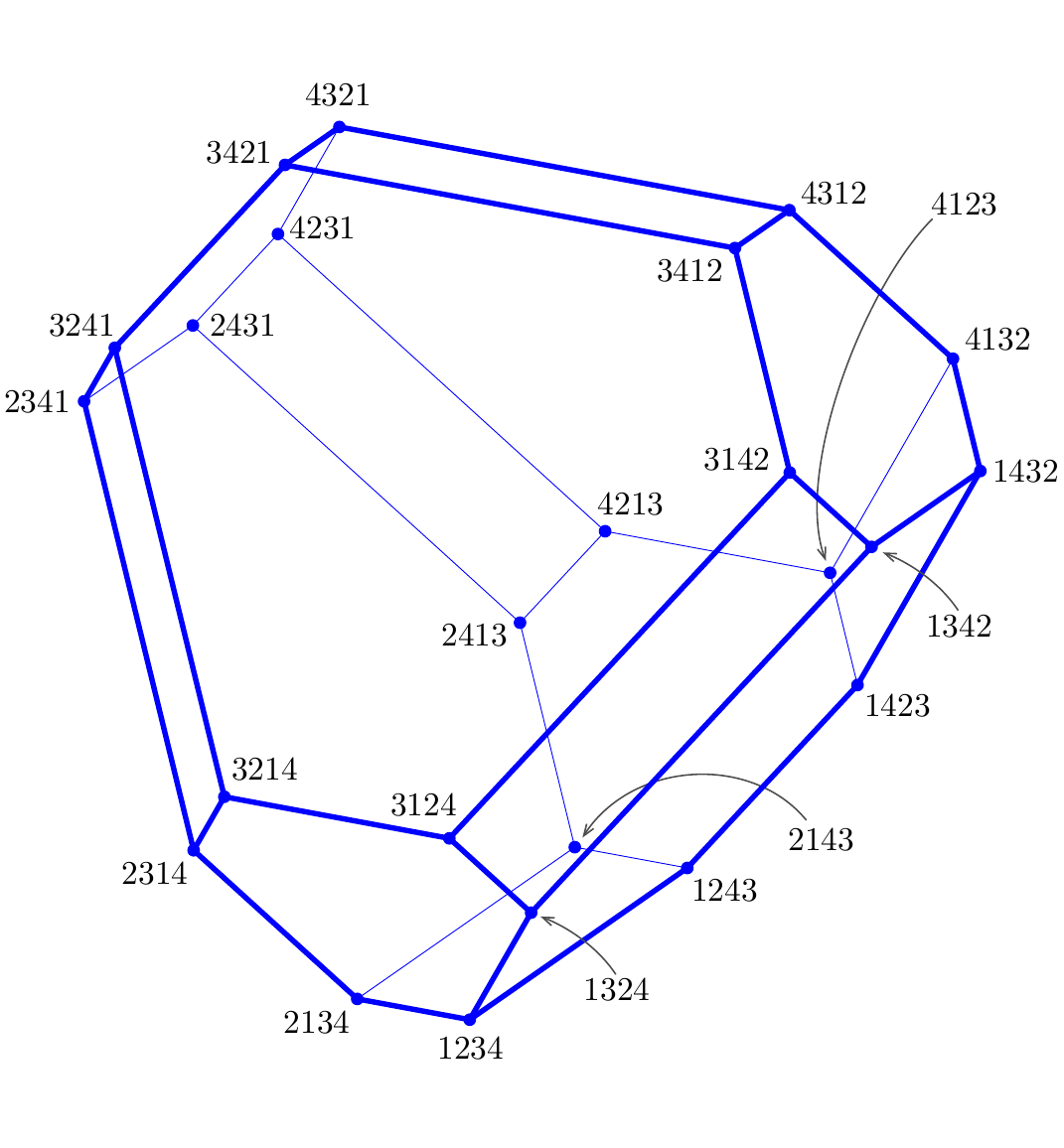} \quad \includegraphics[scale=.45]{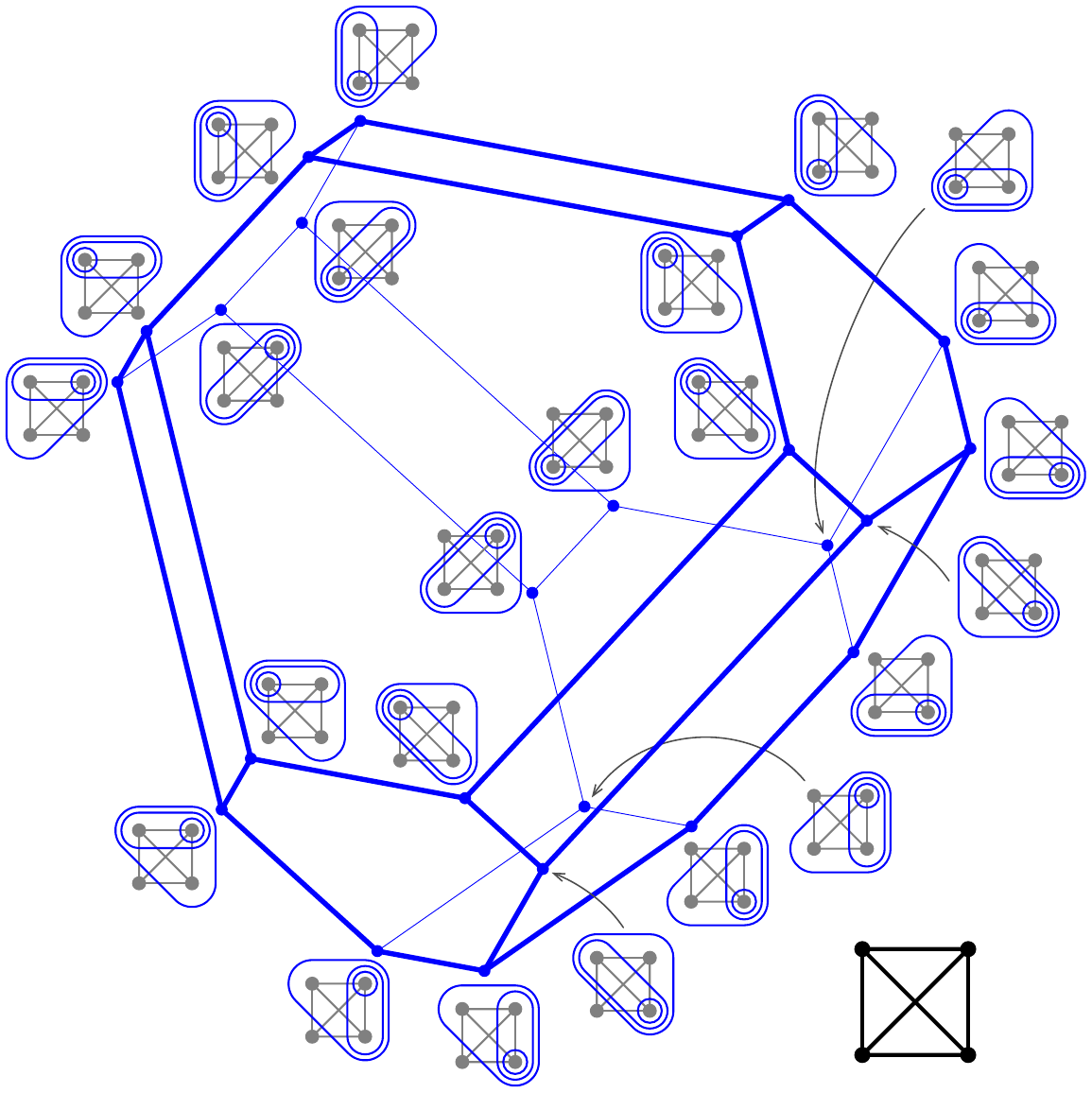}}
	\bigskip
	\centerline{\includegraphics[scale=.45]{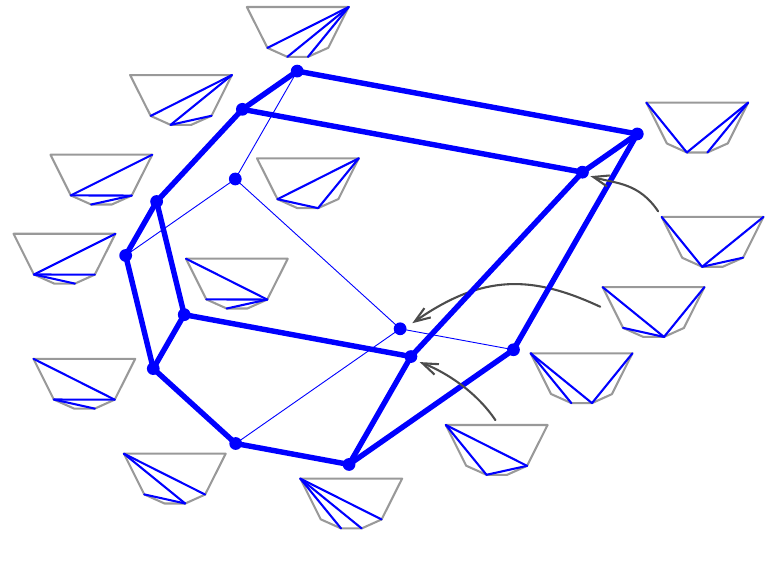} \quad \includegraphics[scale=.45]{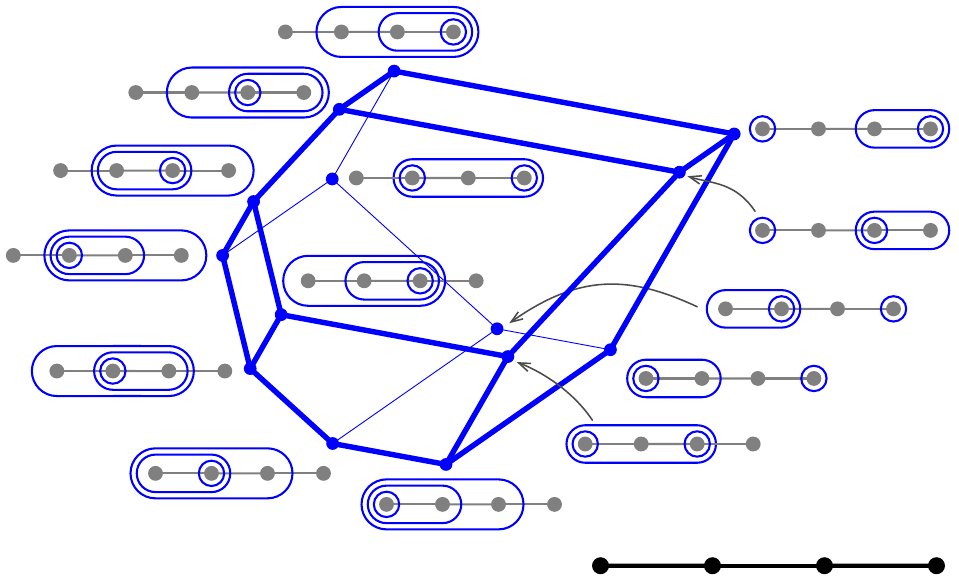}}
	\bigskip
	\centerline{\includegraphics[scale=.45]{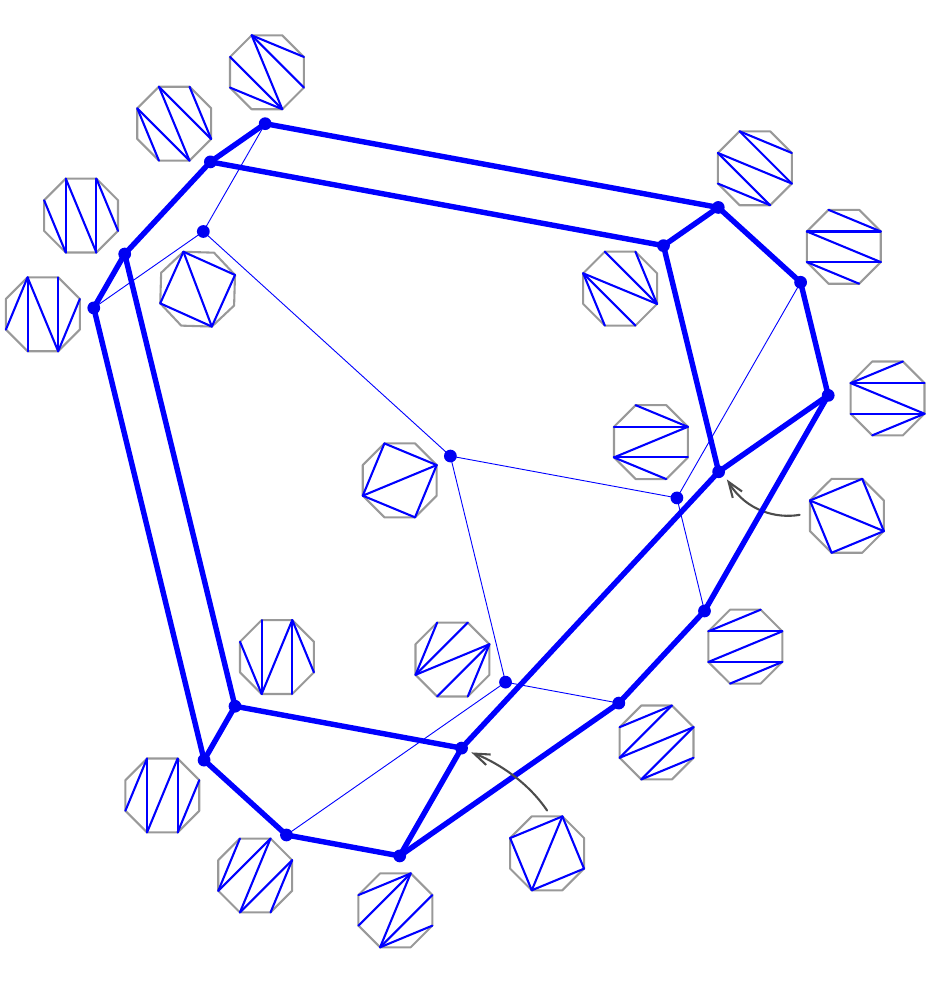} \quad \includegraphics[scale=.45]{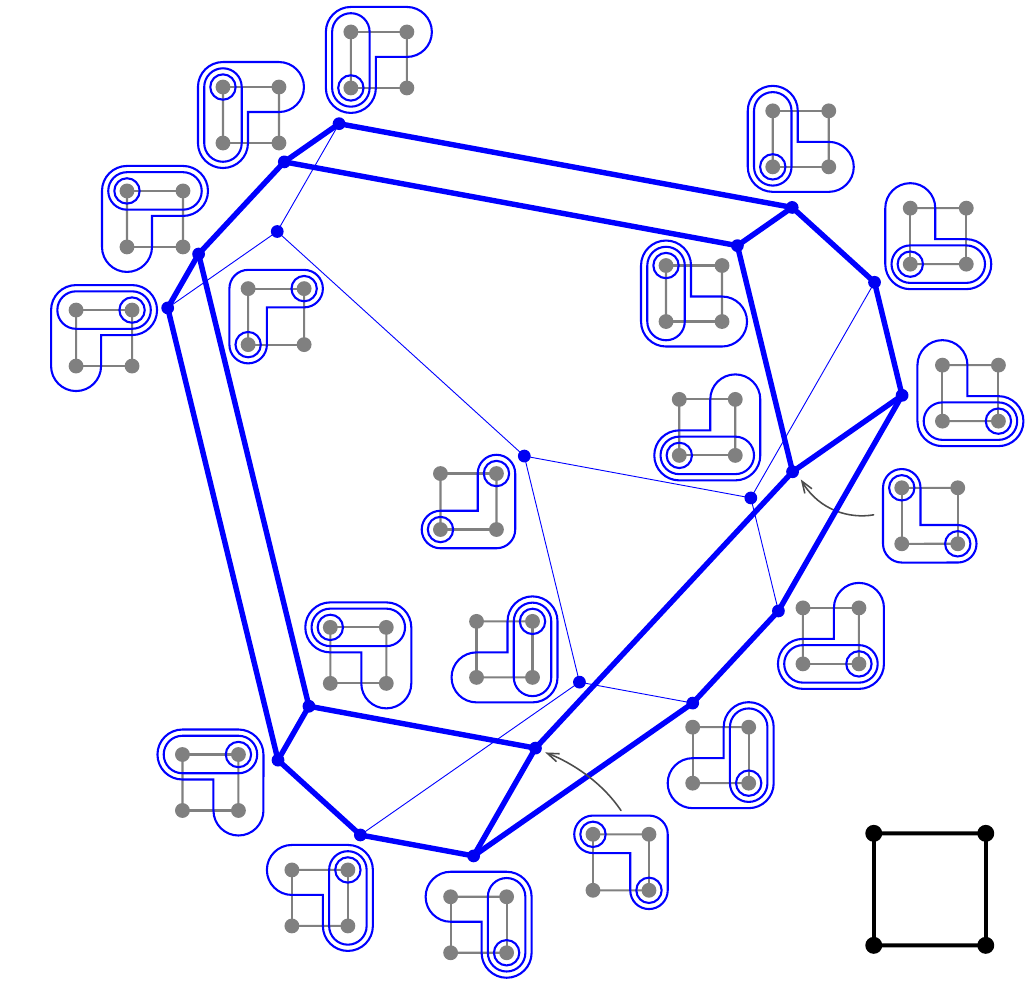}}
	\caption{The permutahedron (top), the associahedron (middle), and the cyclohedron (bottom), with vertices labeled by standard combinatorial objects (left) and by maximal tubings of special graphs (right). The maximal tube is omitted in all tubings.}
	\label{fig:permutahedronAssociahedronCyclohedron}
\end{figure}
\end{example}

\begin{example}
\enlargethispage{.3cm}
Consider the building set~$\building_\circ$ of \cref{exm:booleanBuildingSet}.
The vertex~$\vertexNest[\nested_\circ]$ of the nestohedron~$\Nest(\building_\circ, \b{\lambda})$ corresponding to the maximal nested set~$\nested_\circ \eqdef \{3, 4, 5, 25, 12345, 123456\}$ on~$\building_\circ$ illustrated in \cref{fig:exmNested}\,(right) is given by
\begin{multline*}
\vertexNest[\nested_\circ] = (\lambda_1 + \lambda_{12} + \lambda_{14} + \lambda_{123} + \lambda_{124} + \lambda_{125} + \lambda_{1234} + \lambda_{1235} + \lambda_{1245} + \lambda_{12345}) \b{e}_1 \\ + (\lambda_2 + \lambda_{25}) \b{e}_2 + \lambda_3 \b{e}_3 + \lambda_4 \b{e}_4 + \lambda_5 \b{e}_5 + (\lambda_6 + \lambda_{456} + \lambda_{1456} + \lambda_{2456} + \lambda_{12456} + \lambda_{123456}) \b{e}_6.
\end{multline*}
The inequalities corresponding to the blocks~$25$, $125$, and~$1234$ are given by
\begin{gather*}
x_2 + x_5 \ge \lambda_2 + \lambda_5 + \lambda_{25}
\qquad
x_1 + x_2 + x_5 \ge \lambda_1 + \lambda_2 + \lambda_5 + \lambda_{12} + \lambda_{25} + \lambda_{125}
\\
x_1 + x_2 + x_3 + x_4 \ge \lambda_1 + \lambda_2 + \lambda_3 + \lambda_4 + \lambda_{12} + \lambda_{14} + \lambda_{123} + \lambda_{124} + \lambda_{1234}.
\end{gather*}
Note that since the nested set~$\nested_\circ$ contains the block~$25$ (but neither~$125$ nor~$1234$), the first inequality (but not the last two) is tight for the vertex~$\vertexNest[\nested_\circ]$.
\end{example}

%%%%%%%%%%%%%%%%

\pagebreak
\subsection{Lattice building sets}
\label{subsec:latticeBuildingSets}

Following~\cite{FeichtnerKozlov2004, FeichtnerSturmfels}, we now extend \cref{subsec:boolean} from the boolean lattice to any finite lattice.
Given a finite lattice~$\lattice$, we denote its bottom and top elements by~$\botzero$ and~$\topone$ respectively.
For~${X \le Y}$ in~$\lattice$, we denote by~$[X,Y] \eqdef \set{Z \in \lattice}{X \le Z \le Y}$ the corresponding interval.
If~$\lattice$ is not clear from the context, we add subscripts like~$\le_\lattice$, $\vee_\lattice$, $\botzero_\lattice$, etc.
For~${\c{X} \subseteq \lattice}$, we denote by~$\max(\c{X})$ the maximal elements of~$\c{X}$, and for~${Y \in \lattice}$, we set~$\c{X}_{< Y}\eqdef \set{X \in \c{X}}{X <Y}$, and we define analogously $\c{X}_{\le Y}$, $\c{X}_{> Y}$, and $\c{X}_{\ge Y}$. 
Recall that the \defn{Cartesian product}~$\poset \times \poset$ of two posets~$\poset$ and~$\poset'$ is defined by~$(X,X') \le_{\poset \times \poset'} (Y,Y')$ if and only if~$X \le_{\poset} Y$ and~$X' \le_{\poset'} Y'$.
Finally, for~$n \in \N$, we use the classical notation~$[n] \eqdef \{1, 2, \dots, n\}$.
The following extends \cref{def:booleanBuildingSet} from the boolean lattice to any finite~lattice.

\begin{definition}[{\cite[Def.~2.2]{FeichtnerKozlov2004}, \cite[Def.~3.1]{FeichtnerSturmfels}}]
\label{def:latticeBuildingSet}
Let $\lattice$ be a finite lattice.
A subset $\building \subseteq \lattice_{>\botzero}$ is an \defn{$\lattice$-building set} if, for any $Y \in  \lattice_{>\botzero}$, there is an isomorphism of partially ordered sets
\[
\phi_Y : \prod_{j \in [k]} \lattice_{\le B_j} \xrightarrow{\quad\cong\quad} \lattice_{\le Y},
\]
where $\{B_1, \dots, B_k\} \eqdef \max(\building_{\le Y})$ and the $j$-th component of the map $\phi_Y$ is the inclusion of intervals $\lattice_{\le B_j} \subseteq \lattice_{\le Y}$ in $\lattice$.
The elements in $\building$ are called \defn{blocks}.
\end{definition}

\begin{remark}
Said differently, the lower interval below any element~$Y$ is (isomorphic to) the Cartesian product of the lower intervals of the maximal elements of $\building$ below~$Y$.
Note that $\building$ must contain all \defn{irreducibles} of~$\lattice$ (\ie all elements~$Y \in \lattice$ such that~$\lattice_{\le Y}$ does not decompose as a Cartesian product), in particular the \defn{atoms} of~$\lattice$ (\ie all elements of~$\lattice$ that cover~$\botzero$) and all join-irreducible elements of~$\lattice$ (\ie the elements of~$\lattice$ that cover a single element).
\end{remark}

\begin{example}
\label{exm:minMaxBuildingSet}
As in \cref{exm:minMaxBooleanBuildingSet}, for any finite lattice~$\lattice$, the set~$\lattice_{>\botzero}$ (resp.~the set of irreducibles of~$\lattice$) is always an $\lattice$-building set, which contains (resp.~is contained in) in any $\lattice$-building set.
\end{example}

\begin{remark}
\label{rem:reccursiveDefinitionLatticeBuildingSet}
Note that \cref{def:latticeBuildingSet} is equivalent to the following recursive definition.
A subset~$\building$ of~$\lattice_{>\botzero}$ forms an $\lattice$-building set if and only if
\begin{itemize}
\item for any~$X \in \max(\lattice_{<\topone})$, the set~$\building_{\le X}$ forms an $\lattice_{\le X}$-building set,
\item $\lattice \cong \prod_{B \in \max(\building)} \lattice_{\le B}$.
\end{itemize}
\end{remark}

\begin{definition}
We denote by~$\connectedComponents(\building) \eqdef \max(\building)$ the set of \defn{$\building$-connected components} (\ie maximal elements of~$\building$), and we say that $\building$ is \defn{connected} if $|\connectedComponents(\building)| = 1$.
\end{definition}

We immediately observe that any subset of~$\lattice$ can be completed to a minimal $\lattice$-building set, generalizing \cref{def:booleanBuildingSetClosure} from \cite[Lem.~3.10]{FeichtnerSturmfels}.
We note that this result was independently obtained in~\cite{BackmanDanner}, as a consequence of their result that the $\lattice$-building sets form an intersection lattice (in fact, a supersolvable convex geometry).

\begin{proposition}
\label{prop:buildingClosure}
For any~$\c{X} \subseteq \lattice$, the set of $\lattice$-building sets containing~$\c{X}$ admits a unique inclusion minimal element, that we call the \defn{$\lattice$-building closure} of~$\c{X}$ and denote by~$\tilde{\c{X}}$.
Moreover, $\tilde{\c{X}} \ssm \c{X}$ is contained in the set of elements~$Y \in \lattice$ such that~$\lattice_{\le Y} \not\cong \prod_{X \in \max(\c{X}_{\le Y})} \lattice_{\le X}$.
\end{proposition}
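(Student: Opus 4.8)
The plan is to prove the two assertions in sequence: first the existence of a unique inclusion-minimal $\lattice$-building set containing $\c{X}$, and then the description of the elements that must be added. For the existence, I would argue that the collection of $\lattice$-building sets containing $\c{X}$ is closed under intersection. Concretely, if $\{\building_i\}_{i \in I}$ is a family of $\lattice$-building sets, I claim $\bigcap_i \building_i$ is again an $\lattice$-building set. The cleanest route is via the recursive characterization in \cref{rem:reccursiveDefinitionLatticeBuildingSet}: arguing by induction on the length of $\lattice$, for each coatom $X \in \max(\lattice_{<\topone})$ the sets $(\building_i)_{\le X}$ are $\lattice_{\le X}$-building sets, so by induction their intersection $\bigl(\bigcap_i \building_i\bigr)_{\le X} = \bigcap_i (\building_i)_{\le X}$ is an $\lattice_{\le X}$-building set; it then remains to check that $\lattice \cong \prod_{B \in \max(\bigcap_i \building_i)} \lattice_{\le B}$, which follows because each $\building_i$ gives such a product decomposition and the coarsest common refinement of these decompositions is realized precisely by the maximal elements of the intersection (the irreducibles of $\lattice$ lie in every $\building_i$, so the intersection still contains enough blocks to witness the finest decomposition). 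Since $\lattice_{>\botzero}$ is an $\lattice$-building set containing $\c{X}$ (\cref{exm:minMaxBuildingSet}), the family is nonempty, and $\tilde{\c{X}} \eqdef \bigcap \set{\building}{\building \text{ is an } \lattice\text{-building set}, \c{X} \subseteq \building}$ is the desired minimum.

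For the second assertion, let $\c{Y} \eqdef \c{X} \cup \set{Y \in \lattice_{>\botzero}}{\lattice_{\le Y} \not\cong \prod_{X \in \max(\c{X}_{\le Y})} \lattice_{\le X}}$; I must show $\tilde{\c{X}} \subseteq \c{Y}$. Since $\tilde{\c{X}}$ is the minimal $\lattice$-building set containing $\c{X}$, it suffices to show that $\c{Y}$ is itself an $\lattice$-building set (it obviously contains $\c{X}$). I would verify \cref{def:latticeBuildingSet} directly: fix $Y \in \lattice_{>\botzero}$ and let $\{B_1, \dots, B_k\} = \max(\c{Y}_{\le Y})$; I need $\lattice_{\le Y} \cong \prod_j \lattice_{\le B_j}$ via the inclusion maps. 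First note that $\max(\c{X}_{\le Y})$ and $\max(\c{Y}_{\le Y})$ have the same maximal elements that are $\le Y$: indeed $\c{X} \subseteq \c{Y}$ gives one inclusion of upper bounds, and conversely any $Z \in \c{Y}_{\le Y} \setminus \c{X}$ with $\lattice_{\le Z} \not\cong \prod_{X \in \max(\c{X}_{\le Z})} \lattice_{\le X}$ is, by definition, strictly below one of the factors of that decomposition of $\lattice_{\le Z}$ coming from elements of $\c{X}$... hmm, that last point needs care. The honest statement to use is: if $\lattice_{\le Y} \cong \prod_{X \in \max(\c{X}_{\le Y})} \lattice_{\le X}$ then $Y \notin \c{Y} \setminus \c{X}$, so the elements of $\c{Y}$ below $Y$ beyond those forced by $\c{X}$ each "fix" a failure of decomposition strictly inside one of the $\lattice_{\le X}$-intervals. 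The key structural lemma I will prove is: if $\lattice_{\le Y}$ decomposes as a product $\prod_j \lattice_{\le C_j}$, then for each factor $\lattice_{\le C_j}$, the set $\max(\c{Y}_{\le C_j})$ computed inside $\lattice_{\le C_j}$ equals $\max(\c{Y}_{\le C_j})$ computed inside $\lattice$, and the decompositions glue; combined with an induction on the length of $\lattice$ (or on $|Y|$-type rank), this reduces the claim for $Y$ to the claims for the strictly smaller $C_j$. The base of the induction, and the only genuinely new content, is the case where $\lattice_{\le Y}$ is \emph{irreducible}: then I must show $Y \in \c{Y}$, i.e. $\lattice_{\le Y} \not\cong \prod_{X \in \max(\c{X}_{\le Y})} \lattice_{\le X}$ unless that product is the trivial one-factor product with $Y \in \c{X}$ — which is exactly the statement that an irreducible interval has no nontrivial product decomposition, so $\max(\c{X}_{\le Y}) = \{Y\}$ forces $Y \in \c{X}$, while $\max(\c{X}_{\le Y}) \subsetneq \{Y\}$ forces $Y \in \c{Y}$ by definition.

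The main obstacle I anticipate is the gluing step: showing that when $\lattice_{\le Y}$ genuinely decomposes, the building-set axiom for $\c{Y}$ at $Y$ follows from the axioms at the factors. This requires knowing that "$\max(\c{Y}_{\le Y})$" behaves well under product decompositions of intervals — that a maximal element of $\c{Y}$ below $Y$ lies in a single factor $\lattice_{\le C_j}$ (not spread across several), and that its being a decomposition-obstruction "inside $\lattice$" is the same as being one "inside $\lattice_{\le C_j}$". This is where the hypothesis that $\lattice$ is a lattice (so intervals are lattices and product decompositions of intervals interact coherently with joins) does the real work, and it is essentially the content of \cite[Lem.~3.10]{FeichtnerSturmfels} in the boolean case; I would either cite the analogous lattice-theoretic fact about unique factorization of intervals into irreducibles, or prove it by a short induction on rank. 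Once that structural compatibility is in hand, both the existence via intersections and the explicit description fall out by the recursive criterion of \cref{rem:reccursiveDefinitionLatticeBuildingSet}.
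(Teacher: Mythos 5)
Your reduction of the second assertion is where the argument breaks: you propose to prove that $\c{Y} \eqdef \c{X} \cup \set{Y \in \lattice_{>\botzero}}{\lattice_{\le Y} \not\cong \prod_{X \in \max(\c{X}_{\le Y})} \lattice_{\le X}}$ is itself an $\lattice$-building set, but this is false in general. Take $\lattice$ to be the boolean lattice on $\{1,2,3,4\}$ (writing $123$ for $\{1,2,3\}$, etc.) and $\c{X} \eqdef \{12, 34\}$. Every singleton, every pair other than $12$ and $34$, and every triple is an obstruction (for instance $\max(\c{X}_{\le 123}) = \{12\}$ and $\lattice_{\le 123} \not\cong \lattice_{\le 12}$), whereas $1234$ is not, since $\lattice_{\le 1234} \cong \lattice_{\le 12} \times \lattice_{\le 34}$. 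Hence your $\c{Y}$ consists of all proper non-empty subsets of $\{1,2,3,4\}$, which is not a building set: the maximal elements of $\c{Y}$ below $1234$ are the four triples, and the product of their four lower intervals is certainly not isomorphic to $\lattice_{\le 1234}$ (in boolean terms, $123$ and $124$ intersect but their union $1234$ is missing from $\c{Y}$). The proposition only claims the inclusion $\tilde{\c{X}} \ssm \c{X} \subseteq \c{Y}$, and indeed here $\tilde{\c{X}} = \{1,2,3,4,12,34\}$ picks up only part of $\c{Y}$; so the plan of verifying the building set axioms for $\c{Y}$ cannot be completed, however the gluing lemma is arranged. The paper instead adds a single \emph{minimal} obstruction $Y$ at a time and proves (i) every $\lattice$-building set containing $\c{X}$ is forced to contain $Y$, and (ii) the obstruction set of $\c{X} \cup \{Y\}$ is strictly contained in that of $\c{X}$; the minimality of $Y$ is used in both claims, and this one induction yields existence, uniqueness, and the containment simultaneously.

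Your first half also has a gap. That $\lattice$-building sets are closed under intersection is true, but it is a genuine theorem (the paper attributes it to Backman--Danner and does not use it), and your justification of the top-level decomposition is incorrect: the maximal elements of $\building_1 \cap \building_2$ need not realize the coarsest common refinement of the two decompositions. On the boolean lattice on $\{1,2,3,4\}$, take $\building_1 \eqdef \{1,2,3,4,123\}$ and $\building_2 \eqdef \{1,2,3,4,234\}$: the common refinement of the partitions $123|4$ and $1|234$ contains the block $23$, which lies in neither building set, so $\max(\building_1 \cap \building_2)$ consists of the four singletons and is strictly finer than the common refinement. (The intersection happens to be a building set here, but not for the reason you give.) So neither half of the proposal goes through as written, whereas the paper's inductive argument never needs intersection-closedness at all.
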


\begin{proof}
The proof works by induction on~$|\lattice_{>\botzero} \ssm \c{X}|$.
If~$\c{X} = \lattice_{>\botzero}$, it is an $\lattice$-building set.
If~$\c{X}$ is not an $\lattice$-building set, then the set~$\c{Y}$ of elements~$Y \in \lattice$ such that~$\lattice_{\le Y} \not\cong \prod_{X \in \max(\c{X}_{\le Y})} \lattice_{\le X}$ is non-empty.
Let~$Y$ be any minimal element of~$\c{Y}$, let~$\c{X}' \eqdef \c{X} \cup \{Y\}$, and let~$\c{Y}'$ be the set of elements~$Z \in \lattice$ such that~$\lattice_{\le Z} \not\cong \prod_{X \in \max(\c{X}'_{\le Z})} \lattice_{\le X}$.
We claim that 
\begin{enumerate}[(i)]
\item any $\lattice$-building set containing~$\c{X}$ actually contains~$\c{X}'$, 
\item $\c{Y}' \subset \c{Y}$.
\end{enumerate}
This proves the statement by induction.
Indeed, $\c{X}'$ admits a building closure~$\tilde{\c{X}}'$ and~$\tilde{\c{X}}' \ssm \c{X}' \subseteq \c{Y}'$.
We thus conclude from the claim that~$\c{X}$ admits a building closure~$\tilde{\c{X}} = \tilde{\c{X}}'$ and that~$\tilde{\c{X}} \ssm \c{X} = (\tilde{\c{X}}' \ssm \c{X}') \cup \{Y\} \subseteq \c{Y}' \cup \{Y\} \subseteq \c{Y}$.

We prove (i) by contradiction.
Assume that there is an $\lattice$-building set~$\building$ containing~$\c{X}$ but not~$Y$.
Let~$\{B_1, \dots, B_k\} \eqdef \max(\building_{\le Y})$ so that~$\lattice_{\le Y} \cong \prod_{j \in [k]} \lattice_{\le B_j}$.
Hence, we have~$\max(\c{X}_{\le Y}) = \bigsqcup_{j \in [k]} \max(\c{X}_{\le B_j})$.
Moreover, by minimality of~$Y$, we have~$\lattice_{\le B_j} \cong \prod_{X \in \max(\c{X}_{\le B_j})} \lattice_{\le X}$.
We thus obtain that
\[
\lattice_{\le Y} \cong \prod_{j \in [k]} \lattice_{\le B_j} \cong \prod_{j \in [k]}\prod_{X \in \max(\c{X}_{\le B_j})} \lattice_{\le X} = \prod_{X \in \max(\c{X}_{\le Y})} \lattice_{\le X},
\]
contradicting that~$Y \in \c{Y}$.

We prove (ii) by contraposition. 
Let~$Z \notin \c{Y}$, so that~$\lattice_{\le Z} \cong \lattice_{\le Z_1} \times \dots \times \lattice_{\le Z_\ell}$ where $\{Z_1, \dots, Z_\ell\} \eqdef \max(\c{X}_{\le Z})$.
We claim that~$Y \notin \max(\c{X}'_{\le Z})$, so that~$\max(\c{X}'_{\le Z}) = \max(\c{X}_{\le Z})$ and thus $\lattice_{\le Z} \cong \prod_{X \in \max(\c{X}'_{\le Z})} \lattice_{\le X}$, which implies that~$Z \notin \c{Y}'$.
To prove the claim, assume that~$Y \le Z$, and decompose~$Y = (Y_1, \dots, Y_\ell)$ in the product~$\lattice_{\le Z_1} \times \dots \times \lattice_{\le Z_\ell}$.
If there is~$j \in [\ell]$ such that~$Y = Y_j$, then~$Y \le Z_j$ so that~$Y \notin \max(\c{X}'_{\le Z})$.
Otherwise, by minimality of~$Y$, we have~$\lattice_{\le Y_j} \cong \prod_{X \in \max(\c{X}_{\le Y_j})} \lattice_{\le X}$.
We thus obtain that
\[
\lattice_{\le Y} \cong \prod_{j \in [k]} \lattice_{\le Y_j} \cong \prod_{j \in [k]}\prod_{X \in \max(\c{X}_{\le Y_j})} \lattice_{\le X} = \prod_{X \in \max(\c{X}_{\le Y})} \lattice_{\le X},
\]
contradicting that~$Y \in \c{Y}$.
\end{proof}

%%%%%%%%%%%%%%%%

\subsection{Lattice nested sets}
\label{subsec:latticeNestedSets}

The following extends \cref{def:booleanNestedSet} from the boolean lattice to any finite lattice.

\begin{definition}[{\cite[Def.~2.2]{FeichtnerKozlov2004}, \cite[Def.~3.2]{FeichtnerSturmfels}}]
\label{def:latticeNestedSet}
Let $\lattice$ be a finite lattice and~$\building$ be an $\lattice$-building set.
An \defn{$\lattice$-nested set} $\nested$ on~$\building$ is a subset of~$\building$ containing~$\connectedComponents(\building)$ and such that for any $k\geq 2$  pairwise incomparable elements $B_1, \dots, B_k \in \nested \ssm \connectedComponents(\building)$, the join $B_1\vee \cdots \vee B_k$ does not belong to $\building$.
The \defn{$\lattice$-nested complex} of~$\building$ is the simplicial complex~$\nestedComplex$ whose faces are $\nested \ssm \connectedComponents(\building)$ for all $\lattice$-nested sets~$\nested$ on~$\building$.
\end{definition}

\begin{remark}
Note that as in the boolean case (\cref{rem:nestedComplex}), we include~$\connectedComponents(\building)$ in all $\lattice$-nested sets on~$\building$ (as in~\cite{Postnikov}), but to remove~$\connectedComponents(\building)$ from all $\lattice$-nested sets on~$\building$ when defining the $\lattice$-nested complex of~$\building$ (as in~\cite{Zelevinsky,FeichtnerSturmfels}).
\end{remark}

\begin{example}
\label{exm:orderComplex}
As in \cref{exm:orderComplexBoolean}, for the maximal $\lattice$-building set~$\lattice_{>\botzero}$ of \cref{exm:minMaxBuildingSet}, the $\lattice$-nested sets are the chains in~$\lattice_{>\botzero}$, hence the $\lattice$-nested complex is the \defn{order complex} of~$\lattice_{>\botzero}$.
\end{example}

We will use the following characterization of nested antichains of an $\lattice$-building set~$\building$.

\begin{proposition}[{\cite[Prop.~2.8\,(2)]{FeichtnerKozlov2004}}]
\label{prop:nestedFactorization}
If~$B_1, \dots, B_k$ are pairwise incomparable elements \linebreak of an \mbox{$\lattice$-buil}\-ding set~$\building$, then the set~$\{B_1, \dots, B_k\} \cup \connectedComponents(\building)$ is $\lattice$-nested if and only \linebreak if~${\max(\building_{\le B_1 \vee \dots \vee B_k}) = \{B_1, \dots, B_k\}}$.
\end{proposition}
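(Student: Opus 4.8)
The plan is to prove both implications separately, using the recursive characterization of $\lattice$-building sets from \cref{rem:reccursiveDefinitionLatticeBuildingSet} together with \cref{def:latticeNestedSet}. Throughout, write $Y \eqdef B_1 \vee \dots \vee B_k$ and recall that by definition $\nested \eqdef \{B_1, \dots, B_k\} \cup \connectedComponents(\building)$ is an antichain (together with the connected components) precisely when no proper subfamily of $\{B_1,\dots,B_k\}$ violates the nestedness condition; so the content is entirely about whether $Y \in \building$ and how $\building_{\le Y}$ looks.

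For the direction ``$\max(\building_{\le Y}) = \{B_1,\dots,B_k\} \implies \nested$ is $\lattice$-nested'', I would argue as follows. Take any $j \ge 2$ pairwise incomparable elements $C_1,\dots,C_j$ of $\nested \ssm \connectedComponents(\building)$, that is, a subfamily of $\{B_1, \dots, B_k\}$; after relabeling say $C_i = B_i$ for $i \in [j]$. We must show $C_1 \vee \dots \vee C_j \notin \building$, and it suffices to treat the case $j = k$ (a subfamily's join lies below $Y$, and if it were a block it would be a block below $Y$ not among the $B_i$, again contradicting maximality — I'd spell this reduction out carefully). So suppose for contradiction $Y \in \building$. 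Then $Y \in \building_{\le Y}$, and since $Y$ is the top of the interval $\lattice_{\le Y}$ it is the unique maximal element, forcing $\max(\building_{\le Y}) = \{Y\}$. But $k \ge 2$ and the $B_i$ are pairwise incomparable, so $\{B_1,\dots,B_k\} \ne \{Y\}$, contradicting the hypothesis. Hence $Y \notin \building$, which is exactly what we needed.

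For the converse ``$\nested$ is $\lattice$-nested $\implies \max(\building_{\le Y}) = \{B_1, \dots, B_k\}$'', let $\{D_1, \dots, D_m\} \eqdef \max(\building_{\le Y})$, so by the building-set axiom $\lattice_{\le Y} \cong \prod_{\ell \in [m]} \lattice_{\le D_\ell}$. Each $B_i \le Y$ lies in some $\lattice_{\le D_{\ell(i)}}$: indeed, decompose $B_i = (B_i^1, \dots, B_i^m)$ in the product; since $B_i \in \building$ is irreducible in $\lattice_{\le Y}$ (its lower interval does not split as a nontrivial product, a block always being irreducible as noted after \cref{def:latticeBuildingSet}), at most one coordinate $B_i^\ell$ is non-minimal, so $B_i \le D_{\ell(i)}$ for a unique $\ell(i)$. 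This gives a map $[k] \to [m]$, $i \mapsto \ell(i)$. I claim it is a bijection. Surjectivity: $Y = \bigvee_i B_i$, and projecting to the $\ell$-th factor, $D_\ell = \bigvee_{\,i : \ell(i) = \ell} (\text{$\ell$-th coordinate of }B_i)$; if no $i$ maps to $\ell$ this join is $\botzero$, forcing $D_\ell = \botzero$, impossible since $D_\ell \in \building \subseteq \lattice_{>\botzero}$. Injectivity: if $\ell(i) = \ell(i')$ with $i \ne i'$, then $B_i \vee B_{i'} \le D_{\ell(i)} \in \building$, and by minimality of the relevant product decomposition this join is itself in $\building_{\le Y}$ (it sits inside a single factor which is itself the lower interval of a block) — more precisely, since $\nested$ is $\lattice$-nested, $B_i \vee B_{i'}$ is \emph{not} in $\building$ unless $B_i, B_{i'}$ are comparable; but both lie in $\lattice_{\le D_{\ell(i)}}$ and I would derive $B_i \vee B_{i'} \in \building$ directly (using that $\building_{\le D_{\ell(i)}}$ is an $\lattice_{\le D_{\ell(i)}}$-building set with top block $D_{\ell(i)}$ and arguing as in the forward direction that the join of the images is a block), contradicting incomparability. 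So $i \mapsto \ell(i)$ is a bijection, hence $m = k$; and since $B_i \le D_{\ell(i)}$ with both in $\building$, and the $D_\ell$ are the maximal blocks below $Y$ while $\bigvee_i B_i = Y$, a short squeeze (each $D_\ell \le Y = \bigvee B_i \le \bigvee D_{\ell}$ componentwise) forces $B_i = D_{\ell(i)}$ for every $i$. Therefore $\max(\building_{\le Y}) = \{B_1, \dots, B_k\}$.

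The main obstacle is the injectivity step in the converse, where I must convert the product-decomposition data into a genuine statement that $B_i \vee B_{i'}$ is a \emph{block} when $\ell(i) = \ell(i')$. The cleanest route is to reduce to the forward direction applied inside the sublattice $\lattice_{\le D_{\ell(i)}}$ with its induced building set $\building_{\le D_{\ell(i)}}$: there, $D_{\ell(i)}$ is the maximum block, $B_i$ and $B_{i'}$ are two of its blocks with join still $\le D_{\ell(i)}$, and one shows $\max((\building_{\le D_{\ell(i)}})_{\le B_i \vee B_{i'}})$ cannot be $\{B_i, B_{i'}\}$ since otherwise the forward direction would make $\{B_i, B_{i'}\}$ nested, which it is (being a subantichain of $\nested$), forcing $B_i \vee B_{i'} \notin \building$ — but we can instead exhibit it in $\building$ by the product structure. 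I would phrase this via \cref{prop:nestedFactorization}'s statement applied recursively, or more self-containedly, by invoking \cref{rem:reccursiveDefinitionLatticeBuildingSet}; it is a bookkeeping argument rather than a deep one, but it requires care to avoid circularity with the very proposition being proved.
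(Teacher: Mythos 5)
The paper does not prove this statement itself (it is quoted from Feichtner--Kozlov), so the only question is whether your argument stands on its own. Your easy direction (the maximality condition implies nestedness) is correct once the reduction to subfamilies is written out: if $\bigvee_{i \in I} B_i \in \building$ for some $I$ with $|I| \ge 2$, then by maximality it lies below some $B_m$, forcing $B_i \le B_m$ for some $i \ne m$ and contradicting incomparability. The genuine gap is exactly where you flag "the main obstacle": the injectivity step of the converse. Your plan is to show that $B_i \vee B_{i'} \in \building$ whenever $B_i, B_{i'}$ lie below the same maximal block $D_{\ell}$ of $\building_{\le Y}$, "by the product structure". That intermediate claim is false: for the building set $\{1,2,3,123\}$ on the boolean lattice of $\{1,2,3\}$, the blocks $1$ and $2$ both lie below the block $123$, yet $1 \vee 2 = 12$ is not a block; nothing about the decomposition $\lattice_{\le Y} \cong \prod_{\ell} \lattice_{\le D_\ell}$ turns pairwise joins inside one factor into blocks, and "arguing as in the forward direction" can only show joins are \emph{not} blocks. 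A secondary misstep is your justification that each $B_i$ lies below a unique $D_{\ell(i)}$ because "a block is always irreducible": the remark after \cref{def:latticeBuildingSet} says irreducibles are contained in every building set, not the converse (the maximal building set $\lattice_{>\botzero}$ contains many reducible elements). Existence of $\ell(i)$ is just maximality of the $D_\ell$; uniqueness follows from $D_\ell \wedge D_{\ell'} = \botzero$ in the product.

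The repair is already contained in your surjectivity computation. Since joins in $\prod_\ell \lattice_{\le D_\ell}$ are coordinatewise and each $B_i$ sits entirely in the factor $\ell(i)$, you have $D_\ell = \bigvee_{i :\, \ell(i) = \ell} B_i$ for every $\ell$. Hence if some fibre of $\ell$ contained at least two indices, that whole subfamily would be an antichain of $\nested$ whose join is the block $D_\ell$ --- contradicting nestedness, provided the subfamily avoids $\connectedComponents(\building)$, a point \cref{def:latticeNestedSet} makes essential and which your write-up never addresses. It is handled in one line: if some $B_j$ in the fibre were in $\connectedComponents(\building)$, then $B_j \le D_\ell$ and maximality would give $B_j = D_\ell$, making $B_j$ dominate the other fibre elements and contradicting incomparability. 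With injectivity obtained this way, $D_{\ell(i)} = B_i$ for each $i$ is immediate (each fibre is a singleton, so $D_{\ell(i)} = B_i$ directly, no squeeze needed), and the converse follows; the pairwise-join route, and the circularity worry it creates, should simply be dropped.
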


\cref{prop:nestedFactorization} actually extends to the following recursive definition of $\lattice$-nested sets on~$\building$.

\begin{remark}
\label{rem:reccursiveDefinitionLatticeNestedSet}
A subset~$\nested$ of~$\building$ containing~$\connectedComponents(\building)$ forms an $\lattice$-nested set on~$\building$ if and only if, denoting~$\{B_1, \dots, B_k\} \eqdef \max(\nested \ssm \connectedComponents(\building))$,
\begin{itemize}
\item  for all~$i \in [k]$, the set~$\nested_{\le B_i}$ forms an $\lattice_{\le B_i}$-nested set on~$\building_{\le B_i}$,
\item $\max(\building_{\le B_1 \vee \dots \vee B_k}) = \{B_1, \dots, B_k\}$.
\end{itemize}
\end{remark}

\begin{remark}\label{rmk:semilattice}
\cref{def:latticeBuildingSet,def:latticeNestedSet} slightly differ from the original definitions of E.~M.~Feichtner and D.~Kozlov~\cite[Defs.~2.2~\&~2.7]{FeichtnerKozlov2004} which were stated in the more general setting of meet semilattices.
One can however recover their definitions from ours as follows.
For a meet semilattice~$\semi$, denote by~$\semi^+$ the lattice obtained by adding a top element~$\topone$.
For~$\building \subseteq \semi$, denote by~$\building^+ \eqdef \building \sqcup \{\topone\} \subseteq \semi^+$.
Then~$\building$ is an \defn{$\semi$-semibuilding set} if and only if~$\building^+$ is an $\semi^+$-building set, and the \defn{$\semi$-seminested complex}~$\seminestedComplex_{\semi}(\building)$ of~$\building$ is the $\semi^+$-nested complex~$\nestedComplex[\lattice][\building^+]$ of~$\building^+$.
Note that for a lattice~$\lattice$ and~$\building \subseteq \lattice$, the $\lattice$-nested complex~$\nestedComplex$ slightly differs from the $\lattice$-seminested complex~$\seminestedComplex_\lattice(\building)$.
Namely, $\seminestedComplex_\lattice(\building)$ is the join of~$\nestedComplex$ with the simplex with vertices~$\connectedComponents(\building)$.
We thus decided to make explicit the distinction between nested complexes and seminested complexes (which are called nested complexes in \cite[Def.~2.7]{FeichtnerKozlov2004}).
\end{remark}

%%%%%%%%%%%%%%%%

\subsection{Restriction, contraction, and links}
\label{subsec:restrictionContractionNestedComplex}

We now extend \cref{def:restrictionContractionBuildingSet,prop:linksNestedComplex} to describe the links in $\lattice$-nested complexes of $\lattice$-building sets for any lattice~$\lattice$.
We provide proofs here as we have not found explicit descriptions of these links in this generality in the literature at the time of writing (the boolean case is described in~\cite[Prop.~3.2]{Zelevinsky}, the case of the lattice of flats of a realizable matroid is described in~\cite[Sect.~4.3]{DeConciniProcesi1995}, and the case of the lattice of flats of a matroid was concomitantly and independently established in~\cite[Thm.~1.6]{BraunerEurPrattVlad}).

\begin{definition}
\label{def:restrictionContractionLatticeBuildingSet}
For any $\lattice$-building set~$\building$ and element~$X \in \lattice$, define
\begin{itemize}
\item the \defn{restriction} of~$\building$ to~$X$ as~$\building_{|X} \eqdef \set{B \in \building}{B \le X}$,
\item the \defn{contraction} of~$X$ in~$\building$ as~$\building_{\!/X} \eqdef \set{B \vee X}{B \in \building \text{ and } B \not\le X}$.
\end{itemize}
\end{definition}

\begin{proposition}
\label{prop:restrictionContractionLatticeBuildingSet}
For any $\lattice$-building set~$\building$ and any~$X \in \building$, the restriction~$\building_{|X}$ is an $\lattice_{\le X}$-building set and the contraction~$\building_{\!/X}$ is an $\lattice_{\ge X}$-building set.
\end{proposition}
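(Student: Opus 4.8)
The plan is to verify the building-set axioms for $\building_{|X}$ and $\building_{\!/X}$ directly from the recursive characterization in \cref{rem:reccursiveDefinitionLatticeBuildingSet}, exploiting that intervals of $\lattice$ below (resp.\ above) $X$ are themselves lattices and that the Cartesian product structures required by \cref{def:latticeBuildingSet} restrict and quotient cleanly.

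For the restriction $\building_{|X}$: this is the easy half. Fix $Y \in (\lattice_{\le X})_{>\botzero} = \lattice_{>\botzero, \le X}$. Then $(\lattice_{\le X})_{\le Y} = \lattice_{\le Y}$ as posets, and $\max((\building_{|X})_{\le Y}) = \max(\building_{\le Y})$ since every block of $\building$ below $Y$ is automatically $\le X$. Hence the isomorphism $\phi_Y : \prod_j \lattice_{\le B_j} \xrightarrow{\cong} \lattice_{\le Y}$ provided by the fact that $\building$ is an $\lattice$-building set is exactly the isomorphism required for $\building_{|X}$ to be an $\lattice_{\le X}$-building set. So this direction is essentially a tautology once one observes that ``being a block below $Y$'' does not see whether the ambient lattice is $\lattice$ or $\lattice_{\le X}$.

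For the contraction $\building_{\!/X}$: this is the substantive part. Here I would work in the interval $\lattice_{\ge X}$, whose bottom element is $X$, and I would use \cref{prop:restrictionContractionLatticeBuildingSet}'s companion fact that join with $X$ gives an order-preserving map $\lattice \to \lattice_{\ge X}$. The first point to establish is that $\building_{\!/X} \subseteq (\lattice_{\ge X})_{>X}$: if $B \in \building$ with $B \not\le X$ then $B \vee X > X$. Next, fix $Z \in (\lattice_{\ge X})_{>X}$, i.e.\ $Z \in \lattice$ with $Z > X$, and I need $(\lattice_{\ge X})_{\le Z} = [X,Z]$ to be the Cartesian product of the intervals below the maximal elements of $(\building_{\!/X})_{\le Z}$. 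The key lemma is a comparison of $[X, Z]$ with the interval $[\botzero, Z]$: since $\building$ is an $\lattice$-building set, $\lattice_{\le Z} \cong \prod_{j} \lattice_{\le C_j}$ where $\{C_1,\dots,C_m\} = \max(\building_{\le Z})$; writing $X = (X_1,\dots,X_m)$ under this isomorphism, the interval $[X,Z]$ decomposes as $\prod_j [X_j, C_j]$. One then checks that the blocks $C$ of $\building$ below $Z$ with $C \not\le X$ contribute, via $C \mapsto C \vee X$, exactly to those factors $j$ for which $X_j < C_j$, and that within each such factor the maximal elements of $(\building_{\!/X})_{\le Z}$ restricted to $[X_j, C_j]$ are again a building-set-style decomposition of $[X_j, C_j]$ — this is where one recurses, using that $\building_{\le C_j}$ is an $\lattice_{\le C_j}$-building set (by \cref{rem:reccursiveDefinitionLatticeBuildingSet}, or \cref{prop:restrictionContractionLatticeBuildingSet} applied to $\building_{|C_j}$) and then contracting $X_j$ inside it. Assembling the factors gives $[X,Z] \cong \prod \lattice_{\le (C_j \vee X)}$ over the relevant $j$, which is the required isomorphism $\phi_Z$ for $\building_{\!/X}$.

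The main obstacle I anticipate is the bookkeeping in the contraction case: one must be careful that distinct blocks $C, C'$ of $\building$ below $Z$ can have the same join $C \vee X$ with $X$ (they can differ only in factors where $X$ already dominates them), so the map $C \mapsto C \vee X$ is not injective, and one needs to argue that $\max((\building_{\!/X})_{\le Z})$ is genuinely indexed by the factors $j$ with $X_j < C_j$ and consists precisely of the images $C_j \vee X$ of the top blocks in those factors — not of spurious smaller blocks. The cleanest way around this is probably to set up the factorwise decomposition $[X,Z] = \prod_j [X_j,C_j]$ first and prove everything factor by factor, reducing to the connected case where $\max(\building_{\le C_j}) = \{C_j\}$ and the statement becomes the assertion that contracting a single element of a building set yields a building set on the interval above it, proved by the recursion of \cref{rem:reccursiveDefinitionLatticeBuildingSet}. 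A sanity check throughout is the boolean case: there $\building_{\!/X}$ of \cref{def:restrictionContractionLatticeBuildingSet} must reconcile with $\building_{\!/R}$ of \cref{def:restrictionContractionBuildingSet} under the identification $\lattice_{\ge X} \cong 2^{\ground \ssm R}$ sending $B \vee X$ to $B \ssm R$, which the referee remarks after \cref{def:restrictionContractionBuildingSet} already flag, so consistency there is a useful guide.
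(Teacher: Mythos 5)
Your restriction argument and the skeleton of your contraction argument (decompose $[X,Z]$ via the product structure coming from $\max(\building_{\le Z})$, and identify the maximal contracted blocks as the joins $C_j\vee X$ of the per-factor top blocks with $X$) coincide with the paper's. The gap lies in how you propose to justify that identification. Your ``recursion to the connected case'' is circular as written: the claim that contracting $X_j$ inside $\building_{\le C_j}$ yields a building set on $[X_j,C_j]$ is exactly the contraction half of \cref{prop:restrictionContractionLatticeBuildingSet} for the sublattice $\lattice_{\le C_j}$, and neither \cref{rem:reccursiveDefinitionLatticeBuildingSet} (a reformulation of the definition, not a proof device) nor the restriction half supplies it; to break the circle you would need an induction you never set up, and in the connected case ($C_j=Z$) the sublattice does not shrink, so your reduction merely restates the claim. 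Moreover the recursion is unnecessary, and the ``main obstacle'' you flag dissolves: injectivity of $C\mapsto C\vee X$ is never needed. What is needed is only that (a) every $D\in(\building_{\!/X})_{\le Z}$ has the form $B\vee X$ with $B\in\building_{\le Z}$, hence $D\le B_i\vee X$ for some $B_i\in\max(\building_{\le Z})$, and (b) the elements $B_i\vee X$ are pairwise incomparable by the product structure of $\lattice_{\le Z}$. Together these give $\max\bigl((\building_{\!/X})_{\le Z}\bigr)=\{B_1\vee X,\dots,B_k\vee X\}\ssm\{X\}$ directly, with no spurious smaller blocks and no per-factor sub-decomposition; in particular, when $Z\in\building$ one gets $\max\bigl((\building_{\!/X})_{\le Z}\bigr)=\{Z\}$ and the condition at $Z$ is trivially satisfied, which is the observation your plan is missing.

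You also never invoke the hypothesis $X\in\building$, nor handle the degenerate case it governs. Since $X$ is a block, it lies below a unique $B_\ell\in\max(\building_{\le Z})$, which is what lets one write $[X,Z]\cong[X,B_\ell]\times\prod_{i\ne\ell}\lattice_{\le B_i}\cong\prod_{i}[X,B_i\vee X]$ with each factor an interval of $\lattice_{\ge X}$, as required for the isomorphism to be given by inclusions; and when $X=B_\ell$ the element $B_\ell\vee X=X$ is the bottom of $\lattice_{\ge X}$, hence not in $\building_{\!/X}$, and must be removed from the list of maximal elements (the corresponding factor $[X,B_\ell]$ being a singleton). Your coordinate setup $X=(X_1,\dots,X_m)$ can be pushed through, but as written it neither records where blockness of $X$ enters nor treats this removal, and the one substantive step of the proof is left to a circular recursion rather than to the short direct argument above.
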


\begin{proof}
It is immediate for the restriction, we just prove it for the contraction.
Observe first that $\building_{\!/X} \subseteq \lattice_{> X}$.
Consider~$Y \in \lattice_{\ge X}$, and let~$\{B_1, \dots, B_k\} \eqdef \max(\building_{\le Y})$ so that~${\lattice_{\le Y} \cong \prod_{i \in [k]} \lattice_{\le B_i}}$.
Observe that~$\max\bigl( (\building_{\!/X})_{\le Y} \bigr) = \{C_1, \dots, C_k\} \ssm \{X\}$ where~${C_i \eqdef B_i \vee X}$.
Indeed, any~$C \in (\building_{\!/X})_{\le Y}$ is of the form~$B \vee X$ for some~$B \in \building_{\le Y}$.
By maximality of~$\{B_1, \dots, B_k\}$, there is~$i \in [k]$ such that~${B \le B_i}$, so that~$C = B \vee X \le B_i \vee X = C_i$ for some~$i \in [k]$.
Moreover, the~$C_i = B_i \vee X$ are pairwise incomparable by the product structure~$\lattice_{\le Y} \cong \prod_{i \in [k]} \lattice_{\le B_i}$.
Note that, as~$X \in \building$ and~$X \le Y$, there is~$\ell \in [k]$ with~$X \le B_\ell$.
If~$X = B_\ell$, then~$C_\ell = B_\ell \vee X = X$ is the minimal element of~$\lattice_{\ge X}$ hence does not belong to~$\building_{\!/X}$, and we have to remove it from~$\{C_1, \dots, C_k\}$ to get $\max\bigl( (\building_{\!/X})_{\le Y} \bigr)$.
Finally, again by the product structure~$\lattice_{\le Y} \cong \prod_{i \in [k]} \lattice_{\le B_i}$, we have
\[
[X ,Y] \cong [X, B_\ell] \times \prod_{i \ne \ell} \lattice_{\le B_i} \cong [X, B_\ell] \times \prod_{i \ne \ell} [X, B_i \vee X] \cong \prod_{i \in [k]} [X,C_i] \cong \prod_{C \in \max( (\building_{\!/X})_{\le Y})} [X,C],
\]
where the last isomorphism also holds when~$X = B_\ell$ since~$[X,B_\ell] = \{X\}$ is a singleton.
\end{proof}

\begin{proposition}
\label{prop:linksLatticeNestedComplex}
For any $X \in \building \ssm \connectedComponents(\building)$, the link of the $\lattice$-nested set~$\{X\}$ in~$\nestedComplex$ is isomorphic to the join~$\nestedComplex[\lattice_{\le X}][\building_{|X}] \join \nestedComplex[\lattice_{\ge X}][\building_{\!/X}]$.
\end{proposition}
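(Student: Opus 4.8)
The plan is to set up an explicit bijection between the $\lattice$-nested sets $\nested$ on $\building$ containing $X$ and the pairs $(\nested', \nested'')$ where $\nested'$ is an $\lattice_{\le X}$-nested set on $\building_{|X}$ and $\nested''$ is an $\lattice_{\ge X}$-nested set on $\building_{\!/X}$, and then check that this bijection is compatible with inclusion (so that it induces the claimed isomorphism of simplicial complexes on the links). Given $\nested$ with $X \in \nested$, I would send it to $\nested' \eqdef \nested_{\le X}$ and $\nested'' \eqdef \set{B \vee X}{B \in \nested, \, B \not\le X}$; conversely, given $(\nested',\nested'')$, I would recover $\nested \eqdef \nested' \cup \set{C \in \building}{C \vee X \in \nested'' \text{ and } C \text{ is maximal with this property below its image}}$ — more cleanly, I would use the product structure $\lattice_{\le Y} \cong \prod \lattice_{\le B_i}$ coming from the definition of $\lattice$-building set to canonically "un-join" each element of $\nested''$ against $X$. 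The first thing I would do, to make the last point precise, is record the following: for $B \in \building$ with $B \not\le X$ but $B$ comparable to $X$ (indeed $B \vee X$ is the relevant join), the assignment $B \mapsto B \vee X$ restricted to the blocks of $\building$ that are "transverse to $X$" is injective and has a well-defined inverse, because in the product decomposition of $\lattice_{\le B \vee X}$ the element $X$ corresponds to a subproduct and $B$ to the complementary coordinates.

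Once the bijection of vertex sets is in hand, the core verification is that a subset $\nested \ni X$ of $\building$ is $\lattice$-nested \emph{if and only if} $\nested_{\le X}$ is $\lattice_{\le X}$-nested on $\building_{|X}$ and the "contracted" image is $\lattice_{\ge X}$-nested on $\building_{\!/X}$. Here I would lean on the recursive characterization of $\lattice$-nested sets from \cref{rem:reccursiveDefinitionLatticeNestedSet} together with \cref{prop:nestedFactorization}: the condition $\max(\building_{\le B_1 \vee \dots \vee B_k}) = \{B_1,\dots,B_k\}$ for antichains in $\nested$ splits, via the isomorphisms $\phi_Y$, into the analogous condition for the part of the antichain below $X$ and the analogous condition for the part above $X$ after taking joins with $X$ — this is exactly the combinatorial heart of \cref{prop:restrictionContractionLatticeBuildingSet}, whose proof already exhibits how $\max(\building_{\le Y})$ pushes forward to $\max((\building_{\!/X})_{\le Y})$ under $B_i \mapsto B_i \vee X$ (dropping the copy of $X$). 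So I would reuse that computation: an antichain $\{B_1,\dots,B_k\}$ in $\nested$ with some $B_j$ below $X$ and others not can, using that $X \in \nested$ and the pairwise-compatibility forced by nestedness, be assumed after grouping to have all its "$\le X$" members handled inside $\building_{|X}$ and all its "$\not\le X$" members, once joined with $X$, handled inside $\building_{\!/X}$; conversely any would-be violating antichain upstairs or downstairs lifts back.

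Concretely, the steps in order: (1) establish the injectivity of $B \mapsto B \vee X$ on transverse blocks and name its inverse $\psi_X$, using the product structure; (2) define the candidate maps $\nested \mapsto (\nested_{\le X}, \set{B\vee X}{B\in\nested,\, B\not\le X})$ and its inverse built from $\psi_X$, and check they are mutually inverse as maps on vertex sets; (3) show a subset $\nested$ with $X\in\nested$ is $\lattice$-nested iff its two images are nested over the respective lower and upper intervals, invoking \cref{rem:reccursiveDefinitionLatticeNestedSet} and \cref{prop:nestedFactorization} and mirroring the join/decomposition bookkeeping of \cref{prop:restrictionContractionLatticeBuildingSet}; (4) observe that the bijection is inclusion-preserving in both directions and that $\nested$ contains $\connectedComponents(\building)$ and $X$ iff $\nested'$ contains $\connectedComponents(\building_{|X})$ and $\nested''$ contains $\connectedComponents(\building_{\!/X})$, hence the faces $\nested\ssm(\connectedComponents(\building)\cup\{X\})$ of the link correspond to the faces $(\nested'\ssm\connectedComponents(\building_{|X})) \sqcup (\nested''\ssm\connectedComponents(\building_{\!/X}))$ of the join, as desired.

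The main obstacle I expect is step (3) in the "only if" direction when an antichain in $\nested$ straddles $X$ — i.e.\ contains elements both $\le X$ and $\not\le X$. One must argue that such an antichain cannot produce a new violation that is invisible to the two pieces separately; the key point is that, because $X \in \nested$ and $\nested$ is nested, every block of $\nested$ incomparable to $X$ is actually disjoint-from-$X$ in the product sense (lives in coordinates complementary to those of $X$ in some $\lattice_{\le Y}$), so joining the straddling antichain amounts, after applying the relevant $\phi_Y^{-1}$, to separately joining an antichain in $\building_{|X}$ with the image of an antichain in $\building_{\!/X}$, and $\max(\building_{\le(\text{that join})})$ factors accordingly. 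Making this "disjoint in the product sense" statement precise and checking it interacts correctly with $\psi_X$ is where the real work lies; everything else is bookkeeping with the isomorphisms $\phi_Y$ already granted by \cref{def:latticeBuildingSet}.
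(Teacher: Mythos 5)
Your overall route is the same as the paper's: project a nested set $\nested \ni X$ to the pair $\bigl(\nested_{\le X}, \set{B \vee X}{B \in \nested, \, B \not\le X}\bigr)$, build an inverse by ``un-joining'' against $X$, and verify nestedness in both directions with \cref{prop:nestedFactorization} and the product decompositions granted by \cref{def:latticeBuildingSet}. However, your step (1), which you single out as the first thing to establish, is false as stated: the map $B \mapsto B \vee X$ is \emph{not} injective on the blocks incomparable to $X$. For instance, let $\lattice$ be the face lattice of a square with vertices $1,2,3,4$ in cyclic order, let $\building \eqdef \lattice_{>\botzero}$ be the maximal building set, and let $X \eqdef \{1\}$; then the vertex $\{3\}$ and the edges $\{2,3\}$ and $\{3,4\}$ are all incomparable to $X$ and all satisfy $B \vee X = \topone$. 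The product-structure justification you give only applies when $B \vee X \notin \building$: in that case one checks (via the decomposition $\lattice_{\le B \vee X} \cong \prod_{D \in \max(\building_{\le B\vee X})} \lattice_{\le D}$) that $\max(\building_{\le B\vee X}) = \{B, X\}$, so $B$ is indeed the factor complementary to $X$ and is unique. But when $B \vee X \in \building$ the interval $\lattice_{\le B \vee X}$ has the trivial decomposition, $X$ is not a subproduct, and there is nothing to ``un-join''.

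This gap propagates to your inverse in step (2), because the contracted family $\set{B \vee X}{B \in \nested, \, B \not\le X}$ genuinely contains blocks of $\building$: every $B \in \nested$ with $B > X$ contributes $B \vee X = B$ itself, and on such elements your $\psi_X$ is undefined. The repair is the case distinction used in the paper's map $\psi$: send $C \in \building_{\!/X}$ to $C$ itself when $C \in \building$, and to the unique $B \in \building$ with $\max(\building_{\le C}) = \{B, X\}$ when $C \notin \building$ (uniqueness follows from \cref{prop:nestedFactorization}, or from your complementary-coordinate argument, which is valid exactly in this case). With this case distinction inserted, and with the corresponding case analysis in the nestedness verification of your step (3) (antichains containing an element above $X$, equal to $X$, or neither), your plan coincides with the paper's proof; the remaining bookkeeping in your step (4) is fine.
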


\begin{proof}
For~$X \in \nested \in \nestedComplex$, define~$\phi_{|X}(\nested) \eqdef \nested_{< X}$ and~$\phi_{\!/X}(\nested) \eqdef \set{B \vee X}{B \in \nested \text{ and } B \not\le X}$.
It is immediate that~$\phi_{|X} \in \nestedComplex[\lattice_{\le X}][\building_{|X}]$, and we prove that~$\phi_{\!/X}(\nested) \in \nestedComplex[\lattice_{\ge X}][\building_{\!/X}]$.
Consider~$k \ge 2$ pairwise incomparable elements~$C_1, \dots, C_k$ of~$\phi_{\!/X}(\nested)$, and let~$C = C_1 \vee \dots \vee C_k$.
Let~$B_i \in \nested$ such that~$C_i = B_i \vee X$.
Note that~$C = B_1 \vee \dots \vee B_k \vee X$ and that~$B_1, \dots, B_k$ are pairwise incomparable (as~$C_1, \dots, C_k$ are).
Hence, as~$\{B_1, \dots, B_k, X\} \subseteq \nested$, we obtain by applying \cref{prop:nestedFactorization} that
\begin{itemize}
\item either $X$ is incomparable with all~$B_i$, then~$\max(\building_{\le C}) = \{B_1, \dots, B_k, X\}$, 
\item or~$X \le B_i$ for some~$i \in [k]$, then~$C = B_1 \vee \dots \vee B_k$ and~$\max(\building_{\le C}) = \{B_1, \dots, B_k\}$.
\end{itemize}
Assume now by means of contradiction that~$C = C_1 \vee \dots \vee C_k \in \building_{\!/X}$.
Let~$B \in \building$ be such that~$C = B \vee X$. Hence, again by \cref{prop:nestedFactorization},
\begin{itemize}
\item either~$C \in \building$ so that~$\max(\building_{\le C}) = \{C\}$, 
\item or $\{B,X\}$ are incomparable and $\lattice$-nested, so that~$\max(\building_{\le C}) = \{B, X\}$.
\end{itemize}
As~$B_i \ne X$ for all~$i \in [k]$, neither~$\{C\}$ nor~$\{B,X\}$ is of the form~$\{B_1, \dots, B_k, X\}$ or~$\{B_1, \dots, B_k\}$.
This contradicts our two previous descriptions of~$\max(\building_{\le C})$.
We thus conclude that~$\nested_{\!/X}$ is indeed $\lattice_{\ge X}$-nested.
Hence, the map~$\nested \mapsto \phi_{|X}(\nested) \join \phi_{\!/X}(\nested)$ sends the link of~$\{X\}$ in~$\nestedComplex$ to the join~$\nestedComplex[\lattice_{\le X}][\building_{|X}] \join \nestedComplex[\lattice_{\ge X}][\building_{\!/X}]$.
To prove that it is an isomorphism, we construct the reverse map.

For~$C \in \building_{\!/X}$, define~$\psi(C) \eqdef C$ if~$C \in \building$, and $\psi(C) \eqdef B$ if~$C \notin \building$ and~$C = B \vee X$ with~$B \in \building$ (this~$B$ is then unique by \cref{prop:nestedFactorization}).
Given~$\nested_{|X}$ in~$\nestedComplex[\lattice_{\le X}][\building_{|X}]$ and~$\nested_{\!/X} \in \nestedComplex[\lattice_{\ge X}][\building_{\!/X}]$,~define
\(
\nested \eqdef \nested_{|X} \cup  \{X\} \cup \psi(\nested_{\!/X}).
\)
Note that~$X \in \nested \subseteq \building$ and that~$\phi_{|X}(\nested) = \nested_{|X}$ while~${\phi_{\!/X}(\nested) = \nested_{\!/X}}$.
We thus just need to prove that~$\nested$ is $\lattice$-nested.
Let~$B_1, \dots, B_k \in \nested$ pairwise incomparable, and let~$B = B_1 \vee \dots \vee B_k$.
We distinguish three cases, depending on whether there is~$\ell \in [k]$ such that~$B_\ell > X$, or~$B_\ell = X$, or not.

Assume that there is~$\ell \in [k]$ such that~$B_\ell > X$.
As~$B_1, \dots, B_k$ are pairwise incomparable, we have~$B_i \not\le X$ for all~$i \in [k]$.
Hence,~$C_i \eqdef B_i \vee X$ is in~$\nested_{\!/X}$ and~$B_i = \psi(C_i)$.
Moreover, assume that there is~$C_i \le C_j$.
If~$C_j \in \building$, then~$B_j = \psi(C_j) = C_j$ so that~${B_i \le C_i \le C_j = B_j}$.
If~$C_j \notin \building$, then~$\max(\building_{\le C_j}) = \{B_j, X\}$ by \cref{prop:nestedFactorization}, and~$B_i \in \building$ with~$B_i \le C_j$ and~${B_i \not\le X}$ implies that~$B_i \le B_j$.
As we assumed that~$B_1, \dots, B_k$ are pairwise incomparable, this implies that~$C_1, \dots, C_k$ are pairwise incomparable.
Since~$C_1, \dots, C_k \in \nested_{\!/X}$ we thus obtain \linebreak that~${C_1 \vee \dots \vee C_k}$ is not in~$\building_{\!/X}$, hence not in~$\building$.
As~$B = B_1 \vee \dots \vee B_k \ge X$ since~$B_\ell \ge X$, we obtain that~${B = B_1 \vee \dots \vee B_k = C_1 \vee \dots \vee C_k}$ is not in~$\building$.

The case where there is~$\ell \in [k]$ with~$B_\ell = X$ is similar.
If~$k > 2$, then the same proof holds, except that we remove~$C_\ell = B_\ell = X$ from~$C_1, \dots, C_k$.
If~$k = 2$, then~$B = B_i \vee X$ with~$B_i$ and~$X$ incomparable, thus~$B_i = \psi(B_i \vee X) \ne B_i \vee X$, so that~$B = B_i \vee X \notin \building$.

Assume now that~$B_i \not\ge X$ for all~$i \in [k]$.
Assume without loss of generality that~$B_i \not\le X$ if and only if~$i \in [\ell]$.
If~$\ell = 0$, then~$B_1, \dots, B_k \in \nested_{|X}$ so that~$B \notin \building$.
Otherwise, let~$Y \eqdef B_1 \vee \dots \vee B_\ell \vee X$.
By the previous case, we have~$Y \notin \building$, hence~$\max(\building_{\le Y}) = \{B_1, \dots, B_\ell, X\}$ by \cref{prop:nestedFactorization}.
As~$B_i < B \eqdef B_1 \vee \dots \vee B_k \le Y$, we obtain that~$B \notin \building$.

We conclude that the map~$\nested \mapsto \phi_{|X}(\nested) \join \phi_{\!/X}(\nested)$ is an isomorphism from the link of~$\{X\}$ in~$\nestedComplex$ to the join~$\nestedComplex[\lattice_{\le X}][\building_{|X}] \join \nestedComplex[\lattice_{\ge X}][\building_{\!/X}]$.
\end{proof}

We finally deduce from \cref{prop:linksLatticeNestedComplex} the links of arbitrary nested sets.
We first need to extend the contraction to joins of blocks.

\begin{lemma}
For~$X,Y \in \lattice$, we have
\(
\building_{\!/X \vee Y} = (\building_{\!/X})_{\!/X \vee Y} = (\building_{\!/Y})_{\!/X \vee Y}. 
\)
Hence, $\building_{\!/X}$ is an \mbox{$\lattice_{\ge X}$-building} set for any~$X \eqdef X_1 \vee \dots \vee X_k$ with~$X_1, \dots, X_k \in \building$.
\end{lemma}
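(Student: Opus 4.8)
The plan is to first establish the chain of equalities $\building_{\!/X \vee Y} = (\building_{\!/X})_{\!/X \vee Y} = (\building_{\!/Y})_{\!/X \vee Y}$ by a direct unfolding of \cref{def:restrictionContractionLatticeBuildingSet}, and then to deduce the final claim by induction on the number~$k$ of joinands, each inductive step being a single application of \cref{prop:restrictionContractionLatticeBuildingSet} combined with the identity just proved. The first identity holds for all $X, Y \in \lattice$; the hypothesis that the $X_i$ are blocks of~$\building$ enters only in the final assertion.

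For the identity, I would expand both sides. Writing $Z \eqdef X \vee Y$ and using only that $X \le Z$, the relevant elementary facts are that for every $B \in \building$ one has $B \vee X \le Z$ if and only if $B \le Z$, and that $B \not\le Z$ already forces $B \not\le X$. Hence
\[
(\building_{\!/X})_{\!/Z} = \set{(B \vee X) \vee Z}{B \in \building, \; B \not\le X, \; B \vee X \not\le Z} = \set{B \vee Z}{B \in \building, \; B \not\le Z} = \building_{\!/Z},
\]
and the same computation with $Y$ in place of $X$ (using $X \vee Y = Y \vee X$) gives $(\building_{\!/Y})_{\!/Z} = \building_{\!/Z}$.

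For the final claim, I would argue by induction on~$k$. The case $k = 1$ is exactly \cref{prop:restrictionContractionLatticeBuildingSet}. For $k \ge 2$, set $X' \eqdef X_1 \vee \dots \vee X_{k-1}$, so that $X = X' \vee X_k$; by the inductive hypothesis $\building_{\!/X'}$ is an $\lattice_{\ge X'}$-building set. If $X_k \le X'$, then $X = X'$ and there is nothing left to prove. Otherwise $X_k \in \building$ with $X_k \not\le X'$, so $X = X_k \vee X'$ is a block of~$\building_{\!/X'}$, and \cref{prop:restrictionContractionLatticeBuildingSet} applied to the building set $\building_{\!/X'}$ over the lattice $\lattice_{\ge X'}$ at the block~$X$ shows that $(\building_{\!/X'})_{\!/X}$ is a building set over $(\lattice_{\ge X'})_{\ge X} = \lattice_{\ge X}$. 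Finally, the identity with $X'$ and $X_k$ in the roles of $X$ and $Y$ yields $(\building_{\!/X'})_{\!/X} = (\building_{\!/X'})_{\!/X' \vee X_k} = \building_{\!/X' \vee X_k} = \building_{\!/X}$, which closes the induction.

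I do not expect a genuine obstacle: the argument is essentially bookkeeping. The two points that warrant a moment of care are the absorption facts $B \vee X \le X \vee Y \Leftrightarrow B \le X \vee Y$ in an arbitrary (possibly non-modular) lattice, which are precisely what make the first computation collapse, and the separate handling of the degenerate case $X_k \le X'$ in the induction, which is needed so that $X$ is indeed an element of $\building_{\!/X'}$ before \cref{prop:restrictionContractionLatticeBuildingSet} can be applied.
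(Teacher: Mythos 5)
Your proof is correct and takes essentially the same route as the paper: the identity is obtained by the very same unfolding of the contraction definition, justified by the same two absorption facts ($B \le X \vee Y \Leftrightarrow B \vee X \le X \vee Y$, and $B \not\le X \vee Y \Rightarrow B \not\le X$). The paper leaves the deduction of the second claim implicit, and your induction (applying \cref{prop:restrictionContractionLatticeBuildingSet} to the block $X' \vee X_k$ of $\building_{\!/X'}$ and handling the degenerate case $X_k \le X'$) is exactly the intended argument.
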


\begin{proof}
Observe first that~$X \vee Y \in \lattice_{\ge X}$ so that~$(\building_{\!/X})_{\!/X \vee Y}$ is well-defined.
Moreover,
\begin{align*}
(\building_{\!/X})_{\!/X \vee Y} 
& = \set{B' \vee X \vee Y}{B' \in \building_{\!/X} \text{ and } B' \not\le X \vee Y} \\
& = \set{B \vee X \vee Y}{B \in \building \text{ and } B \not\le X \text{ and } B \vee X \not\le X \vee Y} \\
& = \set{B \vee X \vee Y}{B \in \building \text{ and } B \not\le X \vee Y} = \building_{\!/X \vee Y}.
\end{align*}
The last equality holds since~$B \le X \vee Y$ implies~$B \vee X \le X \vee Y$ and conversely both $B \le X$ and~$B \vee X \le X \vee Y$ imply~$B \le X \vee Y$.
\end{proof}

\begin{definition}
For an $\lattice$-building set~$\building$, an $\lattice$-nested set~$\nested$ on~$\building$ and~$X \in \nested$, we define
\[
\lattice_{X \in \nested} \eqdef [Y, X]
\qquad\text{and}\qquad
\building_{X \in \nested} \eqdef (\building_{|X})_{\!/Y}
\]
where~$Y = \bigvee_{Z \in \nested, \; Z < X} Z$.
\end{definition}

\begin{proposition}
The link of a nested set~$\nested$ in the nested complex~$\nestedComplex$ is isomorphic to the join of the nested complexes~$\nestedComplex[\lattice_{X \in \nested}][\building_{X \in \nested}]$ for all~$X \in \nested$.
\end{proposition}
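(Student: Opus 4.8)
The plan is to iterate \cref{prop:linksLatticeNestedComplex}, by induction on $d \eqdef |\nested \ssm \connectedComponents(\building)|$, the number of vertices of the face $\nested$ of $\nestedComplex$. For $d = 0$, \ie $\nested = \connectedComponents(\building)$, the link of $\nested$ is all of $\nestedComplex$; on the other side, for every $X \in \connectedComponents(\building)$ no $Z \in \connectedComponents(\building)$ lies strictly below $X$, so the join $Y = \bigvee_{Z \in \nested,\, Z < X} Z$ is empty and $\lattice_{X \in \nested} = [\botzero, X] = \lattice_{\le X}$, $\building_{X \in \nested} = (\building_{|X})_{\!/\botzero} = \building_{|X}$. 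Hence the claim reduces to the product decomposition $\nestedComplex \cong \bigJoin_{X \in \connectedComponents(\building)} \nestedComplex[\lattice_{\le X}][\building_{|X}]$, which follows from the isomorphism $\lattice \cong \prod_{B \in \connectedComponents(\building)} \lattice_{\le B}$ of \cref{rem:reccursiveDefinitionLatticeBuildingSet} together with the recursive description of nested sets in \cref{rem:reccursiveDefinitionLatticeNestedSet}: a nested set on a disconnected building set decomposes as a disjoint union of nested sets on the building sets of its connected components.

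For the inductive step, choose $X$ \emph{minimal} in $\nested \ssm \connectedComponents(\building)$ and set $\nested' \eqdef \nested \ssm \{X\}$, still a nested set on $\building$. Since links of faces are computed iteratively, the link of $\nested$ in $\nestedComplex$ equals the link of the face $\nested'$ inside $\link_{\nestedComplex}(\{X\})$, which by \cref{prop:linksLatticeNestedComplex} is isomorphic to $A \join B$ with $A \eqdef \nestedComplex[\lattice_{\le X}][\building_{|X}]$ and $B \eqdef \nestedComplex[\lattice_{\ge X}][\building_{\!/X}]$, via the map $\nested \mapsto \phi_{|X}(\nested) \sqcup \phi_{\!/X}(\nested)$ from the proof of that proposition. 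Because $X$ is minimal in $\nested \ssm \connectedComponents(\building)$ (and no connected component lies below a block), no element of $\nested$ lies strictly below $X$, so this isomorphism carries the face $\nested'$ entirely into the factor $B$, onto the nested set $\nested'' \eqdef \phi_{\!/X}(\nested') = \set{Z \vee X}{Z \in \nested'}$ on $\building_{\!/X}$. As the link in $A \join B$ of a face supported on $B$ is $A$ joined with its link in $B$, we get $\link_{\nestedComplex}(\nested) \cong A \join \link_B(\nested'')$. Moreover $\phi_{\!/X}$ restricts to a bijection $\nested' \to \nested''$ sending $\connectedComponents(\building)$ onto $\connectedComponents(\building_{\!/X})$ (both facts follow from \cref{prop:nestedFactorization} and the product structure of $\lattice$), so $|\nested'' \ssm \connectedComponents(\building_{\!/X})| = d - 1$, and the inductive hypothesis applies to $\building_{\!/X}$ and $\nested''$, yielding $\link_B(\nested'') \cong \bigJoin_{C \in \nested''} \nestedComplex[(\lattice_{\ge X})_{C \in \nested''}][(\building_{\!/X})_{C \in \nested''}]$.

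It then remains to match the factors under the bijection $\nested \leftrightarrow \{X\} \sqcup \nested''$. The $X$-factor on the left is $A = \nestedComplex[\lattice_{\le X}][\building_{|X}]$, which equals $\nestedComplex[\lattice_{X \in \nested}][\building_{X \in \nested}]$ exactly as in the base case (the defining join $Y$ is again empty). For $Z \in \nested'$, set $C \eqdef Z \vee X \in \nested''$; using that $W \vee X \le Z \vee X$ if and only if $W \le Z$ for $W, Z \in \nested'$ (established inside the proof of \cref{prop:linksLatticeNestedComplex}), one computes $\bigvee_{W' \in \nested'',\, W' < C} W' = X \vee \bigvee_{W \in \nested,\, W < Z} W$, so that the order-preserving map $U \mapsto U \vee X$ restricts to an isomorphism from $\lattice_{Z \in \nested} = [Y_Z, Z]$, where $Y_Z \eqdef \bigvee_{W \in \nested,\, W < Z} W$, onto $(\lattice_{\ge X})_{C \in \nested''} = [Y_Z \vee X, C]$: the two intervals literally coincide when $X < Z$, and when $X$ and $Z$ are incomparable the isomorphism is induced by $\lattice_{\le Z \vee X} \cong \lattice_{\le Z} \times \lattice_{\le X}$ from \cref{prop:nestedFactorization}. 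One finally checks that this isomorphism carries $\building_{Z \in \nested} = (\building_{|Z})_{\!/Y_Z}$ onto $(\building_{\!/X})_{C \in \nested''}$, using the compatibility between restriction and contraction and the extension of contraction to joins of blocks recorded just above the statement; assembling the three identifications gives the desired join decomposition of $\link_{\nestedComplex}(\nested)$. The conceptual skeleton is routine once \cref{prop:linksLatticeNestedComplex} is available; the technical heart, and main obstacle, is this last verification that $U \mapsto U \vee X$ transports the pair $\bigl([Y_Z, Z],\, \building_{Z \in \nested}\bigr)$ onto $\bigl((\lattice_{\ge X})_{C \in \nested''},\, (\building_{\!/X})_{C \in \nested''}\bigr)$, which requires carefully unwinding how iterated contractions and restrictions interact through these product decompositions.
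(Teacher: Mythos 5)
Your argument is correct and follows essentially the same route as the paper, whose proof is simply to apply \cref{prop:linksLatticeNestedComplex} iteratively; your induction on $|\nested \ssm \connectedComponents(\building)|$ with a minimal block $X$ is exactly that iteration, spelled out. The factor-matching step (that $U \mapsto U \vee X$ carries $\bigl([Y_Z,Z],\building_{Z\in\nested}\bigr)$ onto $\bigl((\lattice_{\ge X})_{C\in\nested''},(\building_{\!/X})_{C\in\nested''}\bigr)$, using $\max(\building_{\le Z\vee X})=\{Z,X\}$ from \cref{prop:nestedFactorization}) checks out and just makes explicit what the paper leaves implicit.
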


\begin{proof}
Apply \cref{prop:linksLatticeNestedComplex} iteratively.
\end{proof}

\begin{remark}
\label{rem:contraction}
Note that the contractions of \cref{def:restrictionContractionBuildingSet,def:restrictionContractionLatticeBuildingSet} slightly differ.
Namely, for boolean building sets we delete~$R$ from all~$B \in \building$ with~$B \not\subseteq R$, while for lattice building sets we join~$X$ to all~$B \in \building$ with~$B \not\le X$.
It is equivalent for boolean building sets since the interval above~$R$ in the boolean lattice on~$[n]$ is isomorphic to the boolean lattice on~$[n] \ssm R$.
We will use the same identification for contractions of building sets over atomic lattices, in order to keep compatibility with the contraction on oriented matroids of \cref{def:restrictionContractionOM}.
In contrast, this identification makes no sense for contractions of building sets on arbitrary lattices.
\end{remark}

%%%%%%%%%%%%%%%%

\subsection{Combinatorial blow-ups}
\label{subsec:combinatorialblowup}

The operation of combinatorial blow-ups of meet semilattices was introduced in \cite[Def.~3.1]{FeichtnerKozlov2004} to give a recursive description of nested complexes. 
Following \cite[Sect.~4.2]{FeichtnerKozlov2004}, we will see in \cref{exm:stellarSubdivisionTruncation} that combinatorial blow-ups on face lattices of polytopes correspond to stellar subdivisions, or dually truncations.

\begin{definition}[{\cite[Def.~3.1]{FeichtnerKozlov2004}}]
\label{def:combinatorialBlowUps}
	Let $\semi$ be a meet semilattice, and let $X\in\semi$. We define the \defn{combinatorial blow-up} of $\semi$ at $X$ as the poset $\Bl$ whose elements are
	\begin{itemize}
		\item $Y\in\semi$ such that $Y\not \geq_\semi X$,
		\item $(X,Y)$ for $Y\in \semi$  such that $Y\not \geq_\semi X$ and $Y \vee_\semi X$ exists,
	\end{itemize}
and whose order relations are 
	\begin{itemize}
	\item $Y >_{\Bl} Z$ if $Y >_\semi Z$,
	\item $(X,Y) >_{\Bl} (X,Z)$ if $Y >_\semi Z$,
	\item $(X,Y) >_{\Bl} Z$ if $Y \ge_\semi Z$,
\end{itemize}	
where in all three cases $Y, Z \not\ge_\semi X$.
\end{definition}

We state the following result in terms of semilattices as in \cref{rmk:semilattice}, which is the original formulation from \cite{FeichtnerKozlov2004}, as this is the form we will use latter.

\begin{theorem}[{\cite[Thm.~3.4]{FeichtnerKozlov2004}}]\label{thm:blowup}
	Let $\semi$ be a meet semilattice, and let $\building=\{B_1,\dots,B_k\} \subseteq \semi$ be an $\semi$-semibuilding set ordered so that if $B_i\geq_\semi B_j$ then $i<j$. Consider the sequence of combinatorial blow-ups
	\[\Bl[\semi][\building] \eqdef \Bl[{\Bl[\cdots {\Bl[\semi][B_{1}]}][B_{k-1}]}][B_k].\]
	Then $\Bl[\semi][\building]$ is isomorphic to the face poset of the $\semi$-seminested complex~$\seminestedComplex_{\semi}(\building)$.
\end{theorem}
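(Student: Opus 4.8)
The plan is to argue by induction on the length of the blow-up sequence, carrying along an explicit combinatorial model of the intermediate posets. For any finite sequence $G_1, \dots, G_m$ of elements of a meet-semilattice $\semi$ with $G_i \ge_\semi G_j \implies i < j$ --- which covers an ordered $\semi$-building set --- I would describe the poset obtained by blowing up successively at $G_1, \dots, G_m$ (in this order, as in the statement) by an explicit model $P_m$: its elements are the pairs $(\nested, Z)$, where $\nested \subseteq \{G_1, \dots, G_m\}$ records the exceptional divisors one lies on and $Z \in \semi$ is a residual position, subject to the compatibility conditions that correctly model the blow-up --- that $Z$ lies above none of $G_1, \dots, G_m$, that the joins needed to make sense of lying on the divisor of each $G_i \in \nested$ exist, and the analogue for $\nested$ of the ``nested'' condition --- with order $(\nested, Z) \le (\nested', Z')$ exactly when $\nested \subseteq \nested'$ and $Z$ lies below the part of $Z'$ not absorbed by the new divisors $\nested' \ssm \nested$. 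A preliminary observation, needed so that the iterated blow-up is well-defined, is that a combinatorial blow-up of a meet-semilattice is again a meet-semilattice; this is immediate from \cref{def:combinatorialBlowUps}, the meet of two elements being $Y \wedge_\semi Z$ when both are bare, $(X, Y \wedge_\semi Z)$ (or $Y \wedge_\semi Z$ if the former is undefined) when both are pairs $(X,Y)$ and $(X,Z)$, and $Y \wedge_\semi Z$ in the mixed case.

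The induction rests on the single-blow-up step: one more combinatorial blow-up, at $G_{m+1}$, turns $P_m$ into $P_{m+1}$. Unwinding \cref{def:combinatorialBlowUps}, the elements $(\nested, Z)$ of $P_m$ not lying above $G_{m+1}$ survive unchanged, while the new pairs are the $(G_{m+1}, (\nested, Z))$ for which the join of $Z$ with $G_{m+1}$ exists; the hypothesis that $G_{m+1}$ is not strictly above any earlier $G_i$ is precisely what keeps the new exceptional divisor from interfering with the old ones, so that the outcome is $P_{m+1}$. The base cases are $P_0 = \semi$ and $P_1 = \Bl[\semi][G_1]$, the latter being \cref{def:combinatorialBlowUps} verbatim.

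It then remains to specialize the sequence to an ordered $\semi$-building set $\building$ (so $m = k = |\building|$) and to identify $P_k$ with the face poset of $\seminestedComplex_\semi(\building)$, which is where the building set axiom \cref{def:latticeBuildingSet} enters. First, every element $(\nested, Z)$ of $P_k$ has $Z = \botzero_\semi$: blowing up at a block discards every element lying above it, so a surviving residual lies above no block of $\building$, and since $\building$ contains all atoms of $\semi$ this forces $Z = \botzero_\semi$. Second, with $Z = \botzero_\semi$, the pair $(\nested, \botzero_\semi)$ occurs in $P_k$ precisely when $\nested$ is a nested set of $\building^+$ not containing $\topone$ --- this is the content of \cref{prop:nestedFactorization}, equivalently of the recursive description \cref{rem:reccursiveDefinitionLatticeNestedSet} --- and the order of $P_k$ then restricts to inclusion. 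Hence $(\nested, \botzero_\semi) \mapsto \nested$ identifies $P_k$ with the face poset of $\nestedComplex[\semi^+][\building^+] = \seminestedComplex_\semi(\building)$ (\cref{rmk:semilattice}), which is the assertion. This is the combinatorial counterpart of the statement, recalled in \cref{exm:stellarSubdivisionTruncation}, that on face posets combinatorial blow-ups realize stellar subdivisions, dually face truncations; the same analysis also exhibits $\seminestedComplex_\semi(\building)$ as obtained from the seminested complex of the minimal $\semi$-building set by one stellar subdivision for each non-irreducible block of $\building$.

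The main obstacle is the single-blow-up step, and above all getting the model $P_m$ right --- the precise compatibility conditions on $(\nested, Z)$ and the precise order relation --- so that the update rule under one more blow-up closes up cleanly: matching the order of $\Bl[P_m][G_{m+1}]$ against that of $P_{m+1}$ requires running through the bare/bare, bare/pair and pair/pair cases of \cref{def:combinatorialBlowUps}, and \cref{prop:linksLatticeNestedComplex} is the natural device for organizing this, as it predicts the links that must appear. The second delicate point is the residual-trivialization for full building sets, which genuinely uses the product-of-intervals axiom of \cref{def:latticeBuildingSet} rather than mere closure under the relevant joins.
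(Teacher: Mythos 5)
The paper does not prove this statement: \cref{thm:blowup} is imported verbatim from~\cite[Thm.~3.4]{FeichtnerKozlov2004}, so there is no in-paper argument to compare with. (For the record, the source proves it by a different induction: one peels off a maximal block $B$, shows that $\Bl[\semi][B]$ again carries a semibuilding set built from $\building\ssm\{B\}$ with isomorphic seminested complex, and iterates; your plan instead keeps $\semi$ fixed and tries to describe every intermediate poset of the blow-up sequence by a global model.)

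Judged on its own, your proposal has a genuine gap: the model $P_m$, which is the entire content of the induction, is never actually defined. You describe its elements $(\nested,Z)$ only by intention (``subject to the compatibility conditions that correctly model the blow-up'', ``the analogue for $\nested$ of the nested condition'', ``$Z$ lies below the part of $Z'$ not absorbed by the new divisors''), and then assert that one more blow-up turns $P_m$ into $P_{m+1}$, with the role of the ordering hypothesis reduced to a slogan. But for $m<k$ the set $\{B_1,\dots,B_m\}$ need not be a semibuilding set of $\semi$, so there is no off-the-shelf description of $P_m$ to invoke: writing down exactly which joins must exist, which antichain condition $\nested$ must satisfy relative to the blocks already blown up, how the order handles the bare/bare, bare/pair and pair/pair cases of \cref{def:combinatorialBlowUps}, and then checking $\Bl[P_m][B_{m+1}]\cong P_{m+1}$ against those conditions \emph{is} the theorem; you explicitly defer it as ``the main obstacle''. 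The same applies to the final identification: claiming that $(\nested,\botzero_\semi)$ occurs in $P_k$ precisely when $\nested\cup\{\topone\}$ is an $\semi^+$-nested set of $\building^+$ ``is the content of'' \cref{prop:nestedFactorization} is too quick --- that proposition characterizes nested antichains inside a fixed lattice and says nothing about which labels survive the iterated blow-up; this equivalence has to be extracted from the (missing) inductive description. The peripheral observations are fine: the blow-up of a finite meet-semilattice is again a meet-semilattice, and your fallback case for the meet of $(X,Y)$ and $(X,Z)$ is in fact vacuous, since $(Y\wedge_\semi Z)\vee_\semi X$ exists whenever $Y\vee_\semi X$ does (it is the meet of the nonempty set of common upper bounds). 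But as written the proposal is an outline of a strategy, not a proof.
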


%%%%%%%%%%%%%%%%

\subsection{Operations}
\label{subsec:operationsNestedComplex}

We now briefly discuss two operations on lattices, building sets, and nested complexes, which enable us to restrict our proofs to connected building sets (we will see later in \cref{sec:facialBuildingSetsFacialNestedComplexes} that these operations can also be performed at the level of face lattices of oriented matroids).
Again, these operations are not original, but we could not find the results below explicitly in the lattice generality in the existing literature.

\begin{definition}
The \defn{Cartesian product}~$\lattice \times \lattice'$ of two lattices~$\lattice$ and~$\lattice'$ is the lattice where for all~$X, Y \in \lattice$ and~$X', Y' \in \lattice'$, we have
\begin{itemize}
\item $(X,X') \le_{\lattice \times \lattice'} (Y,Y')$ if and only if~$X \le_\lattice Y$ and~$X' \le_{\lattice'} Y'$,
\item the join~$(X,X') \vee_{\lattice \times \lattice'} (Y,Y') = (X \vee_\lattice Y, X' \vee_{\lattice'} Y')$, and 
\item the meet~$(X,X') \wedge_{\lattice \times \lattice'} (Y,Y') = (X \wedge_\lattice Y, X' \wedge_{\lattice'} Y')$.
\end{itemize}
\end{definition}

\begin{proposition}
\label{prop:directSumLatticeBuildingSets}
If~$\building$ is an $\lattice$-building set and~$\building'$ is an $\lattice'$-building set, then
\[
\building \oplus \building' \eqdef \bigl( \building \times \{\botzero_{\lattice'}\} \bigr) \sqcup \bigl( \{\botzero_\lattice\} \times \building' \bigr)
\]
is an $(\lattice \times \lattice')$-building set, whose $(\lattice \times \lattice')$-nested complex is isomorphic to~$\nestedComplex \join \nestedComplex[\lattice'][\building']$.
\end{proposition}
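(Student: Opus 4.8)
The plan is to verify the two assertions separately, using the recursive characterizations from \cref{rem:reccursiveDefinitionLatticeBuildingSet} and \cref{rem:reccursiveDefinitionLatticeNestedSet}. First I would record the basic structural facts about $\lattice \times \lattice'$: its bottom element is $(\botzero_\lattice, \botzero_{\lattice'})$, its atoms are exactly the pairs $(a, \botzero_{\lattice'})$ for $a$ an atom of $\lattice$ together with the pairs $(\botzero_\lattice, a')$ for $a'$ an atom of $\lattice'$, and for any $(Y,Y') \in \lattice \times \lattice'$ the lower interval factors as $(\lattice \times \lattice')_{\le (Y,Y')} \cong \lattice_{\le Y} \times \lattice'_{\le Y'}$. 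In particular every element of $\building \oplus \building'$ lies strictly above $(\botzero_\lattice,\botzero_{\lattice'})$, so $\building \oplus \building' \subseteq (\lattice \times \lattice')_{> \botzero}$.

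To show $\building \oplus \building'$ is an $(\lattice \times \lattice')$-building set, I would check the defining isomorphism of \cref{def:latticeBuildingSet} directly for each $(Y,Y')$. Writing $\{B_1,\dots,B_k\} \eqdef \max(\building_{\le Y})$ and $\{B_1',\dots,B_\ell'\} \eqdef \max(\building'_{\le Y'})$, the key observation is that
\[
\max\bigl((\building \oplus \building')_{\le (Y,Y')}\bigr) = \bigset{(B_i, \botzero_{\lattice'})}{i \in [k]} \sqcup \bigset{(\botzero_\lattice, B_j')}{j \in [\ell]},
\]
because an element $(B,\botzero_{\lattice'})$ of $\building \oplus \building'$ is $\le (Y,Y')$ iff $B \le Y$, and maximality is computed coordinatewise. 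Then, using the product structure $(\lattice \times \lattice')_{\le (X, \botzero_{\lattice'})} \cong \lattice_{\le X}$ and the hypotheses that $\building$, $\building'$ are building sets, one assembles
\[
(\lattice \times \lattice')_{\le (Y,Y')} \cong \lattice_{\le Y} \times \lattice'_{\le Y'} \cong \Bigl(\prod_{i \in [k]} \lattice_{\le B_i}\Bigr) \times \Bigl(\prod_{j \in [\ell]} \lattice'_{\le B_j'}\Bigr) \cong \prod_{C \in \max((\building \oplus \building')_{\le (Y,Y')})} (\lattice \times \lattice')_{\le C},
\]
and one checks the $j$-th component of the composite isomorphism is the inclusion of the interval, as required. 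Alternatively, and perhaps more cleanly, one can invoke the recursive criterion of \cref{rem:reccursiveDefinitionLatticeBuildingSet} and induct on the height of the lattice.

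For the nested complex, note first that $\connectedComponents(\building \oplus \building') = \connectedComponents(\building) \times \{\botzero_{\lattice'}\} \sqcup \{\botzero_\lattice\} \times \connectedComponents(\building')$, so a face of $\nestedComplex[\lattice \times \lattice'][\building \oplus \building']$ is a set consisting of some blocks of type $(B,\botzero_{\lattice'})$ with $B \in \building \ssm \connectedComponents(\building)$ and some of type $(\botzero_\lattice, B')$ with $B' \in \building' \ssm \connectedComponents(\building')$. The crucial point is that blocks of the first type are pairwise incomparable only when the corresponding $B$'s are, the same for the second type, and a block of the first type is always incomparable with one of the second type; moreover the join in $\lattice \times \lattice'$ of a mixed family $\{(B_i,\botzero_{\lattice'})\}_{i} \cup \{(\botzero_\lattice, B_j')\}_j$ equals $(\bigvee_i B_i, \bigvee_j B_j')$, which lies in $\building \oplus \building'$ only if it has a zero coordinate, i.e.\ only if one of the two families is empty (since $\building \subseteq \lattice_{>\botzero}$). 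Hence the antichain-join condition of \cref{def:latticeNestedSet} for $\building \oplus \building'$ decouples into the condition for $\building$ on the first-coordinate blocks and the condition for $\building'$ on the second-coordinate blocks. This gives the bijection $\nested \mapsto (\{B : (B,\botzero_{\lattice'}) \in \nested\},\ \{B' : (\botzero_\lattice, B') \in \nested\})$ between faces of $\nestedComplex[\lattice \times \lattice'][\building \oplus \building']$ and pairs of faces of $\nestedComplex$ and $\nestedComplex[\lattice'][\building']$, which is exactly an isomorphism onto the join $\nestedComplex \join \nestedComplex[\lattice'][\building']$.

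The main obstacle I anticipate is bookkeeping rather than conceptual: one must be careful that the isomorphism $\phi_{(Y,Y')}$ in \cref{def:latticeBuildingSet} is not merely an abstract poset isomorphism but the specific one whose components are interval inclusions, so the product-of-products rearrangement has to be checked to be compatible with these inclusions (it is, because each factor isomorphism is). On the nested side, the only subtlety is the edge case $k=1$ or $\ell=0$ in the decoupling argument — i.e.\ making sure a pure (non-mixed) family is handled by the hypothesis on the relevant single factor — but this is immediate once one observes that $\building \oplus \building'$ restricted to comparisons among first-coordinate blocks is just a relabeled copy of $\building$.
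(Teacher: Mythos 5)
Your proposal is correct and follows essentially the same route as the paper: compute $\max\bigl((\building \oplus \building')_{\le (Y,Y')}\bigr)$ coordinatewise and assemble the product of lower intervals to verify the building-set axiom, then use the coordinatewise join to decouple the nested condition into the two factors. Your explicit remark that a mixed antichain can never have its join in $\building \oplus \building'$ (since both coordinates would be strictly above bottom) just spells out the step the paper leaves implicit.
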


\begin{proof}
Let~$Y \in \lattice$ with~$\max(\building_{\le Y}) = \{B_1, \dots, B_k\}$ and~$Y' \in \lattice'$ with~$\max(\building'_{\le Y'}) = \{B'_1, \dots, B'_{k'}\}$.
Then
\[
\max \bigl( (\building \oplus \building')_{\le (Y,Y')} \bigr) = \{(B_1, \botzero_{\lattice'}), \dots, (B_k, \botzero_{\lattice'})\} \sqcup \{(\botzero_\lattice, B'_1), \dots, (\botzero_\lattice, B'_{k'})\} \qquad \text{and}
\]
\[
(\lattice \times \lattice')_{\le (Y,Y')} \cong \lattice_{\le Y} \times \lattice'_{\le Y'} \cong \prod_{i \in [k]} \lattice_{\le B_i} \times \prod_{i \in [k']} \! \lattice_{\le B'_i} \cong \prod_{i \in [k]} (\lattice \times \lattice')_{\le (B_i, \botzero_{\lattice'})} \times \prod_{i \in [k']} \! (\lattice \times \lattice')_{\le (\botzero_\lattice, B'_i)}.
\]
Hence, $\building \oplus \building'$ is indeed an $(\lattice \times \lattice')$-building set.
Consider now~$\nested \subseteq \building \times \{\botzero_{\lattice'}\}$ and~$\nested' \subseteq \{\botzero_\lattice\} \times \building'$.
As~$\bigvee_{\lattice \times \lattice'} (\nested \sqcup \nested') = (\bigvee_\lattice \nested, \bigvee_{\lattice'} \nested')$, we indeed obtain that $\nested \sqcup \nested' \in \nestedComplex[\lattice \times \lattice'][\building \oplus \building']$ if and only if~$\nested \in \nestedComplex$ and~$\nested' \in \nestedComplex[\lattice'][\building']$.
\end{proof}

\begin{definition}
\label{def:freeProductLattices}
The \defn{free product}~$\lattice \boxtimes \lattice'$ of two finite lattices~$\lattice$ and~$\lattice'$ is the lattice obtained as the Cartesian product~$\bigl( \lattice \ssm \{\topone_\lattice\} \bigr) \times \bigl( \lattice' \ssm \{\topone_{\lattice'}\} \bigr)$ to which we add a top element~$\topone_{\lattice \boxtimes \lattice'}$.
\end{definition}

As~$\lattice \ssm \{\topone_\lattice\}$ and~$\lattice' \ssm \{\topone_{\lattice'}\}$ are both meet semilattices, $\bigl( \lattice \ssm \{\topone_\lattice\} \bigr) \times \bigl( \lattice' \ssm \{\topone_{\lattice'}\} \bigr)$ is a meet semilattice, hence~$\lattice \boxtimes \lattice'$ is a meet semilattice with a maximal element, hence indeed a lattice.
Note that, for~$x_1, \dots, x_k \in \lattice$ with~$x_1 \vee_\lattice \dots \vee_\lattice x_k <_\lattice \topone_\lattice$ and~$x_1', \dots, x_k' \in \lattice$ with~$x_1' \vee_{\lattice'} \dots \vee_{\lattice'} x_k' <_{\lattice'} \topone_{\lattice'}$, we have 
\begin{align*}
(x_1, x_1') \vee_{\lattice \boxtimes \lattice'} \dots \vee_{\lattice \boxtimes \lattice'} (x_k, x_k') 
& = (x_1, x_1') \vee_{\lattice \times \lattice'} \dots \vee_{\lattice \times \lattice'} (x_k, x_k') \\
& = (x_1 \vee_\lattice \dots \vee_\lattice x_k, x_1' \vee_{\lattice'} \dots \vee_{\lattice'} x_k').
\end{align*}

\begin{proposition}
\label{prop:freeSumLatticeBuildingSets}
If~$\building$ be a connected $\lattice$-building set and~$\building'$ is a connected $\lattice'$-building set, then
\[
\building \boxplus \building' \eqdef \bigl( (\building \ssm \{\topone_\lattice\}) \times \{\botzero_{\lattice'}\} \bigr) \sqcup \bigl( \{\botzero_\lattice\} \times (\building' \ssm \{\topone_{\lattice'}\} ) \bigr) \sqcup \{\topone_{\lattice \boxtimes \lattice'}\}
\]
is a connected $(\lattice \boxtimes \lattice')$-building set, whose $(\lattice \boxtimes \lattice')$-nested complex is isomorphic to~$\nestedComplex \join \nestedComplex[\lattice'][\building']$.
\end{proposition}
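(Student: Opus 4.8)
The plan is to verify separately the two assertions of the statement — that $\building \boxplus \building'$ is a connected $(\lattice \boxtimes \lattice')$-building set, and that $\nestedComplex[\lattice \boxtimes \lattice'][\building \boxplus \building'] \cong \nestedComplex \join \nestedComplex[\lattice'][\building']$ — relying on \cref{prop:directSumLatticeBuildingSets} and the recursive characterization of building sets in \cref{rem:reccursiveDefinitionLatticeBuildingSet}. First I would note that the global top $\topone_{\lattice \boxtimes \lattice'}$ dominates every block of $\building \boxplus \building'$, so $\max(\building \boxplus \building') = \{\topone_{\lattice \boxtimes \lattice'}\}$; this yields connectedness and makes the second clause of \cref{rem:reccursiveDefinitionLatticeBuildingSet} automatic. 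It then remains to check that $(\building \boxplus \building')_{\le X}$ is an $(\lattice \boxtimes \lattice')_{\le X}$-building set for each coatom $X$ of $\lattice \boxtimes \lattice'$. The key observation is that a coatom of $\lattice \boxtimes \lattice'$ is a pair $X = (x,x')$ with $x$ a coatom of $\lattice$ and $x'$ a coatom of $\lattice'$, so that $(\lattice \boxtimes \lattice')_{\le X} = \lattice_{\le x} \times \lattice'_{\le x'}$ and $(\building \boxplus \building')_{\le X}$ is precisely the direct sum $\building_{\le x} \oplus \building'_{\le x'}$; since $\building_{\le x}$ and $\building'_{\le x'}$ are building sets by \cref{rem:reccursiveDefinitionLatticeBuildingSet} applied to $\building$ and $\building'$, \cref{prop:directSumLatticeBuildingSets} concludes.

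For the nested complex, I would first record that since $\connectedComponents(\building \boxplus \building') = \{\topone_{\lattice \boxtimes \lattice'}\}$ and a block of $\building \boxplus \building'$ coming from $\building$ is always incomparable in $\lattice \boxtimes \lattice'$ to one coming from $\building'$ (both blocks strictly dominate the bottom in their own factor), every candidate face of $\nestedComplex[\lattice \boxtimes \lattice'][\building \boxplus \building']$ decomposes uniquely as $F \sqcup F'$ with $F \subseteq \building \ssm \{\topone_\lattice\}$ and $F' \subseteq \building' \ssm \{\topone_{\lattice'}\}$. I would then show that $F \sqcup F'$ is a face if and only if $F$ is a face of $\nestedComplex$ and $F'$ is a face of $\nestedComplex[\lattice'][\building']$; since this correspondence is visibly inclusion-preserving, it gives the desired isomorphism with the join. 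The crucial input is the join formula for $\lattice \boxtimes \lattice'$ recalled after \cref{def:freeProductLattices}: the join of finitely many elements has coordinatewise value $(C,C')$ unless one coordinate reaches $\topone_\lattice$ or $\topone_{\lattice'}$, in which case the join collapses to $\topone_{\lattice \boxtimes \lattice'}$. Together with the observation that the non-top elements of $\building \boxplus \building'$ are exactly the $(B, \botzero_{\lattice'})$ with $B \in \building \ssm \{\topone_\lattice\}$ and the $(\botzero_\lattice, B')$ with $B' \in \building' \ssm \{\topone_{\lattice'}\}$, this turns the condition "the join of $k \ge 2$ pairwise incomparable blocks lies in $\building \boxplus \building'$" into "$\topone$ is attained in one factor, or the corresponding partial join lies in $\building$ (resp.\ $\building'$)", which is exactly the nested-set obstruction on the $\building$-side (resp.\ $\building'$-side).

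I expect the delicate step to be controlling the joins when the chosen incomparable blocks mix the two factors — in particular, ruling out that the coordinatewise join is secretly $\topone_{\lattice \boxtimes \lattice'}$. For the "if" direction this is where I would use that $F$ and $F'$ are already faces: a partial join $B_1 \vee_\lattice \dots \vee_\lattice B_p$ of $p \ge 2$ pairwise incomparable blocks of a face cannot lie in $\building$, hence cannot equal $\topone_\lattice$ since $\topone_\lattice \in \building$ by connectedness, whereas for $p \le 1$ it is $\botzero_\lattice$ or a single block $\ne \topone_\lattice$; so the hypothesis that $\building$ and $\building'$ are connected is genuinely needed here (it is also what makes the construction $\building \boxplus \building'$ meaningful, through the removal of the factor tops). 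For the "only if" direction the same formula shows that testing pairwise incomparable families already contained in one factor detects exactly the obstruction for that factor. Everything else — the identification $(\building \boxplus \building')_{\le (x,x')} = \building_{\le x} \oplus \building'_{\le x'}$, the splitting of faces, and the simpliciality of the resulting bijection — is routine bookkeeping once this join analysis is settled.
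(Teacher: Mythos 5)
Your proposal is correct and follows essentially the same route as the paper: both verify the building-set axiom by reducing lower intervals of $\lattice \boxtimes \lattice'$ to the Cartesian-product situation of \cref{prop:directSumLatticeBuildingSets} (you via the coatom recursion of \cref{rem:reccursiveDefinitionLatticeBuildingSet}, the paper directly for each non-top element), and both establish the nested-complex isomorphism through the same join analysis in the free product, using connectedness of $\building$ and $\building'$ to rule out joins collapsing to $\topone_{\lattice \boxtimes \lattice'}$. The only differences are organizational (per-subfamily case analysis versus the paper's split on whether $\bigvee_\lattice \nested$ or $\bigvee_{\lattice'} \nested'$ hits the top), so no gap to report.
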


\begin{proof}
Consider~${Z \in \building \boxplus \building'}$.
If~$Z = \topone_{\lattice \boxtimes \lattice'}$, then~$Z \in \building \boxplus \building'$ and there is nothing to prove.
Otherwise, $Z = (Y,Y')$ where~${Y \in \lattice \ssm \{\topone_\lattice\}}$ and~$Y' \in \lattice' \ssm \{\topone_{\lattice'}\}$.
If~$\max(\building_{\le Y}) = \{B_1, \dots, B_k\}$ and~$\max(\building'_{\le Y'}) = \{B'_1, \dots, B'_{k'}\}$, then
\[
\max \bigl( (\building \boxplus \building')_{\le Z} \bigr) = \{(B_1, \botzero_{\lattice'}), \dots, (B_k, \botzero_{\lattice'})\} \sqcup \{(\botzero_\lattice, B'_1), \dots, (\botzero_\lattice, B'_{k'})\} \qquad \text{and}
\]
\[
(\lattice \boxtimes \lattice')_{\le Z} \cong (\lattice \boxtimes \lattice')_{\le (Y,Y')} \cong \prod_{i \in [k]} (\lattice \boxtimes \lattice')_{\le (B_i, \botzero_{\lattice'})} \times \prod_{i \in [k']} \! (\lattice \boxtimes \lattice')_{\le (\botzero_\lattice, B'_i)},
\]
by restricting the isomorphism above for the Cartesian product~$\lattice \times \lattice'$.
Hence, $\building \boxplus \building'$ is indeed an $(\lattice \boxtimes \lattice')$-building set.
Consider now~${\nested \subseteq (\building \ssm \{\topone_\lattice\}) \times \{\botzero_{\lattice'}\}}$ and~${\nested' \subseteq \{\botzero_\lattice\} \times (\building' \ssm \{\topone_{\lattice'}\})}$.
If~$\bigvee_\lattice \nested = \topone_\lattice$, then~$\nested \notin \nestedComplex$ (because ${\topone_\lattice \in \building}$ by our assumption that~$\building$ is connected) and $\bigvee_{\lattice \boxtimes \lattice'} \nested \sqcup \nested' \ge \bigvee_{\lattice \boxtimes \lattice'} \nested' = \topone_{\lattice \boxtimes \lattice'}$ so that~${\nested \sqcup \nested' \notin \nestedComplex[\lattice \boxtimes \lattice'][\building \boxplus \building']}$.
Similarly, if~$\bigvee_{\lattice'} \nested' = \topone_{\lattice'}$, then~$\nested' \notin \nestedComplex[\lattice'][\building']$ and~$\nested \sqcup \nested' \notin \nestedComplex[\lattice \boxtimes \lattice'][\building \boxplus \building']$.
Now if~$\bigvee_\lattice \nested < \topone_\lattice$ and~$\bigvee_{\lattice'} \nested' < \topone_{\lattice'}$, then we have~$\bigvee_{\lattice \boxtimes \lattice'} \nested \sqcup \nested' = \bigvee_{\lattice \times \lattice'} \nested \sqcup \nested' = (\bigvee_\lattice \nested, \bigvee_{\lattice'} \nested')$.
We conclude that~$\nested \sqcup \nested' \in \nestedComplex[\lattice \boxtimes \lattice'][\building \boxplus \building']$ if and only if~$\nested \in \nestedComplex$ and~$\nested' \in \nestedComplex[\lattice'][\building']$ as desired.
\end{proof}

\begin{corollary}
\label{coro:connectedLatticeBuildingSet}
For any $\lattice$-building set~$\building$ with~$\connectedComponents(\building) = \{B_1, \dots, B_k\}$, the $\lattice$-nested complex of~$\building$ is isomorphic to the $(\lattice_{\le B_1} \boxtimes \dots \boxtimes \lattice_{\le B_k})$-nested complex of the connected $(\lattice_{\le B_1} \boxtimes \dots \boxtimes \lattice_{\le B_k})$-building set~$\building_{\le B_1} \boxplus \dots \boxplus \building_{\le B_k}$.
\end{corollary}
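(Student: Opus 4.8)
The plan is to reduce the corollary to an iterated application of \cref{prop:freeSumLatticeBuildingSets}, once we know that $\nestedComplex$ is the join of the nested complexes of the restrictions $\building_{|B_i}$. Write $\connectedComponents(\building) = \{B_1, \dots, B_k\}$. First I would record two preliminary facts. Applying \cref{def:latticeBuildingSet} to $Y = \topone_\lattice$ gives a poset isomorphism $\phi \colon \prod_{i \in [k]} \lattice_{\le B_i} \to \lattice$ whose $i$-th component is the inclusion $\lattice_{\le B_i} \subseteq \lattice$; being an isomorphism between lattices it preserves joins, so $\phi(X_1, \dots, X_k) = X_1 \vee \dots \vee X_k$, and in particular $\phi^{-1}(B_\ell)$ is the tuple supported on the single coordinate $\ell$. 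Since $\max(\building) = \{B_1, \dots, B_k\}$, every block of $\building$ lies below exactly one $B_i$, so $\building = \bigsqcup_{i \in [k]} \building_{|B_i}$ is a disjoint union; and each $\building_{|B_i}$ is an $\lattice_{\le B_i}$-building set by \cref{prop:restrictionContractionLatticeBuildingSet}, connected because $\connectedComponents(\building_{|B_i}) = \{B_i\}$ (as $B_i \in \building$ is the top of $\lattice_{\le B_i}$).

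The core step is to prove
\[
\nestedComplex \;\cong\; \nestedComplex[\lattice_{\le B_1}][\building_{|B_1}] \join \dots \join \nestedComplex[\lattice_{\le B_k}][\building_{|B_k}].
\]
Given $\nested \subseteq \building$ with $\{B_1, \dots, B_k\} \subseteq \nested$, put $\nested_i \eqdef \nested \cap \building_{|B_i}$, so $\nested = \bigsqcup_i \nested_i$ with $B_i \in \nested_i$. I would show that $\nested$ is $\lattice$-nested on $\building$ if and only if each $\nested_i$ is $\lattice_{\le B_i}$-nested on $\building_{|B_i}$, by testing \cref{def:latticeNestedSet} on pairwise incomparable $C_1, \dots, C_m \in \nested \ssm \{B_1, \dots, B_k\}$ with $m \ge 2$. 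If these $C_j$ do not all lie in one $\building_{|B_i}$, say $C_1 \le B_a$ and $C_2 \le B_b$ with $a \ne b$, then $\phi^{-1}(C_1 \vee \dots \vee C_m)$ has entries strictly above $\botzero$ in coordinates $a$ and $b$, hence $C_1 \vee \dots \vee C_m \not\le B_\ell$ for all $\ell$ and so $C_1 \vee \dots \vee C_m \notin \building$; such families impose no constraint. If all $C_j$ lie in a single $\building_{|B_i}$, then $C_1 \vee \dots \vee C_m \le B_i$, so it is in $\building$ exactly when it is in $\building_{|B_i}$. Matching these two cases against the nestedness conditions on $\building$ and on each $\building_{|B_i}$ yields the equivalence, and the bijection $\nested \mapsto (\nested_i)_{i \in [k]}$ identifies $\nested \ssm \{B_1, \dots, B_k\}$ with $\bigsqcup_i (\nested_i \ssm \{B_i\})$, which is exactly the required isomorphism onto the join.

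To finish, I would iterate \cref{prop:freeSumLatticeBuildingSets}. Unwinding \cref{def:freeProductLattices}, the free product $\lattice_{\le B_1} \boxtimes \dots \boxtimes \lattice_{\le B_k}$ is $\prod_{i \in [k]} (\lattice_{\le B_i} \ssm \{B_i\})$ with one new top element adjoined, independently of the bracketing, so $\boxtimes$ is associative up to canonical isomorphism, and likewise $\boxplus$. Then induction on $k$ applies: using that $\building_{|B_1} \boxplus \dots \boxplus \building_{|B_{k-1}}$ is connected and invoking \cref{prop:freeSumLatticeBuildingSets} together with associativity of $\join$, the free sum $\building_{|B_1} \boxplus \dots \boxplus \building_{|B_k}$ is a connected $(\lattice_{\le B_1} \boxtimes \dots \boxtimes \lattice_{\le B_k})$-building set whose nested complex is $\nestedComplex[\lattice_{\le B_1}][\building_{|B_1}] \join \dots \join \nestedComplex[\lattice_{\le B_k}][\building_{|B_k}]$. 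Combined with the displayed isomorphism of the previous paragraph, this gives the corollary.

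The step I expect to be the main obstacle is the core one: verifying, through the decomposition $\lattice \cong \prod_i \lattice_{\le B_i}$, that the join of two blocks coming from distinct connected components can never lie back in $\building$, so that $\lattice$-nestedness genuinely splits as a product over the components. The rest is routine bookkeeping with the definitions plus a short induction.
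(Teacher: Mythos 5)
Your proposal is correct and follows essentially the same route as the paper: decompose~$\nestedComplex$ as the join of the nested complexes~$\nestedComplex[\lattice_{\le B_i}][\building_{\le B_i}]$ over the connected components, then reassemble this join via iterated application of \cref{prop:freeSumLatticeBuildingSets}. The only difference is that you verify the first (join) decomposition by hand through the product structure~$\lattice \cong \prod_i \lattice_{\le B_i}$ at~$\topone$, whereas the paper obtains it directly by citing \cref{prop:directSumLatticeBuildingSets}; your inline argument is in substance a re-proof of the relevant instance of that proposition.
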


\begin{proof}
Let~$\lattice_i \eqdef \lattice_{\le B_i}$ and~$\building_i \eqdef \building_{\le B_i}$ for~$i \in [k]$.
Then
\[
\nestedComplex[\lattice][\building] \cong \nestedComplex[\lattice_1][\building_1] \join \dots \join \nestedComplex[\lattice_k][\building_k] \cong \nestedComplex[\lattice_1 \boxtimes \dots \boxtimes \lattice_k][\building_{\le B_1} \boxplus \dots \boxplus \building_{\le B_k}].
\]
The first isomorphism holds by \cref{prop:directSumLatticeBuildingSets} since~$\lattice \cong \lattice_1 \times \dots \times \lattice_k$ and~${\building \cong \building_1 \oplus \dots \oplus \building_k}$ as~$\building$ is an $\lattice$-building set and~$\connectedComponents(\building) = \{B_1, \dots, B_k\}$.
The second isomorphism holds by \cref{prop:freeSumLatticeBuildingSets} since~$\building_i$ is a connected $\lattice_i$-building set for all~$i \in [k]$.
Composing these two isomorphisms shows the statement.
\end{proof}

%%%%%%%%%%%%%%%%%%%%%%%%%%%%%%%%%%%%%%%

\section{Oriented matroids}
\label{sec:orientedMatroids}

We now recall some aspects of oriented matroids.
See~\cite{BjornerLasVergnasSturmfelsWhiteZiegler} for a detailed reference on the subject.
We start from the intuitive definition of the oriented matroid of a vector configuration~(\cref{subsec:vectorConfigurations}) before providing the general abstract definition of oriented matroid (\cref{subsec:orientedMatroids}).
We then consider the Las Vergnas face lattice of an oriented matroid (\cref{subsec:LasVergnasLattice}), define restrictions and contractions (\cref{subsec:restrictionContractionOM}) and stellar subdivisions (\cref{subsec:stellarSubdivisions}), study the two natural operations of direct and free sum (\cref{subsec:operationsOM}), and conclude with decompositions into connected components~(\cref{subsec:connectedComponents}).

%%%%%%%%%%%%%%%%

\subsection{Vector configurations}
\label{subsec:vectorConfigurations}

\enlargethispage{.4cm}
We first consider a finite vector configuration, and present some combinatorial encodings of its linear dependences and evaluations.

\begin{definition}
\label{def:dependencesEvaluationsVectorConfiguration}
For a finite vector configuration~$\b{A} \eqdef (\b{a}_s)_{s \in \ground} \in (\R^d)^\ground$, we denote by
\begin{itemize}
\item $\dependences \eqdef \set{\b{\delta} \in \R^\ground}{\sum_{s \in \ground} \delta_s \b{a}_s = \zero}$ the vector space of linear \defn{dependences} on~$\b{A}$,
\item $\evaluations \eqdef\! \set{(f(\b{a}_s))_{s \in \ground} \!\in\! \R^\ground\!}{\!f \!\in\! \smash{(\R^d)}^*\!}$ the vector space of \defn{evaluations} of linear forms~on~$\b{A}$.
\end{itemize}
Note that the dimension of the evaluation space~$\evaluations$ is the \defn{rank}~$\rank$ of~$\b{A}$, while the dimension of the dependence space~$\dependences$ is the \defn{corank}~$\corank \eqdef |\ground| - \rank$ of~$\b{A}$.
Moreover, $\dependences$ and~$\evaluations$ are orthogonal spaces.
\end{definition}

\begin{definition}
A \defn{signed subset} of~$\ground$ is a pair~$x = (x_+,x_-)$ with~$x_+, x_- \subseteq \ground \text{ and } x_+ \cap x_- = \varnothing$.
The \defn{support} of~$x$ is $\underline{x} \eqdef x_+ \cup x_-$, and the \defn{opposite} of~$x$ is~$-x \eqdef (x_-, x_+)$.
The \defn{signature} of~${\b{\delta} \in \R^\ground}$ is the signed subset~$\signature(\b{\delta}) \eqdef (\set{s \in \ground}{\delta_s > 0}, \set{s \in \ground}{\delta_s < 0})$.
We denote by~$\signature(\ground)$ the set of signed subsets~of~$\ground$.
\end{definition}

\begin{definition}
\label{def:orientedMatroidVectorConfiguration}
The \defn{oriented matroid}~$\OM(\b{A})$ of a finite vector configuration~$\b{A} \subset \R^d$ is the combinatorial data given equivalently by
\begin{itemize}
\item the \defn{vectors}~$\vectors$ of~$\b{A}$, \ie the signatures of linear dependences of~$\b{A}$,
\item the \defn{covectors}~$\covectors$ of~$\b{A}$, \ie the signatures of linear evaluations on~$\b{A}$,
\item the \defn{circuits}~$\circuits$ of~$\b{A}$, \ie the support minimal signatures of linear dependences of~$\b{A}$,
\item the \defn{cocircuits}~$\cocircuits$ of~$\b{A}$, \ie the support minimal signatures of linear evaluations on~$\b{A}$.
\end{itemize}
\end{definition}

\begin{example}
\label{exm:vectorConfiguration}
Consider the vector configuration
\[
\b{A}_\circ \eqdef \left\{ 
	\begin{bmatrix} 0 \\ 0 \\ 0 \\ 1 \end{bmatrix},
	\begin{bmatrix} 0 \\ 0 \\ 0 \\ 1 \end{bmatrix},
	\begin{bmatrix} 0 \\ 0 \\ 1 \\ 1 \end{bmatrix},
	\begin{bmatrix} 1 \\ 0 \\ 0 \\ 1 \end{bmatrix},
	\begin{bmatrix} 0 \\ 1 \\ 0 \\ 1 \end{bmatrix},
	\begin{bmatrix} 1 \\ 1 \\ 0 \\ 1 \end{bmatrix}
\right\} \subset \R^4.
\]
It has $13$ vectors, $153$ covectors, $6$ circuits, and $14$ cocircuits.
Its circuits are $(1,2)$, $(16,45)$, $(26,45)$ and their opposites, and its cocircuits are $(12,6)$, $(124,\varnothing)$, $(125,\varnothing)$, $(3,\varnothing)$, $(46,\varnothing)$, $(4,5)$, $(56,\varnothing)$ and their opposites.
(Here again, we abuse notation and write $123$ for $\{1,2,3\}$ since all labels have a single digit).
\end{example}

\begin{remark}
We say that~$x = (x_+, x_-)$ and~$y = (y_+, y_-)$ are \defn{sign orthogonal}, and we write~$x \perp y$, if~$(x_+ \cap y_+) \cup (x_- \cap y_-) \ne \varnothing \iff (x_+ \cap y_-) \cup (x_- \cap y_+) \ne \varnothing$.
Observe that
\begin{itemize}
\item $v \in \{-, 0, +\}^\ground$ is a vector of~$\vectors$ if and only if~$v \perp c^*$ for all cocircuits~$c^* \in \cocircuits$,
\item $v^* \in \{-, 0, +\}^\ground$ is a covector of~$\covectors$ if and only if~$c \perp v^*$ for all circuits~$c \in \circuits$.
\end{itemize}
Note in particular that $c \perp c^*$ for any circuit~$c \in \circuits$ and cocircuit~$c^* \in \cocircuits$.
\end{remark}

\begin{remark}
Note that any of the four sets of vectors~$\vectors$, covectors~$\covectors$, circuits~$\circuits$ and cocircuits~$\cocircuits$ completely determines the other three.
To summarize, we reproduce the diagram from~\cite[Coro.~6.9]{Ziegler}:

\centerline{
\begin{tikzpicture}[xscale=3, yscale=2]
	\node (dep) at (0,1) {$\dependences$};
	\node (eval) at (0,0) {$\evaluations$};
	\node (vec) at (1,1) {$\vectors$};
	\node (covec) at (1,0) {$\covectors$};
	\node (circ) at (2,1) {$\circuits$};
	\node (cocirc) at (2,0) {$\cocircuits$};
	\draw[->] (dep) -- (vec) node[midway, above] {$\signature$};
	\draw[->] (vec) -- (circ) node[midway, above] {$\suppmin$};
	\draw[->] (eval) -- (covec) node[midway, below] {$\signature$};
	\draw[->] (covec) -- (cocirc) node[midway, below] {$\suppmin$};
	\draw[<->] (dep) -- (eval) node[midway, left] {orthogonal spaces};
	\draw[<->] (vec) -- (covec) node[midway, left] {$\perp$};
	\draw[->] (circ) -- (covec) node[near end, below] {$\perp$};
	\draw[->] (cocirc) -- (vec) node[near end, above] {$\perp$};
\end{tikzpicture}
}

\noindent
where 
\begin{itemize}
\item $\signature(X)$ denotes the set of signatures of the elements in~$X \subseteq \R^\ground$,
\item $\suppmin(X)$ denotes the set of support minimal elements of~$X \subseteq \sigma(\ground)$, and
\item ${\perp}(X) \eqdef \set{y \in \sigma(\ground)}{x \perp y \text{ for all } x \in X}$ denotes the sign vectors which are sign orthogonal to all sign vectors in~$X \subseteq \{-, 0, +\}^\ground$.
\end{itemize}
\end{remark}

\begin{example}
\label{exm:graphicalOM}
Consider a directed graph~$\digraph$ with vertex set~$V$ and arc set~$\ground$ (loops and multiple arcs are allowed).
Let~$(\b{b}_v)_{v \in V}$ denote the standard basis of~$\R^V$.
The \defn{incidence configuration}~$\b{A}_{\digraph}$ of~$\digraph$ has a vector~$\b{a}_{(u,v)} \eqdef \b{b}_u - \b{b}_v \in \R^V$ for each arc~$(u,v)$ of~$\digraph$.
Its oriented matroid is called the \defn{graphical oriented matroid}~$\OM(\digraph)$ of~$\digraph$.
Its ground set is the set~$\ground$ of arcs of~$\digraph$, and it has
\begin{itemize}
\item a vector~$v$ for each collection of cycles, with clockwise arcs~$v_+$ and counter-clockwise~arcs~$v_-$,
\item a covector~$v^*$ for each oriented edge cut (meaning a subset~$X$ of edges, together with an acyclic orientation of the graph obtained by contracting the connected components of~$\digraph \ssm X$), with forward arcs~$v^*_+$ and backward arcs~$v^*_-$,
\item a circuit~$c$ for each simple cycle, with clockwise arcs~$c_+$ and counter-clockwise arcs~$c_-$,
\item a cocircuit~$c^*$ for each support minimal edge cut, with forward arcs~$c^*_+$ and backward arcs~$c^*_-$.
\end{itemize}
See \cite[Prop.~1.1.7 \& Chap.~5]{Oxley} and \cite[Sect.~1.1]{BjornerLasVergnasSturmfelsWhiteZiegler}.
\end{example}

\begin{example}
\label{exm:graphicalVectorConfiguration}
Consider the directed graph~$\digraph_\circ$ of \cref{fig:exmDigraph}\,(left).
Its oriented matroid coincides with that of the vector configuration~$\b{A}_\circ$ \cref{exm:vectorConfiguration}.
In fact, the vector configuration~$\b{A}_\circ$ is mapped to the incidence configuration of~$\digraph_\circ$ by the linear map with matrix
\[
\begin{bmatrix}
	1 & 0 & 1 & -1 \\
	-1 & 0 & -1 & 0 \\
	0 & -1 & -1 & 1 \\
	0 & 1 & 0 & 0 \\
	0 & 0 & 1 & 0
\end{bmatrix}.
\]
\begin{figure}[h]
	\capstart
	\centerline{\includegraphics[scale=.9]{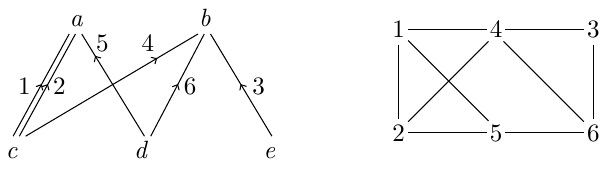}}
	\caption{A directed graph~$\digraph_\circ$ (left) and its line graph~$L(\digraph_\circ)$ (right).}
	\label{fig:exmDigraph}
\end{figure}
\end{example}

%%%%%%%%%%%%%%%%

\subsection{Oriented matroids}
\label{subsec:orientedMatroids}

We now define abstract oriented matroids, which are combinatorial abstractions for the dependences and evaluations of vector configurations considered in \cref{def:dependencesEvaluationsVectorConfiguration,def:orientedMatroidVectorConfiguration}.
There are various cryptomorphic axiomatizations of oriented matroids, see~\cite[Sect.~3]{BjornerLasVergnasSturmfelsWhiteZiegler}.
For completeness, we define here oriented matroids via the circuit axioms.
Note that we will not use this precise definition, only some of its consequences recalled in this section.

\begin{definition}
\label{def:OM}
An \defn{oriented matroid} with ground set~$\ground$ is the combinatorial data~$\OM$ given by four sets~$\vectors[\OM]$, $\covectors[\OM]$, $\circuits[\OM]$ and~$\cocircuits[\OM]$ of signed subsets of~$\ground$, respectively called the \defn{vectors}, \defn{covectors}, \defn{circuits} and \defn{cocircuits} of~$\OM$, and connected by
\[
\vectors[\OM] = {\perp} \bigl( \cocircuits[\OM] \bigr),
\quad
\covectors[\OM] = {\perp} \bigl( \circuits[\OM] \bigr),
\quad
\circuits[\OM] = \suppmin \bigl( \vectors[\OM] \bigr)
\quad\text{and}\quad
\cocircuits[\OM] = \suppmin \bigl( \covectors[\OM] \bigr),
\]
(where~$\suppmin(X)$ denotes the set of support minimal elements of~$X$, and~${\perp}(X)$ denotes the set of sign vectors sign orthogonal to all elements of~$X$), and such that~$\circuits[\OM]$ (or equivalently~$\cocircuits[\OM]$) satisfy the following circuit axioms
\begin{itemize}
\item $\varnothing \notin \circuits[\OM]$,
\item if $c \in \circuits[\OM]$ then $-c \in \circuits[\OM]$,
\item if~$c, c' \in \circuits[\OM]$ with~$\underline{c} \subseteq \underline{c}'$, then $c = c'$ or $c = -c'$,
\item for any~$c,c' \in \circuits[\OM]$ with~$c \ne -c'$ and~$s \in c_+ \cap c'_-$, there exists~$c'' \in \circuits[\OM]$ such that~$c''_+ \subseteq (c_+ \cup c'_+) \ssm \{s\}$ and~$c''_- \subseteq (c_- \cup c'_-) \ssm \{s\}$.
\end{itemize}
\end{definition}

\begin{example}
\label{exm:realizableOM}
For a vector configuration~$\b{A} \eqdef (\b{a}_s)_{s \in \ground} \in (\R^d)^\ground$, the oriented matroid~$\OM(\b{A})$ of \cref{def:orientedMatroidVectorConfiguration} is an oriented matroid in the sense of \cref{def:OM}.
\end{example}

\begin{definition}
\label{def:realizableOM}
An oriented matroid~$\OM$ is \defn{realizable} if it is the oriented matroid~$\OM(\b{A})$ of a vector configuration~$\b{A} \eqdef (\b{a}_s)_{s \in \ground} \in (\R^d)^\ground$, \ie~$\vectors[\OM] = \vectors$, $\covectors[\OM] = \covectors$,
$\circuits[\OM] = \circuits$, and $\cocircuits[\OM] = \cocircuits$ (these four conditions are actually equivalent).
\end{definition}

\begin{remark}
\label{rem:matroids}
Note that the supports of the vectors (resp.~covectors, resp.~circuits, resp.~cocircuits) of an oriented matroid form the dependences (resp.~codependences, resp.~circuits, resp.~cocircuits) of a classical (unoriented) matroid~\cite{Oxley}.
In other words, any oriented matroid~$\OM$ has an underlying classical matroid~$\UOM$.
However, not all classical matroids can be oriented to an oriented matroid.
We have decided to focus here on oriented matroids, even if some of the notions we will cover are actually classical matroid notions (in particular connected components in \cref{subsec:connectedComponents}).
\end{remark}

\begin{definition}
\label{def:acyclicOM}
An oriented matroid~$\OM$ on~$\ground$ is \defn{acyclic} if the following equivalent conditions (see~\cite[Prop.~3.4.8]{BjornerLasVergnasSturmfelsWhiteZiegler}) are satisfied:
\begin{enumerate}[(i)]
\item it has no positive circuit,
\item it has no positive vector,
\item $(\ground, \varnothing)$ is a covector,
\item for any~$s \in \ground$, there is a cocircuit~$c^* = (c^*_+, \varnothing)$ with~$s \in c^*_+$.
\end{enumerate}
\end{definition}

\begin{example}
\label{exm:acyclicRealizableOM}
Following on \cref{exm:realizableOM}, a realizable oriented matroid~$\OM(\b{A})$ is acyclic if and only if~$\b{A}$ has no positive linear dependence, \ie if and only if~$\b{A}$ is contained in a positive linear half-space of~$\R^d$.
For instance, the oriented matroid~$\OM(\b{A}_\circ)$ of the vector configuration~$\b{A}_\circ$ of \cref{exm:vectorConfiguration} is acyclic.
\end{example}

\begin{example}
\label{exm:acyclicGraphicalOM}
Following on \cref{exm:graphicalOM}, a graphical oriented matroid~$\OM(\digraph)$ is acyclic if and only if~$\digraph$ is acyclic (\ie has no directed cycle).
For instance, the graphical oriented matroid~$\OM(\digraph_\circ)$ of the directed graph~$\digraph_\circ$ of \cref{exm:graphicalVectorConfiguration} is acyclic.
\end{example}

%%%%%%%%%%%%%%%%

\subsection{Las Vergnas face lattices}
\label{subsec:LasVergnasLattice}

The Las Vergnas face lattices of oriented matroids are the analogs of the face lattices of convex polyhedral cones.
Note that a face of a polyhedral cone is witnessed by a supporting linear evaluation that is zero on the vectors of the face and positive on the remaining vectors.
These induce non-negative covectors, \ie covectors of the form $(c^*_+,\varnothing)$.

\begin{definition}
\label{def:faceOM}
Let $\OM$ be an acyclic oriented matroid. A set $F\subseteq \ground$ is a \defn{face} of $\OM$ if it is the complement of a non-negative covector, \ie if~$(\ground\ssm F,\varnothing)\in\covectors[\OM]$.
\end{definition}

\begin{definition}
The \defn{Las Vergnas face lattice} $\FL(\OM)$ of an acyclic oriented matroid $\OM$ is the poset of faces of $\OM$ ordered by~inclusion.
It is always a lattice~\cite[Prop.~4.3.5]{BjornerLasVergnasSturmfelsWhiteZiegler}, with bottom element~$\varnothing$ and top element~$\ground$.
The \defn{face semilattice}~$\FL(\OM)_{<\topone}$ is the Las Vergnas face lattice without its top element
\end{definition}

\begin{example}
\label{exm:coneAcyclicRealizableOM}
Following on \cref{exm:realizableOM,exm:acyclicRealizableOM}, for an acyclic vector configuration $\b{A}$, the Las Vergnas face lattice~$\FL(\OM(\b{A}))$ is isomorphic to the face lattice of the cone~$\R_{\geq 0}\b{A}$ generated by~$\b{A}$, hence to the face lattice of a convex polytope (obtained by any section of~$\R_{\geq 0}\b{A}$ by a suitable hyperplane~$\b{H}$)\footnote{In order to have this isomorphism, we use slightly different conventions for face lattices of polytopes and of polyhedral cones. For polytopes, we consider the empty set as a face of dimension~$-1$, which is the minimal element of the face lattice. In contrast, for polyhedral cones we consider the origin to be the minimal face of dimension~$0$. This way, we obtain an order-preserving bijection between $i$-dimensional faces of the polytope $\b{H}\cap\R_{\geq 0}\b{A}$ and $(i+1)$-faces of the polyhedral cone $\R_{\geq 0}\b{A}$.}.
Conversely, the face lattice~$\FL(\polytope)$ of any polytope~$\polytope$ is isomorphic to the Las Vergnas face lattice~$\FL(\OM(\b{A}_{\polytope}))$ where~$\b{A}_{\polytope}$ is the vector configuration obtained by homogeneization of the vertices of~$\polytope$, \ie
\[
\b{A}_{\polytope} \eqdef \set{\begin{bmatrix} \b{v} \\ 1 \end{bmatrix}}{\b{v} \text{ vertex of } \polytope}.
\]
\end{example}

\begin{example}
\label{exm:orderPolytope}
Following on \cref{exm:graphicalOM,exm:acyclicGraphicalOM}, for a directed acyclic graph~$\digraph$ with vertex set~$V$, the Las Vergnas face lattice~$\FL(\OM(\digraph))$ of the graphical oriented matroid~$\OM(\digraph)$ is isomorphic to the refinement lattice on partitions $\pi \eqdef \{\pi_1, \dots, \pi_k\}$ of~$V$ into  connected subsets of~$\digraph$ such that the contraction~$\digraph_{\!/\pi} \eqdef \digraph_{\!/\pi_1/\pi_2/ \dots /\pi_k}$ is acyclic.
It is also isomorphic to the face lattice of the order polytope of~$\digraph$, defined as
\[
\OrderPolytope(\digraph) \eqdef \bigset{\b{x} \in \R^V}{\sum_{(u,v) \in \digraph} \b{x}_v - \b{x}_u = 1 \text{ and } x_u \le x_v \text{ for each arc } (u,v) \in \digraph }.
\]
See~\cite[Thm.~1.2]{Stanley-posetPolytopes} and \cite[Sect.~2.1]{Galashin} (we use the version of the latter as we work with directed graphs which do not necessarily admit a global source and a global sink).
\end{example}

\begin{remark}
 There are other natural ways to define the face lattice of an oriented matroid, see \cite[Chap.~4]{BjornerLasVergnasSturmfelsWhiteZiegler}. For the constructions in the sequel, it is very important not to mistake the Las Vergnas face lattice with its opposite, called the Edmonds--Mantel face lattice.
As we will only use this notion of face lattice of oriented matroids, for brevity from now on we will simply refer to the face lattice of an oriented matroid, meaning its Las Vergnas face lattice.
\end{remark}

The face lattice of an oriented matroid can be interpreted topologically as the lattice of cells of a regular cell decomposition of a sphere. We refer to \cite[Sect.~4.7]{BjornerLasVergnasSturmfelsWhiteZiegler} for background.

\begin{theorem}[{\cite[Thm.~4.3.5]{BjornerLasVergnasSturmfelsWhiteZiegler}}]
\label{thm:faceLatticeOrientedMatroidSphere}
The face lattice~$\FL(\OM)$ of an acyclic oriented matroid~$\OM$ of rank~$r$ is isomorphic to the face lattice of a regular cell decomposition~$\Delta(\OM)$ of the $(r-2)$-sphere.
\end{theorem}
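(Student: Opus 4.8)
The plan is to reproduce, for an arbitrary acyclic oriented matroid, the picture that is available in the realizable case. If $\OM=\OM(\b{A})$ for an acyclic configuration $\b{A}$ of rank $r$, then by \cref{exm:coneAcyclicRealizableOM} the lattice $\FL(\OM)$ is the face lattice of the pointed $r$-dimensional cone $\R_{\ge 0}\b{A}$, equivalently of an $(r-1)$-dimensional polytope $\polytope$ obtained by a generic affine section; the proper faces of $\polytope$ form a polytopal (in particular regular) cell decomposition of $\partial\polytope\cong\sph[r-2]$, and adjoining $\varnothing$ and $\polytope$ as $\botzero$ and $\topone$ gives back $\FL(\OM)$. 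So in the realizable case there is nothing left to prove.

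For a general acyclic $\OM$, I would first reduce to the simple case: an acyclic oriented matroid has no loops (a loop $s$ would yield the positive circuit $(\{s\},\varnothing)$), and passing to the simplification changes neither $r$ nor $\FL(\OM)$, since parallel elements keep the same sign in every covector. Then I would invoke the Folkman--Lawrence topological representation theorem \cite[Sect.~5.2]{BjornerLasVergnasSturmfelsWhiteZiegler}: there is an essential arrangement $(S_e)_{e\in\ground}$ of signed pseudospheres in $\sph[r-1]$ whose induced regular cell decomposition has face poset $\covectors[\OM]\cup\{\botzero\}$, a covector $X$ indexing the open cell $\sigma_X=\set{x\in\sph[r-1]}{\mathrm{sign}_e(x)=X_e\text{ for all }e}$, with $\overline{\sigma_X}$ a regular CW ball whose face poset is the principal order ideal below $X$, and with the topes indexing the top-dimensional (hence $(r-1)$-dimensional) cells.

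The key identification is then the following. Since $\OM$ is acyclic, $T^+\eqdef(\ground,\varnothing)$ is a tope, so $\overline{\sigma_{T^+}}$ is a closed $(r-1)$-ball. The assignment $F\mapsto(\ground\ssm F,\varnothing)$ is an inclusion-reversing bijection from $\FL(\OM)$ onto $\set{X\in\covectors[\OM]}{X_-=\varnothing}=\set{X\in\covectors[\OM]}{\botzero\preceq X\preceq T^+}$, which is precisely the face poset of the ball $\overline{\sigma_{T^+}}$ enlarged by $\botzero$. Hence $\FL(\OM)$ is anti-isomorphic to the face lattice of $\overline{\sigma_{T^+}}$, and therefore also to the face lattice of the regular cell decomposition of $\sph[r-2]$ induced on its boundary $\partial\overline{\sigma_{T^+}}$. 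It then remains to show that the opposite of the face lattice of such a regular cell decomposition of $\sph[r-2]$ is again the face lattice of a regular cell decomposition of $\sph[r-2]$; equivalently, that the order complex of the proper part $\FL(\OM)\ssm\{\botzero,\topone\}$ is a PL $(r-2)$-sphere.

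For this I would use that $\FL(\OM)$ is a graded lattice of length $r$ (graded by the $\UOM$-rank of the closure of a face) which is thin --- every rank-$2$ interval is a diamond, the oriented-matroid analogue of the diamond property of polytope face lattices --- and that, by applying \cref{prop:linksLatticeNestedComplex} to the maximal $\FL(\OM)$-building set (whose nested complex is exactly the order complex of $\FL(\OM)\ssm\{\botzero,\topone\}$) together with induction on $r$, links of faces in that order complex are joins $\sph[k-2]\join\sph[r-k-2]\cong\sph[r-3]$, so the order complex is a closed PL $(r-2)$-manifold. To identify this manifold with $\sph[r-2]$ I would establish shellability of $\FL(\OM)$ by an oriented-matroid line shelling --- the analogue of the Bruggesser--Mani shelling, ordering the coatoms of $\FL(\OM)$ by a sufficiently generic cocircuit --- and then invoke the Danaraj--Klee criterion. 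I expect this shellability step to be the main obstacle: producing the line shelling explicitly and controlling the induced order on faces in the presence of degenerate covectors is the real content beyond the topological representation theorem. (Alternatively one can quote directly that the closed tope region of an oriented-matroid pseudosphere arrangement is a shellable PL ball \cite[Ch.~4]{BjornerLasVergnasSturmfelsWhiteZiegler}, but this merely relocates the difficulty.)
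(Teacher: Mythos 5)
The paper offers no argument for this statement: it is imported verbatim from \cite[Thm.~4.3.5]{BjornerLasVergnasSturmfelsWhiteZiegler}, so what you are really doing is reconstructing the proof in the book. Your reconstruction is sound up to and including the key identification: acyclicity gives the positive tope $T^+=(\ground,\varnothing)$, the map $F\mapsto(\ground\ssm F,\varnothing)$ is an anti-isomorphism from $\FL(\OM)$ onto the interval below $T^+$ in the big face lattice, and by the topological representation theorem this interval is the augmented face poset of the closed tope cell $\overline{\sigma_{T^+}}$, whose boundary is a regular cell decomposition of $\sph[r-2]$. This is exactly the Edmonds--Mandel versus Las Vergnas distinction the paper warns about, and exactly where the cited proof starts.

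The gap is in your final step. You propose to show that the order complex of $\FL(\OM)\ssm\{\botzero,\topone\}$ is a PL $(r-2)$-sphere by proving shellability of $\FL(\OM)$ via an ``oriented-matroid line shelling'' ordering the coatoms by a generic cocircuit. This step would fail: there is no Bruggesser--Mani-type sweep on the Las Vergnas side, precisely because oriented matroids admit no polarity (the polar/adjoint through which a line shelling of the boundary of $\polytope^\triangle$ would be produced need not exist), and shellability of the Las Vergnas face lattice is, in contrast with the Edmonds--Mandel lattice, not known in general --- it cannot be waved in as the closing step. Moreover the detour is unnecessary, and the alternative you dismiss as ``relocating the difficulty'' is in fact the proof: a poset and its opposite have the \emph{same} order complex, so the order complex of $\FL(\OM)\ssm\{\botzero,\topone\}$ is literally the barycentric subdivision of $\partial\overline{\sigma_{T^+}}$, and the statement that the closed tope region is a shellable PL $(r-1)$-ball is a proven theorem in \cite[Ch.~4]{BjornerLasVergnasSturmfelsWhiteZiegler}, built on the representation theorem, not a debt you still owe. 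What genuinely remains is only the passage from ``opposite of the face lattice of a PL regular cell decomposition of $\sph[r-2]$'' to ``face lattice of a regular cell decomposition of $\sph[r-2]$'', which is handled by the dual block decomposition of the PL sphere, or by Bj\"orner's CW-poset criterion together with the fact that every lower interval $[\varnothing,F]$ of $\FL(\OM)$ is $\FL(\OM_{|F})$ (\cref{prop:restrictionContractionFL}), so the same observation applies to the minor. Your ``equivalently'' quietly absorbs this point (it needs the complementary-link argument, which is valid only in the PL category), and note additionally that ``closed PL manifold homeomorphic to $\sph[r-2]$'' does not formally yield a PL sphere when $r-2=4$ --- one more reason to import PL-sphericity from the tope cell rather than from a manifold-plus-shelling argument.
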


%%%%%%%%%%%%%%%%

\pagebreak
\subsection{Restriction and contraction}
\label{subsec:restrictionContractionOM}

We now recall the definitions of restrictions and contractions in oriented matroids.

\begin{definition}
\label{def:restrictionContractionOM}
For any~$R \subseteq \ground$, define
\begin{itemize}
\item the \defn{restriction} of~$\OM$ to~$R$ as the oriented matroid~$\OM_{|R}$ on~$R$ with
\[
\vectors[\OM_{|R}] = \set{v \in \vectors[\OM]}{\underline{v} \subseteq R}
\qquad\text{and}\qquad
\covectors[\OM_{|R}] = \set{v^* \cap R}{v^* \in \covectors[\OM]},
\]
\item the \defn{contraction} of~$R$ in~$\OM$ as the oriented matroid~$\OM_{\!/R}$ on~$\ground \ssm R$ with
\[
\vectors[\OM_{\!/R}] = \set{v \ssm R}{v \in \vectors[\OM]}
\qquad\text{and}\qquad
\covectors[\OM_{\!/R}] = \set{v^* \in \covectors[\OM]}{\underline{v}^* \cap R = \varnothing},
\]
\end{itemize}
where~$v \cap R \eqdef (v_+ \cap R, v_- \cap R)$ and~$v \ssm R \eqdef (v_+ \ssm R, v_- \ssm R)$.
Note that acyclicity is preserved by restriction but not by contraction in general (see \cref{lem:acyclicContraction}).
\end{definition}

\begin{remark}
\label{rem:restrictionContractionOM}
It immediately follows from \cref{def:restrictionContractionOM} that
\[
\circuits[\OM_{|R}] = \set{c \in \circuits[\OM]}{\underline{c} \subseteq R}
\qquad\text{and}\qquad
\cocircuits[\OM_{\!/R}] = \set{c^* \in \cocircuits[\OM]}{\underline{c}^* \cap R = \varnothing}.
\]
In contrast, we can only describe $\cocircuits[\OM_{|R}]$ (resp.~$\circuits[\OM_{\!/R}]$) as support minimal elements of~$\covectors[\OM_{|R}]$ (resp.~$\vectors[\OM_{\!/R}]$).
\end{remark}

\begin{example}
\label{exm:resctrictionContractionRealizableOM}
Following on \cref{exm:realizableOM,exm:acyclicRealizableOM,exm:coneAcyclicRealizableOM}, consider a vector configuration \linebreak ${\b{A} \eqdef (\b{a}_s)_{s \in \ground}}$ and~$R \subseteq \ground$. Denote by
\begin{itemize}
\item $\b{A}_{|R}$ the vector subconfiguration~$(\b{a}_r)_{r \in R}$,
\item $\b{A}_{\!/R}$ the vector configuration obtained by projecting the vectors~$\b{a}_s$ with~$s \notin R$ on the space orthogonal to all vectors~$\b{a}_r$ with~$r \in R$.
\end{itemize}
Then~$\OM(\b{A})_{|R} = \OM(\b{A}_{|R})$ and~$\OM(\b{A})_{\!/R} = \OM(\b{A}_{\!/R})$.
\end{example}

\begin{example}
\label{exm:resctrictionContractionGraphicalOM}
Following on \cref{exm:graphicalOM,exm:acyclicGraphicalOM,exm:orderPolytope}, consider a directed graph~$\digraph$ and a subset~$R$ of arcs of~$\digraph$. Denote by
\begin{itemize}
\item $\digraph_{|R}$ the subgraph of~$\digraph$ formed by the arcs in~$R$,
\item $\digraph_{\!/R}$ the contraction of the arcs of~$R$ in~$\digraph$.
\end{itemize}
Then~$\OM(\digraph)_{|R} = \OM(\digraph_{|R})$ and~$\OM(\digraph)_{\!/R} = \OM(\digraph_{\!/R})$.
\end{example}

\begin{example}
For instance, consider the oriented matroid~$\OM(\b{A}_\circ)$ of \cref{exm:vectorConfiguration}.
For~$R \eqdef 123$, the restriction~$\OM(\b{A}_\circ)_{|R}$ and the contraction~$\OM(\b{A}_\circ)_{\!/R}$ are the oriented matroids of the vector configurations
\[
{\b{A}_\circ}_{|R} = \left\{ 
	\begin{bmatrix} 0 \\ 0 \\ 0 \\ 1 \end{bmatrix},
	\begin{bmatrix} 0 \\ 0 \\ 0 \\ 1 \end{bmatrix},
	\begin{bmatrix} 0 \\ 0 \\ 1 \\ 1 \end{bmatrix}
\right\}
\qquad\text{and}\qquad
{\b{A}_\circ}_{\!/R} =
\left\{ 
	\begin{bmatrix} 1 \\ 0 \\ 0 \\ 0 \end{bmatrix},
	\begin{bmatrix} 0 \\ 1 \\ 0 \\ 0 \end{bmatrix},
	\begin{bmatrix} 1 \\ 1 \\ 0 \\ 0 \end{bmatrix}
\right\}
\]
respectively.
They indeed coincide with the graphical oriented matroids of the restriction to and of the contraction of the arcs labeled~$1,2,3$ in the directed graph of \cref{exm:graphicalVectorConfiguration,fig:exmDigraph}.
\end{example}

The following lemma explains the connection between acyclic contractions, non-negative covectors and faces of an acyclic oriented matroid.

\begin{lemma}
\label{lem:acyclicContraction}
Let $\OM$ be an acyclic oriented matroid on the ground set~$\ground$ and~$R \subseteq \ground$.
Then
\[
\text{$\OM_{\!/R}$~is acyclic} \iff (\ground \ssm R, \varnothing) \in \covectors[\OM_{\!/R}] \iff (\ground \ssm R, \varnothing) \in \covectors[\OM] \iff R \in \FL(\OM).
\]
\end{lemma}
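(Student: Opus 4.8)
The plan is to read \cref{lem:acyclicContraction} as a chain of three equivalences, each obtained by unwinding a single definition, and to prove the three biconditionals in turn, from left to right.

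For the first equivalence, I would note that the contraction $\OM_{\!/R}$ is an oriented matroid on the ground set $\ground \ssm R$. By \cref{def:acyclicOM} (the equivalence of conditions~(i) and~(iii), applied to $\OM_{\!/R}$ rather than $\OM$), the matroid $\OM_{\!/R}$ is acyclic if and only if the all-positive sign vector on its ground set, namely $(\ground \ssm R, \varnothing)$, is one of its covectors; that is, if and only if $(\ground \ssm R, \varnothing) \in \covectors[\OM_{\!/R}]$.

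For the second equivalence, I would invoke the description of the covectors of a contraction from \cref{def:restrictionContractionOM}, namely $\covectors[\OM_{\!/R}] = \set{v^* \in \covectors[\OM]}{\underline{v}^* \cap R = \varnothing}$. The support of the signed set $(\ground \ssm R, \varnothing)$ is $\ground \ssm R$, which is disjoint from $R$, so the disjointness condition is automatically satisfied and membership in $\covectors[\OM_{\!/R}]$ reduces exactly to membership in $\covectors[\OM]$. Finally, the third equivalence $(\ground \ssm R, \varnothing) \in \covectors[\OM] \iff R \in \FL(\OM)$ is precisely \cref{def:faceOM} read with $F = R$.

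I do not foresee a genuine obstacle, as each step is purely definitional; the only point that deserves a word is the degenerate case $R = \ground$. Then $\ground \ssm R = \varnothing$ and the "all-positive covector" $(\ground \ssm R, \varnothing)$ is the zero sign vector $\zero$, which is a covector of every oriented matroid (it is sign orthogonal to everything). This is consistent with $\ground$ always being the top face of $\FL(\OM)$ and with the oriented matroid on the empty ground set being acyclic, so the equivalences hold in this case too. If one prefers, the argument can equivalently be packaged as a cycle of implications $(\OM_{\!/R}\text{ acyclic}) \Rightarrow \dots \Rightarrow (R \in \FL(\OM)) \Rightarrow (\OM_{\!/R}\text{ acyclic})$, but since every link is already a biconditional I would keep the three equivalences separate.
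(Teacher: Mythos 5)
Your proof is correct and follows essentially the same route as the paper: the outer equivalences are immediate from \cref{def:acyclicOM} and \cref{def:faceOM}, and the middle one follows because the support of $(\ground \ssm R, \varnothing)$ is disjoint from $R$, so the covector description of the contraction in \cref{def:restrictionContractionOM} applies directly. Your remark on the degenerate case $R = \ground$ is a harmless extra observation not needed in the paper's argument.
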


\begin{proof}
The first equivalence is \cref{def:acyclicOM}, the last one is \cref{def:faceOM}.
For the middle one, observe that~$(\ground \ssm R) \cap R = \varnothing$.
Hence, \cref{rem:restrictionContractionOM} ensures that~$\ground \ssm R$ is a covector of~$\OM_{\!/R}$ if and only if it is a covector of~$\OM$.
\end{proof}

Finally, we describe the face lattices of restrictions and contractions of oriented matroids.

\begin{proposition}
\label{prop:restrictionContractionFL}
The face lattices of restrictions and contractions on faces are intervals of the face lattice.
Namely, for an acyclic oriented matroid~$\OM$ and~$X \in \FL(\OM)$, we have
\[
\FL(\OM_{|X}) = \FL(\OM)_{\le X}
\qquad\text{and}\qquad
\FL(\OM_{\!/X}) \cong \FL(\OM)_{\ge X},
\]
where the last isomophism is given by~$F \mapsto F \cup X$.
\end{proposition}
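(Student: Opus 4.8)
The plan is to unwind the two definitions involved and reduce everything to a statement about non-negative covectors of~$\OM$. Recall from \cref{def:faceOM} that a subset $F$ of the ground set of an acyclic oriented matroid is a face precisely when $(\ground \ssm F, \varnothing)$ is a covector, and from \cref{def:restrictionContractionOM} that the covectors of~$\OM_{|X}$ are the sets $v^* \cap X$ for $v^* \in \covectors[\OM]$, while the covectors of~$\OM_{\!/X}$ are the covectors of~$\OM$ whose support is disjoint from~$X$. Since both face lattices are ordered by inclusion, I would first note that it suffices to identify the underlying sets of faces; the order-theoretic claims then follow immediately. I would also record at the outset that $\OM_{|X}$ is acyclic (restriction preserves acyclicity) and that $\OM_{\!/X}$ is acyclic by \cref{lem:acyclicContraction} because $X \in \FL(\OM)$, so that both face lattices are well defined.

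For the restriction, the easy inclusion is $\FL(\OM)_{\le X} \subseteq \FL(\OM_{|X})$: if $F \in \FL(\OM)$ with $F \subseteq X$, then $(\ground \ssm F, \varnothing) \in \covectors[\OM]$, and intersecting with~$X$ yields $(X \ssm F, \varnothing) = (\ground \ssm F, \varnothing) \cap X \in \covectors[\OM_{|X}]$, so $F \in \FL(\OM_{|X})$. For the reverse inclusion, given $F \in \FL(\OM_{|X})$ I would pick $v^* \in \covectors[\OM]$ with $v^*_+ \cap X = X \ssm F$ and $v^*_- \cap X = \varnothing$, and then \emph{clean it up} using that $X$ is a face: since $X \in \FL(\OM)$, the sign vector $w^* \eqdef (\ground \ssm X, \varnothing)$ lies in $\covectors[\OM]$, and the composition $u^* \eqdef w^* \circ v^*$ is again a covector of~$\OM$ (the covectors of an oriented matroid are closed under composition). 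By construction $u^*$ equals $+$ on $\ground \ssm X$ and equals $v^*$ on~$X$, so that $u^*_+ = (\ground \ssm X) \cup (X \ssm F) = \ground \ssm F$ and $u^*_- = \varnothing$, i.e.\ $u^* = (\ground \ssm F, \varnothing)$, proving $F \in \FL(\OM)$ (and $F \subseteq X$ is automatic). Hence $\FL(\OM_{|X}) = \FL(\OM)_{\le X}$ as sets, and as posets since both carry the inclusion order.

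For the contraction, I would fix $G \subseteq \ground \ssm X$ and observe that the support of $\bigl((\ground \ssm X) \ssm G, \varnothing\bigr)$ is automatically disjoint from~$X$, so that, by the description of $\covectors[\OM_{\!/X}]$ above, $G \in \FL(\OM_{\!/X})$ if and only if $\bigl((\ground \ssm X) \ssm G, \varnothing\bigr) \in \covectors[\OM]$. Since $G$ and $X$ are disjoint, $(\ground \ssm X) \ssm G = \ground \ssm (X \cup G)$, so this is equivalent to $X \cup G \in \FL(\OM)$ (which then automatically contains~$X$). Hence $G \mapsto X \cup G$ is a bijection from $\FL(\OM_{\!/X})$ onto $\FL(\OM)_{\ge X}$, with inverse $F \mapsto F \ssm X$; because $G$ is disjoint from~$X$, we have $G_1 \subseteq G_2 \iff X \cup G_1 \subseteq X \cup G_2$, so this bijection and its inverse preserve the inclusion order, and it is an isomorphism of posets.

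I expect the only genuinely non-formal step to be the reverse inclusion in the restriction case: the covector $v^*$ witnessing that $F$ is a face of~$\OM_{|X}$ may have nonzero, a priori arbitrarily signed, coordinates outside~$X$, so it need not be non-negative on all of~$\ground$. Repairing this is exactly where the hypothesis $X \in \FL(\OM)$ enters, via composition with the non-negative covector $(\ground \ssm X, \varnothing)$ and the closure of the set of covectors under composition (the big face lattice structure). The degenerate cases $X = \varnothing$ and $X = \ground$ give $\OM_{\!/\varnothing} = \OM = \OM_{|\ground}$ and are covered by the same formulas, so they need no separate treatment.
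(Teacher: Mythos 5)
Your proposal is correct and follows essentially the same route as the paper's proof: the restriction case is handled by intersecting/extending non-negative covectors and repairing the witness via composition with the covector $(\ground \ssm X, \varnothing)$ guaranteed by $X \in \FL(\OM)$, and the contraction case reduces to the observation that the relevant non-negative covectors have support disjoint from~$X$. The extra remarks on acyclicity and degenerate cases are harmless additions that do not change the argument.
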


\begin{proof}
We first prove that~$\FL(\OM_{|X}) = \FL(\OM)_{\le X}$.
For~$F \subseteq X$, we have directly
\[
F \in \FL(\OM) \iff (\ground \ssm F, \varnothing) \in  \covectors[\OM] \Longrightarrow (X \ssm F, \varnothing) \in \covectors[\OM_{|X}] \iff F \in \FL(\OM_{|X})
\]
For the converse, if~$(X \ssm F, \varnothing) \in \covectors[\OM_{|X}]$, then there are~$R_+, R_- \subseteq \ground \ssm X$ such \linebreak that~$((X \ssm F) \cup R_+, R_-) \in \covectors[\OM]$.
As~$X \in \FL(\OM)$, we also have~$(\ground \ssm X, \varnothing) \in \covectors[\OM]$.
Hence,~$(\ground \ssm F, \varnothing) = (\ground \ssm X, \varnothing) \circ ((X \ssm F) \cup R_+, R_-) \in \covectors[\OM]$, where~$\circ$ is the composition of signed sets (see~\cite[Sect.~3.1]{BjornerLasVergnasSturmfelsWhiteZiegler}).

We now prove that~$\FL(\OM_{\!/X}) \cong \FL(\OM)_{\ge X}$.
For~$F \subseteq \ground \ssm X$, we have
\begin{align*}
F\cup X \in \FL(\OM) 
& \iff (\ground \ssm (F \cup X), \varnothing) \in  \covectors[\OM] \\
& \iff ((\ground \ssm X) \ssm F, \varnothing) \in \covectors[\OM_{\!/X}] \\
& \iff F \in \FL(\OM_{\!/X}).
\qedhere
\end{align*}
\end{proof}

%%%%%%%%%%%%%%%%

\subsection{Stellar subdivisions}
\label{subsec:stellarSubdivisions}

In this section, we describe the operation of stellar subdivision for oriented matroids. They are the oriented matroid incarnation of the combinatorial blow-ups from \cref{subsec:combinatorialblowup}.
Even though stellar subdivisions are usually presented topologically at the level of cell complexes, when performed on the face lattice of an oriented matroid they can always be realized by oriented matroids (even though these realizations are not unique). This is why we decided to simplify the exposition and present them directly at the level of oriented matroids. 
In particular, they can be performed via lexicographic extensions, see \cite[Sect.~7.2]{BjornerLasVergnasSturmfelsWhiteZiegler}. Lexicographic extensions preserve realizability, and hence stellar subdivisions of realizable oriented matroids are realizable. In fact, if~$\polytope$ is a convex polytope and~$\OM$ is the oriented matroid of its vertices, then the stellar subdivisions of~$\polytope$ in the sense of \cite[Sect.~14.1.2]{HenkRichterGebertZiegler1997} realize the stellar subdivisions of~$\FL(\OM)$, see \cref{exm:stellarSubdivisionTruncation}.

\begin{proposition}[{\cite[Prop.~9.2.3 \& Sect.~7.2]{BjornerLasVergnasSturmfelsWhiteZiegler}}]
\label{prop:stellarSubdivisionOrientedMatroid}

Let $\OM$ be an acyclic oriented matroid with ground set~$\ground$, and $F\neq \ground$ be one of its proper faces.
Then there is an oriented matroid on~$\ground \cup \{\asd[F]\}$, that we call the \defn{stellar subdivision} of $\OM$ at $F$ and denote by \defn{$\sd(F,\OM)$}, whose face semilattice~$\FL(\sd(F,\OM))_{<\topone}$ is isomorphic to the combinatorial blow-up at~$F$ of the face semilattice~$\FL(\OM)_{<\topone}$ of~$\OM$. Here, the isomorphism is the identity on the common faces and given by~$(F,X)\mapsto \asd[F]\cup X$ for the other.
	
Moreover, $\sd(F, \FL(\OM))$ can be chosen to be realizable when~$\OM$ is realizable.
\end{proposition}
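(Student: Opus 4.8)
The plan is to realize $\sd(F,\OM)$ as a single-element lexicographic extension of $\OM$, in the sense of \cite[Sect.~7.2]{BjornerLasVergnasSturmfelsWhiteZiegler}. Fix an arbitrary ordering $f_1, \dots, f_m$ of the elements of the face $F$ (we may assume $F \ne \varnothing$, as $F = \varnothing$ is degenerate and never arises when $F$ is a block of a building set; the case where $F$ is an atom reduces to a relabelling, see below). I would then let $\sd(F,\OM)$ be the lexicographic extension of $\OM$ by the new point $\asd \eqdef [f_1^{+}, \dots, f_m^{+}]$. Recall from that reference that this is a genuine single-element extension of oriented matroids, defined purely combinatorially (the cocircuits of $\OM$ being extended by a determined sign-insertion rule along the ordered signed list $f_1^{+}, \dots, f_m^{+}$), and that in the realizable case $\OM = \OM(\b{A})$ it is realized by $\OM(\b{A} \cup \{\b{a}_{\asd}\})$ with $\b{a}_{\asd} \eqdef \sum_{i \in [m]} \varepsilon^{i-1}\b{a}_{f_i}$ for any sufficiently small $\varepsilon > 0$. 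This is a point in the relative interior of the cone spanned by the face $F$: it is essential here that the list uses all of $F$ with all positive signs, so that $\b{a}_{\asd}$ is not pushed onto the boundary of that cone. Since all attached signs are positive and $F\ne\ground$, the extension $\sd(F,\OM)$ is again acyclic --- in the realizable case because $\b{a}_{\asd}$ lies in the pointed cone $\R_{\ge 0}\b{A}$ and so creates no positive dependence, and in general by the combinatorial description of lexicographic extensions.

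Next I would identify the face semilattice with the combinatorial blow-up. By \cite[Prop.~9.2.3 \& Sect.~7.2]{BjornerLasVergnasSturmfelsWhiteZiegler}, the lexicographic extension just constructed performs, at the level of the face posets of the regular cell decompositions of \cref{thm:faceLatticeOrientedMatroidSphere}, the stellar subdivision of $\Delta(\OM)$ at the cell $F$; equivalently, $\FL(\sd(F,\OM))_{<\topone}$ is the stellar subdivision of $\FL(\OM)_{<\topone}$ at $F$ at the level of posets. On the other hand, by \cite[Sect.~4.2]{FeichtnerKozlov2004} (see also \cref{exm:stellarSubdivisionTruncation}), the stellar subdivision of the face poset of a regular cell complex at a cell coincides with the combinatorial blow-up of that poset at the corresponding element. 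Composing these two identifications yields
\[
\FL(\sd(F,\OM))_{<\topone} \;\cong\; \Bl[{\FL(\OM)_{<\topone}}][F].
\]
Unwinding this shows that a face $G$ of $\OM$ with $F \not\subseteq G$ remains a face of $\sd(F,\OM)$ and corresponds to the element $G$ of $\Bl[{\FL(\OM)_{<\topone}}][F]$, while every face of $\sd(F,\OM)$ containing $\asd$ is of the form $\asd \cup X$ for a face $X$ of $\OM$ with $F \not\subseteq X$ and $F \vee X \ne \ground$, and corresponds to the element $(F, X)$; this is exactly the asserted isomorphism together with its description.

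For the last claim, lexicographic extensions preserve realizability \cite[Sect.~7.2]{BjornerLasVergnasSturmfelsWhiteZiegler}: an explicit realization was already exhibited above, namely $\OM(\b{A} \cup \{\b{a}_{\asd}\})$ with $\b{a}_{\asd} = \sum_{i \in [m]} \varepsilon^{i-1}\b{a}_{f_i}$ for $\varepsilon > 0$ small, which we observed is again acyclic. When $\b{A}$ is the homogenization of the vertices of a polytope $\polytope$ (\cref{exm:coneAcyclicRealizableOM}), $\b{a}_{\asd}$ dehomogenizes to a point in the relative interior of the face of $\polytope$ indexed by $F$, so this configuration realizes the stellar subdivision of $\polytope$ at that face in the sense of \cite[Sect.~14.1.2]{HenkRichterGebertZiegler1997}, consistently with \cref{rem:truncations}.

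The main obstacle is the identification in the second paragraph, i.e.\ the fact that the lexicographic extension along $f_1^{+}, \dots, f_m^{+}$ carries out the stellar subdivision at $F$ at the combinatorial level. If one does not wish to cite \cite[Prop.~9.2.3]{BjornerLasVergnasSturmfelsWhiteZiegler} as a black box, one must instead compute directly how the covectors of $\OM$ --- equivalently its non-negative covectors, equivalently its faces --- transform under this lexicographic extension, and match the resulting face poset, with its cover relations, against the recursive description of the combinatorial blow-up in \cref{def:combinatorialBlowUps}: the delicate point is to show that the faces of $\sd(F,\OM)$ containing $\asd$ are exactly the sets $\asd \cup X$ with $F \not\le X$ and $F \vee X \ne \ground$, and no others. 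The remaining ingredients --- acyclicity, realizability, and the degenerate case where $F$ is an atom (when the construction merely relabels $F$ as $\asd$, matching the combinatorial blow-up at an atom) --- are routine.
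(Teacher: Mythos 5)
Your overall route --- obtain $\sd(F,\OM)$ as a single-element lexicographic extension, identify its face semilattice with the combinatorial blow-up, and get realizability for free --- is exactly the one the paper intends (the paper offers no argument beyond the citation to \cite[Prop.~9.2.3 \& Sect.~7.2]{BjornerLasVergnasSturmfelsWhiteZiegler}). But the specific extension you chose does not work, and this is a genuine error rather than a presentational one. The extension $[f_1^{+},\dots,f_m^{+}]$ is realized by $\sum_{i\in[m]}\varepsilon^{i-1}\b{a}_{f_i}$, a point in the relative interior of the cone spanned by the face~$F$, hence inside $\R_{\ge 0}\b{A}$ and beneath every facet. Adding such a point does not change the cone, so it does not change the Las Vergnas face lattice either: a non-negative covector of the extension is just a non-negative covector of $\OM$, with the new element added to its zero set precisely when it vanishes on all of~$F$. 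Consequently the faces of your extension are the faces $G$ of $\OM$ with $F\not\subseteq G$, together with $G\cup\{\asd\}$ for the faces $G\supseteq F$; in particular $\{\asd\}$ is not a face as soon as $F$ is not an atom of $\FL(\OM)$, and the face semilattice is isomorphic to $\FL(\OM)_{<\topone}$ itself, not to $\Bl[\FL(\OM)_{<\topone}][F]$. Concretely, if $\OM$ is the oriented matroid of the homogenized vertices of a square and $F$ is an edge, your configuration still realizes the square, whereas the blow-up at an edge is the face poset of a pentagon. Your parenthetical claim that it is ``essential'' that the new point not be pushed onto the boundary of the cone is exactly backwards: the Las Vergnas face lattice records faces of the cone, not subdivisions of the cell complex, so the new element must be pushed \emph{outside} the cone, beyond precisely the facets containing~$F$.

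The repair is to append the complement of $F$ with negative signs: take the lexicographic extension $[f_1^{+},\dots,f_m^{+},g_1^{-},\dots,g_k^{-}]$, where $g_1,\dots,g_k$ enumerate $\ground\ssm F$, realized by $\sum_{i\in[m]}\varepsilon^{i-1}\b{a}_{f_i}-\sum_{j\in[k]}\varepsilon^{m+j-1}\b{a}_{g_j}$. For a non-negative cocircuit vanishing on $F$ (a facet containing $F$), the first nonzero evaluation met along this list is on some $g_j$ and is weighted negatively, so the new element is beyond that facet; for a non-negative cocircuit not vanishing on $F$, the first nonzero evaluation is on some $f_i$ and is weighted positively, so the new element is beneath that facet. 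This is the standard ``beyond exactly the facets containing $F$'' position; it plays the same role as the explicit vector $\b{a}_F\eqdef\sum_{s\in F}\b{a}_s-\varepsilon\sum_{s\in\ground}\b{a}_s$ of \cref{exm:stellarSubdivisionTruncation}, acyclicity survives because for small $\varepsilon$ the new vector stays in an open halfspace containing~$\b{A}$, and realizability is preserved as you say. With this correction, the verification you rightly single out as the crux --- that the faces of the extension containing $\asd$ are exactly the sets $\asd\cup X$ with $F\not\le X$ and $F\vee X\ne\ground$, matched against \cref{def:combinatorialBlowUps} --- goes through.
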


\begin{remark}
The definition above contains some abuse of notation, as the oriented \linebreak matroid~$\sd(F,\OM)$ is not uniquely defined (only its face lattice is). Therefore, we will only use this operation when we are considering face lattices.
\end{remark}

\begin{remark}
Alternatively we can describe the \defn{stellar subdivision} $\sd(F, \OM)$ as the operation on $\FL(\OM)$ that replaces the (open) star of $F$ by the cone over its boundary with apex $\asd[F]$:
\[
\FL(\sd(F, \OM))_{<\topone} = \FL(\OM)_{<\topone} \ssm(\ostar(F,\FL)) \cup \{\asd[F]\} \times \left(\cstar(F,\FL) \ssm \ostar(F,\FL) \right),
\]
where~$\ostar(F,\FL) \eqdef \FL_{F \le \cdot <\topone}$ is the \defn{(open) star} of $F$ and $\cstar(F,\FL) \eqdef \bigcup_{G\in \ostar(F,\FL)} \FL_{\le G}$ is the \defn{closed star} of~$F$.
\end{remark}

\begin{example}
\label{exm:stellarSubdivisionTruncation}
Following on \cref{exm:realizableOM,exm:acyclicRealizableOM,exm:coneAcyclicRealizableOM,exm:resctrictionContractionRealizableOM},  if~$\b{A} \eqdef (\b{a}_s)_{s \in \ground} \in (\R^d)^{\ground}$ is a vector configuration, and $F\subset \ground$ is a proper face of $\FL(\OM(\b A))$, then $\sd(F, \OM(\b A))$ is realized by the vector configuration $\b{A}\cup \b{a}_F$, where 
\[
\b{a}_F\eqdef \sum_{s\in F}\b a_s -\varepsilon \sum_{s\in \ground}\b a_s
\]
for some $\varepsilon>0$ sufficiently small.

If~$\polytope$ is a polytope with vertex set $(\b{v}_s)_{s \in \ground}$, then the stellar subdivision at $F$ can be realized by $\conv(\polytope\cup \b v_F)$, where 
\[
\b{v}_F\eqdef \frac{1+\varepsilon}{|F|}\sum_{s\in F}\b v_s -\frac{\varepsilon}{|\ground \ssm F|} \sum_{s\in \ground \ssm F}\b v_s
\]
for some $\varepsilon>0$ sufficiently small.
This operation is commonly referred to as \defn{stellar subdivision} in polyhedral combinatorics \cite[Sect.~14.1.2]{HenkRichterGebertZiegler1997}.
The given choice of $\b v_F$ is not canonical; any other point beyond the facets containing~$F$ and beneath the remaining facets would also work.
See \cref{fig:stellarSubdivisionsFaceTruncations}\,(left).

Stellar subdivision of polytopes is the polar operation to truncation. If $\polytope$ contains the origin in its interior, and $\polytope^\triangle$ is its polar, then the stellar subdivision of $\polytope$ at $F$ is polar to the truncation of~$F^\diamond$ on $\polytope^\triangle$, where~$F^\diamond$ is the face of $\polytope^\triangle$  associated to~$F$ (see~\cite[Sect.~2.3]{Ziegler}), and the \defn{truncation} of~$F^\diamond$ consists in intersecting~$\polytope^\triangle$ with a halfspace that does not intersect~$F^\diamond$ but contains all the vertices of~$\polytope^\triangle$ not in~$F^\diamond$. This can be achieved by considering a supporting hyperplane for~$F^\diamond$ and pushing it inside by slightly perturbing the right-hand side.
See \cref{fig:stellarSubdivisionsFaceTruncations}\,(right).

\begin{figure}
	\capstart
	\centerline{\includegraphics[scale=.5]{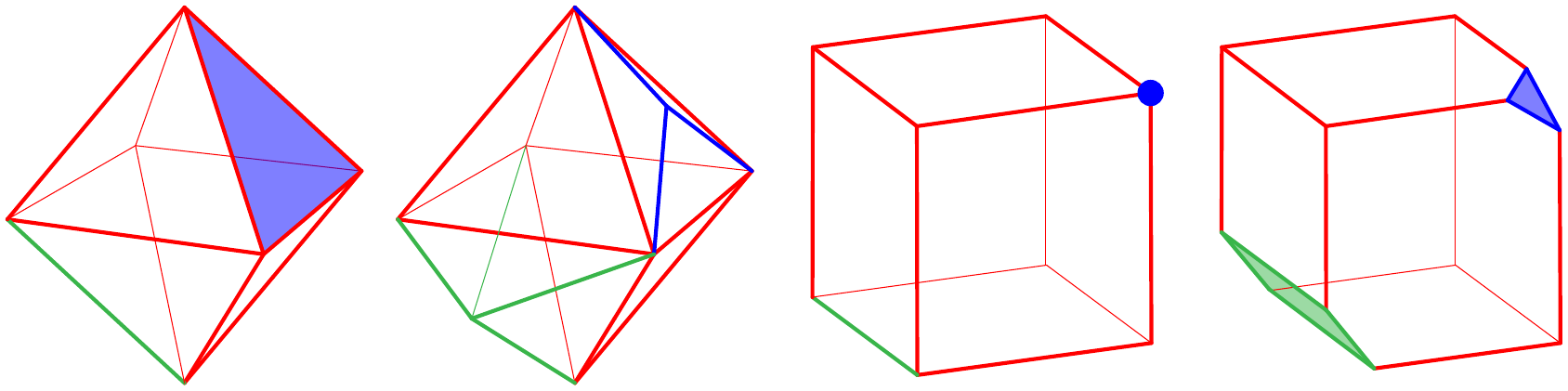}}
	\caption{Duality between two stellar subdivisions on an octahedron (left) and two face truncations on a cube (right).}
	\label{fig:stellarSubdivisionsFaceTruncations}
\end{figure}
\end{example}

%%%%%%%%%%%%%%%%

\subsection{Operations}
\label{subsec:operationsOM}

We conclude by two operations on oriented matroids that will realize the Cartesian and free products of their face lattices, defined in \cref{subsec:operationsNestedComplex}.

\begin{definition}[{\cite[Prop.~7.6.1]{BjornerLasVergnasSturmfelsWhiteZiegler}}]
\label{def:directSumOM}
The \defn{direct sum} of two oriented matroids~$\OM$ and~$\OM'$ on disjoint ground sets~$\ground$ and~$\ground'$ is the oriented matroid $\OM \oplus \OM'$ on $\ground \cup \ground'$ with 
\begin{itemize}
\item vectors~$\vectors[\OM \oplus \OM']=\set{(v_+ \cup v'_+, v_- \cup v'_-)}{v \in \vectors[\OM], v' \in \vectors[\OM]}$,
\item covectors~$\covectors[\OM \oplus \OM']=\set{(v_+^*\cup v'^*_+, v^*_- \cup v'^*_-)}{v^* \in \covectors[\OM], v'^* \in \covectors[\OM']}$,
\item circuits~$\circuits[\OM \oplus \OM'] = \circuits[\OM] \cup \circuits[\OM']$,
\item cocircuits~$\cocircuits[\OM \oplus \OM'] = \cocircuits[\OM] \cup \cocircuits[\OM']$.
\end{itemize}
\end{definition}

\begin{example}
\label{exm:directSumRealizableOM}
Following on \cref{exm:realizableOM,exm:acyclicRealizableOM,exm:coneAcyclicRealizableOM,exm:resctrictionContractionRealizableOM,exm:stellarSubdivisionTruncation}, for two vector configurations~$\b{A} \in (\R^d)^{\ground}$ and~$\b{A}' \in (\R^{d'})^{\ground'}$, we have
\[
\OM(\b{A}) \oplus \OM(\b{A}') = \OM(\b{A} \oplus \b{A}'),
\]
where~$\b{A} \oplus \b{A}' \eqdef \set{(\b{a}_{s}, \zero_{d'})}{s \in \ground} \cup \set{(\zero_{d}, \b{a}'_{s'})}{s' \in \ground'}$.
If~$\polytope, \polytope'$ are two polytopes, \linebreak then ${\b{A}_{\polytope} \oplus \b{A}_{\polytope'} \simeq \b{A}_{\polytope \join \polytope'}}$, where~$\polytope \join \polytope'$ is known as the join of~$\polytope$ and~$\polytope'$ (see \cite[Sect.~14.1.3]{HenkRichterGebertZiegler1997}) and~$\simeq$ is a linear isomorphism.
\end{example}

\begin{example}
\label{exm:directSumGraphicalOM}
Following on \cref{exm:graphicalOM,exm:acyclicGraphicalOM,exm:orderPolytope,exm:resctrictionContractionGraphicalOM}, for two directed graphs~$\digraph$ and~$\digraph'$, we have~$\OM(\digraph) \oplus \OM(\digraph') = \OM(\digraph \oplus \digraph')$, where~$\digraph \oplus \digraph'$ is the disjoint union of~$\digraph$ and~$\digraph'$.
\end{example}

\begin{definition}
\label{def:freeSumOM}
A \defn{free sum} of two acyclic oriented matroids~$\OM$ and~$\OM'$ on disjoint ground sets~$\ground$ and~$\ground'$ is an oriented matroid~$\OM \boxplus \OM'$ on $\ground \cup \ground'$ constructed as~$\sd(\ground, \OM \oplus \OM')_{\!/\asd[\ground]}$.  
\end{definition}

Since the stellar subdivision is not uniquely defined, neither is the free sum~$\OM \boxplus \OM'$.
However, we will see that its face lattice~$\FL(\OM \boxplus \OM')$ is uniquely defined.

\begin{example}
\label{exm:freeSumRealizableOM}
Following on \cref{exm:realizableOM,exm:acyclicRealizableOM,exm:coneAcyclicRealizableOM,exm:resctrictionContractionRealizableOM,exm:stellarSubdivisionTruncation,exm:directSumRealizableOM}, for two vector configurations~$\b{A} \in (\R^d)^{\ground}$ and~$\b{A}' \in (\R^{d'})^{\ground'}$, we have
\[
\OM(\b{A}) \boxplus \OM(\b{A}') = \OM(\b{A} \boxplus \b{A}'),
\]
where~$\b{A} \boxplus \b{A}'$ is obtained by placing~$\b{A}$ in~$\R^d \times \{\zero_{d'-1}\}$ and~$\b{A}'$ in~$\{\zero_{d-1}\} \times \R^{d'}$ such that the half-line~$\zero^{d-1} \times \R_{>0} \times \zero^{d'-1}$ belongs to the interior of the two generated cones.
If~$\polytope, \polytope'$ are two polytopes, then $\b{A}_{\polytope} \oplus \b{A}_{\polytope'}$ is combinatorially equivalent to~$\b{A}_{\polytope \oplus \polytope'}$, where~$\polytope \oplus \polytope'$ is known as the direct sum of~$\polytope$ and~$\polytope'$ (see \cite[Sect.~14.1.3]{HenkRichterGebertZiegler1997}).
\end{example}

Finally, we describe the face lattices of direct sums and free sums of oriented matroids.

\begin{proposition}
\label{prop:directSumCartesianProductFL}
The face lattice of the direct sum is the Cartesian product of face lattices.
Namely, if~$\OM$ and~$\OM'$ are acyclic oriented matroids on disjoint ground sets $\ground$ and $\ground'$, then
\[
\FL(\OM \oplus \OM') \cong \FL(\OM) \times \FL(\OM'),
\]
where the isomorphism is given by $F \mapsto (F \cap \ground, F \cap \ground')$.
Conversely, for an acyclic oriented matroid $\OM$ with ground set $\ground \sqcup \ground'$, if~$\FL(\OM) \cong \FL(\OM_{|\ground}) \times \FL(\OM_{|\ground'})$, then $\OM = \OM_{|\ground} \oplus \OM_{|\ground'}$.
\end{proposition}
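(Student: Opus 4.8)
The plan is to treat the two directions separately, working straight from the circuit and covector axioms.

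For the forward direction I would first check that $\OM \oplus \OM'$ is acyclic, so that $\FL(\OM \oplus \OM')$ is even defined: acyclic oriented matroids have $(\ground, \varnothing)$ as a covector (\cref{def:acyclicOM}), so $(\ground, \varnothing) \in \covectors[\OM]$ and $(\ground', \varnothing) \in \covectors[\OM']$, whence $(\ground \cup \ground', \varnothing) \in \covectors[\OM \oplus \OM']$ by \cref{def:directSumOM}. The crux is the observation, immediate from \cref{def:directSumOM}, that a signed subset $w = (w_+, w_-)$ of $\ground \cup \ground'$ is a covector of $\OM \oplus \OM'$ precisely when $(w_+ \cap \ground, w_- \cap \ground) \in \covectors[\OM]$ and $(w_+ \cap \ground', w_- \cap \ground') \in \covectors[\OM']$. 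Applying this to the non-negative covector $\bigl( (\ground \cup \ground') \ssm F, \varnothing \bigr)$ that witnesses a face $F$ (\cref{def:faceOM}), I get that $F \in \FL(\OM \oplus \OM')$ if and only if $F \cap \ground \in \FL(\OM)$ and $F \cap \ground' \in \FL(\OM')$. Hence $F \mapsto (F \cap \ground, F \cap \ground')$ is a bijection from $\FL(\OM \oplus \OM')$ onto $\FL(\OM) \times \FL(\OM')$, with inverse $(G, G') \mapsto G \sqcup G'$; since $\ground$ and $\ground'$ partition the ground set, both maps are order-preserving, so this is a lattice isomorphism.

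For the converse my plan is to go through ranks and the underlying matroid. Restrictions of acyclic oriented matroids are acyclic (\cref{def:restrictionContractionOM}), so $\OM_{|\ground}$ and $\OM_{|\ground'}$ have Las Vergnas face lattices; and by \cref{thm:faceLatticeOrientedMatroidSphere}, the face lattice of an acyclic oriented matroid is the augmented face lattice of a regular cell decomposition of $\sph[r-2]$ with $r$ its rank, hence a graded lattice of length $r$. The length of the Cartesian product of two graded lattices is the sum of their lengths, so the hypothesis $\FL(\OM) \cong \FL(\OM_{|\ground}) \times \FL(\OM_{|\ground'})$ forces
\[
\rank[\OM] = \rank[\OM_{|\ground}] + \rank[\OM_{|\ground'}].
\]
Since $\rank[\OM_{|\ground}] = \mathrm{rk}_{\UOM}(\ground)$, $\rank[\OM_{|\ground'}] = \mathrm{rk}_{\UOM}(\ground')$, and $\ground \sqcup \ground'$ is the whole ground set, this is the equality case of the submodular inequality $\mathrm{rk}(\ground \sqcup \ground') \le \mathrm{rk}_{\UOM}(\ground) + \mathrm{rk}_{\UOM}(\ground')$, which is exactly the condition that the underlying matroid splits as $\UOM = \UOM_{|\ground} \oplus \UOM_{|\ground'}$, equivalently that no circuit of $\OM$ meets both $\ground$ and $\ground'$ (see \cite{Oxley}). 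Then $\circuits[\OM] = \set{c \in \circuits[\OM]}{\underline{c} \subseteq \ground} \cup \set{c \in \circuits[\OM]}{\underline{c} \subseteq \ground'} = \circuits[\OM_{|\ground}] \cup \circuits[\OM_{|\ground'}] = \circuits[\OM_{|\ground} \oplus \OM_{|\ground'}]$ by \cref{rem:restrictionContractionOM} and \cref{def:directSumOM}, and since an oriented matroid is determined by its set of circuits (\cref{def:OM}), I conclude $\OM = \OM_{|\ground} \oplus \OM_{|\ground'}$.

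The one step that needs care is, in the converse, the passage from an abstract lattice isomorphism to the numerical identity of ranks (which is why it matters that the face lattices are graded of length the rank, via \cref{thm:faceLatticeOrientedMatroidSphere}); after that, everything is a direct unwinding of the definitions. It is also worth recording why the rank identity already pins down the orientation: a circuit $c$ of $\OM$ meeting both $\ground$ and $\ground'$ would, after completing $\underline{c} \cap \ground$ to a basis of $\OM_{|\ground}$ and $\underline{c} \cap \ground'$ to a basis of $\OM_{|\ground'}$, produce a set of size $\rank[\OM_{|\ground}] + \rank[\OM_{|\ground'}]$ that contains $c$ (so is dependent) yet spans the whole matroid, contradicting $\rank[\OM] = \rank[\OM_{|\ground}] + \rank[\OM_{|\ground'}]$. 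The forward direction is completely routine.
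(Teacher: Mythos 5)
Your proof is correct. The forward direction is exactly the paper's argument: the face lattice factorization is read off directly from the covector description in \cref{def:directSumOM}. For the converse, both you and the paper reduce to the rank identity $\rank[\OM] = \rank[\OM_{|\ground}]+\rank[\OM_{|\ground'}]$ (you make explicit, via the gradedness and length of Las Vergnas face lattices from \cref{thm:faceLatticeOrientedMatroidSphere}, the step that the paper leaves implicit when it states its ``stronger result'' with the rank hypothesis), but you then diverge in how the rank identity is converted into $\OM = \OM_{|\ground} \oplus \OM_{|\ground'}$. The paper observes that every circuit of $\OM_{|\ground} \oplus \OM_{|\ground'}$ is a circuit of $\OM$, deduces a strong map between the two oriented matroids, and concludes equality from equality of ranks by citing \cite[Prop.~7.7.1 \& Exm.~7.25]{BjornerLasVergnasSturmfelsWhiteZiegler}. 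You instead pass through the underlying unoriented matroid: equality in the submodular inequality means $\ground$ is a separator, so no circuit of $\OM$ meets both $\ground$ and $\ground'$ (your basis-completion argument proves exactly the contrapositive you need), whence $\circuits[\OM] = \circuits[\OM_{|\ground}] \cup \circuits[\OM_{|\ground'}] = \circuits[\OM_{|\ground} \oplus \OM_{|\ground'}]$ by \cref{rem:restrictionContractionOM} and \cref{def:directSumOM}, and an oriented matroid is determined by its circuits (\cref{def:OM}). Your route is more elementary and self-contained (no strong map machinery), at the mild cost of detouring through the unoriented matroid, which the paper generally avoids (\cref{rem:matroids}); the paper's route is shorter by outsourcing the final comparison to \cite{BjornerLasVergnasSturmfelsWhiteZiegler}. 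Both arguments correctly exploit the crucial hypothesis that $\ground \sqcup \ground'$ is the entire ground set, in line with the nuance discussed after the proposition.
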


\begin{proof}
The first statement follows directly from the description of the covectors of $\OM \oplus \OM'$.

The converse statement follows from the following stronger result. If $\OM$ is an oriented matroid on~$\ground \sqcup \ground'$, and $\rank[\OM] = \rank[\OM_{|\ground}]+\rank[\OM_{|\ground'}]$, then $\OM = \OM_{|\ground} \oplus \OM_{|\ground'}$. Indeed, every circuit of~${\OM_{|\ground} \oplus \OM_{|\ground'}}$ is automatically a circuit of~$\OM$. This means that there is a strong map from~${\OM_{|\ground} \oplus \OM_{|\ground'}}$ to $\OM$, see \cite[Prop.~7.7.1]{BjornerLasVergnasSturmfelsWhiteZiegler}. If both have the same rank, then they must coincide by \cite[Exm.~7.25]{BjornerLasVergnasSturmfelsWhiteZiegler}.
\end{proof}

Note that there is a nuance in the formulation of the converse statement in \cref{prop:directSumCartesianProductFL}. It is not enough that~$\FL(\OM) \cong \FL(\OM_{|X}) \times \FL(\OM_{|Y})$ to conclude that~$\OM = \OM_{|X} \oplus \OM_{|Y}$, because there could be additional elements not visible in the face lattice and have $X\cup Y\neq \ground$.

\begin{proposition}
\label{prop:freeSumFreeProductFL}
The face lattice of the free sum is the free product of face lattices.
Namely, if~$\OM$ and~$\OM'$ are acyclic oriented matroids on disjoint ground sets $\ground$ and $\ground'$, then
\[
\FL(\OM \boxplus \OM') \cong \FL(\OM) \boxtimes \FL(\OM'),
\]
where the isomorphism is given by $F \mapsto (F \cap \ground, F \cap \ground)$.
\end{proposition}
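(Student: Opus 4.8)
The plan is to unfold the definition of the free sum, $\OM \boxplus \OM' = \sd(\ground, \OM \oplus \OM')_{\!/\asd[\ground]}$ from \cref{def:freeSumOM}, and to chase the face lattice through the three operations it is assembled from --- direct sum, stellar subdivision, and contraction --- each of which we already control. We may assume that $\ground$ and $\ground'$ are both non-empty, since otherwise the free sum is not even defined. First I would record that $\ground$ is a proper face of $\OM \oplus \OM'$: under the isomorphism $\FL(\OM \oplus \OM') \cong \FL(\OM) \times \FL(\OM')$, $F \mapsto (F \cap \ground, F \cap \ground')$, of \cref{prop:directSumCartesianProductFL}, the subset $\ground$ corresponds to $(\topone_{\FL(\OM)}, \botzero_{\FL(\OM')})$, which is a genuine element of $\FL(\OM) \times \FL(\OM')$, and it is proper since $\ground' \neq \varnothing$. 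Hence $\sd(\ground, \OM \oplus \OM')$ is defined, and \cref{prop:stellarSubdivisionOrientedMatroid} identifies $\FL(\sd(\ground, \OM \oplus \OM'))_{<\topone}$ with the combinatorial blow-up $\Bl[{\FL(\OM \oplus \OM')_{<\topone}}][\ground]$, via the identity on the common faces and $(\ground, Y) \mapsto \asd[\ground] \cup Y$ on the new ones; in particular $\{\asd[\ground]\}$ is a face of $\sd(\ground, \OM \oplus \OM')$, corresponding to the minimal new element $(\ground, \botzero)$.

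Next I would compute the principal up-set of $\{\asd[\ground]\}$ inside $\FL(\sd(\ground, \OM \oplus \OM'))$. Writing $\semi \eqdef \FL(\OM \oplus \OM')_{<\topone}$, the order relations of \cref{def:combinatorialBlowUps} show that $(\ground, Y) \ge (\ground, \botzero)$ for every admissible $Y$, that no element of $\semi$ lies above a new element, and that $\topone$ lies above everything, so that
\[
\FL(\sd(\ground, \OM \oplus \OM'))_{\ge \{\asd[\ground]\}} \;=\; \bigset{(\ground, Y)}{Y \in \semi, \; Y \not\ge \ground, \; Y \vee_\semi \ground \text{ exists}} \cup \{\topone\},
\]
ordered by $(\ground, Y) \le (\ground, Y') \iff Y \le_\semi Y'$. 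Translating through $\FL(\OM \oplus \OM') \cong \FL(\OM) \times \FL(\OM')$ and writing $Y = (Y_1, Y_2)$, the condition $Y \not\ge \ground$ becomes $Y_1 \neq \topone_{\FL(\OM)}$, while "$Y \vee_\semi \ground$ exists" (\ie $Y \vee \ground < \topone$) becomes $Y_2 \neq \topone_{\FL(\OM')}$; hence this poset is order-isomorphic to $\bigl( (\FL(\OM) \ssm \{\topone\}) \times (\FL(\OM') \ssm \{\topone\}) \bigr) \cup \{\topone\}$, which is precisely $\FL(\OM) \boxtimes \FL(\OM')$ by \cref{def:freeProductLattices}. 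Finally, \cref{prop:restrictionContractionFL} gives $\FL(\OM \boxplus \OM') = \FL(\sd(\ground, \OM \oplus \OM')_{\!/\asd[\ground]}) \cong \FL(\sd(\ground, \OM \oplus \OM'))_{\ge \{\asd[\ground]\}}$ via $F \mapsto F \cup \{\asd[\ground]\}$. Composing the three isomorphisms and tracking a face $F$ of $\OM \boxplus \OM'$, namely $F \mapsto F \cup \{\asd[\ground]\} \mapsto (\ground, F) \mapsto (\ground, (F \cap \ground, F \cap \ground')) \mapsto (F \cap \ground, F \cap \ground')$ with the full ground set mapping to $\topone$, yields the claimed formula $F \mapsto (F \cap \ground, F \cap \ground')$.

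The covector bookkeeping behind "$\ground$ is a face" and the identification of the translated poset with $\FL(\OM) \boxtimes \FL(\OM')$ are routine. The one point I expect to need care is reconciling the several "top" conventions in play --- the fresh top added by $\sd$, the top kept under the contraction in \cref{prop:restrictionContractionFL}, and the fresh top glued on in the free product of \cref{def:freeProductLattices} --- and in particular checking that "$Y \vee_\semi \ground$ exists" corresponds exactly to "$Y_2 \neq \topone_{\FL(\OM')}$" under the product identification, so that the admissibility condition of the combinatorial blow-up matches the "delete the top of each factor" operation defining $\boxtimes$.
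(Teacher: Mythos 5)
Your proposal is correct and follows essentially the same route as the paper: both identify $\FL(\OM \boxplus \OM')$ with the faces of $\sd(\ground, \OM \oplus \OM')$ above $\asd[\ground]$ via \cref{prop:restrictionContractionFL}, describe that up-set through the stellar subdivision (combinatorial blow-up) at $\ground$, and then translate through the product decomposition of \cref{prop:directSumCartesianProductFL} to recover $\FL(\OM) \boxtimes \FL(\OM')$. Your version is merely more explicit about the blow-up order relations (in particular that ``$Y \vee_\semi \ground$ exists'' amounts to $Y \cap \ground' \neq \ground'$), where the paper phrases the same condition as being contained in a facet containing $\ground$.
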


\begin{proof}
By \cref{prop:restrictionContractionFL}, the face lattice of $\sd(\ground, \OM \oplus \OM')_{\!/\asd[\ground]}$ is the link of $\asd[\ground]$ in $\sd(\ground, \OM \oplus \OM')$, that is, the set of faces of $\sd(\ground,\OM \oplus \OM')$ containing~$\ground$.
By construction of the stellar subdivision, these are in bijection with the faces~$G$ of $\OM \oplus \OM$ that do not contain~$\ground$ but that are contained in a facet of $\OM \oplus \OM'$ containing~$\ground$ (plus the top element).
Faces of $\OM \oplus \OM'$ are of the form $F \sqcup F'$ with $F \in \FL(\OM)$ and~$F' \in \FL(\OM')$.
Note first that $F \sqcup F'$ is a facet containing $\ground$ if and only if $F' \neq \ground'$ (otherwise it would not be a facet).
And its faces $G \sqcup G' \subseteq F \sqcup F'$ not containing~$\ground$ give precisely the description of~$\FL(\OM) \boxtimes \FL(\OM')$ (except for the top element).
\end{proof}

%%%%%%%%%%%%%%%%

\subsection{Connected components}
\label{subsec:connectedComponents}

We conclude by the following notion of connected components which will be useful along the paper.
Note that the connected components of an oriented matroid actually only depend on the underlying unoriented matroid~$\UOM$.
We have however decided to focus our presentation on oriented matroids, and to skip classical matroids (see \cref{rem:matroids}).

\begin{definition}
\label{def:connectedComponentsOM}
The \defn{connected components} of an oriented matroid~$\OM$ are equivalence classes of the equivalence relation on the ground set~$\ground$ given by $s \sim t$ if either $s = t$ or there is $c \in \circuits[\OM]$ with $s, t \in \underline{c}$.
\end{definition}

\begin{example}
\label{exm:connectedComponentsGraphicalOM}
Following on \cref{exm:graphicalOM,exm:acyclicGraphicalOM,exm:orderPolytope,exm:resctrictionContractionGraphicalOM,exm:directSumGraphicalOM}, the connected components of a graphical oriented matroid~$\OM(\digraph)$ are the edge sets of the $2$-connected components of the unoriented graph underlying the directed graph~$\digraph$.
Note that, in contrast, the (oriented) matroids do not capture $1$-connectivity (the graphical matroid of a tree on $n$ vertices coincides with that of~$n-1$ disjoint edges).
See \cite[Chap.~4]{Oxley}.
\end{example}

\cref{def:connectedComponentsOM} directly gives the following decomposition, which is well-known at the level of unoriented matroids~\cite[Prop.~4.2.8]{Oxley}, and carries over straightforwardly to oriented matroids.

\begin{proposition}
\label{prop:connectedComponentsDecomposition}
Let $\OM$ be an oriented matroid on $\ground$. If $\ground_1,\dots,\ground_k$ are the connected components of~$\OM$, then $\OM=\OM_{|\ground_1}\oplus\cdots\oplus\OM_{|\ground_k}$.
\end{proposition}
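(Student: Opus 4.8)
The plan is to reduce everything to the binary direct sum statement already available in \cref{prop:directSumCartesianProductFL}, using induction on the number $k$ of connected components and the stronger rank criterion proved there. First I would recall that $s \sim t$ in \cref{def:connectedComponentsOM} is indeed an equivalence relation: reflexivity and symmetry are immediate, and transitivity follows from the circuit elimination axiom in \cref{def:OM} (given circuits $c$ through $s,t$ and $c'$ through $t,u$, one produces a circuit through $s$ and $u$ — this is the classical matroid argument, which carries over verbatim on the level of supports since the underlying matroid $\UOM$ controls $\underline{\mathcal{C}}(\OM)$ by \cref{rem:matroids}). So $\ground = \ground_1 \sqcup \dots \sqcup \ground_k$ is a genuine partition.

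The key step is the case $k=2$: if $\ground = \ground_1 \sqcup \ground_2$ with no circuit of $\OM$ meeting both parts, then $\OM = \OM_{|\ground_1} \oplus \OM_{|\ground_2}$. For this I would invoke the rank criterion inside the proof of \cref{prop:directSumCartesianProductFL}: it suffices to check $\rank[\OM] = \rank[\OM_{|\ground_1}] + \rank[\OM_{|\ground_2}]$, equivalently $\corank[\OM] = \corank[\OM_{|\ground_1}] + \corank[\OM_{|\ground_2}]$. Here I would argue at the level of the underlying matroid: every circuit of $\OM$ is entirely contained in one $\ground_i$, hence every circuit of $\OM$ is a circuit of some $\OM_{|\ground_i}$ and vice versa (a circuit of $\OM_{|\ground_i}$ is a circuit of $\OM$ by \cref{rem:restrictionContractionOM}); since a matroid is determined by its circuits, this forces the rank identity. (Concretely: choosing a basis of $\OM_{|\ground_1}$ and a basis of $\OM_{|\ground_2}$ and taking their union gives an independent set of $\OM$ — no circuit can arise since circuits do not cross — which is moreover spanning, as any element of $\ground_i$ not in the chosen basis of $\OM_{|\ground_i}$ is dependent on it within $\ground_i$.) Then the strong-map plus equal-rank argument in \cref{prop:directSumCartesianProductFL} yields $\OM = \OM_{|\ground_1} \oplus \OM_{|\ground_2}$.

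For the general case I would induct on $k$. Set $\ground' \eqdef \ground_2 \sqcup \dots \sqcup \ground_k$; no circuit of $\OM$ meets both $\ground_1$ and $\ground'$, so by the $k=2$ case $\OM = \OM_{|\ground_1} \oplus \OM_{|\ground'}$. Now $\ground_2, \dots, \ground_k$ are precisely the connected components of $\OM_{|\ground'}$: indeed the circuits of $\OM_{|\ground'}$ are exactly the circuits of $\OM$ contained in $\ground'$, and each such circuit already lies in a single $\ground_i$, so the equivalence relation on $\ground'$ matches. By the induction hypothesis $\OM_{|\ground'} = \OM_{|\ground_2} \oplus \dots \oplus \OM_{|\ground_k}$, and restriction commutes with direct sum in the evident way ($\OM_{|\ground'}$ restricted to $\ground_i$ is $\OM_{|\ground_i}$). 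Associativity of $\oplus$ (immediate from the covector description in \cref{def:directSumOM}) then gives $\OM = \OM_{|\ground_1} \oplus \dots \oplus \OM_{|\ground_k}$.

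The main obstacle is the $k=2$ base case, and specifically justifying the rank additivity cleanly: one must be careful that $\corank$ really splits, i.e.\ that the dependence space of $\OM$ decomposes as a direct sum of the dependence spaces of the two restrictions. This is where the hypothesis "no circuit crosses the partition" does the real work, via the fact that circuits span the dependences (the support-minimal elements generate), and it is exactly the point where the proof leans on \cref{rem:matroids} to pass to the underlying matroid. Everything else — transitivity of $\sim$, associativity of $\oplus$, compatibility of restriction with $\oplus$ — is routine bookkeeping from the definitions.
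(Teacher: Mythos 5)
Your proof is correct, but it is considerably heavier than what the situation requires, and heavier than what the paper does: the paper gives no argument at all, simply citing the classical unoriented statement \cite[Prop.~4.2.8]{Oxley} and asserting that it carries over. The reason it carries over ``directly'' is worth noting, because it short-circuits your whole rank/strong-map machinery: by \cref{def:connectedComponentsOM}, every circuit of~$\OM$ has its support inside a single~$\ground_i$, so by \cref{rem:restrictionContractionOM} the circuit set of~$\OM$ is exactly $\bigcup_i \circuits[\OM_{|\ground_i}]$, which by \cref{def:directSumOM} is the circuit set of $\OM_{|\ground_1}\oplus\cdots\oplus\OM_{|\ground_k}$; since an oriented matroid on a fixed ground set is determined by its circuits (\cref{def:OM}), the two oriented matroids coincide, with no induction, no rank additivity, and no appeal to the strong-map argument inside \cref{prop:directSumCartesianProductFL}. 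Your route (base case $k=2$ via $\rank[\OM]=\rank[\OM_{|\ground_1}]+\rank[\OM_{|\ground_2}]$, then the strong-map-plus-equal-rank criterion, then induction and associativity of~$\oplus$) is valid, and the basis-union argument for rank additivity is sound; its only real payoff is that it reuses a criterion already proved in the paper rather than the cryptomorphism. One small caution: your parenthetical claim that transitivity of~$\sim$ follows from a single application of circuit elimination is too quick --- eliminating~$t$ from circuits through $s,t$ and $t,u$ yields a circuit in the union avoiding~$t$, but not necessarily one containing both $s$ and~$u$; the classical proof (Oxley, Prop.~4.1.2) needs a more careful minimality argument. Since you explicitly defer to the classical argument this is not a gap, but the parenthetical as stated would not compile into a proof. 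Note also that transitivity is not actually needed for your decomposition step: all you use is that each circuit lies in a single block of the partition.
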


Combining this proposition with \cref{prop:directSumCartesianProductFL} we directly get the following result.

\begin{corollary}
\label{cor:connectedComponentsOfFacesAreFaces}
If $\OM$ is acyclic, then its connected components are faces.
\end{corollary}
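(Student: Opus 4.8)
The plan is to deduce this directly from \cref{prop:connectedComponentsDecomposition,prop:directSumCartesianProductFL}. Write $\ground_1, \dots, \ground_k$ for the connected components of $\OM$, so that $\ground = \ground_1 \sqcup \dots \sqcup \ground_k$. By \cref{prop:connectedComponentsDecomposition} we have $\OM = \OM_{|\ground_1} \oplus \dots \oplus \OM_{|\ground_k}$, and iterating \cref{prop:directSumCartesianProductFL} gives a lattice isomorphism $\FL(\OM) \xrightarrow{\ \cong\ } \FL(\OM_{|\ground_1}) \times \dots \times \FL(\OM_{|\ground_k})$, under which a face $F$ is sent to $(F \cap \ground_1, \dots, F \cap \ground_k)$.

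Next I would observe that each restriction $\OM_{|\ground_i}$ is again acyclic, since restriction preserves acyclicity (see \cref{def:restrictionContractionOM}); hence $\ground_i$, being the whole ground set of $\OM_{|\ground_i}$, is the top face of $\FL(\OM_{|\ground_i})$. Fix $i \in [k]$ and consider the tuple $(\varnothing, \dots, \varnothing, \ground_i, \varnothing, \dots, \varnothing)$, with $\ground_i$ in position $i$ and the bottom element $\varnothing$ in every other position. As just noted this is a legitimate element of the product lattice, so it is the image under the isomorphism above of a unique face $F \in \FL(\OM)$. Since that isomorphism sends $F$ to $(F \cap \ground_1, \dots, F \cap \ground_k)$ and the $\ground_j$ partition $\ground$, this forces $F \cap \ground_i = \ground_i$ and $F \cap \ground_j = \varnothing$ for $j \ne i$, i.e.\ $F = \ground_i$. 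Thus $\ground_i \in \FL(\OM)$, as desired.

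There is essentially no obstacle here beyond unwinding the isomorphism; the one point to be careful about is that one genuinely uses the acyclicity of $\OM$ (equivalently, of each $\OM_{|\ground_i}$) to know that $\ground_i$ is the top element of $\FL(\OM_{|\ground_i})$, and hence that the chosen tuple is a valid element of the product of face lattices. Alternatively, one could give a covector-theoretic argument avoiding the iteration: each $\OM_{|\ground_i}$ being acyclic gives $(\ground_i, \varnothing) \in \covectors[\OM_{|\ground_i}]$ by \cref{def:acyclicOM}, and combining these covectors across the connected components (taking, say, the zero covector in the $i$-th slot) through the description of the covectors of a direct sum in \cref{def:directSumOM} yields $(\ground \ssm \ground_i, \varnothing) \in \covectors[\OM]$, which says precisely that $\ground_i$ is a face by \cref{def:faceOM}.
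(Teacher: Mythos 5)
Your proof is correct and takes essentially the same route as the paper, which obtains this corollary precisely by combining \cref{prop:connectedComponentsDecomposition} with \cref{prop:directSumCartesianProductFL}; your argument just spells out the unwinding of the resulting lattice isomorphism, correctly noting that acyclicity of each restriction $\OM_{|\ground_i}$ is what makes the chosen tuple a legitimate element of the product of face lattices. Your alternative covector-theoretic argument via \cref{def:directSumOM,def:acyclicOM,def:faceOM} is also valid, but it is only a hands-on version of the same combination rather than a genuinely different approach.
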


%%%%%%%%%%%%%%%%%%%%%%%%%%%%%%%%%%%%%%%

\clearpage
\part{Facial nested complexes and acyclic nested complexes}
\label{part:facialAcyclicNestedComplexes}

In this part, we consider two families of simplicial complexes associated to an oriented matroid~$\OM$: the facial nested complexes of facial building sets (\cref{sec:facialBuildingSetsFacialNestedComplexes}) and the acyclic nested complexes of oriented building sets (\cref{sec:orientedBuildingSetsAcyclicNestedComplexes}).
We then prove that these two families essentially coincide (\cref{sec:facialNestedComplexesVsAcyclicNestedComplexes}).
We provide these two perspectives as they naturally generalize different former constructions, and are both useful for geometric realizations.
Namely, the facial nested complexes are adapted to the realizations by stellar subdivisions (\cref{sec:orientedMatroidRealizations}), while the acyclic nested complexes are useful for our realizations by sections of nestohedra~(\cref{sec:polytopalRealizations}).

%%%%%%%%%%%%%%%%%%%%%%%%%%%%%%%%%%%%%%%

\section{Facial building sets and facial nested complexes}
\label{sec:facialBuildingSetsFacialNestedComplexes}

We start with building sets and nested complexes over the Las Vergnas face lattice of an oriented matroid~$\OM$.
We define facial building sets (\cref{subsec:facialBuildingSets}) and facial nested complexes (\cref{subsec:facialNestedComplexes}), discuss their restrictions and contractions to describe their links (\cref{subsec:restrictionContractionFacialNestedComplex}), and describe two operations to restrict to connected facial building sets (\cref{subsec:operationsFacialNestedComplex}).

%%%%%%%%%%%%%%%%

\subsection{Facial building sets}
\label{subsec:facialBuildingSets}

We first give a name to the building sets over Las Vergnas face lattices.

\begin{definition}
\label{def:facialBuildingSet}
A \defn{facial building set} is a pair~$(\fbuilding,\OM)$, where~$\OM$ is an oriented matroid on a ground set~$\ground$, and~$\fbuilding$ is a $\FL(\OM)$-building set over the Las Vergnas face lattice~$\FL(\OM)$.
We also often say that~$\fbuilding$ is a facial building set for~$\OM$.
We say that~$(\fbuilding, \OM)$ is \defn{realizable} if~$\OM$ is realizable.
\end{definition}

In the remaining of this section, we just want to explore the facial building sets of some natural examples of oriented matroids.

\begin{example}
\label{exm:facialBuildingSetSimplex}
Consider the independent oriented matroid~$\OM_\triangle$ on~$[n]$ (\ie with no circuit).
It is the oriented matroid of the vector configuration obtained by homogeneization of the vertices of an $(n-1)$-dimensional simplex.
Its Las Vergnas face lattice~$\FL(\OM_\triangle)$ is the boolean lattice.
Hence, its facial building sets are just the boolean building sets of \cref{def:booleanBuildingSet}.
\end{example}

\begin{example}
\label{exm:facialBuildingSetSimplicialPolytope}
For a simplicial polytope~$\polytope$, \cref{rem:reccursiveDefinitionLatticeBuildingSet} implies that the connected $\FL(\polytope)$-building sets are precisely the subsets of faces of~$\polytope$ containing~$\polytope$ and whose restriction to any facet of~$\polytope$ forms a boolean building set.
(Note that the connected restriction is empty when~$\polytope$ is irreducible.)
Up to polarity, this recovers the description of the $\polytope^\triangle$-building sets of J.~Almeter~\cite{Almeter} (see also \cite[Sect.~3]{Petric}).
\end{example}

\begin{example}
\label{exm:facialBuildingSetCrossPolytope1}
Let~$[\underline{n}] \sqcup [\overline{n}] \eqdef \set{\underline{i}}{i \in [n]} \sqcup \set{\overline{i}}{i \in [n]}$.
Consider the vectors~$\b{v}_{\underline{i}} \eqdef \b{e}_i+\b{e}_{n+1}$ and~$\b{v}_{\overline{i}} \eqdef -\b{e}_i+\b{e}_{n+1}$ for~$i \in [n]$ obtained by homogeneization of the vertices of an $n$-dimensional cross-polytope.
Consider the oriented matroid~$\OM_\Diamond$ on~$[\underline{n}] \sqcup [\overline{n}]$ defined by the vector configuration~$\set{\b{v}_{\underline{i}}}{i \in [n]} \sqcup \set{\b{v}_{\overline{i}}}{i \in [n]}$.
Its Las Vergnas face lattice~$\FL(\OM_\Diamond)$ is isomorphic to the face lattice of the cross-polytope.
Namely, the faces of~$\OM_\Diamond$ are the set~$[\underline{n}] \sqcup [\overline{n}]$ and all its subsets which do not contain~$\{\underline{i}, \overline{i}\}$ for each~$i \in [n]$.
We call \defn{hyperoctahedral building sets} the facial building sets for~$\OM_\Diamond$ (an analogous definition holds for any Dynkin type).
Since the cross-polytope is not a join, a direct application of \cref{exm:facialBuildingSetSimplicialPolytope} gives the following description. 
\end{example}

\begin{proposition}
\label{prop:facialBuildingSetCrossPolytope}
A collection~$\fbuilding$ of subsets of~$[\underline{n}] \sqcup [\overline{n}]$ is a hyperoctahedral building set if and only~if
\begin{itemize}
\item $\{\underline{i}, \overline{i}\} \not\subseteq F$ for all~$F \in \fbuilding$ distinct from~$[\underline{n}] \sqcup [\overline{n}]$ and all~$i \in [n]$,
\item $\fbuilding$ contains the set~$[\underline{n}] \sqcup [\overline{n}]$ and the singletons~$\{\underline{i}\}$ and~$\{\overline{i}\}$ for all~$i \in [n]$,
\item for any~$F, G \in \fbuilding$ such that~$\{\underline{i}, \overline{i}\} \not\subseteq F \cup G$ for all~$i \in [n]$, if~$F \cap G \ne \varnothing$ then~$F \cup G \in \fbuilding$.
\end{itemize}
\end{proposition}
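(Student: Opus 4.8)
The plan is to specialize \cref{exm:facialBuildingSetSimplicialPolytope} to the cross-polytope. Since the cross-polytope is not a join, its face lattice $\FL(\OM_\Diamond)$ does not decompose as a non-trivial Cartesian product, so by \cref{rem:reccursiveDefinitionLatticeBuildingSet} every $\FL(\OM_\Diamond)$-building set $\fbuilding$ satisfies $\FL(\OM_\Diamond)\cong\prod_{B\in\connectedComponents(\fbuilding)}\FL(\OM_\Diamond)_{\le B}$, which forces $\connectedComponents(\fbuilding)=\{[\underline{n}]\sqcup[\overline{n}]\}$; thus every hyperoctahedral building set is connected. Hence \cref{exm:facialBuildingSetSimplicialPolytope} applies, and I would start from the characterization it gives: $\fbuilding$ is a hyperoctahedral building set if and only if $\fbuilding$ is a collection of (non-empty) faces of $\OM_\Diamond$ that contains $[\underline{n}]\sqcup[\overline{n}]$ and whose restriction $\fbuilding_{|F}$ to each facet $F$ of $\OM_\Diamond$ is a boolean building set on $F$ in the sense of \cref{def:booleanBuildingSet}. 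Everything after that is unwinding this into the three displayed conditions.

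For the unwinding I would first record the concrete face structure of the cross-polytope: its faces are $[\underline{n}]\sqcup[\overline{n}]$ together with all subsets avoiding every antipodal pair $\{\underline{i},\overline{i}\}$; its facets are exactly the $2^n$ simplices $\set{\varepsilon_i}{i\in[n]}$ with $\varepsilon_i\in\{\underline{i},\overline{i}\}$; every proper face is contained in some facet (extend it by a representative of each pair it misses); and $\FL(\OM_\Diamond)_{\le F}$ is the boolean lattice on $F$ for each facet $F$. Granting this, "$\fbuilding$ consists of faces of $\OM_\Diamond$ and contains $[\underline{n}]\sqcup[\overline{n}]$" is precisely the first bullet together with $[\underline{n}]\sqcup[\overline{n}]\in\fbuilding$, because a non-empty face distinct from the top is exactly a set avoiding every antipodal pair; and the singleton axiom of \cref{def:booleanBuildingSet} applied to the $\fbuilding_{|F}$ and quantified over all facets $F$ (whose union is the whole ground set) is exactly "$\{\underline{i}\},\{\overline{i}\}\in\fbuilding$ for all $i\in[n]$", i.e.\ the rest of the second bullet.

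The only step with real content --- and the one I expect to be the main (mild) obstacle --- is translating the union axiom of \cref{def:booleanBuildingSet}. For a fixed facet $F$ it reads: whenever $B,B'\in\fbuilding$ with $B,B'\subseteq F$ and $B\cap B'\ne\varnothing$, then $B\cup B'\in\fbuilding$ (the inclusion $B\cup B'\subseteq F$ holding automatically). Quantifying over all facets, I would show this is equivalent to the third bullet: given $B,B'\in\fbuilding$ with $B\cap B'\ne\varnothing$ and $B\cup B'$ avoiding every antipodal pair, the set $B\cup B'$ is then a proper face, hence contained in some facet $F$, which contains both $B$ and $B'$ and lets the $F$-axiom give $B\cup B'\in\fbuilding$; conversely, if $B,B'\subseteq F$ for some facet $F$, then $B\cup B'\subseteq F$ automatically avoids every antipodal pair, so the third bullet applies directly. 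The crux is just the observation that "$B\cup B'$ avoids every antipodal pair" is exactly the condition under which $B$ and $B'$ lie in a common facet; once that is in place, the equivalence of the proposition with the characterization from \cref{exm:facialBuildingSetSimplicialPolytope} is a routine bookkeeping.
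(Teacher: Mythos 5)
Your proposal is correct and follows the same route as the paper: the paper's proof is exactly the remark that the cross-polytope is not a join, so every hyperoctahedral building set is connected, followed by "a direct application" of \cref{exm:facialBuildingSetSimplicialPolytope} (restrictions to facets are boolean building sets), which is what you do. Your unwinding of the facet-wise union axiom into the third bullet (via the observation that $F\cup G$ avoiding every antipodal pair is exactly the condition for $F$ and $G$ to lie in a common facet) is precisely the bookkeeping the paper leaves implicit.
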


\begin{example}
\label{exm:facialBuildingSetCrossPolytope2}
There are two natural simple ways to construct hyperoctahedral building sets:
\begin{enumerate}
\item For a graph~$\graphG$ on~$[\underline{n}] \sqcup [\overline{n}]$, the set of all non-empty subsets of~$[\underline{n}] \sqcup [\overline{n}]$ which do not contain~$\{\underline{i}, \overline{i}\}$ for each~$i \in [n]$ and induce a connected subgraph of~$\graphG$, together with the set~$[\underline{n}] \sqcup [\overline{n}]$ itself, form a \defn{graphical} hyperoctahedral building set.
\item Given two (boolean) building sets~$\underline{\building}$ on~$[\underline{n}]$ and~$\overline{\building}$ on~$[\overline{n}]$, the set~$\underline{\building} \sqcup \overline{\building} \sqcup \{[\underline{n}] \sqcup [\overline{n}]\}$ is a hyperoctahedral building set that we call \defn{bi-building set}.
\end{enumerate}
\end{example}

\begin{example}
\label{exm:designBuildingSet}
For instance, for a (boolean) building set~$\building$, the \defn{design building set} of~\cite{DevadossHeathVipismakul} is the bi-building set~$\building^\square \eqdef \building \sqcup \set{\{i^\square\}}{i \in [n]} \sqcup \{[n] \sqcup [n^\square]\}$ on~$[n] \sqcup [n^\square]$.
See also \cref{exm:designNestedComplex}.
\end{example}

%%%%%%%%%%%%%%%%

\subsection{Facial nested complexes}
\label{subsec:facialNestedComplexes}

We now consider nested complexes on facial building sets.

\begin{definition}
\label{def:facialNestedComplex}
The \defn{facial nested complex}~$\nestedComplex[][\fbuilding,\OM]$ of a facial building set~$(\fbuilding,\OM)$ is the \mbox{$\FL(\OM)$-nes}\-ted complex~$\nestedComplex[\FL(\OM)][\building]$ of~$\fbuilding$.
\end{definition}

\begin{example}
\label{exm:barycentricSubdivision1}
Following \cref{exm:orderComplexBoolean,exm:orderComplex}, for the maximal building set~$\FL(\OM)_{>\botzero}$, the facial nested complex~$\nestedComplex[][\FL(\OM)_{>\botzero},\OM]$ is isomorphic to the \defn{barycentric subdivision} of the positive tope of~$\OM$.
By \cref{exm:orderComplex}, it has a face for each flag of faces of~$\OM$, in particular a vertex for each face of~$\OM$ and a facet for each complete flag of faces of~$\OM$.
\end{example}

\begin{example}
\label{exm:facialNestedComplexSimplex}
Consider the independent oriented matroid~$\OM_\triangle$ on~$[n]$ (\ie with no circuit).
According to~\cref{exm:facialBuildingSetSimplex}, its facial nested complexes are the boolean nested complexes of \cref{def:booleanNestedSet}.
\end{example}

\begin{example}
\label{exm:facialNestedComplexSimplicialPolytope}
Following \cref{exm:facialBuildingSetSimplicialPolytope}, for a simplicial polytope~$\polytope$ and a connected $\FL(\polytope)$-building set~$\fbuilding$, the $\FL(\polytope)$-nested complex~$\nestedComplex[\FL(\polytope)][\fbuilding]$ is the union, over all facets~$F$ of~$\polytope$, of the joins of~$\nestedComplex[][\fbuilding_{\le F}]$ with the simplex~$\simplex_{\connectedComponents(\fbuilding_{<F})}$ with vertices the connected components of~$\fbuilding_{<F}$.
Up to polarity, this recovers the description of the $\polytope^\triangle$-nested sets of J.~Almeter~\cite{Almeter} (see also \cite[Sect.~3]{Petric}).
\end{example}

\begin{example}
\label{exm:facialNestedComplexCrossPolytope1}
Consider the oriented matroid~$\OM_\Diamond$ of \cref{exm:facialBuildingSetCrossPolytope1}.
We call \defn{hyperoctahedral nested sets} the facial nested sets for~$\OM_\Diamond$ (an analogous definition holds for any Dynkin type).
A direct application of \cref{exm:facialNestedComplexSimplicialPolytope} gives the following description. 
\end{example}

\begin{proposition}
\label{prop:facialNestedComplexCrossPolytope}
A subset~$\nested$ of a hyperoctahedral building set~$\fbuilding$ containing~$[\underline{n}] \sqcup [\overline{n}]$  is a hyperoctahedral nested set if and only if
\begin{enumerate}
\item for any~$F,G \in \nested$, either~$F \subseteq G$ or~$G \subseteq F$ or~$F \cap G = \varnothing$,
\item for any~$k \ge 2$ pairwise disjoint~$F_1,\dots,F_k \in \nested$, the union~$F_1 \cup \dots \cup F_k$ is in~$\FL(\OM_\Diamond)$ but not in~$\fbuilding$.
\end{enumerate}
\end{proposition}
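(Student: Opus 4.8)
The plan is to derive the statement directly from \cref{exm:facialNestedComplexSimplicialPolytope} (the general description of $\FL(\polytope)$-nested sets for a simplicial polytope), specialized to the cross-polytope, just as \cref{prop:facialBuildingSetCrossPolytope} was derived from \cref{exm:facialBuildingSetSimplicialPolytope}. The key structural fact about $\polytope_\Diamond$ is that it is irreducible (it is not a join of lower-dimensional polytopes), so for each facet $F$ of $\OM_\Diamond$ the connected restriction $\connectedComponents(\fbuilding_{<F})$ contributes nothing — the simplex $\simplex_{\connectedComponents(\fbuilding_{<F})}$ is empty — and the facial nested complex is simply the union over all facets $F$ of the boolean nested complexes $\nestedComplex[][\fbuilding_{\le F}]$. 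So a subset $\nested \subseteq \fbuilding$ containing $[\underline n]\sqcup[\overline n]$ is a hyperoctahedral nested set if and only if $\nested\ssm\{[\underline n]\sqcup[\overline n]\}$ lies inside $\fbuilding_{\le F}$ for some facet $F$ and is a boolean nested set on $\fbuilding_{\le F}$.

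Next I would unpack what "$\nested\ssm\{[\underline n]\sqcup[\overline n]\}$ lies below a common facet and is boolean-nested there" means in terms of the original ground set. The facets of $\OM_\Diamond$ are exactly the maximal faces, i.e.\ the subsets $F\subseteq[\underline n]\sqcup[\overline n]$ of size $n$ that contain exactly one of $\{\underline i,\overline i\}$ for each $i\in[n]$. A collection of proper blocks of $\nested$ lies below a common such facet precisely when no two of them together contain a pair $\{\underline i,\overline i\}$ — equivalently, $\bigcup(\nested\ssm\{[\underline n]\sqcup[\overline n]\})$ contains no pair $\{\underline i,\overline i\}$, which is automatic once every union of pairwise disjoint blocks avoids such a pair. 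Given that, being a boolean nested set on $\fbuilding_{\le F}$ (in the sense of \cref{def:booleanNestedSet}) is exactly: (1) pairwise laminar (nested or disjoint), and (2) for any $k\ge2$ pairwise disjoint blocks $F_1,\dots,F_k$, the union $F_1\cup\dots\cup F_k$ is not in $\fbuilding_{\le F}=\fbuilding_{|F}$. Since such a union automatically sits inside the facet $F$, hence inside $\FL(\OM_\Diamond)$, condition (2) translates to: $F_1\cup\dots\cup F_k\in\FL(\OM_\Diamond)$ but $F_1\cup\dots\cup F_k\notin\fbuilding$. This is precisely condition (2) of the proposition, and condition (1) matches verbatim.

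The one point that needs care — and the main (minor) obstacle — is the equivalence between "$\nested$ sits below a single common facet" and the pointwise condition in (1)–(2) that unions of pairwise disjoint blocks lie in $\FL(\OM_\Diamond)$. The forward direction is immediate: if everything lies below a facet $F$, then any union of blocks lies in $F$, hence is a face. For the converse, I would argue that if $\nested$ is laminar and every union of pairwise disjoint maximal blocks $F_1,\dots,F_k$ of $\nested\ssm\connectedComponents$ is a face of $\OM_\Diamond$, then that union avoids all pairs $\{\underline i,\overline i\}$; by laminarity this union equals $\bigcup(\nested\ssm\{[\underline n]\sqcup[\overline n]\})$, so the whole support avoids every pair, and any size-$n$ face containing it is a facet $F$ with $\nested\ssm\{[\underline n]\sqcup[\overline n]\}\subseteq\fbuilding_{|F}$. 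Then \cref{prop:restrictionContractionLatticeBuildingSet} guarantees $\fbuilding_{|F}=\fbuilding_{\le F}$ is a boolean building set, and one checks the two displayed conditions are exactly \cref{def:booleanNestedSet} for it. Assembling these observations with \cref{exm:facialNestedComplexSimplicialPolytope} completes the proof.
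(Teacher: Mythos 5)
Your route is the paper's: the proposition is obtained by specializing \cref{exm:facialNestedComplexSimplicialPolytope} to the cross-polytope, and your element-level verification (laminarity forced by the closure property of \cref{prop:facialBuildingSetCrossPolytope}, and the identification ``join $=$ union'' once all blocks sit below a common facet, with the common facet found via the join of the maximal blocks in the forward direction) is exactly the intended unpacking. However, your stated ``key structural fact'' is false: the simplex $\simplex_{\connectedComponents(\fbuilding_{<F})}$ is \emph{not} empty, and the facial nested complex is \emph{not} the plain union over facets $F$ of the boolean nested complexes $\nestedComplex[][\fbuilding_{\le F}]$. Irreducibility of the cross-polytope only guarantees that the top element belongs to every facial building set, i.e.\ that every hyperoctahedral building set is connected; it says nothing about $\fbuilding_{<F}$, whose connected components are nonempty (they contain at least the singletons). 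The simplex factor is there precisely to reinstate the maximal blocks of $\fbuilding$ below $F$, which \cref{def:booleanNestedSet} strips from the faces of $\nestedComplex[][\fbuilding_{\le F}]$ as connected components. Concretely, for the octahedron with the full hyperoctahedral building set and a facet $T$, the chain $\{v,e,T\}$ with $v\subset e\subset T$ (together with the top) is a hyperoctahedral nested set, but $\{v,e,T\}$ is not a face of $\nestedComplex[][\fbuilding_{\le T}]$ since $T$ is the connected component of $\fbuilding_{\le T}$; and read literally, your intermediate criterion ``$\nested\ssm\{[\underline{n}]\sqcup[\overline{n}]\}$ is a boolean nested set on $\fbuilding_{\le F}$'' also fails in the other direction for $\nested=\bigl\{[\underline{n}]\sqcup[\overline{n}],\{\underline{1}\}\bigr\}$, because a nested set in the sense of \cref{def:booleanNestedSet} must contain $\connectedComponents(\fbuilding_{\le F})$.

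The flaw is reparable, and you in fact repair it implicitly: the criterion you actually use afterwards is only conditions (1)--(2) relative to $\fbuilding_{\le F}$, which is equivalent to saying that $\bigl(\nested\ssm\{[\underline{n}]\sqcup[\overline{n}]\}\bigr)\cup\connectedComponents(\fbuilding_{\le F})$ is a nested set on $\fbuilding_{\le F}$, i.e.\ that $\nested\ssm\{[\underline{n}]\sqcup[\overline{n}]\}$ lies in $\nestedComplex[][\fbuilding_{\le F}]\join\simplex_{\connectedComponents(\fbuilding_{\le F})}$ --- which is exactly the role of the simplex factor you discarded. With the bridge restated this way, the rest of your argument (both implications, using that a union of blocks contained in a facet avoids all antipodal pairs and hence equals the join, and that a union avoiding such pairs sits inside some facet) is correct and amounts to the ``direct application'' the paper invokes.
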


\begin{example}
\label{exm:facialNestedComplexCrossPolytope2}
As already observed in~\cite[Prop.~4.1]{Almeter}, the hyperoctahedral nested complex of the full hyperoctahedral building set is isomorphic to the boundary complex of the polar of the type~$B$ permutahedron. See \cref{fig:facialNestedComplexesOctahedron}\,(left).
\end{example}

\begin{example}
\label{exm:facialNestedComplexCrossPolytope3}
As already observed in~\cite[Prop.~4.1]{Almeter}, the hyperoctahedral nested complex of the bi-building set~$\underline{\building} \sqcup \overline{\building} \sqcup \{[\underline{n}] \sqcup [\overline{n}]\}$ where~$\underline{\building} = \overline{\building} = 2^{[n]} \ssm \{\varnothing\}$ (resp.~$\underline{\building} = \overline{\building} = \set{[i,j]}{1 \le i \le j \le n}$) is isomorphic to the boundary complex of the polar of the type~$A$ permutahedron (resp.~biassociahedron of~\cite{BarnardReading}).
\end{example}

\begin{example}
\label{exm:designNestedComplex}
For the design building set~$\building^\square$ of a boolean building set~$\building$ defined in \cref{exm:designBuildingSet}, the nested sets are of the form~$\nested \sqcup \nested^\square \sqcup \{[n] \sqcup [n^\square]\}$, where~$\nested$ is a $\building$-nested set, $\nested^\square$ is a subset of~$\set{\{i^\square\}}{i \in [n]}$, such that if~$\{i^\square\} \in \nested^\square$, then~$i \notin N$ for any~$N \in \nested$.
Hence, for a graphical building set~$\tubes$ of a graph~$\graphG$, the nested complex of the design building set~$\tubes^\square$ is the design nested complex of~\cite{DevadossHeathVipismakul}.
See \cref{fig:facialNestedComplexesOctahedron}\,(right) for some examples, and \cite[Sect.~5.4]{MannevillePilaud-compatibilityFans} for an extended discussion of the combinatorial isomorphisms between nested complexes and design nested complexes.
\begin{figure}
	\capstart
	\centerline{\includegraphics[scale=.5]{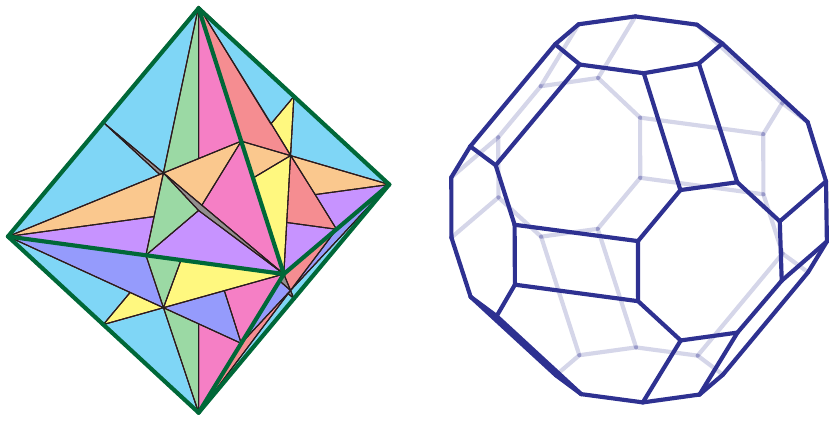}\qquad\includegraphics[scale=.7]{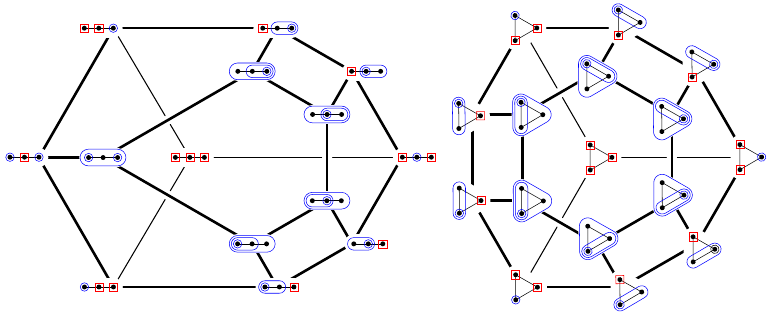}}
	\caption{The type~$B$ permutahedron (left) and some design graph associahedra (right).}
	\label{fig:facialNestedComplexesOctahedron}
\end{figure}
\end{example}

%%%%%%%%%%%%%%%%

\subsection{Restriction, contraction, and links}
\label{subsec:restrictionContractionFacialNestedComplex}

We now briefly discuss the properties of facial building sets and facial nested sets with respect to restriction and contraction operations defined in \cref{def:restrictionContractionLatticeBuildingSet} for building sets and in \cref{def:restrictionContractionOM} for oriented matroids.
We do a slight abuse of notation in the building set contraction~$\fbuilding_{\!/X}$, identifying a face~$F$ containing~$X$ with~$F \ssm X$, in order to be compatible with the oriented matroid contraction~$\OM_{\!/X}$.
This makes sense since~$\FL(\OM)$ is atomic (see \cref{rem:contraction}).

\begin{proposition}
\label{prop:restrictionContractionFacialBuildingSet}
If~$(\fbuilding, \OM)$ is a facial building set and~$X \in \fbuilding$, then~$(\fbuilding_{|X}, \OM_{|X})$ and~$(\fbuilding_{\!/X}, \OM_{\!/X})$ are facial building sets.
\end{proposition}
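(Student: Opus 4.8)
The plan is to deduce the statement by combining the lattice-level result~\cref{prop:restrictionContractionLatticeBuildingSet} with the identification of the face lattices of restrictions and contractions from~\cref{prop:restrictionContractionFL}, once we have checked that the oriented matroids $\OM_{|X}$ and $\OM_{\!/X}$ are acyclic so that their Las Vergnas face lattices are defined. First I would note that, since $\fbuilding$ is a $\FL(\OM)$-building set, we have $X \in \fbuilding \subseteq \FL(\OM)$, so $X$ is a face of $\OM$. Hence $\OM_{|X}$ is acyclic, because acyclicity is preserved under restriction (see~\cref{def:restrictionContractionOM}), and $\OM_{\!/X}$ is acyclic by~\cref{lem:acyclicContraction}, precisely because $X \in \FL(\OM)$. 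In particular $\FL(\OM_{|X})$ and $\FL(\OM_{\!/X})$ are well-defined, and by~\cref{prop:restrictionContractionFL} we have $\FL(\OM_{|X}) = \FL(\OM)_{\le X}$ and $\FL(\OM_{\!/X}) \cong \FL(\OM)_{\ge X}$, the latter isomorphism being $F \mapsto F \cup X$.

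Next I would apply~\cref{prop:restrictionContractionLatticeBuildingSet} to the finite lattice $\FL(\OM)$ and the $\FL(\OM)$-building set $\fbuilding$, using that $X \in \fbuilding$. This yields that $\fbuilding_{|X}$ is a $\FL(\OM)_{\le X}$-building set and that $\fbuilding_{\!/X}$ is a $\FL(\OM)_{\ge X}$-building set. Transporting these two facts along the identifications of the previous paragraph, $\fbuilding_{|X}$ is a $\FL(\OM_{|X})$-building set and $\fbuilding_{\!/X}$ is a $\FL(\OM_{\!/X})$-building set; by~\cref{def:facialBuildingSet} this says exactly that $(\fbuilding_{|X}, \OM_{|X})$ and $(\fbuilding_{\!/X}, \OM_{\!/X})$ are facial building sets.

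The single point that requires care is the compatibility between the contraction of~\cref{def:restrictionContractionLatticeBuildingSet}, whose blocks are the joins $B \vee X$ for $B \in \fbuilding$ with $B \not\le X$ and therefore live in the interval $\FL(\OM)_{\ge X}$, and the oriented matroid contraction $\OM_{\!/X}$, whose face lattice consists of subsets of $\ground \ssm X$. As recalled just before the statement, these match under the order isomorphism $F \cup X \mapsto F$ of~\cref{prop:restrictionContractionFL}, which is legitimate because $\FL(\OM)$ is atomic (cf.~\cref{rem:contraction}). Once this identification is fixed, there is nothing further to verify, so I do not anticipate a genuine obstacle: the entire mathematical content is carried by~\cref{prop:restrictionContractionLatticeBuildingSet} and~\cref{prop:restrictionContractionFL}, and the proof is just a matter of assembling them correctly.
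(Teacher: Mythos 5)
Your proposal is correct and follows exactly the paper's argument, which deduces the statement by combining \cref{prop:restrictionContractionLatticeBuildingSet} with \cref{prop:restrictionContractionFL}. The only difference is that you spell out the acyclicity of $\OM_{|X}$ and $\OM_{\!/X}$ (via \cref{lem:acyclicContraction}) and the identification of the two contraction conventions (\cref{rem:contraction}), details the paper leaves implicit.
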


\begin{proof}
Follows from \cref{prop:restrictionContractionLatticeBuildingSet,prop:restrictionContractionFL}.
\end{proof}

From \cref{prop:linksLatticeNestedComplex,prop:restrictionContractionFacialBuildingSet}, we obtain that the links in facial nested complexes are joins of facial nested complexes.

\begin{proposition}
\label{prop:singleLinkFacialBuildingSet}
If~$(\fbuilding, \OM)$ is a facial building set and~$X \in \fbuilding \ssm \connectedComponents(\fbuilding)$, then the link of~$X$ \linebreak in the facial nested complex~$\nestedComplex[][\fbuilding, \OM]$ is isomorphic to the join of facial nested \linebreak complexes~${\nestedComplex[][\fbuilding_{|X}, \OM_{|X}] \join \nestedComplex[][\fbuilding_{\!/X}, \OM_{\!/X}]}$.
\end{proposition}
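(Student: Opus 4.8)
The plan is to reduce everything to the lattice-level description of links already established in \cref{prop:linksLatticeNestedComplex}, and then to translate the resulting restriction and contraction of the $\FL(\OM)$-building set $\fbuilding$ into the restriction and contraction of the facial building set $(\fbuilding,\OM)$ itself, using \cref{prop:restrictionContractionFL,prop:restrictionContractionFacialBuildingSet}. Concretely, I would first unwind \cref{def:facialNestedComplex}: the facial nested complex $\nestedComplex[][\fbuilding, \OM]$ is by definition the $\FL(\OM)$-nested complex $\nestedComplex[\FL(\OM)][\fbuilding]$. Since $X \in \fbuilding \ssm \connectedComponents(\fbuilding)$, applying \cref{prop:linksLatticeNestedComplex} with $\lattice = \FL(\OM)$ and $\building = \fbuilding$ identifies the link of $\{X\}$ in $\nestedComplex[\FL(\OM)][\fbuilding]$ with the join $\nestedComplex[\FL(\OM)_{\le X}][\fbuilding_{|X}] \join \nestedComplex[\FL(\OM)_{\ge X}][\fbuilding_{\!/X}]$, where $\fbuilding_{|X}$ and $\fbuilding_{\!/X}$ are the restriction and contraction of the $\FL(\OM)$-building set $\fbuilding$ in the sense of \cref{def:restrictionContractionLatticeBuildingSet}.

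It then remains to recognise each of the two factors as a facial nested complex. For the restriction this is immediate: \cref{prop:restrictionContractionFL} gives $\FL(\OM_{|X}) = \FL(\OM)_{\le X}$ as an equality of lattices, and under this equality $\fbuilding_{|X} = \set{B \in \fbuilding}{B \le X}$ is literally the building set of the facial building set $(\fbuilding_{|X}, \OM_{|X})$ produced by \cref{prop:restrictionContractionFacialBuildingSet}, so that $\nestedComplex[\FL(\OM)_{\le X}][\fbuilding_{|X}] = \nestedComplex[][\fbuilding_{|X}, \OM_{|X}]$. For the contraction I would use the isomorphism $\FL(\OM_{\!/X}) \cong \FL(\OM)_{\ge X}$, $F \mapsto F \cup X$, of \cref{prop:restrictionContractionFL}, and check that it carries the contracted facial building set $\fbuilding_{\!/X}$ of $\OM_{\!/X}$ (a collection of faces of $\OM_{\!/X}$, i.e.\ of subsets of $\ground \ssm X$) onto the lattice contraction $\set{B \vee_{\FL(\OM)} X}{B \in \fbuilding,\ B \not\le X}$ over $\FL(\OM)_{\ge X}$, whence $\nestedComplex[\FL(\OM)_{\ge X}][\fbuilding_{\!/X}] \cong \nestedComplex[][\fbuilding_{\!/X}, \OM_{\!/X}]$. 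Combining the two identifications yields the claimed isomorphism, and \cref{prop:restrictionContractionFacialBuildingSet} is exactly what guarantees that both $(\fbuilding_{|X}, \OM_{|X})$ and $(\fbuilding_{\!/X}, \OM_{\!/X})$ are genuine facial building sets, so that the right-hand side of the statement is meaningful in the first place.

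The only delicate point, which is bookkeeping rather than conceptual, is precisely this compatibility of the two flavours of contraction: the lattice contraction of \cref{def:restrictionContractionLatticeBuildingSet} joins $X$ to each block $B \not\le X$ and lives in the upper interval $\FL(\OM)_{\ge X}$, whereas the oriented matroid $\OM_{\!/X}$ of \cref{def:restrictionContractionOM} lives on the smaller ground set $\ground \ssm X$. One must verify that under the set-theoretic identification $G \leftrightarrow G \ssm X$ of faces of $\OM$ containing $X$ with faces of $\OM_{\!/X}$ — legitimate because $\FL(\OM)$ is atomic, cf.\ \cref{rem:contraction} — the block $B \vee_{\FL(\OM)} X$ corresponds to $(B \vee_{\FL(\OM)} X) \ssm X \in \FL(\OM_{\!/X})$, which is exactly how $\fbuilding_{\!/X}$ is defined as a facial building set for $\OM_{\!/X}$. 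Once this dictionary is fixed, the proof is a one-line assembly of \cref{prop:linksLatticeNestedComplex,prop:restrictionContractionFL,prop:restrictionContractionFacialBuildingSet}.
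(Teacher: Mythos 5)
Your proposal is correct and is essentially the paper's own argument: the paper proves this proposition as an immediate consequence of \cref{prop:linksLatticeNestedComplex,prop:restrictionContractionFacialBuildingSet}, exactly the assembly you describe. The extra bookkeeping you spell out (matching the lattice contraction on $\FL(\OM)_{\ge X}$ with the oriented matroid contraction on $\ground \ssm X$ via $F \mapsto F \cup X$) is precisely the identification the paper makes implicitly through the abuse of notation discussed before \cref{prop:restrictionContractionFacialBuildingSet} and in \cref{rem:contraction}.
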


\begin{proof}
Immediate consequence of \cref{prop:linksLatticeNestedComplex,prop:restrictionContractionFacialBuildingSet}.
\end{proof}

To describe links of arbitrary faces, we need the following definitions.

\begin{definition}
\label{def:restrictionContractionFacialBuildingSet}
For a facial building set~$(\fbuilding, \OM)$, a facial nested set~$\nested$ and~$X \in \nested$, we define
\[
\restrGround[X] \eqdef X \ssm Y,
\qquad
\restrBuilding[\fbuilding][X] \eqdef (\fbuilding_{|X})_{\!/Y}
\qquad\text{and}\qquad
\restrOM[X] \eqdef (\OM_{|X})_{\!/Y}
\]
where~$Y = \bigvee_{Z \in \nested, \; Z < X} Z$.
\end{definition}

\begin{proposition}
\label{prop:restrictionContractionFacialBuildingSet2}
For any~$X \in \nested \in \nestedComplex[][\fbuilding, \OM]$, the pair~$(\restrBuilding[\fbuilding][X], \restrOM[X])$ is a facial building set on~$\restrGround[X]$.
\end{proposition}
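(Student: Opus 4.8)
The plan is to derive this from two earlier results: restriction of a facial building set along a block is again a facial building set (\cref{prop:restrictionContractionFacialBuildingSet}), and contraction of an $\lattice$-building set along an arbitrary \emph{join of blocks} is again a building set (the last lemma of \cref{subsec:restrictionContractionNestedComplex}). Throughout, write $Y \eqdef \bigvee_{Z \in \nested, \, Z < X} Z$, so that by \cref{def:restrictionContractionFacialBuildingSet} we have $\restrGround[X] = X \ssm Y$, $\restrBuilding[\fbuilding][X] = (\fbuilding_{|X})_{\!/Y}$, and $\restrOM[X] = (\OM_{|X})_{\!/Y}$.

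First I would note that $X \in \fbuilding$ (since $\nested \subseteq \fbuilding$), so \cref{prop:restrictionContractionFacialBuildingSet} gives that $(\fbuilding_{|X}, \OM_{|X})$ is a facial building set, and \cref{prop:restrictionContractionFL} identifies its face lattice with $\FL(\OM)_{\le X}$. Next I would verify that $Y$ has two properties simultaneously: on one hand, every $Z \in \nested$ with $Z < X$ lies in $\fbuilding$ and satisfies $Z \le X$, hence lies in $\fbuilding_{|X}$, so $Y$ is a join of blocks of the $\FL(\OM_{|X})$-building set $\fbuilding_{|X}$; on the other hand, $Y$ is a join of elements of $\FL(\OM)$, hence belongs to $\FL(\OM)$, and $Y \le X$ places it in $\FL(\OM)_{\le X} = \FL(\OM_{|X})$, i.e. $Y$ is a face of $\OM_{|X}$. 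From the second property and \cref{lem:acyclicContraction}, the contraction $\restrOM[X] = (\OM_{|X})_{\!/Y}$ is acyclic, and \cref{prop:restrictionContractionFL} (applied to $\OM_{|X}$ and $Y$) identifies $\FL(\restrOM[X])$ with $\FL(\OM_{|X})_{\ge Y}$ via $F \mapsto F \cup Y$. From the first property and the lemma on contracting joins of blocks, $\restrBuilding[\fbuilding][X] = (\fbuilding_{|X})_{\!/Y}$ is a building set over $\FL(\OM_{|X})_{\ge Y}$. Transporting along the above isomorphism — which, under the abuse of notation of \cref{def:restrictionContractionFacialBuildingSet} that identifies a face $F \supseteq Y$ with $F \ssm Y$, is precisely the passage to $\restrGround[X] = X \ssm Y$ — yields that $\restrBuilding[\fbuilding][X]$ is a $\FL(\restrOM[X])$-building set, which is exactly the claim.

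The main subtlety, and the reason one cannot simply reapply \cref{prop:restrictionContractionFacialBuildingSet} with $Y$ in the role of $X$, is that $Y$ is typically \emph{not} a block of $\fbuilding$: as soon as $\nested$ has at least two maximal blocks strictly below $X$, the nested-set axiom of \cref{def:latticeNestedSet} (see also \cref{prop:nestedFactorization}) forces their join $Y$ outside $\fbuilding$. So the load-bearing input is the more general lemma allowing contraction along a join of blocks, combined with the fact that $\FL(\OM)$ is closed under joins, which is what keeps $Y$ a face so that \cref{lem:acyclicContraction} and \cref{prop:restrictionContractionFL} remain applicable. Everything else is routine bookkeeping with the two abuses of notation already recorded in \cref{rem:contraction} and \cref{def:restrictionContractionFacialBuildingSet}.
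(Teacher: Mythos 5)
Your proof is correct and follows essentially the same route as the paper, which simply declares the statement an immediate consequence of \cref{prop:restrictionContractionFacialBuildingSet}: restrict at the block $X$, then contract at $Y$. Your writeup usefully makes explicit the two facts the paper leaves implicit — that $Y$, being a join of blocks of $\fbuilding_{|X}$ rather than a block, requires the lemma on contracting joins of blocks, and that $Y$ is a face of $\OM_{|X}$ so that \cref{lem:acyclicContraction} and \cref{prop:restrictionContractionFL} identify $\FL(\restrOM[X])$ with $\FL(\OM_{|X})_{\ge Y}$ compatibly with the stated abuse of notation.
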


\begin{proof}
Immediate consequence of \cref{prop:restrictionContractionFacialBuildingSet}.
\end{proof}

\begin{proposition}
\label{prop:linksFacialNestedComplex}
For a facial nested set~$\nested$, the link of~$\nested \ssm \connectedComponents(\fbuilding)$ in the facial nested complex~$\nestedComplex[][\fbuilding, \OM]$ is the join of the facial nested complexes~$\nestedComplex[][{\restrBuilding[\fbuilding][X]},{ \restrOM[X]}]$ for all~$X \in \nested$.
\end{proposition}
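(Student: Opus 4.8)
The plan is to reduce \cref{prop:linksFacialNestedComplex} to the single-face case \cref{prop:singleLinkFacialBuildingSet} by an induction on $|\nested \ssm \connectedComponents(\fbuilding)|$, exactly as the analogous lattice-level statement following \cref{prop:linksLatticeNestedComplex} was handled. First I would recall that, since the link of a face in a simplicial complex can be computed one vertex at a time (namely $\link(\{s_1, \dots, s_m\}, \simplex) = \link(\{s_2, \dots, s_m\}, \link(\{s_1\}, \simplex))$ whenever the set is a face), it suffices to peel off one block of $\nested$ at a time. The subtlety is the order in which the blocks are peeled: by \cref{rem:reccursiveDefinitionLatticeNestedSet} a facial nested set decomposes recursively, so I would choose a block $X \in \nested \ssm \connectedComponents(\fbuilding)$ that is \emph{inclusion-minimal} among the non-component blocks of $\nested$. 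For such an $X$ the quantity $Y = \bigvee_{Z \in \nested,\, Z < X} Z$ is the bottom element $\varnothing$ of $\FL(\OM)$, so $\restrBuilding[\fbuilding][X] = \fbuilding_{|X}$, $\restrOM[X] = \OM_{|X}$, and $\restrGround[X] = X$.

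Next I would apply \cref{prop:singleLinkFacialBuildingSet} to this minimal $X$: the link of $\{X\}$ in $\nestedComplex[][\fbuilding, \OM]$ is $\nestedComplex[][\fbuilding_{|X}, \OM_{|X}] \join \nestedComplex[][\fbuilding_{\!/X}, \OM_{\!/X}]$, both of which are facial nested complexes by \cref{prop:restrictionContractionFacialBuildingSet}. Inside this join, the face $\nested \ssm (\connectedComponents(\fbuilding) \cup \{X\})$ splits into its part below $X$, namely $\nested_{<X}$, which is a face of $\nestedComplex[][\fbuilding_{|X}, \OM_{|X}]$, and its part "above or incomparable to" $X$, which under the contraction map $B \mapsto B \vee X$ becomes a face of $\nestedComplex[][\fbuilding_{\!/X}, \OM_{\!/X}]$ — this is precisely the decomposition spelled out in the proof of \cref{prop:linksLatticeNestedComplex}. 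Therefore $\link(\nested \ssm \connectedComponents(\fbuilding), \nestedComplex[][\fbuilding,\OM])$ is the join of $\link(\nested_{<X}, \nestedComplex[][\fbuilding_{|X}, \OM_{|X}])$ with $\link(\text{(image of the rest)}, \nestedComplex[][\fbuilding_{\!/X}, \OM_{\!/X}])$. Now the inductive hypothesis applies to each of these two smaller facial building sets, since the corresponding nested sets have strictly fewer non-component blocks.

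It remains to check that the data $(\restrBuilding[\fbuilding][Z], \restrOM[Z], \restrGround[Z])$ attached to a block $Z \in \nested$ is computed correctly regardless of whether we reach $Z$ via the $|X$-branch or the $/X$-branch of the recursion. For blocks $Z < X$, working in $\FL(\OM_{|X}) = \FL(\OM)_{\le X}$ does not change $\bigvee_{W \in \nested,\, W < Z} W$ nor the relevant restriction/contraction, because restriction to $X$ commutes with restriction and contraction below $X$; this is where \cref{prop:restrictionContractionFacialBuildingSet} and the identification $(\OM_{|X})_{|Z} = \OM_{|Z}$ are used. For blocks not below $X$, one uses the lemma preceding \cref{def:restrictionContractionFacialBuildingSet} (iterated contractions commute, $\fbuilding_{\!/X \vee Y} = (\fbuilding_{\!/X})_{\!/X\vee Y}$) together with the matching statement $(\OM_{\!/X})_{\!/X \vee Y} = \OM_{\!/(X \vee Y)}$ for oriented matroids, to see that contracting $X$ first and then the image of $Y$ yields the same facial building set as contracting $Y = \bigvee_{W < Z} W$ directly inside $(\fbuilding_{|Z'}, \OM_{|Z'})$ for the appropriate ambient block $Z'$.

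I expect the main obstacle to be purely bookkeeping: verifying that the join decomposition produced by iterating \cref{prop:singleLinkFacialBuildingSet} reassembles into exactly the indexed join $\bigJoin_{X \in \nested} \nestedComplex[][\restrBuilding[\fbuilding][X], \restrOM[X]]$, with the right $Y$'s appearing in each factor. The key point making this work is associativity and commutativity of the join of simplicial complexes, so that the order in which we peel off the minimal blocks of $\nested$ is irrelevant; and the commutation identities for restriction/contraction of both $\fbuilding$ and $\OM$ quoted above, which guarantee that each factor does not depend on the chosen peeling order. Once these are in place, the statement follows, and I would simply write: "Apply \cref{prop:singleLinkFacialBuildingSet} iteratively, as in the proof of the analogous statement after \cref{prop:linksLatticeNestedComplex}," which is presumably what the authors intend.
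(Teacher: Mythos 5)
Your proposal is correct and matches the paper's argument: the paper's proof of \cref{prop:linksFacialNestedComplex} is precisely an iterated application of \cref{prop:singleLinkFacialBuildingSet} (mirroring the lattice-level statement proved after \cref{prop:linksLatticeNestedComplex}), which is exactly the induction you describe. The extra bookkeeping you spell out — peeling off inclusion-minimal blocks and using the commutation of restrictions and contractions to see that each factor is independent of the peeling order — is the content the authors leave implicit, and it is sound.
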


\begin{proof}
Immediate consequence of \cref{prop:singleLinkFacialBuildingSet}.
\end{proof}

%%%%%%%%%%%%%%%%

\subsection{Operations}
\label{subsec:operationsFacialNestedComplex}

We now combine \cref{subsec:operationsNestedComplex,subsec:operationsOM} to define two operations on facial building sets and facial nested complexes.

\begin{proposition}
\label{prop:directSumFacialBuildingSets}
If~$(\fbuilding, \OM)$ and~$(\fbuilding', \OM')$ are two facial building sets, then
\[
(\fbuilding, \OM) \oplus (\fbuilding', \OM') \eqdef (\fbuilding \oplus \fbuilding', \OM \oplus \OM')
\]
is a facial building set, whose facial nested complex~$\nestedComplex[][(\fbuilding, \OM) \oplus (\fbuilding', \OM')]$ is isomorphic to the join of facial nested complexes~$\nestedComplex[][\fbuilding, \OM] \join \nestedComplex[][\fbuilding', \OM']$.
\end{proposition}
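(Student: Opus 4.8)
The plan is to deduce the statement directly from its lattice-theoretic counterpart, \cref{prop:directSumLatticeBuildingSets}, by transporting everything along the canonical isomorphism $\FL(\OM \oplus \OM') \cong \FL(\OM) \times \FL(\OM')$ supplied by \cref{prop:directSumCartesianProductFL}. (Note that $\OM \oplus \OM'$ is again acyclic, since by \cref{def:directSumOM} its circuits are the circuits of $\OM$ together with those of $\OM'$, none of which is positive, so $\FL(\OM \oplus \OM')$ is indeed defined.)

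First I would record that, by \cref{prop:directSumCartesianProductFL}, the map $\Phi \colon F \mapsto (F \cap \ground, F \cap \ground')$ is a lattice isomorphism from $\FL(\OM \oplus \OM')$ onto the Cartesian product $\FL(\OM) \times \FL(\OM')$, with inverse $(G, G') \mapsto G \sqcup G'$; in particular it sends $\botzero_{\FL(\OM \oplus \OM')} = \varnothing$ to $(\botzero_{\FL(\OM)}, \botzero_{\FL(\OM')}) = (\varnothing, \varnothing)$. Next I would check that $\Phi$ identifies the facial direct sum $\fbuilding \oplus \fbuilding'$ --- the collection of faces of $\OM \oplus \OM'$ obtained by regarding each $B \in \fbuilding$ and each $B' \in \fbuilding'$ as a subset of $\ground \sqcup \ground'$ --- with the lattice direct sum $\bigl(\fbuilding \times \{\botzero_{\FL(\OM')}\}\bigr) \sqcup \bigl(\{\botzero_{\FL(\OM)}\} \times \fbuilding'\bigr)$ of the $\FL(\OM)$-building set $\fbuilding$ and the $\FL(\OM')$-building set $\fbuilding'$: indeed $\Phi(B) = (B, \varnothing)$ for every $B \in \fbuilding$ and $\Phi(B') = (\varnothing, B')$ for every $B' \in \fbuilding'$. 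This matching of the two meanings of $\oplus$ (one for oriented matroids and their faces, one for lattice building sets) under $\Phi$ is a routine bookkeeping verification, and it is the only point that requires genuine care.

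Granting this, \cref{prop:directSumLatticeBuildingSets} applies verbatim: the lattice direct sum is an $\bigl(\FL(\OM) \times \FL(\OM')\bigr)$-building set, hence its preimage $\fbuilding \oplus \fbuilding'$ under the lattice isomorphism $\Phi$ is an $\FL(\OM \oplus \OM')$-building set, which is precisely the assertion that $(\fbuilding \oplus \fbuilding', \OM \oplus \OM')$ is a facial building set. Moreover, the same proposition gives that the $\bigl(\FL(\OM) \times \FL(\OM')\bigr)$-nested complex of the lattice direct sum is the join $\nestedComplex[\FL(\OM)][\fbuilding] \join \nestedComplex[\FL(\OM')][\fbuilding']$; transporting this back along $\Phi$ and unwinding \cref{def:facialNestedComplex} identifies it with $\nestedComplex[][(\fbuilding, \OM) \oplus (\fbuilding', \OM')]$ on one side and with $\nestedComplex[][\fbuilding, \OM] \join \nestedComplex[][\fbuilding', \OM']$ on the other, which completes the proof. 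No genuinely new difficulty arises beyond the conventions check above; all the oriented matroid content is already packaged in \cref{prop:directSumCartesianProductFL}.
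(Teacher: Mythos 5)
Your proposal is correct and follows exactly the paper's route: the paper's proof is simply to combine \cref{prop:directSumLatticeBuildingSets} with \cref{prop:directSumCartesianProductFL}, which is precisely what you do, with the isomorphism $F \mapsto (F \cap \ground, F \cap \ground')$ matching the two meanings of $\oplus$ spelled out as the only bookkeeping step. No further comment is needed.
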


\begin{proof}
Combine~\cref{prop:directSumLatticeBuildingSets,prop:directSumCartesianProductFL}.
\end{proof}

\begin{proposition}
\label{prop:freeSumFacialBuildingSets}
If~$(\fbuilding, \OM)$ and~$(\fbuilding', \OM')$ are two connected facial building sets, then
\[
(\fbuilding, \OM) \boxplus (\fbuilding', \OM') \eqdef (\fbuilding \boxplus \fbuilding', \OM \boxplus \OM')
\]
is a connected facial building set, whose facial nested complex~$\nestedComplex[][(\fbuilding, \OM) \boxplus (\fbuilding', \OM')]$ is isomorphic to the join of facial nested complexes~$\nestedComplex[][\fbuilding, \OM] \join \nestedComplex[][\fbuilding', \OM']$.
\end{proposition}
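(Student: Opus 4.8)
The plan is to mirror the proof of \cref{prop:directSumFacialBuildingSets}, exchanging the direct sum for the free sum: the statement should follow by combining the lattice-level construction of \cref{prop:freeSumLatticeBuildingSets} with the oriented matroid computation of \cref{prop:freeSumFreeProductFL}, which identifies the face lattice of a free sum with the free product of the face lattices. Since $\OM \boxplus \OM'$ is only well defined up to its face lattice (\cref{def:freeSumOM}), I read $\fbuilding \boxplus \fbuilding'$ through the isomorphism $\FL(\OM \boxplus \OM') \cong \FL(\OM) \boxtimes \FL(\OM')$ as the lattice free sum of \cref{prop:freeSumLatticeBuildingSets}, that is, concretely as the collection $(\fbuilding \ssm \{\ground\}) \sqcup (\fbuilding' \ssm \{\ground'\}) \sqcup \{\ground \sqcup \ground'\}$ of faces of $\OM \boxplus \OM'$.

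First I would set $\lattice \eqdef \FL(\OM)$ and $\lattice' \eqdef \FL(\OM')$. Since $(\fbuilding, \OM)$ and $(\fbuilding', \OM')$ are connected facial building sets, $\fbuilding$ is a connected $\lattice$-building set and $\fbuilding'$ is a connected $\lattice'$-building set; in particular $\topone_\lattice = \ground \in \fbuilding$ and $\topone_{\lattice'} = \ground' \in \fbuilding'$ (by \cref{rem:reccursiveDefinitionLatticeBuildingSet}), so the construction of \cref{prop:freeSumLatticeBuildingSets} applies and yields a connected $(\lattice \boxtimes \lattice')$-building set $\fbuilding \boxplus \fbuilding'$ with $\nestedComplex[\lattice \boxtimes \lattice'][\fbuilding \boxplus \fbuilding'] \cong \nestedComplex[\lattice][\fbuilding] \join \nestedComplex[\lattice'][\fbuilding']$. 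By \cref{prop:freeSumFreeProductFL}, $\FL(\OM \boxplus \OM') \cong \lattice \boxtimes \lattice'$, so transporting $\fbuilding \boxplus \fbuilding'$ along this isomorphism exhibits it as a connected $\FL(\OM \boxplus \OM')$-building set; that is, $(\fbuilding \boxplus \fbuilding', \OM \boxplus \OM')$ is a connected facial building set. Chaining these isomorphisms with \cref{def:facialNestedComplex} then gives
\begin{align*}
\nestedComplex[][(\fbuilding, \OM) \boxplus (\fbuilding', \OM')]
& = \nestedComplex[\FL(\OM \boxplus \OM')][\fbuilding \boxplus \fbuilding']
\cong \nestedComplex[\lattice \boxtimes \lattice'][\fbuilding \boxplus \fbuilding'] \\
& \cong \nestedComplex[\lattice][\fbuilding] \join \nestedComplex[\lattice'][\fbuilding']
= \nestedComplex[][\fbuilding, \OM] \join \nestedComplex[][\fbuilding', \OM'].
\end{align*}

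The only genuine point, and the place where I expect the (mild) obstacle to sit, is the compatibility of the two meanings of $\boxplus$: I would check that the isomorphism $F \mapsto (F \cap \ground, F \cap \ground')$ of \cref{prop:freeSumFreeProductFL} carries the set-theoretic free sum $(\fbuilding \ssm \{\ground\}) \sqcup (\fbuilding' \ssm \{\ground'\}) \sqcup \{\ground \sqcup \ground'\}$ exactly onto the lattice free sum $\bigl( (\fbuilding \ssm \{\topone_\lattice\}) \times \{\botzero_{\lattice'}\} \bigr) \sqcup \bigl( \{\botzero_\lattice\} \times (\fbuilding' \ssm \{\topone_{\lattice'}\}) \bigr) \sqcup \{\topone_{\lattice \boxtimes \lattice'}\}$. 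This is a routine unwinding: a block $B \in \fbuilding$ with $B \subsetneq \ground$, viewed inside $\ground \sqcup \ground'$, has $B \cap \ground = B$ and $B \cap \ground' = \varnothing = \botzero_{\lattice'}$, hence maps to $(B, \botzero_{\lattice'})$; symmetrically for blocks of $\fbuilding'$; and $\ground \sqcup \ground'$ maps to $\topone_{\lattice \boxtimes \lattice'}$. Beyond this bookkeeping — together with the fact, ensured by \cref{def:freeSumOM,prop:freeSumFreeProductFL}, that $\OM \boxplus \OM'$ is acyclic so that $\FL(\OM \boxplus \OM')$ is defined — there is essentially nothing to prove; all the content lives in the two cited propositions.
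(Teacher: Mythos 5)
Your proposal is correct and follows exactly the route of the paper, whose entire proof is ``Combine \cref{prop:freeSumLatticeBuildingSets,prop:freeSumFreeProductFL}.'' The extra bookkeeping you spell out — identifying the set-theoretic free sum of the building sets with the lattice free sum under the isomorphism $F \mapsto (F \cap \ground, F \cap \ground')$ of \cref{prop:freeSumFreeProductFL} — is precisely the implicit compatibility check the paper leaves to the reader, and you carry it out correctly.
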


\begin{proof}
Combine~\cref{prop:freeSumLatticeBuildingSets,prop:freeSumFreeProductFL}.
\end{proof}

\begin{corollary}
\label{coro:connectedFacialBuildingSet}
For any facial building set~$(\fbuilding, \OM)$ with~$\connectedComponents(\fbuilding) = \{F_1, \dots, F_k\}$, the facial nested complex~$\nestedComplex[][\fbuilding, \OM]$ is isomorphic to the facial nested complex of the connected facial building set~${(\fbuilding_{\le F_1} \boxplus \dots \boxplus \fbuilding_{\le F_k}, \OM_{|F_1} \boxplus \dots \boxplus \OM_{|F_k})}$.
\end{corollary}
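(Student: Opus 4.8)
The plan is to deduce this from the lattice-level statement \cref{coro:connectedLatticeBuildingSet}, applied to the lattice $\lattice = \FL(\OM)$, together with the dictionary between lattice operations on $\FL(\OM)$ and oriented matroid operations developed in \cref{subsec:restrictionContractionOM,subsec:operationsOM}. Since $\fbuilding$ is an $\FL(\OM)$-building set with $\connectedComponents(\fbuilding) = \{F_1, \dots, F_k\}$, \cref{coro:connectedLatticeBuildingSet} immediately gives an isomorphism
\[
\nestedComplex[][\fbuilding, \OM] = \nestedComplex[\FL(\OM)][\fbuilding] \;\cong\; \nestedComplex[\FL(\OM)_{\le F_1} \boxtimes \dots \boxtimes \FL(\OM)_{\le F_k}][\fbuilding_{\le F_1} \boxplus \dots \boxplus \fbuilding_{\le F_k}] ,
\]
so it only remains to rewrite the right-hand side in oriented matroid terms. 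First I would use \cref{prop:restrictionContractionFL} to replace each interval $\FL(\OM)_{\le F_i}$ by $\FL(\OM_{|F_i})$; correspondingly $\fbuilding_{\le F_i} = \fbuilding_{|F_i}$ becomes an $\FL(\OM_{|F_i})$-building set, that is, $(\fbuilding_{|F_i}, \OM_{|F_i})$ is a facial building set (\cref{prop:restrictionContractionFacialBuildingSet}), and it is \emph{connected} because $F_i$ is the top element of $\FL(\OM_{|F_i})$ and lies in $\fbuilding_{|F_i}$, so $\connectedComponents(\fbuilding_{|F_i}) = \max(\fbuilding_{|F_i}) = \{F_i\}$.

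Next I would identify the free product of these intervals with the face lattice of a free sum: iterating \cref{prop:freeSumFreeProductFL} yields $\FL(\OM_{|F_1}) \boxtimes \dots \boxtimes \FL(\OM_{|F_k}) \cong \FL(\OM_{|F_1} \boxplus \dots \boxplus \OM_{|F_k})$, and under this identification the lattice free sum $\fbuilding_{\le F_1} \boxplus \dots \boxplus \fbuilding_{\le F_k}$ of \cref{prop:freeSumLatticeBuildingSets} is by definition the facial building set $\fbuilding_{\le F_1} \boxplus \dots \boxplus \fbuilding_{\le F_k}$ of $\OM_{|F_1} \boxplus \dots \boxplus \OM_{|F_k}$ (\cref{prop:freeSumFacialBuildingSets}); iterating \cref{prop:freeSumFacialBuildingSets} on the connected facial building sets $(\fbuilding_{|F_i}, \OM_{|F_i})$ also shows that this pair is a connected facial building set. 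Substituting these identifications into the displayed isomorphism gives exactly $\nestedComplex[][\fbuilding, \OM] \cong \nestedComplex[][\fbuilding_{\le F_1} \boxplus \dots \boxplus \fbuilding_{\le F_k}, \OM_{|F_1} \boxplus \dots \boxplus \OM_{|F_k}]$. (Equivalently, one can copy the structure of the two-step proof of \cref{coro:connectedLatticeBuildingSet}, using \cref{prop:directSumLatticeBuildingSets} and \cref{prop:restrictionContractionFL} for the join decomposition $\nestedComplex[][\fbuilding, \OM] \cong \nestedComplex[][\fbuilding_{|F_1}, \OM_{|F_1}] \join \dots \join \nestedComplex[][\fbuilding_{|F_k}, \OM_{|F_k}]$, and iterated \cref{prop:freeSumFacialBuildingSets} for the reassembly.)

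The only point needing care — and the reason the ``direct sum'' half of the argument is carried out at the level of $\FL(\OM)$ rather than through \cref{prop:directSumFacialBuildingSets} — is that, unlike at the lattice level, one cannot assert $\OM = \OM_{|F_1} \oplus \dots \oplus \OM_{|F_k}$: the faces $F_1, \dots, F_k$ need not cover $\ground$ if $\OM$ has ground-set elements that are not vertices of its positive tope (for instance a point in the relative interior of an edge), exactly the caveat recorded after \cref{prop:directSumCartesianProductFL}. This causes no real difficulty, since the facial nested complex $\nestedComplex[][\fbuilding, \OM]$ depends only on the pair $(\FL(\OM), \fbuilding)$ and the isomorphism $\FL(\OM) \cong \prod_i \FL(\OM_{|F_i})$ holds as lattices; the only effect is that the bookkeeping for that step must stay inside $\FL(\OM)$.
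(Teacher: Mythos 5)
Your proof is correct and follows essentially the same route as the paper, which dispatches this corollary as an immediate consequence of \cref{coro:connectedLatticeBuildingSet,prop:directSumFacialBuildingSets,prop:freeSumFacialBuildingSets}. Your extra care in carrying out the decomposition step inside $\FL(\OM)$ (rather than asserting $\OM = \OM_{|F_1} \oplus \dots \oplus \OM_{|F_k}$, which can indeed fail when $F_1 \cup \dots \cup F_k \ne \ground$) is a legitimate clarification, consistent with the caveat recorded after \cref{prop:directSumCartesianProductFL}, and does not change the substance of the argument.
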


\begin{proof}
Immediate consequence of \cref{coro:connectedLatticeBuildingSet,prop:directSumFacialBuildingSets,prop:freeSumFacialBuildingSets}.
\end{proof}

\begin{example}
Let~$\b{m} \eqdef (m_1, \dots, m_\ell)$ with~$m_i \ge 1$ for all~$i \in [\ell]$.
Define~$[\b{m}] \eqdef [m_1^1] \sqcup \dots \sqcup [m_\ell^\ell]$ where~$[m_i^i] \eqdef \{1^i, 2^i, \dots, m_i^i\}$ denote the first~$m_i$ integers colored by~$i$ for all~$i \in [\ell]$.
Consider the free sum of simplices~$\Diamond_\b{m} \eqdef \triangle_{[m_1]} \oplus \dots \oplus \triangle_{[m_\ell]}$.
The vertices of~$\Diamond_\b{m}$ correspond to the elements of~$[\b{m}]$ and the faces of~$\Diamond_\b{m}$ correspond to the set~$[\b{m}]$ itself together with all its subsets which do not contain~$[m_i^i]$ for each~$i \in [\ell]$.
Consider the oriented matroid~$\OM_{\b{m}}$ on~$[\b{m}]$ defined by the vector configuration obtained by homogeneization of the vertices of~$\Diamond_\b{m}$.
A collection~$\fbuilding$ of subsets of~$[\b{m}]$ is a facial building set for~$\OM_\b{m}$ if and only if
\begin{itemize}
\item $[m_i^i] \not\subseteq F$ for all~$F \in \fbuilding$ distinct from~$[\b{m}]$ and all~$i \in [\ell]$,
\item $\fbuilding$ contains the set~$[\b{m}]$ and the singletons~$\{n^i\}$ for all~$i \in [\ell]$ and~$n \in [m_i]$,
\item for any~$F, G \in \fbuilding$ such that~$[m_i^i] \not\subseteq F \cup G$ for all~$i \in [\ell]$, if~$F \cap G \ne \varnothing$ then~$F \cup G \in \fbuilding$.
\end{itemize}
Moreover, a subset~$\nested$ of a facial building set~$\fbuilding$ for~$\OM_\b{m}$ containing~$[\b{m}]$ is a facial nested set if and~only~if
\begin{enumerate}
\item for any~$F, G \in \nested$, either~$F \subseteq G$ or~$G \subseteq F$ or~$F \cap G = \varnothing$,
\item for any~$k \ge 2$ pairwise disjoint~$F_1,\dots,F_k \in \nested$, the union~$F_1 \cup \dots \cup F_k$ is in~$\FL(\OM_\b{m})$ but not in~$\fbuilding$.
\end{enumerate}
This specializes to \cref{prop:facialBuildingSetCrossPolytope,prop:facialNestedComplexCrossPolytope} when~$\b{m} = (2, \dots, 2)$.
\end{example}

%%%%%%%%%%%%%%%%%%%%%%%%%%%%%%%%%%%%%%%

\section{Oriented building sets and acyclic nested complexes}
\label{sec:orientedBuildingSetsAcyclicNestedComplexes}

We now switch to a different family of simplicial complexes associated to an oriented matroid~$\OM$.
We first define oriented building sets (\cref{subsec:orientedBuildingSets}), and then their acyclic nested complexes (\cref{subsec:acyclicNestedComplex}).

%%%%%%%%%%%%%%%%

\subsection{Oriented building sets}
\label{subsec:orientedBuildingSets}

We first combine building sets and oriented matroids to obtain the following combinatorial data.

\begin{definition}
\label{def:orientedBuildingSet}
An \defn{oriented building set} is a pair~$(\building, \OM)$ where~$\building$ is a building set and~$\OM$ is an oriented matroid on the same ground set~$\ground$ such that~$\underline{c} \in \building$ for any~$c \in \circuits[\OM]$.
We also often say that~$\building$ is an oriented building set for~$\OM$.
We say that~$(\building, \OM)$ is \defn{realizable} if~$\OM$ is realizable.
\end{definition}

\begin{example}
Consider the building set~$\building_\circ$ of \cref{exm:booleanBuildingSet} and the vector configuration~$\b{A}_\circ$ of \cref{exm:vectorConfiguration}. As the supports~$12$, $1456$, and~$2456$ of the circuits~$\circuits[\b{A}_\circ]$ are all blocks of~$\building_\circ$, the pair~$(\building_\circ, \OM(\b{A}_\circ))$ is an oriented building set.
\end{example}

\begin{example}
\label{exm:graphicalOrientedBuildingSet}
Consider a directed graph~$\digraph$ with vertex set~$V$ and arc set~$\ground$.
The \defn{line graph} of~$\digraph$ is the graph~$L(\digraph)$ on~$\ground$ with an edge between two arcs of~$\digraph$ if and only if they share an endpoint.
See \cref{fig:exmDigraph} for an illustration.
The \defn{graphical oriented building set} of~$\digraph$ is the pair~$(\building(L(\digraph)), \OM(\digraph))$, where~$\building(L(\digraph))$ is the graphical building set of~$L(\digraph)$ defined in \cref{exm:graphicalBuildingSet} and~$\OM(\digraph)$ is the graphical oriented matroid of~$\digraph$ defined in \cref{exm:graphicalOM}.
Note that it is indeed an oriented building set: $\ground$ is the ground set of both~$\building(L(\digraph))$ and~$\OM(\digraph)$, and the circuits in~$\OM(\digraph)$ are cycles in~$\digraph$, hence of~$L(\digraph)$, thus belong to~$\building(L(\digraph))$.
\end{example}

\begin{remark}
Given an oriented matroid~$\OM$, the building sets~$\building$ such that~$(\building, \OM)$ is an oriented building set form an upper ideal in the inclusion poset of building sets.
This upper ideal is generated by a unique building set~$\building(\OM)$, which is the minimal building set containing~$\underline{c}$ for all~$c \in \circuits[\OM]$, that is, the building closure of~$\set{\underline{c}}{c \in \circuits[\OM]}$.
In other words, the blocks of~$\building(\OM)$ are the connected components of~$\OM$ and all its restrictions.
For instance, for the vector configuration~$\b{A}_\circ$ of \cref{exm:vectorConfiguration}, the minimal building set is~$\building(\OM(\b{A}_\circ)) = \{1, 2, 3, 4, 5, 6, 12, 1456, 2456, 123456\}$.
\end{remark}

We now briefly discuss the properties of oriented building sets with respect to the operations of restriction and contraction defined in \cref{def:restrictionContractionBuildingSet} for building sets and in \cref{def:restrictionContractionOM} for oriented matroids.

\begin{proposition}
\label{prop:restrictionContractionOrientedBuildingSet}
If~$(\building, \OM)$ is an oriented building set on~$\ground$ and~$R \subseteq \ground$, then the restriction~$(\building_{|R}, \OM_{|R})$ and contraction~$(\building_{\!/R}, \OM_{\!/R})$ are oriented building sets on~$R$ and~$\ground \ssm R$ respectively.
\end{proposition}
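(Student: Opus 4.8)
The plan is to reduce the statement to a single verification in each case. By \cref{def:restrictionContractionBuildingSet} the restriction $\building_{|R}$ and the contraction $\building_{\!/R}$ are again building sets, and by \cref{def:restrictionContractionOM} the restriction $\OM_{|R}$ and the contraction $\OM_{\!/R}$ are again oriented matroids (on $R$ and $\ground \ssm R$ respectively). Hence, in view of \cref{def:orientedBuildingSet}, the only thing left to check is that the support of every circuit of $\OM_{|R}$ (resp.\ of $\OM_{\!/R}$) belongs to $\building_{|R}$ (resp.\ to $\building_{\!/R}$).

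For the restriction this is immediate from \cref{rem:restrictionContractionOM}, which gives $\circuits[\OM_{|R}] = \set{c \in \circuits[\OM]}{\underline{c} \subseteq R}$. Indeed, if $c$ is a circuit of $\OM_{|R}$, then $c$ is a circuit of $\OM$ with $\underline{c} \subseteq R$; since $(\building, \OM)$ is an oriented building set we have $\underline{c} \in \building$, and therefore $\underline{c} \in \building_{|R}$ because $\underline{c} \subseteq R$. Thus $(\building_{|R}, \OM_{|R})$ is an oriented building set.

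For the contraction the situation is subtler, since \cref{rem:restrictionContractionOM} explicitly warns that the circuits of $\OM_{\!/R}$ are in general not circuits of $\OM$, so their supports must be controlled by other means. The relevant input is the classical description of circuits of a contraction: passing to underlying matroids commutes with contraction (see \cref{rem:matroids} and \cite{BjornerLasVergnasSturmfelsWhiteZiegler}), and every circuit of a matroid $M/R$ has the form $C \ssm R$ for some circuit $C$ of $M$ with $C \not\subseteq R$ (see \cite[Chap.~3]{Oxley}). Consequently, if $c'$ is a circuit of $\OM_{\!/R}$, then $\underline{c'} = \underline{c} \ssm R$ for some $c \in \circuits[\OM]$ with $\underline{c} \not\subseteq R$. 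Since $(\building, \OM)$ is an oriented building set we get $\underline{c} \in \building$, and then $\underline{c} \ssm R \in \building_{\!/R}$ by the definition of the contraction of a building set, \ie $\underline{c'} \in \building_{\!/R}$. This shows that $(\building_{\!/R}, \OM_{\!/R})$ is an oriented building set. (If one prefers to stay entirely inside oriented matroids, one can instead take a vector $v \in \vectors[\OM]$ with $v \ssm R = c'$, write $v$ as a conformal composition of circuits of $\OM$, and use support-minimality of $\underline{c'}$ among supports of vectors of $\OM_{\!/R}$ to isolate a single such circuit $c$ with $\underline{c} \ssm R = \underline{c'}$.)

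The only genuine obstacle is this last ingredient for the contraction, namely the behavior of circuit supports under contraction of (oriented) matroids; once that is granted, both halves of the proposition are a direct unfolding of the definitions of oriented building set and of restriction/contraction of building sets.
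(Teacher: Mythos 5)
Your proposal is correct and follows essentially the same route as the paper: check the circuit-support condition of \cref{def:orientedBuildingSet}, using that circuits of $\OM_{|R}$ are circuits of $\OM$ supported in $R$, and that every circuit of $\OM_{\!/R}$ has support $\underline{c} \ssm R$ for some $c \in \circuits[\OM]$ with $\underline{c} \not\subseteq R$. The only difference is that you spell out (via the underlying matroid, or the conformal-decomposition alternative) the standard fact about circuits of contractions that the paper invokes without further justification.
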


\begin{proof}
If~$c \in \circuits[\OM_{|R}]$, then~$c \in \circuits[\OM]$ so that~$\underline{c} \in \building$, and~$\underline{c} \subseteq R$ so that~$\underline{c} \in \building_{|R}$.
If~$c' \in \circuits[\OM_{\!/R}]$, then~$c' = c' \ssm R$ for some~$c \in \circuits[\OM]$, so that~$\underline{c} \in \building$ and thus~$\underline{c'} = \underline{c} \ssm R \in \building_{\!/R}$.
\end{proof}

Our next definition is the analogue of \cref{def:restrictionContractionFacialBuildingSet} for oriented building sets.

\begin{definition}
\label{def:restrictionContractionOrientedBuildingSet}
For an oriented building set~$(\building, \OM)$, a nested set~$\nested$ on~$\building$ and~$B \in \nested$, we define
\[
\restrGround \eqdef B \ssm R,
\qquad
\restrBuilding \eqdef (\building_{|B})_{\!/R}
\qquad\text{and}\qquad
\restrOM \eqdef (\OM_{|B})_{\!/R}
\]
where~$R = \complRestrGround \eqdef \bigcup_{C \in \nested, \; C \subsetneq B} C$.
\end{definition}

\begin{proposition}
\label{prop:restrictionContractionOrientedBuildingSet2}
For any~$B \in \nested \in \nestedComplex[][\building]$, the pair~$(\restrBuilding, \restrOM)$ is an oriented building set on~$\restrGround$.
\end{proposition}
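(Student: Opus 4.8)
The plan is to deduce this statement directly from \cref{prop:restrictionContractionOrientedBuildingSet}, exactly as its facial counterpart \cref{prop:restrictionContractionFacialBuildingSet2} is deduced from \cref{prop:restrictionContractionFacialBuildingSet}. Recall from \cref{def:restrictionContractionOrientedBuildingSet} that $\restrBuilding = (\building_{|B})_{\!/R}$ and $\restrOM = (\OM_{|B})_{\!/R}$, where $R = \bigcup_{C \in \nested, \, C \subsetneq B} C$, so that the pair $(\restrBuilding, \restrOM)$ is obtained from $(\building, \OM)$ by first restricting to~$B$ and then contracting~$R$.

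First I would apply \cref{prop:restrictionContractionOrientedBuildingSet} to the oriented building set $(\building, \OM)$ and the subset $B \subseteq \ground$, obtaining that $(\building_{|B}, \OM_{|B})$ is an oriented building set on the ground set~$B$. Next I would check that $R \subseteq B$: each $C \in \nested$ with $C \subsetneq B$ is in particular contained in~$B$, hence so is the union~$R$. Therefore $R$ is a subset of the ground set~$B$ of the oriented building set $(\building_{|B}, \OM_{|B})$, and a second application of \cref{prop:restrictionContractionOrientedBuildingSet}, this time to $(\building_{|B}, \OM_{|B})$ and $R \subseteq B$, shows that $((\building_{|B})_{\!/R}, (\OM_{|B})_{\!/R})$ is an oriented building set on $B \ssm R$, which is exactly the claim since $\restrGround = B \ssm R$.

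I do not anticipate any real obstacle: the only point that needs a word of justification is the inclusion $R \subseteq B$, which is what makes the second application of \cref{prop:restrictionContractionOrientedBuildingSet} legitimate. Note that, unlike in the facial setting of \cref{prop:restrictionContractionFacialBuildingSet2}, this argument never uses that $\nested$ is a nested set; this is because for a building set~$\building$ the contraction $\building_{\!/R}$ is well-defined for an arbitrary subset~$R$ of the ground set (see \cref{def:restrictionContractionBuildingSet}), so the restriction and the contraction can be chained freely without requiring~$R$ to be a block.
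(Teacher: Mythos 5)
Your argument is exactly the paper's: the paper proves this proposition simply as an "immediate consequence of \cref{prop:restrictionContractionOrientedBuildingSet}", i.e.\ by applying that proposition to the restriction to~$B$ and then to the contraction of~$R \subseteq B$, which is precisely what you spell out. The proposal is correct, and your added observations (that $R \subseteq B$ and that nestedness of~$\nested$ is not actually needed) are accurate refinements of the same route.
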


\begin{proof}
Immediate consequence of \cref{prop:restrictionContractionOrientedBuildingSet}.
\end{proof}

\begin{example}
\label{exm:graphicalRestrictionContractionOrientedBuildingSet}
Consider the graphical oriented building set of a directed graph~$\digraph$ from~\cref{exm:graphicalOrientedBuildingSet}, and a tube~$\tube$ in a tubing~$\tubing$ of~$L(\digraph)$.
The oriented building set~$(\building(L(\digraph)), \OM(\digraph))_{\tube \in \tubing}$ is the graphical oriented building set of the directed graph obtained as the contraction in the restriction~$\digraph_{|\tube}$ of all arcs contained in some tube~$\tube' \in \tubing$ with~$\tube' \subsetneq \tube$.
\end{example}

%%%%%%%%%%%%%%%%

\subsection{Acyclic nested complex}
\label{subsec:acyclicNestedComplex}

Using \cref{def:restrictionContractionOrientedBuildingSet}, we are now ready to define acyclic nested sets and the acyclic nested complex of an oriented building set.
Here and throughout, we define~$\bigcup \nested \eqdef N_1 \cup \dots \cup N_k$ for~$\nested \eqdef \{N_1, \dots, N_k\}$.

\begin{lemma}
\label{lem:characterizationAcyclicNestedSets}
The following conditions are equivalent for a nested set~$\nested$~on~$\building$:
\begin{enumerate}[(i)]
\item $\restrOM$ is acyclic for every~$B \in \nested$,
\item there is no~$c \in \circuits[\OM]$ and~$B \in \nested$ with~$\underline{c} \subseteq B$ such that~$\restrGround$ intersects~$c_+$~but~not~$c_-$,
\item there is no~$c \in \circuits[\OM]$ and~$\nested' \subseteq \nested$ with~$c_+ \not\subseteq \bigcup \nested'$ while~$c_- \subseteq \bigcup \nested'$,
\item $\OM_{\!/\bigcup \nested'}$ is acyclic for every~$\nested' \subseteq \nested$,
\item $\ground \ssm \bigcup \nested' \in \covectors[\OM]$ for every~$\nested' \subseteq \nested$,
\item $\bigcup \nested'$ is a face of $\OM$ for every~$\nested' \subseteq \nested$.
\end{enumerate}
We then say that~$\nested$ is \defn{acyclic} for~$\OM$.
\end{lemma}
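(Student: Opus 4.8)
The plan is to prove the cycle $(\mathrm{i})\Leftrightarrow(\mathrm{ii})\Leftrightarrow(\mathrm{iii})\Leftrightarrow(\mathrm{v})\Leftrightarrow(\mathrm{iv})$ together with $(\mathrm{v})\Leftrightarrow(\mathrm{vi})$, splitting the six conditions into a \emph{local} group $(\mathrm{i})$--$(\mathrm{ii})$, phrased through the minors $\restrOM$, and a \emph{global} group $(\mathrm{iii})$--$(\mathrm{vi})$, phrased through the sets $\bigcup\nested'$, and then bridging the two groups. For the global group, I would first observe that taking $\nested' = \varnothing$ in any of $(\mathrm{iv})$, $(\mathrm{v})$, $(\mathrm{vi})$, or $(\mathrm{iii})$ (where it forbids a positive circuit) forces $\OM$ to be acyclic by \cref{def:acyclicOM}; once this is known, \cref{lem:acyclicContraction} applied with $R = \bigcup\nested'$ for each $\nested'\subseteq\nested$, together with \cref{def:faceOM}, gives $(\mathrm{iv})\Leftrightarrow(\mathrm{v})\Leftrightarrow(\mathrm{vi})$ immediately.

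For $(\mathrm{iii})\Leftrightarrow(\mathrm{v})$ I would use the cryptomorphism that $(\ground\ssm R,\varnothing)$ is a covector of $\OM$ iff it is sign-orthogonal to every circuit of $\OM$; unwinding the definition of $\perp$ with $v^*=(\ground\ssm R,\varnothing)$ turns this into: $c_+\subseteq R \Leftrightarrow c_-\subseteq R$ for all $c\in\circuits[\OM]$. Applying this with $R=\bigcup\nested'$, and using that $\circuits[\OM]$ is closed under $c\mapsto -c$ to recover both implications of the biconditional from the single forbidden pattern of $(\mathrm{iii})$, yields $(\mathrm{iii})\Leftrightarrow(\mathrm{v})$. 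For the local group, \cref{def:restrictionContractionOM} identifies $\vectors[\restrOM]=\set{v\ssm R}{v\in\vectors[\OM],\ \underline v\subseteq B}$ with $R=\complRestrGround$; hence by \cref{def:acyclicOM} the matroid $\restrOM$ is acyclic iff there is no $v\in\vectors[\OM]$ with $\underline v\subseteq B$, $v_-\subseteq R$ and $v_+\not\subseteq R$, and since $\underline v\subseteq B=\restrGround\sqcup R$ this says exactly that $\restrGround$ meets $v_+$ but not $v_-$. Passing from vectors to circuits is harmless: given such a $v$, pick $s\in v_+\cap\restrGround$ and (by the standard fact that every vector of an oriented matroid conformally refines to a circuit through any prescribed element of its support) a circuit $c$ with $c_+\subseteq v_+$, $c_-\subseteq v_-$, $s\in\underline c$; then $s\in c_+\cap\restrGround$, $c_-\cap\restrGround\subseteq v_-\cap\restrGround=\varnothing$ and $\underline c\subseteq B$. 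This gives $(\mathrm{i})\Leftrightarrow(\mathrm{ii})$.

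It remains to bridge the two groups through $(\mathrm{ii})\Leftrightarrow(\mathrm{iii})$. The implication $(\mathrm{iii})\Rightarrow(\mathrm{ii})$ is immediate: if $c\in\circuits[\OM]$ and $B\in\nested$ witness the failure of $(\mathrm{ii})$, then $\nested'\eqdef\set{C\in\nested}{C\subsetneq B}$ has $\bigcup\nested'=\complRestrGround$, and $c_-\subseteq\bigcup\nested'$, $c_+\not\subseteq\bigcup\nested'$ witness the failure of $(\mathrm{iii})$. For $(\mathrm{ii})\Rightarrow(\mathrm{iii})$, suppose $c\in\circuits[\OM]$ and $\nested'\subseteq\nested$ satisfy $c_-\subseteq\bigcup\nested'$ and $c_+\not\subseteq\bigcup\nested'$. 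Since $(\building,\OM)$ is an oriented building set, $\underline c\in\building$, so $\underline c$ lies in some maximal block $K\in\connectedComponents(\building)\subseteq\nested$, and $\set{C\in\nested}{\underline c\subseteq C}$ is a nonempty chain (any two of its members meet in $\underline c\ne\varnothing$, hence are comparable), with a least element $B$; write $R\eqdef\complRestrGround=\bigcup_{C\in\nested,\, C\subsetneq B}C$. I claim $c$ and $B$ witness the failure of $(\mathrm{ii})$. First, $c_-\subseteq R$: for $z\in c_-$ choose $C\in\nested'$ with $z\in C$; then $C$ meets $B$ in $z$, hence $C$ and $B$ are comparable, and $C\supseteq B$ or $C=B$ would force $c_+\subseteq\underline c\subseteq B\subseteq\bigcup\nested'$ (using $B\in\nested'$ in the second case), a contradiction, so $C\subsetneq B$ and $z\in R$. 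Second, $c_+\not\subseteq R$: otherwise $\underline c\subseteq R$, and, writing the maximal elements of $\set{C\in\nested}{C\subsetneq B}$ as $C_1,\dots,C_m$ (pairwise disjoint, each strictly below $B\in\building$ hence not in $\connectedComponents(\building)$, with $R=C_1\cup\dots\cup C_m$), \cref{prop:nestedFactorization} applied to the nested antichain $\{C_1,\dots,C_m\}$ gives $\max(\building_{\le R})=\{C_1,\dots,C_m\}$, so $\underline c$ would sit inside some $C_i\subsetneq B$ in $\nested$, contradicting the minimality of $B$ (the cases $m\le 1$ contradict it even more directly). Hence $\restrGround=B\ssm R$ meets $c_+$ but not $c_-$, contradicting $(\mathrm{ii})$.

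I expect $(\mathrm{ii})\Rightarrow(\mathrm{iii})$ to be the main obstacle: the content is that testing the acyclicity condition at each single block $B$ of the nested set---and at the one canonical set $\complRestrGround$ attached to it---already certifies it for every subset $\nested'\subseteq\nested$, and making this work requires both the building-set axiom on $\building$ and the factorization property of nested antichains from \cref{prop:nestedFactorization}. This is also the only place where the hypothesis $\underline c\in\building$ from \cref{def:orientedBuildingSet} is genuinely used; all the remaining steps are direct translations of definitions together with \cref{lem:acyclicContraction}.
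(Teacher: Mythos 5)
Your proof is correct and follows essentially the same route as the paper: the identification $\vectors[\restrOM]=\set{v\ssm R}{v\in\vectors[\OM],\ \underline v\subseteq B}$ for (i)$\Leftrightarrow$(ii), the very same witnesses $\nested'\eqdef\set{C\in\nested}{C\subsetneq B}$ and $B\eqdef\min\set{C\in\nested}{\underline c\subseteq C}$ for (ii)$\Leftrightarrow$(iii), and \cref{lem:acyclicContraction} for the remaining equivalences (your detour (iii)$\Leftrightarrow$(v) via sign-orthogonality instead of (iii)$\Leftrightarrow$(iv) via \cref{def:acyclicOM} is only a cosmetic rerouting). The extra details you supply --- the conformal circuit refinement, the argument via \cref{prop:nestedFactorization} that $c_+\not\subseteq R$, and extracting acyclicity of $\OM$ from $\nested'=\varnothing$ before invoking \cref{lem:acyclicContraction} --- are exactly the steps the paper leaves implicit, and they are all sound.
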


\begin{proof}
By \cref{def:restrictionContractionOM},
\[
\vectors[\restrOM] = \set{v \ssm R}{v \in \vectors[\OM_{|B}]} = \set{v \ssm R}{v \in \vectors[\OM] \text{ and } \underline{v} \subseteq B},
\]
where~$R \eqdef \bigcup_{C \in \nested, \; C \subsetneq B} C$ and~$v \ssm R \eqdef (v_+ \ssm R, v_- \ssm R)$.
The equivalence (i) $\Leftrightarrow$ (ii) thus follows from \cref{def:acyclicOM}.

The equivalent (ii) $\Leftrightarrow$ (iii) is immediate: if~$c$ and~$B$ fail~(ii), then~$c$ and~$\nested' \eqdef \set{C \in \nested}{C \subsetneq B}$ fail~(iii), and conversely if $c$ and~$\nested'$ fail~(iii), then~$c$ and~$B \eqdef \min\set{C \in \nested}{\underline{c} \subseteq C}$ fail~(ii).

The equivalence (iii) $\Leftrightarrow$ (iv) follows from \cref{def:acyclicOM} again.

Finally, we have~(iv) $\Leftrightarrow$ (v) $\Leftrightarrow$ (vi) by \cref{lem:acyclicContraction}.
\end{proof}

\begin{definition}
\label{def:acyclicNestedComplex}
The \defn{acyclic nested complex} of an oriented building set~$(\building, \OM)$ is the simplicial complex~$\acyclicNestedComplex(\building, \OM)$ whose faces are~$\nested \ssm \connectedComponents(\building)$ for all nested sets~$\nested$ of~$\building$ which are acyclic for~$\OM$.
\end{definition}

Note that the conditions~(iii), (iv), (v) or (vi)  of \cref{lem:characterizationAcyclicNestedSets} immediately show that the acyclic nested complex~$\acyclicNestedComplex(\building, \OM)$ is indeed a simplicial complex.
We will prove in \cref{sec:orientedMatroidRealizations} that it is actually a $(\rank[\OM]-|\connectedComponents(\building)|-1)$-dimensional simplicial sphere, and in \cref{sec:polytopalRealizations} that it is the boundary complex of a simplicial $(\rank[\OM]-|\connectedComponents(\building)|)$-dimensional polytope as soon as the oriented matroid~$\OM$ is realizable.

\begin{example}
\label{exm:simplexAcyclicNestedComplex}
For any building set~$\building$ on~$\ground$, the nested complex~$\nestedComplex[][\building]$ is the acyclic nested complex~$\acyclicNestedComplex(\building, \OM_\triangle)$, where~$\OM_\triangle$ is the independent (\ie with no circuit) oriented matroid~on~$\ground$.
\end{example}

\begin{remark}
\label{rem:trivialCases}
\begin{itemize}
\item If~$\OM$ is not acyclic, then the acyclic nested complex~$\acyclicNestedComplex(\building, \OM)$ is empty.
\item If~$\OM$ contains a circuit~$c = (c_+, c_-)$ with~$|c_-| = 1$, then~$\acyclicNestedComplex(\building, \OM)$ is actually isomorphic to~$\acyclicNestedComplex(\building_{|\ground \ssm \{s\}}, \OM_{|\ground \ssm \{s\}})$.
\end{itemize}
\end{remark}

\begin{example}
\label{exm:graphicalAcyclicNestedComplex}
From \cref{exm:graphicalOrientedBuildingSet}, consider a directed graph~$\digraph$ and its graphical oriented building set~$(\building(L(\digraph)), \OM(\digraph))$.
\begin{figure}
	\capstart
	\centerline{\includegraphics[scale=.45]{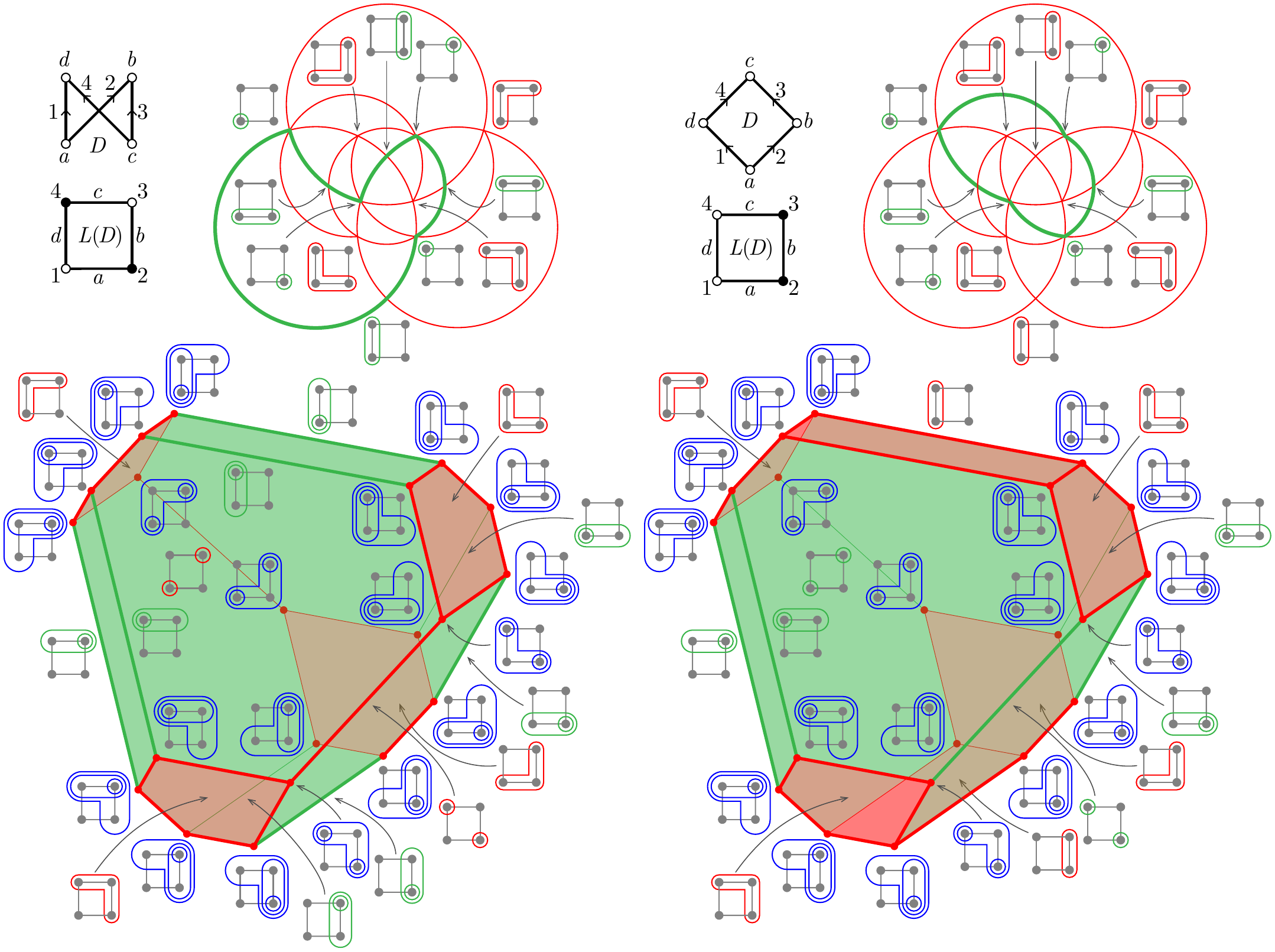}}
	\caption{Two graphical acyclic nested complexes. For each one, we have drawn the directed graph~$\digraph$, and its line graph~$L(\digraph)$ with vertices colored black and white according to the sign of the corresponding arcs in the only circuit of~$\digraph$. On the graphical nested fan of~$L(\digraph)$ (top), we have drawn the acyclic tubings of~$L(\digraph)$ in green. On the graph associahedron of~$L(\digraph)$ (bottom), we have drawn all maximal tubings of~$L(\digraph)$ in blue, the minimal cyclic tubings of~$L(\digraph)$ in red, and the maximal acyclic tubings of~$L(\digraph)$ in green.}
	\label{fig:acyclicNestedComplexes}
\end{figure}
The \defn{graphical acyclic nested complex}~$\acyclicNestedComplex(\building(L(\digraph)), \OM(\digraph))$ is then given by all tubings~$\tubing$ on~$L(\digraph)$ such that for each tube~$\tube \in \tubing$, the contraction in the restriction~$\digraph_{|\tube}$ of all arcs contained in some tube~$\tube' \in \tubing$ with~$\tube' \subsetneq \tube$ yields an acyclic directed graph.
\cref{fig:acyclicNestedComplexes} illustrates two graphical acyclic nested complexes.
Note that these two directed graphs have the same line graph, but distinct graphical oriented matroids, and thus distinct graphical acyclic nested complexes.
\end{example}

\begin{example}
\label{exm:simplexAcyclicNestedComplexGraphical}
It follows from \cref{exm:simplexAcyclicNestedComplex} that the graphical acyclic nested complex of~$\digraph$ is isomorphic to the classical nested complex of the line graph~$L(\digraph)$ when~$\digraph$ is an oriented forest (for instance, it is isomorphic to the simplicial permutahedron if~$\digraph$ is a star, and to the simplicial associahedron if~$\digraph$ is a path).
\end{example}

\begin{remark}
\label{rem:posetAssociahedra}
It follows from \cref{rem:trivialCases} that the graphical acyclic nested complex of~$\digraph$~is
\begin{itemize}
\item empty if~$\digraph$ is cyclic (\ie has a directed cycle),
\item isomorphic to the graphical acyclic nested complex of the Hasse diagram of the transitive closure of~$\digraph$ if~$\digraph$ is acyclic.
\end{itemize}
Hence, graphical acyclic nested complexes are in fact intrinsically associated to posets.
The graphical case of \cref{exm:graphicalOrientedBuildingSet,exm:graphicalAcyclicNestedComplex} actually motivated \cref{def:orientedBuildingSet,def:acyclicNestedComplex}, and was inspired from the poset associahedra defined in~\cite{Galashin}.
See \cref{sec:posetAssociahedra}.
\end{remark}

We now describe the links in the acyclic nested complex~$\acyclicNestedComplex(\building, \OM)$.
The reader is invited to write a direct proof of the following statement.
It also follows from the analogue \cref{prop:linksFacialNestedComplex} on facial nested complexes, and the connection of \cref{sec:facialNestedComplexesVsAcyclicNestedComplexes} between facial nested complexes and acyclic nested complexes.

\begin{proposition}
\label{prop:linksAcyclicNestedComplex}
For an acyclic nested set~$\nested$, the link of~$\nested \ssm \connectedComponents(\building)$ in the acyclic nested complex~$\acyclicNestedComplex(\building, \OM)$ is the join of the acyclic nested complexes~$\acyclicNestedComplex(\restrBuilding, \restrOM)$ for all~$B \in \nested$.
\end{proposition}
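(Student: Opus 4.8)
The plan is to reduce to a single block and then to identify the acyclic part inside the known link description for ordinary nested complexes. Exactly as \cref{prop:linksFacialNestedComplex} is deduced from \cref{prop:singleLinkFacialBuildingSet} by iteration, it suffices to treat the case $\nested = \{B\} \cup \connectedComponents(\building)$ with $B \in \building \ssm \connectedComponents(\building)$ a vertex of $\acyclicNestedComplex(\building,\OM)$, and then apply this case repeatedly: restrictions and contractions of oriented building sets are again oriented building sets (\cref{prop:restrictionContractionOrientedBuildingSet}), and the iterated restriction--contraction data $(\restrBuilding,\restrOM)$ of \cref{def:restrictionContractionOrientedBuildingSet} compose correctly (as do $(\OM_{|B})_{\!/R}$ and the corresponding contraction of a boolean building set). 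So the goal becomes: the link of $\{B\}$ in $\acyclicNestedComplex(\building,\OM)$ is the join $\acyclicNestedComplex(\building_{|B},\OM_{|B}) \join \acyclicNestedComplex(\building_{\!/B},\OM_{\!/B})$.

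Now $\acyclicNestedComplex(\building,\OM)$ is a subcomplex of $\nestedComplex[][\building]$, and $\acyclicNestedComplex(\building_{|B},\OM_{|B})$, $\acyclicNestedComplex(\building_{\!/B},\OM_{\!/B})$ are subcomplexes of $\nestedComplex[][\building_{|B}]$, $\nestedComplex[][\building_{\!/B}]$. By \cref{prop:linksNestedComplex} the link of $\{B\}$ in $\nestedComplex[][\building]$ is isomorphic to $\nestedComplex[][\building_{|B}] \join \nestedComplex[][\building_{\!/B}]$ through an explicit bijection $\nested[M] \mapsto (\nested[M]_{|B},\nested[M]_{\!/B})$ (cf.\ the proof of \cref{prop:linksLatticeNestedComplex}), so it is enough to show that this bijection matches acyclic faces with acyclic faces: for a nested set $\nested[M]$ on $\building$ with $\{B\} \cup \connectedComponents(\building) \subseteq \nested[M]$, one wants $\nested[M]$ acyclic for $\OM$ if and only if $\nested[M]_{|B}$ is acyclic for $\OM_{|B}$ and $\nested[M]_{\!/B}$ is acyclic for $\OM_{\!/B}$. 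The tools are characterization~(vi) (equivalently (v)) of \cref{lem:characterizationAcyclicNestedSets} together with \cref{prop:restrictionContractionFL}: since $B$ is a vertex of $\acyclicNestedComplex(\building,\OM)$, $B$ is a face of $\OM$, so $\FL(\OM_{|B}) = \FL(\OM)_{\le B}$ and $\FL(\OM_{\!/B}) \cong \FL(\OM)_{\ge B}$ via $F \mapsto F \cup B$. Unwinding (and using \cref{lem:acyclicContraction} on the contraction side), ``$\nested[M]_{|B}$ acyclic for $\OM_{|B}$'' says that $\bigcup \mathcal{L}$ is a face of $\OM$ for every subcollection $\mathcal{L}$ of the blocks of $\nested[M]$ strictly contained in $B$, and ``$\nested[M]_{\!/B}$ acyclic for $\OM_{\!/B}$'' says that $\bigcup \mathcal{K} \cup B$ is a face of $\OM$ for every subcollection $\mathcal{K}$ of the blocks of $\nested[M]$ not contained in $B$.

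The forward implication is then immediate: any witnessing subcollection for the right-hand side lifts to a subcollection of $\nested[M]$ whose union (respectively, union with $B$) is a face of $\OM$ because $\nested[M]$ is acyclic. The reverse implication is the crux. Given an arbitrary $\nested' \subseteq \nested[M]$, I would pass to its inclusion-maximal elements --- pairwise disjoint, with the same union $\bigcup\nested'$ --- and split them into those contained in $B$, the (at most one) block containing $B$, and those disjoint from $B$. The first two groups are handled directly by $\OM_{|B}$ and by $\OM_{\!/B}$; the real content is that a face of $\OM$ contained in $B$ and a face of $\OM$ disjoint from $B$ need not glue to a face of $\OM$ for an arbitrary oriented matroid, so here one must use the oriented-building-set hypothesis ($\underline c \in \building$ for every circuit $c$ of $\OM$) and the nested-set axiom that a union of $\ge 2$ pairwise disjoint blocks of $\nested[M]$ is never a block of $\building$. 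Concretely: if $\bigcup\nested'$ were not a face, characterization~(iii) of \cref{lem:characterizationAcyclicNestedSets} produces a circuit $c$ of $\OM$ and a subcollection $\nested'' \subseteq \nested[M]$ with $c_- \subseteq \bigcup\nested''$ but $c_+ \not\subseteq \bigcup\nested''$; since $\underline c \in \building$, one locates $\underline c$ relative to $B$ inside the nested/building structure, and the block generated by the relevant part of $\nested''$ pushes this obstruction down into $\OM_{|B}$ or $\OM_{\!/B}$, contradicting the right-hand hypothesis. I expect this bookkeeping --- controlling how $\underline c$ straddles $B$ and which sub-block of $\nested''$ to keep --- to be the main obstacle.

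Finally, I would record the shorter alternative anticipated in the text: once \cref{sec:facialNestedComplexesVsAcyclicNestedComplexes} establishes that oriented building sets and facial building sets carry the same nested complexes, compatibly with restriction and contraction (the data of \cref{def:restrictionContractionOrientedBuildingSet} corresponding to that of \cref{def:restrictionContractionFacialBuildingSet}), the statement follows immediately from \cref{prop:linksFacialNestedComplex}. This avoids the gluing argument entirely, at the cost of a forward reference.
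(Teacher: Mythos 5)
Your closing paragraph is in fact the paper's own justification: the paper gives no direct proof of \cref{prop:linksAcyclicNestedComplex}; it explicitly invites the reader to write one and otherwise derives the statement from \cref{prop:linksFacialNestedComplex} together with the identification of facial and acyclic nested complexes in \cref{sec:facialNestedComplexesVsAcyclicNestedComplexes} (\cref{thm:facialNestedComplexesVsAcyclicNestedComplexes}), forward reference included. Read as a derivation from the facial case, your proposal therefore agrees with the paper.

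The direct argument that forms the bulk of your proposal, however, stops exactly at the step that carries all the content, and the maneuver you sketch there does not yet work. In the single-block case, after splitting a subfamily $\nested'$ of a nested set containing $B$ according to position relative to $B$, the only nontrivial situation is when no block of $\nested'$ contains $B$; you then know, via \cref{lem:characterizationAcyclicNestedSets}\,(vi) and \cref{prop:restrictionContractionFL}, that $F_1 \eqdef \bigcup\set{C \in \nested'}{C \subseteq B}$ is a face of $\OM$ contained in $B$ and that $F_3 \cup B$ is a face of $\OM$, where $F_3 \eqdef \bigcup\set{C \in \nested'}{C \cap B = \varnothing}$, and you must conclude that $F_1 \cup F_3$ is a face of $\OM$. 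This implication is false for arbitrary faces: for the square with vertices $1,2,3,4$ in cyclic order, $B = \{1,2\}$, $F_1 = \{1\}$ and $F_3 = \{3,4\}$ satisfy the hypotheses, yet $\{1,3,4\}$ is not a face. What rules this out is precisely the pair of hypotheses you name but never actually deploy: here $\{1,2,3,4\}$ is the support of a circuit, hence a block, so $B$ together with blocks of a nested set realizing $F_3$ would be pairwise disjoint blocks whose union is a block, contradicting the nested-set axiom. A correct direct proof thus has to run an argument in the spirit of \cref{lem:acyclonestohedron1}, forming the block $\underline{c} \cup \bigcup \nested''$ from an offending circuit and comparing it with $B$ via \cref{prop:nestedFactorization}; your alternative of extracting a circuit $c'$ of $\OM_{\!/B}$ from $c \ssm B$ fails as stated, since $c'_+$ may lie entirely inside $\bigcup \nested''$, so a violation of \cref{lem:characterizationAcyclicNestedSets}\,(iii) need not descend to the contraction. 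As written, the direct route is a plan with a genuine gap at its acknowledged crux; the proposal stands only through its final paragraph, which coincides with the paper's proof.
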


\begin{example}
\label{exm:graphicalLinksAcyclicNestedComplex}
In view of \cref{exm:graphicalRestrictionContractionOrientedBuildingSet}, all links in a graphical acyclic nested complexes are graphical acyclic nested complexes.
\end{example}

We now consider with a little more care the maximal acyclic nested sets, which will be important in our proof of \cref{thm:acyclonestohedron}.

\begin{proposition}
\label{prop:rank1}
If~$\nested$ is a maximal acyclic nested set of~$(\building, \OM)$, then~$\restrOM$ has rank~$1$ for any~$B \in \nested$.
\end{proposition}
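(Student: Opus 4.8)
The plan is to argue by contradiction using maximality of $\nested$ together with the link description from \cref{prop:linksAcyclicNestedComplex}. Suppose $\nested$ is a maximal acyclic nested set of $(\building, \OM)$ but that $\restrOM$ has rank at least $2$ for some $B \in \nested$. By \cref{lem:characterizationAcyclicNestedSets}, acyclicity of $\nested$ means $\restrOM$ is acyclic for every $B \in \nested$; in particular $\restrOM$ is an acyclic oriented matroid of rank $\ge 2$ on the ground set $\restrGround = B \ssm R$ (where $R = \bigcup_{C \in \nested,\ C \subsetneq B} C$). First I would invoke \cref{prop:restrictionContractionOrientedBuildingSet2} to note that $(\restrBuilding, \restrOM)$ is itself an oriented building set. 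The idea is then to enlarge $\nested$ by adding a suitable block $B'$ sitting strictly between $R$ and $B$, contradicting maximality.

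The key step is to produce such a block. Since $\restrOM$ is acyclic of rank $\ge 2$, its positive tope is a cell of dimension $\ge 1$, so by \cref{thm:faceLatticeOrientedMatroidSphere} the face lattice $\FL(\restrOM)$ has a proper nonempty face; equivalently, there is a nonempty proper face $F \subsetneq \restrGround$ of $\restrOM$. Since $\restrBuilding$ is a building set on $\restrGround$, it contains all singletons, and $F$ is contained in the building closure; in fact one can pick a $\restrBuilding$-connected component $B'_0$ of $(\restrBuilding)_{|F}$ — concretely, an inclusion-maximal block of $\restrBuilding$ that is contained in some proper face of $\restrOM$ and is distinct from $\restrGround$ (such a block exists because singletons qualify when $|\restrGround| \ge 2$, which holds as $\rank \restrOM \ge 2$). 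Lifting $B'_0 \subseteq \restrGround = B \ssm R$ back to $B' := B'_0 \cup R \subseteq B$, one checks via \cref{prop:restrictionContractionOrientedBuildingSet} and the building-closure description (\cref{def:booleanBuildingSetClosure}) that $B'$ is a block of $\building$ with $R \subsetneq B' \subsetneq B$ lying in a proper face, hence $\nested \cup \{B'\}$ is still a nested set and still acyclic (a face contained in a face is a face, cf.\ \cref{lem:characterizationAcyclicNestedSets}(vi)), contradicting maximality of $\nested$.

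Alternatively, and perhaps more cleanly, I would phrase the whole argument through links: by \cref{prop:linksAcyclicNestedComplex} the link of $\nested \ssm \connectedComponents(\building)$ is the join $\bigJoin_{B \in \nested} \acyclicNestedComplex(\restrBuilding, \restrOM)$, so $\nested$ is a facet of $\acyclicNestedComplex(\building, \OM)$ if and only if each $\acyclicNestedComplex(\restrBuilding, \restrOM)$ is empty, i.e.\ has the empty set as its only face. It therefore suffices to show that if $(\building', \OM')$ is an oriented building set with $\OM'$ acyclic of rank $\ge 2$, then $\acyclicNestedComplex(\building', \OM')$ has a vertex. This is the crux: produce a block $B' \in \building' \ssm \connectedComponents(\building')$ such that $\{B'\} \cup \connectedComponents(\building')$ is acyclic nested. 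The main obstacle is verifying that this single block can always be found so that both the nestedness condition (\cref{prop:nestedFactorization}: $\max(\building'_{\le B'} )$ works out) and the acyclicity condition of \cref{lem:characterizationAcyclicNestedSets} hold simultaneously; the natural candidate is a building-closure-minimal block contained in a proper face of $\OM'$ of dimension $1$ (an edge of the positive tope), which exists precisely because $\rank \OM' \ge 2$ guarantees such an edge and $\building'$ must contain a block inside it (at worst a singleton). Once this vertex is exhibited, the base case is closed and the proposition follows.
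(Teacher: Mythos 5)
Your overall strategy is the same as the paper's (argue by contradiction and enlarge $\nested$, equivalently exhibit a vertex in a link factor), but both of your key verifications are either false or missing. First, the lifting step fails: if $B'_0 \in \restrBuilding$, then $B'_0 = B'' \ssm R$ for some block $B'' \in \building_{|B}$ with $B'' \not\subseteq R$, but $B''$ need not contain $R$, and $B'_0 \cup R$ is in general \emph{not} a block of~$\building$ --- indeed $R$ is a union of pairwise disjoint blocks of $\nested$, and such unions are typically not in $\building$ (that is precisely the nested-set condition), so there is no reason for $R \subsetneq B' \subsetneq B$ with $B' \in \building$ as you claim. Second, even adding one genuine block of $\building$ can destroy nestedness: the new block can be disjoint from blocks of $\nested$ contained in $R$ while their union lies in $\building$. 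This is exactly why the paper does not add a single block: it picks a positive cocircuit $c^*$ of $\restrOM$ (a facet, rather than an arbitrary proper face) and adds \emph{all} connected components of $\set{B' \in \building}{B' \subseteq B \ssm c^*_+}$; these new blocks cover $B \ssm c^*_+ \supseteq R$, hence absorb every old block strictly below $B$, which is what makes the nestedness check work, and acyclicity of the $B$-factor then follows from sign-orthogonality of $c^*$ with the circuits supported in $B$.

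The link reformulation does not close the gap either. The lemma you reduce to --- every oriented building set $(\building',\OM')$ with $\OM'$ acyclic of rank $\ge 2$ has a vertex in its acyclic nested complex --- is false as stated: take $\OM'$ to be two coloops and $\building'$ the two singletons; the only acyclic nested set is $\connectedComponents(\building')$. It becomes true for the link factors because $\restrGround = B \ssm R$ is itself a block of $\restrBuilding$ (it is the image of $B$ under the contraction), i.e.\ the factor is \emph{connected} --- a fact you neither state nor use. Moreover, your candidate block (a ``building-closure-minimal'' block, ``at worst a singleton'', inside a $1$-dimensional face) need not itself be a face of $\restrOM$, and by \cref{lem:characterizationAcyclicNestedSets}\,(vi) every member of an acyclic nested set must be a face, so acyclicity can fail for it; also, when the rank is exactly $2$ there is no proper $1$-dimensional face to work with. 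A correct single-block candidate would be a vertex of $\restrOM$ (an extremal parallel class), which is a block because parallel elements give $2$-element circuit supports; spelling this out, together with the connectivity of the link factor, would repair your second route, but as written the step you yourself flag as ``the main obstacle'' is asserted rather than proved.
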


\begin{proof}
Assume by contradiction that~$\restrOM$ has rank larger than~$1$ for some~$B \in \nested$.
Hence, it admits a proper positive cocircuit~$c^*$ with~$\varnothing \ne c^*_+ \ne \restrGround$ (consider any facet of~$\restrOM$).
Define
\[
\nested' \eqdef \nested \cup \connectedComponents(\set{B' \in \building}{B' \subseteq B \ssm c^*_+}).
\]
We claim that
\begin{enumerate}[(i)]
\item $\nested \subsetneq \nested'$, since~$\varnothing \ne c^*_+ \ne \restrGround$.
\item $\nested'$ is a nested set of~$\building$. Indeed,
	\begin{itemize}
	\item Any~$B', B'' \in \nested$ (resp.~$B', B'' \in \nested' \ssm \nested$) are clearly nested or disjoint since they belong to a nested set (resp.~since they are connected components of a building set). Consider now~$B' \in \nested$ and~$B'' \in \nested' \ssm \nested$. Since~$B$ and~$B'$ belong to the nested set~$\nested$, they are nested or disjoint. If~$B \subseteq B'$, then~$B'' \subset B'$. If~$B' \subsetneq B$, then~$B' \subseteq B \ssm c^*_+$, so that~$B' \subseteq B''$ or~$B' \cap B'' = \varnothing$. Otherwise, $B' \cap B = \varnothing$, thus~$B' \cap B'' = \varnothing$.
	\item Consider~$k \ge 2$ pairwise disjoint~$B_1, \dots, B_k$ in~$\nested'$. If they all belong to~$\nested$, then ${B_1 \cup \dots \cup B_k} \notin \building$. Otherwise, let~$B_j \in \nested' \ssm \nested$. If~$I = \set{i \in [k]}{B_i \not\subseteq B} \ne \varnothing$, then~$B \cup B_1 \cup \dots \cup B_k = B \cup \bigcup_{i \in I} B_i \notin \building$ (since~$\nested$ is a nested set) so that~$B_1 \cup \dots \cup B_k \notin \building$ (since~$\building$ is a building set and~$B \cap B_j \ne \varnothing$). If~$I = \varnothing$, then $B_1 \cup \dots \cup B_k \notin \building$ by maximality of~$B_j$.
	\item $\nested'$ contains~$\nested$ which contains~$\connectedComponents(\building)$.
	\end{itemize}
\item $\nested'$ is still acyclic for~$\OM$. Indeed,
	\begin{itemize}
	\item For~$B' \in \nested \ssm \{B\}$, we have~$\complRestrGround[R][B'][\nested'] = \complRestrGround[R][B'][\nested]$ so that~$\restrOM[B'][\nested'] = \restrOM[B'][\nested]$ is acyclic.
	\item For~$B' \in \connectedComponents(\set{B' \in \building}{B' \subseteq B \ssm c^*_+})$, we have~$\complRestrGround[R][B'][\nested'] = \complRestrGround[R][B][\nested] \cap B'$ so that \linebreak $\restrOM[B'][\nested'] = (\restrOM[B][\nested])_{|B'}$ is acyclic since~$ \restrOM[B][\nested]$ is.
	\item As circuits and cocircuits are sign orthogonal and~$c^*$ is a positive cocircuit of~$\restrOM$, $c^*_+ \cap c_+ \ne \varnothing$ implies~$c^*_+ \cap c_- \ne \varnothing$ for any circuit~$c$ of~$\restrOM$, hence for any circuit~$c$ of~$\OM$ with~$\underline{c} \subseteq B$. Since~$\restrGround[B][\nested'] = c^*_+$, we obtain that~$\restrOM[B][\nested]$ is acyclic by \cref{lem:characterizationAcyclicNestedSets}.
	\end{itemize}
\end{enumerate}
This contradicts the maximality of~$\nested$.
\end{proof}

\begin{corollary}
\label{coro:pureAcyclicNestedComplex}
The acyclic nested complex~$\acyclicNestedComplex(\building, \OM)$ is pure of dimension~$\rank[\OM]-|\connectedComponents(\building)|-1$.
\end{corollary}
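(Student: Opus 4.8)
The plan is to reduce the purity statement to the single assertion that every \emph{maximal} acyclic nested set~$\nested$ of~$(\building, \OM)$ has cardinality exactly~$\rank[\OM]$. Indeed, $\nested \mapsto \nested \ssm \connectedComponents(\building)$ is an inclusion-preserving bijection from acyclic nested sets on~$\building$ onto the faces of~$\acyclicNestedComplex(\building, \OM)$, and the face~$\nested \ssm \connectedComponents(\building)$ has dimension~$|\nested| - |\connectedComponents(\building)| - 1$; so the above assertion is precisely purity of dimension~$\rank[\OM] - |\connectedComponents(\building)| - 1$. (If~$\OM$ is not acyclic, the complex is empty by \cref{rem:trivialCases} and there is nothing to prove, so one may assume~$\OM$ acyclic.)

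First I would record a rank-additivity lemma: whenever~$D_1, \dots, D_m$ are pairwise disjoint blocks of~$\building$ lying in a common nested set, none of whose unions of two or more is a block, then~$\OM_{|D_1 \cup \dots \cup D_m} = \OM_{|D_1} \oplus \dots \oplus \OM_{|D_m}$, and hence~$\rank[\OM_{|D_1 \cup \dots \cup D_m}] = \sum_j \rank[\OM_{|D_j}]$. The proof is the crucial use of the oriented-building-set hypothesis: for~$c \in \circuits[\OM]$ with~$\underline{c} \subseteq D_1 \cup \dots \cup D_m$ we have~$\underline{c} \in \building$, and if~$\underline{c}$ met the~$D_j$ for all~$j$ in an index set~$J$ with~$|J| \ge 2$, then repeated application of the building-set union axiom to the pairwise-intersecting blocks would force~$\bigcup_{j \in J} D_j \in \building$, contradicting the nestedness of~$\{D_1, \dots, D_m\}$. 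Thus every circuit support lies inside a single~$D_j$, the connected components of~$\OM_{|D_1 \cup \dots \cup D_m}$ refine the partition~$\{D_j\}$, and the direct-sum decomposition follows from \cref{prop:connectedComponentsDecomposition} (and rank of a direct sum is additive).

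Next I would combine this with \cref{prop:rank1}. For a maximal acyclic nested set~$\nested$ and~$B \in \nested$, writing~$\{C_1, \dots, C_m\} \eqdef \max\set{C \in \nested}{C \subsetneq B}$ and~$R \eqdef \bigcup_j C_j$, the~$C_j$ are pairwise disjoint (maximal elements of a laminar family) and their union is not a block, so the lemma gives~$\rank[\OM_{|R}] = \sum_j \rank[\OM_{|C_j}]$; \cref{prop:rank1} gives~$\rank[(\OM_{|B})_{\!/R}] = 1$; and the standard matroid identity~$\rank[(\OM_{|B})_{\!/R}] = \rank[\OM_{|B}] - \rank[\OM_{|R}]$ then yields~$\rank[\OM_{|B}] = 1 + \sum_j \rank[\OM_{|C_j}]$. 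An induction up the laminar family~$\nested$ (base case~$B$ minimal, where~$R = \varnothing$ and both sides equal~$1$) upgrades this to~$\bigl| \set{C \in \nested}{C \subseteq B} \bigr| = \rank[\OM_{|B}]$ for every~$B \in \nested$. Finally, applying the same bookkeeping with~$\ground$ in place of~$B$ and the connected components~$\connectedComponents(\building)$ in place of the~$C_j$ — noting that every block of~$\building$, in particular every circuit support of~$\OM$, lies in a single connected component since no union of two or more maximal blocks is a block, whence~$\OM = \bigoplus_{G \in \connectedComponents(\building)} \OM_{|G}$ and~$\rank[\OM] = \sum_{G} \rank[\OM_{|G}]$ — and using that~$\nested$ is the disjoint union of the sets~$\set{C \in \nested}{C \subseteq G}$ over~$G \in \connectedComponents(\building)$, I would obtain~$|\nested| = \sum_{G} \rank[\OM_{|G}] = \rank[\OM]$, which completes the argument.

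The main obstacle is the rank-additivity lemma, and specifically the claim that a circuit support cannot straddle two disjoint blocks of a nested set: this is exactly where the defining condition~$\underline{c} \in \building$ of an oriented building set is indispensable, since without it the pieces~$C_j$ could be linked by a circuit and the rank count would fail. Everything downstream — the contraction rank identity, the laminar induction, and the decomposition along connected components — is routine.
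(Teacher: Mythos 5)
Your proposal is correct and follows essentially the same route as the paper: the paper's proof also reduces purity to showing that every maximal acyclic nested set~$\nested$ satisfies $|\nested| = \rank[\OM] = \sum_{B \in \nested} \rank[\restrOM]$ via \cref{prop:rank1}. The only difference is that you spell out the rank-additivity step (circuit supports being blocks cannot straddle disjoint members of a nested set, hence the restrictions decompose as direct sums and ranks telescope up the laminar family), which the paper asserts without detail; your justification of it is sound.
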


\begin{proof}
For a maximal nested set~$\nested$, we have~$\connectedComponents(\building) \subseteq \nested$ and~${\rank[\OM] = \sum_{B \in \nested} \rank[\restrOM] = |\nested|}$ by \cref{prop:rank1}.
Hence, the corresponding face has cardinality~$|\nested \ssm \connectedComponents(\building)| = \rank[\OM] - |\connectedComponents(\building)|$.
\end{proof}

\begin{corollary}
\label{coro:rank1}
For any maximal acyclic nested set~$\nested$ of~$(\building, \OM)$, and any~$B \in \nested$, the building set~$\restrBuilding$ on~$\restrGround$ is complete.
\end{corollary}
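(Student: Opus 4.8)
The plan is to derive this directly from \cref{prop:rank1}. Fix a maximal acyclic nested set~$\nested$ of~$(\building, \OM)$ and a block~$B \in \nested$. By \cref{prop:rank1} the oriented matroid~$\restrOM$ has rank~$1$, and since~$\nested$ is acyclic, condition~(i) of \cref{lem:characterizationAcyclicNestedSets} shows that~$\restrOM$ is acyclic. The first step is thus to analyse an acyclic oriented matroid of rank~$1$ on the ground set~$\restrGround$: it has no loops, since a loop would be a positive circuit, contradicting \cref{def:acyclicOM}; and in a loopless (oriented) matroid of rank~$1$ any two distinct elements form a circuit. Hence, for all~$i \neq j$ in~$\restrGround$, the pair~$\{i, j\}$ is a circuit of the underlying matroid~$\UOM[\restrOM]$, and therefore, by \cref{rem:matroids}, the support of some circuit of~$\restrOM$.

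Next, I would use that~$(\restrBuilding, \restrOM)$ is an oriented building set by \cref{prop:restrictionContractionOrientedBuildingSet2}, so that by \cref{def:orientedBuildingSet} every circuit support of~$\restrOM$ lies in~$\restrBuilding$. Combined with the previous step, this means that~$\restrBuilding$ contains every two-element subset of~$\restrGround$ (when~$|\restrGround| \le 1$ there is nothing to prove).

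Finally, I would observe that a building set on~$\restrGround$ containing all singletons, by the axioms of \cref{def:booleanBuildingSet}, and all pairs must be the complete building set~$2^{\restrGround} \ssm \{\varnothing\}$: for~$\varnothing \neq X \subseteq \restrGround$ with~$|X| \ge 3$, pick~$x \in X$ and~$y \in X \ssm \{x\}$; by induction on~$|X|$ the set~$X \ssm \{x\}$ is a block, and since it meets the block~$\{x, y\}$, its union with~$\{x, y\}$, which is~$X$, is a block. Thus~$\restrBuilding$ is complete. I do not foresee a genuine obstacle here; the only point to treat with some care is the passage from ``rank~$1$ and acyclic'' to ``all pairs are circuit supports'', which relies on acyclicity excluding loops and on every element of a loopless rank-one matroid being parallel to every other.
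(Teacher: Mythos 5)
Your proof is correct and follows essentially the same route as the paper: rank~$1$ of~$\restrOM$ (from \cref{prop:rank1}) forces every pair of elements of~$\restrGround$ to be a circuit support, hence a block of~$\restrBuilding$ by the oriented building set condition, and a building set containing all pairs is complete. You merely spell out two points the paper leaves implicit — acyclicity excluding loops, and the closure-under-union induction showing that all pairs generate~$2^{\restrGround}$ — which is a harmless elaboration, not a different argument.
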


\begin{proof}
Since~$\restrOM$ has rank~$1$ by \cref{prop:rank1}, any two elements~$u,v \in \restrGround$ form a circuit of~$\restrOM$, hence a block of~$\restrBuilding$ by \cref{def:orientedBuildingSet}.
Since~$\restrBuilding$ is a building set, we conclude that~$\restrBuilding = 2^{\restrGround}$.
\end{proof}

%%%%%%%%%%%%%%%%%%%%%%%%%%%%%%%%%%%%%%%

\section{Facial nested complexes versus acyclic nested complexes}
\label{sec:facialNestedComplexesVsAcyclicNestedComplexes}

We now connect \cref{sec:facialBuildingSetsFacialNestedComplexes,sec:orientedBuildingSetsAcyclicNestedComplexes}.
More precisely, for any acyclic oriented matroid~$\OM$, we now show that 
\begin{itemize}
\item the facial building sets of \cref{def:facialBuildingSet} essentially coincide with the oriented building sets of \cref{def:orientedBuildingSet}, see \cref{thm:orientedBuildingSetsTofacialBuildingSets},
\item the facial nested complexes of \cref{def:facialNestedComplex} coincide with the acyclic nested complexes of~\cref{def:acyclicNestedComplex}, see \cref{thm:facialNestedComplexesVsAcyclicNestedComplexes}.
\end{itemize}
For this, we need to select the facial part of an oriented building set.

\begin{theorem}
\label{thm:orientedBuildingSetsTofacialBuildingSets}
The map
\(
\building \mapsto \building \cap \FL(\OM)
\)
is a surjection from oriented building sets for~$\OM$ to facial building sets for~$\OM$.
\end{theorem}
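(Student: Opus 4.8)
The statement has two halves: the map is well defined (its values are $\FL(\OM)$-building sets, hence facial building sets for $\OM$) and it is surjective. I treat them in turn.

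\emph{Well-definedness.} Let $(\building,\OM)$ be an oriented building set and put $\fbuilding\eqdef\building\cap\FL(\OM)$. The plan is to verify that $\fbuilding$ is an $\FL(\OM)$-building set via the recursive criterion of \cref{rem:reccursiveDefinitionLatticeBuildingSet}, by induction on $\rank[\OM]$. For the first condition, let $X$ be a coatom of $\FL(\OM)$, that is, a facet of $\OM$. Since $\FL(\OM)_{\le X}=\FL(\OM_{|X})$ by \cref{prop:restrictionContractionFL}, one has $\fbuilding_{\le X}=\building_{|X}\cap\FL(\OM_{|X})$, and $(\building_{|X},\OM_{|X})$ is again an oriented building set by \cref{prop:restrictionContractionOrientedBuildingSet}; as $\rank[\OM_{|X}]=\rank[\OM]-1$, the inductive hypothesis shows $\fbuilding_{\le X}$ is an $\FL(\OM_{|X})$-building set. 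For the second condition, I must exhibit the isomorphism $\FL(\OM)\cong\prod_{F\in\max(\fbuilding)}\FL(\OM)_{\le F}$. Since $\building$ is a boolean building set, its connected components $C_1,\dots,C_p$ partition $\ground$; since every circuit support $\underline{c}$ (for $c\in\circuits[\OM]$) is a block of $\building$, it is contained in a single $C_a$, so the connected components of $\OM$ refine $\{C_1,\dots,C_p\}$ and hence $\OM=\OM_{|C_1}\oplus\dots\oplus\OM_{|C_p}$ by \cref{prop:connectedComponentsDecomposition} and associativity of $\oplus$. Then \cref{prop:directSumCartesianProductFL} gives $\FL(\OM)\cong\prod_a\FL(\OM_{|C_a})$; in particular each $C_a$ is a face, so $\FL(\OM_{|C_a})=\FL(\OM)_{\le C_a}$ and $C_a\in\building\cap\FL(\OM)=\fbuilding$. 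Being the maximal blocks of $\building$, the $C_a$ are precisely the maximal elements of $\fbuilding$, so this is the desired isomorphism. Finally $\fbuilding$ automatically contains every atom of $\FL(\OM)$ and every circuit support that is a face, as all of these lie in $\building(\OM)\subseteq\building$ (being connected components of restrictions of $\OM$).

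\emph{Surjectivity.} Given a facial building set $\fbuilding$ for $\OM$, let $\building$ be the boolean building closure of $\fbuilding\cup\set{\underline{c}}{c\in\circuits[\OM]}$, so that $(\building,\OM)$ is an oriented building set and $\fbuilding\subseteq\building\cap\FL(\OM)$. The content is the reverse inclusion $\building\cap\FL(\OM)\subseteq\fbuilding$, which I would prove by induction on $|F|$ for $F\in\building\cap\FL(\OM)$. By \cref{def:booleanBuildingSetClosure}, either $F$ is a singleton — hence an atom of $\FL(\OM)$, so $F\in\fbuilding$ — or $F=\bigcup_{Y\in\c{Y}}Y$ for an antichain $\c{Y}\subseteq\fbuilding\cup\set{\underline{c}}{c\in\circuits[\OM]}$ whose intersection graph is connected. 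Assume for contradiction that $F\notin\fbuilding$. Applying \cref{def:latticeBuildingSet} to the element $F$ of $\FL(\OM)$ and writing $\{G_1,\dots,G_k\}\eqdef\max(\fbuilding_{\le F})$ — so every $G_a\subsetneq F$ since $F\notin\fbuilding$ — we obtain $\FL(\OM_{|F})=\FL(\OM)_{\le F}\cong\prod_a\FL(\OM)_{\le G_a}=\prod_a\FL(\OM_{|G_a})$ by \cref{prop:restrictionContractionFL}. Here $k\ge 1$ because $\fbuilding$ contains the atoms of $\FL(\OM)$ below $F$, and $k\ne 1$ because a finite lattice is not isomorphic to a proper lower interval of itself; thus $k\ge 2$. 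The $G_a$ are pairwise disjoint: for $a\ne b$, the complement of $G_a\cap G_b$ in $F$ is the composition $(F\ssm G_a,\varnothing)\circ(F\ssm G_b,\varnothing)$ of non-negative covectors of $\OM_{|F}$, so $G_a\cap G_b$ is a face, whence $G_a\cap G_b\subseteq G_a\wedge G_b=\varnothing$. Next, every $Y\in\c{Y}$ lies inside some $G_a$: if $Y\in\fbuilding$ this is clear as $Y\in\fbuilding_{\le F}$; if $Y=\underline{c}$ with $c\in\circuits[\OM]$, then $\underline{c}$ is contained in a connected component $C$ of $\OM_{|F}$, and $C$ is a face (\cref{cor:connectedComponentsOfFacesAreFaces}) lying in $\building(\OM)\subseteq\building$, so either $C\subsetneq F$, in which case $C\in\fbuilding$ by induction and $\underline{c}\subseteq C\subseteq G_a$ for some $a$, or $C=F$, in which case $\OM_{|F}$ is connected, forcing $\FL(\OM_{|F})$ to be irreducible and contradicting $k\ge 2$. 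Since the $G_a$ are pairwise disjoint, the connectedness of the intersection graph of $\c{Y}$ then forces all of $\c{Y}$ into a single $G_{a^*}$, so $F=\bigcup\c{Y}\subseteq G_{a^*}\subsetneq F$, a contradiction. Hence $F\in\fbuilding$.

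\emph{Main obstacle.} The delicate step is the implication ``$\OM'$ connected $\implies$ $\FL(\OM')$ irreducible'' invoked above — equivalently, that a Cartesian product decomposition of $\FL(\OM')$ through the inclusion maps forces a direct sum decomposition of $\OM'$. As the remark following \cref{prop:directSumCartesianProductFL} warns, one cannot in general read off the ground-set partition from the face lattice alone, because of elements invisible in $\FL(\OM')$. I expect to resolve this by combining the two facts already used — that intersections of faces are faces (via composition of their defining non-negative covectors), and that the lattice rank is additive across a Cartesian product of lattices — with connectedness: in a decomposition $\FL(\OM')\cong\prod_a\FL(\OM_{|G_a})$ the $G_a$ are pairwise disjoint faces, and if some element of the ground set lay outside $\bigcup_aG_a$, a circuit joining it across the $G_a$ (which exists because $\OM'$ is connected) would contradict sign-orthogonality with the covectors $(\underline{G_a},\varnothing)$; once $\bigcup_aG_a$ exhausts the ground set, the converse statement of \cref{prop:directSumCartesianProductFL} applies and exhibits $\OM'$ as a nontrivial direct sum, contradicting its connectedness.
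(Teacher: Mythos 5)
Your first half is correct and is essentially the paper's argument in a different dress: the paper checks \cref{def:latticeBuildingSet} directly at every face $F$, using that the connected components of $\OM_{|F}$ are simultaneously blocks and faces, while you check the recursive criterion of \cref{rem:reccursiveDefinitionLatticeBuildingSet} by induction on the rank, using the same ingredients (\cref{prop:connectedComponentsDecomposition,prop:directSumCartesianProductFL,cor:connectedComponentsOfFacesAreFaces,prop:restrictionContractionFL,prop:restrictionContractionOrientedBuildingSet}). Your surjectivity argument also starts exactly as in the paper (building closure of $\fbuilding\cup\set{\underline{c}}{c\in\circuits[\OM]}$, then the reverse inclusion), and the intermediate steps you spell out (disjointness of the $G_a$ via composition of non-negative covectors, the induction on $|F|$, the reduction to a connected component $C$) are fine.

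The genuine gap is the step you flag yourself, and your sketched repair does not close it: the implication ``$\OM_{|F}$ connected $\implies$ $\FL(\OM_{|F})$ irreducible'' is false. Take $\ground=\{u,v,s\}$ and vectors in an open half-plane with $2\b{a}_s=\b{a}_u+\b{a}_v$: the oriented matroid is connected (its unique circuit is $c=(\{u,v\},\{s\})$), yet $\FL(\OM)=\{\varnothing,\{u\},\{v\},\ground\}$ decomposes as the product of the intervals below $G_1=\{u\}$ and $G_2=\{v\}$, which are the maximal blocks of the minimal facial building set $\{\{u\},\{v\}\}$ (the irreducibles of $\FL(\OM)$, cf.\ \cref{exm:minMaxBuildingSet}). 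Your orthogonality argument produces no contradiction here: $c$ is sign-orthogonal to the defining non-negative covectors $(\ground\ssm G_1,\varnothing)=(\{v,s\},\varnothing)$ and $(\{u,s\},\varnothing)$ precisely because its negative element $s$ lies outside $G_1\cup G_2$ (these, not $(\underline{G_a},\varnothing)$, are the relevant covectors, but this does not matter); and the converse part of \cref{prop:directSumCartesianProductFL} cannot be invoked since $G_1\sqcup G_2\neq\ground$, which is exactly the caveat recorded after that proposition. Moreover, no alternative argument can rescue your case ``$C=F$'' in this generality: in this example the facial building set $\{\{u\},\{v\}\}$ is not the trace of any oriented building set at all, because every oriented building set contains the circuit support $\{u,v,s\}$, which is a face. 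What is needed is a hypothesis ruling out elements invisible in the face lattice, e.g.\ that every singleton is a face as in \cref{prop:BergmanPushable}; under it every element of $F$ lies in some $G_a$ and the problematic case disappears. For comparison, the paper's proof crosses the same juncture by writing $\OM_{|B}=\OM_{|B_1}\oplus\cdots\oplus\OM_{|B_k}$ directly from the face-lattice product via \cref{prop:directSumCartesianProductFL}, a deduction that likewise presupposes $B=B_1\sqcup\cdots\sqcup B_k$; so the obstacle you isolated is real, but the sign-orthogonality argument you propose does not overcome it.
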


\begin{proof}
We first prove that the map is well-defined.
Consider an oriented building set~$(\building, \OM)$ and let~$\fbuilding \eqdef \building \cap \FL(\OM)$.
Fix a face~$F$ of~$\OM$, and denote by~$B_1, \dots, B_k$ the maximal facial blocks from~$\fbuilding_{\subseteq F}$, and by~$C_1, \dots, C_\ell$ the connected components of $\OM_{|F}$.
Then
\begin{equation}
\label{eq:proofFacialBuildingSetAreLasVergnasLatticeBuildingSets}
\FL(\OM_{|F}) \stackrel{(i)}{\cong} \prod_{j \in [\ell]} \FL(\OM_{|C_j}) \stackrel{(ii)}{=} \prod_{i \in [k]} \prod_{\substack{j \in [\ell] \\ B_i \cap C_j \ne \varnothing}} \FL(\OM_{|C_j}) \stackrel{(iii)}{\cong} \prod_{i \in [k]} \FL(\OM_{|B_i}),
\end{equation}
so that~$\fbuilding$ is indeed an $\FL(\OM)$-building set.
In \eqref{eq:proofFacialBuildingSetAreLasVergnasLatticeBuildingSets}, the equality~(i) holds by \cref{prop:directSumCartesianProductFL,prop:connectedComponentsDecomposition}.
To see equalities (ii) and (iii), observe that
\begin{itemize}
\item Each~$C_j$ belongs to~$\fbuilding$. Indeed, it belongs to~$\building$ as it is either a singleton or a union of supports of circuits with connected intersection graph, which are both elements of $\building$ by \cref{def:booleanBuildingSet,def:orientedBuildingSet}. Moreover, it is a face of~$\OM$ by \cref{cor:connectedComponentsOfFacesAreFaces}.
\item Each~$B_i$ coincides with the disjoint union~$\bigsqcup_{j \in [\ell], B_i \cap C_j \ne \varnothing} C_j$. Indeed, this union clearly contains~$B_i$ and is contained in~$F$. 
Since the connected components are themselves blocks, $B_i\cup \bigsqcup_{j \in [\ell], B_i \cap C_j \ne \varnothing} C_j$ belongs to~$\building$ as a union of intersecting blocks, hence equals to~$B_i$ by maximality of~$B_i$.
\end{itemize}

To prove the surjectivity, consider a facial building set~$\fbuilding$.
We define $\building$ to be the building closure (see \cref{def:booleanBuildingSetClosure}) of $\fbuilding \cup \set{\underline c}{c \in \circuits[\OM]}$, and we prove that $\fbuilding = \building \cap \FL(\OM)$.
The inclusion $\fbuilding \subseteq \building \cap \FL(\OM)$ is immediate, hence we just need to prove the inclusion~$\building \cap \FL(\OM) \subseteq \fbuilding$.
Consider thus~$B \in \building \cap \FL(\OM)$.
 
Combining the definition of facial building set with \cref{prop:directSumCartesianProductFL}, we know that
\[
\OM_{|B} = \OM_{|B_1} \oplus \cdots \oplus \OM_{|B_k},
\]
where $B_1,\dots,B_k$ are the maximal elements of $\fbuilding$ included in $B$.
Note that the direct sum structure implies that the support of every circuit of $\OM_{|B}$ belongs to one $B_i$.
Therefore, for each~$B_i$, the sets in~$\fbuilding \cup \set{\underline c}{c \in \circuits[\OM]}$ contained in~$B_i$ form a connected component of the intersection graph of $\fbuilding \cup \set{\underline c}{c \in \circuits[\OM]}$.
Hence, each $B_i$ is an inclusion maximal element of $\building_{\subseteq B}$ and, since they are faces, also of $(\building \cap \FL(\OM))_{\subseteq B}$. 
As $B \in \building \cap \FL(\OM)$, we must have $k=1$, and therefore $B=B_1\in \fbuilding$, as required.
\end{proof}

\begin{remark}
Note that the surjection of \cref{thm:orientedBuildingSetsTofacialBuildingSets} is not injective in general.
The inclusion minimal oriented building set mapped to a given facial building set~$\fbuilding$ is the building closure of~$\fbuilding \cup \set{\underline c}{c \in \circuits[\OM]}$ considered in the proof of \cref{thm:orientedBuildingSetsTofacialBuildingSets}.
Note that for the independent oriented matroid~$\OM_\triangle$ (\ie with no circuit), all subsets are faces, so that the map of \cref{thm:orientedBuildingSetsTofacialBuildingSets} is bijective.
\end{remark}

\begin{theorem}
\label{thm:facialNestedComplexesVsAcyclicNestedComplexes}
Let $\fbuilding \eqdef \building \cap \FL(\OM)$ be the facial building set of an oriented building set~$(\building,\OM)$. Then the facial nested complex $\nestedComplex[][\fbuilding, \OM]$ and the acyclic nested complex $\acyclicNestedComplex(\building, \OM)$ coincide.
\end{theorem}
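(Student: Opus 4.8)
The plan is to show that a set $\nested$ is an acyclic nested set of $(\building,\OM)$ exactly when it is a facial nested set of $(\fbuilding,\OM)$, and that $\connectedComponents(\building)=\connectedComponents(\fbuilding)$; since $\acyclicNestedComplex(\building,\OM)$ and $\nestedComplex[][\fbuilding,\OM]$ are obtained from these families of sets by deleting the respective connected components, this gives the theorem. Throughout I work with the characterization of acyclicity from \cref{lem:characterizationAcyclicNestedSets}\,(vi): $\nested$ is acyclic iff $\bigcup\nested'$ is a face of $\OM$ for every $\nested'\subseteq\nested$.

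The single technical ingredient — already implicit in the proof of \cref{thm:orientedBuildingSetsTofacialBuildingSets} — is that for \emph{every} face $F$ of $\OM$, if $B_1,\dots,B_k$ denote the maximal elements of $\fbuilding$ contained in $F$, then $F=B_1\sqcup\dots\sqcup B_k$ as a set and $\OM_{|F}=\OM_{|B_1}\oplus\dots\oplus\OM_{|B_k}$. Indeed, letting $C_1,\dots,C_\ell$ be the connected components of $\OM_{|F}$ (each a block of $\fbuilding$ by \cref{def:orientedBuildingSet,cor:connectedComponentsOfFacesAreFaces}), the cited proof shows each $B_i$ is the union of the $C_j$ it meets, while the product decomposition $\FL(\OM)_{\le F}\cong\prod_i\FL(\OM)_{\le B_i}$ of \cref{def:latticeBuildingSet} forces each $C_j$ to lie in exactly one $B_i$ (a face below two distinct coordinate tops of a product lattice is the bottom), so $\{B_i\}$ partitions $\{C_j\}$ and the decomposition follows from \cref{prop:connectedComponentsDecomposition}. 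Taking $F=\ground$ yields $\ground=B_1\sqcup\dots\sqcup B_m$ with $\{B_1,\dots,B_m\}=\connectedComponents(\fbuilding)$ and $\OM=\bigoplus_i\OM_{|B_i}$; then a maximal block $C$ of the boolean building set $\building$ is necessarily a disjoint union of some of the $B_i$ — if $C\cap B_i\neq\varnothing$ then $C\cup B_i\in\building$, whence $B_i\subseteq C$ by maximality — and such a union is a face, because under $\FL(\OM)\cong\prod_i\FL(\OM_{|B_i})$ (\cref{prop:directSumCartesianProductFL}) it corresponds to the tuple whose $i$-th entry is the top or the bottom of $\FL(\OM_{|B_i})$. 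Maximality then forces $C$ to be a single $B_i$, and the reverse inclusion is immediate, so $\connectedComponents(\building)=\connectedComponents(\fbuilding)$.

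With this in hand, the crucial point is that in a facial (equivalently acyclic) nested set the $\FL(\OM)$-join of any family of pairwise incomparable blocks equals their set-theoretic union: by \cref{prop:nestedFactorization} applied inside $\FL(\OM)$, such blocks are precisely the maximal elements of $\fbuilding$ below their join $F$, so the ingredient above gives $F=\bigsqcup B_i=\bigcup B_i$. Both directions are now routine. If $\nested$ is an acyclic nested set of $(\building,\OM)$, then each of its blocks is a face (\cref{lem:characterizationAcyclicNestedSets}\,(vi) with a singleton), so $\nested\subseteq\fbuilding$; its incomparable blocks are set-disjoint by \cref{def:booleanNestedSet}; and for pairwise incomparable $B_1,\dots,B_j\in\nested\ssm\connectedComponents(\fbuilding)$ the join $B_1\vee\dots\vee B_j=B_1\cup\dots\cup B_j$ lies outside $\building\supseteq\fbuilding$ by the nested-set axiom, so $\nested$ is facial. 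Conversely, if $\nested$ is a facial nested set, two incomparable blocks have $\FL(\OM)$-meet $\varnothing$, hence — since intersections of faces of an oriented matroid are faces — are set-disjoint, giving the laminarity axiom of \cref{def:booleanNestedSet}; for pairwise disjoint $B_1,\dots,B_k\in\nested$ the set $B_1\cup\dots\cup B_k=B_1\vee\dots\vee B_k$ is a face not in $\fbuilding$, hence not in $\building$, giving the second axiom; and applying the ingredient to the maximal elements of an arbitrary $\nested'\subseteq\nested$ shows $\bigcup\nested'$ is a face, so $\nested$ is acyclic by \cref{lem:characterizationAcyclicNestedSets}\,(vi).

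I expect the main obstacle to be the middle step: showing that the $\FL(\OM)$-join of blocks that cooccur in a nested set is their honest set-union, and that the face it spans splits as a direct sum of oriented matroids supported on those blocks. This is exactly the bridge between the lattice-theoretic picture of facial nested complexes and the boolean-plus-orientation picture of acyclic nested complexes, and it requires combining the product description of lower intervals in $\FL(\OM)$, the direct-sum description of the face lattice, and the fact that $\FL(\OM)$ is intersection-closed — the same delicate points already navigated in the proof of \cref{thm:orientedBuildingSetsTofacialBuildingSets}. Once it is secured, the rest is bookkeeping.
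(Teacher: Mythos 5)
Your argument is correct, and while it runs on the same structural engine as the paper's proof---the direct-sum decomposition of $\OM_{|F}$ over the maximal facial blocks below a face---it is organized around a different key lemma. The paper first proves \cref{lem:smallestFaceContainingBlock} (the smallest face containing a block of $\building$ lies in $\fbuilding$), deduces \cref{cor:carrier} and \cref{coro:connectedComponents} from it, and then obtains acyclicity of a facial nested set by a minimal-counterexample argument, applying the direct-sum decomposition to an inclusion-minimal subfamily whose union fails to be a face. You instead promote the decomposition itself to a standing fact about every face ($F$ is the disjoint union of $\max(\fbuilding_{\le F})$, with $\OM_{|F}$ splitting accordingly), extract it legitimately from the proof of \cref{thm:orientedBuildingSetsTofacialBuildingSets}, and combine it with \cref{prop:nestedFactorization} to show that in a facial nested set the join of any antichain equals its set-theoretic union. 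This yields acyclicity directly (the union of any subfamily is the join of its maximal elements, hence a face), dispenses with \cref{lem:smallestFaceContainingBlock} and \cref{cor:carrier}, and recovers \cref{coro:connectedComponents} from the case $F=\ground$; your converse direction is essentially the paper's. What each buys: the paper's route produces auxiliary statements of independent interest and never needs the join-equals-union fact in full antichain generality, while yours is more direct, replaces the contradiction argument by a one-line computation, and makes explicit that both theorems of this section hinge on the same face-partition property. Two compressed steps are sound but worth spelling out: the inference ``not in $\fbuilding$, hence not in $\building$'' is valid only because the set in question is already known to be a face (so membership in $\building$ would force membership in $\fbuilding=\building\cap\FL(\OM)$), and when an antichain meets $\connectedComponents(\fbuilding)$ you implicitly use that nothing strictly above a maximal facial block is facial---a point the paper's own proof treats with the same brevity.
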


To prove this statement, we need a couple of technical results.
We still denote by $\fbuilding \eqdef \building \cap \FL(\OM)$ the facial building set of the oriented building set~$(\building,\OM)$.

\begin{lemma}
\label{lem:smallestFaceContainingBlock}
If $B \in \building$ and $F$ is the inclusion minimal face of $\OM$ containing $B$, then $F \in \fbuilding$.
\end{lemma}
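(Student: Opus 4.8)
The plan is to exploit the connected-component structure of the restriction~$\OM_{|F}$ together with the defining property of building sets on the Las Vergnas face lattice. First I would fix $B \in \building$, let $F$ be the inclusion-minimal face of $\OM$ containing $B$ (which exists since $\FL(\OM)$ is a lattice with top element $\ground$, so the meet of all faces containing $B$ is again a face containing $B$), and suppose for contradiction that $F \notin \fbuilding$. Since $\fbuilding = \building \cap \FL(\OM)$ is an $\FL(\OM)$-building set, the interval $\FL(\OM)_{\le F} = \FL(\OM_{|F})$ decomposes as a product $\prod_{i \in [k]} \FL(\OM_{|F_i})$, where $F_1, \dots, F_k \eqdef \max(\fbuilding_{\le F})$; because $F \notin \fbuilding$ we have $k \ge 2$ (if $k = 1$ then $\max(\fbuilding_{\le F}) = \{F\}$ would force $F \in \fbuilding$). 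By \cref{prop:directSumCartesianProductFL} this product structure promotes to an oriented-matroid direct sum $\OM_{|F} = \OM_{|F_1} \oplus \cdots \oplus \OM_{|F_k}$, so every circuit of $\OM_{|F}$ has support inside a single $F_i$.

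The key step is then to show that $B$ itself lies inside one of the $F_i$, contradicting minimality of $F$. Here I would invoke \cref{prop:buildingClosure} (or rather the explicit description of the building closure from the proof of \cref{thm:orientedBuildingSetsTofacialBuildingSets}): since $\building$ is the building closure of $\fbuilding \cup \set{\underline c}{c \in \circuits[\OM]}$ when we take the minimal oriented building set, and more generally any oriented building set $\building$ with $\fbuilding = \building \cap \FL(\OM)$ has its blocks contained in $B$ built from blocks of $\fbuilding$ and circuit supports contained in $B$, the block $B$ is a union $\bigcup_{Y \in \c{Y}} Y$ for some family $\c{Y}$ of elements of $\fbuilding \cup \set{\underline c}{c \in \circuits[\OM]}$ all contained in $F$ and with connected intersection graph. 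Each such $Y$ is either a block of $\fbuilding$ below $F$ — hence contained in some $F_i$ by maximality of the $F_i$ — or the support of a circuit of $\OM_{|F}$ — hence contained in some $F_i$ by the direct-sum structure established above. Since the intersection graph of $\c{Y}$ is connected and distinct $F_i$ are disjoint, all members of $\c{Y}$ must lie in the \emph{same} $F_i$, so $B = \bigcup_{Y \in \c{Y}} Y \subseteq F_i \subsetneq F$ (proper because $F_i \in \fbuilding$ while $F \notin \fbuilding$, so $F_i \neq F$). This contradicts the minimality of $F$ as a face of $\OM$ containing $B$, and therefore $F \in \fbuilding$.

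The main obstacle I anticipate is making precise the statement "$B$ is a union of blocks of $\fbuilding$ and circuit supports, all with connected intersection graph." One has to be a little careful because $\building$ might be strictly larger than the minimal oriented building set over $\fbuilding$; however, since $B \in \building$ and $B$ is a face, and since $\building$ is contained in the building closure of $\fbuilding \cup \set{\underline c}{c \in \circuits[\OM]}$ only for the minimal choice, the cleaner route is probably to argue directly: let $C_1, \dots, C_m$ be the connected components of $\OM_{|B}$; each $C_j$ is either a singleton or a union of circuit supports, hence a block of $\building$ (by \cref{def:booleanBuildingSet} and \cref{def:orientedBuildingSet}), and each $C_j$ is a face of $\OM_{|B}$ hence of $\OM$ by \cref{cor:connectedComponentsOfFacesAreFaces}. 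So each $C_j \in \fbuilding_{\le F}$, thus $C_j \subseteq F_{i(j)}$ for some $i(j)$. If two components $C_j, C_{j'}$ landed in different $F_i$'s, then $C_j \sqcup C_{j'}$ would not be a face of $\OM$ (it would cross the direct-sum decomposition trivially but that is fine) — rather, the point is that $B = C_1 \sqcup \cdots \sqcup C_m$ and $\OM_{|B} = \bigoplus_j \OM_{|C_j}$ by \cref{prop:connectedComponentsDecomposition}, so $\FL(\OM_{|B}) \cong \prod_j \FL(\OM_{|C_j})$; combining with $\FL(\OM_{|F}) \cong \prod_i \FL(\OM_{|F_i})$ and the inclusion $B \le F$, one sees that $B$ cannot meet two distinct $F_i$ nontrivially unless it equals the corresponding union, forcing $k = 1$. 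Working out this last compatibility of the two product decompositions is the delicate bookkeeping, but it is exactly parallel to the argument already carried out in the proof of \cref{thm:orientedBuildingSetsTofacialBuildingSets}, so it should go through cleanly.
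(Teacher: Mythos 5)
The key step of your plan --- showing that $B$ lies inside a single $F_i$ --- does not go through. It rests on the claim that every block of $\building$ contained in $F$ is a connected union of elements of $\fbuilding \cup \set{\underline c}{c \in \circuits[\OM]}$; this is only true for the \emph{minimal} oriented building set over $\fbuilding$ (the building closure), whereas the lemma is stated for an arbitrary oriented building set $(\building,\OM)$. Concretely, let $\OM$ be the oriented matroid of the homogenized vertices $1,2,3$ of a triangle together with its barycenter $4$ (unique circuit $(\{1,2,3\},\{4\})$ up to sign), and let $\building$ consist of the singletons, $\{1,2,4\}$ and $\{1,2,3,4\}$: this is an oriented building set, $\fbuilding = \{\{1\},\{2\},\{3\},\{1,2,3,4\}\}$, and the block $B=\{1,2,4\}$ contains the element $4$, which lies in no face and in no circuit support contained in $B$, so $B$ is not a union of the required form. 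Your fallback via connected components of $\OM_{|B}$ fails for the same reason: \cref{cor:connectedComponentsOfFacesAreFaces} makes the components faces of $\OM_{|B}$, but faces of $\OM_{|B}$ are faces of $\OM$ only when $B$ itself is a face (\cref{prop:restrictionContractionFL}) --- and at that point you indeed write ``$B$ is a face'', which is not given (it is essentially the kind of statement being proved). In the example, $\{4\}$ is a component of $\OM_{|B}$ but not a face of $\OM$, so the deduction $C_j \in \fbuilding_{\le F}$ collapses. Note moreover that under your contradiction hypothesis, minimality of $F$ together with the direct-sum structure forces $B$ to meet \emph{every} $F_i$, so the containment $B \subseteq F_{i(j)}$ is the entire difficulty, not bookkeeping; the comparison of the two product decompositions of $\FL(\OM_{|B})$ and $\FL(\OM_{|F})$ carries no information about $\OM$ preventing $B$ from meeting several $F_i$.

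There is also a secondary unaddressed point: promoting $\FL(\OM)_{\le F} \cong \prod_i \FL(\OM)_{\le F_i}$ to $\OM_{|F} = \OM_{|F_1} \oplus \cdots \oplus \OM_{|F_k}$ requires $F = F_1 \sqcup \cdots \sqcup F_k$, which is exactly the caveat stated after \cref{prop:directSumCartesianProductFL}; this one is repairable (the connected components of $\OM_{|F}$ are faces of $\OM$ and blocks of $\building$, hence lie in $\fbuilding_{\le F}$ and are covered by the $F_i$, as in the proof of \cref{thm:orientedBuildingSetsTofacialBuildingSets}), but it is not done as written. The paper's own proof is much more direct and avoids all of this: starting from $F_0 = B$, repeatedly adjoin the support $\underline{c}$ of any circuit $c$ with $c_- \subseteq F_i$ and $c_+ \not\subseteq F_i$; each such support is a block meeting the current set (since $c_- \neq \varnothing$ by acyclicity), so every intermediate set stays in $\building$, the process terminates at the minimal face $F$, and hence $F \in \building \cap \FL(\OM) = \fbuilding$. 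Any completion of your contradiction argument would in effect have to reprove this closure description of $F$, so I would recommend adopting it directly.
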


\begin{proof}
By definition, the smallest face containing $B$ is the smallest $F \supseteq B$ for which there is no $c \in \circuits$ with $c_- \subseteq F$ and $c_+ \not\subseteq F$, and can be obtained iteratively as follows.
Start with $F_0 \eqdef B$ and construct~$F_{i+1} \eqdef F_i \cup \underline c^i$ for some $c^i \in \circuits$ with $c^i_- \subseteq F_i$ but $c^i_+ \not\subseteq F_i$ as long as such a circuit~$c^i$ exists.
When no such circuit exists, we recover $F$.
Note that $F_0 \in \building$ and $\underline c^i \in \building$ for all~$i$, and therefore we also have $F_i \in \building$ for all~$i$, so that~$F \in \building$ and thus~$F \in \fbuilding$.
\end{proof}

\begin{corollary}
\label{cor:carrier}
If $B_1, B_2 \in \building$ with $B_1\cap B_2\neq\varnothing$ then~${B_1 \vee B_2 \in \fbuilding}$, where~$\vee$ is the join in~$\FL(\OM)$.
\end{corollary}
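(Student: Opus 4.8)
The plan is to obtain this as an immediate consequence of \cref{lem:smallestFaceContainingBlock}. First I would invoke the union axiom for building sets: since $B_1, B_2 \in \building$ and $B_1 \cap B_2 \neq \varnothing$, the second bullet of \cref{def:booleanBuildingSet} gives $B_1 \cup B_2 \in \building$, so $B_1 \cup B_2$ is again a block of the oriented building set $(\building, \OM)$.

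Next I would identify $B_1 \vee B_2$ with the inclusion minimal face of $\OM$ containing $B_1 \cup B_2$. This is exactly what ``join in $\FL(\OM)$'' means here, under the identification of a subset $S \subseteq \ground$ with the join of its atoms in the atomic lattice $\FL(\OM)$ (see \cref{rem:contraction}), which coincides with the honest lattice join whenever $B_1, B_2$ already happen to be faces. Such a minimal face exists because the meet in $\FL(\OM)$ is the set-theoretic intersection of faces (the intersection of two faces being the complement of the composition of the corresponding non-negative covectors, hence again a face), so the meet of all faces containing $S$ is itself a face containing $S$.

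Finally I would apply \cref{lem:smallestFaceContainingBlock} to the block $B_1 \cup B_2 \in \building$: its inclusion minimal face lies in $\fbuilding = \building \cap \FL(\OM)$, which together with the previous step yields $B_1 \vee B_2 \in \fbuilding$. I do not expect a genuine obstacle here, since the statement is a short corollary; the only point that deserves care is the bookkeeping about what ``join in $\FL(\OM)$'' means for blocks that need not be faces, namely the carrier description just used.
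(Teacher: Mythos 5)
Your proposal is correct and takes essentially the same route as the paper: the union axiom gives $B_1 \cup B_2 \in \building$, and \cref{lem:smallestFaceContainingBlock} applied to this block shows that the inclusion minimal face containing it — which is exactly what $B_1 \vee B_2$ means here — lies in $\fbuilding$. Your side remark that singletons are atoms of $\FL(\OM)$ is not quite accurate in general (elements of the ground set need not be vertices, hence need not be faces at all), but this plays no role, since the operative reading of the join as the smallest face containing $B_1 \cup B_2$ is precisely the one the paper uses, and its existence follows as you say from intersections of faces being faces.
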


\begin{proof}
As~$B_1, B_2 \in \building$ and~$B_1 \cap B_2 \ne \varnothing$ we have~$B_1 \cup B_2 \in \building$.
As~$B_1 \vee B_2$ is the smallest face of~$\OM$ containing~$B_1 \cup B_2$, we obtain that~$B_1 \vee B_2 \in \fbuilding$ by \cref{lem:smallestFaceContainingBlock}.
\end{proof}

\begin{corollary}
\label{coro:connectedComponents}
We have~$\connectedComponents(\building)=\connectedComponents(\fbuilding)$.
\end{corollary}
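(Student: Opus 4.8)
The plan is to prove the stronger statement that the two families of inclusion‑maximal blocks coincide as sets of subsets of~$\ground$, i.e.\ $\max(\building) = \max(\fbuilding)$; since $\connectedComponents(\building) = \max(\building)$ and $\connectedComponents(\fbuilding) = \max(\fbuilding)$ by definition, this is exactly the assertion. The entire argument reduces to one observation, for which \cref{lem:smallestFaceContainingBlock} does the real work: every inclusion‑maximal block of~$\building$ is already a face of~$\OM$, hence lies in~$\fbuilding$.

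Concretely, I would first establish $\max(\building) \subseteq \fbuilding$. Let $B \in \max(\building)$ and let $F$ be the inclusion‑minimal face of~$\OM$ containing~$B$. By \cref{lem:smallestFaceContainingBlock} we have $F \in \fbuilding = \building \cap \FL(\OM)$, so in particular $F \in \building$; since $B \subseteq F$ and $B$ is maximal in~$\building$, this forces $B = F$, and therefore $B \in \fbuilding$.

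The rest is elementary order theory using only the trivial inclusion $\fbuilding \subseteq \building$ and the fact that any element of a finite family of sets sits below a maximal one. If $B \in \max(\building)$, then $B \in \fbuilding$ by the previous paragraph, and any $F \in \fbuilding$ with $B \subseteq F$ also lies in~$\building$, whence $F = B$ by maximality of~$B$ in~$\building$; thus $B \in \max(\fbuilding)$. Conversely, if $F \in \max(\fbuilding)$, then $F \in \building$, so there is $B \in \max(\building)$ with $F \subseteq B$; by the first paragraph $B \in \fbuilding$, and maximality of~$F$ in~$\fbuilding$ gives $F = B \in \max(\building)$. Hence $\max(\building) = \max(\fbuilding)$, that is, $\connectedComponents(\building) = \connectedComponents(\fbuilding)$.

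I do not expect a genuine obstacle: the only non‑formal ingredient is that maximal blocks of an oriented building set are faces, which is precisely \cref{lem:smallestFaceContainingBlock}, itself resting on the hypothesis $\underline{c} \in \building$ for all circuits~$c$ via the iterative face‑closure construction. The one subtlety worth spelling out in the write‑up is that $\fbuilding$, although a priori it could be disconnected as an abstract $\FL(\OM)$‑building set, inherits its connected components from~$\building$ exactly because it arises as the intersection $\building \cap \FL(\OM)$ of a genuine oriented building set with the face lattice.
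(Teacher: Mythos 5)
Your proof is correct and takes essentially the same route as the paper: both rest on \cref{lem:smallestFaceContainingBlock} together with maximality arguments and the inclusion $\fbuilding \subseteq \building$. In fact, your first step (every maximal block of~$\building$ equals the minimal face containing it, hence lies in~$\fbuilding$) explicitly justifies the inclusion $\connectedComponents(\building)\subseteq\connectedComponents(\fbuilding)$ that the paper dismisses as clear, so your write-up is, if anything, slightly more complete than the paper's.
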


\begin{proof}
We have clearly~$\connectedComponents(\building) \subseteq \connectedComponents(\fbuilding)$ since~$\building \supseteq \fbuilding$.
Conversely, $B \in \connectedComponents(\fbuilding)$ was not maximal in~$\building$, then there would be some $B\subsetneq B'\in \building$, and by \cref{lem:smallestFaceContainingBlock} there would be some $B''\in \fbuilding$ such that $B\subsetneq B'\subseteq B''\in \fbuilding$, contradicting the maximality of~$B \in \connectedComponents(\fbuilding)$.
\end{proof}

\begin{proof}[Proof of \cref{thm:facialNestedComplexesVsAcyclicNestedComplexes}]
Let $\nested$ be a facial nested set for~$(\fbuilding, \OM)$. 
Observe first that~$\nested$ is a nested set on~$\building$.
Indeed, we have
\begin{itemize}
\item $\connectedComponents(\building) = \connectedComponents(\fbuilding) \subseteq \nested$ by \cref{coro:connectedComponents},
\item any two elements of~$\nested$ are either nested or disjoint by \cref{cor:carrier},
\item for $k \ge 2$ pairwise disjoint blocks $B_1, \dots, B_k \in \nested$, the union $B_1\cup \dots \cup B_k$ is not in~$\building$, as otherwise we would have $B_1 \vee \dots \vee B_k \in \fbuilding$  by \cref{lem:smallestFaceContainingBlock}, contradicting the fact that $\nested$ is a facial nested set.
\end{itemize}
We now prove that~$\nested$ is acyclic.  
We prove that for all $\nested'\subseteq \nested$, we have that $\bigcup \nested'$ is a face.
By contradiction, consider an inclusion minimal $\nested'\subseteq \nested$ for which $\bigcup \nested'$ is not a face. Its elements $B_1,\dots,B_k$ have to be pairwise incomparable by minimality.
By the definition of the facial nested complex, $F=B_1\vee \cdots \vee B_k$ is not in $\fbuilding$.
By definition of facial building sets, we know that
\[
\OM_{|F}=\OM_{|C_1}\oplus\cdots\oplus \OM_{|C_r},
\]
where $C_1,\dots,C_r$ are the maximal elements of $\fbuilding_{\subseteq F}$.
In particular, $C_i\cap C_j=\varnothing$ for all $i\neq j$. Note also that if $B_i\cap C_j\neq \varnothing$, then $B_i\subseteq C_j$, as otherwise $B_i\vee C_j$ would be a larger block in $\fbuilding_{\subseteq F}$ by \cref{cor:carrier}.
Also, each $C_j$ contains at least some $B_i$, as otherwise $\bigcup_{i\neq j} C_i$ would be a smaller face containing $\nested'$, contradicting the minimality of~$F$.
  
Now, denote $\nested_i=\nested'_{\subseteq C_i}$.
By the minimality of $\nested'$, we know that $\bigcup \nested_i$ is a face of $C_i$ for each~$i$.
By the direct sum decomposition of $F$, any union of faces of the $C_i$'s must be a face of~$F$.
Therefore, we get that~$\bigcup_i \left(\bigcup \nested_i\right) = \bigcup \nested'$ must be a face, a contradiction.

Conversely, let $\nested$ be an acyclic nested set in $\acyclicNestedComplex(\building, \OM)$.
By \cref{lem:characterizationAcyclicNestedSets}\,(vi), every $B\in \nested$ is a face, and therefore belongs to $\fbuilding$.
Consider now some pairwise incomparable elements~$B_1, \dots, B_k$ of~$\nested$.
Since $\nested$ is acyclic for~$\OM$, we obtain that~$B_1 \cup \dots \cup B_k$ is a face of~$\OM$ by \cref{lem:characterizationAcyclicNestedSets}\,(vi), and therefore $B_1 \vee \dots \vee B_k = B_1 \cup \dots \cup B_k$.
Moreover, $B_1, \dots, B_k$ are pairwise incomparable in~$\FL(\OM)$, hence pairwise non nested, so that $B_1 \cup \dots \cup B_k$ is not in~$\building$ since~$\nested \in \acyclicNestedComplex(\building, \OM) \subseteq \nestedComplex[][\building]$.
Since~$\fbuilding \subseteq \building$, we conclude that~$B_1 \vee \dots \vee B_k = B_1 \cup \dots \cup B_k$ is not in~$\fbuilding$.
\end{proof}

%%%%%%%%%%%%%%%%%%%%%%%%%%%%%%%%%%%%%%%

\clearpage
\part{Geometric realizations}
\label{part:realizations}

In this part, we study geometric realizations of facial nested complexes (or equivalently acyclic nested complexes).
We first observe that the facial nested complexes of an oriented matroid~$\OM$ are face lattices of oriented matroids, which are realizable whenever~$\OM$ is (\cref{sec:orientedMatroidRealizations}).
We then provide an alternative polytopal realization of facial nested complexes of realizable oriented matroids as sections of nestohedra, with explicit coordinates (\cref{sec:polytopalRealizations}).
Finally, we connect facial nested complexes to the work of G.~Gaiffi~\cite{Gaiffi2003} on stratified compactifications of polyhedral cones  and to the work of S.~Brauner, C.~Eur, E.~Pratt, and R.~Vlad~\cite{BraunerEurPrattVlad} on wondertopes (\cref{sec:compactifications}).

%%%%%%%%%%%%%%%%%%%%%%%%%%%%%%%%%%%%%%%

\section{Oriented matroid realizations}
\label{sec:orientedMatroidRealizations}

In~\cite[Cor.~4.3]{FeichtnerMuller2005}, E.~M.~Feichtner and I.~M\"uller proved that all nested complexes over a finite atomic meet-semilattice~$\lattice$ are homeomorphic to the order complex of~$\lattice$.
Since the face lattices of oriented matroids encode face lattices of regular cell decompositions of spheres (see~\cref{thm:faceLatticeOrientedMatroidSphere}), their order complexes are the face lattices of the barycentric subdivisions of these spheres.
This shows that the facial nested complexes of an oriented matroid~$\OM$ are always spheres.

In this section, we show that these spheres are actually face lattices of oriented matroids, which are realizable whenever $\OM$ is. %, thus giving a realization of $\acyclicNestedComplex(\building, \OM)$ as a simplicial polytope when $\OM$ is realizable.
This will follow immediately from the interpretation of the combinatorial blow-ups of E.~M.~Feichtner and D.~Kozlov~\cite[Def.~3.1 \& Thm.~3.4]{FeichtnerKozlov2004} (\cref{subsec:combinatorialblowup}) on face lattices of oriented matroids as stellar subdivisions (\cref{subsec:stellarSubdivisions}). Note that in the realizable case, they can be obtained by stellar subdivisions of polytopes or, dually, as a sequence of face truncations.

We focus on the connected case, as the non-connected case reduces to it via \cref{coro:connectedFacialBuildingSet}.

\begin{definition}\label{def:sdbuilding}
Let~$(\fbuilding,\OM)$ be a connected facial building set on a ground set~$\ground$. 
We define $\sdOM$ as the oriented matroid on the ground set~$\fbuilding$ 
given by
\[
\sdOM=\sd(F_1,\sd(F_2, \dots\sd(F_m, \OM)\dots))_{|\fbuilding},
\]
where $F_1, \dots, F_m$ are the faces in~$\fbuilding \ssm \{\ground\}$ ordered in any way such that $F_i \subseteq F_j$ implies $i\geq j$, and where we label each of the vertices~$\asd[F]$ introduced in the successive stellar subdivisions as the associate faces~$F$.
\end{definition}

Note that if we perform a stellar subdivision on a vertex, this vertex ceases to be a face (and to belong to any face). Therefore, since all the vertices of $\OM$ are in~$\fbuilding$ by definition of facial building set, none of the original elements of $\OM$ belongs to a face of $\FL(\sdOM)$, which explains why we can restrict its ground set to be~$\fbuilding$.

Now, \cite[Thm.~3.4]{FeichtnerKozlov2004} yields the following result. We provide a direct proof for completeness.
 
\begin{theorem}
\label{thm:orientedMatroidRealizationConnected}
For any connected facial building set~$(\fbuilding, \OM)$, the facial nested complex $\nestedComplex[][\fbuilding, \OM]$ ordered by inclusion coincides with the face semilattice $\FL(\sdOM)_{<\topone}$.
\end{theorem}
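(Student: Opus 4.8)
The plan is to derive \cref{thm:orientedMatroidRealizationConnected} by combining the abstract combinatorial blow-up result of Feichtner--Kozlov (\cref{thm:blowup}) with its oriented matroid incarnation (\cref{prop:stellarSubdivisionOrientedMatroid}), and then checking that the successive stellar subdivisions match the successive combinatorial blow-ups along the way.

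\textbf{Step 1: Set up the induction on the combinatorial side.} First I would recall that, by \cref{rmk:semilattice}, a connected facial building set $(\fbuilding, \OM)$ is the same as an $\semi$-semibuilding set on the meet semilattice $\semi \eqdef \FL(\OM)_{<\topone}$, and that the facial nested complex $\nestedComplex[][\fbuilding, \OM]$ (with its inclusion order) is by definition the face poset of the $\semi$-seminested complex $\seminestedComplex_\semi(\fbuilding \ssm \{\ground\})$. Write $\fbuilding \ssm \{\ground\} = \{F_1, \dots, F_m\}$ ordered so that $F_i \subseteq F_j \implies i \ge j$; note that $\ground$ was removed precisely because it is the top of $\FL(\OM)$, consistent with the semilattice picture where only the proper faces appear. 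Then \cref{thm:blowup} gives an isomorphism of posets
\[
\FL(\sdOM)_{<\topone} \;\cong\; \Bl[\semi][\{F_1,\dots,F_m\}] \;\cong\; \text{face poset of } \seminestedComplex_\semi(\{F_1,\dots,F_m\}),
\]
provided I can match each intermediate oriented matroid stellar subdivision with the corresponding combinatorial blow-up. That matching is exactly the content of \cref{prop:stellarSubdivisionOrientedMatroid}: it says $\FL(\sd(F,\OM'))_{<\topone} \cong \Bl[{\FL(\OM')_{<\topone}}][F]$, with the isomorphism identifying the new apex $\asd[F]$ with the formal element $(F, \botzero)$ and fixing all old faces.

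\textbf{Step 2: Iterate the single-step identification.} The heart of the argument is an induction on $j$ from $m$ down to $1$: let $\OM^{(j)} \eqdef \sd(F_j, \sd(F_{j+1}, \dots, \sd(F_m, \OM) \dots))$, so $\OM^{(m+1)} = \OM$ and $\sdOM = \OM^{(1)}_{|\fbuilding}$. By \cref{prop:stellarSubdivisionOrientedMatroid}, $\FL(\OM^{(j)})_{<\topone} \cong \Bl[{\FL(\OM^{(j+1)})_{<\topone}}][F_j]$, where I must check that $F_j$ is still a \emph{proper face} of $\OM^{(j+1)}$ (it is: the previous subdivisions only acted at faces $F_{j+1}, \dots, F_m$, each of which either contains $F_j$ — and then lies in the open star of $F_j$, which survives with $F_j$ still a face by the star-replacement description in the second \cref{thm:blowup}-adjacent remark — or is disjoint/incomparable to $F_j$, leaving $F_j$ untouched). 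Because the chosen order satisfies the hypothesis of \cref{thm:blowup} ($F_i \supseteq F_j \implies i < j$, i.e.\ larger faces are blown up first), the composite of these isomorphisms gives $\FL(\OM^{(1)})_{<\topone} \cong \Bl[\semi][\{F_1,\dots,F_m\}]$, and then \cref{thm:blowup} identifies the right-hand side with the face poset of $\seminestedComplex_\semi(\fbuilding \ssm \{\ground\})$, which is $\nestedComplex[][\fbuilding,\OM]$ by definition. Finally I must justify restricting the ground set: after all the subdivisions, every original element $s \in \ground$ has been subdivided (since $\{s\} \in \fbuilding$ by the building set axiom and $\{s\} \ne \ground$ in the connected nontrivial case), so $\{s\}$ is no longer a face and $s$ belongs to no face; hence $\FL(\OM^{(1)}) = \FL(\OM^{(1)}_{|\fbuilding})$, and restricting does not change the face lattice.

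\textbf{Main obstacle.} I expect the main subtlety to be the bookkeeping in Step 2: verifying that at each stage $F_j$ really is still a proper face of $\OM^{(j+1)}$ and that the labelling of apices is consistent (the apex $\asd[F_j]$ introduced at stage $j$ should correspond, under the chain of isomorphisms, to the block $F_j \in \fbuilding$ used to index the ground set of $\sdOM$). This is where the ordering condition "$F_i \subseteq F_j \implies i \ge j$" is doing real work — it guarantees that when we come to blow up $F_j$, none of its proper subfaces has been disturbed, so $\FL(\OM^{(j+1)})_{\le F_j} = \FL(\OM)_{\le F_j}$ and the combinatorial blow-up formula applies verbatim. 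Once this is in place, the identification with $\seminestedComplex_\semi$ and the passage back to $\nestedComplex[][\fbuilding,\OM]$ are formal, using only \cref{rmk:semilattice} and the definitions. For the realizability addendum one simply invokes the last sentence of \cref{prop:stellarSubdivisionOrientedMatroid}: each $\sd(F_j, -)$ preserves realizability, so $\sdOM$ is realizable whenever $\OM$ is.
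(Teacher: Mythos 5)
Your route is the same as the paper's: rewrite $\nestedComplex[][\fbuilding,\OM]$ as the seminested complex of $\fbuilding\ssm\{\ground\}$ over the semilattice $\FL(\OM)_{<\topone}$ via \cref{rmk:semilattice}, invoke \cref{thm:blowup}, and identify the iterated combinatorial blow-up with the iterated stellar subdivision of \cref{def:sdbuilding} through \cref{prop:stellarSubdivisionOrientedMatroid}; the paper's proof is precisely this chain, stated without spelling out the induction. Your Step 1 is fine.

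The flaw is in the step you yourself call the heart of the argument: you run the iteration in the wrong order, and the claims you use to justify it are reversed. With the indexing convention ($F_i\subseteq F_j$ implies $i\ge j$, so larger blocks carry smaller indices), your iteration $\OM^{(j)}=\sd(F_j,\sd(F_{j+1},\dots,\sd(F_m,\OM)\dots))$ subdivides $F_m$, a \emph{minimal} block, first and $F_1$, a maximal block, last. Hence when you reach $F_j$, the blocks already subdivided are exactly those \emph{contained in} $F_j$ or incomparable to it --- not, as you write, those containing $F_j$. A stellar subdivision at a face $G\subsetneq F_j$ removes every face containing $G$, in particular $F_j$ itself, so $F_j$ is no longer a proper face of $\OM^{(j+1)}$ and $\sd(F_j,\OM^{(j+1)})$ is not even defined; consequently your assertions that ``larger faces are blown up first'', that none of $F_j$'s proper subfaces has been disturbed, and that $\FL(\OM^{(j+1)})_{\le F_j}=\FL(\OM)_{\le F_j}$ all fail for the iteration as you set it up (already for a boolean building set containing a vertex and an edge through it). The subdivisions must be performed starting from the maximal blocks, exactly as in the composite of \cref{thm:blowup}, whose innermost blow-up is at $B_1$, a maximal element; the composite in \cref{def:sdbuilding} has to be read with that operational order, since otherwise it is undefined as soon as two blocks are nested. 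Once you run the induction maximal-blocks-first, your statements become correct (every previously subdivided block contains $F_j$ or is incomparable to it, so $F_j$ survives and its lower interval is untouched), the chain of applications of \cref{prop:stellarSubdivisionOrientedMatroid} matches the composite defining the combinatorial blow-up in \cref{thm:blowup}, and the argument closes as you intend. A smaller imprecision: to justify restricting the ground set to $\fbuilding$ you argue that $\{s\}\in\fbuilding$ for every $s\in\ground$, but singletons need not be faces of $\OM$ (the atoms of $\FL(\OM)$ are its vertices, which may contain several parallel elements); the correct statement, which is the one the paper makes after \cref{def:sdbuilding}, is that every atom of $\FL(\OM)$ belongs to $\fbuilding$ and is eventually subdivided, after which no original element of $\ground$ lies in any face.
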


\begin{proof}
By definition, the facial nested complex $\nestedComplex[][\fbuilding, \OM]$ is the $\FL(\OM)$-nested complex~$\nestedComplex[\FL(\OM)][\fbuilding]$. This is the same as the $\FL(\OM)_{<\topone}$-seminested complex~$\nestedComplex[\FL(\OM)_{<\topone}][\fbuilding_{<\topone}]$ by definition of seminested complex, see \cref{rmk:semilattice}.

We can now apply \cref{thm:blowup}, which says that this is isomorphic to $\Bl[\FL(\OM)_{<\topone}][\fbuilding_{<\topone}]$. 
By \cref{prop:stellarSubdivisionOrientedMatroid,def:sdbuilding}, this is exactly $\FL(\sdOM)_{<\topone}$. The labelling in the definition of $\sdOM$ has been chosen so that this isomorphism is the identity.
\end{proof}

This can be extended to disconnected building sets using \cref{coro:connectedFacialBuildingSet}.

\begin{theorem}
\label{thm:orientedMatroidRealizationArbitrary}
For any facial building set~$(\fbuilding, \OM)$ with~$\connectedComponents(\fbuilding) \eqdef \{F_1, \dots, F_k\}$, the facial nested complex~$\nestedComplex[][\fbuilding, \OM]$ is isomorphic to~$\FL(\sdOM[\OM_{|F_1} \boxplus \dots \boxplus \OM_{|F_k}][\fbuilding_{\le F_1} \boxplus \dots \boxplus \fbuilding_{\le F_k}])_{<\topone}$.
\end{theorem}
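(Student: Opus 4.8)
The plan is to reduce to the connected case already settled in \cref{thm:orientedMatroidRealizationConnected}, using the free sum decomposition of \cref{coro:connectedFacialBuildingSet}.

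First I would record the two preliminary facts needed to make sense of the right-hand side. For each $i \in [k]$, the restriction $(\fbuilding_{\le F_i}, \OM_{|F_i})$ is a facial building set by \cref{prop:restrictionContractionFacialBuildingSet}, and it is connected, since $F_i$ is its unique maximal element, i.e.\ $\connectedComponents(\fbuilding_{\le F_i}) = \{F_i\}$. Iterating \cref{prop:freeSumFacialBuildingSets}, the pair
\[
\bigl( \fbuilding_{\le F_1} \boxplus \dots \boxplus \fbuilding_{\le F_k}, \; \OM_{|F_1} \boxplus \dots \boxplus \OM_{|F_k} \bigr)
\]
is therefore again a connected facial building set, so that $\sdOM[\OM_{|F_1} \boxplus \dots \boxplus \OM_{|F_k}][\fbuilding_{\le F_1} \boxplus \dots \boxplus \fbuilding_{\le F_k}]$ is well-defined in the sense of \cref{def:sdbuilding}. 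Here one should check that the operation $\boxtimes$ on lattices (\cref{def:freeProductLattices}) and the operation $\boxplus$ on facial building sets are associative up to isomorphism, so that the $k$-fold iterates above do not depend on a choice of bracketing; this is immediate from the associativity of the Cartesian product together with the explicit description of joins in $\boxtimes$.

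Next, \cref{coro:connectedFacialBuildingSet} provides an isomorphism of facial nested complexes
\[
\nestedComplex[][\fbuilding, \OM] \;\cong\; \nestedComplex[][\fbuilding_{\le F_1} \boxplus \dots \boxplus \fbuilding_{\le F_k}, \; \OM_{|F_1} \boxplus \dots \boxplus \OM_{|F_k}],
\]
sending a block $B \in \fbuilding$ with $B \subseteq F_i$ to the corresponding block of $\fbuilding_{\le F_i}$ inside the free product lattice. Applying \cref{thm:orientedMatroidRealizationConnected} to the connected facial building set on the right-hand side identifies this facial nested complex, ordered by inclusion, with $\FL(\sdOM[\OM_{|F_1} \boxplus \dots \boxplus \OM_{|F_k}][\fbuilding_{\le F_1} \boxplus \dots \boxplus \fbuilding_{\le F_k}])_{<\topone}$, the identification being the identity on blocks by the labelling chosen in \cref{def:sdbuilding}. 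Composing the two isomorphisms yields the statement.

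The argument is essentially bookkeeping, since the substantive work is already contained in \cref{thm:orientedMatroidRealizationConnected}, \cref{coro:connectedFacialBuildingSet}, and the operation lemmas of \cref{subsec:operationsFacialNestedComplex}. The only point requiring genuine care — and the one I expect to be the main (mild) obstacle — is keeping the three vertex sets compatible under the composed isomorphism, namely the blocks of $\fbuilding$, the blocks of $\fbuilding_{\le F_1} \boxplus \dots \boxplus \fbuilding_{\le F_k}$, and the ground set of $\sdOM[\OM_{|F_1} \boxplus \dots \boxplus \OM_{|F_k}][\fbuilding_{\le F_1} \boxplus \dots \boxplus \fbuilding_{\le F_k}]$, which amounts to verifying the associativity of $\boxplus$ that makes the $k$-fold free sum unambiguous.
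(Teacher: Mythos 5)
Your proposal is correct and follows exactly the route the paper intends: the paper states \cref{thm:orientedMatroidRealizationArbitrary} with no separate argument beyond the remark that it follows from \cref{coro:connectedFacialBuildingSet} combined with \cref{thm:orientedMatroidRealizationConnected}, which is precisely your reduction via \cref{prop:restrictionContractionFacialBuildingSet} and \cref{prop:freeSumFacialBuildingSets}. The bookkeeping you flag (associativity of $\boxplus$ and compatibility of the labellings) is indeed the only point to check, and it is as routine as you say.
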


Finally, observe that if $\OM$ is realizable, then the face lattice of $\OM$ is the face lattice of a convex polytope. By \cref{prop:stellarSubdivisionOrientedMatroid}, so is $\sdOM$ for any facial building set~$\fbuilding$.

\begin{theorem}
\label{thm:stellarPolytopalRealization}
For any polytope~$\polytope$, and any facial building set~$\fbuilding$ for~$\OM(\b{A}_{\polytope})$, the facial nested complex $\nestedComplex[][\fbuilding, \OM(\b{A}_{\polytope})]$ is isomorphic to the boundary complex of a convex polytope.
When~$\fbuilding$ is connected, this polytope is obtained by a sequence of stellar subdivisions of~$\polytope$, or dually, by a sequence of face truncations of the polar of~$\polytope$.
\end{theorem}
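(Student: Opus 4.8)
The plan is to upgrade the combinatorial realizations \cref{thm:orientedMatroidRealizationConnected} and \cref{thm:orientedMatroidRealizationArbitrary} to polytopal statements by carrying out, at the level of polytopes, every stellar subdivision occurring in the construction of $\sdOM$. The combinatorial content is already in place: as a face poset, $\nestedComplex[][\fbuilding, \OM(\b{A}_{\polytope})]$ equals $\FL(\sdOM[\OM(\b{A}_{\polytope})][\fbuilding])_{<\topone}$, and $\sdOM[\OM(\b{A}_{\polytope})][\fbuilding]$ is obtained from $\OM(\b{A}_{\polytope})$ by a sequence of stellar subdivisions followed by a ground-set restriction (\cref{def:sdbuilding}). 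So it only remains to witness this oriented matroid by a genuine polytope in the connected case, and to reduce the disconnected case to the connected one.

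Assume first that $\fbuilding$ is connected. The oriented matroid $\OM(\b{A}_{\polytope})$ is realizable by construction, and up to the cone/polytope convention of \cref{exm:coneAcyclicRealizableOM} its Las Vergnas face lattice is the face lattice of $\polytope$. By \cref{prop:stellarSubdivisionOrientedMatroid}, each stellar subdivision appearing in \cref{def:sdbuilding} can be chosen realizable, and by \cref{exm:stellarSubdivisionTruncation} it is realized by the explicit stellar subdivision of the corresponding polytope (adding the point $\b{v}_F$). Iterating these in the order prescribed by \cref{def:sdbuilding} produces a polytope $\polytope'$ with $\OM(\b{A}_{\polytope'}) \cong \sd(F_1, \dots, \sd(F_m, \OM(\b{A}_{\polytope})) \dots )$. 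Restriction likewise preserves realizability, since $\OM(\b{A})_{|R} = \OM(\b{A}_{|R})$ (\cref{exm:resctrictionContractionRealizableOM}); moreover, as already observed right after \cref{def:sdbuilding}, once one has subdivided at every face of $\fbuilding \ssm \{\ground\}$ — in particular at every singleton — no original element of $\OM(\b{A}_{\polytope})$ lies in any face, so discarding these elements leaves the face lattice untouched. Hence $\FL(\sdOM[\OM(\b{A}_{\polytope})][\fbuilding])$ is, after the relabeling $\asd[F] \mapsto F$, the face lattice of $\polytope'$. Feeding this into \cref{thm:orientedMatroidRealizationConnected} gives $\FL(\polytope')_{<\topone} \cong \nestedComplex[][\fbuilding, \OM(\b{A}_{\polytope})]$; since the right-hand side is a simplicial complex (\cref{def:latticeNestedSet,def:facialNestedComplex}), every proper face of $\polytope'$ is a simplex, so $\polytope'$ is simplicial and $\nestedComplex[][\fbuilding, \OM(\b{A}_{\polytope})] = \bdy{\polytope'}$. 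By construction $\polytope'$ comes from $\polytope$ by a sequence of stellar subdivisions, and, by the polarity between stellar subdivision and truncation recalled in \cref{exm:stellarSubdivisionTruncation}, $(\polytope')^\triangle$ comes from $\polytope^\triangle$ by a sequence of face truncations.

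For an arbitrary facial building set with $\connectedComponents(\fbuilding) = \{F_1, \dots, F_k\}$, I would appeal to \cref{coro:connectedFacialBuildingSet}: the complex $\nestedComplex[][\fbuilding, \OM(\b{A}_{\polytope})]$ is isomorphic to the facial nested complex of the connected facial building set $(\fbuilding_{\le F_1} \boxplus \dots \boxplus \fbuilding_{\le F_k},\ \OM(\b{A}_{\polytope})_{|F_1} \boxplus \dots \boxplus \OM(\b{A}_{\polytope})_{|F_k})$. Each restriction $\OM(\b{A}_{\polytope})_{|F_i}$ is realizable — say by a polytope $\polytope_i$ — and a free sum of realizable oriented matroids is realizable by \cref{exm:freeSumRealizableOM}, realized by the polytope direct sum $\polytope_1 \oplus \dots \oplus \polytope_k$. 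Applying the connected case just established to this realizable connected facial building set produces a simplicial polytope whose boundary complex is $\nestedComplex[][\fbuilding, \OM(\b{A}_{\polytope})]$.

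There is no deep obstacle here, since \cref{thm:orientedMatroidRealizationConnected} already carries the combinatorial weight; the mild care needed is with conventions — the $-1$-dimensional empty face, the cone/polytope dimension shift of \cref{exm:coneAcyclicRealizableOM}, and the identification of an original ground-set element $i$ with the singleton face $\{i\} \in \fbuilding$ — and with making the harmlessness of the ground-set restriction in \cref{def:sdbuilding} precise at the geometric level, namely that subdividing at every singleton pushes every original vertex of $\polytope$ into the interior of the subdivided polytope. Everything else is a matter of iterating \cref{prop:stellarSubdivisionOrientedMatroid} and \cref{exm:stellarSubdivisionTruncation} in the order of \cref{def:sdbuilding} and then invoking \cref{coro:connectedFacialBuildingSet}.
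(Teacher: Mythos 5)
Your proposal is correct and follows essentially the same route as the paper: the paper also derives \cref{thm:stellarPolytopalRealization} by combining \cref{thm:orientedMatroidRealizationConnected} (combinatorial blow-ups realized as stellar subdivisions via \cref{prop:stellarSubdivisionOrientedMatroid,exm:stellarSubdivisionTruncation}, with realizability preserved at each step) with the reduction of the disconnected case through \cref{coro:connectedFacialBuildingSet} and the realizability of free sums. The extra details you spell out (harmlessness of the ground-set restriction after subdividing at all singletons, simpliciality of the resulting polytope) are consistent with the paper's remarks following \cref{def:sdbuilding}.
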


\begin{example}
\label{exm:barycentricSubdivision2}
Following on \cref{exm:barycentricSubdivision1}, for the complete facial building set~$\hat\fbuilding$ for~$\OM(\b{A}_{\polytope})$, the facial nested complex~$\nestedComplex[][\hat\fbuilding, \OM(\b{A}_{\polytope})]$ is isomorphic to the complete barycentric subdivision of~$\polytope$, or dually, to the complete face truncation of the polar of~$\polytope$.
When~$\polytope$ is simplicial, \ie when the polar~$\polytope^\triangle$ is simple, all faces of the complete face truncation of~$\polytope^\triangle$ are products of permutahedra.
For instance, the complete face truncation of a simplex is a permutahedron, and the complete face truncation of a permutahedron is a permuto-permutahedron of~\cite{Gaiffi-permutonestohedra,CastilloLiu-permutoAssociahedron}.
See \cref{fig:permutopermutahedra}.
We note that for the same price, we can iterate the construction of~\cite{Gaiffi-permutonestohedra,CastilloLiu-permutoAssociahedron} to obtain iterated nested braid fans and iterated permutahedra.
We also obtain the simple permutoassociahedron of~\cite{BaralicIvanovicPetric,Ivanovic}.
In contrast, we note that all the polytopes resulting from our construction are simple (or dually simplicial), so our setting does not cover the permutoassociahedra~\cite{Kapranov,ReinerZiegler,CastilloLiu-permutoAssociahedron} nor their generalizations to permutonestohedra~\cite{Gaiffi-permutonestohedra} which are non-simple polytopes.
\begin{figure}
	\capstart
	\centerline{\includegraphics[scale=.18]{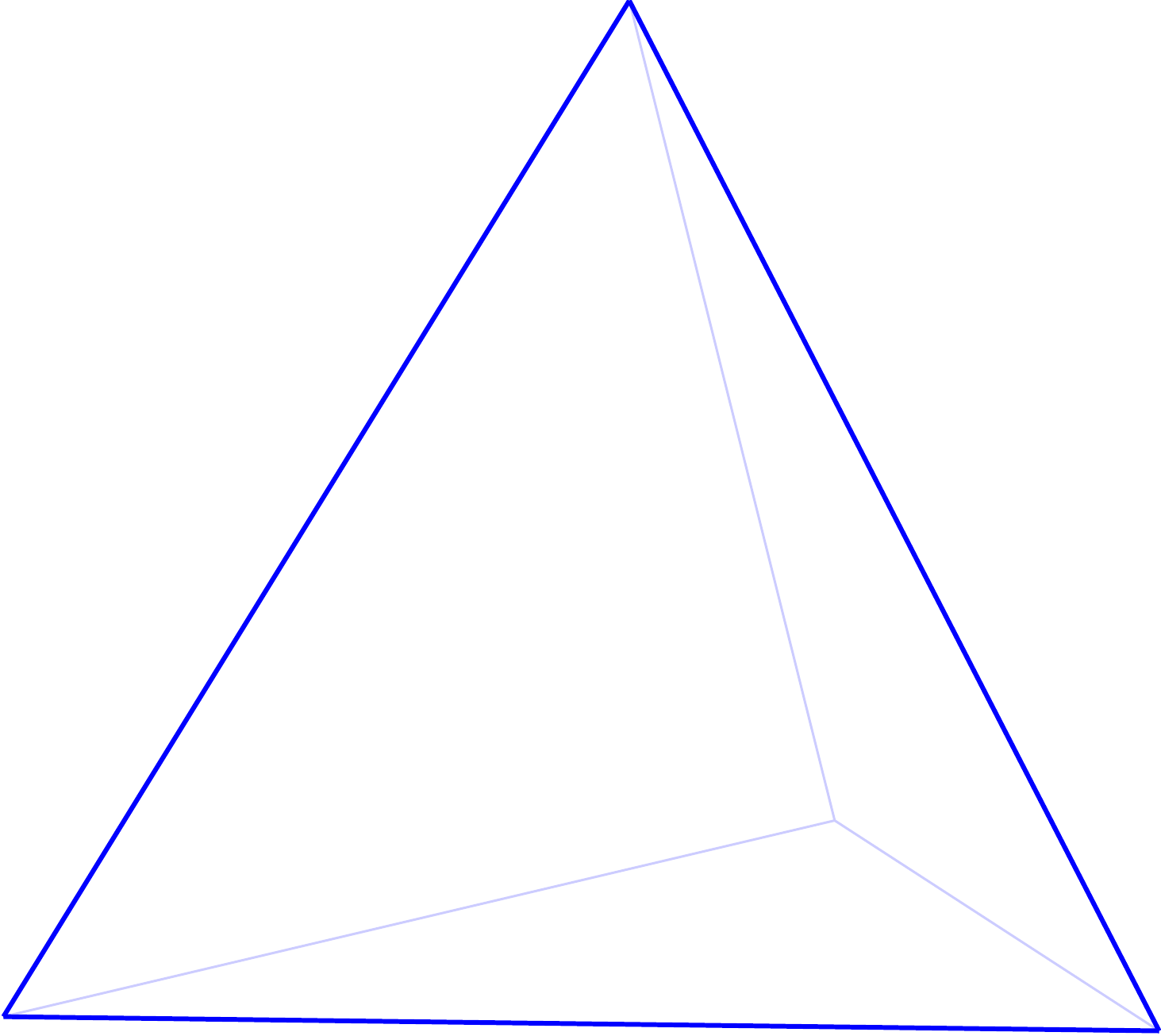}\qquad\includegraphics[scale=.15]{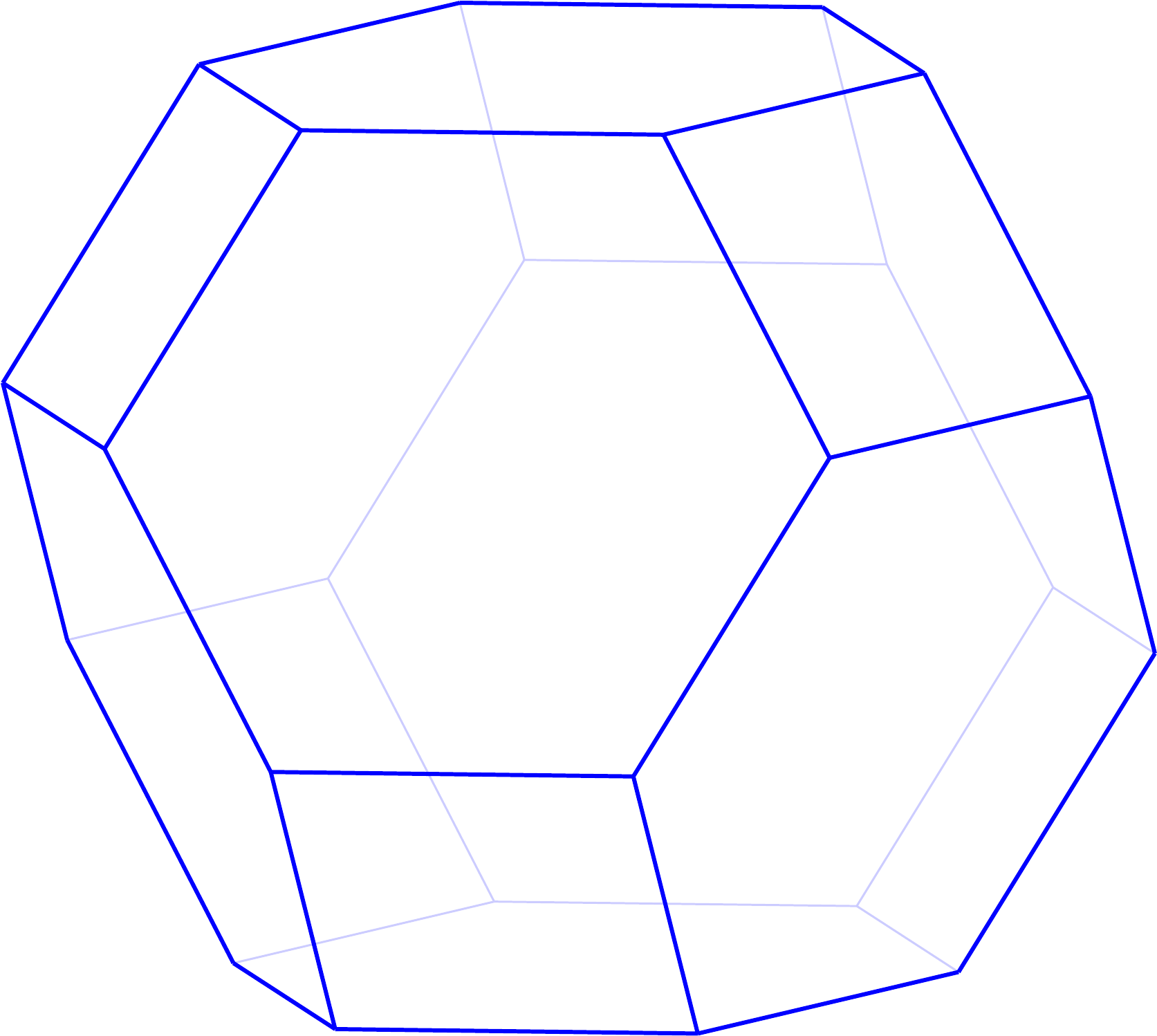}\qquad\includegraphics[scale=.15]{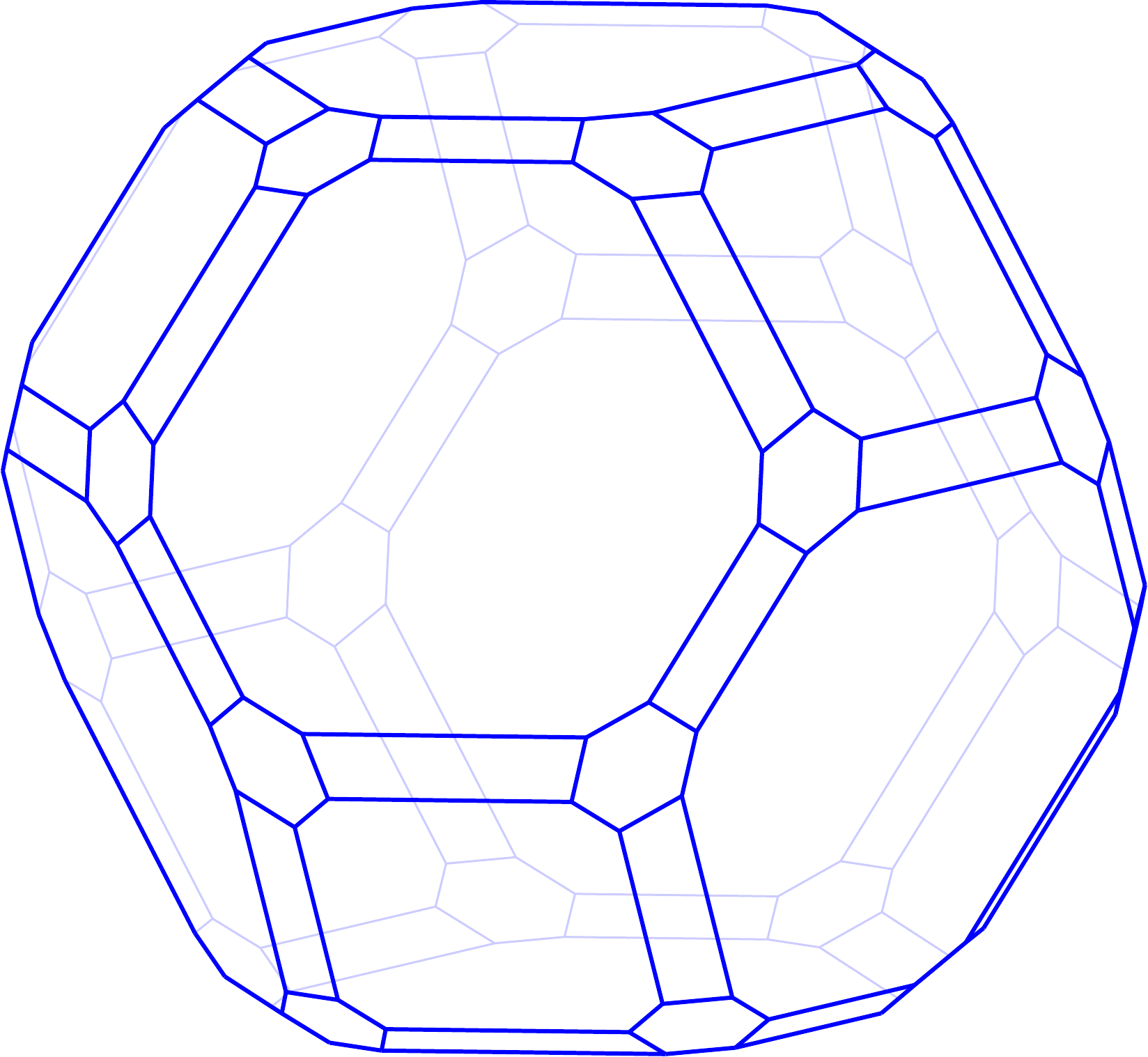}}
	\caption{Iterated permutahedra. Figure adapted from~\cite[Fig.~1]{CastilloLiu-permutoAssociahedron}.}
	\label{fig:permutopermutahedra}
\end{figure}
\end{example}

\begin{example}
The construction of~\cref{thm:stellarPolytopalRealization} was already applied in~\cite{Almeter} to realize the facial nested complexes of a simplicial polytope~$\polytope$ by face truncations of its polar~$\polytope^\triangle$.
For instance, the hyperoctahedral nested complexes discussed in~\cref{exm:facialBuildingSetCrossPolytope1,exm:facialNestedComplexCrossPolytope1} are realized by face truncations of a cube as in \cref{fig:facialNestedComplexesOctahedron}\,(right).
This recovers previous realizations of biassociahedra~\cite{BarnardReading} (see \cref{exm:facialNestedComplexCrossPolytope3}) and design graph associahedra~\cite{DevadossHeathVipismakul} (see \cref{exm:designNestedComplex}).
\end{example}

%%%%%%%%%%%%%%%%%%%%%%%%%%%%%%%%%%%%%%%

\section{Polytopal realizations}
\label{sec:polytopalRealizations}

The polytopal realization of \cref{thm:stellarPolytopalRealization} is not completely satisfactory, as it does not provide explicit coordinates.
Indeed, the stellar subdivisions (or dually the face truncations) rely on some parameters of the construction being sufficiently small, with no explicit bounds.
In this section, we exploit our embedding of the facial nested complex~$\nestedComplex[][\fbuilding, \OM(\b{A})]$ as an acyclic nested complex~$\acyclicNestedComplex(\building, \OM(\b{A}))$ to obtain explicit and combinatorially meaningful polytopal realizations.
Namely, we show that the acyclic faces of the nested complex~$\nestedComplex[][\building]$ can be selected by intersecting a suitable nestohedron~$\Nest(\building, \lambda)$ by the evaluation space of~$\b{A}$.
We first work in the space~$\R^\ground$ of the nestohedron of~$\building$ (\cref{subsec:acyclonestohedra1}) and then in the space~$\R\b{A}$ of a vector configuration~$\b{A}$ (\cref{subsec:acyclonestohedra2}).
We also discuss how this construction can be seen as an explicit version of \cref{thm:stellarPolytopalRealization} by commuting sections and truncations (\cref{subsec:commutingSectionsTruncations}).
Note that our realizations depend on the choice of~$\b{A}$: the same realizable oriented building set could be realized by different choices of vector configurations~$\b{A}$ which would produce distinct polytopal realizations.

%%%%%%%%%%%%%%%%

\subsection{Acyclonestohedra in~$\R^\ground$}
\label{subsec:acyclonestohedra1}

In this section, we provide an explicit polytopal realization of the acyclic nested complex~$\acyclicNestedComplex(\building, \OM)$ as a section of a nestohedron for~$\building$.
We first fix some notations.

\begin{notation}
\label{not:vectorConfiguration}
Since each circuit~$c \in \circuits$ is the signature of a unique (up to rescaling) linear dependence~$\b{\delta} \in \dependences$, we can define
\begin{itemize}
\item $\b{H}_c^=$ as the hyperplane of~$\R^V$ satisfying the equation~$\dotprod{\b{\delta}}{\b{x}} = 0$,
\item $\b{H}_c^>$ as the open halfspace of~$\R^V$ satisfying the inequality~$\dotprod{\b{\delta}}{\b{x}} > 0$, and
\item~$r_c \eqdef \max \b{\delta}^{\ne 0} / \min \b{\delta}^{\ne 0}$ where~$\b{\delta}^{\ne 0} \eqdef \set{|\delta_s|}{s \in \ground} \ssm \{0\}$.
\end{itemize}
Since the dependence space~$\dependences$ is the orthogonal complement of the evaluation space~$\evaluations$, and is generated by the circuit dependences, we have
\[
\evaluations = \bigcap_{c \in \circuits} \b{H}_c^=.
\]
\end{notation}

\begin{definition}
\label{def:acyclonestohedron}
Consider~$\b{\rho} \eqdef (\rho_B)_{B \in \building} \in \R^\building_+$ given by~$\rho_B \eqdef 0$ if~$|B| = 1$ and~$\rho_B \eqdef R^{|B|}$ if~${|B| \ge 2}$, where~$R \eqdef |\building| \cdot \max_{c \in \circuits} r_c$.
The \defn{acyclonestohedron}~$\Acycl(\building, \b{A})$ is the intersection of the nestohedron~$\Nest(\building, \b{\rho})$ of \cref{def:nestohedron} with the evaluation space~$\evaluations$~of~$\b{A}$.
\end{definition}

\begin{theorem}
\label{thm:acyclonestohedron}
For any realizable oriented building set~$(\building, \OM(\b{A}))$, the acyclic nested complex $\acyclicNestedComplex(\building, \OM(\b{A}))$ is isomorphic to the boundary complex of the polar of the acyclonestohedron~$\Acycl(\building, \b{A})$.
\end{theorem}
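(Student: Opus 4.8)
The plan is to read off the face lattice of $\Acycl(\building,\b{A})=\Nest(\building,\b{\rho})\cap\evaluations[\b{A}]$ from the facet description of the nestohedron (\cref{prop:facetDescriptionNestohedron}) together with the characterization of acyclic nested sets (\cref{lem:characterizationAcyclicNestedSets}). Since $\evaluations[\b{A}]=\bigcap_{c\in\circuits[\OM]}\b{H}_c^{=}$ by \cref{not:vectorConfiguration}, the acyclonestohedron is the bounded polytope $\set{x\in\evaluations[\b{A}]}{g_B(x)\ge 0\text{ for }B\in\building,\ g_B(x)=0\text{ for }B\in\connectedComponents(\building)}$, all of whose nonempty faces have the form $F_\nested\cap\evaluations[\b{A}]$, where $F_\nested$ denotes the face of $\Nest(\building,\b{\rho})$ indexed by a nested set $\nested$ of $\building$ (the one where $g_B$ is tight exactly for $B\in\nested$, so $\dim F_\nested=|\ground|-|\nested|$). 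The two statements to prove are: (a) $F_\nested\cap\evaluations[\b{A}]$ is a face of $\Acycl(\building,\b{A})$ of dimension $\dim F_\nested-\corank[\OM]$ exactly when $\nested$ is acyclic for $\OM$; and (b) the faces indexed by non-acyclic nested sets are redundant (empty, or contained in faces already obtained). Given these, the map sending a face of $\Acycl(\building,\b{A})$ to its carrier $\nested$ (the set of facets containing it) is an inclusion-reversing bijection onto $\acyclicNestedComplex(\building,\OM(\b{A}))$, and since the latter is pure of dimension $\rank[\OM]-|\connectedComponents(\building)|-1=\dim\Acycl(\building,\b{A})-1$ by \cref{coro:pureAcyclicNestedComplex}, this identifies it with the boundary complex of the polar of $\Acycl(\building,\b{A})$. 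One may moreover reduce to connected $\building$ at the outset via \cref{coro:connectedFacialBuildingSet} and the compatibility of the construction with free sums.

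The linear-algebraic half of (a) is a transversality statement that does not involve $\b{\rho}$. By \cref{def:nestedFan} and \cref{thm:nestohedron}, the normal cone of $F_\nested$ is $\con[\nested]+\R^{\connectedComponents(\building)}$, with linear span $\vect\set{\b{e}_B}{B\in\nested}$; hence $\mathrm{aff}(F_\nested)$ meets $\evaluations[\b{A}]$ nonemptily and in the expected dimension $\dim F_\nested-\corank[\OM]$ if and only if $\vect\set{\b{e}_B}{B\in\nested}\cap\dependences[\b{A}]=\{\zero\}$, where $\dependences[\b{A}]=\evaluations[\b{A}]^{\perp}$ is spanned by the circuit dependences. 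Identifying a point of $\evaluations[\b{A}]$ with $(f(\b{a}_s))_{s\in\ground}$ and reading $\sum_{b\in B}x_b=f(\sum_{b\in B}\b{a}_b)$, this becomes a rank condition on the vectors $\sum_{b\in B}\b{a}_b$ for $B\in\nested$. When $\nested$ is acyclic I would verify it from \cref{lem:characterizationAcyclicNestedSets}\,(vi) (every $\bigcup\nested'$ is a face of $\OM$), controlling relative ranks in the maximal case via \cref{prop:rank1} and \cref{coro:rank1}. When $\nested$ is not acyclic, a circuit $c$ and a subfamily $\nested'\subseteq\nested$ with $c_-\subseteq\bigcup\nested'$ but $c_+\not\subseteq\bigcup\nested'$ (\cref{lem:characterizationAcyclicNestedSets}\,(iii)) produce a nonzero vector of $\vect\set{\b{e}_B}{B\in\nested}\cap\dependences[\b{A}]$, so transversality fails and $F_\nested\cap\evaluations[\b{A}]$ drops dimension. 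In particular, for a maximal acyclic nested set $\nested$ the affine intersection $\mathrm{aff}(F_\nested)\cap\evaluations[\b{A}]$ is a single point $v_\nested$.

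The main obstacle is the metric half: showing that these affine intersection points actually land inside the faces of $\Nest(\building,\b{\rho})$, i.e. that $v_\nested\in F_\nested$ for every maximal acyclic $\nested$ (so that $v_\nested$ is a genuine vertex of $\Acycl(\building,\b{A})$), and, symmetrically, that no cyclic nested set contributes a face outside those already produced. This is precisely what the choice $\rho_B=R^{|B|}$ for $|B|\ge 2$, with $R=|\building|\cdot\max_{c\in\circuits[\OM]}r_c$, achieves. By \cref{coro:rank1} each face $F_\nested$ (for $\nested$ maximal acyclic) is a product of permutahedra, whose vertices are given explicitly by \cref{prop:vertexDescriptionNestohedron}; writing $v_\nested$ in barycentric coordinates relative to $F_\nested$ and using that the circuit dependences cutting out $\evaluations[\b{A}]$ have coordinate ratios at most $\max_{c}r_c$ while passing to a strictly larger block multiplies the corresponding $\rho$ by at least $R$, one shows that the contributions of the inclusion-largest blocks dominate and force all these coordinates to be positive. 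Establishing this estimate is the heart of the argument; it is the dual incarnation of the fact that a section of a polytope commutes with sufficiently shallow face truncations, made precise in \cref{subsec:commutingSectionsTruncations}.

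Finally I would assemble (a) and (b): the nonempty faces of $\Acycl(\building,\b{A})$ are exactly the sets $F_\nested\cap\evaluations[\b{A}]$ with $\nested\in\acyclicNestedComplex(\building,\OM(\b{A}))$, the carrier map is a bijection onto the acyclic nested sets, and it reverses inclusion because containment of faces of a nestohedron corresponds to reverse inclusion of the indexing nested sets. Matching dimensions and purity via \cref{coro:pureAcyclicNestedComplex} then upgrades this bijection to an isomorphism between $\acyclicNestedComplex(\building,\OM(\b{A}))$ and the boundary complex of the polar of $\Acycl(\building,\b{A})$. Explicit coordinates for the vertices $v_\nested$, obtained by projecting the nestohedron vertices of \cref{prop:vertexDescriptionNestohedron} onto $\evaluations[\b{A}]$, are then recorded in \cref{subsec:acyclonestohedra2}.
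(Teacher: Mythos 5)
Your overall strategy is the same as the paper's: realize $\Acycl(\building,\b{A})$ as the section $\Nest(\building,\b{\rho})\cap\evaluations$, show that the faces of the nestohedron meeting $\evaluations$ are exactly those indexed by acyclic nested sets, and then read off the face lattice (your final assembly via carriers and \cref{coro:pureAcyclicNestedComplex} is essentially how the paper concludes). But both halves of the dichotomy, as you propose to prove them, have gaps. The non-acyclic direction is wrong as stated: non-acyclicity of $\nested$ does \emph{not} produce a nonzero vector of $\vect\set{\b{e}_B}{B \in \nested} \cap \dependences$. For a concrete counterexample, take $\b{a}_1=(1,0)$, $\b{a}_2=(1,1)$, $\b{a}_3=(1,\nicefrac{2}{3})$, $\b{a}_4=(0,1)$, so that $\b{a}_1+2\b{a}_2=3\b{a}_3$ and $\b{a}_2=\b{a}_1+\b{a}_4$, with $\building=2^\ground\ssm\{\varnothing\}$ and $\nested=\{\{3\},\ground\}$. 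This $\nested$ is not acyclic (use $c=(\{1,2\},\{3\})$ and $\nested'=\{\{3\}\}$ in \cref{lem:characterizationAcyclicNestedSets}\,(iii)), yet any dependence lying in $\vect\{\b{e}_3,\b{e}_\ground\}$ would have to be constant on $\{1,2,4\}$, and a direct check shows only $\zero$ qualifies; so transversality holds for this cyclic nested set. Moreover, even where transversality does fail, a dimension drop of $F_\nested\cap\evaluations$ would not give you the needed conclusion that cyclic nested sets contribute nothing to the section. What is actually needed (and what \cref{lem:acyclonestohedron1} proves) is the stronger \emph{metric} statement that the whole face $F_\nested$ lies in the open halfspace $\b{H}_c^>$ of some circuit $c$, hence misses $\evaluations$ entirely; this is exactly where the exponential choice $\rho_B=R^{|B|}$ enters, via the vertex formula of \cref{prop:vertexDescriptionNestohedron} applied to every maximal nested set containing $\nested$.

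On the acyclic side, your proposal inverts the paper's division of labor and leaves the key step unproven: you reduce to showing $v_\nested\in F_\nested$ for maximal acyclic $\nested$ and assert that a barycentric-coordinate domination estimate (driven by $r_c$ versus $R$) forces this, but the estimate is never carried out and it is not clear the stated ratio bound controls those coordinates. The paper needs no estimate here at all: in \cref{lem:acyclonestohedron2}, spanning trees of the rank-one contractions $\restrOM$ (\cref{coro:rank1}) yield circuits forming a basis of $\dependences$, so $\evaluations=\bigcap_{c\in\c{X}}\b{H}_c^=$ for this independent family; each region of that arrangement corresponds to an orientation of the forest, and a linear extension produces a maximal nested set $\nested_\pi\supseteq\nested$ whose vertex $\vertexNest[\nested_\pi][\b{\rho}]$ lies in that region, so convexity of $F_\nested$ forces $F_\nested\cap\evaluations\ne\varnothing$. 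In short: the metric estimate belongs to the exclusion of cyclic nested sets, the inclusion of acyclic ones is purely combinatorial, and as written your proposal establishes neither half.
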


\begin{remark}
Following \cref{exm:simplexAcyclicNestedComplex}, note that if~$\b{A}$ is linearly independent, then its evaluation space~$\evaluations$ is~$\R^S$, and the acyclonestohedra~$\Acycl(\building, \b{A})$ coincides with the classical nestohedron~$\Nest(\building, \b{\rho})$.
For instance, the acyclonestohedron of the graphical oriented building set of an oriented forest~$\digraph$ is the graph associahedron~of~$L(\digraph)$ (for instance, a permutahedron if~$\digraph$ is a star, and an associahedron if~$\digraph$~is~a~path).
\end{remark}

\begin{example}
Following on \cref{exm:orderComplexBoolean,exm:orderComplex,exm:barycentricSubdivision1,exm:barycentricSubdivision2}, we note that \cref{thm:acyclonestohedron} gives explicit coordinates to perform the barycentric subdivision of a polytope~$\polytope$, or dually, the omnitruncation of~$\polytope^\triangle$.
\end{example}

\begin{example}
\label{exm:acyclonestohedron}
Following \cref{exm:graphicalOrientedBuildingSet,exm:graphicalAcyclicNestedComplex}, \cref{fig:acyclonestohedra} illustrates the acyclonestohedra of the graphical acyclic nested complexes of \cref{fig:acyclicNestedComplexes}.
\begin{figure}
	\capstart
	\centerline{\includegraphics[scale=.45]{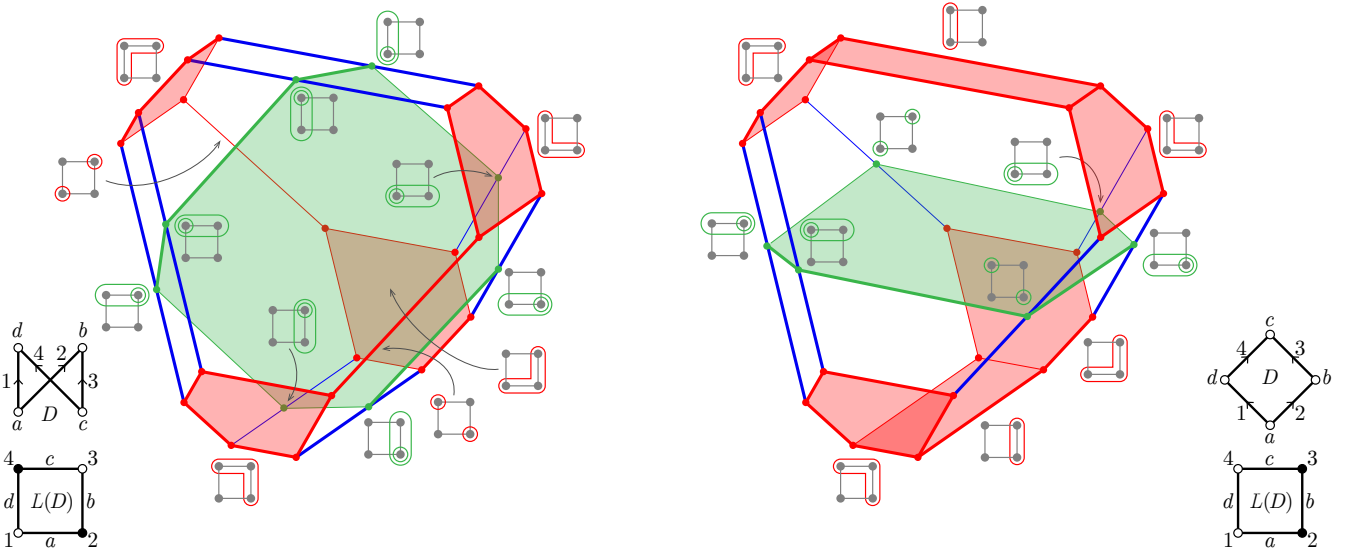}}
	\caption{The graphical acyclonestohedra (green polygons) realizing the graphical acyclic nested complexes of \cref{fig:acyclicNestedComplexes}, obtained as the section of the graph associahedron of the line graph~$L(\digraph)$ by the evaluation space of the graphical oriented matroid of~$\digraph$.}
	\label{fig:acyclonestohedra}
\end{figure}
\end{example}

We split the proof of \cref{thm:acyclonestohedron} into two statements.

\begin{lemma}
\label{lem:acyclonestohedron1}
If a nested set~$\nested$ on~$\building$ is not acyclic for~$\OM(\b{A})$, then the corresponding face of~$\Nest(\building, \b{\rho})$ does not intersect the evaluation space~$\evaluations$.
\end{lemma}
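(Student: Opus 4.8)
The statement to prove is that if a nested set $\nested$ on $\building$ is \emph{not} acyclic for $\OM(\b{A})$, then the corresponding face of the nestohedron $\Nest(\building, \b{\rho})$ does not meet the evaluation space $\evaluations = \bigcap_{c \in \circuits} \b{H}_c^=$. The strategy is to extract from the failure of acyclicity a single circuit $c \in \circuits$ that "sees" the non-acyclicity of $\nested$, and then to show that the corresponding defining hyperplane $\b{H}_c^=$ separates strictly the face of $\Nest(\building, \b{\rho})$ indexed by $\nested$ from the evaluation space — in fact that the whole face lies in one of the open halfspaces $\b{H}_c^>$ or its opposite. Since every face of $\Nest(\building, \b{\rho})$ is the convex hull of the vertices $\vertexNest[\nested'][\b{\rho}]$ for the maximal nested sets $\nested' \supseteq \nested$ (by \cref{thm:nestohedron}, \cref{prop:vertexDescriptionNestohedron}), it suffices to show that $\dotprod{\b{\delta}}{\vertexNest[\nested'][\b{\rho}]}$ has a constant strict sign over all such $\nested'$, where $\b{\delta}$ is the dependence with signature $c$.

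\textbf{Step 1: choosing the circuit.} By \cref{lem:characterizationAcyclicNestedSets}, since $\nested$ is not acyclic there is (in version (iii)) a circuit $c \in \circuits$ and a subset $\nested' \subseteq \nested$ with $c_- \subseteq \bigcup \nested'$ but $c_+ \not\subseteq \bigcup \nested'$. Passing to the block $B \eqdef \min\set{C \in \nested}{\underline{c} \subseteq C}$ (which exists because $\underline{c} \in \building$ and $\nested$ is a nested set, so $\underline c$ is contained in some member of $\nested$ — in fact in some element of $\connectedComponents(\building) \subseteq \nested$), version (ii) of the same lemma gives that $\restrGround$ intersects $c_+$ but not $c_-$, where $R = \bigcup_{C \in \nested,\ C \subsetneq B} C$. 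Fix such a $c$ and such a $B$; let $\b{\delta} \in \dependences$ be the corresponding dependence, normalized so that its support-coordinates have absolute value in $[1, r_c]$, and let us choose the sign of $\b{\delta}$ so that $\delta_s > 0$ on $c_+$ and $\delta_s < 0$ on $c_-$.

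\textbf{Step 2: evaluating the linear functional on a vertex.} For any maximal nested set $\nested' \supseteq \nested$, \cref{prop:vertexDescriptionNestohedron} gives $\vertexNest[\nested'][\b{\rho}] = \sum_{s} \big(\sum_{B'' \in \building,\ s \in B'' \subseteq \block{s}{\nested'}} \rho_{B''}\big) \b{e}_s$, so $\dotprod{\b{\delta}}{\vertexNest[\nested'][\b{\rho}]} = \sum_{s \in \underline{c}} \delta_s \cdot \Lambda_s$, where $\Lambda_s \eqdef \sum_{B'' \in \building,\ s \in B'' \subseteq \block{s}{\nested'}} \rho_{B''}$. The key observation is that $\Lambda_s$ is essentially governed by $|\block{s}{\nested'}|$: since $\rho_{B''} = R^{|B''|}$ for $|B''| \ge 2$ (and $0$ otherwise), the sum is dominated by the single largest term $R^{|\block{s}{\nested'}|}$ (when $|\block{s}{\nested'}| \ge 2$), up to a factor at most $|\building|$. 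More precisely, for each $s$, letting $m_s \eqdef |\block{s}{\nested'}|$, we have $R^{m_s} \le \Lambda_s \le |\building|\, R^{m_s}$ when $m_s \ge 2$, and $\Lambda_s = 0$ when $m_s = 1$. The point is now to compare the blocks $\block{s}{\nested'}$ for $s \in \underline{c}$: since $\underline c \subseteq B \in \nested \subseteq \nested'$, we have $\block{s}{\nested'} \subseteq B$ for all $s \in \underline c$. Moreover, for $s \in c_+$ that lies in $\restrGround = B \setminus R$, the minimal block $\block{s}{\nested'}$ is not contained in $R$ and hence is \emph{strictly larger} than the blocks $\block{t}{\nested'}$ for those $t \in c_-$ which lie in $R$ — indeed $\block{t}{\nested'} \subseteq R \cap B$ for such $t$, so $|\block{t}{\nested'}| < |\block{s}{\nested'}|$. (Here we use that all of $c_- \subseteq R$ and that $\restrGround$ meets $c_+$, from Step 1.)

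\textbf{Step 3: the dominating term and the sign.} Set $m \eqdef \max_{s \in \underline{c}} |\block{s}{\nested'}|$ and let $I \eqdef \set{s \in \underline c}{|\block{s}{\nested'}| = m}$. By Step 1, $I \cap c_- = \varnothing$ (every $t \in c_-$ is in $R \cap B$, so $|\block{t}{\nested'}| \le |R \cap B| < |\restrGround[B][\nested] \cap \block{s}{\nested'}|$ for the witness $s$ — more carefully, $|\block{t}{\nested'}| < m$), and $I \cap c_+ \neq \varnothing$; hence $\sum_{s \in I} \delta_s > 0$, because all $\delta_s$ with $s \in I \subseteq c_+$ are positive. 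Now estimate:
\begin{align*}
\dotprod{\b{\delta}}{\vertexNest[\nested'][\b{\rho}]}
&= \sum_{s \in I} \delta_s \Lambda_s + \sum_{s \in \underline c \setminus I} \delta_s \Lambda_s
\ \ge\ \Big(\sum_{s \in I} \delta_s\Big) R^m - r_c \sum_{s \in \underline c \setminus I} \Lambda_s \\
&\ge\ R^m - r_c \cdot |\building| \cdot |\building| \cdot R^{m-1}
\ =\ R^{m-1}\big(R - r_c |\building|^2\big),
\end{align*}
using $\sum_{s \in I}\delta_s \ge 1$, $|\delta_s| \le r_c$, $\Lambda_s \le |\building| R^{m_s} \le |\building| R^{m-1}$ for $s \notin I$, and $|\underline c \setminus I| \le |\underline c| \le |\building|$ (crudely). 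By the choice $R = |\building| \cdot \max_{c}r_c \ge r_c |\building|$, one gets $R - r_c|\building|^2$ — here I must be slightly more careful with the constant; in fact one should bound $|\underline c \setminus I|$ by $|\underline c|-1$ and note $|\underline c| \le \rank[\OM]+1$, and it is cleanest to simply choose $R$ large enough, say $R \eqdef |\building|^2 \cdot (1 + \max_c r_c)$, so that the right-hand side is strictly positive. In any case, $\dotprod{\b{\delta}}{\vertexNest[\nested'][\b{\rho}]} > 0$ for \emph{every} maximal nested set $\nested' \supseteq \nested$, with a bound independent of $\nested'$. Hence the face of $\Nest(\building, \b{\rho})$ indexed by $\nested$ lies strictly inside $\b{H}_c^>$, which is disjoint from $\b{H}_c^= \supseteq \evaluations$. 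This proves the lemma.

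\textbf{Expected main obstacle.} The delicate point is Step 2 — precisely controlling how $\Lambda_s$ depends on the minimal block $\block{s}{\nested'}$, and in particular establishing the strict inequality $|\block{t}{\nested'}| < |\block{s}{\nested'}|$ for the witness indices $t \in c_-$ versus $s \in c_+ \cap \restrGround$, uniformly over all maximal extensions $\nested' \supseteq \nested$. This requires using that $\nested \subseteq \nested'$ forces $\block{s}{\nested'} \subseteq B$ and that, because $\nested$ already contains all of $\nested'_{\subsetneq B}$-relevant refinement below $B$ (i.e.\ $R = \bigcup_{C \in \nested, C \subsetneq B} C$ is determined by $\nested$, not $\nested'$ — one must check $\block{t}{\nested'} \subseteq R$ for $t \in c_- \subseteq R$, which holds because any block of $\nested'$ containing $t$ and contained in $B$ is, by nestedness, comparable with the blocks of $\nested$ inside $B$ and hence contained in some $C \in \nested$ with $C \subseteq R$). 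Once this "the minimal block of a point of $R$ stays inside $R$" fact is pinned down, the domination-by-the-top-power argument and the choice of $R$ are routine. The remaining verification — that the bound is genuinely uniform in $\nested'$ and that the constant $R$ in \cref{def:acyclonestohedron} (or a mild enlargement thereof) suffices — is a finite estimate.
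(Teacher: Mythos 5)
You follow the same overall route as the paper (choose a circuit $c$ witnessing non-acyclicity, evaluate the corresponding dependence on the vertices of the face via \cref{prop:vertexDescriptionNestohedron}, and let the top power of $R$ dominate), but the central comparison step is not established. The inference in Steps 2--3 --- ``$\block{t}{\nested''} \subseteq R \cap B$ for $t \in c_-$ and $\block{s}{\nested''} \not\subseteq R$ for the witness $s \in c_+ \cap (B \ssm R)$, hence $|\block{t}{\nested''}| < |\block{s}{\nested''}|$'' --- is a non sequitur, and the claim for a fixed witness is in fact false. Take $\ground = \{1,2,3,4\}$ with $\b{A} = (\b{e}_1, \b{e}_2, \b{e}_3, \b{e}_1 + \b{e}_2 - \b{e}_3)$, so the unique circuit is $c = (\{3,4\},\{1,2\})$; take $\building = \bigl\{\{1\},\{2\},\{3\},\{4\},\{1,2\},\{1,2,3,4\}\bigr\}$ and $\nested = \{\{1,2\},\{1,2,3,4\}\}$, which is a nested set that is not acyclic, with $B = \{1,2,3,4\}$ and $R = \{1,2\}$. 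The maximal nested set $\nested'' = \{\{1\},\{4\},\{1,2\},\{1,2,3,4\}\}$ contains $\nested$, and $\block{4}{\nested''} = \{4\}$ has size $1$ while $\block{2}{\nested''} = \{1,2\}$ has size $2$: for the witness $s = 4$ your strict inequality fails, and nothing in your argument rules out $I \cap c_- \ne \varnothing$ in general (here the conclusion is saved only by the \emph{other} element $3 \in c_+$, whose minimal block is $\{1,2,3,4\}$ --- but your reasoning gives no way to find it). Your containment $\block{t}{\nested''} \subseteq R$ for $t \in c_-$ is correct; the gap is entirely on the positive side, which is exactly the part you flagged as the ``delicate point'' but did not actually prove.

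What is true, and what the paper's argument rests on, is an existential statement: \emph{some} element of $c_+$ has a large minimal block, uniformly over all maximal $\nested'' \supseteq \nested$. The paper obtains it by taking the covering family $\nested' \subseteq \nested$ (with $c_- \subseteq \bigcup \nested'$ and $c_+ \not\subseteq \bigcup \nested'$) inclusion minimal, so that its members are pairwise disjoint, each meets $c_-$, and $X \eqdef \underline{c} \cup \bigcup \nested'$ is a block of~$\building$. One then shows $X \subseteq \block{s}{\nested''}$ for some $s \in c_+ \ssm \bigcup \nested'$: the maximal members of $\set{\block{s}{\nested''}}{s \in c_+ \ssm \bigcup \nested'} \cup \nested'$ are blocks of $\nested''$ that all meet $X \in \building$, so if there were two or more of them (necessarily pairwise disjoint) their union would again lie in $\building$, contradicting nestedness of $\nested''$; the unique maximal member must then be some $\block{s}{\nested''}$ and it contains $X$. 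Consequently $X$ itself appears in the positive part of the vertex formula, contributing $\rho_X = R^{|X|}$, while every block on the negative side lies inside a single member of $\nested'$, hence has size at most $|X|-1$ (and, for a maximal $\nested''$, is counted for at most one $t \in c_-$); this is what makes the specific constant $R = |\building| \cdot \max_c r_c$ of \cref{def:acyclonestohedron} sufficient. Your proposal never forms $X$ nor uses minimality of the covering family, and your fallback of ``choosing $R$ larger'' does not repair the missing structural claim --- besides, it would change the polytope $\Nest(\building,\b{\rho})$ fixed in \cref{def:acyclonestohedron}, which is the one the lemma is about.
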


\begin{proof}
Assume that~$\nested$ is not acyclic for~$\OM(\b{A})$.
Then by \cref{lem:characterizationAcyclicNestedSets}\,(iii) there is~$c \in \circuits$ and~$\nested' \subseteq \nested$ with~${c_+ \not\subseteq \bigcup \nested'}$ while~${c_- \subseteq \bigcup \nested'}$.
Let~$\b{\delta} \in \dependences$ be such that~$c = \signature(\b{\delta})$.
Assume that~$\nested'$ is inclusion minimal, and define~${X \eqdef \underline{c} \cup \bigcup \nested'}$.
By minimality of~$\nested'$, any~$B \in \nested'$ intersects~$c_-$.
Hence, $X \in \building$ since all elements of~$\nested'$ are in~$\building$ and intersect~$\underline{c}$ which is also in~$\building$.
Consider a maximal nested set~$\nested''$ containing~$\nested$.
By \cref{prop:vertexDescriptionNestohedron}, the corresponding vertex~$\vertexNest[\nested''][\b{\rho}]$ of~$\Nest(\building, \b{\rho})$ satisfies
\begin{align*}
\dotprod{\b{\delta}}{\vertexNest[\nested''][\b{\rho}]}
& = \sum_{s \in c_+} \sum_{\substack{B \in \building \\ s \in B \subseteq \block{s}{\nested''}}} |\delta_s| \cdot \rho_B - \sum_{s \in c_-} \sum_{\substack{B \in \building \\ s \in B \subseteq \block{s}{\nested''}}} |\delta_s| \cdot \rho_B \\
& \ge \min \b{\delta}^{\ne 0} \rho_X - \sum_{\substack{B \in \building \\ B \subseteq \bigcup \nested'}} \max \b{\delta}^{\ne 0} \rho_B \\
& \ge \min \b{\delta}^{\ne 0} R^{|X|} - (|\building|-1) \cdot \max \b{\delta}^{\ne 0} \cdot R^{\max_{B \in \nested'} |B|}
> 0
\end{align*}
as~$R \ge |\building| \cdot r_c > (|\building|-1) \cdot \max \b{\delta}^{\ne 0} / \min \b{\delta}^{\ne 0}$ and~$|X| \ge \max_{B \in \nested'} |B| + 1$.
Hence, ${\vertexNest[\nested''][\b{\rho}] \in \b{H}_c^>}$ for every maximal nested set~$\nested''$ containing~$\nested$.
We conclude that the entire face of~$\Nest(\building, \b{\rho})$ corresponding to~$\nested$ is in~$\b{H}_c^>$, thus disjoint from~$\evaluations = \bigcap_{c \in \circuits} \b{H}_c^=$.
\end{proof}

\begin{lemma}
\label{lem:acyclonestohedron2}
If a nested set~$\nested$ on~$\building$ is acyclic for~$\OM(\b{A})$, then the corresponding face of~$\Nest(\building, \b{\rho})$ intersects the evaluation space~$\evaluations$.
\end{lemma}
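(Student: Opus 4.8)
The plan is to exhibit, for each acyclic nested set $\nested$, an explicit point of the corresponding face $F_\nested$ of $\Nest(\building, \b{\rho})$ that lies in $\evaluations$; here $F_\nested$ denotes the face of $\Nest(\building, \b{\rho})$ with normal cone $\con[\nested]$, equivalently (by \cref{prop:facetDescriptionNestohedron}) the face obtained by turning the inequality $g_B(\b{x}) \ge 0$ into an equality for each $B \in \nested$. First I would reduce to the case of a \emph{maximal} acyclic nested set: since $\acyclicNestedComplex(\building, \OM(\b{A}))$ is a simplicial complex, $\nested$ extends to some maximal acyclic nested set $\nested^{\max}$, and then $\con[\nested]$ is a face of $\con[\nested^{\max}]$, so $F_{\nested^{\max}}$ is a face of $F_\nested$ and it suffices to find a point of $F_{\nested^{\max}} \cap \evaluations$.

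So assume $\nested$ is maximal acyclic; by \cref{coro:pureAcyclicNestedComplex} we have $|\nested| = \rank[\OM]$. Realize $\OM = \OM(\b{A})$ with $\b{A} = (\b{a}_s)_{s \in \ground} \in (\R^d)^\ground$ spanning, so $d = \rank[\OM]$, and recall that $f \mapsto \b{x}^f \eqdef (f(\b{a}_s))_{s \in \ground}$ is a linear isomorphism from $(\R^d)^*$ onto $\evaluations$, under which $\dotprod{\b{e}_B}{\b{x}^f} = f(\b{a}_B)$ where $\b{a}_B \eqdef \sum_{s \in B} \b{a}_s$; hence $\b{x}^f \in \Nest(\building, \b{\rho})$ if and only if $f(\b{a}_B) \ge \sum_{B' \in \building, \, B' \subseteq B} \rho_{B'}$ for all $B \in \building$, with equality precisely on the faces $B$ where $g_B$ is tight. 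The first key step is that $\set{\b{a}_B}{B \in \nested}$ is a basis of $\R^d$: a linear relation $\sum_{B \in \nested} \gamma_B \b{a}_B = \zero$ rewrites as $\sum_s \bigl( \sum_{B \in \nested, \, s \in B} \gamma_B \bigr) \b{a}_s = \zero$, so the vector $\b{\eta}$ with $\eta_s = \sum_{B \in \nested, \, s \in B} \gamma_B$ lies in $\dependences$; extracting a conformal circuit inside $\b{\eta}$ and applying \cref{lem:characterizationAcyclicNestedSets} shows $\b{\eta} = \zero$ (this is where acyclicity of $\nested$, together with $|\nested| = \rank[\OM] = d$, is used), and then nestedness of $\nested$ (peeling off inclusion-minimal blocks) forces all $\gamma_B = 0$. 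Consequently there is a unique $f_\nested \in (\R^d)^*$ with $f_\nested(\b{a}_B) = \sum_{B' \subseteq B} \rho_{B'}$ for every $B \in \nested$; the point $\b{x}^{f_\nested}$ then lies in $\evaluations$ and in the affine hull of $F_\nested$, and since $\Nest(\building, \b{\rho}) \cap \operatorname{aff}(F_\nested) = F_\nested$, it only remains to verify that $\b{x}^{f_\nested} \in \Nest(\building, \b{\rho})$.

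That last verification, namely $f_\nested(\b{a}_B) \ge \sum_{B' \subseteq B} \rho_{B'}$ for every $B \in \building$, is the main obstacle, and it is the place where the specific weights $\rho_B = R^{|B|}$ with $R = |\building| \cdot \max_{c \in \circuits} r_c$ intervene, through an estimate dual to the one in the proof of \cref{lem:acyclonestohedron1}. My plan here is to exploit \cref{prop:rank1} and \cref{coro:rank1}: along $\nested$ the restrictions $\restrOM$ all have rank $1$ and the building sets $\restrBuilding$ are complete, so $f_\nested$ can be solved for one block at a time, descending $\nested$ from the $\building$-connected components inwards; at each step one is reduced to a complete building set over a rank-$1$ oriented matroid, on which $f_\nested$ distributes the budget $\sum_{B' \subseteq B} \rho_{B'}$ over $B$ roughly in proportion to the circuit coefficients, and the inequality $f_\nested(\b{a}_B) \ge \sum_{B' \subseteq B} \rho_{B'}$ for a smaller block $B$ follows because $R$ is chosen so large that $\rho_B = R^{|B|}$ dominates $\sum_{B' \subsetneq B} \rho_{B'}$ by the margin needed to absorb the ratios $r_c$. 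Combining this estimate with the reduction of the first paragraph proves the lemma, and together with \cref{lem:acyclonestohedron1} this establishes \cref{thm:acyclonestohedron}.
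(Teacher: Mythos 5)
Your reduction to a maximal acyclic nested set and your identification of the unique candidate point are sound in substance, but already the justification of the independence of $\set{\b{a}_B}{B \in \nested}$ is glossed: from a circuit $c$ conformal to $\b{\eta}$ you only get a sign contradiction when the minimal block $B \in \nested$ containing $\underline{c}$ satisfies $\restrGround \cap \underline{c} \ne \varnothing$; nothing prevents $\underline{c}$ from lying inside the union of several maximal proper sub-blocks of $B$ in $\nested$, where $\b{\eta}$ need not be constant, so the quoted application of \cref{lem:characterizationAcyclicNestedSets} does not immediately yield $\b{\eta} = \zero$. The claim is nevertheless true and fixable, e.g.\ by induction along the forest $\nested$ using $|\nested| = \rank[\OM]$ (\cref{coro:pureAcyclicNestedComplex}) and the rank-one acyclic contractions of \cref{prop:rank1} to see that $\sum_{s \in \restrGround} \b{a}_s$ never lies in the span of the proper sub-blocks.

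The genuine gap is the last step, which is exactly the content of the lemma: you must prove $f_\nested(\b{a}_B) \ge \sum_{B' \subseteq B} \rho_{B'}$ for \emph{every} $B \in \building$, and your proposal only gestures at it (``distributes the budget roughly in proportion'', ``$R$ is chosen so large that\dots''). For a block $B$ straddling several blocks of $\nested$, the value $f_\nested(\b{a}_B)$ is a signed combination of the budgets $\sum_{B' \subseteq C} \rho_{B'}$, $C \in \nested$, with coefficients given by ratios of circuit entries, and controlling the cancellations is precisely the difficulty; a ``one block at a time'' recursion as sketched is not an argument, and it is not at all clear it closes with the given $\b{\rho}$. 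Note that the paper's proof of \cref{lem:acyclonestohedron2} uses no estimate on $\b{\rho}$ whatsoever (the weights only intervene in \cref{lem:acyclonestohedron1}): it picks spanning trees on the rank-one contractions $\restrOM$ (\cref{prop:rank1,coro:rank1}) to build a basis $\c{X}$ of circuits of the dependence space, so that $\evaluations = \bigcap_{c \in \c{X}} \b{H}_c^=$, and then shows that every closed region of this hyperplane arrangement contains a vertex $\vertexNest[\nested''][\b{\rho}]$ for some maximal nested set $\nested''$ containing $\nested$, obtained from a linear extension of the corresponding orientation of the forest; convexity of the face then forces it to meet $\evaluations$. As it stands, your proposal defers the whole difficulty of the lemma to an unproven estimate, so it is not a proof.
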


\begin{proof}
Assume that~$\nested$ is a maximal acyclic nested set for~$(\building, \OM(\b{A}))$.
By \cref{coro:rank1}, the oriented matroid~$\restrOM$ has rank~$1$ for each~${B \in \nested}$, hence
\[
\rank = \rank[\OM(\b{A})] = \sum_{B \in \nested} \rank[\restrOM] = |\nested|.
\]
For each~$B \in \nested$, choose a spanning tree~$T_{B \in \nested}$ of the complete graph on~$\restrGround$.
Each edge~$(r,s)$ of~$T_{B \in \nested}$ forms a circuit of~$\restrOM$.
Thus, there exists a circuit~$c_{r,s}$ of~$\OM(\b{A})$ contained in~$B$ which contracts to the circuit~$(r, s)$ of~$\restrOM$.
Let~$\c{X}_{B \in \nested} \eqdef \set{c_{r,s}}{(r,s) \in T_{B \in \nested}}$ and consider $\c{X} = \bigcup_{B \in \nested} \c{X}_{B \in \nested}$.
As~$\c{X}$ is clearly linearly independent and has cardinality
\[
\sum_{B \in \nested} |T_{B \in \nested}| = \sum_{B \in \nested} |\restrGround|-1 = |\ground| - |\nested| = \corank = \dim(\dependences),
\]
the circuits of~$\c{X}$ form a basis of the dependence space~$\dependences$.
Hence, $\evaluations = \bigcap_{c \in \c{X}} \b{H}_c^=$.

To prove that the face of~$\Nest(\building, \b{\rho})$ corresponding to~$\nested$ intersects~$\evaluations{}$, it thus suffices to show that any region of the arrangement of hyperplanes~$\set{\b{H}_c^=}{c \in \c{X}}$ contains the vertex~$\vertexNest[\nested'][\b{\rho}]$ of some maximal nested set~$\nested'$ on~$\building$ containing~$\nested$.
Such a region corresponds to an orientation~$\c{O}$ of the forest formed by the trees~$T_{B \in \nested}$ for~$B \in \nested$.
Add to~$\c{O}$ all arcs~$i \to j$ for~$i \in B$ and~$j \in B'$ such that~$B,B' \in \nested$ with~$B \subsetneq B'$.
The resulting graph is acyclic, so we can consider a linear extension~$\pi$.
Then the vertex~$\vertexNest[\nested_\pi][\b{\rho}]$ of the nested set~$\nested_\pi \eqdef \bigcup_{i \in [p]} \connectedComponents(\building_{|\pi_1 \cup \dots \cup \pi_i})$ lies in the region defined by~$\c{O}$. %from \cref{prop:surjectionBuilding}
\end{proof}

\begin{proof}[Proof of \cref{thm:acyclonestohedron}]
Immediate from \cref{lem:acyclonestohedron1,lem:acyclonestohedron2}.
\end{proof}

%%%%%%%%%%%%%%%%

\subsection{Commuting sections and truncations}
\label{subsec:commutingSectionsTruncations}

We make a brief interlude to add some observations connecting the polytopal realizations of \cref{thm:stellarPolytopalRealization,thm:acyclonestohedron}.
Consider a polytope~$\polytope$ and its homogenized vector configuration~$\b{A}_{\polytope}$.
Let~$(\building, \OM(\b{A}_{\polytope}))$ be a connected oriented building set.
Then 
\begin{itemize}
\item the polar~$\polytope^\triangle$ is the section of the positive orthant~$\R_{\ge0}^d$ with the evaluation space~$\evaluations[\b{A}_{\polytope}]$,
\item the acyclonestohedron~$\Acycl(\building, \b{A}_{\polytope})$ is the section of the nestohedron~$\Nest(\building, \b{\rho})$ with the evaluation space~$\evaluations[\b{A}_{\polytope}]$.
\end{itemize}
In other words, intersecting with the evaluation space~$\evaluations[\b{A}_{\polytope}]$ geometrically embeds both the face lattice of~$\polytope^\triangle$ in the boolean lattice and the acyclic nested complex~$\acyclicNestedComplex(\building, \OM(\b{A}_{\polytope}))$ in the boolean nested complex~$\nestedComplex[][\building]$.

Building on this observation, we note that we can obtain the acyclonestohedron~$\Acycl(\building, \b{A}_{\polytope})$ from the simplex in two ways, by commuting sections and face truncations:
\begin{enumerate}
\item either we first truncate faces of the simplex to obtain the nestohedron~$\Nest(\building, \b{\rho})$, and \linebreak we then intersect with the evaluation space~$\evaluations[\b{A}_{\polytope}]$ to obtain the acyclonestohedron \linebreak $\Acycl(\building, \b{A}_{\polytope})$, as presented in \cref{thm:acyclonestohedron},
\item or we first intersect the simplex with evaluation space~$\evaluations[\b{A}_{\polytope}]$ to obtain the polar~$\polytope^\triangle$, and we then truncate faces of~$\polytope^\triangle$ (according to the facet supporting hyperplanes of the nestohedron~$\Nest(\building, \b{\rho})$ which intersect~$\evaluations[\b{A}_{\polytope}]$) to obtain the acyclonestohedron~$\Acycl(\building, \b{A}_{\polytope})$, thus obtaining explicit parameters to realize the construction mentioned in \cref{thm:stellarPolytopalRealization}.
\end{enumerate}
We can thus see \cref{thm:acyclonestohedron} as an effective version of \cref{thm:stellarPolytopalRealization}, with two major advantages.
First, it gives explicit and combinatorial meaningful coordinates for the truncations.
Second these truncations are compatible with the embedding of the acyclic nested complex~$\Acycl(\building, \b{A}_{\polytope})$ in the boolean nested complex~$\nestedComplex[][\building]$.

Finally, note that in the first way, the face truncations of the simplex corresponding to building blocks of~$\building \ssm \FL(\polytope)$ are superfluous as their supporting hyperplanes do not intersect~$\evaluations[\b{A}_{\polytope}]$.
Our construction of \cref{thm:acyclonestohedron} still performs these truncations as we want to work with a boolean building set~$\building$ containing~$\building \cap \FL(\polytope)$.

\begin{example}
Following \cref{exm:graphicalOrientedBuildingSet,exm:graphicalAcyclicNestedComplex,exm:acyclonestohedron}, \cref{fig:acyclonestohedraSectionsTruncations} illustrates the acyclonestohedra of \cref{fig:acyclonestohedra} obtained either by face truncations and section, or by section and face truncations.
\begin{figure}
	\capstart
	\centerline{\includegraphics[scale=.45]{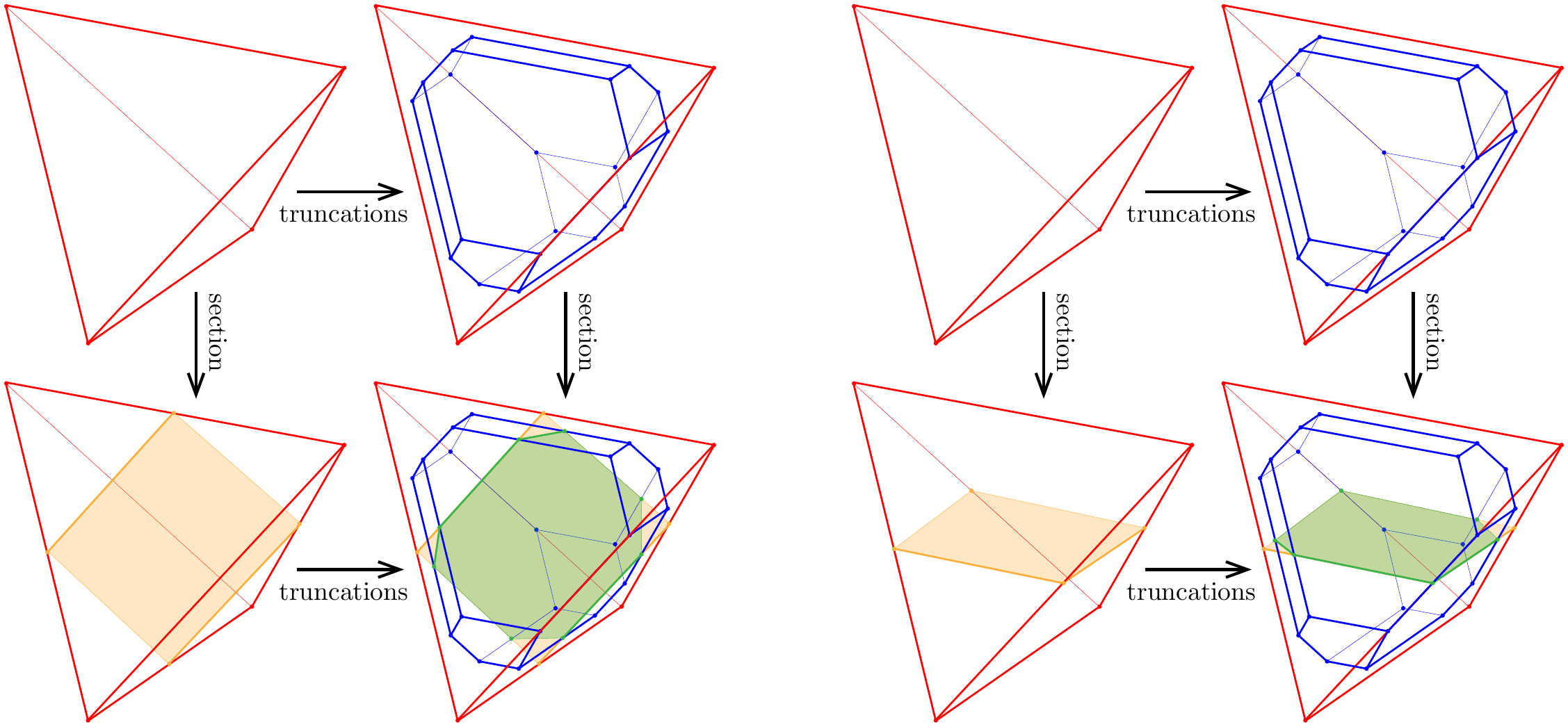}}
	\caption{The graphical acyclonestohedra (green) of \cref{fig:acyclonestohedra}, can be obtained from the simplex (red) in two equivalent ways, commuting the sequence of face truncations defining the nestohedron (blue) with the section defining the polar polytope (orange). See \cref{subsec:commutingSectionsTruncations} for details.}
	\label{fig:acyclonestohedraSectionsTruncations}
\end{figure}
\end{example}

%%%%%%%%%%%%%%%%

\subsection{Acyclonestohedra in~$\R\b{A}$}
\label{subsec:acyclonestohedra2}

In this section, we give an alternative realization of the acyclic nested complex~$\acyclicNestedComplex(\building, \OM(\b{A}))$ by a polytope~$\AcyclProj(\building, \b{A})$ in the vector space~$\R\b{A}$ generated by our vector configuration~$\b{A}$. This polytope~$\AcyclProj(\building, \b{A})$ is obtained as an affine image of the acyclonestohedron~$\Acycl(\building, \b{A})$.
This was inspired from a construction of A.~Sack~\cite{Sack}.

\begin{definition}
\label{def:acyclonestohedronProj}
Consider the coefficients~$\b{\rho} \in \R^\building$ given in \cref{def:acyclonestohedron}.
The \defn{acyclonestohedron}~$\AcyclProj(\building, \b{A})$ is the polytope of~$\R\b{A}$ defined by the equalities~$\overline{g}_B(\b{y}) = 0$ for all~$B \in \connectedComponents(\building)$ and the inequalities~$\overline{g}_B(\b{y}) \ge 0$ for all~$B \in \building$, where
\[
\overline{g}_B(\b{y}) \eqdef \bigdotprod{\sum_{b \in B} \b{a}_b}{\b{y}} - \sum_{\substack{B' \in \building \\ B' \subseteq B}} \rho_{B'}.
\]
\end{definition}

\begin{proposition}
\label{prop:affineMapRealizations}
The acyclonestohedra~$\Acycl(\building, \b{A})$ of \cref{def:acyclonestohedron} and~$\AcyclProj(\building, \OM(\b{A}))$ of \cref{def:acyclonestohedronProj} are affinely equivalent.
\end{proposition}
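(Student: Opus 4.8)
The plan is to exhibit an explicit affine map between the ambient spaces $\evaluations$ (where $\Acycl(\building, \b{A})$ lives) and $\R\b{A}$ (where $\AcyclProj(\building, \b{A})$ lives) that carries one polytope onto the other. The natural candidate is the linear map $\pi \colon \R^\ground \to \R\b{A}$ sending $\b{e}_s \mapsto \b{a}_s$ for each $s \in \ground$, so that $\pi(\b{x}) = \sum_{s \in \ground} x_s \b{a}_s$. First I would record the key compatibility: for any $B \subseteq \ground$ and any $\b{x} \in \R^\ground$, we have $\bigdotprod{\sum_{b \in B} \b{a}_b}{\pi(\b{x})}$ is \emph{not} directly $g_B(\b{x})$, so this first attempt needs adjustment. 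The cleaner route is to use the defining equations: the evaluation space $\evaluations$ is, by \cref{def:dependencesEvaluationsVectorConfiguration}, exactly $\set{(f(\b{a}_s))_{s \in \ground}}{f \in (\R^d)^*}$, so there is a linear surjection $\iota \colon (\R\b{A})^* \to \evaluations$, $f \mapsto (f(\b{a}_s))_{s}$, whose image is $\evaluations$ and which is injective modulo the annihilator of $\b{A}$ (which is trivial on $\R\b{A}$). Thus $\iota$ is a linear isomorphism from $(\R\b{A})^*$ onto $\evaluations$; dually, identifying $\R\b{A}$ with $(\R\b{A})^*$ via any inner product, or more intrinsically working with $\iota^{-1}$, we get the desired affine (indeed linear) isomorphism.

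The main computational step is then to check that $\iota$ matches the facet descriptions. On the $\AcyclProj$ side, \cref{def:acyclonestohedronProj} uses the functions $\overline{g}_B(\b{y}) = \bigdotprod{\sum_{b \in B} \b{a}_b}{\b{y}} - \sum_{B' \subseteq B} \rho_{B'}$ for $\b{y} \in \R\b{A}$; but note $\bigdotprod{\sum_{b \in B} \b{a}_b}{\b{y}}$ should really be read as $f\bigl(\sum_{b\in B}\b a_b\bigr)$ where $\b y$ corresponds to the functional $f$, i.e. $\sum_{b \in B} f(\b{a}_b)$. On the $\Acycl$ side, \cref{def:acyclonestohedron} realizes $\Acycl(\building,\b A)$ as $\Nest(\building, \b{\rho}) \cap \evaluations$, and by \cref{prop:facetDescriptionNestohedron} the nestohedron is cut out by $g_B(\b{x}) = \bigdotprod{\sum_{b \in B} \b{e}_b}{\b{x}} - \sum_{B' \subseteq B} \rho_{B'} = \sum_{b \in B} x_b - \sum_{B' \subseteq B} \rho_{B'}$. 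Now for $\b{x} = \iota(f) \in \evaluations$, we have $x_s = f(\b{a}_s)$, hence $\sum_{b \in B} x_b = \sum_{b \in B} f(\b{a}_b) = \overline{g}_B(\iota(f)) + \sum_{B' \subseteq B} \rho_{B'}$, which gives exactly $g_B(\iota(f)) = \overline{g}_B(f)$ under the identification $\b{y} \leftrightarrow f$. So $\iota$ sends the halfspace $g_B \ge 0$ restricted to $\evaluations$ to the halfspace $\overline{g}_B \ge 0$ in $\R\b{A}$, and likewise for the equalities indexed by $\connectedComponents(\building)$; therefore $\iota\bigl(\{f : \overline{g}_B(f) \ge 0 \ \forall B,\ \overline{g}_B(f) = 0\ \forall B \in \connectedComponents(\building)\}\bigr) = \Nest(\building,\b\rho) \cap \evaluations = \Acycl(\building,\b A)$, i.e. $\iota$ restricts to an affine isomorphism $\AcyclProj(\building,\b A) \xrightarrow{\ \sim\ } \Acycl(\building,\b A)$.

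The steps in order would be: (1) set up the linear map $\iota \colon (\R\b{A})^* \to \R^\ground$, $f \mapsto (f(\b{a}_s))_{s \in \ground}$, and observe its image is precisely $\evaluations$ by \cref{def:dependencesEvaluationsVectorConfiguration}; (2) observe $\iota$ is injective since a functional on $\R\b{A}$ vanishing on all $\b{a}_s$ is zero (the $\b{a}_s$ span $\R\b{A}$), hence $\iota$ is a linear isomorphism onto $\evaluations$; (3) compute, for $f \in (\R\b{A})^*$ and $B \in \building$, that $g_B(\iota(f)) = \overline{g}_B(f)$ via $\sum_{b\in B}(\iota f)_b = \sum_{b\in B} f(\b a_b)$ and the fact that $\rho$ is the same coefficient vector in both \cref{def:acyclonestohedron} and \cref{def:acyclonestohedronProj}; (4) conclude that $\iota$ maps the polytope of \cref{def:acyclonestohedronProj} bijectively onto $\Nest(\building,\b\rho)\cap\evaluations = \Acycl(\building,\b A)$. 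The main obstacle — really the only subtle point — is bookkeeping the identification of $\R\b{A}$ with $(\R\b{A})^*$ implicit in the notation $\bigdotprod{\cdot}{\cdot}$ of \cref{def:acyclonestohedronProj}, i.e. being careful that "$\b{y} \in \R\b{A}$" in that definition is paired with vectors $\sum_{b\in B}\b a_b \in \R\b A$ through whatever bilinear form is intended, and matching that consistently with the standard pairing $\dotprod{\cdot}{\cdot}$ on $\R^\ground$ used for the nestohedron; once the dictionary $\b{y} \leftrightarrow f$ with $\b{x} = \iota(f)$, $x_s = f(\b a_s)$ is fixed, everything is a one-line verification. No genuinely hard estimate or combinatorial argument is needed here — the content has already been done in \cref{thm:acyclonestohedron}, and \cref{prop:affineMapRealizations} is the bridge lemma making the two coordinate presentations interchangeable.
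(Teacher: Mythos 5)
Your proposal is correct and takes essentially the same approach as the paper: one exhibits a linear isomorphism between $\evaluations$ and $\R\b{A}$ under which the facet functionals $\b{x}\mapsto\bigdotprod{\sum_{b \in B} \b{e}_b}{\b{x}}$ and $\b{y}\mapsto\bigdotprod{\sum_{b \in B} \b{a}_b}{\b{y}}$ correspond, so the two systems of (in)equalities with the common right-hand sides $\sum_{B' \subseteq B} \rho_{B'}$ are exchanged. Your map $\b{y}\mapsto(\dotprod{\b{a}_s}{\b{y}})_{s \in \ground}$ is precisely the inverse of the adjoint $\Psi^*$ that the paper constructs via the projection of the $\b{e}_s$ onto $\evaluations$, so the only difference is a slightly more direct description of the same isomorphism.
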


\begin{proof}
We still denote by~$(\b{e}_s)_{s \in \ground}$ the standard basis of~$\R^\ground$.
Let~$\pi : \R^\ground \to \evaluations$ denote the projection on~$\evaluations$ parallel to~$\dependences$.
Consider the vectors~$\b{f}_s \eqdef \pi(\b{e}_s)$ of~$\evaluations$.

Observe that, for any dependence~$\b{\delta} \in \dependences$, we have
\[
\sum_{s \in \ground} \delta_s \b{f}_s = \sum_{s \in \ground} \delta_s \pi(\b{e}_s) = \pi\bigl(\sum_{s \in \ground} \delta_s \b{e}_s \bigr) = \pi(\b{\delta}) = \zero
\]
since the projection~$\pi$ is parallel to~$\dependences$.
As the dimension of~$\evaluations$ is the rank of~$\b{A}$, we obtain that the vectors~$(\b{f}_s)_{s \in \ground}$ have precisely the same dependences as the vectors~$(\b{a}_s)_{s \in \ground}$.
We can thus define an invertible linear map~$\Psi : \R\b{A} \to \evaluations$ such that~$\Psi(\b{a}_s) = \b{f}_s$ for all~$s \in \ground$.
We denote by~$\Psi^* :  \evaluations \to \R\b{A}$ the adjoint of~$\Psi$, \ie such that~$\dotprod{\Psi(\b{a})}{\b{x}} = \dotprod{\b{a}}{\Psi^*(\b{x})}$ for any~$\b{a} \in \R\b{A}$ and~$\b{x} \in \evaluations$.

We now observe that the map~$\Psi^*$ sends the acyclonestohedron~$\Acycl(\building, \b{A})$ to the acyclonestohedron~$\AcyclProj(\building, \OM(\b{A}))$.
Indeed, recall from \cref{prop:facetDescriptionNestohedron} that the inequality of the nestohedron~$\Nest(\building, \b{\rho})$ corresponding to a building block~$B \in \building$ is given by~$\bigdotprod{\sum_{b \in B} \b{e}_b}{\b{x}} \ge \Omega_B$ where~$\Omega_B \eqdef \sum_{B' \in \building, B' \subseteq B} \rho_{B'}$.
Observe now that~$\b{x} \in \evaluations$, we have
\[
\bigdotprod{\sum_{b \in B} \b{e}_b}{\b{x}} = \bigdotprod{\sum_{b \in B} \b{f}_b}{\b{x}} = \bigdotprod{\Psi\bigl(\sum_{b \in B} \b{a}_b\bigr)}{\b{x}} = \bigdotprod{\sum_{b \in B} \b{a}_b}{\Psi^*(\b{x})}.
\]
Hence, the inequality~$\bigdotprod{\sum_{b \in B} \b{e}_b}{\b{x}} \ge \Omega_B$ of the acyclonestohedron~$\Acycl(\building, \b{A})$ becomes the inequality~$\bigdotprod{\sum_{b \in B} \b{a}_b}{\b{y}} \ge \Omega_B$ of the acyclonestohedron~$\AcyclProj(\building, \OM(\b{A}))$.
The same observation holds replacing the inequalities corresponding to the blocks of~$\building$ by the equalities corresponding to the connected components of~$\building$.
\end{proof}

\begin{theorem}
\label{thm:alternativeRealization}
For any realizable oriented building set~$(\building, \OM(\b{A}))$, the acyclic nested complex $\acyclicNestedComplex(\building, \OM(\b{A}))$ is isomorphic to the boundary complex of the polar of the acyclonestohedron~$\AcyclProj(\building, \b{A})$.
\end{theorem}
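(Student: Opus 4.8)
The plan is to deduce this statement formally from the two results already established in this section, with essentially no additional work. By \cref{thm:acyclonestohedron}, the acyclic nested complex $\acyclicNestedComplex(\building, \OM(\b{A}))$ is isomorphic to the boundary complex of the polar of the acyclonestohedron $\Acycl(\building, \b{A})$. By \cref{prop:affineMapRealizations}, the polytopes $\Acycl(\building, \b{A})$ and $\AcyclProj(\building, \b{A})$ are affinely equivalent, the equivalence being induced by the adjoint $\Psi^*$ of the linear isomorphism $\Psi$ constructed in that proof. Since an affine equivalence of polytopes induces an isomorphism of their face lattices, and hence a combinatorial equivalence of their polars, the boundary complex of the polar of $\AcyclProj(\building, \b{A})$ is isomorphic to that of the polar of $\Acycl(\building, \b{A})$, which in turn is isomorphic to $\acyclicNestedComplex(\building, \OM(\b{A}))$. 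Composing the two isomorphisms gives the claimed one explicitly.

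The one point I would pause on is making sure that ``the polar of $\AcyclProj(\building, \b{A})$'' is well defined, i.e.\ that $\AcyclProj(\building, \b{A})$ is a genuine (bounded) polytope of the expected dimension $\rank[\OM(\b{A})] - |\connectedComponents(\building)|$. This again follows from the affine equivalence of \cref{prop:affineMapRealizations}: $\Acycl(\building, \b{A})$ is by definition the section of the bounded polytope $\Nest(\building, \b{\rho})$ by the evaluation space $\evaluations$, hence bounded, and its dimension is pinned down by \cref{thm:acyclonestohedron} (equivalently by \cref{coro:pureAcyclicNestedComplex}); both properties transport across $\Psi^*$.

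In short, I expect this proof to be a two-line deduction (``Immediate from \cref{thm:acyclonestohedron} and \cref{prop:affineMapRealizations}''), possibly with a sentence recording that the combinatorial isomorphism is transported by $\Psi^*$. There is no genuine obstacle here: all the substantive geometry — the coefficient choice $\b{\rho}$ in \cref{def:acyclonestohedron}, the separation estimates of \cref{lem:acyclonestohedron1,lem:acyclonestohedron2}, and the construction of $\Psi$ — has already been carried out upstream.
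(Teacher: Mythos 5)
Your proposal matches the paper's proof exactly: the paper's argument is simply "Immediate consequence of \cref{thm:acyclonestohedron,prop:affineMapRealizations}", i.e.\ compose the combinatorial isomorphism from \cref{thm:acyclonestohedron} with the affine equivalence of \cref{prop:affineMapRealizations}. Your additional remarks on transporting the face-lattice isomorphism via $\Psi^*$ and on boundedness/dimension are correct but not needed beyond what the paper records.
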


\begin{proof}
Immediate consequence of \cref{thm:acyclonestohedron,prop:affineMapRealizations}.
\end{proof}

\begin{remark}\label{rmk:redundantinequalitiesacyclonestohedron}
Note that in \cref{def:acyclonestohedronProj}, the inequalities given by the acyclic blocks~$B$ are redundant, since the corresponding facets of~$\Nest(\building, \b{\rho})$ are not intersected by the evaluation space~$\evaluations$.
We could thus just delete them from the inequalities list.
In other words, the irredundant inequalities in \cref{def:acyclonestohedronProj} correspond to the blocks of~$\building \cap \FL(\OM)$.
\end{remark}

%%%%%%%%%%%%%%%%%%%%%%%%%%%%%%%%%%%%%%%

\section{Compactifications}
\label{sec:compactifications}

In the celebrated paper \cite{DeConciniProcesi1995}, C.~De Concini and C.~Procesi used nested sets on the lattice of flats of a subspace arrangement in a projective space to study \defn{wonderful compactifications} of its complement via sequences of projective blow-ups. 
Compactifications of interiors of polytopes have also been studied in different guises. 
In particular, the configuration space of $n$ ordered points in the real line is the interior of a simplex, and the (Stasheff) associahedron models its (real) W.~Fulton and R.~MacPherson's compactification \cite{FultonMacPherson1994, Devadoss2004}. 
P.~Galashin's main motivation for defining poset associahedra~\cite{Galashin} was that they model compactifications of the space of order preserving maps $P\to \R$, which can be identified with the interior of an order polytope, thus generalizing the aforementioned construction of the (Stasheff) associahedron.
G.~Gaiffi gave in~\cite{Gaiffi2003} general models for real compactifications of arrangements of subspaces and halfspaces, which includes in particular the case of polyhedral cones.
His work shows that all facial nested complexes model nice compactifications of interiors of polytopes, obtained via sequences of real blow-ups (\ie the ball, beams, and plates constructions of~\cite{KuperbergThurston}), which we now describe.

Let $\polytope\subseteq \R^d$ be a polytope. For simplicity, we will assume that it is full-dimensional and that $\b 0\in \inter{\polytope}$, where $\inter{\polytope}$ denotes the interior of~$P$.
We consider the vector configurations~$\b{A}_{\polytope}$ and~$\b{A}_{\polytope^\triangle}$ associated to~$\polytope$ and its polar~$\polytope^\triangle$ (as in \cref{exm:coneAcyclicRealizableOM}). 
Let $\{H_s\}_{s\in \ground}$ be the facet-defining hyperplanes of~$\cone(\b{A}_{\polytope})$ (which is the homogenization of $\polytope$). 

Let~$\fbuilding$ be  a connected facial building set of~$\OM(\b{A}_{\polytope^\triangle})$. For every $F\in \fbuilding$, we denote by $V_F \eqdef \bigcap_{s\in F}H_s$, and by $A_F \eqdef V_F^\perp$ its orthogonal subspace. Finally, let $\sph[{F}]\eqdef \sph[d]\cap A_F$, where $\sph[{d}]$ is the unit sphere. 

For a point $p\in \inter{\polytope}$, we denote by $p\mapsto \tilde p \eqdef\binom{p}{1}$ its homogenization in $\R^{d+1}$. For each $F\in \fbuilding$, let $\pi_F$ denote the orthogonal projection onto~$A_F$, and let $\tilde \pi_F: \inter{\polytope} \to \sph[F]$ be the map $p \mapsto \frac{\pi_F(\tilde p)}{\|\pi_F(\tilde p)\|}$, which is well-defined on the interior of $\polytope$. Finally, let $\rho \eqdef \prod_{F \in \fbuilding} \tilde \pi_F$ be the product map:
\[
\rho:
\begin{array}[t]{lcl}
\inter{\polytope} & \longrightarrow & \displaystyle \prod_{F \in \fbuilding} \sph[{F}] \\
p & \longmapsto & \bigl( \tilde \pi_F(p) \bigr)_{F \in \fbuilding}
\end{array}
\]
 
The closure of the image $\rho(\inter{\polytope})$ of $\inter{\polytope}$ under this map is a compactification $\polytope^{\fbuilding}$ of $\inter{\polytope}$ (note that the factor $\tilde \pi_\ground$ corresponding to $\ground\in \fbuilding$ is a diffeomorphism between $\inter{\polytope}$ and its image), that has many interesting properties analogous to the De Concini--Procesi wonderful models~\cite{DeConciniProcesi1995}. 

\begin{theorem}[{\cite[Sect.~6.1 \& Thm.~11.1]{Gaiffi2003}}]
\label{thm:compactification}
Consider a polytope~$\polytope$ with polar~$\polytope^\triangle$, and a connected facial building set~$\fbuilding$ of~$\FL(\polytope^\triangle)$.
Then there is a compactification $\polytope^{\fbuilding}$ of the interior of~$\polytope$ that is a stratified $C^\infty$ manifold with corners such that
\begin{enumerate}[(i)]
 \item except for the open dense stratum, all the strata lie in the boundary,
 \item the codimension~$1$ strata are in correspondence with the facial blocs of $\fbuilding$,
 \item the intersection of the closures of the strata indexed by a subset $ \nested\subseteq\fbuilding$ is non-empty if and only if $\nested$ is a $\FL(\polytope^\triangle)$-nested set,
 \item the strata of $\polytope^{\fbuilding}$ are indexed by the faces of the facial nested complex~$\nestedComplex[\FL(\polytope^\triangle)][\fbuilding]$.
\end{enumerate}
\end{theorem}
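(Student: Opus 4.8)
The statement is, up to translation into our language, contained in~\cite[Sect.~6.1 \& Thm.~11.1]{Gaiffi2003}, so the plan is to set up the dictionary between our combinatorial data and Gaiffi's geometric construction and then invoke his theorem. First I would recall Gaiffi's framework: given a finite arrangement of linear subspaces (and, in the polyhedral case, supporting halfspaces) of a real vector space, the nonempty intersections of the subspaces form a lattice, a \emph{building set} in his sense is a subset satisfying exactly the product-decomposition condition of \cref{def:latticeBuildingSet}, and to such a building set he attaches a compactification obtained by an iterated sequence of real (spherical) blow-ups along its subspaces in order of decreasing dimension, built from the ball, beams, and plates constructions of~\cite{KuperbergThurston}. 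Its strata are indexed by the nested sets, the codimension-one strata corresponding to the blocks, exactly as in the De Concini--Procesi and Feichtner--Kozlov theory~\cite{DeConciniProcesi1995, FeichtnerKozlov2004}.

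The main step is the identification. The relevant arrangement is the one cut out by the facet hyperplanes $\{H_s\}_{s \in \ground}$ of the cone $\cone(\b{A}_{\polytope})$ together with its supporting halfspaces --- it is the halfspaces that make this a compactification of $\inter{\polytope}$ rather than of the complement of a central arrangement. I would first record that $F \mapsto V_F \eqdef \bigcap_{s \in F} H_s$ is an order-reversing bijection between $\FL(\polytope^\triangle) \cong \FL(\OM(\b{A}_{\polytope^\triangle}))$ and the poset of the subspaces $V_F$ ordered by reverse inclusion: a face $F$ of $\polytope^\triangle$ is dual to a face $F^\diamond$ of $\polytope$, and $V_F$ is the linear span of the cone over $F^\diamond$, so a larger $F$ gives a smaller $F^\diamond$ and a smaller $V_F$, with $\botzero = \varnothing$ mapping to the whole space and $\topone = \ground$ to the apex $\{\b 0\}$. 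Under this bijection, the subspaces appearing as faces of the cone form precisely the ``visible'' part of the intersection lattice of $\{H_s\}$, and --- this is the crux --- a product decomposition of a lower interval $\FL(\polytope^\triangle)_{\le F}$ corresponds, through \cref{prop:directSumCartesianProductFL} and the recursive criterion of \cref{rem:reccursiveDefinitionLatticeBuildingSet}, to a splitting of the corresponding face of the cone into independent blocks; this is exactly the condition Gaiffi requires of his building sets. Hence a connected facial building set $\fbuilding$ of $\OM(\b{A}_{\polytope^\triangle})$ becomes, via $F \mapsto V_F$, a building set in Gaiffi's sense on $\cone(\b{A}_{\polytope})$.

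It then remains to match the maps: for $F \in \fbuilding$, the composite $\tilde\pi_F \colon p \mapsto \pi_F(\tilde p)/\|\pi_F(\tilde p)\|$ records the direction of the orthogonal projection of the homogenized point onto $A_F = V_F^\perp$, and taking the product over $F \in \fbuilding$ and closing up the image reproduces --- after the standard identifications of~\cite{Gaiffi2003}, with $\tilde\pi_\ground$ a diffeomorphism onto its image and the other factors encoding the successive blow-up data along the $V_F$ --- the compactification Gaiffi attaches to $\{V_F\}_{F \in \fbuilding}$. Invoking~\cite[Thm.~11.1]{Gaiffi2003} and translating its conclusions back through the bijection $F \mapsto V_F$ then yields (i)--(iv): the only stratum not in the boundary is the open dense one; the codimension-one strata are the exceptional divisors of the blow-ups, hence in bijection with the blocks of $\fbuilding$; a family of strata indexed by $\nested \subseteq \fbuilding$ has nonempty common closure if and only if the corresponding subspaces are nested, that is, if and only if $\nested$ is an $\FL(\polytope^\triangle)$-nested set; and therefore the strata are indexed by the faces of $\nestedComplex[\FL(\polytope^\triangle)][\fbuilding]$.

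The hard part will be making the identification of the previous paragraph airtight: one must check that Gaiffi's notion of building set on an arrangement of subspaces and halfspaces restricts correctly to the face structure of $\cone(\b{A}_{\polytope})$, so that only the visible subspaces $V_F$ enter; verify that the polyhedral (halfspace) side of his construction is exactly what compactifies $\inter{\polytope}$; and keep track of the normalization that only a block $F$ with $V_F$ of codimension $\ge 2$ inside the next block above contributes a non-trivial blow-up, mirroring the role of $\R^\building_+$ in the nestohedron construction. Since all of the analytic and blow-up work is already carried out in~\cite{Gaiffi2003}, once the dictionary is in place there is nothing further to prove.
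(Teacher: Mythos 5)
Your proposal is correct and follows essentially the same route as the paper: the paper likewise realizes $\polytope^{\fbuilding}$ via the subspaces $V_F=\bigcap_{s\in F}H_s$, the normalized projections $\tilde\pi_F$ onto $A_F=V_F^\perp$, and the closure of the image of the product map $\rho$, and then imports properties (i)--(iv) directly from \cite[Sect.~6.1 \& Thm.~11.1]{Gaiffi2003}. The only content beyond the citation is the dictionary between facial building sets and Gaiffi's building sets of subspaces attached to the homogenization cone, which you set up in the same way (indeed somewhat more explicitly than the paper does).
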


In fact, in the second part of~\cite{Gaiffi2003}, G.~Gaiffi generalizes his constructions to arbitrary conically stratified $C^\infty$ manifolds with corners, those that locally look like arrangements of linear subspaces and halfspaces. Therefore, an analogue of \cref{thm:compactification} also exists for a more general class of oriented matroids, those which are representable by $C^\infty$ pseudosphere arrangements (the Topological Representation Theorem \cite{FolkmanLawrence1978}\cite[Sect.~1.4]{BjornerLasVergnasSturmfelsWhiteZiegler} states that every oriented matroid can be represented by an arrangement of pseudospheres, although as far as we know, it is not known whether one can further require them to be $C^\infty$).

A recent alternative compactification of polytope interiors via projective blow-ups as been introduced in~\cite{BraunerEurPrattVlad} under the name \defn{wondertopes}.
They are defined as regions in De Concini--Procesi wonderful compactifications of hyperplane arrangement complements, and it is proven in~\cite{BraunerEurPrattVlad} that they have a positive geometry structure in the sense of~\cite{ArkaniHamedBaiLam-PositiveGeometries}.
The boundary structure of wondertopes is described in~\cite[Sect.~4.1]{BraunerEurPrattVlad} via the links of its elements.
As it can be verified by comparing~\cite[Lem.~4.3]{BraunerEurPrattVlad} with \cref{prop:singleLinkFacialBuildingSet}, the facial nested complex provides an explicit combinatorial description of the boundary structure of wondertopes.

Hence, facial nested complexes model the two compactifications of~\cite{Gaiffi2003} and~\cite{BraunerEurPrattVlad}, while the compactification techniques are slightly different.
This is not surprising as facial nested complexes record sequences of combinatorial blow-ups, which were designed to model geometric blow-ups.
In fact, the compactifications of~\cite{Gaiffi2003} and~\cite{BraunerEurPrattVlad} are closely related, as discussed in~\cite{Gaiffi2004}.

%%%%%%%%%%%%%%%%%%%%%%%%%%%%%%%%%%%%%%%

\clearpage
\part{Poset associahedra and affine poset cyclohedra}
\label{part:posetAssociahedra}

In this part, we show that the poset associahedra (\cref{sec:posetAssociahedra}) and affine poset cyclohedra (\cref{sec:affinePosetCyclohedra}) defined by P.~Galashin in~\cite{Galashin} can be interpreted as specific acyclic nested complexes.
Applying \cref{sec:orientedMatroidRealizations}, we thus recover the polytopal realizations of~\cite[Thm.~2.1]{Galashin} by stellar subdivisions of order polytopes.
Applying \cref{sec:polytopalRealizations}, we obtain nice explicit realizations as graphical acyclonestohedra.
Applying \cref{sec:compactifications}, we also recover stratified compactifications of the order polytopes akin to~\cite[Thm.~1.9\,\&\,1.12]{Galashin}.
We note that nice polytopal realizations for poset associahedra (with explicit integer inequality descriptions) were also independently obtained by A.~Sack~\cite{Sack}, and actually motivated our construction of acyclonestohedra in~$\R\b{A}$ (see \cref{rem:Sack} for more details).
It would be interesting to know if other results on the combinatorial structure of poset associahedra~\cite{NguyenSack1,NguyenSack2,Nguyen} can be extended in a meaningful way to all acyclonestohedra.

%%%%%%%%%%%%%%%%%%%%%%%%%%%%%%%%%%%%%%%

\section{Poset associahedra}
\label{sec:posetAssociahedra}

Here, we consider a classical finite poset~$(\poset, \preccurlyeq)$, whose Hasse diagram~$H(\poset)$ is connected.
(Note that the case of disconnected posets can be treated using \cref{coro:connectedLatticeBuildingSet,coro:connectedFacialBuildingSet}, we stick here with the convention of~\cite{Galashin} which slightly simplifies the exposition.)
We first recall the definition of poset associahedron from~\cite{Galashin}.

\begin{definition}[{\cite[Sect.~1.1]{Galashin}}]
\label{def:posetAssociahedron}
A \defn{pipe} of~$\poset$ is a subset~$Q$ of~$\poset$ with~$|Q| > 1$ which induces a connected subgraph of~$H(\poset)$ and which is order convex (\ie such that $p \preccurlyeq q \preccurlyeq r$ with $p,r\in Q$ implies $q\in Q$).
We denote by~$\c{D}(\poset)$ the directed graph with a vertex for each pipe of~$\poset$, with an arc joining a pipe~$Q$ to a pipe~$R$ if~$Q \cap R = \varnothing$ and there is~$q \in Q$ and~$r \in R$ such that~$q \prec r$.
A \defn{piping} of~$\poset$ is a set of pairwise nested or disjoint pipes of~$\poset$, which contains the pipe~$\poset$ itself, and which induces an acyclic subgraph of~$\c{D}(\poset)$ (see \cref{fig:exmPosetNested}).
The \defn{piping complex} of~$\poset$ is the simplicial complex~$\pipingComplex(\poset)$ whose faces are $\piping \ssm \{\poset\}$ for all pipings~$\piping$ of~$\poset$ (see \cref{fig:posetAssociahedraCombi}).
A \defn{poset associahedron} of~$\poset$ is a simple polytope whose dual has boundary complex isomorphic to the piping complex~$\pipingComplex(\poset)$ (see \cref{fig:posetAssociahedraCombi}).
\begin{figure}[b]
	\capstart
	\centerline{\includegraphics[scale=.5]{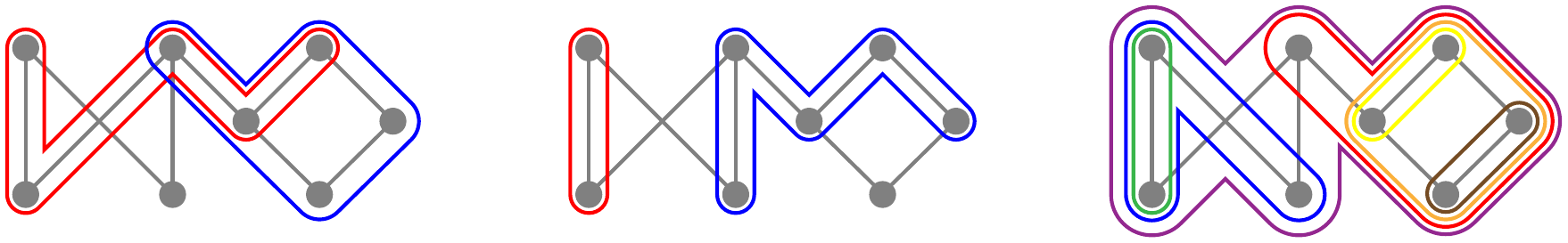}}
	\caption{Some incompatible pipes (left and middle), and a maximal piping (right) on a poset.}
	\label{fig:exmPosetNested}
\end{figure}
\end{definition}

We now interpret the piping complexes as acyclic nested complexes.

\begin{definition}
\label{def:posetOrientedBuildingSet}
The \defn{poset oriented building set}~$(\building(\poset), \OM(\poset))$ of~$\poset$ is the graphical oriented building set of \cref{exm:graphicalOrientedBuildingSet} for its Hasse diagram~$H(\poset)$.
That is,  $\building(\poset)$ is the graphical building set of the line graph~$L(\poset)$ of~$H(\poset)$ and~$\OM(\poset)$ is the graphical oriented matroid of~$H(\poset)$.
\end{definition}

\begin{proposition}
\label{prop:pipingComplexAsAcyclicNestedComplex}
The piping complex~$\pipingComplex(\poset)$ is isomorphic to the acyclic nested complex \linebreak $\acyclicNestedComplex(\building(\poset), \OM(\poset))$ of the poset oriented building set of~$\poset$.
\end{proposition}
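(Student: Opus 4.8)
The strategy is to match the combinatorial data on both sides piece by piece: the vertices of the two complexes, and then the facets (maximal faces). On the one side we have pipings of $\poset$, built from pipes of $\poset$; on the other we have acyclic nested sets of $(\building(\poset), \OM(\poset))$, built from acyclic tubings of the line graph $L(\poset)$ of the Hasse diagram $H(\poset)$. The first and crucial step is to establish a bijection between \emph{pipes} of $\poset$ and a distinguished family of \emph{tubes} of $L(\poset)$. A tube $\tube$ of $L(\poset)$ is a set of edges of $H(\poset)$ inducing a connected subgraph of $L(\poset)$, i.e.\ the edges of some connected subgraph $G_\tube$ of $H(\poset)$; by \cref{exm:graphicalAcyclicNestedComplex} the tube appears in an acyclic tubing exactly when the contraction of the relevant arcs yields an acyclic digraph. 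I would send a pipe $Q$ of $\poset$ (an order-convex subset with $|Q|>1$ inducing a connected subgraph of $H(\poset)$) to the tube $\tube_Q$ consisting of all edges of $H(\poset)$ with both endpoints in $Q$. The inverse sends an edge-tube $\tube$ to its vertex set $V(\tube)\subseteq\poset$; I would need to check that the vertex set of the connected subgraph of $H(\poset)$ induced by a tube which is \emph{acyclic-extendable} is automatically order-convex (this is where Hasse-diagram connectivity of $\poset$ and the acyclicity condition of \cref{exm:graphicalAcyclicNestedComplex} are used: a "gap" $p\prec q\prec r$ with $p,r\in V(\tube)$, $q\notin V(\tube)$ would create, after contracting, a directed edge from the contracted blob to itself or a $2$-cycle).

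**Matching the incidence relations.** Having identified the vertices, the next step is to check that the two notions of "compatible" agree under this bijection. For pipes this is: nested or disjoint, and the directed graph $\c{D}(\poset)$ induces an acyclic subgraph. For tubes of $L(\poset)$ inside the graphical oriented building set it is: the tubing condition (pairwise compatible tubes of $L(\poset)$) together with the acyclicity condition from \cref{lem:characterizationAcyclicNestedSets}. The plan is to unwind the definition of $\c{D}(\poset)$ (an arc $Q\to R$ precisely when $Q\cap R=\varnothing$ and some $q\in Q$, $r\in R$ have $q\prec r$) and compare it with the digraph $\OM(\poset)$-contraction picture: an acyclic nested set $\nested$ on $\building(\poset)$ corresponds, via \cref{exm:graphicalRestrictionContractionOrientedBuildingSet}, to the condition that for each tube $\tube\in\nested$, contracting in $H(\poset)_{|\tube}$ all arcs lying in some strictly smaller tube of $\nested$ gives an acyclic digraph. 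The arcs created by these contractions are exactly the arcs of $\c{D}(\poset)$ restricted to the pipes in $\nested$, so the acyclicity conditions coincide. I would also need to handle disjointness versus adjacency: two disjoint pipes $Q,R$ with no comparable pair $q\prec r$ nor $r\prec q$ are "non-adjacent" in the line graph $L(\poset)$ sense precisely when $\tube_Q\cup\tube_R$ is not a tube of $L(\poset)$, i.e.\ no edge of $H(\poset)$ joins $Q$ to $R$ — which, for order-convex $Q,R$, is equivalent to there being no covering relation between them, and the remaining "distant but comparable" case is handled by the $\c{D}(\poset)$-acyclicity. Finally the requirement that a piping contains the pipe $\poset$ matches the requirement that a nested set contains $\connectedComponents(\building(\poset)) = \{\poset\}$ (using that $H(\poset)$ is connected, so $L(\poset)$ is connected and $\building(\poset)$ is connected).

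**Conclusion and the main obstacle.** Once the bijection on vertices is shown to carry pipings to acyclic nested sets and back, the isomorphism of simplicial complexes follows by passing to faces: $\piping\mapsto \set{\tube_Q}{Q\in\piping}$ is a bijection from pipings of $\poset$ to acyclic nested sets of $(\building(\poset),\OM(\poset))$, and removing the top element $\poset$ (resp.\ $\connectedComponents(\building(\poset))$) turns this into a simplicial isomorphism $\pipingComplex(\poset)\cong\acyclicNestedComplex(\building(\poset),\OM(\poset))$. I expect the main obstacle to be the verification that every acyclic-extendable tube of $L(\poset)$ has order-convex vertex set in $\poset$ (so that the map on vertices is genuinely a bijection, not just an injection from pipes), and dually that the acyclicity condition in \cref{lem:characterizationAcyclicNestedSets} translates precisely to acyclicity of the induced subgraph of $\c{D}(\poset)$; both require carefully tracking how contractions of the Hasse diagram create exactly the arcs of $\c{D}(\poset)$. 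A clean way to organize this is to first observe, via \cref{rem:posetAssociahedra}, that graphical acyclic nested complexes only depend on the transitive closure of the digraph, reducing the comparison to the poset $\poset$ itself and letting the combinatorics of $\c{D}(\poset)$ and of contracted Hasse diagrams be compared on the same footing.
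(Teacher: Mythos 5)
Your plan is essentially the paper's proof: the same map $Q\mapsto\varepsilon(Q)$ (arcs of the induced subgraph $H(Q)$) is shown to biject pipes with the tubes of $L(\poset)$ that are acyclic for $\OM(\poset)$, and then nestedness, disjointness and the acyclicity conditions are matched; the only difference is that you test acyclicity through contractions (conditions (i)/(iv) of \cref{lem:characterizationAcyclicNestedSets}) where the paper uses the circuit criterion (condition (iii)), which is an equivalent reformulation rather than a different route. However, two specific steps need repair. First, for $\tube\mapsto V(\tube)$ to invert $Q\mapsto\varepsilon(Q)$ you must rule out not only tubes with non-convex vertex set, but also \emph{non-induced} tubes, i.e.\ connected arc sets missing some Hasse arc between two of their vertices. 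For instance, when $H(\poset)$ is the $4$-cycle with arcs $a{\to}c$, $a{\to}d$, $b{\to}c$, $b{\to}d$, the tube $\{a{\to}c,\,a{\to}d,\,b{\to}c\}$ is connected and has order-convex vertex set $\{a,b,c,d\}$, yet it is not $\varepsilon(Q)$ for any pipe $Q$ and it shares its vertex set with $\varepsilon(\poset)$; your convexity argument does not exclude it. The fix is parallel to your convexity argument (contracting the tube identifies all four vertices, so the missing arc $b{\to}d$ becomes a loop, a positive circuit of the contraction), and this is exactly the case the paper folds into its ``not induced or not order convex'' step, but it has to be verified, otherwise the vertex bijection fails.

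Second, your claim that $\varepsilon(Q)$ and $\varepsilon(R)$ are non-adjacent in $L(\poset)$ precisely when no edge of $H(\poset)$ joins $Q$ to $R$ is false, and the ensuing reconciliation (``no covering relation between them'', with comparable-but-distant pairs deferred to $\c{D}(\poset)$-acyclicity) chases a discrepancy that does not exist. Adjacency in $L(\poset)$ means a shared endpoint, so for disjoint pipes $Q,R$ the tubes $\varepsilon(Q)$ and $\varepsilon(R)$ are automatically disjoint and non-adjacent: a Hasse arc from $Q$ to $R$ lies in neither tube and creates no adjacency. Taken literally, your statement would wrongly exclude legitimate pipings such as $\{\{a,b\},\{c,d\}\}$ in the chain $a<b<c<d$. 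The correct bookkeeping, as in the paper, is that disjointness of pipes corresponds exactly to disjointness-plus-non-adjacency of tubes, and \emph{all} order relations between disjoint pipes, covering or not, are accounted for solely by the third equivalence: directed cycles of $\c{D}(\poset)$ among pipes of the piping correspond to circuits $c$ of $\OM(\poset)$ with $c_+$ contained in, and $c_-$ not contained in, the union of the corresponding tubes (equivalently, to cyclic contractions). With these two points repaired, your argument coincides with the paper's.
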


\begin{proof}
For a subset~$Q \subseteq \poset$, denote by~$H(Q)$ the subgraph of the Hasse diagram~$H(\poset)$ induced by~$Q$, by~$\varepsilon(Q)$ the arcs of~$H(Q)$, and by~$L(Q)$ the subgraph of the line graph~$L(\poset)$ of~$H(\poset)$ induced by~$\varepsilon(Q)$.
Observe that
\begin{itemize}
\item $H(Q)$ is connected if and only if $L(Q)$ is connected,
\item if~$H(Q)$ is connected, then $|Q| > 1$ if and only if~$|\varepsilon(Q)| > 0$.
\end{itemize}

Hence, the map~$Q \mapsto \varepsilon(Q)$ is an injection from pipes of~$\poset$ to tubes of~$L(\poset)$. 
The tubes of~$L(\poset)$ that are not in the image of $\varepsilon$ correspond to subgraphs of~$H(\poset)$ that are either not induced or not order convex. %, therefore they are not acyclic.
If $\tube$ is one of these tubes, then there exist $i \prec j \in \poset$ and a directed path $\vec{\pi}$ from $i$ to $j$ such that $i$ and~$j$ belong to $H(\tube)$ but not all the edges of $\vec{\pi}$ do. Since $H(\tube)$ is connected, there is a (undirected) path $\pi'$ between $i$ and $j$ in $H(\tube)$. 
Combining $\vec{\pi}$ and $\pi'$, we get a circuit of~$\OM(\poset)$ such that $c_+ \subseteq \pi' \subset \tube$ while $\vec{\pi} \subseteq c_- \not\subseteq \tube$. 
Conversely, note that all the tubes of $L(\poset)$ that are not acyclic correspond to non-convex or non-induced subgraphs of $H(\poset)$.
Hence,~$Q \mapsto \varepsilon(Q)$ is a bijection between pipes of~$\poset$ and tubes of~$L(\poset)$ which are acyclic for~$\OM(\poset)$.

Moreover, observe that
\begin{itemize}
\item $Q \subseteq R$ if and only if~$\varepsilon(Q) \subseteq \varepsilon(R)$,
\item $Q$ and $R$ are disjoint if and only if $\varepsilon(Q)$ and $\varepsilon(R)$ are disjoint and non-adjacent,
\item $Q_1, \dots, Q_k$ form a directed cycle of~$\c{D}(\poset)$ if and only if there is a circuit~$c$ of~$\OM(\poset)$ such that $c_+ \subseteq \varepsilon(Q_1) \cup \dots \cup \varepsilon(Q_k)$ but $c_- \not\subseteq \varepsilon(Q_1) \cup \dots \cup \varepsilon(Q_k)$.
\end{itemize}
Hence, the map~$\piping \mapsto \set{\varepsilon(\pipe)}{\pipe \in \piping}$ is a bijection from pipings of~$\poset$ to tubings of~$L(\poset)$ which are acyclic for~$\OM(\poset)$.
This bijection is illustrated in \cref{fig:posetAssociahedraCombi}.
\end{proof}

\begin{example}
For instance, if the Hasse diagram~$H(\poset)$ is a tree, then the poset associahedron of~$\poset$ is just the graph associahedron of the line graph~$L(\poset)$, see \cref{exm:simplexAcyclicNestedComplex,exm:simplexAcyclicNestedComplexGraphical}.
We thus obtain the permutahedron when~$H(\poset)$ is a star, and the classical associahedron when~$H(\poset)$ is a line (these examples were already observed in~\cite[Sect.~1.1]{Galashin}).
In contrast, when~$H(\poset)$ is a cycle, $L(\poset)$ is also a cycle, but the poset associahedron of~$\poset$ is a section of the cyclohedron with the hyperplane normal to the unique circuit of~$H(\poset)$.
The two possible poset associahedra when~$H(\poset)$ is a $4$-cycle are represented in \cref{fig:posetAssociahedraCombi}, and obtained as sections in~\cref{fig:acyclonestohedra}.
\cref{fig:posetAssociahedron3,fig:posetAssociahedron4} illustrate two $3$-dimensional poset associahedra.
\begin{figure}
	\capstart
	\centerline{\includegraphics[scale=.45]{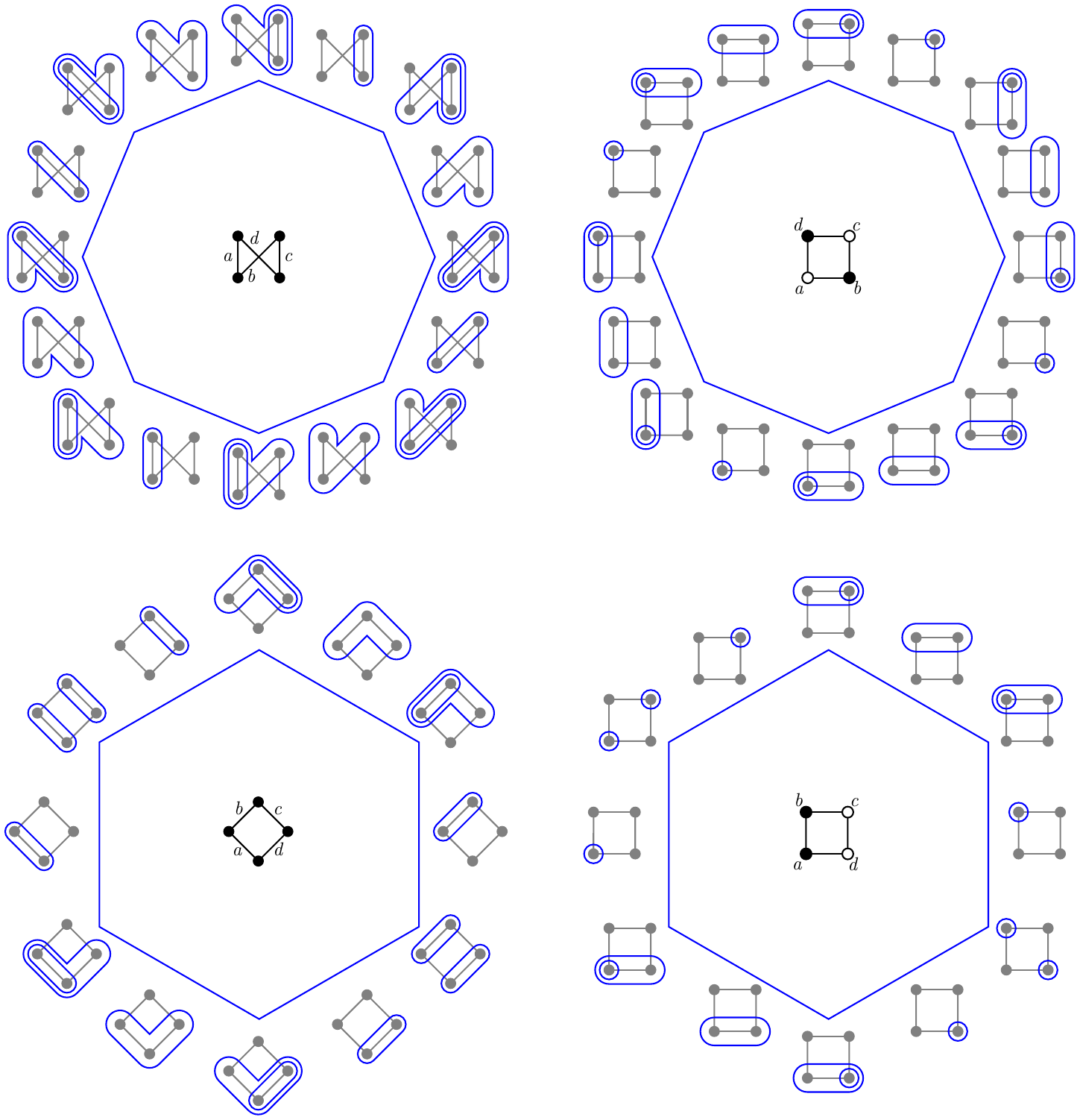}}
	\caption{The isomorphism between the piping complex of~$\poset$ (left) and the poset acyclic nested complex of~$(\building(\poset), \OM(\poset))$ (right) for two different posets~$\poset$. For both posets, the line graph~$L(\poset)$ is the $4$-cycle, and the signs of the unique circuit of~$\OM(\poset)$ are indicated by the black and white vertices of~$L(\poset)$. The pipes of~$\poset$ (left) correspond to the tubes of~$L(\poset)$ (right), and the pipings of~$\poset$ (left) correspond to the tubings of~$L(\poset)$ which are acyclic for~$\OM(\poset)$ (right). The maximal pipe/tube is omitted in all pipings/tubings.}
	\label{fig:posetAssociahedraCombi}
\end{figure}
\begin{figure}
	\capstart
	\centerline{\includegraphics[scale=.45]{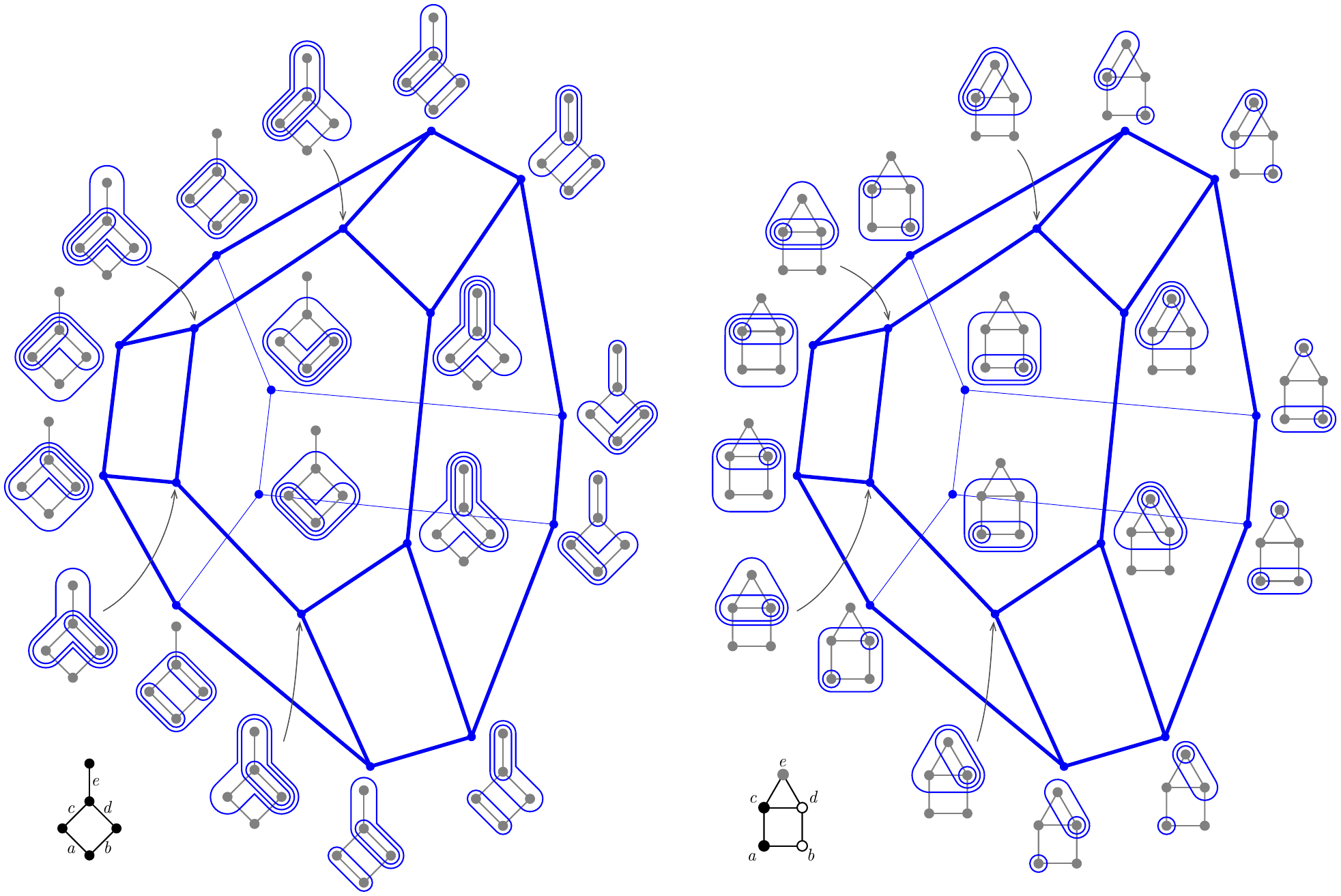}}
	\caption{A $3$-dimensional poset associahedron already considered in \cite[Fig.~1]{Galashin}. It illustrates the isomorphism between the piping complex~$\pipingComplex(\poset)$ (left) and the acyclic nested complex of~$\acyclicNestedComplex(\building(\poset), \OM(\poset))$ (right).}
	\label{fig:posetAssociahedron3}
\end{figure}
\begin{figure}
	\capstart
	\centerline{\includegraphics[scale=.45]{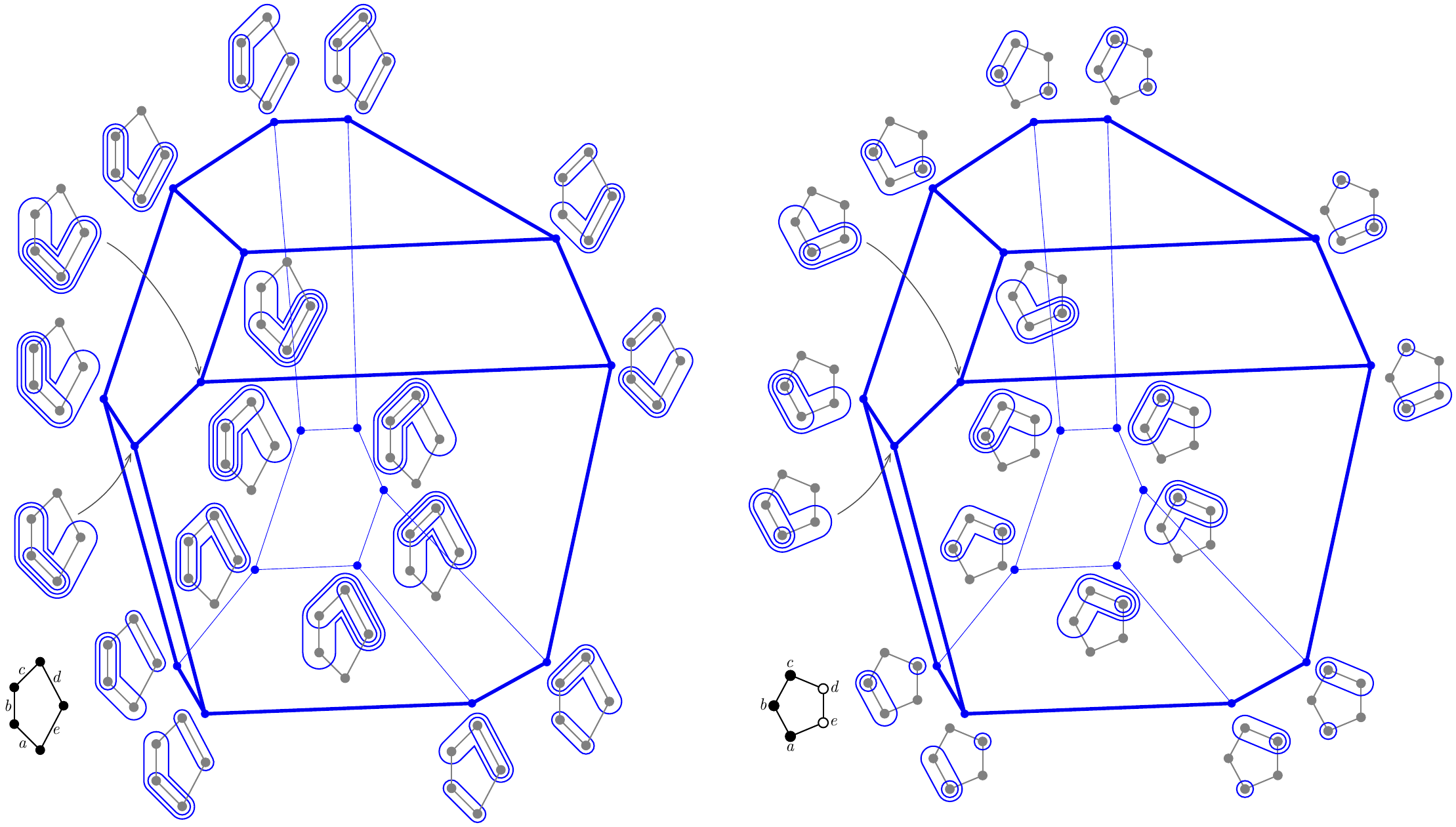}}
	\caption{Another $3$-dimensional poset associahedron. It illustrates the isomorphism between the piping complex~$\pipingComplex(\poset)$ (left) and the acyclic nested complex of~$\acyclicNestedComplex(\building(\poset), \OM(\poset))$ (right).}
	\label{fig:posetAssociahedron4}
\end{figure}
\end{example}

We derive from~\cref{prop:pipingComplexAsAcyclicNestedComplex,prop:linksAcyclicNestedComplex} that the family of piping complexes is closed by links, a result already obtained in \cite[Coro.~2.7\,(vi)]{Galashin}.
We need the following analogue of \cref{def:restrictionContractionOrientedBuildingSet} for posets.

\begin{definition}
\label{def:linksPipingComplex}
For a pipe~$\pipe$ in a piping~$\piping$ of~$\poset$, we define~$\poset_{\pipe \in \piping}$ as the transitive closure of the directed graph~$(H(\poset)_{|\pipe})_{\!/\bigcup_{\pipe' \in \piping, \, \pipe' \subsetneq \pipe} \pipe'}$.
\end{definition}

\begin{corollary}
\label{coro:linksPipingComplex}
The link of a piping~$\piping$ in the piping complex~$\pipingComplex(\poset)$ is the join of the piping complexes~$\pipingComplex(\poset_{\pipe \in \piping})$ for all pipes~$\pipe$ of~$\piping$.
\end{corollary}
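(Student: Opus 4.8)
The strategy is to transport everything through the isomorphism established in \cref{prop:pipingComplexAsAcyclicNestedComplex}, namely $\pipingComplex(\poset) \cong \acyclicNestedComplex(\building(\poset), \OM(\poset))$ via the map $\pipe \mapsto \varepsilon(\pipe)$, and then invoke the link description for acyclic nested complexes from \cref{prop:linksAcyclicNestedComplex}. First I would fix a piping $\piping$ of $\poset$, let $\tubing \eqdef \set{\varepsilon(\pipe)}{\pipe \in \piping}$ be the corresponding acyclic tubing of $L(\poset)$ under the bijection of \cref{prop:pipingComplexAsAcyclicNestedComplex}, and note that $\tubing \ssm \{\varepsilon(\poset)\}$ is the face of $\acyclicNestedComplex(\building(\poset), \OM(\poset))$ matching $\piping \ssm \{\poset\}$. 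Applying \cref{prop:linksAcyclicNestedComplex}, the link of $\tubing \ssm \connectedComponents(\building(\poset))$ is the join $\bigJoin_{\tube \in \tubing} \acyclicNestedComplex(\restrBuilding[\building(\poset)][\tube][\tubing], \restrOM[\tube][\tubing])$, where the restriction-contraction pair is as in \cref{def:restrictionContractionOrientedBuildingSet}.

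The bulk of the work is then to identify each factor $\acyclicNestedComplex(\restrBuilding[\building(\poset)][\tube][\tubing], \restrOM[\tube][\tubing])$ with the piping complex $\pipingComplex(\poset_{\pipe \in \piping})$ of the poset defined in \cref{def:linksPipingComplex}. Here I would use \cref{exm:graphicalRestrictionContractionOrientedBuildingSet}: the oriented building set $(\building(L(\digraph)), \OM(\digraph))_{\tube \in \tubing}$ is the graphical oriented building set of the directed graph obtained by contracting, in the restriction $\digraph_{|\tube}$ of $\digraph = H(\poset)$, all arcs contained in some strictly smaller tube $\tube' \in \tubing$. Translating $\tube = \varepsilon(\pipe)$ and $\tube' = \varepsilon(\pipe')$ back to the poset side, this is exactly $(H(\poset)_{|\pipe})_{\!/\bigcup_{\pipe' \in \piping,\,\pipe' \subsetneq \pipe} \pipe'}$, i.e.\ the directed graph whose transitive closure is $\poset_{\pipe \in \piping}$ in \cref{def:linksPipingComplex}. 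The point to check is that the graphical acyclic nested complex only depends on this directed graph through its transitive closure; this is precisely \cref{rem:posetAssociahedra}, which says that $\acyclicNestedComplex(\building(L(\digraph)), \OM(\digraph))$ for an acyclic $\digraph$ is isomorphic to that of the Hasse diagram of its transitive closure (and is empty if $\digraph$ is cyclic — but cyclicity would contradict acyclicity of $\tubing$, so that case does not arise here). Combining this with \cref{prop:pipingComplexAsAcyclicNestedComplex} applied to the poset $\poset_{\pipe \in \piping}$ gives $\acyclicNestedComplex(\restrBuilding[\building(\poset)][\tube][\tubing], \restrOM[\tube][\tubing]) \cong \pipingComplex(\poset_{\pipe \in \piping})$.

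Assembling the pieces: the link of $\piping$ in $\pipingComplex(\poset)$ equals, via the isomorphism of \cref{prop:pipingComplexAsAcyclicNestedComplex}, the link of $\tubing \ssm \connectedComponents(\building(\poset))$ in $\acyclicNestedComplex(\building(\poset), \OM(\poset))$, which by \cref{prop:linksAcyclicNestedComplex} is $\bigJoin_{\pipe \in \piping} \acyclicNestedComplex(\restrBuilding[\building(\poset)][\varepsilon(\pipe)][\tubing], \restrOM[\varepsilon(\pipe)][\tubing]) \cong \bigJoin_{\pipe \in \piping} \pipingComplex(\poset_{\pipe \in \piping})$, as desired. The main obstacle — really the only non-bookkeeping step — is verifying the identification of the contracted-restricted directed graph with $\poset_{\pipe \in \piping}$ and making sure the transitive-closure passage is legitimate, i.e.\ that contracting arcs inside $H(\poset)_{|\pipe}$ and then taking transitive closure produces a well-defined poset (no $2$-cycles) whose Hasse diagram generates the same graphical acyclic nested complex; this is where one must carefully track the order-convexity and induced-subgraph conditions that $\varepsilon$ encodes, but all the needed statements are already recorded in \cref{exm:graphicalRestrictionContractionOrientedBuildingSet} and \cref{rem:posetAssociahedra}.
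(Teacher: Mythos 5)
Your proposal is correct and takes essentially the same route as the paper, which obtains the corollary precisely by transporting the link description of \cref{prop:linksAcyclicNestedComplex} through the isomorphism of \cref{prop:pipingComplexAsAcyclicNestedComplex}, identifying the factors via \cref{exm:graphicalRestrictionContractionOrientedBuildingSet} and the transitive-closure invariance recorded in \cref{rem:posetAssociahedra}. The details you spell out (acyclicity of the contracted graph from acyclicity of the tubing, and the passage to the Hasse diagram of the transitive closure) are exactly the bookkeeping the paper leaves implicit.
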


Remember from \cref{exm:orderPolytope} that the Las Vergnas face lattice of the graphical oriented matroid~$\OM(\poset)$ is isomorphic to the face lattice of the \defn{order polytope} of~$\poset$, defined by
\[
\OrderPolytope(\poset) \eqdef \bigset{\b{x} \in \R^V}{\sum_{v \in V} x_v = 0, \sum_{(u,v) \in \digraph} x_v - x_u = 1 \text{ and } x_u \le x_v \text{ for each } u \preccurlyeq v \text{ in } \poset}.
\]

Combining~\cref{prop:pipingComplexAsAcyclicNestedComplex,thm:facialNestedComplexesVsAcyclicNestedComplexes,thm:compactification}, we obtain the following compactification, homeomorphic as a stratified space to that of~\cite[Thm.~1.9]{Galashin}.

\begin{corollary} 
There exists a compactification of~$\OrderPolytope(\poset)$ which is a stratified $C^\infty$ manifold with corners and whose combinatorics is encoded by the piping complex~$\pipingComplex(\poset)$.
\end{corollary}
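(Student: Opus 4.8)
The plan is to assemble the corollary from three ingredients already in place. First I would observe that by \cref{exm:orderPolytope}, the Las Vergnas face lattice~$\FL(\OM(\poset))$ of the graphical oriented matroid of the Hasse diagram~$H(\poset)$ is isomorphic to the face lattice of the order polytope~$\OrderPolytope(\poset)$. In particular, $\OrderPolytope(\poset)$ is a polytope whose face lattice is precisely~$\FL(\OM(\poset))$, so the constructions of \cref{sec:compactifications} apply with~$\polytope^\triangle = \OrderPolytope(\poset)$ playing the role of the polar, i.e. with~$\polytope$ a polytope such that~$\FL(\polytope^\triangle) \cong \FL(\OM(\poset))$. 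Here I would note that~$\OM(\poset)$ is realizable (it is the oriented matroid of the incidence configuration of~$H(\poset)$, see \cref{exm:graphicalOM,exm:orderPolytope}), so such a~$\polytope$ exists, and that since~$H(\poset)$ is connected, the poset oriented building set~$(\building(\poset), \OM(\poset))$ is connected as well (the unique $\building(\poset)$-connected component is the set of all arcs of~$H(\poset)$, by \cref{exm:graphicalBuildingSet}).

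Next I would invoke \cref{thm:orientedBuildingSetsTofacialBuildingSets,thm:facialNestedComplexesVsAcyclicNestedComplexes} to pass from the oriented building set~$(\building(\poset), \OM(\poset))$ to its facial part~$\fbuilding \eqdef \building(\poset) \cap \FL(\OM(\poset))$: this is a connected facial building set for~$\OM(\poset)$, and by \cref{thm:facialNestedComplexesVsAcyclicNestedComplexes} the facial nested complex~$\nestedComplex[][\fbuilding, \OM(\poset)]$ coincides with the acyclic nested complex~$\acyclicNestedComplex(\building(\poset), \OM(\poset))$, which by \cref{prop:pipingComplexAsAcyclicNestedComplex} is isomorphic to the piping complex~$\pipingComplex(\poset)$. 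Transporting~$\fbuilding$ through the isomorphism~$\FL(\OM(\poset)) \cong \FL(\polytope^\triangle)$ gives a connected facial building set of~$\FL(\polytope^\triangle)$ whose facial nested complex~$\nestedComplex[\FL(\polytope^\triangle)][\fbuilding]$ is isomorphic to~$\pipingComplex(\poset)$.

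Finally I would apply \cref{thm:compactification} to~$\polytope$ and the facial building set~$\fbuilding$ of~$\FL(\polytope^\triangle)$: it produces a compactification~$\polytope^{\fbuilding}$ of the interior~$\inter{\polytope}$ that is a stratified $C^\infty$ manifold with corners whose strata are indexed by the faces of~$\nestedComplex[\FL(\polytope^\triangle)][\fbuilding] \cong \pipingComplex(\poset)$, with the incidence structure of \cref{thm:compactification}\,(i)--(iv). Since~$\inter{\polytope}$ is diffeomorphic to the interior of~$\OrderPolytope(\poset)$ (any two full-dimensional polytopes with the same face lattice have diffeomorphic interiors, and combinatorially equivalent polytopes have combinatorially equivalent polars, so the face lattice of~$\polytope$ is determined and one may simply take~$\polytope$ to be the polar dual of~$\OrderPolytope(\poset)$), this is the desired compactification of~$\OrderPolytope(\poset)$, homeomorphic as a stratified space to the one of~\cite[Thm.~1.9]{Galashin}. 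The only real subtlety — and the step I expect to require the most care — is the bookkeeping around polarity: \cref{sec:compactifications} is stated for a polytope~$\polytope$ and a facial building set of~$\FL(\polytope^\triangle)$, so one must be careful to feed it the polar of~$\OrderPolytope(\poset)$ (equivalently, recognize~$\OrderPolytope(\poset)$ itself as~$\polytope^\triangle$) rather than~$\OrderPolytope(\poset)$ directly, and to keep the assumptions of \cref{thm:compactification} (full-dimensionality, origin in the interior) — which are harmless normalizations up to affine change of coordinates — satisfied.
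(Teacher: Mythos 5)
Your skeleton is the paper's: combine \cref{prop:pipingComplexAsAcyclicNestedComplex}, \cref{thm:facialNestedComplexesVsAcyclicNestedComplexes} and \cref{thm:compactification}. But the one step you yourself single out as delicate, the polarity bookkeeping, you resolve in the wrong direction. By \cref{def:faceOM}, a face of $\OM(\poset)$ is a set $F$ of arcs admitting a linear functional that is constant on the arcs of $F$ and strictly monotone on all remaining arcs; equivalently, $F$ is the set of facet inequalities $x_u \le x_v$ of $\OrderPolytope(\poset)$ that are tight on some face of the order polytope. Larger faces of $\OM(\poset)$ thus correspond to smaller faces of $\OrderPolytope(\poset)$, so $\FL(\OM(\poset))$ is canonically the face lattice of the \emph{polar} of the order polytope, not of the order polytope itself (for the poset $\hat 0 \prec a,b,c \prec \hat 1$ the order polytope is a $3$-cube, while $\FL(\OM(\poset))$ has six atoms, one per arc, hence is the face lattice of the octahedron). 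Consequently, in \cref{thm:compactification} one should take $\polytope = \OrderPolytope(\poset)$ itself: its facets are indexed by the arcs, the facial building set $\fbuilding = \building(\poset) \cap \FL(\OM(\poset))$ lives on $\FL(\polytope^\triangle) \cong \FL(\OM(\poset))$ exactly as the theorem requires, and Gaiffi's construction then compactifies $\inter{\OrderPolytope(\poset)}$ directly, with strata indexed by $\nestedComplex[\FL(\OM(\poset))][\fbuilding] \cong \pipingComplex(\poset)$. No polar detour is needed. (The literal wording of \cref{exm:orderPolytope}, which you took at face value, is what pushed you the other way; the identification there is really with the face lattice of the polar, i.e.\ an anti-isomorphism with the face lattice of the order polytope.)

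Your patch, namely compactifying the interior of $\OrderPolytope(\poset)^\triangle$ and then invoking that interiors of full-dimensional polytopes of equal dimension are diffeomorphic, does rescue the bare existence statement, since any such interior is an open ball. But it only identifies the dense stratum with $\inter{\OrderPolytope(\poset)}$ through an arbitrary diffeomorphism, whereas the content of the corollary is that the compactified space \emph{is} the interior of the order polytope (the space of order-preserving maps), compactified by real blow-ups along its own faces; this canonical identification is what makes the result comparable, as a stratified space, to the compactification of \cite[Thm.~1.9]{Galashin}. With the polarity corrected as above, your argument becomes exactly the paper's, and the extra diffeomorphism step disappears.
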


Combining~\cref{prop:pipingComplexAsAcyclicNestedComplex,thm:facialNestedComplexesVsAcyclicNestedComplexes,thm:stellarPolytopalRealization}, we also recover the following result of~\cite[Sect.~2.1]{Galashin}.

\begin{corollary}
The piping complex~$\pipingComplex(\poset)$ is the boundary complex of a polytope obtained by stellar subdivisions on the order polytope of~$\poset$.
\end{corollary}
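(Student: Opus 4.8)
The plan is to deduce this corollary immediately by composing three results already established in the excerpt. First I would invoke \cref{prop:pipingComplexAsAcyclicNestedComplex}, which identifies the piping complex $\pipingComplex(\poset)$ with the acyclic nested complex $\acyclicNestedComplex(\building(\poset), \OM(\poset))$ of the poset oriented building set. This converts the statement about pipings into one about acyclic nested complexes of an \emph{oriented} building set, and crucially $\OM(\poset)$ is realizable (it is the graphical oriented matroid of the Hasse diagram $H(\poset)$, hence the oriented matroid of the incidence vector configuration $\b{A}_{\digraph}$ of \cref{exm:graphicalOM}, which by \cref{exm:orderPolytope} has Las Vergnas face lattice isomorphic to the face lattice of the order polytope $\OrderPolytope(\poset)$).

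Next I would pass to the facial picture: by \cref{thm:facialNestedComplexesVsAcyclicNestedComplexes}, setting $\fbuilding \eqdef \building(\poset) \cap \FL(\OM(\poset))$, the acyclic nested complex $\acyclicNestedComplex(\building(\poset), \OM(\poset))$ coincides with the facial nested complex $\nestedComplex[][\fbuilding, \OM(\poset)]$ of the facial building set $(\fbuilding, \OM(\poset))$. So $\pipingComplex(\poset) \cong \nestedComplex[][\fbuilding, \OM(\poset)]$.

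Finally I would apply \cref{thm:stellarPolytopalRealization}. Since $\OM(\poset) = \OM(\b{A}_{\polytope})$ for $\polytope = \OrderPolytope(\poset)$ (via \cref{exm:orderPolytope} and \cref{exm:coneAcyclicRealizableOM}, the homogenized vertex configuration of the order polytope realizes this face lattice), \cref{thm:stellarPolytopalRealization} states that $\nestedComplex[][\fbuilding, \OM(\b{A}_{\polytope})]$ is isomorphic to the boundary complex of a convex polytope obtained by a sequence of stellar subdivisions of $\polytope = \OrderPolytope(\poset)$. (Recall we are under the standing assumption that $H(\poset)$ is connected, which guarantees $\OM(\poset)$ is connected and hence $\fbuilding$ is a connected facial building set, so the "connected" clause of \cref{thm:stellarPolytopalRealization} applies; the disconnected case would be handled via \cref{coro:connectedFacialBuildingSet} and \cref{coro:connectedLatticeBuildingSet}, but is excluded by convention here as in~\cite{Galashin}.) Chaining the three isomorphisms gives that $\pipingComplex(\poset)$ is the boundary complex of a polytope obtained by stellar subdivisions on the order polytope of $\poset$, as claimed.

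The only genuinely non-mechanical point in this argument is checking that the oriented matroid $\OM(\poset)$ really is the oriented matroid of the homogenized vertex set of $\OrderPolytope(\poset)$, rather than of some other realization of the same face lattice; but \cref{thm:stellarPolytopalRealization} is stated for $\OM(\b{A}_{\polytope})$ with $\polytope$ any polytope, and since realizable oriented matroids with the same face lattice yield the same facial building sets and the same facial nested complexes (the nested complex depends only on the lattice $\FL(\OM)$ and the building set), applying the theorem with $\polytope = \OrderPolytope(\poset)$ suffices. Thus everything reduces to citing \cref{prop:pipingComplexAsAcyclicNestedComplex}, \cref{thm:facialNestedComplexesVsAcyclicNestedComplexes}, and \cref{thm:stellarPolytopalRealization} in sequence, with the realizability of $\OM(\poset)$ supplied by \cref{exm:orderPolytope}.
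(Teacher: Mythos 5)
Your proposal is correct and takes exactly the paper's route: the corollary is obtained there precisely by chaining \cref{prop:pipingComplexAsAcyclicNestedComplex}, \cref{thm:facialNestedComplexesVsAcyclicNestedComplexes}, and \cref{thm:stellarPolytopalRealization}, with realizability and the identification with the order polytope supplied by \cref{exm:orderPolytope} as you say. One harmless slip: the connectedness hypothesis of \cref{thm:stellarPolytopalRealization} should be justified by connectedness of the building set~$\fbuilding$ (equivalently of the line graph~$L(\poset)$, which does follow from $H(\poset)$ being connected, via \cref{coro:connectedComponents}), not by connectedness of the oriented matroid~$\OM(\poset)$, which fails as a matroid whenever $H(\poset)$ is not $2$-connected.
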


As pointed out in~\cite[Rem.~1.5]{Galashin}, this approach proves that the piping complex is polytopal (in particular that it is a shellable sphere), but does not provide explicit coordinates.
We now combine~\cref{prop:pipingComplexAsAcyclicNestedComplex} with~\cref{thm:acyclonestohedron,thm:alternativeRealization} to obtain explicit coordinates for poset associahedra.
See \cref{fig:acyclonestohedra,fig:posetAssociahedron3,fig:posetAssociahedron4} for illustrations.
We define~$\b{\rho} \in \R^{\building(\poset)}$ by~$\rho_{\tube} \eqdef |\building(\poset)|^{|\tube|}$.

\begin{corollary}
\label{coro:posetAssociahedronAsAcyclonestohedron}
The piping complex~$\pipingComplex(\poset)$ is the boundary complex of the polar of the acyclonestohedron~$\Acycl(\building(\poset), \b{A}(\poset))$, obtained as the intersection of the graph associahedron~$\Nest(\building(\poset), \b{\rho})$ of the line graph~$L(\poset)$ with the linear hyperplanes normal to~$\one_{c_+} - \one_{c_-}$ for all circuits~${c = (c_+, c_-)}$ of~$H(\poset)$.
\end{corollary}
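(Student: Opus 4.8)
The plan is to deduce the statement by combining \cref{prop:pipingComplexAsAcyclicNestedComplex} with the realization result \cref{thm:acyclonestohedron} (or equivalently \cref{thm:alternativeRealization}), and then to check that, in the graphical case, all the parameters appearing in \cref{def:acyclonestohedron} specialize to the ones written in the statement.

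First I would recall that, by \cref{prop:pipingComplexAsAcyclicNestedComplex}, the piping complex $\pipingComplex(\poset)$ is isomorphic to the acyclic nested complex $\acyclicNestedComplex(\building(\poset), \OM(\poset))$ of the poset oriented building set, where $\OM(\poset)$ is the graphical oriented matroid of the Hasse diagram $H(\poset)$ and $\building(\poset)$ is the graphical building set of its line graph $L(\poset)$. By \cref{exm:graphicalOM}, the oriented matroid $\OM(\poset)$ is realized by the incidence configuration $\b{A}(\poset)$ of $H(\poset)$, so $(\building(\poset), \OM(\b{A}(\poset)))$ is a realizable oriented building set. Applying \cref{thm:acyclonestohedron}, the complex $\acyclicNestedComplex(\building(\poset), \OM(\b{A}(\poset)))$ is isomorphic to the boundary complex of the polar of the acyclonestohedron $\Acycl(\building(\poset), \b{A}(\poset))$, which by \cref{def:acyclonestohedron} is the section of the nestohedron $\Nest(\building(\poset), \b{\rho})$ by the evaluation space $\evaluations[\b{A}(\poset)]$; and by \cref{exm:graphAssociahedron} this nestohedron is precisely the graph associahedron of $L(\poset)$ for the weights $\b{\rho}$.

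It then remains to unwind the data of \cref{def:acyclonestohedron} in the graphical case. For the weights, I would use that by \cref{exm:graphicalOM} every circuit $c = (c_+, c_-)$ of $\OM(\poset)$ is (the signature of) a simple cycle of $H(\poset)$, and that a telescoping computation along that cycle shows the associated linear dependence among the incidence vectors $\b{a}_{(u,v)} = \b{b}_u - \b{b}_v$ to be $\b{\delta}_c = \one_{c_+} - \one_{c_-}$, whose nonzero entries are all $\pm 1$. Hence $r_c = 1$ for every circuit, so $R = |\building(\poset)| \cdot \max_c r_c = |\building(\poset)|$ and $\rho_{\tube} = R^{|\tube|} = |\building(\poset)|^{|\tube|}$ for $|\tube| \ge 2$ (the values on singletons being irrelevant by the remark following \cref{thm:nestohedron}), matching the statement. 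For the cutting hyperplanes, \cref{not:vectorConfiguration} gives $\evaluations[\b{A}(\poset)] = \bigcap_{c \in \circuits[\OM(\poset)]} \b{H}_c^=$, and $\b{H}_c^=$ is by definition the linear hyperplane $\set{\b{x}}{\dotprod{\b{\delta}_c}{\b{x}} = 0}$, i.e.\ the one normal to $\one_{c_+} - \one_{c_-}$. Putting these two identifications together turns $\Acycl(\building(\poset), \b{A}(\poset))$ into exactly the polytope described in the statement, which finishes the argument.

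I expect the whole proof to be essentially bookkeeping, the only slightly nonroutine point being the identification of the circuit dependences of $\OM(\poset)$ with the $\pm 1$-vectors $\one_{c_+} - \one_{c_-}$; that is the main (mild) obstacle to pin down carefully. An alternative, equally short route bypasses the ambient space $\R^{\ground}$ altogether and invokes \cref{thm:alternativeRealization} instead, producing directly the explicit integer inequality description of the acyclonestohedron $\AcyclProj(\building(\poset), \b{A}(\poset))$ living in the arc space $\R\b{A}(\poset)$; this is essentially the presentation obtained independently by A.~Sack~\cite{Sack}.
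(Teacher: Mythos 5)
Your proposal is correct and follows exactly the paper's route: the corollary is obtained by combining \cref{prop:pipingComplexAsAcyclicNestedComplex} with \cref{thm:acyclonestohedron} (and \cref{thm:alternativeRealization} for the version in $\R\b{A}$), and then specializing the data of \cref{def:acyclonestohedron} to the graphical case. Your explicit check that the circuit dependences of $\OM(\poset)$ are the $\pm1$-vectors $\one_{c_+}-\one_{c_-}$, so that $r_c=1$ and $\rho_{\tube}=|\building(\poset)|^{|\tube|}$, just makes explicit a verification the paper leaves implicit.
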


\begin{corollary}
\label{coro:posetAssociahedronAsAcyclonestohedronProj}
The piping complex~$\pipingComplex(\poset)$ is the boundary complex of the polar of the acyclonestohedron~$\AcyclProj(\building(\poset), \b{A}(\poset))$, which is the polytope in~$\R^{\poset}$ defined by the equality~$\overline{g}_{\poset}(\b{y}) = 0$ and the inequalities~$\overline{g}_{\tube}(\b{y}) \ge 0$ for all~$\tube \in \building(\poset$), where
\[
\overline{g}_{\tube}(\b{y}) \eqdef \bigdotprod{\sum_{\substack{p,q \in \tube \\ p \precdot q}} \b{b}_p - \b{b}_q}{\b{y}} - \sum_{\substack{\tube' \in \building(\poset) \\ \tube' \subseteq \tube}} |\building(\poset)|^{|\tube|}.
\]
\end{corollary}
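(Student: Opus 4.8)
\textbf{Proof plan for \cref{coro:posetAssociahedronAsAcyclonestohedronProj}.}
The statement is an instance of \cref{thm:alternativeRealization} applied to the poset oriented building set~$(\building(\poset), \OM(\poset))$, so the plan is essentially to unwind definitions and check that the data match. First I would invoke \cref{prop:pipingComplexAsAcyclicNestedComplex}, which identifies the piping complex~$\pipingComplex(\poset)$ with the acyclic nested complex~$\acyclicNestedComplex(\building(\poset), \OM(\poset))$. Since the Hasse diagram~$H(\poset)$ is connected, the graphical oriented matroid~$\OM(\poset)$ is realizable, realized by the incidence configuration~$\b{A}(\poset) \eqdef \b{A}_{H(\poset)}$ (cf.~\cref{exm:graphicalOM}), whose vectors are~$\b{a}_{(p,q)} = \b{b}_p - \b{b}_q$ for each cover relation~$p \precdot q$ of~$\poset$, where~$(\b{b}_v)_{v \in \poset}$ is the standard basis of~$\R^\poset$. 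Thus~$(\building(\poset), \OM(\b{A}(\poset)))$ is a realizable oriented building set, and \cref{thm:alternativeRealization} applies, giving that~$\acyclicNestedComplex(\building(\poset), \OM(\b{A}(\poset)))$ is isomorphic to the boundary complex of the polar of~$\AcyclProj(\building(\poset), \b{A}(\poset))$.

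Next I would compute the facet functions~$\overline{g}_{\tube}$ of \cref{def:acyclonestohedronProj} for this particular vector configuration. By \cref{def:posetOrientedBuildingSet}, the ground set of~$\building(\poset)$ is the arc set~$\varepsilon(\poset)$ of~$H(\poset)$, and a tube~$\tube \in \building(\poset)$ is a subset of arcs inducing a connected subgraph of the line graph~$L(\poset)$. For such a tube, substituting~$\b{a}_{(p,q)} = \b{b}_p - \b{b}_q$ into~$\sum_{b \in \tube} \b{a}_b$ gives exactly~$\sum_{p \precdot q,\; (p,q) \in \tube} (\b{b}_p - \b{b}_q)$, which is the vector paired with~$\b{y}$ in the displayed formula (after identifying arcs of~$H(\poset)$ with cover pairs~$p \precdot q$ and writing the sum over $p, q \in \tube$ as a slight abuse meaning over arcs of~$H(\poset)$ with both endpoints, equivalently both-endpoints-in-$\tube$; this is the same convention as in \cref{prop:pipingComplexAsAcyclicNestedComplex}). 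For the coefficient~$\sum_{\tube' \subseteq \tube} \rho_{\tube'}$, I would recall that \cref{def:acyclonestohedronProj} uses the~$\b{\rho}$ of \cref{def:acyclonestohedron}, namely~$\rho_{\tube'} = 0$ if~$|\tube'| = 1$ and~$\rho_{\tube'} = R^{|\tube'|}$ otherwise, with~$R = |\building(\poset)| \cdot \max_{c} r_c$. Here every circuit~$c$ of a graphical oriented matroid has all nonzero dependence coordinates equal to~$\pm 1$, so~$r_c = 1$ and~$R = |\building(\poset)|$; moreover the singleton terms vanish, so~$\sum_{\tube' \subseteq \tube} \rho_{\tube'} = \sum_{\tube' \subseteq \tube} |\building(\poset)|^{|\tube'|}$, matching the displayed right-hand side. (If one prefers the normalization~$\rho_{\tube} = |\building(\poset)|^{|\tube|}$ literally, one just notes that only the values for~$|\tube| \ge 2$ enter any inequality and the singleton values are irrelevant up to translation, as remarked after \cref{thm:nestohedron}; so either convention yields the same polytope.) Finally, $\connectedComponents(\building(\poset)) = \{\poset\}$ since~$H(\poset)$, hence~$L(\poset)$, is connected, so the unique equality is~$\overline{g}_{\poset}(\b{y}) = 0$ and the remaining blocks give the inequalities~$\overline{g}_{\tube}(\b{y}) \ge 0$.

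Putting these together: \cref{thm:alternativeRealization} says the boundary complex of the polar of~$\AcyclProj(\building(\poset), \b{A}(\poset))$ is~$\acyclicNestedComplex(\building(\poset), \OM(\b{A}(\poset)))$, which by \cref{prop:pipingComplexAsAcyclicNestedComplex} is~$\pipingComplex(\poset)$; and by the computation above~$\AcyclProj(\building(\poset), \b{A}(\poset))$ is precisely the polytope in~$\R^\poset$ cut out by the stated equality and inequalities. I expect no serious obstacle here: the only points requiring care are (a) checking that the evaluation of~$\sum_{b \in \tube} \b{a}_b$ really produces the signed sum of basis vectors as written, including the bookkeeping between arcs of~$H(\poset)$ and cover pairs in the displayed sum-over-$p,q$; (b) confirming~$r_c = 1$ for graphical circuits so that the coefficient simplifies to powers of~$|\building(\poset)|$; and (c) noting that redundancy of the acyclic-block inequalities (\cref{rmk:redundantinequalitiesacyclonestohedron}) does not affect the claim, since the corollary lists inequalities for all~$\tube \in \building(\poset)$ and only asserts they define the polytope. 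None of these is delicate; the content is really all in \cref{thm:alternativeRealization} and \cref{prop:pipingComplexAsAcyclicNestedComplex}, which are already established.
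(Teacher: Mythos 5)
Your proposal is correct and follows essentially the same route as the paper, which states this corollary as a direct combination of \cref{prop:pipingComplexAsAcyclicNestedComplex} with \cref{thm:acyclonestohedron,thm:alternativeRealization}: one unwinds \cref{def:acyclonestohedronProj} for the incidence configuration of~$H(\poset)$, where every circuit has $\pm 1$ coefficients so $r_c = 1$ and $R = |\building(\poset)|$, exactly as you do. The bookkeeping points you flag (arcs of~$H(\poset)$ versus cover pairs, the singleton values of~$\b{\rho}$ being a normalization convention, and redundancy of the acyclic-block inequalities) are handled at the same level of detail as in the paper itself, so nothing further is needed.
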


\begin{remark}
\label{rem:Sack}
The description of \cref{coro:posetAssociahedronAsAcyclonestohedronProj} was actually motivated by the construction of A.~Sack~\cite{Sack}.
He independently obtained polytopal realizations of poset associahedra directly in the space~$\R^{\poset}$.
His description is essentially the same as that of \cref{coro:posetAssociahedronAsAcyclonestohedronProj}, except that~$\smash{\sum_{\tube' \in \building(\poset), \tube' \subseteq \tube} |\building(\poset)|^{|\tube|}}$ is replaced by~$|\poset|^{2|\tube|}$.
His proof however is not using the description of \cref{coro:posetAssociahedronAsAcyclonestohedron}.
We note that his construction motivated us to provide the explicit coordinates of the description of acyclonestohedra~$\AcyclProj(\building, \b{A})$ in~$\R\b{A}$ given in \cref{subsec:acyclonestohedra2}, which are obtained as a projection of our construction of~$\Acycl(\building, \b{A})$ in~$\R^\ground$ from \cref{subsec:acyclonestohedra1}.
\end{remark}

\begin{remark}
In fact, in the case of poset associahedra, both \cref{coro:posetAssociahedronAsAcyclonestohedron,coro:posetAssociahedronAsAcyclonestohedronProj} even hold when replacing~$\b{\rho}_{\tube} = |\building(\poset)|^{|\tube|}$ by~${\b{\rho}_{\tube} = 4^{|\tube|}}$.
However, this requires a much more careful proof of \cref{lem:acyclonestohedron1}, where the contribution of the tubes counted negatively is compensated by the contribution of more than one tube counted positively.
Details can be found in the Master thesis of the first author~\cite{Mantovani}.
\end{remark}

\begin{remark}
In~\cite[Rem~3.3]{Galashin}, P.~Galashin asks for an algebraic variety reflecting the combinatorics of the piping complex of~$\poset$.
As a last application of \cref{prop:pipingComplexAsAcyclicNestedComplex,thm:facialNestedComplexesVsAcyclicNestedComplexes}, we note that such a variety is given by the toric variety associated to the facial nested complex as in~\cite[Sect.~5]{FeichtnerYuzvinsky2004}. 
\end{remark}

%%%%%%%%%%%%%%%%%%%%%%%%%%%%%%%%%%%%%%%

\section{Affine poset cyclohedra}
\label{sec:affinePosetCyclohedra}

Here, we show that our constructions apply to affine posets as well.
We first recall the definition of these combinatorial objects.

\begin{definition}[{\cite[Def.~1.10]{Galashin}}]
\label{def:affinePoset}
An \defn{affine poset} (of order~$n \ge 1$) is a poset~$\affinePoset = (\Z, \preccurlyeq)$ such~that
\begin{itemize}
\item for all~$i \in \Z$, $i \prec i+n$,
\item for all~$i,j \in \Z$, $i \preccurlyeq j$ if and only if~$i + n \preccurlyeq j + n$, 
\item for all~$i,j \in \Z$, there is~$k \in \N$ such that~$i \preccurlyeq j + kn$.
\end{itemize}
The order~$n$ of~$\affinePoset$ is denoted by~$|\affinePoset|$. The Hasse diagram of~$\affinePoset$ is denoted~$H(\affinePoset)$.
\end{definition}

\begin{definition}
For~$i \in \Z$, we denote by~$\quo{i}{n}$ and~$\mod{i}{n}$ the quotient and the remainder of the division of~$i$ by~$n$ (so that~$i = n \cdot \quo{i}{n} + \mod{i}{n}$).
We call
\begin{itemize}
\item \defn{element class} the residue class~$\cl{i}{n} \eqdef \set{i+kn}{k \in \Z}$ of an element~$i \in \Z$,
\item \defn{subset class} the residue class~$\cl{J}{n} \eqdef \set{\set{j+kn}{j \in J}}{k \in \Z}$ of a subset~$J \subseteq \Z$,
\item \defn{relation class} the residue class~$\cl{i \prec j}{n} \eqdef \set{i+kn \prec j+kn}{k \in \Z}$ of a relation~${i \prec j}$~in~$\affinePoset$.
\end{itemize}
\end{definition}

\begin{definition}[{\cite[Sect.~1.3]{Galashin}}]
\label{def:affinePosetAssociahedron}
A subset~$J$ of~$\Z$ is \defn{convex} if~$i \preccurlyeq j \preccurlyeq k$ and~$i,k \in J$ implies~${j \in J}$, \defn{connected} if the subgraph~$H(J)$ of~$H(\affinePoset)$ induced by~$J$ is connected, and \defn{thin} if it is either~$\Z$ or contains at most one element of each element class~$\cl{i}{n}$.
A \defn{pipe} of~$\affinePoset$ is a thin, convex and connected subset~$Q$ of~$\Z$ with~$|Q| > 1$.
We denote by~$\c{D}(\affinePoset)$ the directed graph with a vertex for each subset of~$\Z$, with an arc joining a subset~$Q$ to a subset~$R$ if~$Q \cap R = \varnothing$ and there is~$q \in Q$ and~$r \in R$ such that~$q \prec r$.
A \defn{pipe class} of~$\affinePoset$ is the subset class $\cl{\pipe}{n}$ of a pipe $\pipe$.
Two pipe classes  $\cl{\pipe}{n}$ and  $\cl{\pipe'}{n}$ are \defn{nested} if $\pipe + kn$ and $\pipe'$ are nested for some~$k \in \Z$, and \defn{disjoint} if~$\pipe + kn$ and~$\pipe'$ are disjoint for all~$k \in \Z$.
A \defn{piping} of~$\affinePoset$ is a collection of pairwise nested or disjoint pipe classes of~$\affinePoset$ that contains the pipe class~$\Z$ and 
induces an acyclic subgraph of~$\c{D}(\affinePoset)$ (see \cref{fig:exmAffinePosetNested}).
The \defn{affine piping complex} of~$\affinePoset$ is the simplicial complex~$\affinePipingComplex(\affinePoset)$ whose faces are~$\piping \ssm \{\Z\}$ for all pipings~$\piping$ of~$\affinePoset$ (see \cref{fig:affinePosetCyclohedraCombi}).
An \defn{affine poset cyclohedron} of~$\affinePoset$ is a simple polytope whose dual has boundary complex isomorphic to the affine piping complex~$\affinePipingComplex(\affinePoset)$ (see \cref{fig:affinePosetCyclohedron}).
\begin{figure}
	\capstart
	\centerline{\includegraphics[scale=.5]{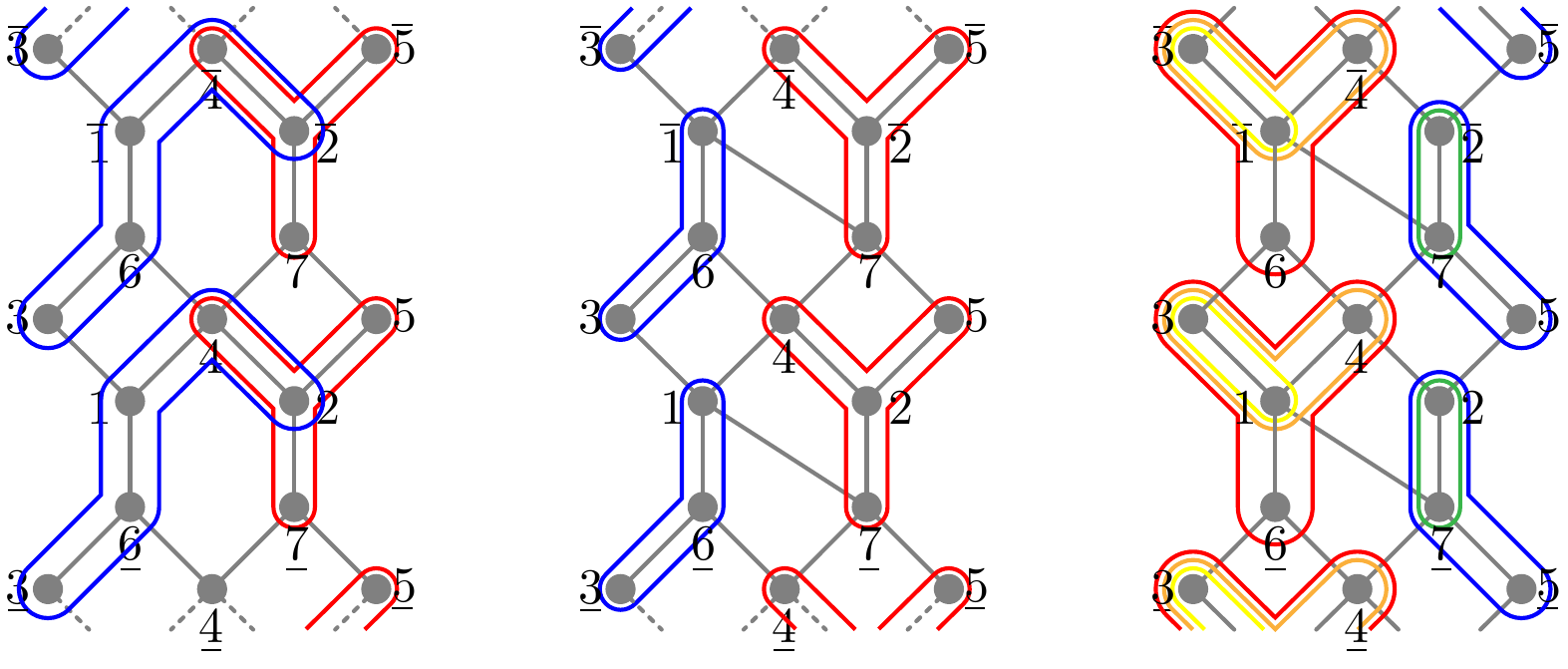}}
	\caption{Some incompatible pipes (left and middle), and a maximal piping (right) on an affine poset.}
	\label{fig:exmAffinePosetNested}
\end{figure}
\end{definition}

We now interpret the affine piping complexes in our setting of acyclic nested complexes.

\begin{definition}
\label{def:lineGraphAffinePoset}
The \defn{line graph}~$L(\affinePoset)$ is the graph with a vertex for each cover relation class~${\cl{i \precdot j}{n}}$ of~$\affinePoset$ and with an arc joining~$\cl{i \precdot j}{n}$ to~$\cl{i' \precdot j'}{n}$ whenever~$\{\cl{i}{n}, \cl{j}{n}\} \cap \{\cl{i'}{n}, \cl{j'}{n}\} \ne \varnothing$.
\end{definition}

\begin{definition}
\label{def:incidenceConfigurationAffinePoset}
Let~$(\b{b}_i)_{i \in [n+1]}$ be the standard basis of~$\R^{n+1}$.
For~$j \in \Z$, define the vector~$\tb{b}_j \eqdef \b{b}_{\mod{j}{n}} +  \quo{j}{n}\cdot\b{b}_{n+1}$.
The \defn{incidence configuration}~$\b{A}(\affinePoset)$ is the vector configuration with a vector~$\b{a}_{\cl{i \precdot j}{n}} \eqdef \b{a}_{i \precdot j} \eqdef \tb{b}_i - \tb{b}_j \in \R^{n+1}$ for each cover relation class~$\cl{i \precdot j}{n}$ of~$\affinePoset$ (observe here that~$\b{a}_{i \precdot j}$ indeed only depends on the relation class~$\cl{i \precdot j}{n}$).
\end{definition}

\begin{definition}
\label{def:affinePosetOrientedBuildingSet}
We consider the \defn{affine poset oriented building set}~$(\building(\affinePoset), \OM(\affinePoset))$ of~$\affinePoset$ where
\begin{itemize}
\item $\OM(\affinePoset)$ is the oriented matroid of the incidence configuration~$\b{A}(\affinePoset)$,
\item $\building(\affinePoset)$ is the building closure of $\hat\building(\affinePoset)\cup\circuitSupports[\OM(\affinePoset)]$, where $\hat\building(\affinePoset)$ is the graphical building set of the line graph~$L(\affinePoset)$, and $\circuitSupports[\OM(\affinePoset)]=\set{\underline{c}}{c\in\circuits[\OM(\affinePoset)]}$ is the set of supports of circuits of $\OM(\affinePoset)$.
\end{itemize}
\end{definition}

\begin{proposition}
\label{prop:affinePipingComplexAsAcyclicNestedComplex}
The affine piping complex~$\affinePipingComplex(\affinePoset)$ is isomorphic to the acyclic nested complex~$\acyclicNestedComplex(\building(\affinePoset), \OM(\affinePoset))$ of the affine poset oriented building set of~$\affinePoset$.
\end{proposition}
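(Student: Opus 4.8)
The plan is to mimic the proof of \cref{prop:pipingComplexAsAcyclicNestedComplex}, exhibiting an explicit bijection between pipe classes of $\affinePoset$ and the acyclic tubes of the affine poset oriented building set, and checking that this bijection identifies pipings with acyclic nested sets. First I would associate to each pipe $\pipe$ of $\affinePoset$ (a thin, convex, connected subset of $\Z$ with $|\pipe|>1$) the set $\varepsilon(\pipe)$ of cover relation classes $\cl{i \precdot j}{n}$ of $\affinePoset$ with $i,j\in\pipe$; passing to classes, $\cl{\pipe}{n}\mapsto\varepsilon(\pipe)$ is well-defined on pipe classes. As in the non-affine case, the key elementary observations are that the subgraph $H(\pipe)$ is connected if and only if the induced subgraph of $L(\affinePoset)$ on $\varepsilon(\pipe)$ is connected, and that $|\pipe|>1$ if and only if $\varepsilon(\pipe)\neq\varnothing$ when $H(\pipe)$ is connected. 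So $\varepsilon$ lands in the tubes of $L(\affinePoset)$, hence in $\hat\building(\affinePoset)$ and a fortiori in $\building(\affinePoset)$.

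Second I would identify the image. A tube $\tube$ of $L(\affinePoset)$ corresponds to a connected subgraph of $H(\affinePoset)/\Z$; lifting it picks out a connected subgraph of $H(\affinePoset)$, and the thin/convex conditions are exactly the obstruction to such a subgraph being of the form $H(\pipe)$ for a pipe. Concretely, just as in the proof of \cref{prop:pipingComplexAsAcyclicNestedComplex}, if $\tube$ fails to come from a convex or thin connected subset, then there exist $i\prec j$ and a directed path $\vec\pi$ from $i$ to $j$ whose edge classes are not all in $\tube$ while $i,j$ lie in the vertex set spanned by $\tube$; combining $\vec\pi$ with an undirected path inside $H(\tube)$ yields a circuit $c$ of $\OM(\affinePoset)$ with $c_+\subseteq\tube$ but $c_-\not\subseteq\tube$, so $\tube$ is not acyclic. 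Conversely a non-acyclic tube corresponds to a non-convex or non-thin subgraph. Here I must be careful about thinness: a pipe cannot contain two elements of the same element class $\cl{i}{n}$ unless it is all of $\Z$, and $\Z$ itself is the maximal pipe class $\cl{\Z}{n}$ which corresponds to the full ground set of $\building(\affinePoset)$; this case should be matched with the top block separately (it is removed from nested sets anyway). The upshot is that $\varepsilon$ is a bijection between pipe classes of $\affinePoset$ and tubes of $L(\affinePoset)$ that are acyclic for $\OM(\affinePoset)$ — noting that the extra generators $\circuitSupports[\OM(\affinePoset)]$ added in \cref{def:affinePosetOrientedBuildingSet} are precisely the supports needed so that $(\building(\affinePoset),\OM(\affinePoset))$ is an oriented building set, and by \cref{lem:characterizationAcyclicNestedSets}\,(vi) every block of an acyclic nested set is a face, hence acyclic as a tube.

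Third I would promote this to the level of (co)compatibility, exactly paralleling the second list of bullet points in the proof of \cref{prop:pipingComplexAsAcyclicNestedComplex}: $\cl{\pipe}{n}\subseteq\cl{\pipe'}{n}$ (nested in the affine sense) if and only if $\varepsilon(\pipe)\subseteq\varepsilon(\pipe')$ up to shift; $\cl{\pipe}{n}$ and $\cl{\pipe'}{n}$ are disjoint if and only if $\varepsilon(\pipe)$ and $\varepsilon(\pipe')$ are disjoint and non-adjacent in $L(\affinePoset)$; and a family $\cl{\pipe_1}{n},\dots,\cl{\pipe_k}{n}$ forms a directed cycle of $\c{D}(\affinePoset)$ if and only if there is a circuit $c$ of $\OM(\affinePoset)$ with $c_+\subseteq\varepsilon(\pipe_1)\cup\dots\cup\varepsilon(\pipe_k)$ but $c_-\not\subseteq\varepsilon(\pipe_1)\cup\dots\cup\varepsilon(\pipe_k)$. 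These translate, via \cref{lem:characterizationAcyclicNestedSets}, the condition ``$\piping$ induces an acyclic subgraph of $\c{D}(\affinePoset)$'' into ``the tubing $\set{\varepsilon(\pipe)}{\pipe\in\piping}$ is acyclic for $\OM(\affinePoset)$''. Together with the fact that the pairwise nested-or-disjoint condition on pipe classes matches the tubing compatibility condition, this shows $\piping\mapsto\set{\varepsilon(\pipe)}{\pipe\in\piping}$ is an isomorphism $\affinePipingComplex(\affinePoset)\cong\acyclicNestedComplex(\building(\affinePoset),\OM(\affinePoset))$ (removing $\Z$ on the left corresponds to removing $\connectedComponents(\building(\affinePoset))$ on the right).

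The main obstacle I anticipate is bookkeeping around the $\Z$-action: unlike in \cref{prop:pipingComplexAsAcyclicNestedComplex}, where one works with honest finite subsets, here everything is a residue class, so one must consistently check that $\varepsilon$, the nesting/disjointness relations, and the circuits of $\OM(\affinePoset)$ are all compatible with shifting by $n$, and in particular that the directed-cycle criterion in $\c{D}(\affinePoset)$ — which explicitly requires finding a representative $\pipe_i + k_i n$ realizing an arc — correctly matches the (shift-invariant) circuit condition. A clean way to handle this is to pass once and for all to the quotient graph $H(\affinePoset)/\Z$ and its oriented matroid, observe that $\OM(\affinePoset)$ as defined via $\b{A}(\affinePoset)$ is exactly the graphical oriented matroid of $H(\affinePoset)/\Z$ (here the extra coordinate $\b{b}_{n+1}$ records the winding number, so circuits of $\b{A}(\affinePoset)$ are precisely the signed cycles of $H(\affinePoset)/\Z$), and then invoke \cref{exm:graphicalOrientedBuildingSet,exm:graphicalAcyclicNestedComplex} essentially verbatim. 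The thinness condition is then the statement that a cycle of $H(\affinePoset)/\Z$ with nonzero winding is not contractible — which is exactly why such a tube fails to lift to a pipe and is simultaneously non-acyclic. Once this dictionary is set up, the remaining verifications are the same routine checks as in \cref{prop:pipingComplexAsAcyclicNestedComplex}.
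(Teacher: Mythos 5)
Your outline follows the same two-step strategy as the paper (a vertex bijection between pipe classes and acyclic blocks, then an identification of pipings with acyclic nested sets), but the two points where the affine case genuinely differs from \cref{prop:pipingComplexAsAcyclicNestedComplex} are exactly the points you leave unproved or get wrong. First, the ``clean dictionary'' you propose is false: $\OM(\affinePoset)$ is \emph{not} the graphical oriented matroid of the quotient graph $\quotientGraph$, and its circuits are not the signed cycles of $\quotientGraph$. The winding coordinate $\b{b}_{n+1}$ changes the matroid: a circuit of $\OM(\affinePoset)$ is the support of a positive combination of cycles $c_1,\dots,c_r$ of $\quotientGraph$ with $\sum_{k}\delta_k\quo{c_k}{n}=0$ (this is \cref{lem:circuitsAffinePoset}), so a single cycle with nonzero winding is independent, while a circuit may have support that is disconnected in $L(\affinePoset)$ (two disjoint cycles with cancelling windings). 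For the affine chain ($i\precdot i+1$ for all $i$), the quotient graph is a directed $n$-cycle, whose graphical oriented matroid has a positive circuit and is therefore not even acyclic --- your dictionary would make the acyclic nested complex empty --- whereas $\OM(\affinePoset)$ has no circuits at all and the proposition correctly yields the cyclohedron. Consequently the $\Z$-bookkeeping cannot be discharged by invoking \cref{exm:graphicalOrientedBuildingSet,exm:graphicalAcyclicNestedComplex} verbatim; the circuit constructions in your convexity and thinness arguments, and the equivalence between directed cycles of $\c{D}(\affinePoset)$ and violating circuits, all have to be run through this winding-number decomposition (this is how the paper treats the case where some $\quo{c_k}{n}\neq 0$).

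Second, you dispose of the blocks of $\building(\affinePoset)\ssm\hat\building(\affinePoset)$ --- those forced into the building closure by $\circuitSupports[\OM(\affinePoset)]$ --- with the remark that every block of an acyclic nested set is a face ``hence acyclic as a tube''. That assumes exactly what must be shown: that an acyclic block of $\building(\affinePoset)$ is a tube of $L(\affinePoset)$ arising from a thin, convex, connected set. A priori an extra block could be a face of $\OM(\affinePoset)$, creating a vertex of $\acyclicNestedComplex(\building(\affinePoset),\OM(\affinePoset))$ with no pipe class attached to it; and the extra blocks also strengthen the nested-set condition, since a union of pairwise disjoint acyclic blocks could lie in $\building(\affinePoset)$ without being a tube of $L(\affinePoset)$. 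The paper handles both issues with two dedicated lemmas: every block outside $\hat\building(\affinePoset)$ induces a subgraph $H(B)$ with an infinite connected component (\cref{lem:affinePosetFiniteCc1}), while every acyclic block with $H(B)\neq H(\affinePoset)$ has only finite components (\cref{lem:affinePosetFiniteCc}); together these confine acyclic blocks to thin tubes of $L(\affinePoset)$ and show that the closure blocks never interfere with the nested-set condition either. Your proposal contains no substitute for these arguments --- and they, together with \cref{lem:circuitsAffinePoset}, are precisely the ``much more involved details'' that separate the affine proof from the finite one.
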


The proof of \cref{prop:affinePipingComplexAsAcyclicNestedComplex} follows the same lines as that of \cref{prop:pipingComplexAsAcyclicNestedComplex}, but the details are much more involved.
The problem here is that we had to include~$\circuitSupports[\OM(\affinePoset)]$ in~$\building(\affinePoset)$ in order to make~$(\building(\affinePoset), \OM(\affinePoset))$ an oriented building set.
We thus need to make sure that these additional blocks do not perturb too much the situation.
We delay this proof to the end of the section, and immediately derive some corollaries.

\begin{example}
For instance, if the Hasse diagram~$H(\affinePoset)$ is a chain, then the affine poset cyclohedron of~$\affinePoset$ is just the cyclohedron.
An affine poset cyclohedra is represented in \cref{fig:affinePosetCyclohedraCombi,fig:affinePosetCyclohedron}.
\begin{figure}
	\capstart
	\centerline{\includegraphics[scale=.45]{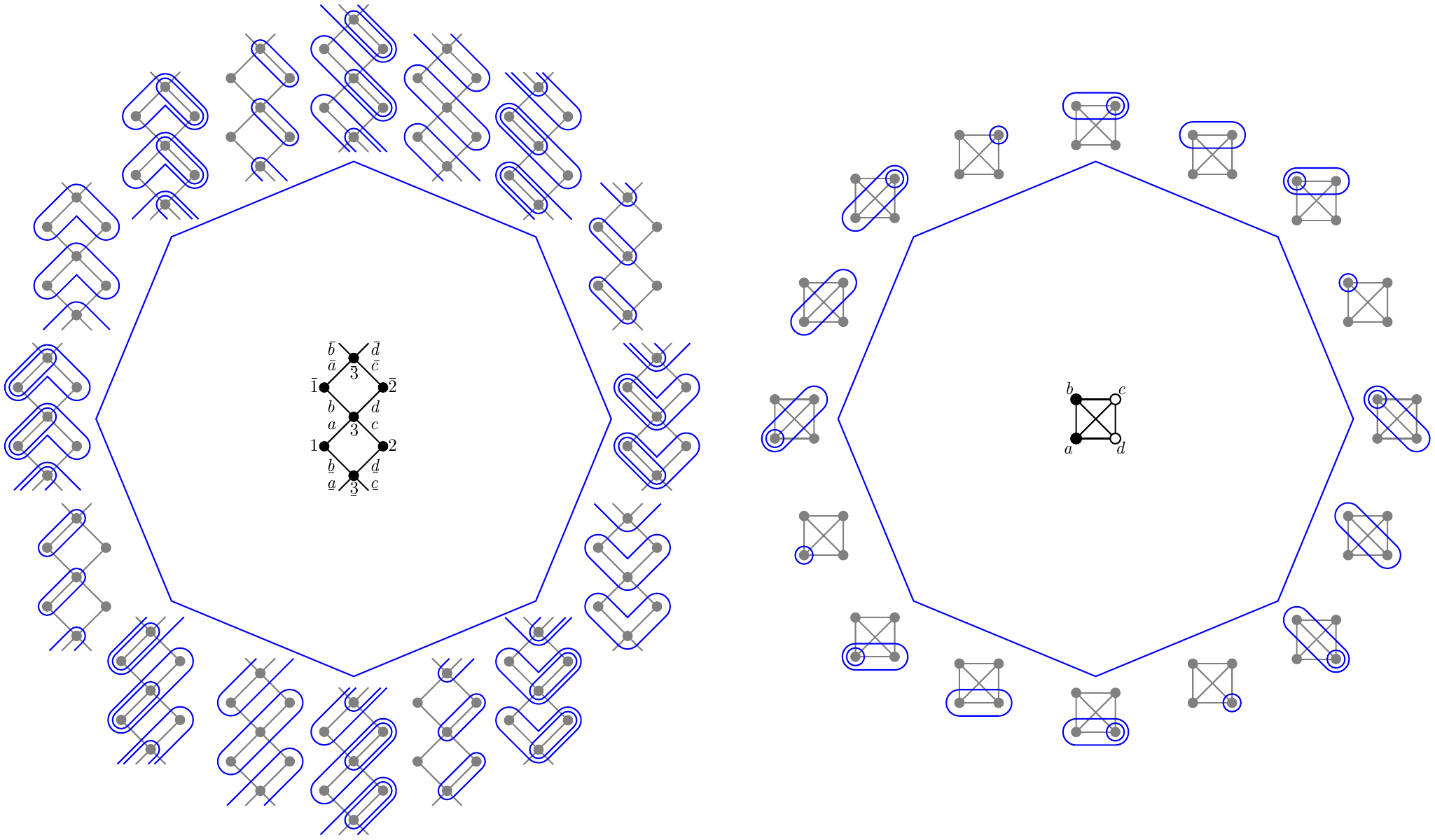}}
	\caption{The isomorphism between the affine piping complex of~$\affinePoset$ (left) and the affine graphical acyclic nested complex of~$(\building(\affinePoset), \OM(\affinePoset))$ (right). The line graph~$L(\affinePoset)$ is the $4$-clique, and the signs of the unique circuit of~$\OM(\affinePoset)$ are indicated by the black and white vertices of~$L(\affinePoset)$. The affine poset tubes of~$\affinePoset$ (left) correspond to the graph tubes of~$L(\affinePoset)$ (right), and the affine poset tubings of~$\affinePoset$ (left) correspond to the graph tubings of~$L(\affinePoset)$ which are acyclic for~$\OM(\affinePoset)$ (right). The maximal pipe/tube is omitted in all pipings/tubings.}
	\label{fig:affinePosetCyclohedraCombi}
\end{figure}
\begin{figure}
	\capstart
	\centerline{\includegraphics[scale=.45]{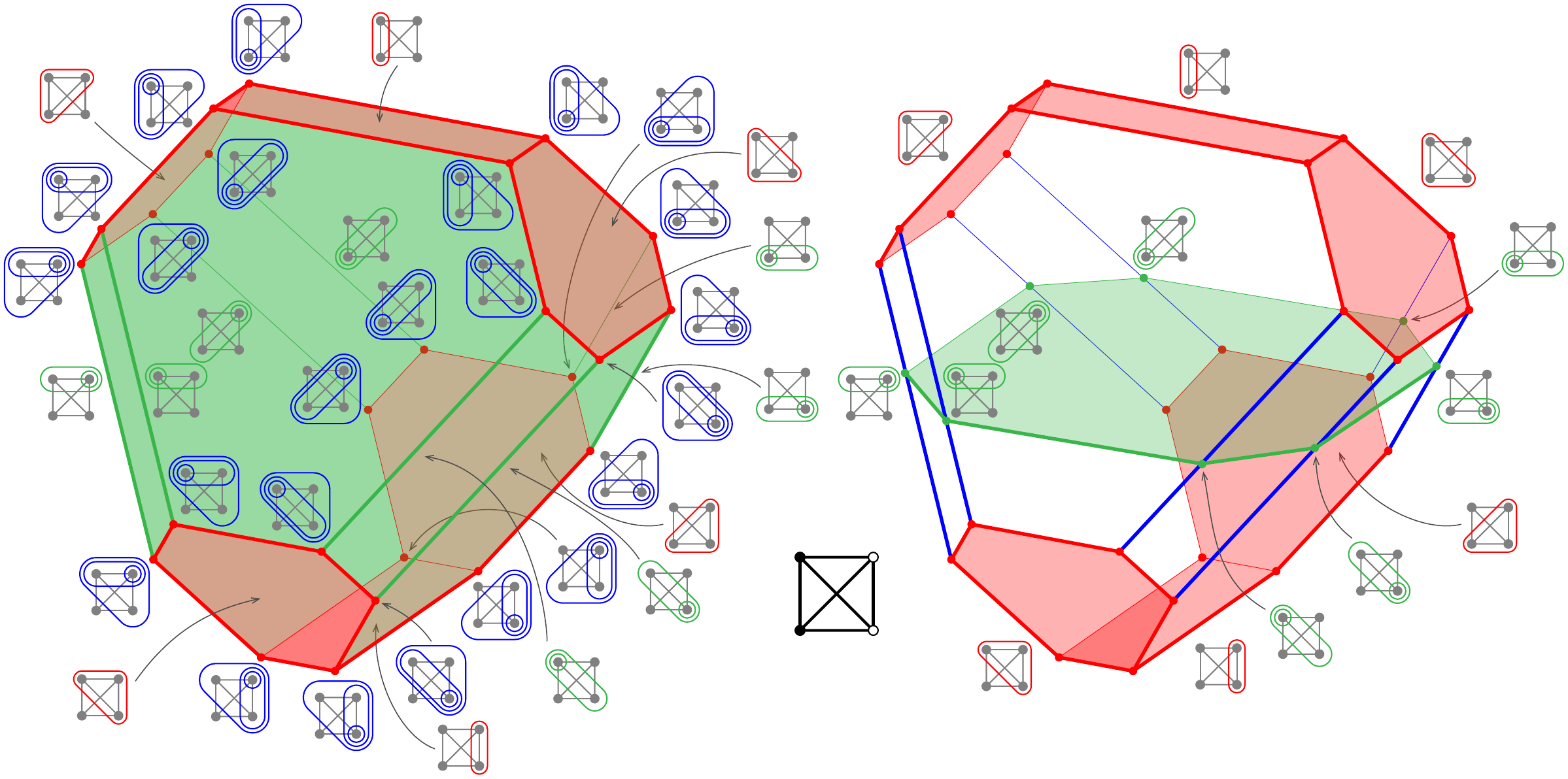}}
	\caption{The affine poset cyclohedron of \cref{fig:affinePosetCyclohedraCombi} obtained as a section of a graph associahedron. The line graph~$L(\affinePoset)$ is the $4$-clique, and the unique circuit of~$\OM(\affinePoset)$ is represented by the black and white vertices of~$L(\affinePoset)$. The left picture represents the permutahedron, with all maximal tubings of~$L(\affinePoset)$ in blue, the minimal cyclic tubings of~$L(\affinePoset)$ in red, and the maximal acyclic tubings of~$L(\affinePoset)$ in green. The right picture represents the affine poset cyclohedron (the green polygon) obtained as a section of the permutahedron by the evaluation space. The maximal tube is omitted in all tubings.}
	\label{fig:affinePosetCyclohedron}
\end{figure}
\end{example}

We derive from~\cref{prop:affinePipingComplexAsAcyclicNestedComplex,prop:linksAcyclicNestedComplex} that links of affine piping complexes are joins of piping complexes and affine piping complexes, a result already obtained in \cite[Coro.~4.10\,(vi)]{Galashin}.
We need the following analogue of \cref{def:restrictionContractionOrientedBuildingSet} for affine posets.

\begin{definition}
For a pipe class~$\cl{\pipe}{n}\neq\Z$ in a piping~$\piping$ of~$\affinePoset$, we define~$\poset_{\pipe \in \piping}$ as the finite poset obtained as the transitive closure of the directed graph~$(H(\affinePoset)_{|\pipe})_{\!/\bigcup_{\cl{\pipe'}{n} \in \piping, \, \pipe' \subsetneq \pipe} \pipe'}$ (the choice of the representative~$\pipe$ in~$\cl{\pipe}{n}$ gives isomorphic posets).
We define the affine poset~$\affinePoset_{\Z \in \piping}$ as the transitive closure of the directed graph~$(H(\affinePoset))_{\!/\bigcup_{\pipe' \in \piping, \, \pipe' \neq \Z} \pipe'}$.
\end{definition}

\begin{corollary}
\label{coro:linksAffinePipingComplex}
The link of an affine piping~$\piping$ in the affine piping complex~$\affinePipingComplex(\affinePoset)$ is the join of the affine piping complex $\affinePipingComplex(\affinePoset_{\Z \in \piping})$
and the (ordinary) piping complexes~$\pipingComplex(\poset_{\pipe \in \piping})$ for all~$\cl{\pipe}{n}\neq\Z$ of~$\piping$.
\end{corollary}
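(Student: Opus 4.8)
The plan is to deduce \cref{coro:linksAffinePipingComplex} from \cref{prop:affinePipingComplexAsAcyclicNestedComplex} and \cref{prop:linksAcyclicNestedComplex}, in the same way \cref{coro:linksPipingComplex} was deduced from \cref{prop:pipingComplexAsAcyclicNestedComplex}. By \cref{prop:affinePipingComplexAsAcyclicNestedComplex}, the affine piping complex $\affinePipingComplex(\affinePoset)$ is $\acyclicNestedComplex(\building(\affinePoset), \OM(\affinePoset))$ under a bijection that sends an affine piping $\piping$ to an acyclic nested set $\nested$ and each pipe class $\cl{\pipe}{n}$ of $\piping$ to the block $\varepsilon(\pipe)$ of $\nested$ (the affine analogue of the map $Q \mapsto \varepsilon(Q)$ in the proof of \cref{prop:pipingComplexAsAcyclicNestedComplex}); the pipe class $\Z$ corresponds to the full ground set, which is the unique $\building(\affinePoset)$-connected component. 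Fixing $\piping$ and $\nested$, \cref{prop:linksAcyclicNestedComplex} writes the link of $\nested \ssm \connectedComponents(\building(\affinePoset))$ as $\bigJoin_{B \in \nested} \acyclicNestedComplex(\restrBuilding[\building(\affinePoset)][B][\nested], \restrOM[B][\nested])$. It then suffices to identify the factor attached to the top block $B = \ground$ with $\affinePipingComplex(\affinePoset_{\Z \in \piping})$ and the factor attached to each proper block $B = \varepsilon(\pipe)$ with $\pipingComplex(\poset_{\pipe \in \piping})$.

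For the identification I would first describe $\restrOM[B][\nested]$ and $\restrBuilding[\building(\affinePoset)][B][\nested]$ concretely. Using the graphical interpretation of restriction and contraction (\cref{exm:graphicalOrientedBuildingSet,exm:graphicalRestrictionContractionOrientedBuildingSet}) together with $L(\digraph)_{\!/R} = L(\digraph_{\!/R})$, the oriented matroid $\restrOM[B][\nested]$ is the graphical oriented matroid of the directed graph $\digraph_B$ obtained from $H(\affinePoset)$ (resp. $H(\affinePoset)_{|\pipe}$) by contracting the edges corresponding to the proper pipe classes of $\piping$ contained in $\cl{\pipe}{n}$, so the (affine or ordinary) transitive closure of $\digraph_B$ is exactly $\affinePoset_{\Z \in \piping}$ (resp. $\poset_{\pipe \in \piping}$). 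The building set $\restrBuilding[\building(\affinePoset)][B][\nested]$ always contains the graphical building set of $L(\digraph_B)$, and it is an oriented building set for $\restrOM[B][\nested]$ by \cref{prop:restrictionContractionOrientedBuildingSet2}, hence also contains all circuit supports of $\restrOM[B][\nested]$. When $\cl{\pipe}{n} \ne \Z$, the pipe $\pipe$ is thin, so $\digraph_B$ is an ordinary finite acyclic digraph, its circuit supports are simple cycles and thus already tubes of $L(\digraph_B)$; one checks that no further blocks appear, so $\restrBuilding[\building(\affinePoset)][B][\nested]$ is the graphical building set of $L(\digraph_B)$, i.e. the poset oriented building set of $\digraph_B$ in the sense of \cref{def:posetOrientedBuildingSet}. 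Collapsing its transitive edges via \cref{rem:trivialCases} (this is exactly \cref{rem:posetAssociahedra}) identifies the factor with $\acyclicNestedComplex(\building(\poset_{\pipe \in \piping}), \OM(\poset_{\pipe \in \piping})) = \pipingComplex(\poset_{\pipe \in \piping})$ via \cref{prop:pipingComplexAsAcyclicNestedComplex}.

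The main obstacle is the factor attached to the top block $B = \ground$, where — unlike in the thin case — the circuit-support blocks artificially added to $\building(\affinePoset)$ in \cref{def:affinePosetOrientedBuildingSet} genuinely matter, and contraction does not obviously commute with the building closure (a connected family of generators may become disconnected after deleting $R$, so $\restrBuilding[\building(\affinePoset)][\ground][\nested]$ could a priori contain blocks not of the prescribed form). Rather than proving $\restrBuilding[\building(\affinePoset)][\ground][\nested] = \building(\affinePoset_{\Z \in \piping})$ literally, I would reduce $\restrOM[\ground][\nested]$ to $\OM(\affinePoset_{\Z \in \piping})$ by iteratively deleting the transitive edges of $\digraph_B$ using \cref{rem:trivialCases} (each such edge $e$ lies in a circuit with $c_- = \{e\}$ and $\underline{c}$ a block of the oriented building set), and then invoke \cref{thm:facialNestedComplexesVsAcyclicNestedComplexes}: since the acyclic nested complex depends only on the facial part of the building set, it is enough to check that the resulting building set and $\building(\affinePoset_{\Z \in \piping})$ have the same intersection with $\FL(\OM(\affinePoset_{\Z \in \piping}))$. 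An equivalent route is to re-run the combinatorial bijection of the proof of \cref{prop:affinePipingComplexAsAcyclicNestedComplex} directly on $\affinePoset_{\Z \in \piping}$, checking — as already needed there — that the extra circuit-support blocks neither destroy convexity or thinness of pipe classes nor create new acyclic tubings. The thin case, by comparison, is routine bookkeeping.
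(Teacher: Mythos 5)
Your proposal is correct and follows essentially the same route as the paper, which derives \cref{coro:linksAffinePipingComplex} directly from \cref{prop:affinePipingComplexAsAcyclicNestedComplex,prop:linksAcyclicNestedComplex} and leaves the identification of the join factors implicit. Your extra care with the top block --- reducing to the facial part via \cref{thm:facialNestedComplexesVsAcyclicNestedComplexes} (or re-running the bijection of \cref{prop:affinePipingComplexAsAcyclicNestedComplex}) instead of comparing building closures literally --- is precisely the verification the paper glosses over, and it is sound.
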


As~$\OM(\affinePoset)$ is defined by a vector configuration, its Las Vergnas face lattice is isomorphic to the face lattice of the polytope given by its positive tope, which is called \defn{affine order polytope}~$\OrderPolytope(\affinePoset)$ by P.~Galashin~\cite{Galashin}.

Combining~\cref{prop:pipingComplexAsAcyclicNestedComplex,thm:facialNestedComplexesVsAcyclicNestedComplexes,thm:compactification}, we obtain the following compactification, homeomorphic as a stratified space to that of~\cite[Thm.~1.12]{Galashin}.

\begin{corollary} 
There exists a compactification of~$\OrderPolytope(\affinePoset)$ which is a stratified $C^\infty$ manifold with corners and whose combinatorics is encoded by the affine piping complex~$\affinePipingComplex(\affinePoset)$.
\end{corollary}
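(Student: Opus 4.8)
The plan is to obtain the statement by feeding the combinatorial identifications of \cref{prop:affinePipingComplexAsAcyclicNestedComplex,thm:facialNestedComplexesVsAcyclicNestedComplexes} into G.~Gaiffi's compactification result \cref{thm:compactification}, mirroring the non-affine argument given just above. The bridge to the affine order polytope is that $\OM(\affinePoset)$ is by construction the oriented matroid of the incidence configuration $\b{A}(\affinePoset)$, hence realizable and acyclic, so that its Las Vergnas face lattice $\FL(\OM(\affinePoset))$ is the face lattice of its positive tope, which is precisely the affine order polytope $\OrderPolytope(\affinePoset)$.

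First I would rewrite the affine piping complex as a facial nested complex. By \cref{prop:affinePipingComplexAsAcyclicNestedComplex} we have $\affinePipingComplex(\affinePoset) \cong \acyclicNestedComplex(\building(\affinePoset), \OM(\affinePoset))$, and since $(\building(\affinePoset), \OM(\affinePoset))$ is an oriented building set (by \cref{def:affinePosetOrientedBuildingSet}, $\building(\affinePoset)$ contains $\circuitSupports[\OM(\affinePoset)]$), \cref{thm:facialNestedComplexesVsAcyclicNestedComplexes} identifies this with the facial nested complex $\nestedComplex[][\fbuilding, \OM(\affinePoset)] = \nestedComplex[\FL(\OM(\affinePoset))][\fbuilding]$ for $\fbuilding \eqdef \building(\affinePoset) \cap \FL(\OM(\affinePoset))$. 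Next I would check that $\fbuilding$ is connected, as required by \cref{thm:compactification}: the Hasse diagram of an affine poset is connected, hence so is the line graph $L(\affinePoset)$, so $\hat\building(\affinePoset)$ is a connected building set; taking the building closure cannot change the unique maximal block $\ground$, so $\building(\affinePoset)$ is connected, and \cref{coro:connectedComponents} then gives $\connectedComponents(\fbuilding) = \connectedComponents(\building(\affinePoset)) = \{\ground\}$. Then I would fix a geometric realization of $\OrderPolytope(\affinePoset)$ as a full-dimensional polytope, translated so that the origin lies in its interior, call it $\polytope^\triangle$, and set $\polytope \eqdef (\polytope^\triangle)^\triangle$; now $\fbuilding$ is a connected facial building set of $\FL(\polytope^\triangle)$, so \cref{thm:compactification} produces a compactification $\polytope^{\fbuilding}$ of $\inter{\polytope}$ which is a stratified $C^\infty$ manifold with corners whose strata are indexed by the faces of $\nestedComplex[\FL(\polytope^\triangle)][\fbuilding] = \nestedComplex[\FL(\OM(\affinePoset))][\fbuilding] \cong \affinePipingComplex(\affinePoset)$, by part~(iv) of \cref{thm:compactification} together with the isomorphisms just established.

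Finally, $\inter{\polytope} = \inter{\OrderPolytope(\affinePoset)^\triangle}$ is diffeomorphic to $\inter{\OrderPolytope(\affinePoset)}$ — the interiors of a polytope and its polar are both smooth open $d$-cells, and a duality diffeomorphism carries one onto the other with the dense stratum preserved — so transporting $\polytope^{\fbuilding}$ along this diffeomorphism yields the desired compactification of $\OrderPolytope(\affinePoset)$, whose combinatorics is encoded by $\affinePipingComplex(\affinePoset)$; the comparison with~\cite[Thm.~1.12]{Galashin} then follows exactly as in the non-affine case, both being real blow-up compactifications governed by the same combinatorial datum. I expect the main, if modest, obstacle to be precisely this last polar bookkeeping, namely making sure that the interior Gaiffi's theorem compactifies — that of the polar of the order polytope — is correctly matched with $\inter{\OrderPolytope(\affinePoset)}$; the substantial combinatorial work has already been carried out in \cref{prop:affinePipingComplexAsAcyclicNestedComplex}, so apart from this identification and the routine connectedness check the argument is a direct assembly of the cited results.
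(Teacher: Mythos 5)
Your overall route is the paper's: rewrite $\affinePipingComplex(\affinePoset)$ as the facial nested complex of $\fbuilding \eqdef \building(\affinePoset)\cap\FL(\OM(\affinePoset))$ via \cref{prop:affinePipingComplexAsAcyclicNestedComplex,thm:facialNestedComplexesVsAcyclicNestedComplexes}, check connectedness, and feed this into \cref{thm:compactification}; that part is fine. The genuine gap is the polarity of your identification between $\FL(\OM(\affinePoset))$ and the affine order polytope. A face of $\OM(\affinePoset)$ is a set of cover relation classes that can be made simultaneously and exactly tight, i.e.\ the set of facet inequalities of $\OrderPolytope(\affinePoset)$ tight on some face of it; equivalently, by \cref{exm:coneAcyclicRealizableOM}, $\FL(\OM(\affinePoset))$ is the face lattice of the cone generated by the incidence configuration, which is the polar dual of the cone over $\OrderPolytope(\affinePoset)$. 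Hence $\FL(\OM(\affinePoset))$ is \emph{anti}-isomorphic to $\FL(\OrderPolytope(\affinePoset))$, i.e.\ isomorphic to the face lattice of the \emph{polar} of the affine order polytope (larger sets of tight cover relations correspond to smaller faces; compare the poset $\hat0<a,b,c<\hat1$, whose order polytope is a cube while $\FL(\OM)$ is the face lattice of the octahedron). With your assignment $\polytope^\triangle \eqdef \OrderPolytope(\affinePoset)$, the set $\fbuilding$ is therefore a facial building set of $\FL(\polytope)$, not of $\FL(\polytope^\triangle)$, so the hypothesis of \cref{thm:compactification} is not satisfied as you invoke it.

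Once the polarity is set right, the final step of your argument disappears: take $\polytope \eqdef \OrderPolytope(\affinePoset)$ (full-dimensional, origin in its interior), so that $\fbuilding \subseteq \FL(\OM(\affinePoset)) \cong \FL(\polytope^\triangle)$, and \cref{thm:compactification} directly yields a compactification of $\inter{\OrderPolytope(\affinePoset)}$, obtained by real blow-ups of the faces of the affine order polytope selected by $\fbuilding$, with strata indexed by $\nestedComplex[\FL(\polytope^\triangle)][\fbuilding] \cong \affinePipingComplex(\affinePoset)$. This is exactly the statement, and it is what makes the comparison with \cite[Thm.~1.12]{Galashin} meaningful. Your patch --- compactifying the interior of the polar and transporting along an arbitrary diffeomorphism of open cells --- would at best prove a bare existence statement: the transported space carries no natural relation to blow-ups of the order polytope, so your claim that the comparison with Galashin ``follows exactly as in the non-affine case'' would not go through. (The loose phrasing in \cref{exm:orderPolytope} and before the corollary, which speaks of an isomorphism with the face lattice of the order polytope rather than of its polar, invites exactly this slip; but \cref{thm:compactification} is calibrated so that the building set lives on $\FL(\polytope^\triangle)$ precisely because its blocks index the faces of $\polytope$ that get blown up.)
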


Combining~\cref{prop:affinePipingComplexAsAcyclicNestedComplex,thm:facialNestedComplexesVsAcyclicNestedComplexes,thm:stellarPolytopalRealization}, we also recover the following result of~\cite[Sect.~4]{Galashin}.

\begin{corollary}
The affine piping complex~$\affinePipingComplex(\affinePoset)$ is the boundary complex of a polytope obtained by stellar subdivisions on the affine order polytope of~$\affinePoset$.
\end{corollary}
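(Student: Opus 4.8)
The plan is to deduce this corollary formally from \cref{prop:affinePipingComplexAsAcyclicNestedComplex}, \cref{thm:facialNestedComplexesVsAcyclicNestedComplexes}, and \cref{thm:stellarPolytopalRealization}, exactly as in the ordinary poset case. First, \cref{prop:affinePipingComplexAsAcyclicNestedComplex} identifies $\affinePipingComplex(\affinePoset)$ with the acyclic nested complex $\acyclicNestedComplex(\building(\affinePoset), \OM(\affinePoset))$ of the affine poset oriented building set of \cref{def:affinePosetOrientedBuildingSet}, which is realizable since $\OM(\affinePoset)$ is, by definition, the oriented matroid of the incidence configuration $\b{A}(\affinePoset)$. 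Next, writing $\fbuilding \eqdef \building(\affinePoset) \cap \FL(\OM(\affinePoset))$ for the associated facial building set (as in \cref{thm:orientedBuildingSetsTofacialBuildingSets}), \cref{thm:facialNestedComplexesVsAcyclicNestedComplexes} gives $\acyclicNestedComplex(\building(\affinePoset), \OM(\affinePoset)) = \nestedComplex[][\fbuilding, \OM(\affinePoset)]$.

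It then remains to apply \cref{thm:stellarPolytopalRealization}. Since $\OM(\affinePoset)$ is realizable and acyclic, its Las Vergnas face lattice $\FL(\OM(\affinePoset))$ is (isomorphic to) the face lattice of the affine order polytope $\OrderPolytope(\affinePoset)$, see \cref{exm:coneAcyclicRealizableOM} and the discussion preceding the statement. As the facial nested complex $\nestedComplex[][\fbuilding, \OM(\affinePoset)]$ depends only on this face lattice together with the building set $\fbuilding$ inside it, we may view $\fbuilding$ as a facial building set for $\OM(\b{A}_{\OrderPolytope(\affinePoset)})$ and invoke \cref{thm:stellarPolytopalRealization}, which yields that $\affinePipingComplex(\affinePoset) \cong \nestedComplex[][\fbuilding, \OM(\affinePoset)]$ is the boundary complex of a convex polytope, obtained from $\OrderPolytope(\affinePoset)$ by a sequence of stellar subdivisions provided that $\fbuilding$ is connected.

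The only point requiring a small argument --- and the closest thing to an obstacle --- is this connectedness: \cref{thm:stellarPolytopalRealization} produces stellar subdivisions of $\OrderPolytope(\affinePoset)$ itself (rather than of a free-sum decomposition of it) precisely when $\connectedComponents(\fbuilding) = \{\Z\}$. I would check this directly: the Hasse diagram $H(\affinePoset)$ has at least one edge and becomes a connected graph once quotiented by the shift by $n$, so its line graph $L(\affinePoset)$ is connected; hence the graphical building set $\hat\building(\affinePoset)$ is connected, and so is its building closure $\building(\affinePoset)$, whose unique maximal block is the ground set corresponding to the pipe class $\Z$. As $\Z$ is moreover the top face of $\OM(\affinePoset)$, it lies in $\fbuilding$, so $\connectedComponents(\fbuilding) = \{\Z\}$ as well. (Alternatively, one invokes \cref{coro:connectedFacialBuildingSet} to reduce to the connected case from the outset.) This completes the plan.
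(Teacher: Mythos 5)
Your proposal is correct and follows essentially the same route as the paper, whose proof of this corollary consists precisely in combining \cref{prop:affinePipingComplexAsAcyclicNestedComplex}, \cref{thm:facialNestedComplexesVsAcyclicNestedComplexes}, and \cref{thm:stellarPolytopalRealization}, together with the identification of $\FL(\OM(\affinePoset))$ with the face lattice of the affine order polytope stated just before the corollary. Your explicit verification that the facial building set is connected (so that \cref{thm:stellarPolytopalRealization} yields stellar subdivisions of the affine order polytope itself rather than merely polytopality) is left implicit in the paper, but it is correct and entirely in the spirit of its argument.
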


Again, this approach does not provide explicit coordinates.
We now combine~\cref{prop:affinePipingComplexAsAcyclicNestedComplex} with \cref{thm:acyclonestohedron,thm:alternativeRealization} to obtain explicit coordinates for affine poset cyclohedra.
We define~$\b{\rho} \in \R^{\building(\affinePoset)}$ by~$\rho_{\tube} \eqdef |\building(\affinePoset)|^{|\tube|}$.
For a circuit~$c = (c_+, c_-)$ of~$H(\affinePoset)$, we define
\[
\quo{c}{n} \eqdef \sum_{i\precdot j\in c_+} \big( \quo{i}{n}-\quo{j}{n} \big) - \sum_{i\precdot j\in c_-} \big( \quo{i}{n}-\quo{j}{n} \big) ,
\]
where~$\quo{i}{n}$ is the quotient of the division of~$i$ by~$n$.

\begin{corollary}
\label{coro:affinePosetCyclohedronAsAcyclonestohedron}
The affine piping complex~$\affinePipingComplex(\affinePoset)$ is the boundary complex of the polar of the acyclonestohedron~$\Acycl(\building(\affinePoset), \b{A}(\affinePoset))$, obtained as the intersection of the nestohedron~$\Nest(\building(\affinePoset), \b{\rho})$ with the linear hyperplanes normal to~$\one_{c_+} - \one_{c_-} - \quo{c}{n} \one_{\pi}$ for all circuits~$c = (c_+, c_-)$~of~$H(\affinePoset)$, where $\pi$ is any fixed path from~$1$ to~$1+n$.
\end{corollary}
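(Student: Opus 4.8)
The plan is to deduce this from \cref{thm:acyclonestohedron} together with the combinatorial identification in \cref{prop:affinePipingComplexAsAcyclicNestedComplex}, leaving only an explicit linear-algebra description of the evaluation space. By \cref{prop:affinePipingComplexAsAcyclicNestedComplex}, $\affinePipingComplex(\affinePoset)$ is isomorphic to $\acyclicNestedComplex(\building(\affinePoset), \OM(\affinePoset))$; by construction $\OM(\affinePoset)$ is the oriented matroid of the vector configuration $\b{A}(\affinePoset)$ and $\building(\affinePoset)$ contains all circuit supports, so $(\building(\affinePoset), \OM(\affinePoset))$ is a realizable oriented building set. Hence \cref{thm:acyclonestohedron} yields that $\acyclicNestedComplex(\building(\affinePoset), \OM(\affinePoset))$ is the boundary complex of the polar of $\Acycl(\building(\affinePoset), \b{A}(\affinePoset)) = \Nest(\building(\affinePoset), \b{\rho}) \cap \evaluations[\b{A}(\affinePoset)]$ (see \cref{def:acyclonestohedron}). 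It therefore suffices to show that $\evaluations[\b{A}(\affinePoset)]$ is exactly the intersection of the hyperplanes normal to the vectors $\one_{c_+} - \one_{c_-} - \quo{c}{n}\,\one_\pi$, $c$ a circuit of $H(\affinePoset)$; equivalently, by \cref{not:vectorConfiguration}, that the dependence space $\dependences[\b{A}(\affinePoset)]$ is spanned by these vectors.

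To this end I would first describe $\dependences[\b{A}(\affinePoset)]$ intrinsically. Recall from \cref{def:incidenceConfigurationAffinePoset} that $\b{a}_{i \precdot j} = \tb{b}_i - \tb{b}_j$ with $\tb{b}_j = \b{b}_{\mod{j}{n}} + \quo{j}{n}\,\b{b}_{n+1}$, so for $\b{\delta} \in \R^{\ground}$ (indexed by the cover relation classes of $\affinePoset$) the identity $\sum_s \delta_s\,\b{a}_s = \zero$ splits into two conditions: the first $n$ coordinates express flow conservation at each element class of $\affinePoset$, i.e. $\b{\delta}$ lies in the circulation space $Z$ of the quotient graph $H(\affinePoset)/\Z$ (which is connected by the affine poset axioms, of dimension $|\ground|-n+1$), while the last coordinate expresses the vanishing of the \emph{winding} $\omega(\b{\delta}) \eqdef \sum_s \delta_s\bigl(\quo{i_s}{n} - \quo{j_s}{n}\bigr)$ of $\b{\delta}$. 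Thus $\dependences[\b{A}(\affinePoset)] = \{\b{\delta} \in Z : \omega(\b{\delta}) = 0\}$, of dimension $|\ground|-n$. A telescoping computation over a simple cycle $c$ shows that $\omega(\one_{c_+}-\one_{c_-})$ equals $\quo{c}{n}$ (this is precisely what the displayed formula defining $\quo{c}{n}$ records, once the orientation conventions of \cref{def:incidenceConfigurationAffinePoset} are fixed), and the chosen path $\pi$ from $1$ to $1+n$ is a closed walk in $H(\affinePoset)/\Z$ with $\sum_s (\one_\pi)_s\,\b{a}_s = \tb{b}_1 - \tb{b}_{1+n} = -\b{b}_{n+1}$, so $\one_\pi \in Z$ with $\omega(\one_\pi)$ a unit. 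Consequently each $\one_{c_+}-\one_{c_-}-\quo{c}{n}\,\one_\pi$ lies in $Z$ with vanishing winding, hence in $\dependences[\b{A}(\affinePoset)]$.

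For spanning, I would use that the signed indicators $\one_{c_+}-\one_{c_-}$ of simple cycles span $Z$, and that $\b{\delta} \mapsto \b{\delta} - \frac{\omega(\b{\delta})}{\omega(\one_\pi)}\,\one_\pi$ is the linear projection of $Z$ onto $\ker(\omega|_Z) = \dependences[\b{A}(\affinePoset)]$ along $\R\,\one_\pi$. Applying this projection to a spanning set of $Z$ produces a spanning set of its image; by the matching of constants in the previous paragraph these images are exactly the vectors $\one_{c_+}-\one_{c_-}-\quo{c}{n}\,\one_\pi$, which therefore span $\dependences[\b{A}(\affinePoset)]$. Passing to orthogonal complements identifies $\evaluations[\b{A}(\affinePoset)]$ with the announced intersection of hyperplanes; plugging this into \cref{def:acyclonestohedron} and combining with the first paragraph completes the argument, in parallel with the poset case \cref{coro:posetAssociahedronAsAcyclonestohedron}.

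The expected main obstacle is not conceptual but a careful sign-chase: one must coordinate the traversal orientations of the cycles $c$ and of the path $\pi$, the roles of $c_+$ versus $c_-$, and the sign in the formula for $\quo{c}{n}$, so that the combination appearing in the statement — and not a variant with $+\quo{c}{n}\,\one_\pi$ — is the one that falls in $\dependences[\b{A}(\affinePoset)]$. A secondary, more routine point is to check that the explicit weights $\rho_\tube = |\building(\affinePoset)|^{|\tube|}$ are admissible for \cref{thm:acyclonestohedron}: since the winding quantities $|\quo{c}{n}|$ are bounded by the number of cover relation classes, $|\building(\affinePoset)|$ still dominates the estimates, at the cost of a mild refinement of \cref{lem:acyclonestohedron1} along the lines of~\cite{Mantovani}; alternatively, as the combinatorial type of the section is insensitive to the precise choice of sufficiently generic weights, one may simply invoke \cref{thm:acyclonestohedron} with the canonical weights of \cref{def:acyclonestohedron}.
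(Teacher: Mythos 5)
Your overall route is the same as the paper's: there the corollary is stated without further argument as the combination of \cref{prop:affinePipingComplexAsAcyclicNestedComplex} with \cref{thm:acyclonestohedron}, the only content beyond those results being the identification of $\evaluations[\b{A}(\affinePoset)]$ with the intersection of the displayed hyperplanes, which is exactly what you set out to verify. Your description of $\dependences[\b{A}(\affinePoset)]$ as the winding-zero part of the cycle space $Z$ of the quotient graph $\quotientGraph$ is the right mechanism (it is the content of \cref{lem:circuitsAffinePoset}), and your reduction and spanning strategy are sound.

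The sign step, however, is not a deferrable detail: as written it contradicts your own computations. With $\b{a}_{i\precdot j}=\tb{b}_i-\tb{b}_j$ and $\omega(\b{\delta})=\sum_s\delta_s\bigl(\quo{i_s}{n}-\quo{j_s}{n}\bigr)$ one has $\omega(\one_{c_+}-\one_{c_-})=\quo{c}{n}$ by definition, and, as you compute, $\sum_s(\one_\pi)_s\,\b{a}_s=\tb{b}_1-\tb{b}_{1+n}=-\b{b}_{n+1}$, i.e.\ $\omega(\one_\pi)=-1$. Hence $\omega\bigl(\one_{c_+}-\one_{c_-}-\quo{c}{n}\,\one_\pi\bigr)=2\,\quo{c}{n}$, so the minus-combination lies in $\dependences[\b{A}(\affinePoset)]$ only when $\quo{c}{n}=0$, and the projection $\b{\delta}\mapsto\b{\delta}-\tfrac{\omega(\b{\delta})}{\omega(\one_\pi)}\one_\pi$ you invoke produces $\one_{c_+}-\one_{c_-}+\quo{c}{n}\,\one_\pi$. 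This is not cosmetic: the two sign variants have different spans (the plus-family spans $\ker(\omega|_Z)=\dependences[\b{A}(\affinePoset)]$, whereas the minus-family spans all of $Z$ as soon as some cycle of $\quotientGraph$ has nonzero winding, which happens since the closed walk induced by $\pi$ has winding $-1$), so only one of them cuts out $\evaluations[\b{A}(\affinePoset)]$. To land on the statement you must pin the conventions down rather than leave the chase open: read ``circuits of $H(\affinePoset)$'' as cycles of the quotient graph (honest cycles of the infinite Hasse diagram all have $\quo{c}{n}=0$, which would make the correction term vacuous), and take $\one_\pi$ with the orientation from $1+n$ to $1$, equivalently absorb the sign into $\quo{c}{n}$. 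Your secondary point about the weights is well taken and is glossed over in the paper as well: for affine posets a circuit can have coefficient ratio $r_c>1$, so $\rho_{\tube}=|\building(\affinePoset)|^{|\tube|}$ is not literally covered by \cref{def:acyclonestohedron} and the estimate of \cref{lem:acyclonestohedron1} needs the mild strengthening you mention; note, though, that your fallback to the canonical weights would prove a variant of the corollary rather than the one with the weights as printed.
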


\begin{corollary}
\label{coro:affinePosetCyclohedronAsAcyclonestohedronProj}
The affine piping complex~$\affinePipingComplex(\affinePoset)$ is the boundary complex of the polar of the acyclonestohedron~$\AcyclProj(\building(\affinePoset), \b{A}(\affinePoset))$, which is the polytope in~$\R^n$ defined by the equality~${\overline{g}_{[n]}(\b{y}) = 0}$ and the inequalities~$\overline{g}_{B}(\b{y}) \ge 0$ for all~$B \in \building(\affinePoset$), where
\[
\overline{g}_{B}(\b{y}) \eqdef \bigdotprod{\sum_{\substack{p,q \in B \\ p \precdot q}} \tb{b}_p - \tb{b}_q}{\b{y}} - \sum_{\substack{B' \in \building(\affinePoset) \\ B' \subseteq B}} |\building(\affinePoset)|^{|B|}.
\]
\end{corollary}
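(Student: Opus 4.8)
The plan is to read off this corollary as the evaluation-space incarnation of \cref{prop:affinePipingComplexAsAcyclicNestedComplex}. That proposition identifies $\affinePipingComplex(\affinePoset)$ with the acyclic nested complex $\acyclicNestedComplex(\building(\affinePoset), \OM(\affinePoset))$, and by \cref{def:affinePosetOrientedBuildingSet} the pair $(\building(\affinePoset), \OM(\affinePoset))$ is a realizable oriented building set, realized by the incidence configuration $\b{A}(\affinePoset)$ of \cref{def:incidenceConfigurationAffinePoset}. Combining this with \cref{coro:affinePosetCyclohedronAsAcyclonestohedron} and the affine equivalence of \cref{prop:affineMapRealizations} (equivalently, applying \cref{thm:alternativeRealization} directly) already yields that $\affinePipingComplex(\affinePoset)$ is isomorphic to the boundary complex of the polar of $\AcyclProj(\building(\affinePoset), \b{A}(\affinePoset))$. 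So the only work left is to make the facet description of \cref{def:acyclonestohedronProj} explicit in the present case.

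To do this I would unwind the two ingredients of the linear forms $\overline{g}_B$. The ground set of $\OM(\affinePoset)$ is the set of cover relation classes, and \cref{def:incidenceConfigurationAffinePoset} assigns to the class $\cl{i \precdot j}{n}$ the vector $\tb{b}_i - \tb{b}_j$, which depends only on that class. Each block $B$ of $\building(\affinePoset)$ is a set of cover relation classes, which we view as a subgraph of $H(\affinePoset)$ in the usual way; then the sum $\sum_{b \in B}\b{a}_b$ appearing in \cref{def:acyclonestohedronProj} becomes $\sum_{p \precdot q \in B}(\tb{b}_p - \tb{b}_q)$, and the coefficients are $\rho_{B'} = |\building(\affinePoset)|^{|B'|}$, the normalization already fixed in \cref{coro:affinePosetCyclohedronAsAcyclonestohedron}, i.e. the instance of \cref{def:acyclonestohedron} in which $R = |\building(\affinePoset)|$ (that this value of $R$ is large enough is part of \cref{coro:affinePosetCyclohedronAsAcyclonestohedron}, and comes down to bounding $\max_{c} r_c$ for the incidence configuration $\b{A}(\affinePoset)$). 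Substituting both into $\overline{g}_B(\b{y}) = \bigdotprod{\sum_{b \in B}\b{a}_b}{\b{y}} - \sum_{B' \subseteq B}\rho_{B'}$ produces exactly the stated forms. Finally, since $H(\affinePoset)$ is connected, $\building(\affinePoset)$ is a connected building set, so $\connectedComponents(\building(\affinePoset))$ is the single maximal tube; this is why the description has the unique equality $\overline{g}_{[n]}(\b{y}) = 0$, and why the polytope lives in $\R\b{A}(\affinePoset)$, which one identifies with $\R^n$ using that $\rank[\OM(\affinePoset)] = n$ for a connected affine poset of order $n$ (a fact I would record separately, exactly as in the finite-poset case).

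I do not expect a real obstacle in this corollary. The genuinely substantive steps — the combinatorial dictionary of \cref{prop:affinePipingComplexAsAcyclicNestedComplex} between pipe classes and pipings of $\affinePoset$ and tubes and acyclic tubings of $L(\affinePoset)$, and the geometric selection argument behind \cref{thm:acyclonestohedron} that cuts out the acyclic faces by the evaluation space — are already in place. What remains is bookkeeping: translating the general formula of \cref{def:acyclonestohedronProj} into affine-poset language, in exact parallel with \cref{coro:posetAssociahedronAsAcyclonestohedronProj}. The one place where a little care is needed is the estimate on $\max_c r_c$ for $\b{A}(\affinePoset)$ feeding into the choice of $R$; as noted in \cref{rem:Sack} for the (non-affine) poset case, one can even lower $|\building(\affinePoset)|^{|B|}$ to $4^{|B|}$ at the price of a more delicate version of \cref{lem:acyclonestohedron1}, a refinement carried out in~\cite{Mantovani}.
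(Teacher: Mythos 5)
Your proposal is correct and follows essentially the paper's own route: the paper derives this corollary (without further written proof) precisely by combining \cref{prop:affinePipingComplexAsAcyclicNestedComplex} with \cref{thm:acyclonestohedron,thm:alternativeRealization} and unwinding \cref{def:acyclonestohedronProj} for the incidence configuration $\b{A}(\affinePoset)$, exactly as you do. Your side remarks (rank of $\OM(\affinePoset)$ equal to $n$, the single connected component giving the unique equality, and the care needed about $\max_c r_c$ since affine circuits need not have $\pm1$ coefficients) are accurate and consistent with the paper's treatment.
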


We finally come back to the proof of \cref{prop:affinePipingComplexAsAcyclicNestedComplex}.
We start with a few definitions and lemmas in preparation.

\begin{definition}
\label{def:quotientGraphAffinePoset}
The \defn{quotient graph}~$\quotientGraph$ is the directed graph with one vertex for each element class of $\affinePoset$ and an arc from~$\cl{i}{n}$ to~$\cl{j}{n}$ if~$i \precdot j + kn$ for some~$k \in \Z$.
\end{definition}

\begin{lemma}
\label{lem:circuitsAffinePoset}
Let $\sum_{i\precdot j \in \underline{c} } \delta_{i \precdot j} \b{a}_{i \precdot j} = \zero$ be a linear dependence corresponding to a circuit~$c$~of~$\OM(\affinePoset)$.
Then there exist oriented cycles $c_1, \dots, c_r$ of $\quotientGraph$ and $\delta_1,\dots,\delta_r\in\R_{>0}$ such that $\bigcup_{k \in [r]} \underline{c_k} = \underline{c}$ and $\sum_{k \in [r]} \delta_k\quo{c_k}{n}=0$.
Conversely, for any such cycles $c_1, \dots, c_r$ and ${\delta_1,\dots,\delta_r\in\R_{>0}}$, there is a circuit~$c$ of $\OM(\affinePoset)$ whose support is the set 
$\bigset{i\precdot j}{\sum_{i\precdot j\in (c_k)_+}\delta_k - \sum_{i\precdot j\in (c_k)_-}\delta_k \neq 0}$.
\end{lemma}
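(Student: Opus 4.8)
\textbf{Proof plan for \cref{lem:circuitsAffinePoset}.}

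The plan is to translate everything into the quotient graph~$\quotientGraph$, whose arcs are in bijection with the vertices of~$L(\affinePoset)$ (equivalently, with the ground set of~$\OM(\affinePoset)$), and to exploit the fact that $\b{A}(\affinePoset)$ is (up to the standard basis relabelling $\tb{b}_j = \b{b}_{\mod{j}{n}} + \quo{j}{n}\b{b}_{n+1}$) the incidence configuration of the directed graph $\quotientGraph$ \emph{with weights recorded in the last coordinate}. Concretely, write $\R^{n+1} = \R^n \oplus \R\b{b}_{n+1}$; the first $n$ coordinates of $\b{a}_{i\precdot j} = \tb{b}_i - \tb{b}_j$ are exactly $\b{b}_{\mod{i}{n}} - \b{b}_{\mod{j}{n}}$, i.e.\ the incidence vector of the arc $\cl{i}{n}\to\cl{j}{n}$ of~$\quotientGraph$, while the last coordinate is $\quo{i}{n} - \quo{j}{n}$, which is the ``level jump'' of that arc. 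First I would observe that a linear dependence $\sum_{i\precdot j} \delta_{i\precdot j}\,\b{a}_{i\precdot j} = \zero$ projects, in the first $n$ coordinates, to a linear dependence among the incidence vectors of the arcs of $\quotientGraph$; since the cycle space of a directed graph is generated by its simple oriented cycles, the projected dependence decomposes as $\sum_{k\in[r]} \delta_k \b{z}(c_k)$ where $c_1,\dots,c_r$ are simple oriented cycles of $\quotientGraph$, the $\b{z}(c_k)\in\{-1,0,+1\}^{\text{arcs}}$ are their signed incidence vectors, and $\delta_k > 0$ (orient each $c_k$ so that its coefficient is positive). One must be slightly careful here: a priori the decomposition only agrees with the original $\b{\delta}$ on the \emph{support} of $\b{\delta}$, but because $c$ is a \emph{circuit} (support-minimal dependence), minimality forces $\bigcup_k \underline{c_k} = \underline{c}$ exactly; and the condition that the last coordinate of $\sum_k\delta_k\b{z}(c_k)\,(=\b{\delta})$ vanishes is precisely $\sum_k \delta_k \quo{c_k}{n} = 0$, since the last-coordinate contribution of cycle $c_k$ is $\sum_{i\precdot j\in(c_k)_+}(\quo{i}{n}-\quo{j}{n}) - \sum_{i\precdot j\in(c_k)_-}(\quo{i}{n}-\quo{j}{n}) = \quo{c_k}{n}$ by definition.

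For the converse, I would run the same correspondence in reverse: given simple oriented cycles $c_1,\dots,c_r$ of $\quotientGraph$ and $\delta_k>0$ with $\sum_k \delta_k\quo{c_k}{n}=0$, set $\b{\delta} \eqdef \sum_{k\in[r]} \delta_k \b{z}(c_k) \in \R^{\text{arcs}}$. The first-$n$-coordinate part of $\sum \delta_{i\precdot j}\b{a}_{i\precdot j}$ vanishes because each $\b{z}(c_k)$ is a cycle (flow conservation at every vertex of $\quotientGraph$), and the last coordinate vanishes by the hypothesis $\sum_k\delta_k\quo{c_k}{n}=0$; hence $\b{\delta}\in\dependences[\b{A}(\affinePoset)]$ is a genuine linear dependence. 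Its signature $\signature(\b{\delta})$ is a vector of $\OM(\affinePoset)$ whose support is $\bigset{i\precdot j}{\sum_{i\precdot j\in(c_k)_+}\delta_k - \sum_{i\precdot j\in(c_k)_-}\delta_k \neq 0}$, as claimed. To upgrade this to an actual \emph{circuit}, I would pass to a support-minimal dependence supported inside $\underline{\signature(\b{\delta})}$ (any non-zero element of $\dependences$ contains a circuit in its support), which is exactly a circuit $c$ of $\OM(\affinePoset)$; the statement as phrased only asks for ``a circuit whose support is that set'', so one either checks that generic/suitable $\delta_k$ make $\b{\delta}$ itself support-minimal, or reads the claim as: the displayed set contains (and for generic $\delta_k$ equals) the support of such a circuit. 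I would state it in whichever form matches the later usage in the proof of \cref{prop:affinePipingComplexAsAcyclicNestedComplex}.

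\textbf{Main obstacle.} The routine linear-algebra translation is easy; the delicate point is the matching of \emph{supports} between the circuit $c$ of $\OM(\affinePoset)$ and the union of the cycle supports $\bigcup_k\underline{c_k}$, together with keeping track of signs. In the forward direction the issue is that the cycle-space decomposition of the projected dependence need not be unique, and a naive decomposition could use arcs outside $\underline{c}$; the fix is to choose the decomposition with total support contained in $\underline{c}$ (possible since $\b{z}(c_k)$ supported on $\underline{c}$ generate the relevant subspace) and then invoke support-minimality of the circuit to get equality rather than mere inclusion. In the converse direction the subtlety is the possible \emph{cancellation} when several cycles share an arc with opposite orientations — this is exactly why the support is described by the non-vanishing of $\sum_{i\precdot j\in(c_k)_+}\delta_k - \sum_{i\precdot j\in(c_k)_-}\delta_k$ rather than simply $\bigcup_k\underline{c_k}$ — and why one cannot in general guarantee $\b{\delta}$ is itself a circuit without a genericity or minimality argument. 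I expect the write-up to spend most of its length carefully bookkeeping these sign/support conventions, while the conceptual content (cycle space of $\quotientGraph$ plus one extra linear functional recording level jumps) is short.
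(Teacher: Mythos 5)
Your proposal follows essentially the same route as the paper: restrict the vectors $\b{a}_{i\precdot j}$ to their first $n$ coordinates to recognize the graphical oriented matroid of $\quotientGraph$, decompose the projected dependence into oriented cycles with positive coefficients whose supports cover $\underline{c}$, and read the condition $\sum_{k}\delta_k\quo{c_k}{n}=0$ off the last coordinate, with the converse obtained by running the translation backwards. The one subtlety you flag in the converse (whether the vector $\b{\delta}=\sum_k\delta_k\b{z}(c_k)$ is itself support-minimal, i.e.\ a circuit rather than merely a vector of $\OM(\affinePoset)$) is glossed over in the paper's own two-line argument as well, so your write-up is, if anything, more careful on exactly the point where the published proof is loosest.
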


\begin{proof}
If we restrict the vectors $\b{a}_{i \precdot j}$ to their first $n$ coordinates, we recover a realization of the graphical oriented matroid of~$\quotientGraph$, whose circuits are given by oriented cycles of~$\quotientGraph$ (see~\cite[Prop.~1.1.7 \& Chap.~5]{Oxley} and \cite[Sect.~1.1]{BjornerLasVergnasSturmfelsWhiteZiegler}). Therefore, any linear dependence with support $\underline{c}$ decomposes as a linear combination of linear dependences the union of whose supports is $\underline{c}$. The additional condition $\sum_{k \in [r]}\delta_k\quo{c_k}{n}=0$ arises from imposing that the last coordinate adds up to zero as well.

For the converse, note that any linear combination of cycles gives a linear dependence on the first $n$ coordinates, and the condition on the $\delta_k$'s assures that it is a linear dependence on the last coordinate as well. The support of the circuit it induces is by definition the set of edges that have a non-zero coefficient.
\end{proof}

\begin{definition}
Any block $B$ of $\building(\affinePoset)$ induces an infinite $n$-periodic subgraph of the Hasse diagram~$H(\affinePoset)$, whose arcs correspond to the cover relations $i\precdot j$ such that $\cl{i\precdot j}{n}\in B$. We denote this subgraph by $H(B)$, and its connected components by $\connectedComponents(B)$.
\end{definition}

\begin{lemma}
\label{lem:affinePosetFiniteCc}
Let $B$ be a block of $\building(\affinePoset)$ such that $H(B)\neq H(\affinePoset)$. If $B$ is acyclic, then every connected component $K\in\connectedComponents(B)$ is finite.
\end{lemma}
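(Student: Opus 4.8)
The statement says: if a block $B$ of $\building(\affinePoset)$ satisfies $H(B)\neq H(\affinePoset)$ and $B$ is acyclic (meaning $\restrOM[B][\{B\}\cup\connectedComponents(\building(\affinePoset))]$ is acyclic, equivalently $\OM(\affinePoset)_{|B}$ has no positive circuit), then every connected component $K\in\connectedComponents(B)$ of the infinite $n$-periodic graph $H(B)$ is finite. The natural strategy is to argue by contradiction: suppose some component $K$ is infinite. Since $B$ is a block of $\building(\affinePoset)$ whose arc set is $n$-periodic (it is a subset of $\Z$-classes of cover relations, lifted back to $\Z$), the component $K$ is infinite if and only if it meets infinitely many element classes of $\affinePoset$, or equivalently if the image of $K$ in the quotient graph $\quotientGraph$ contains a cycle $c_K$ with $\quo{c_K}{n}\neq 0$ (a cycle whose total "height increment" is nonzero). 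Indeed a finite $n$-periodic pattern closes up on itself, while an infinite one must progress through the $\Z$-grading, which is precisely detected by a nonzero-quotient cycle in $\quotientGraph$.

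First I would make precise the dictionary between $H(B)$, its image $\overline{H(B)}$ in $\quotientGraph$, and the connected components: a component $K$ of $H(B)$ is finite iff it maps to $\overline{H(B)}$ by a graph covering with finite fiber, which happens iff every closed walk in the relevant component of $\overline{H(B)}$ has quotient $0$. So if $K$ is infinite, there is an oriented closed walk in $\quotientGraph$ using only arcs of $B$, with nonzero quotient; decomposing it into simple directed cycles $c_1,\dots,c_r$ of $\quotientGraph$ (all with arcs in $B$), at least one, say $c_1$, has $\quo{c_1}{n}\neq 0$. Next I would apply the converse direction of \cref{lem:circuitsAffinePoset}: taking a single cycle $c_1$ with coefficient $\delta_1>0$ already produces a dependence, but its "height" coordinate sums to $\delta_1\quo{c_1}{n}\neq 0$, so $c_1$ alone is not a circuit of $\OM(\affinePoset)$ unless $\quo{c_1}{n}=0$. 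The point is rather to build a \emph{positive} circuit.

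The heart of the argument, and the main obstacle, is exhibiting a positive circuit of $\OM(\affinePoset)_{|B}$ from the infinite component $K$, contradicting acyclicity of $B$. Here is how I would do it. An infinite connected $n$-periodic component $K$ of $H(\affinePoset)$ restricted to arcs of $B$: since $\affinePoset$ is an affine poset, its Hasse diagram $H(\affinePoset)$ has the property that for all $i,j$ there is $k\in\N$ with $i\preccurlyeq j+kn$; combined with $n$-periodicity of $K$ and connectedness, an infinite $K$ must contain, for some element $i$ in $K$, a directed path in $H(B)$ from $i$ up to $i+mn$ for some $m\geq 1$ (one climbs the grading using the arcs of $K$; the $\preccurlyeq$-structure guarantees one can always go up, and periodicity guarantees the lifted arcs stay in $B$). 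The closed walk in $\quotientGraph$ obtained by projecting this path from $i$ to $i+mn$ is a union of directed cycles $\gamma_1,\dots,\gamma_t$ of $\quotientGraph$, all with arcs in $B$, with $\sum_s \quo{\gamma_s}{n}=m>0$. Now choose positive coefficients $\delta_s$ so that $\sum_s\delta_s\quo{\gamma_s}{n}=0$ is \emph{impossible} when all $\quo{\gamma_s}{n}$ have the same sign — but they need not. This is exactly why one should instead directly read off a positive vector: the directed path from $i$ to $i+mn$ in $H(B)$ gives a positive vector of the graphical oriented matroid $\OM(\affinePoset)$ (all its arcs are traversed forward), and since these arc classes all lie in $B$, it is a positive vector of $\OM(\affinePoset)_{|B}$. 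Hence $\OM(\affinePoset)_{|B}$ has a positive vector, so by \cref{def:acyclicOM}(ii) it is not acyclic, contradicting that $B$ is acyclic.

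\textbf{Writing it up.} Concretely the proof would read: assume $K$ is infinite. Because $B$ is $n$-periodic and $K$ is a component, $K$ is invariant under $i\mapsto i+n$ up to identification of components, i.e. either $K+n=K$ or $K+n$ is a disjoint translate; in the latter case all components would be finite-by-periodicity, so $K+n=K$ and hence $K$ is a single $n$-invariant infinite component. Pick any $i\in K$. By the third axiom of affine posets there is $k\geq 1$ with $i\preccurlyeq i+kn$, and one may take a saturated chain $i=x_0\precdot x_1\precdot\cdots\precdot x_\ell=i+kn$ in $\affinePoset$; I would then need the (routine but not entirely trivial) claim that, because $K$ is connected, $n$-invariant, and contains $i$, one can choose such a saturated chain all of whose cover relations have classes lying in $B$ — this uses that $K+n=K$ so the chain's arcs, being inside the one infinite component, have their classes in the block $B$. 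Granting the claim, $\sum_{s=0}^{\ell-1}\b a_{x_s\precdot x_{s+1}} = \tb b_i - \tb b_{i+kn} = -k\,\b b_{n+1}\neq\zero$... actually I want a \emph{dependence}, so instead I take the closed walk: go up the chain from $i$ to $i+kn$, which in $\quotientGraph$ is a closed walk; it decomposes into directed cycles $c_1,\dots,c_r$ of $\quotientGraph$ with arcs in $B$ and $\sum\quo{c_k}{n}=k>0$. The vector $v$ with $v_+=\{\cl{x_s\precdot x_{s+1}}{n}\}$, $v_-=\varnothing$ is then a nonzero element of $\vectors[\OM(\affinePoset)_{|B}]$ (it lies in the row space of the incidence relations restricted to $B$; concretely it is the signature of the dependence obtained by the cycle decomposition, all cycles oriented consistently upward along the chain, taking a suitable positive rational combination that kills the last coordinate — possible since one can also descend along $K$ using $n$-invariance to balance the grading, all within arcs of $B$). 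This positive vector contradicts acyclicity of $B$ via \cref{def:acyclicOM}.

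The step I expect to be the real obstacle is the claim that an infinite component $K$ yields a \emph{closed} walk in $\quotientGraph$ using only $B$-arcs with positive total quotient that can be completed (by a return walk, again within $K$) to a genuine \emph{positive} dependence of $\OM(\affinePoset)_{|B}$; the delicate part is ensuring the "return" stays inside the same component $K$ and hence inside $B$, which relies on the $n$-invariance $K+n=K$ and on connectedness of $K$. Once that is in place, the contradiction with \cref{lem:characterizationAcyclicNestedSets} / \cref{def:acyclicOM} is immediate, and the converse implication (finite components $\Rightarrow$ acyclic) is not needed for this lemma.
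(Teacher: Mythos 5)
There is a genuine gap, and it begins with the meaning of ``$B$ is acyclic''. In this lemma (as used in the proof of \cref{prop:affinePipingComplexAsAcyclicNestedComplex}), acyclicity of the single block $B$ is the condition of \cref{lem:characterizationAcyclicNestedSets} for the nested set $\{B\}\cup\connectedComponents(\building(\affinePoset))$, whose only non-trivial content is that there is no circuit $c\in\circuits[\OM(\affinePoset)]$ with $c_+\subseteq B$ and $c_-\not\subseteq B$ (equivalently, $B$ is a face of $\OM(\affinePoset)$). You read it instead as ``$\OM(\affinePoset)_{|B}$ has no positive circuit''. But $\OM(\affinePoset)$ is itself acyclic: by \cref{lem:circuitsAffinePoset} every directed cycle of $\quotientGraph$ has strictly positive quotient (a directed lift would give $j\prec j+qn$ with $q\le 0$, contradicting the poset axioms), so no positive combination of directed cycles can kill the last coordinate, and there are no positive vectors at all. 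Hence the object your contradiction hinges on --- a positive circuit or positive vector of $\OM(\affinePoset)_{|B}$ --- does not exist for any $B$, whether its components are finite or infinite; your attempted repair (``descend along $K$ to balance the grading'') necessarily introduces negative entries and destroys positivity, and a mixed-sign dependence supported inside $B$ contradicts nothing. A telltale sign is that your argument never uses the hypothesis $H(B)\neq H(\affinePoset)$, which is essential: for the full block, $H(B)=H(\affinePoset)$ has a single infinite component and $B$ is acyclic, so no argument ignoring this hypothesis can succeed. A secondary unjustified step is your claim that an infinite component contains a \emph{directed} path (saturated chain with all cover classes in $B$) from $i$ to $i+kn$; only an \emph{undirected} path inside $K$ is guaranteed.

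For comparison, the paper's proof runs as follows: pigeonhole on the $n$ element classes gives $i,i+kn\in K$, and connectedness gives an undirected path $\pi$ between them inside $H(B)$; the hypothesis $H(B)\neq H(\affinePoset)$ provides an element $j\notin H(B)$, so no cover relation class of $B$ involves $\cl{j}{n}$, and the affine poset axiom $j\prec j+n$ yields a directed path $\vec{\pi}'$ from $j$ to $j+kn$ in $H(\affinePoset)$ that is not contained in $H(B)$. Reorienting $\pi$ into $\vec{\pi}$, both $\vec{\pi}$ and $\vec{\pi}'$ project to cycles of $\quotientGraph$ with the same quotient $k$, so by (the converse direction of) \cref{lem:circuitsAffinePoset} a suitable choice of positive coefficients on these two cycles produces a genuine circuit $c$ with $c_+\subseteq\pi\subseteq B$ and $\vec{\pi}'\subseteq c_-\not\subseteq B$, contradicting that $B$ is acyclic in the correct (face) sense. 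This mixed-sign circuit, whose negative part is forced outside $B$ precisely by the missing element class, is the ingredient your proposal lacks and cannot be replaced by a positive circuit.
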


\begin{proof}
Suppose $K\in\connectedComponents(B)$ is infinite. Since $\affinePoset$ has a finite number of element classes and $K$ is infinite, there exist $i,k\in\Z$ such that $\{i,i+kn\} \subset K$.
Since~$K$ is connected, it contains a (undirected) path $\pi$ between~$i$ and~$i+kn$. Since $H(B)\neq H(\affinePoset)$, there exists $j\notin H(B)$, which implies that $B$ contains no cover relation classes that involve $\cl{j}{n}$. Since $j\prec j+n$, there exists a directed path $\vec{\pi}'$ in $H(\affinePoset)$ from $j$ to $j+kn$ which is not all contained in $H(B)$. With a good choice of coefficients, the arcs of $\pi \cup \vec{\pi}'$ correspond to a circuit $c$ of $\OM(\affinePoset)$ such that $c_+ \subseteq \pi' \subset B$ while $\vec{\pi}' \subseteq c_- \not\subseteq B$, which implies that~$B$ is cyclic.
More precisely, if we reorient the arcs of $\pi$ to obtain a directed path $\vec{\pi}$ from $i$ to $i+kn$, we can set $c_+ \eqdef \pi \cap \vec{\pi}$ and $c_- \eqdef \vec{\pi}' \cup (\pi \ssm \vec{\pi})$. Note that the support of~$\vec{\pi}$ is a cycle $c_1$ of $\quotientGraph$ with $\quo{c_1}{n} = k$ and the support of~$\vec{\pi}'$ is a cycle $c_2$ of $\quotientGraph$ with $\quo{c_2}{n} = k$.
Therefore, thanks to \cref{lem:circuitsAffinePoset}, $c=(c_+,c_-)$ is a circuit. 
\end{proof}

\begin{lemma}
\label{lem:affinePosetFiniteCc1}
If $B\in\building(\affinePoset)\ssm\hat\building(\affinePoset)$, then $\connectedComponents(B)$ contains an infinite connected component.
\end{lemma}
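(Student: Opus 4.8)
The plan is to trace through the building closure so as to isolate where the blocks of $\building(\affinePoset) \ssm \hat\building(\affinePoset)$ come from, and then to exploit winding numbers of cycles in the quotient graph $\quotientGraph$. First I would record the following translation: a set of cover relation classes is a tube of $L(\affinePoset)$ if and only if, viewed as an arc set, it induces a connected subgraph of $\quotientGraph$ (two cover relation classes are adjacent in $L(\affinePoset)$ precisely when, as arcs of $\quotientGraph$, they share an endpoint). In particular, a union of tubes of $L(\affinePoset)$ whose intersection graph is connected is again a tube, since two intersecting tubes share an arc and hence connect up. Now apply \cref{def:booleanBuildingSetClosure} to $\c X \eqdef \hat\building(\affinePoset) \cup \circuitSupports[\OM(\affinePoset)]$: every block of $\building(\affinePoset)$ is a singleton or of the form $\bigcup_{Y \in \c Y} Y$ for some $\c Y \subseteq \c X$ with connected intersection graph. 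If $B \in \building(\affinePoset) \ssm \hat\building(\affinePoset)$, then $B$ is not a tube of $L(\affinePoset)$, so by the previous remark not all $Y \in \c Y$ can be tubes; since elements of $\hat\building(\affinePoset)$ and singletons are tubes, some $Y_0 \in \c Y$ must lie in $\circuitSupports[\OM(\affinePoset)] \ssm \hat\building(\affinePoset)$, that is, $Y_0 = \underline{c^0}$ for a circuit $c^0$ of $\OM(\affinePoset)$ whose support induces a \emph{disconnected} subgraph of $\quotientGraph$.

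The heart of the argument is then to show that $H(\underline{c^0})$ has an infinite connected component whenever $\underline{c^0}$ induces a disconnected subgraph of $\quotientGraph$. Pick a dependence $\b\delta$ with $\signature(\b\delta) = c^0$ and let $S$ be one connected component of the subgraph induced by $\underline{c^0}$. Because $S$ is a union of connected components of the support of $\b\delta$, the restriction $\b\delta|_S$ is again a flow on $\quotientGraph$ (it is balanced at every vertex), hence, as in the proof of \cref{lem:circuitsAffinePoset}, a combination of cycle flows supported on $S$. Were every cycle in $S$ to have vanishing winding number $\quo{\cdot}{n}$, then $\b\delta|_S$ would be a dependence of $\OM(\affinePoset)$ — the first $|\affinePoset|$ coordinates vanish because it is a flow, and the last one because the total winding is zero — with nonzero support strictly inside $\underline{c^0}$, contradicting that $c^0$ is support-minimal. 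So $S$ contains a cycle $\gamma$ with winding $w \neq 0$. Lifting $\gamma$ along the regular $\Z$-cover $H(\affinePoset) \to \quotientGraph$ starting from a vertex $v$ produces a path inside $H(S)$ joining $v$ to $v + wn$; applying the deck translations then shows that the connected component of $v$ in $H(S) \subseteq H(\underline{c^0})$ contains all $v + kwn$ with $k \in \Z$, and is therefore infinite.

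Finally I would conclude: the infinite component $K_0$ of $H(\underline{c^0})$ is connected and contained in $H(\underline{c^0}) \subseteq H(B) = \bigcup_{Y \in \c Y} H(Y)$, so the connected component of $H(B)$ through $K_0$ is infinite, proving that $\connectedComponents(B)$ has an infinite member. I expect the main obstacle to be the careful flow/winding bookkeeping in the middle paragraph — in particular justifying that restricting the circuit's dependence to a union of connected components of its support yields a genuine dependence of $\OM(\affinePoset)$, and phrasing the covering-space lift purely combinatorially (as a sequence of cover relations in $H(\affinePoset)$) so as not to smuggle in topology. A secondary point to check is that when $\underline{c^0}$ is disconnected each of its components is a proper nonempty subset, so that the contradiction with support-minimality of $c^0$ is genuine.
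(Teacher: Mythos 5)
Your proof is correct and follows essentially the same route as the paper: use the building-closure description (\cref{def:booleanBuildingSetClosure}) to extract a circuit support inside $B$ that is not a tube of $L(\affinePoset)$, produce a cycle of $\quotientGraph$ with nonzero winding supported in it, lift that cycle to $H(\affinePoset)$ and use $n$-periodicity to obtain an infinite component. The only (harmless) variation is how the nonzero-winding cycle is located — you restrict the circuit's dependence to a connected component of its support and invoke support-minimality, whereas the paper applies the cycle decomposition of \cref{lem:circuitsAffinePoset} to the whole circuit and notes that all-zero windings would force the support to be a tube.
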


\begin{proof}
Since $\building(\affinePoset)$ is the building closure of $\hat\building(\affinePoset)\cup\circuitSupports[\OM(\affinePoset)]$ and ~$B\notin\hat\building(\affinePoset)$, there is a circuit ~$c$ of ~$\OM(\affinePoset)$ such that $\underline{c}$ is contained in $B$ and is not a block of $\hat\building(\affinePoset)$. 
Let $c_1,\dots, c_r$ be cycles of~$\quotientGraph$ as in \cref{lem:circuitsAffinePoset}. Note that if $\quo{c_k}{n}=0$ for all $k\in[r]$, then $c$ corresponds to a connected collection of cycles of the line graph of~$\affinePoset$, therefore it belongs to $\hat\building(\affinePoset)$. 
Let $k\in [r]$ be such that $\quo{c_k}{n}\neq 0$. This implies that $c_k$ is the quotient of a path from $j$ to $j+hn$ in $H(\affinePoset)$ for some $j,h\in\Z$.
Considering that $H(B)$ is $n$-periodic (since it is induced by cover relation classes), the connected component $K\in\connectedComponents(B)$ that contains this path also contains paths from $j$ to $j+\ell hn$ for all $\ell\in\Z$, hence it is infinite. 
\end{proof}

\begin{proof}[Proof of \cref{prop:affinePipingComplexAsAcyclicNestedComplex}]
We start by defining a bijection between building blocks of~$\building(\affinePoset)$ which are acyclic for~$\OM(\affinePoset)$ and pipe classes of $\affinePoset$.
Let $B$ be an building block of $\building(\affinePoset)$ acyclic for~$\OM(\affinePoset)$, and let us define $\piping_B \eqdef \set{v(K)}{K\in\connectedComponents(B)}$, where $v(K)$ is the set of vertices of $K$. 
Since $B$ is acyclic, $B\in\hat\building(\affinePoset)$ by \cref{lem:affinePosetFiniteCc,lem:affinePosetFiniteCc1}, hence it corresponds to a connected subgraph of the line graph of $\affinePoset$. 
This implies that $\piping_B=\cl{\pipe}{n}$, where $\pipe$ is connected, thin by \cref{lem:affinePosetFiniteCc}, and convex by the acyclicity of $B$. Hence, $\piping_B$ is a pipe class of $\affinePoset$.
 
Conversely, let $\pipe$ be a pipe of $\affinePoset$ and let $B_{\pipe}$ be the set of cover relation classes induced by~$\pipe$.
We show that $B_{\pipe}$ is a building block of~$\building(\affinePoset)$ acyclic for~$\OM(\affinePoset)$.
Observe that if $B_{\pipe}$ is not acyclic, then $\pipe$ is not convex. In fact, if $B$ is not acyclic there exists a circuit~$c \in \circuits[\OM(\affinePoset)]$ with~$c_+ \subseteq B_{\pipe}$ while~$c_- \not\subseteq B_{\pipe}$. 
We decompose $c=c_1\cup\cdots \cup c_r$ into cycles of the quotient graph as in \cref{lem:circuitsAffinePoset}. 
If $\quo{c_k}{n} = 0$ for all $k \in [r]$, then $\underline{c}$ corresponds to a circuit of $H(\affinePoset)$, and we can deduce that $\pipe$ is not convex as in the case of non-affine posets (\cref{prop:pipingComplexAsAcyclicNestedComplex}). 
Otherwise, assume without loss of generality that $\quo{c_1}{n} > 0$. There are $i,k\in \Z$, $k>0$ such that $c_1$ induces a path $\pi$ from~$i$ to~$i+kn$ in $H(\affinePoset)$. Since $\pipe$ is thin, at least some edge class of this path is not in $B_{\pipe}$, and therefore~$(c_1)_- \not\subseteq B_{\pipe}$ (while still $(c_1)_+ \subseteq B_{\pipe}$). This means that~$\pi$ starts with a down path from $i$ to some element $i_1\in \pipe$, and ends with a down path from some element $i_2\in \pipe$ to $i+kn$. One of these two paths can be empty, but not both because of the aforementioned thinness. We also know that there is some increasing path from $i$ to $i+kn$ by the definition of affine poset. Combining these paths $i_1\to i$, $i\to i+kn$ and $i+kn\to i_2$, we see that $\pipe$ is not convex.

Since the maps $B\mapsto\piping_{B}$ and $\cl{\pipe}{n}\mapsto B_{\pipe}$ are clearly inverse to each other, we get a bijection between building blocks of~$\building(\affinePoset)$ acyclic for~$\OM(\affinePoset)$ and pipe classes of $\affinePoset$.
We now prove that these maps induce bijections between pipings of $\affinePoset$ and acyclic nested sets of~$(\building(\affinePoset), \OM(\affinePoset))$.

Note that if $B$ and~$B'$ are acyclic building blocks different from~$[n]$, then they both belong to the graphical building set $\hat\building(\affinePoset)$ by \cref{lem:affinePosetFiniteCc,lem:affinePosetFiniteCc1}, and we directly have that  $B$ and~$B'$ are nested if and only if $\piping_B$ and $\piping_{B'}$ are nested, and that $B$ and~$B'$ are disjoint and non-adjacent if and only if $\piping_B$ and $\piping_{B'}$ are disjoint.
It thus only remains to show that if $B_1,\dots,B_r$ are acyclic building blocks forming a nested set, then there exists a circuit $c$ of $\OM(\affinePoset)$ such that~$c_+ \subseteq B_1\cup\dots\cup B_r$ but~$c_- \not\subseteq B_1\cup\dots\cup B_r$ if and only if $\piping_{B_1}, \dots, \piping_{B_r}$ form a directed cycle of~$\c{D}(\affinePoset)$.
The ``if'' direction is straightforward.
To prove the ``only if'' direction, let $c$ be such a circuit. We decompose it $c=c_1\cup\cdots \cup c_r$ into cycles of the quotient graph following \cref{lem:circuitsAffinePoset}.
Again, if $\quo{c_i}{n} = 0$ for all $i$, then $\underline{c}$ corresponds to a circuit of $H(\affinePoset)$, and we can directly deduce that~$\piping_{B_1}, \dots, \piping_{B_r}$ form a directed cycle of $\c{D}(\affinePoset)$ as in the case of non-affine posets (\cref{prop:pipingComplexAsAcyclicNestedComplex}). Otherwise, as above, we can assume without loss of generality that $\quo{c_1}{n} > 0$ and find a path $\pi$ from $i$ to~$i+kn$.
Some edge class of this path is not in $\piping_{B_1}\cup\dots\cup \piping_{B_r}$, as otherwise there would be a pipe containing both $i$ and $i+kn$, contradicting the thinness.
Therefore, $(c_1)_- \not\subseteq B_1\cup\dots\cup B_r$ (while still $(c_1)_+ \subseteq B_1\cup\dots\cup B_r$).
This implies that $\pi \ssm (\piping_{B_1}\cup\dots\cup \piping_{B_r})$ is a non-empty collection of down paths whose endpoints belong to some pipes $\pipe_1,\dots, \pipe_\ell$ in~$\piping_{B_1}\cup\dots\cup \piping_{B_r}$. We also know that there is some increasing path from $i$ to $i+kn$ by the definition of affine poset.
The combination of these paths induces a directed cycle on the vertices $\pipe_1,\dots,\pipe_\ell$ of $\c{D}(\affinePoset)$.
\end{proof}

%%%%%%%%%%%%%%%%%%%%%%%%%%%%%%%%%%%%%%%

\clearpage
\part{Nested complex embeddings and the Bergman embedding}
\label{part:embeddings}

Embedding facial nested complexes as acyclic nested complexes (\cref{part:facialAcyclicNestedComplexes}) enabled us to produce polyhedral realizations (\cref{part:realizations}).
In this part, we discuss further embeddings of nested complexes and the resulting realizations.
We first investigate conditions on a map~$\lattice \to \lattice'$ between two lattices which guarantee that the $\lattice$-nested complexes embed as subcomplexes of $\lattice'$-nested complexes (\cref{sec:latticeEmbedding}).
We then apply these conditions to embed atomic nested complexes as subcomplexes of boolean nested complexes (\cref{sec:atomic}), and derive realizations of atomic nested complexes as subfans of boolean nested fans and as subcomplexes of boundary complexes of nestohedra, recovering the fan realizations of E.~M.~Feichtner and S.~Yuzvinsky~\cite{FeichtnerYuzvinsky2004}.
We finally apply our conditions to extend the connection between nested complexes over the face lattice and the flat lattice of an oriented matroid (\cref{sec:Bergman}).

%%%%%%%%%%%%%%%%%%%%%%%%%%%%%%%%%%%%%%%

\section{Nested complex embeddings}
\label{sec:latticeEmbedding}

In this section, we consider two lattices~$\lattice,\lattice'$ and discuss conditions for the $\lattice$-nested complexes to embed as subcomplexes of $\lattice'$-nested complexes.

%%%%%%%%%%%%%%%%

\subsection{Compatible building sets}
\label{subsec:compatible}

Consider an order embedding~$\phi : \lattice \to \lattice'$ between two lattices~$\lattice,\lattice'$ (\ie $X \le_\lattice Y \iff \phi(X) \le_{\lattice'} \phi(Y)$ for all~$X,Y \in \lattice$).
We study the relation between the $\lattice$-building sets and their $\lattice$-nested complexes and the $\lattice'$-building sets and their $\lattice'$-nested complexes.

\begin{definition}
\label{def:phiCompatible}
Consider an order embedding~$\phi : \lattice \to \lattice'$ between two lattices~$\lattice,\lattice'$.
Let $\building$ be an $\lattice$-building set, and~$\building'$ be an $\lattice'$-building set.
We say that~$(\building, \building')$ is \defn{$\phi$-compatible} if~$\phi(\building)$ is contained in~$\building'$ and~$\phi$ embeds the $\lattice$-nested complex~$\nestedComplex[\lattice][\building]$ to a subcomplex of the $\lattice'$-nested complex~$\nestedComplex[\lattice'][\building']$.
\end{definition}

\begin{example}
\label{exm:maximalBuildingSetsCompatible}
$(\lattice_{>\botzero}, {\lattice'}_{\!\!>\botzero'})$ is always $\phi$-compatible.
Indeed, $\phi(\lattice_{>\botzero}) \subseteq \lattice'_{>\botzero'}$ since~$\phi$ is an order embedding, and any~$\nested \in \nestedComplex$ forms a chain, hence~$\phi(\nested)$ also forms a chain since~$\building$ is order preserving.
\end{example}

\begin{example}
\label{exm:phiCompatible}
Following on the examples of \cref{fig:acyclicNestedComplexes}, consider the three lattices
\[
\lattice_1 = 
\begin{tikzpicture}[scale=.8, baseline=1.1cm, inner sep=2]
	\node (0) at (0,0) {$\varnothing$};
	\node[draw] (1) at (-1.5,1) {$1$};
	\node[draw] (2) at (-.5,1) {$2$};
	\node[draw] (3) at (.5,1) {$3$};
	\node[draw] (4) at (1.5,1) {$4$};
	\node[draw] (12) at (-1.5,2) {$12$};
	\node[draw] (23) at (-.5,2) {$23$};
	\node[draw] (34) at (.5,2) {$34$};
	\node[draw] (14) at (1.5,2) {$14$};
	\node[draw] (1234) at (0,3) {$1234$};
	\foreach \a/\b in {0/1, 0/2, 0/3, 0/4,
                   1/12, 1/14,
                   2/12, 2/23,
                   3/23, 3/34,
                   4/14, 4/34,
                   12/1234, 23/1234, 34/1234, 14/1234}
		\draw (\a) -- (\b);
\end{tikzpicture}
\qquad
\lattice_2 = 
\begin{tikzpicture}[scale=.8, baseline=1.1cm, inner sep=2]
	\node (0) at (0,0) {$\varnothing$};
	\node[draw] (1) at (-1.5,1) {$1$};
	\node[draw] (2) at (-.5,1) {$2$};
	\node[draw] (3) at (.5,1) {$3$};
	\node[draw] (4) at (1.5,1) {$4$};
	\node[draw] (12) at (-1.5,2) {$12$};
	\node (13) at (-.5,2) {$13$};
	\node (24) at (.5,2) {$24$};
	\node[draw] (34) at (1.5,2) {$34$};
	\node[draw] (1234) at (0,3) {$1234$};
	\foreach \a/\b in {0/1, 0/2, 0/3, 0/4,
                   1/12, 1/13,
                   2/12, 2/24,
                   3/13, 3/34,
                   4/24, 4/34,
                   12/1234, 13/1234, 24/1234, 34/1234}
		\draw (\a) -- (\b);
\end{tikzpicture}
\quad\text{and}\quad
\lattice' = 
\begin{tikzpicture}[scale=.8, baseline=1.5cm, inner sep=2]
	\node (0) at (0,0) {$\varnothing$};
	\node[draw] (1) at (-1.5,1) {$1$};
	\node[draw] (2) at (-.5,1) {$2$};
	\node[draw] (3) at (.5,1) {$3$};
	\node[draw] (4) at (1.5,1) {$4$};
	\node[draw] (12) at (-2.5,2) {$12$};
	\node (13) at (-1.5,2) {$13$};
	\node[draw] (14) at (-.5,2) {$14$};
	\node[draw] (23) at (.5,2) {$23$};
	\node (24) at (1.5,2) {$24$};
	\node[draw] (34) at (2.5,2) {$34$};
	\node[draw] (123) at (-1.5,3) {$123$};
	\node[draw] (124) at (-.5,3) {$124$};
	\node[draw] (134) at (.5,3) {$134$};
	\node[draw] (234) at (1.5,3) {$234$};
	\node[draw] (1234) at (0,4) {$1234$};
	\foreach \a/\b in {0/1, 0/2, 0/3, 0/4,
                   1/12, 1/13, 1/14,
                   2/12, 2/23, 2/24,
                   3/13, 3/23, 3/34,
                   4/14, 4/24, 4/34,
                   12/123, 12/124,
                   13/123, 13/134,
                   14/124, 14/134,
                   23/123, 23/234,
                   24/124, 24/234,
                   34/134, 34/234,
                   123/1234, 124/1234, 134/1234, 234/1234}
		\draw (\a) -- (\b);
\end{tikzpicture}
\]
and their building sets~$\building_1 \eqdef \lattice_1 \ssm \{\botzero_{\lattice_1}\}$, $\building_2 \eqdef \lattice_2 \ssm \{\botzero_{\lattice_2}, 13, 24\}$, and~$\building' \eqdef \lattice' \ssm \{\botzero_{\lattice'}, 13, 24\}$ (we have boxed the blocks of these building sets).
Then the pairs~$(\building_1, \building')$ and~$(\building_2, \building')$ are $\iota$-compatible (where~$\iota$ is the inclusion map).
Note that~$\building'$ is the inclusion minimal $\lattice'$-building set $\iota$-compatible with~$\building_1$.
In contrast, the inclusion mininal $\lattice'$-building set $\iota$-compatible with~$\building_2$ is~$\building_2$ itself, not~$\building'$.
\end{example}

\begin{example}
More generally, for an oriented matroid~$\OM$ on a ground set~$\ground$, \cref{thm:orientedBuildingSetsTofacialBuildingSets,thm:facialNestedComplexesVsAcyclicNestedComplexes} show that if~$\building$ is an acyclic building set for~$\OM$, then $\fbuilding \eqdef \building \cap \FL(\OM)$ is a facial building set for~$\OM$, and $(\fbuilding, \building)$ is $\phi$-compatible, where~$\phi : \FL(\OM) \to 2^\ground$ is defined by~$\phi(F) \eqdef \ground_{\le F}$.
This example will be largely extended in \cref{subsec:atomic}.
\end{example}

As already illustrated in \cref{part:realizations}, one can use the $\phi$-compatibility of a pair~$(\building, \building')$ to construct geometric realizations of~$\nestedComplex[\lattice][\building]$ from geometric realizations of~$\nestedComplex[\lattice'][\building']$ as follows.
We note however that the resulting realizations are in general not polytopal, see \cref{rem:badFan}.

\begin{proposition}
Consider an order embedding~$\phi : \lattice \to \lattice'$ and a $\phi$-compatible pair~$(\building, \building')$.
Then any polyhedral complex realizing~$\nestedComplex[\lattice'][\building']$ contains a subcomplex realizing~$\nestedComplex$.
\end{proposition}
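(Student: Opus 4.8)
The plan is to unwind \cref{def:phiCompatible} and then transport a simplicial subcomplex through the combinatorial isomorphism that underlies any polyhedral realization. Essentially nothing needs to be proved beyond bookkeeping: $\phi$-compatibility already provides the combinatorial embedding, and a realization is by definition a face-poset isomorphism, so we just pull back.

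First I would recall that, by $\phi$-compatibility, the order embedding $\phi$ induces an injective simplicial map from $\nestedComplex$ to $\nestedComplex[\lattice'][\building']$, sending an $\lattice$-nested set $\nested$ on $\building$ to the $\lattice'$-nested set $\phi(\nested) \eqdef \set{\phi(B)}{B \in \nested}$ on $\building'$ (with $\connectedComponents(\building)$ identified accordingly). Since $\phi$ is injective and order preserving, this map is an isomorphism onto its image, which is a genuine subcomplex $\mathcal{S}' \subseteq \nestedComplex[\lattice'][\building']$, i.e.\ a collection of faces closed under passing to subsets. Thus $\mathcal{S}' \cong \nestedComplex$ as abstract simplicial complexes.

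Next, let $\mathcal{P}$ be a polyhedral complex realizing $\nestedComplex[\lattice'][\building']$; by definition this means there is an isomorphism $\Psi$ between the face poset of $\mathcal{P}$ (its cells ordered by inclusion) and the face poset of $\nestedComplex[\lattice'][\building']$. Set $\mathcal{S} \eqdef \set{C \in \mathcal{P}}{\Psi(C) \in \mathcal{S}'}$. Because $\mathcal{S}'$ is face-closed in $\nestedComplex[\lattice'][\building']$ and $\Psi$ preserves the face relation, $\mathcal{S}$ is closed under passing to faces of polyhedra, hence is a polyhedral subcomplex of $\mathcal{P}$. The restriction $\Psi|_{\mathcal{S}}$ is a poset isomorphism onto $\mathcal{S}'$, which is in turn isomorphic to $\nestedComplex$ via $\phi$; composing, $\mathcal{S}$ realizes $\nestedComplex$.

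The argument is pure bookkeeping; the only point requiring a little care is making precise what ``realizing'' means for a polyhedral complex (a combinatorial isomorphism of face posets, equivalently a homeomorphism $\spa{\mathcal{P}} \cong \spa{\nestedComplex[\lattice'][\building']}$ carrying cells to faces) and checking that the preimage of a simplicial subcomplex under such an isomorphism is again a genuine polyhedral complex --- which holds because being a face-closed subset of the cells of $\mathcal{P}$ is exactly the condition for a subcollection to form a polyhedral subcomplex. I expect the main (minor) obstacle to be phrasing this cleanly enough that it also applies verbatim to the fan realizations used in \cref{part:realizations}, where ``cells'' are cones and ``subcomplex'' means ``subfan''; no new idea is needed there, only the observation that a subfan is precisely a face-closed subset of cones, so the same pullback construction goes through.
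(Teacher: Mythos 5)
Your argument is correct and coincides with the paper's own (very brief) proof: use the $\phi$-compatibility to embed $\nestedComplex$ as a subcomplex of $\nestedComplex[\lattice'][\building']$, and then select the corresponding faces of the polyhedral complex realizing $\nestedComplex[\lattice'][\building']$. The extra bookkeeping you spell out (face-closedness being preserved under the face-poset isomorphism) is exactly what the paper leaves implicit.
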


\begin{proof}
As $(\building,\building')$ is $\phi$-compatible, $\nestedComplex$ embeds as a subcomplex of~$\nestedComplex[\lattice'][\building']$, so we just need to select the faces of the polyhedral complex realizing~$\nestedComplex[\lattice'][\building']$ to obtain a subcomplex realizing~$\nestedComplex$.
\end{proof}

\begin{example}
\label{exm:subrealizations}
Following on \cref{exm:phiCompatible}, \cref{fig:acyclicNestedComplexes} illustrates polyhedral realizations of the nested complexes~$\nestedComplex[\lattice_1][\building_1]$ and~$\nestedComplex[\lattice_2][\building_2]$ as subrealizations of the boolean nested complex~$\nestedComplex[][\building']$.
The fans of \cref{fig:acyclicNestedComplexes}\,(top) are subfans of the nested fan~$\nestedFan[][\building']$, and the polytopal complexes of \cref{fig:acyclicNestedComplexes}\,(bottom) are subcomplexes of the boundary complex of the nestohedron~$\Nest(\building')$.
The subfans of~$\nestedFan[][\building']$ are normal to the subcomplexes of~$\Nest(\building')$.
This example will be largely extended in \cref{subsec:subrealizations}.
\end{example}

%%%%%%%%%%%%%%%%

\subsection{Compatibility guarantees}
\label{subsec:tame}

We now try to find sufficient conditions for a pair~$(\building, \building')$ to be $\phi$-compatible.
For this, we consider the following properties of~$\phi$.

\begin{definition}
\label{def:tame}
We say that a map~$\phi : \lattice \to \lattice'$ is
\begin{itemize}
\item \defn{atom exhaustive} if~$\phi(\lattice)$ contains all atoms~$A$ of~$\lattice'$ such that~$A \le \phi(\topone_\lattice)$, 
\item \defn{join preserving} if~$\phi(X \vee_\lattice Y) = \phi(X) \vee_{\lattice'} \phi(Y)$ for any~$X,Y \in \lattice$,
\item \defn{cover preserving} if it sends cover relations of~$\lattice$ to cover relations of~$\lattice'$. (Note that this holds in particular if~$\lattice$ and~$\lattice'$ are ranked and~$\phi$ is \defn{rank shifting}, meaning that~$\rank[\phi(x)] - \rank[x]$ is the same for all~$x \in \lattice$.)
\end{itemize}
We say that~$\phi : \lattice \to \lattice'$ is \defn{tame} if $\phi$ is an order embedding and satisfies at least one of these three properties.
\end{definition}

\begin{remark}
Note that the three conditions of \cref{def:tame} are independent.
We have represented in \cref{fig:VennDiagram} an example in each region of the Venn diagram of these conditions.
\begin{figure}
	\capstart
	\centerline{\includegraphics[scale=.9]{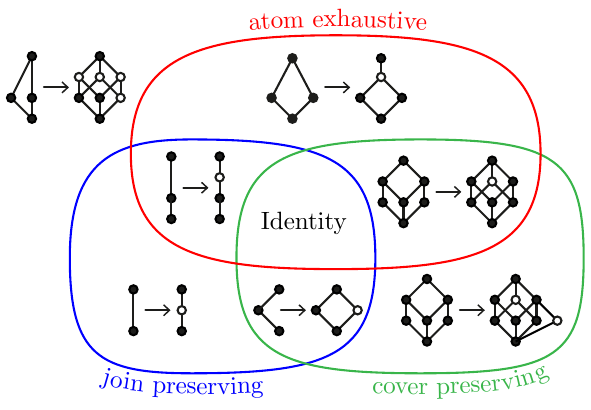}}
	\caption{The conditions of \cref{def:tame} are independent.}
	\label{fig:VennDiagram}
\end{figure}
\end{remark}

We first observe that if~$\lattice$ has a product structure, then a cover preserving order embedding also preserves joins compatible with the product structure.

\begin{lemma}
\label{lem:coverPreserving}
If~$\phi : \lattice \to \lattice'$ is a cover preserving order embedding and~$B_1, \dots, B_k \in \lattice$ are such that~$\lattice_{\le B_1 \vee \dots \vee B_k} \cong \lattice_{\le B_1} \times \dots \times \lattice_{\le B_k}$, then~$\phi(B_1 \vee_\lattice \dots \vee_\lattice B_k) = \phi(B_1) \vee_{\lattice'} \dots \vee_{\lattice'} \phi(B_k)$.
\end{lemma}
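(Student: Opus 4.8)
The plan is to reduce to the case $k=2$ and then run an induction along a saturated chain. Throughout, write $B \eqdef B_1 \vee_\lattice \dots \vee_\lattice B_k$ and $C \eqdef \phi(B_1) \vee_{\lattice'} \dots \vee_{\lattice'} \phi(B_k)$. Since $\phi$ is order preserving and each $B_i \le B$, we get $\phi(B_i) \le \phi(B)$, hence $C \le \phi(B)$ at once; the content of the lemma is the reverse inequality $\phi(B) \le C$. I would first record the bookkeeping around the product decomposition of \cref{def:latticeBuildingSet}: under $\lattice_{\le B} \cong \lattice_{\le B_1} \times \dots \times \lattice_{\le B_k}$, the block $B_i$ corresponds to the coordinate top of the $i$-th factor, a tuple corresponds to the join of its entries, and covers in the interval $\lattice_{\le B}$ are covers in $\lattice$.

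For $k=2$, fix a saturated chain $\botzero_\lattice = X_0 \lessdot X_1 \lessdot \dots \lessdot X_m = B_2$ inside $\lattice_{\le B_2}$. The product structure promotes this to a saturated chain $B_1 = B_1 \vee X_0 \lessdot B_1 \vee X_1 \lessdot \dots \lessdot B_1 \vee X_m = B$ in $\lattice$, because in $\lattice_{\le B_1} \times \lattice_{\le B_2}$ one has $(B_1, X_j) \lessdot (B_1, X_{j+1})$ exactly when $X_j \lessdot X_{j+1}$. Applying the cover preserving order embedding $\phi$, each $\phi(B_1 \vee X_j) \lessdot \phi(B_1 \vee X_{j+1})$ is a cover in $\lattice'$. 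The key identity is then
\[
\phi(B_1 \vee X_{j+1}) = \phi(B_1 \vee X_j) \vee_{\lattice'} \phi(X_{j+1}) .
\]
Indeed, the right-hand side lies between $\phi(B_1 \vee X_j)$ and $\phi(B_1 \vee X_{j+1})$, so by the cover it equals one of the two; if it equalled $\phi(B_1 \vee X_j)$ then $\phi(X_{j+1}) \le \phi(B_1 \vee X_j)$, hence $X_{j+1} \le B_1 \vee X_j$ since $\phi$ is an order embedding, forcing $B_1 \vee X_{j+1} = B_1 \vee X_j$ and contradicting strictness of the cover.

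With this identity, I would show $C \ge \phi(B_1 \vee X_j)$ by induction on $j$: it holds for $j=0$ since $C \ge \phi(B_1)$, and if $C \ge \phi(B_1 \vee X_j)$ then, using $C \ge \phi(B_2) \ge \phi(X_{j+1})$, we get $C \ge \phi(B_1 \vee X_j) \vee_{\lattice'} \phi(X_{j+1}) = \phi(B_1 \vee X_{j+1})$. Taking $j=m$ yields $C \ge \phi(B)$, so $C = \phi(B)$. For general $k$, induct on $k$ with base cases $k \le 2$: set $B' \eqdef B_1 \vee \dots \vee B_{k-1}$; the decomposition of \cref{def:latticeBuildingSet} restricts to $\lattice_{\le B'} \cong \lattice_{\le B_1} \times \dots \times \lattice_{\le B_{k-1}}$ and regroups to $\lattice_{\le B} \cong \lattice_{\le B'} \times \lattice_{\le B_k}$ with $B = B' \vee B_k$ and $B'$, $B_k$ the coordinate tops; applying the $k=2$ case to $(B', B_k)$ and the inductive hypothesis to $B_1, \dots, B_{k-1}$ gives $\phi(B) = \phi(B') \vee_{\lattice'} \phi(B_k) = \phi(B_1) \vee_{\lattice'} \dots \vee_{\lattice'} \phi(B_k)$.

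I expect the only delicate point to be the bookkeeping in the first paragraph — confirming that, under the canonical isomorphism of \cref{def:latticeBuildingSet}, the element $B_1 \vee X$ for $X \le B_2$ is the interleaved tuple and that covers transfer faithfully between $\lattice_{\le B}$, the product, and $\lattice$. Once that is in place, the cover preserving and order embedding hypotheses carry the entire weight of the displayed identity, and everything else is a routine induction.
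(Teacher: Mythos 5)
Your proof is correct and follows essentially the same route as the paper's: reduce to $k=2$, lift a saturated chain through the product structure so that each step $B_1 \vee X_j \lessdot B_1 \vee X_{j+1}$ is a cover of $\lattice$, apply cover preservation, and use order reflection to rule out the degenerate case where the join collapses to the lower element. The only cosmetic difference is that you work with a single chain below $B_2$ and accumulate inequalities to get $\phi(B_1)\vee_{\lattice'}\dots\vee_{\lattice'}\phi(B_k) \ge \phi(B_1\vee_\lattice\dots\vee_\lattice B_k)$, whereas the paper runs a double induction on $i+j$ over chains below both $B_1$ and $B_2$; the sandwich-plus-cover mechanism is identical.
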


\begin{proof}
By an immediate induction, we can clearly assume that~$k = 2$.
Consider two saturated chains~$\botzero = X_0 \lessdot X_1 \lessdot \dots \lessdot X_p = B_1$ and~$\botzero = Y_0 \lessdot Y_1 \lessdot \dots \lessdot Y_q = B_2$.
We prove by induction on~$i + j$ that~$\phi(X_i \vee_\lattice Y_j) = \phi(X_i) \vee_{\lattice'} \phi(Y_j)$ for all~$0 \le i \le p$ and~$0 \le j \le q$.
It holds for~$i = 0$ and~$j = 0$ since~$\phi(X_0 \vee_\lattice Y_0) = \phi(\botzero) = \phi(\botzero) \vee_{\lattice'} \phi(\botzero) = \phi(X_0) \vee_{\lattice'} \phi(Y_0)$.
Assume now that~$i > 0$ (the case~$j > 0$ is symmetric).
We have
\[
\phi(X_{i-1} \vee_\lattice Y_j) = \phi(X_{i-1}) \vee_{\lattice'} \phi(Y_j) \le \phi(X_i) \vee_{\lattice'} \phi(Y_j) \le \phi(X_i \vee_\lattice Y_j),
\]
where the equality holds by induction hypothesis and the inequalities hold since~$\phi$ is order preserving.
Moreover,~$X_{i-1} \vee_\lattice Y_j \lessdot X_i \vee_\lattice Y_j$ is a cover relation of~$\lattice$ (since~$X_{i-1} \lessdot X_i$ is a cover relation and~$\lattice_{\le B_1 \vee B_2}$ has a product structure).
Since~$\phi$ is cover preserving, we obtain that~$\phi(X_{i-1} \vee_\lattice Y_j) \lessdot \phi(X_i \vee_\lattice Y_j)$ is a cover relation.
As~$X_i \not\le X_{i-1} \vee_\lattice Y_j$ and~$\phi$ is order reflecting, we have~$\phi(X_i) \not\le \phi(X_{i-1} \vee_\lattice Y_j)$, hence~$\phi(X_{i-1} \vee_\lattice Y_j) \ne \phi(X_i) \vee_{\lattice'} \phi(Y_j)$.
We obtain that~$\phi(X_i \vee_\lattice Y_j) = \phi(X_i) \vee_{\lattice'} \phi(Y_j)$, which proves our induction.
Finally, for~$i = p$ and~$j = q$, we obtain that~$\phi(B_1 \vee_\lattice B_2) = \phi(B_1) \vee_{\lattice'} \phi(B_2)$ as desired.
\end{proof}

\begin{proposition}
\label{prop:phiCompatible}
If~$\phi$ is tame and~$\building = \phi^{-1}(\building')$ (or equivalently~$\building' \cap \phi(\lattice) = \phi(\building)$), then $(\building, \building')$ is $\phi$-compatible.
\end{proposition}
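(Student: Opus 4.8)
Here is how I would prove \cref{prop:phiCompatible}.

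\medskip

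The plan is to check the two requirements of \cref{def:phiCompatible} directly. The inclusion $\phi(\building) \subseteq \building'$ is immediate from $\building = \phi^{-1}(\building')$ (and one gets the ``equivalently'' reformulation from injectivity of~$\phi$). For the embedding of nested complexes, the restriction of~$\phi$ to~$\building$ is injective since $\phi$ is an order embedding, and it sends $\building \ssm \connectedComponents(\building)$ into $\building' \ssm \connectedComponents(\building')$: if $B \in \building$ is not maximal, witnessed by $B <_\lattice B''$ with $B'' \in \building$, then $\phi(B) <_{\lattice'} \phi(B'')$ with $\phi(B'') \in \building'$, so $\phi(B) \notin \connectedComponents(\building')$. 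Hence it only remains to show that $\phi$ carries faces of $\nestedComplex[\lattice][\building]$ to faces of $\nestedComplex[\lattice'][\building']$.

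So I would fix an $\lattice$-nested set~$\nested$ on~$\building$, set $\tau \eqdef \nested \ssm \connectedComponents(\building)$, and prove that $\nested' \eqdef \phi(\tau) \cup \connectedComponents(\building')$ is an $\lattice'$-nested set on~$\building'$; since $\phi(\tau)$ is disjoint from $\connectedComponents(\building')$ by the previous paragraph, this yields $\nested' \ssm \connectedComponents(\building') = \phi(\tau)$, which is then a face of $\nestedComplex[\lattice'][\building']$. As $\nested'$ contains $\connectedComponents(\building')$, by \cref{def:latticeNestedSet} the only thing to verify is that for any $k \ge 2$ pairwise incomparable $C_1, \dots, C_k \in \phi(\tau)$ the join $C_1 \vee_{\lattice'} \dots \vee_{\lattice'} C_k$ is not in~$\building'$. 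Writing $C_i = \phi(B_i)$ with $B_i \in \tau$, the $B_1, \dots, B_k$ are pairwise incomparable (as $\phi$ reflects the order) elements of $\nested \ssm \connectedComponents(\building)$, so $Y \eqdef B_1 \vee_\lattice \dots \vee_\lattice B_k$ is not in~$\building$ because $\nested$ is $\lattice$-nested, and therefore $\phi(Y) \notin \building'$ since $\building = \phi^{-1}(\building')$. Moreover $\{B_1, \dots, B_k\} \cup \connectedComponents(\building) \subseteq \nested$ is again $\lattice$-nested, so \cref{prop:nestedFactorization} gives $\max(\building_{\le Y}) = \{B_1, \dots, B_k\}$, and hence $\lattice_{\le Y} \cong \lattice_{\le B_1} \times \dots \times \lattice_{\le B_k}$ by \cref{def:latticeBuildingSet}. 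It thus suffices to show $Z \eqdef \phi(B_1) \vee_{\lattice'} \dots \vee_{\lattice'} \phi(B_k) \notin \building'$, and this is the point where tameness enters.

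I would treat the three options separately. If $\phi$ is join preserving, an immediate induction gives $Z = \phi(Y) \notin \building'$. If $\phi$ is cover preserving, the product decomposition of $\lattice_{\le Y}$ just obtained lets me apply \cref{lem:coverPreserving} to get the same identity $Z = \phi(Y)$, hence $Z \notin \building'$. The interesting case is $\phi$ atom exhaustive, where $Z$ need not equal $\phi(Y)$; here I argue by contradiction, assuming $Z \in \building'$. Since $Z \le_{\lattice'} \phi(Y)$ and $Z \in \building'$, the element $Z$ lies below some block of $\max(\building'_{\le \phi(Y)})$; and $\phi(Y) \notin \building'$ forces $|\max(\building'_{\le \phi(Y)})| \ge 2$ with $\lattice'_{\le \phi(Y)}$ the corresponding product, each factor a nontrivial interval and so containing an atom. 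Thus there is an atom~$A$ of~$\lattice'$ below $\phi(Y)$ that lies in a factor distinct from the one containing~$Z$, so $A \not\le_{\lattice'} Z$. On the other hand, atom exhaustiveness (applied below $\phi(Y) \le_{\lattice'} \phi(\topone_\lattice)$) gives $A = \phi(a)$ for some $a \in \lattice$, necessarily with $a \le_\lattice Y$; since $\phi$ is an order embedding one checks that $a$ is either $\botzero_\lattice$ or an atom of~$\lattice$, so $a$, viewed inside $\lattice_{\le Y} \cong \lattice_{\le B_1} \times \dots \times \lattice_{\le B_k}$, is concentrated in a single factor, giving $a \le_\lattice B_i$ for some~$i$, and hence $A = \phi(a) \le_{\lattice'} \phi(B_i) \le_{\lattice'} Z$ --- a contradiction.

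The only genuinely delicate point is this last case, and within it the step that every atom of~$\lattice'$ below $\phi(Y)$ is forced to lie below~$Z$: this is what turns the ``extra'' atoms supplied by the product structure of $\lattice'_{\le \phi(Y)}$ into the contradiction. The join-preserving and cover-preserving cases, and all the bookkeeping with nested sets and connected components, are routine.
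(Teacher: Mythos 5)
Your proof is correct and takes essentially the same approach as the paper's: the same reduction to showing that the join of the $\phi(B_i)$ in $\lattice'$ is not in $\building'$, the same three-way case split over the tameness conditions using \cref{lem:coverPreserving} and \cref{prop:nestedFactorization}, and, in the atom-exhaustive case, the same device of picking an atom of $\lattice'$ in a factor of the decomposition of $\lattice'_{\le \phi(Y)}$ other than the one containing the join, and pulling it back through atom exhaustiveness. The only variations are cosmetic: you land the contradiction in $\lattice'$ (locating the pulled-back atom below some $B_i$ via the product structure of $\lattice_{\le Y}$, hence below $Z$, against $A \not\le Z$) where the paper instead contradicts $\max(\building_{\le Y}) = \{B_1,\dots,B_k\}$ in $\lattice$, and you spell out the case $\lvert\max(\building'_{\le \phi(Y)})\rvert = 1$ that the paper leaves implicit.
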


\begin{proof}
Consider $\nested \in \nestedComplex[\lattice][\building]$ and~$\nested' \eqdef \phi(\nested)$.
We want to prove that~$\nested' \in \nestedComplex[\lattice'][\building']$ that is,
\begin{itemize}
\item $\nested' \cap \connectedComponents(\building') = \varnothing$, and
\item for any~$B_1', \dots, B_k'$ pairwise incomparable in~$\nested'$, the join~$B_1' \vee_{\lattice'} \dots \vee_{\lattice'} B_k'$ is not in~$\building'$.
\end{itemize}

For the first point, we have~$\building' \cap \phi(\lattice) = \phi(\building)$, thus $\connectedComponents(\building') \cap \phi(\lattice) \subseteq \connectedComponents(\phi(\building))= \phi(\connectedComponents(\building))$ since~$\phi$ is an order embedding. Hence,~$\nested \cap \connectedComponents(\building) = \varnothing$ implies $\nested' \cap \connectedComponents(\building') = \varnothing$.

For the second point, let~$B_1', \dots, B_k'$ be pairwise incomparable in~$\nested'$, and~${Y' \!\eqdef\! B_1' \vee_{\lattice'} \dots \vee_{\lattice'} B_k'}$.
Let~$B_1, \dots, B_k \in \nested$ such that~$B_i' = \phi(B_i)$, let~${Z \eqdef B_1 \vee_\lattice \dots \vee_\lattice B_k}$, and~$Z' \eqdef \phi(Z)$.
Note that~$Y' \le Z'$ since~$\phi$ is order preserving.
We now distinguish three cases, corresponding to the different cases in \cref{def:tame}.

\para{$\phi$ is join preserving}
As~$\nested \in \nestedComplex$, we have~$Z \notin \building$.
As~$\phi$ is lattice preserving, we obtain that~$Y' = Z' \notin \phi(\building) = \building'$.

\para{$\phi$ is cover preserving}
As~$\nested \in \nestedComplex$, we have~$Z \notin \building$ and moreover~$\lattice_{\le Z} \cong \prod_{i \in [k]} \lattice_{\le B_i}$ by \cref{prop:nestedFactorization}.
By \cref{lem:coverPreserving}, we obtain that~$Y' = Z' \notin \phi(\building) = \building'$.

\para{$\phi$ is atom exhaustive}
Assume by contradiction that~$Y' \in \building'$.
Since~$\building'$ is an $\lattice'$-building set, we have~$\lattice'_{\le Z'} \cong \prod_{j \in [\ell]} \lattice'_{\le C_j'}$ where~$\{C'_1, \dots, C'_\ell\} = \max(\building'_{\le Z'})$.
As~$Y' \in \building'$ and~$Y' \le Z'$, there is~$j \in [\ell]$ such that~$Y' \le C_j'$ by maximality.
If~$\ell > 1$, then there is~$i \in [\ell]$ distinct from~$j$, and an atom~${A' \le C_i'}$.
As~$\phi$ is atom exhaustive, there is~$A \in \lattice$ such that~${A' = \phi(A)}$.
Note that~$A' \le Z'$ so that~$A \le Z$ since~$\phi$ is order reflecting.
Moreover, the product structure of~$\lattice'_{\le Z'}$ ensures that~$A' \not\le C_j'$, therefore for any~$m \in [k]$, we have~$A' \not\le B_m'$, hence~${A \not\le B_m}$ since~$\phi$ is order preserving.
Additionally, $A$ is an atom of~$\lattice$ since~$\phi$ is order embedding.
Hence,~${A \in \building}$, and we obtain that~${\max(\building_{\le Z}) \!\ne\! \{B_1, \dots, B_k\}}$, which by \cref{prop:nestedFactorization} contradicts that~${\{B_1, \dots, B_k\} \in \nestedComplex}$, hence that~${\nested \in \nestedComplex}$.
\end{proof}

\begin{example}
Note that none of the conditions of \cref{prop:phiCompatible} is superfluous to guarantee $\phi$-compatibility.
Indeed, consider the lattices
\[
\lattice = 
\begin{tikzpicture}[scale=.7, baseline=.6cm]
	\node (a) at (0,0) {$a$};
	\node (b) at (-1,1) {$b$};
	\node (c) at (1,1) {$c$};
	\node (d) at (0,2) {$d$};
	\draw (a) -- (b);
	\draw (a) -- (c);
	\draw (b) -- (d);
	\draw (c) -- (d);
\end{tikzpicture}
\qquad\text{and}\qquad
\lattice' = 
\begin{tikzpicture}[scale=.7, baseline=.95cm]
	\node (1) at (0,0) {$1$};
	\node (2) at (-1,1) {$2$};
	\node (3) at (0,1) {$3$};
	\node (4) at (1,1) {$4$};
	\node (5) at (-1,2) {$5$};
	\node (6) at (0,2) {$6$};
	\node (7) at (1,2) {$7$};
	\node (8) at (0,3) {$8$};
	\draw (1) -- (2);
	\draw (1) -- (3);
	\draw (1) -- (4);
	\draw (2) -- (5);
	\draw (2) -- (6);
	\draw (3) -- (5);
	\draw (3) -- (7);
	\draw (4) -- (6);
	\draw (4) -- (7);
	\draw (5) -- (8);
	\draw (6) -- (8);
	\draw (7) -- (8);
\end{tikzpicture}
.
\]
Then
\begin{itemize}
\item for the identity map~$\iota : \lattice \to \lattice$, the $\lattice$-building sets~$\building_1 \eqdef \{b,c\}$ and~$\building_2 \eqdef \{b,c,d\}$ form a $\iota$-incompatible pair~$(\building_1, \building_2)$ (since~$\{b,c\} \in \nestedComplex \ssm \nestedComplex[\lattice][\building']$ as~$b \vee_\lattice c = d \in \building' \ssm \building$), although $\iota$ satisfies all three conditions of \cref{def:tame},
\item for the (non-tame) order embedding~$\phi : \lattice \to \lattice'$ given by~$a \mapsto 1$, $b \mapsto 2$, $c \mapsto 3$, and~${d \mapsto 8}$, the $\lattice$-building set~$\building \eqdef \{b,c\}$ and the $\lattice'$-building set~$\building' \eqdef \{2,3,4,5\}$ form a \mbox{$\phi$-incom}\-patible pair~$(\building, \building')$ (since~$\{b,c\} \in \nestedComplex$ $b \vee_\lattice c = d \notin \building$ but~$\phi(\nested) = \{2,3\} \notin \nestedComplex[\lattice'][\building']$ as~${2 \vee_{\lattice'} 3 = 5 \in \building'}$), although~$\building = \phi^{-1}(\building')$.
\end{itemize}
\end{example}

\begin{example}
Note that none of the conditions of \cref{prop:phiCompatible} is necessary to have \mbox{$\phi$-compa}\-tibility.
Indeed, consider the lattices
\[
\lattice = 
\begin{tikzpicture}[scale=.7, baseline=.6cm]
	\node (a) at (0,0) {$a$};
	\node (b) at (-1,1) {$b$};
	\node (c) at (1,1) {$c$};
	\node (d) at (0,2) {$d$};
	\draw (a) -- (b);
	\draw (a) -- (c);
	\draw (b) -- (d);
	\draw (c) -- (d);
\end{tikzpicture}
\qquad\text{and}\qquad
\lattice' = 
\begin{tikzpicture}[scale=.7, baseline=.95cm]
	\node (1) at (0,0) {$1$};
	\node (2) at (-1,1) {$2$};
	\node (3) at (0,1) {$3$};
	\node (4) at (1,1) {$4$};
	\node (5) at (-1,2) {$5$};
	\node (6) at (0,3) {$6$};
	\draw (1) -- (2);
	\draw (1) -- (3);
	\draw (1) -- (4);
	\draw (2) -- (5);
	\draw (3) -- (5);
	\draw (4) -- (6);
	\draw (5) -- (6);
\end{tikzpicture}
\]
and the order embedding~$\phi : \lattice \to \lattice'$ given by~$a \mapsto 1$, $b \mapsto 2$, $c \mapsto 3$, and $d \mapsto 6$, the $\lattice$-building set~$\building \eqdef \{b,c\}$, and the $\lattice'$-building set~$\building' \eqdef \{2,3,6\}$.
Then $(\building, \building')$ is~$\phi$-compatible while~$\phi$ is not tame and~$\building \ne \phi^{-1}(\building')$.
\end{example}

\begin{remark}
\label{rem:BackmanDanner}
Following an early presentation of our work, where \cref{prop:phiCompatible} was not presented, S.~Backman and R.~Danner independently investigated in~\cite{BackmanDanner} conditions on a map~$\phi : \lattice \to \lattice'$ to guarantee that the $\lattice$-nested complexes embed as subcomplexes of $\lattice'$-nested complexes. 
They prove in~\cite[Thm.~4.5]{BackmanDanner} that if~$\phi$ is a meet preserving order embedding such that, below any element of~$\phi(\lattice)$, any irreducible of~$\lattice'$  is the image of an irreducible of~$\lattice$, then any $\lattice$-building set~$\building$ is the preimage~$\phi^{-1}(\building')$ of an $\lattice'$-building set~$\building'$, and the $\lattice$-nested complex~$\nestedComplex$ embeds as a subcomplex of the $\lattice'$-nested complex~$\nestedComplex[\lattice'][\building']$ (here, irreducible means that the corresponding lower interval does not exhibit a product structure).
We note that this result can a posteriori be derived directly from \cref{prop:buildingClosure,prop:phiCompatible}.
Namely, the condition that any irreducible of~$\lattice'$  is the image of an irreducible of~$\lattice$ ensures that
\begin{itemize}
\item the map~$\phi$ is atom exhaustive (since any atom is irrecucible),
\item for any $\lattice$-building set~$\building$, the $\lattice'$-building closure~$\building'$ of~$\phi(\building)$ satisfies~$\building = \phi^{-1}(\building')$ (this can be seen from the second sentence of~\cref{prop:buildingClosure}).
\end{itemize}
In view of \cref{prop:phiCompatible}, the conditions in~\cite[Thm.~4.5]{BackmanDanner} can be weakened: we just assume that $\phi$ is an order embedding (not necessarily that it preserves meets), and atom exhaustive (not necessarily that it is consistent).
Finally, we note that, besides the atom exhaustive criterion, \cref{prop:buildingClosure,prop:phiCompatible} provide alternative criteria to guarantee nested complex embeddings, which will be particularly useful in \cref{sec:Bergman}.
\end{remark}

%%%%%%%%%%%%%%%%

\subsection{Pushable and pullable building sets}
\label{subsec:pushablePullable}

In practice, we will be given only one of~$\building$ and~$\building'$, and we will try to find the other one to create a $\phi$-compatible pair~$(\building, \building')$.
For that, we will try to apply \cref{prop:phiCompatible}, and thus to find a pair with~$\building = \phi^{-1}(\building')$.
The following definition thus considers the natural candidates.

\begin{definition}
\label{def:phiPushablePullable}
Consider an order embedding~$\phi : \lattice \to \lattice'$ between two lattices~$\lattice,\lattice'$.
We say that
\begin{itemize}
\item an $\lattice'$-building set~$\building'$ is \defn{$\phi$-pullable} if $\phi^{-1}(\building')$ is an $\lattice$-building set,
\item an $\lattice$-building set~$\building$ is \defn{$\phi$-pushable} if there exists an $\lattice'$-building set~$\building'$ such that~${\building \!=\! \phi^{-1}(\building')}$.
\end{itemize}
\end{definition}

\begin{lemma}
An $\lattice$-building set~$\building$ is pushable if and only if~$\building = \phi^{-1}(\building')$ where~$\building'$ is the \mbox{$\lattice'$-building} closure of~$\phi(\building)$.
\end{lemma}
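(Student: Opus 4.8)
The plan is to read off both directions directly from the definitions, with the only real input being the universal (minimality) property of the building closure established in \cref{prop:buildingClosure}.

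First I would dispatch the easy implication: if $\building = \phi^{-1}(\building')$ where $\building'$ is the $\lattice'$-building closure of $\phi(\building)$, then $\building'$ is by construction an $\lattice'$-building set, so $\building$ is the preimage of an $\lattice'$-building set and hence $\phi$-pushable in the sense of \cref{def:phiPushablePullable}. Before this I would record the small preliminary point that $\phi(\building)$ actually lies in $\lattice'_{>\botzero'}$, so that its $\lattice'$-building closure is well-defined: since $\phi$ is an order embedding and $\building\subseteq\lattice_{>\botzero}$, for every $B\in\building$ we have $\phi(B) > \phi(\botzero) \ge \botzero'$.

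For the converse, suppose $\building$ is $\phi$-pushable and pick any $\lattice'$-building set $\building''$ with $\building = \phi^{-1}(\building'')$. Then $\phi(B)\in\building''$ for every $B\in\building$, i.e. $\phi(\building)\subseteq\building''$; since $\building'$ is the inclusion-minimal $\lattice'$-building set containing $\phi(\building)$ (that is precisely the content of \cref{prop:buildingClosure}), we get $\building'\subseteq\building''$, and therefore $\phi^{-1}(\building')\subseteq\phi^{-1}(\building'')=\building$. The reverse inclusion $\building\subseteq\phi^{-1}(\building')$ is automatic from $\phi(\building)\subseteq\building'$. Combining the two inclusions gives $\building=\phi^{-1}(\building')$, completing the proof.

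I do not expect a genuine obstacle here: the statement is essentially a repackaging of the minimality of the building closure, and the argument is a two-line sandwich of inclusions. The only thing to be careful about is not to assume a priori that $\building'=\phi^{-1}(\building')$ before the minimality argument is invoked, and to check the harmless domain condition $\phi(\building)\subseteq\lattice'_{>\botzero'}$ so that $\building'$ exists at all.
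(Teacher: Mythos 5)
Your proof is correct and follows essentially the same route as the paper: both directions reduce to the minimality of the $\lattice'$-building closure from \cref{prop:buildingClosure}, giving the sandwich $\phi(\building)\subseteq\building'\subseteq\building''$ and hence $\building=\phi^{-1}(\building')$ (the paper phrases the last step as $\building'\cap\phi(\lattice)=\phi(\building)$, which is equivalent to your preimage formulation since $\phi$ is injective).
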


\begin{proof}
Assume that there is an $\lattice'$-building set~$\building''$ such that~${\building = \phi^{-1}(\building'')}$, and let~$\building'$ denote the $\lattice'$-building closure of~$\phi(\building)$.
Then~$\phi(\building) \subseteq \building' \subseteq \building''$, hence~$\phi(\building) \subseteq \building' \cap \phi(\lattice) \subseteq \building'' \cap \phi(\lattice) = \phi(\building)$, which proves that~$\building'$ suits.
The reverse implication is immediate.
\end{proof}

\begin{corollary}
If~$\phi : \lattice \to \lattice'$ is tame, then
\begin{itemize}
\item if an $\lattice'$-building set~$\building'$ is \defn{$\phi$-pullable}, then~$(\building, \building')$ is $\phi$-compatible, where~$\building = \phi^{-1}(\building')$,
\item if an $\lattice$-building set~$\building$ is \defn{$\phi$-pushable}, then~$(\building, \building')$ is $\phi$-compatible, where~$\building'$ is the \mbox{$\lattice'$-building} closure of~$\phi(\building)$.
\end{itemize}
\end{corollary}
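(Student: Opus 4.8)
The final statement is the Corollary immediately following the Lemma characterizing $\phi$-pushable building sets, and it has two bullet points that follow almost immediately from \cref{prop:phiCompatible} together with the Lemma.

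\medskip

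\noindent\textbf{Plan.} The plan is to deduce both items directly from \cref{prop:phiCompatible}, which states that if $\phi$ is tame and $\building = \phi^{-1}(\building')$, then $(\building, \building')$ is $\phi$-compatible. Since tameness of $\phi$ is assumed in the hypothesis of the Corollary, in each case it only remains to exhibit the relation $\building = \phi^{-1}(\building')$ between the two building sets and to check that both $\building$ and $\building'$ are genuinely building sets over their respective lattices.

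\medskip

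\noindent\textbf{First bullet.} Suppose $\building'$ is a $\phi$-pullable $\lattice'$-building set, and set $\building \eqdef \phi^{-1}(\building')$. By \cref{def:phiPushablePullable}, $\phi$-pullability means precisely that $\phi^{-1}(\building')$ is an $\lattice$-building set, so $\building$ is a legitimate $\lattice$-building set. Since by construction $\building = \phi^{-1}(\building')$ and $\phi$ is tame, \cref{prop:phiCompatible} applies verbatim and gives that $(\building, \building')$ is $\phi$-compatible.

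\medskip

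\noindent\textbf{Second bullet.} Suppose $\building$ is a $\phi$-pushable $\lattice$-building set, and let $\building'$ be the $\lattice'$-building closure of $\phi(\building)$ (which exists and is an $\lattice'$-building set by \cref{prop:buildingClosure}, or already by \cref{def:booleanBuildingSetClosure} in the boolean case). By the Lemma just proved, $\phi$-pushability of $\building$ is equivalent to $\building = \phi^{-1}(\building')$ for exactly this $\building'$. Hence, again invoking \cref{prop:phiCompatible} with $\phi$ tame and $\building = \phi^{-1}(\building')$, we conclude that $(\building, \building')$ is $\phi$-compatible.

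\medskip

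\noindent\textbf{Expected obstacle.} There is essentially no obstacle here: the corollary is a bookkeeping consequence of \cref{prop:phiCompatible} and the preceding Lemma, and the only thing to be careful about is matching the phrasing of $\phi$-pushable/$\phi$-pullable in \cref{def:phiPushablePullable} with the equality $\building = \phi^{-1}(\building')$ required by \cref{prop:phiCompatible}. The proof can therefore be written in two lines: ``In both cases $\building = \phi^{-1}(\building')$ (by \cref{def:phiPushablePullable} in the first case, and by the previous Lemma in the second), so the statements follow from \cref{prop:phiCompatible}.''
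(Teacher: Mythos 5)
Your proposal is correct and is exactly the argument the paper intends: the corollary is stated without proof as an immediate consequence of \cref{prop:phiCompatible}, the definition of $\phi$-pullable (which guarantees $\phi^{-1}(\building')$ is an $\lattice$-building set), and the preceding lemma (which identifies pushability with $\building = \phi^{-1}(\building')$ for the building closure of $\phi(\building)$). Your bookkeeping — checking in each bullet that both sets are genuine building sets and that $\building = \phi^{-1}(\building')$ holds — matches the paper's implicit reasoning precisely.
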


\begin{example}
Consider the inclusion~$\iota : \lattice \to \lattice'$ where~$\lattice$ is a bottom interval of~$\lattice'$.
It is clearly tame (in fact, it satisfies all three conditions of \cref{def:tame}).
Moreover, any $\lattice'$-building set~$\building'$ is $\iota$-pullable (as the decomposition of any~$Y \in \lattice$ coincides in~$\lattice$ and~$\lattice'$) and any $\lattice$-building~$\building$ set is $\iota$-pushable (as the $\lattice'$-building closure of~$\building$ only adds elements in~$\lattice' \ssm \lattice$ by \cref{prop:buildingClosure}).
Note in particular that this example implies that, for any face~$F$ of an oriented matroid~$\OM$, facial nested complexes of~$\OM_{|F}$ embed in facial nested complexes of~$\OM$.
\end{example}

\begin{remark}
\label{rem:notAllPushablePullable}
Note that not all $\lattice$-building sets~$\building$ are $\phi$-pushable, nor all $\lattice'$-building sets~$\building'$ are $\phi$-pullable, even if~$\phi$ satisfies all three conditions of \cref{def:tame}.
For instance, consider the two order embeddings
\[
\lattice = 
\begin{tikzpicture}[scale=.5, baseline=.7cm]
	\node (a) at (0,0) {$a$};
	\node (b) at (-1,1) {$b$};
	\node (c) at (1,1) {$c$};
	\node (d) at (0,2) {$d$};
	\node (e) at (1,3) {$e$};
	\draw (a) -- (b);
	\draw (a) -- (c);
	\draw (b) -- (d);
	\draw (c) -- (d);
	\draw (d) -- (e);
\end{tikzpicture}
\qquad\xrightarrow{\phi}\qquad
\lattice' = 
\begin{tikzpicture}[scale=.5, baseline=.7cm]
	\node (1) at (0,0) {$1$};
	\node (2) at (-1,1) {$2$};
	\node (3) at (1,1) {$3$};
	\node (4) at (0,2) {$4$};
	\node (5) at (2,2) {$5$};
	\node (6) at (1,3) {$6$};
	\draw (1) -- (2);
	\draw (1) -- (3);
	\draw (2) -- (4);
	\draw (3) -- (4);
	\draw (3) -- (5);
	\draw (4) -- (6);
	\draw (5) -- (6);
\end{tikzpicture}
\qquad\xrightarrow{\phi'}\qquad
\lattice' = 
\begin{tikzpicture}[scale=.5, baseline=.7cm]
	\node (1) at (0,0) {$\alpha$};
	\node (2) at (-1,1) {$\beta$};
	\node (3) at (1,1) {$\gamma$};
	\node (4) at (0,2) {$\delta$};
	\node (5) at (1,2) {$\epsilon$};
	\node (6) at (2,2) {$\zeta$};
	\node (7) at (1,3) {$\eta$};
	\draw (1) -- (2);
	\draw (1) -- (3);
	\draw (2) -- (4);
	\draw (3) -- (4);
	\draw (3) -- (5);
	\draw (3) -- (6);
	\draw (4) -- (7);
	\draw (5) -- (7);
	\draw (6) -- (7);
\end{tikzpicture}
\]
where~$\phi$ is given by~$a \mapsto 1$, $b \mapsto 2$, $c \mapsto 3$, $d \mapsto 4$, and $e \mapsto 6$ and~$\phi'$ is given by~$1 \mapsto \alpha$, $2 \mapsto \beta$, $3 \mapsto \gamma$, $4 \mapsto \delta$, $5 \mapsto \zeta$, and $6 \mapsto \eta$.
Then the $\lattice'$-building set~$\{2,3,5\}$ is neither $\phi$-pullable nor $\phi'$-pushable.
\end{remark}

%%%%%%%%%%%%%%%%%%%%%%%%%%%%%%%%%%%%%%%

\section{Atomic nested complex embeddings}
\label{sec:atomic}

In this section, we show that when~$\lattice$ is atomic, the $\lattice$-nested complex of an $\lattice$-building set can always be embedded as a subcomplex of the boolean nested complex of a boolean building set.
We then use it to realize the $\nestedComplex$ as a subfan of a boolean nested fan (recovering the fan realization of $\nestedComplex$ in~\cite{FeichtnerYuzvinsky2004}), and as a subcomplex of the boundary complex of a nestohedron.

%%%%%%%%%%%%%%%%

\subsection{Atomic nested complexes}
\label{subsec:atomic}

We first recall the definition of atomic lattices and embed their nested complexes into boolean nested complexes.

\begin{definition}
A lattice~$\lattice$ is \defn{atomic} if every element is a join of atoms.
\end{definition}

\begin{proposition}
\label{prop:atomicTame}
For any finite atomic lattice~$\lattice$ with atoms $\ground$, the map~$\phi : \lattice \mapsto 2^\ground$ defined by~$\phi(X) \eqdef \ground_{\le X}$ is an atom exhaustive order embedding, hence is tame.
\end{proposition}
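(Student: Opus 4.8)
The plan is to verify directly that $\phi$ is an order embedding and then check the atom exhaustive condition of \cref{def:tame}; tameness then follows immediately, since an order embedding satisfying at least one of the three listed properties is tame.

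First I would record that $\phi$ is well defined and order preserving: for $X \le_\lattice Y$, any atom $s \le_\lattice X$ also satisfies $s \le_\lattice Y$, so $\ground_{\le X} \subseteq \ground_{\le Y}$. The only step requiring an idea is order reflection, and this is precisely where atomicity is used. If $\ground_{\le X} \subseteq \ground_{\le Y}$, then since $\lattice$ is atomic we may write $X = \bigvee_\lattice \ground_{\le X}$; as every atom below $X$ lies below $Y$, taking the join of all of them yields $X = \bigvee_\lattice \ground_{\le X} \le_\lattice Y$. Combined with order preservation, this shows $X \le_\lattice Y \iff \phi(X) \subseteq \phi(Y)$, i.e.\ $\phi$ is an order embedding.

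Next I would check atom exhaustiveness. The atoms of the boolean lattice $2^\ground$ are the singletons $\{s\}$ for $s \in \ground$, and $\phi(\topone_\lattice) = \ground_{\le \topone_\lattice} = \ground$, so every atom of $2^\ground$ lies below $\phi(\topone_\lattice)$. It then suffices to see that each $\{s\}$ is in the image of $\phi$: indeed, viewing $s \in \ground$ as an element of $\lattice$, we have $\phi(s) = \ground_{\le s} = \{s\}$, because any atom $t \le_\lattice s$ satisfies $\botzero_\lattice <_\lattice t \le_\lattice s$, and this forces $t = s$ since $s$ covers $\botzero_\lattice$. Hence $\phi(\lattice)$ contains all the atoms of $2^\ground$, so $\phi$ is atom exhaustive, and therefore tame by \cref{def:tame}.

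I do not anticipate a real obstacle: the single substantive point is order reflection, which is a one-line consequence of the defining property of atomic lattices (every element is the join of the atoms beneath it), and everything else is bookkeeping with the definitions of atom, boolean lattice, and tameness. (It is worth noting that $\phi$ need not be join preserving or cover preserving in general, which is why the atom exhaustive criterion is the relevant one here.)
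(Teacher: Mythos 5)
Your proposal is correct and follows essentially the same route as the paper's proof: order preservation is immediate, order reflection comes from $X = \bigvee \ground_{\le X}$ by atomicity, and atom exhaustiveness from $\phi(s) = \{s\}$ for each atom $s$. The only difference is that you spell out these one-line observations in more detail, which is fine.
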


\begin{proof}
The map~$\phi$ is
\begin{itemize}
\item atom exhaustive since~$\phi(s) = \{s\}$ for all~$s \in \ground$,
\item order preserving since~$X \le Y$ implies~$S_{\le X} \subseteq S_{\le Y}$ for all~$X,Y \in \lattice$,
\item order reflecting since~$X = \bigvee_{s \in S_{\le X}} s$ for all~$X \in \lattice$ since~$\lattice$ is atomic.
\qedhere
\end{itemize}
\end{proof}

\begin{proposition}
\label{prop:atomicPushable}
Consider a finite atomic lattice~$\lattice$ with atoms $\ground$ and the map~$\phi : \lattice \mapsto 2^\ground$ defined by~$\phi(X) \eqdef \ground_{\le X}$.
For any $\lattice$-building set~$\building$, the (boolean) building closure~$\building'$ of $\phi(\building)$ satisfies~$\building = \phi^{-1}(\building')$.
In other words, any $\lattice$-building set~$\building$ is $\phi$-pushable.
\end{proposition}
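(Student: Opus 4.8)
The plan is to establish the two inclusions $\building \subseteq \phi^{-1}(\building')$ and $\phi^{-1}(\building') \subseteq \building$ separately, where $\building'$ denotes the boolean building closure of $\phi(\building)$. The first is immediate: for $B \in \building$ we have $\phi(B) \in \phi(\building) \subseteq \building'$, and $\phi$ is injective since it is an order embedding by \cref{prop:atomicTame}, so $B \in \phi^{-1}(\building')$. For the reverse inclusion I would first record, using \cref{def:booleanBuildingSetClosure}, that every element of $\building'$ has the form $\bigcup_{B \in \c{B}} \phi(B)$ for some non-empty $\c{B} \subseteq \building$ whose intersection graph (an edge between $B$ and $B'$ whenever $\phi(B) \cap \phi(B') \ne \varnothing$) is connected; the singletons $\{s\}$ listed in \cref{def:booleanBuildingSetClosure} are already of this form, since every atom $s$ of $\lattice$ is an irreducible and hence lies in $\building$, with $\phi(s) = \{s\}$.

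The technical core is the claim that if $B_1, B_2 \in \building$ satisfy $\phi(B_1) \cap \phi(B_2) \ne \varnothing$, then $B_1 \vee_\lattice B_2 \in \building$. To prove it, set $Y \eqdef B_1 \vee B_2$ and $\{C_1, \dots, C_\ell\} \eqdef \max(\building_{\le Y})$, so $\lattice_{\le Y} \cong \prod_{j \in [\ell]} \lattice_{\le C_j}$ by \cref{def:latticeBuildingSet}, with the factors $\lattice_{\le C_j}$ embedded in $\lattice_{\le Y}$ by the inclusions. By maximality, $B_1 \le C_{j_1}$ and $B_2 \le C_{j_2}$ for some $j_1, j_2 \in [\ell]$. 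If $j_1 \ne j_2$, then any atom $a$ with $a \le B_1$ and $a \le B_2$ would lie in $\lattice_{\le C_{j_1}} \cap \lattice_{\le C_{j_2}} = \{\botzero_\lattice\}$, which is impossible; hence $j_1 = j_2 =: j$ and $Y = B_1 \vee B_2 \le C_j \le Y$, so $Y = C_j \in \building$. An induction on $|\c{B}|$, removing at each step a non-cut vertex of the intersection graph (for instance a leaf of a spanning tree), then yields $\bigvee_{B \in \c{B}} B \in \building$ for every non-empty $\c{B} \subseteq \building$ with connected intersection graph.

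Given this, I would finish as follows. Let $X \in \lattice$ with $\phi(X) \in \building'$, and write $\phi(X) = \bigcup_{B \in \c{B}} \phi(B)$ with $\c{B} \subseteq \building$ connected. Put $Y \eqdef \bigvee_{B \in \c{B}} B \in \building$. For each $B \in \c{B}$, $\phi(B) \subseteq \phi(X)$, hence $B \le X$ since $\phi$ reflects order, so $Y \le X$ and thus $\phi(Y) \subseteq \phi(X)$. Conversely each $B \le Y$ gives $\phi(B) \subseteq \phi(Y)$, whence $\phi(X) = \bigcup_{B \in \c{B}} \phi(B) \subseteq \phi(Y)$. So $\phi(X) = \phi(Y)$, and $X = Y \in \building$ by injectivity. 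This proves $\building = \phi^{-1}(\building')$, so $\building$ is $\phi$-pushable in the sense of \cref{def:phiPushablePullable}.

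The main obstacle is the claim about joins of overlapping blocks, together with the fact — which is what makes the claim necessary — that $\bigcup_{B \in \c{B}} \phi(B)$ generally differs from $\phi(\bigvee_{B \in \c{B}} B)$ (think of two adjacent edges of a square, whose vertex sets union to three of the four vertices). The argument closes precisely because $X$ is assumed to be an honest lattice element with $\phi(X)$ equal to such a union, which forces $Y \le X$ and $\phi(X) \subseteq \phi(Y)$ simultaneously and hence $X = Y$; and the product decomposition of $\lattice_{\le Y}$ from the building-set axiom is exactly what guarantees $Y \in \building$. The rest is routine manipulation of order embeddings and the recursive description of building closures.
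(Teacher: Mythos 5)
Your proof is correct, but it is organized around a different key lemma than the paper's. The paper argues directly on $X$: it takes the canonical factorization $\{B_1,\dots,B_k\} = \max(\building_{\le X})$, observes from the product structure of \cref{def:latticeBuildingSet} that the sets $\phi(B_1),\dots,\phi(B_k)$ are pairwise disjoint, and then notes that a connected family $C_1,\dots,C_\ell \in \building$ with $\phi(X) = \bigcup_i \phi(C_i)$ must lie below a single $B_j$, forcing $\phi(B_j) = \phi(X)$ and hence $X = B_j \in \building$ — no induction needed. You instead prove the standalone closure statement that the join of two blocks sharing an atom is again a block (applying the product decomposition at $B_1 \vee_\lattice B_2$ rather than at $X$), extend it by induction over a spanning tree of the intersection graph to get $\bigvee \c{B} \in \building$ for any connected family $\c{B}$, and then identify $X$ with this join using order reflection and injectivity of $\phi$. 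Both routes rest on the same two ingredients — $\phi$ is an atom-exhaustive order embedding (\cref{prop:atomicTame}), and elements below two distinct maximal blocks of a lower interval meet only in $\botzero$ — but your intermediate lemma is a reusable lattice analogue of the boolean axiom that overlapping blocks have their union in the building set (compare \cref{cor:carrier} in the facial setting), at the cost of an extra induction, while the paper's argument is shorter and needs only one application of the product structure.
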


\begin{proof}
Let~$X \in \lattice$ such that~$\phi(X) \in \building'$ and $\{B_1, \dots, B_k\} = \max(\building_{\le X})$.
As $\building$ is an $\lattice$-building set, we have~$\lattice_{\le X} \cong \prod_{i \in [k]} \lattice_{\le B_i}$, so that~$\phi(B_1), \dots, \phi(B_k)$ are pairwise disjoint.
As~$\phi(X)$ is in the (boolean) building closure~$\building'$ of~$\phi(\building)$, \cref{def:booleanBuildingSetClosure} ensures that there are~${C_1, \dots, C_\ell \in \building}$ such that~$\phi(X) = \bigcup_{i \in [\ell]} \phi(C_i)$ and the intersection graph of~$\phi(C_1), \dots, \phi(C_k)$ is connected.
As~$\phi(C_i) \subseteq \phi(X)$ and~$\phi$ is order reflecting, we obtain that~$C_1, \dots, C_\ell \in \building_{\le X}$.
As the intersection graph of~$\phi(C_1), \dots, \phi(C_k)$ is connected while~$\phi(B_1), \dots, \phi(B_k)$ are pairwise disjoint, we conclude that~$C_1, \dots, C_\ell$ all belong to the same~$B_j$.
We thus obtain that~$\bigcup_{i \in [\ell]} \phi(C_i) \subseteq \phi(B_j) \subseteq \phi(X)$.
The equality~$\phi(X) = \bigcup_{i \in [\ell]} \phi(C_i)$ thus shows that~$\phi(B_j) = \phi(X)$.
As~$\phi$ is injective, we obtain that~$X = B_j \in \building$.
\end{proof}

\begin{corollary}
\label{coro:atomicInBoolean}
The $\lattice$-nested complex $\nestedComplex$ of any $\lattice$-building set~$\building$ on a finite atomic lattice~$\lattice$ with atoms $\ground$ is isomorphic to a subcomplex of the (boolean) nested complex $\nestedComplex[][\building']$ on the (boolean) building set~$\building'$ obtained as the (boolean) building closure of $\set{\ground_{\le B}}{B \in \building}$.
\end{corollary}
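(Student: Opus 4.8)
The plan is to deduce \cref{coro:atomicInBoolean} by combining the three preceding results of this subsection, since they were set up precisely for this purpose. Let $\lattice$ be a finite atomic lattice with set of atoms $\ground$, and let $\phi : \lattice \to 2^\ground$ be the map $\phi(X) \eqdef \ground_{\le X}$. The first step is to invoke \cref{prop:atomicTame}, which gives that $\phi$ is an atom exhaustive order embedding, hence tame in the sense of \cref{def:tame}. The second step is to invoke \cref{prop:atomicPushable}: for the given $\lattice$-building set $\building$, the boolean building closure $\building'$ of $\phi(\building) = \set{\ground_{\le B}}{B \in \building}$ satisfies $\building = \phi^{-1}(\building')$.

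With these two facts in hand, the third step is to apply \cref{prop:phiCompatible}: since $\phi$ is tame and $\building = \phi^{-1}(\building')$, the pair $(\building, \building')$ is $\phi$-compatible. By \cref{def:phiCompatible}, this means exactly that $\phi$ embeds the $\lattice$-nested complex $\nestedComplex[\lattice][\building]$ onto a subcomplex of the boolean nested complex $\nestedComplex[][\building']$. Since $\building'$ is by construction the boolean building closure of $\set{\ground_{\le B}}{B \in \building}$ (using \cref{def:booleanBuildingSetClosure} for the boolean building closure), this is precisely the claimed statement.

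I do not anticipate any real obstacle here: the corollary is a direct corollary in the literal sense, a two-line assembly of \cref{prop:atomicTame,prop:atomicPushable,prop:phiCompatible}. The only point requiring a sentence of care is that $\phi$ being an order embedding guarantees it is injective, so the image subcomplex is genuinely isomorphic (not merely a quotient) to $\nestedComplex[\lattice][\building]$; this is already implicit in the definition of $\phi$-compatibility but worth spelling out. Thus the proof is simply:

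\begin{proof}
By \cref{prop:atomicTame}, the map~$\phi : \lattice \to 2^\ground$ defined by~$\phi(X) \eqdef \ground_{\le X}$ is a tame order embedding.
Let~$\building'$ be the (boolean) building closure of~$\phi(\building) = \set{\ground_{\le B}}{B \in \building}$.
By \cref{prop:atomicPushable}, we have~$\building = \phi^{-1}(\building')$.
Hence, by \cref{prop:phiCompatible}, the pair~$(\building, \building')$ is $\phi$-compatible, which by \cref{def:phiCompatible} means that~$\phi$ embeds the $\lattice$-nested complex~$\nestedComplex[\lattice][\building]$ as a subcomplex of the (boolean) nested complex~$\nestedComplex[][\building']$.
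Since~$\phi$ is an order embedding, it is injective, so this embedding realizes~$\nestedComplex[\lattice][\building]$ isomorphically as a subcomplex of~$\nestedComplex[][\building']$.
\end{proof}
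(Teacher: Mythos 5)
Your proposal is correct and is exactly the paper's argument: the paper proves \cref{coro:atomicInBoolean} by citing \cref{prop:atomicTame,prop:atomicPushable,prop:phiCompatible}, which is precisely the three-step assembly you carry out. Your additional remark about injectivity of the order embedding is harmless and already implicit in \cref{def:phiCompatible}.
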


\begin{proof}
This follows from \cref{prop:atomicTame,prop:atomicPushable,prop:phiCompatible}.
\end{proof}

\begin{remark}
\label{rem:atomicInBoolean}
\cref{prop:phiCompatible} actually tells that~$\nestedComplex$ is isomorphic to a subcomplex of~$\nestedComplex[][\building']$ for any boolean building set~$\building'$ such that~$\building = \set{X \in \lattice}{\ground_{\le X} \in \building'}$.
The existence of such a building set~$\building'$ is not clear in general (as discussed in \cref{rem:notAllPushablePullable}), but is guarantied here by \cref{prop:atomicPushable}, which says that the building closure of $\set{\ground_{\le B}}{B\in\building}$ suits.
\end{remark}

\begin{example}
Following on \cref{exm:phiCompatible}, \cref{coro:atomicInBoolean} embeds the nested complexes~$\nestedComplex[\lattice_1][\building_1]$ and~$\nestedComplex[\lattice_2][\building_2]$ into the boolean nested complex~$\nestedComplex[][\building']$.
\end{example}

%%%%%%%%%%%%%%%%

\subsection{Polyhedral realizations}
\label{subsec:subrealizations}

We now use the embedding of \cref{coro:atomicInBoolean} to obtain polyhedral fan realizations of the $\lattice$-nested complexes of $\lattice$-building sets for any finite atomic lattice~$\lattice$.

\begin{corollary}
\label{coro:subrealizations}
The $\lattice$-nested complex $\nestedComplex$ of any $\lattice$-building set~$\building$ on a finite atomic lattice~$\lattice$ with atoms $\ground$ is realized by 
\begin{itemize}
\item a subfan of the (boolean) nested fan $\nestedFan[][\building']$,
\item a subcomplex of the boundary complex of the polar of the (boolean) nestohedron~$\Nest(\building')$,
\end{itemize}
where~$\building'$ is the (boolean) building closure of $\set{\ground_{\le B}}{B \in \building}$.
\end{corollary}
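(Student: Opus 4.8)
The plan is to deduce this corollary directly from the machinery already assembled, rather than reproving anything from scratch. The key input is \cref{coro:atomicInBoolean}: the map $\phi : \lattice \to 2^\ground$ given by $\phi(X) \eqdef \ground_{\le X}$ is a tame (indeed atom exhaustive) order embedding by \cref{prop:atomicTame}, every $\lattice$-building set $\building$ is $\phi$-pushable by \cref{prop:atomicPushable}, and hence $(\building, \building')$ is $\phi$-compatible by \cref{prop:phiCompatible}, where $\building'$ is the boolean building closure of $\set{\ground_{\le B}}{B \in \building} = \phi(\building)$. By \cref{def:phiCompatible}, this means $\phi$ embeds the $\lattice$-nested complex $\nestedComplex$ as a subcomplex of the boolean nested complex $\nestedComplex[][\building']$.

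First I would recall the geometric realization of the ambient boolean complex. By \cref{thm:nestedFan}, the boolean nested fan $\nestedFan[][\building']$ is a simplicial fan of $\R^\ground$ realizing $\nestedComplex[][\building']$: its cones are $\con[\nested'] = \cone(\set{\b{e}_{B'}}{B' \in \nested'})$ for $\nested' \in \nestedComplex[][\building']$, and the face poset of this fan is isomorphic to the face poset of $\nestedComplex[][\building']$. Dually, by \cref{thm:nestohedron}, the nestohedron $\Nest(\building')$ (for any choice of $\b{\lambda} \in \R^{\building'}_+$) has normal fan $\nestedFan[][\building'] + \R^{\connectedComponents(\building')}$, so the boundary complex of the polar of $\Nest(\building')$ is isomorphic to $\nestedComplex[][\building']$.

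Second, I would simply select the subcomplex cut out by the embedding. Since $\phi$ embeds $\nestedComplex$ as a subcomplex of $\nestedComplex[][\building']$, the image consists precisely of those faces $\nested'$ of $\nestedComplex[][\building']$ of the form $\nested' = \phi(\nested)$ for $\nested \in \nestedComplex$. Taking the corresponding cones $\set{\con[\phi(\nested)]}{\nested \in \nestedComplex}$ yields a subfan of $\nestedFan[][\building']$ whose face poset is isomorphic to that of $\nestedComplex$; since a subcollection of the cones of a simplicial fan that is closed under taking faces is itself a simplicial fan, this is a genuine polyhedral fan realizing $\nestedComplex$. Dually, taking the corresponding faces of the polar of $\Nest(\building')$ gives a subcomplex of its boundary complex isomorphic to $\nestedComplex$. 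This is exactly the content of the (unnumbered) proposition preceding the corollary, applied with the polyhedral complex being $\nestedFan[][\building']$ or the boundary complex of $\Nest(\building')^\triangle$ respectively.

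I do not anticipate a real obstacle: the only subtlety worth a sentence is that a ``subcomplex'' of a fan must be verified to still be a fan, i.e.\ that the selected cones are closed under passing to faces — but this is immediate because $\nestedComplex$ is a simplicial complex and $\phi$ is a simplicial embedding, so faces of $\phi(\nested)$ in $\nestedComplex[][\building']$ are again of the form $\phi(\nested'')$ with $\nested'' \subseteq \nested$. Concretely, the proof reads: ``Combine \cref{coro:atomicInBoolean} with \cref{thm:nestedFan,thm:nestohedron}, selecting in the boolean nested fan $\nestedFan[][\building']$ (resp.\ in the boundary complex of the polar of $\Nest(\building')$) the faces corresponding to the image of $\nestedComplex$ under the embedding of \cref{coro:atomicInBoolean}.''

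\begin{proof}
By \cref{coro:atomicInBoolean}, the map~$\phi : \lattice \to 2^\ground$ given by~$\phi(X) \eqdef \ground_{\le X}$ embeds the $\lattice$-nested complex~$\nestedComplex$ as a subcomplex of the boolean nested complex~$\nestedComplex[][\building']$, where~$\building'$ is the boolean building closure of~$\set{\ground_{\le B}}{B \in \building}$.
By \cref{thm:nestedFan}, the boolean nested fan~$\nestedFan[][\building']$ is a simplicial fan realizing~$\nestedComplex[][\building']$.
Selecting the cones~$\con[\phi(\nested)]$ for~$\nested \in \nestedComplex$ yields a subcollection of the cones of~$\nestedFan[][\building']$ which is closed under taking faces (since~$\nestedComplex$ is a simplicial complex and~$\phi$ is a simplicial embedding), hence a subfan of~$\nestedFan[][\building']$ whose face poset is isomorphic to that of~$\nestedComplex$.
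This subfan thus realizes~$\nestedComplex$.
Dually, by \cref{thm:nestohedron}, the boundary complex of the polar of the nestohedron~$\Nest(\building')$ is isomorphic to~$\nestedComplex[][\building']$, and selecting the faces corresponding to the image of~$\nestedComplex$ under the embedding of \cref{coro:atomicInBoolean} produces a subcomplex of this boundary complex realizing~$\nestedComplex$.
\end{proof}
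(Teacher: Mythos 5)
Your proposal is correct and is essentially the paper's own argument: the corollary is an immediate consequence of \cref{coro:atomicInBoolean} combined with \cref{thm:nestedFan,thm:nestohedron}, via the unnumbered proposition of \cref{subsec:compatible} (select the faces of the boolean realization corresponding to the image of the embedding). Your extra remark that the selected cones are closed under passing to faces, so the selection is a genuine subfan, is a fine (if implicit in the paper) sanity check, and indeed this subfan is exactly the $\lattice$-nested fan of \cref{def:latticeNestedFan}.
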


\begin{remark}
Following \cref{rem:atomicInBoolean}, \cref{coro:subrealizations} holds for any boolean building set~$\building'$ such that~$\building = \set{X \in \lattice}{\ground_{\le X} \in \building'}$.
\end{remark}

\begin{example}
Following on \cref{exm:phiCompatible}, \cref{coro:subrealizations} realizes the nested complexes~$\nestedComplex[\lattice_1][\building_1]$ and~$\nestedComplex[\lattice_2][\building_2]$ as in \cref{fig:acyclicNestedComplexes}.
\end{example}

\begin{remark}
\label{rem:badFan}
As illustrated in \cref{fig:acyclicNestedComplexes}, note that
\begin{enumerate}
\item the dimension of the affine span of the subfan of \cref{coro:subrealizations} may exceed the dimension of $\nestedComplex$,
\item the subcomplex of the boundary complex of the nestohedron~$\Nest(\building')$ does not form the boundary complex of a polytope.
\end{enumerate}
As established in \cref{sec:polytopalRealizations}, when $\lattice$ is the face lattice of a realizable oriented matroid, there are alternative realizations as a complete simplicial fan in an affine subspace of the right dimension, and as the boundary complex of a polytope.
\end{remark}

We now give an explicit description of the subfan of \cref{coro:subrealizations} to recover the explicit fan realizations of E.~M.~Feichtner and S.~Yuzvinsky~\cite{FeichtnerYuzvinsky2004}.

\begin{definition}[\cite{FeichtnerYuzvinsky2004}]
\label{def:latticeNestedFan}
Let~$\lattice$ be a finite atomic lattice with atoms~$\ground$.
The \defn{$\lattice$-nested fan} of an $\lattice$-building set~$\building$ is the polyhedral fan
\[
\nestedFan \eqdef \set{\con[\nested]}{\nested \in \nestedComplex }
\]
where~$\con[\nested] \eqdef \cone\set{\b{e}_B}{B \in \nested}$ is the polyhedral cone generated by the characteristic vectors $\b{e}_X \eqdef \sum_{s \in \ground_{\le X}} \b{e}_s \in \R^\ground$ of~$\ground_{\le X}$ for all~$X \in \nested$.
\end{definition}

\begin{proposition}[\cite{FeichtnerYuzvinsky2004}]
\label{prop:subfanFY}
For any finite atomic lattice~$\lattice$ and any $\lattice$-building set~$\building$, the $\lattice$-nested fan~$\nestedFan$ is a simplicial fan realization of the $\lattice$-nested complex~$\nestedComplex$.
\end{proposition}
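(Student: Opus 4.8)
The plan is to deduce \cref{prop:subfanFY} directly from \cref{coro:subrealizations} and the explicit description of boolean nested fans in \cref{thm:nestedFan}, rather than reproving the fan property from scratch. The key observation is that the map $\phi : \lattice \to 2^\ground$, $\phi(X) \eqdef \ground_{\le X}$ of \cref{prop:atomicTame} induces, on characteristic vectors, exactly $\b{e}_X = \b{e}_{\phi(X)} = \sum_{s \in \ground_{\le X}} \b{e}_s$. So the cone $\con[\nested]$ of \cref{def:latticeNestedFan} is precisely the cone $\con[\phi(\nested)]$ of the boolean nested fan $\nestedFan[][\building']$ from \cref{def:nestedFan}, where $\building'$ is the boolean building closure of $\set{\ground_{\le B}}{B \in \building}$.

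First I would invoke \cref{prop:atomicTame,prop:atomicPushable} to know that $(\building, \building')$ is $\phi$-compatible, so by \cref{prop:phiCompatible} (or directly \cref{coro:atomicInBoolean}) the map $\nested \mapsto \phi(\nested)$ is an isomorphism from $\nestedComplex$ onto a subcomplex of $\nestedComplex[][\building']$. Next I would note that $\phi$ is injective on blocks, hence $|\phi(\nested)| = |\nested|$, and that the assignment $B \mapsto \b{e}_B = \b{e}_{\phi(B)}$ matches rays bijectively; this gives that $\nestedFan \eqdef \set{\con[\nested]}{\nested \in \nestedComplex}$ equals $\set{\con[\nested']}{\nested' \in \phi(\nestedComplex)} \subseteq \nestedFan[][\building']$ as a collection of cones. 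By \cref{thm:nestedFan}, $\nestedFan[][\building']$ is a simplicial fan of $\R^\ground$; any subcollection of the cones of a simplicial fan that is closed under taking faces is again a simplicial fan, and $\phi(\nestedComplex)$ is a subcomplex of $\nestedComplex[][\building']$, hence closed under faces. Therefore $\nestedFan$ is a simplicial fan, and since $\nested \mapsto \phi(\nested)$ is a simplicial isomorphism $\nestedComplex \cong \phi(\nestedComplex)$ with $\con[\nested] = \con[\phi(\nested)]$, the fan $\nestedFan$ realizes $\nestedComplex$.

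The one point that needs a little care — and which I expect to be the main (mild) obstacle — is verifying that the linear independence of $\set{\b{e}_B}{B \in \nested}$ is inherited correctly: a priori distinct blocks $B \ne B'$ of $\nested$ could have $\ground_{\le B} = \ground_{\le B'}$ and thus the same characteristic vector, which would collapse a simplex. But $\phi$ is an order embedding by \cref{prop:atomicTame}, so $\ground_{\le B} = \ground_{\le B'}$ forces $B = B'$; hence the characteristic vectors indexed by $\nested$ are exactly those indexed by $\phi(\nested)$, with no collapse, and they are linearly independent because $\phi(\nested)$ is a face of the simplicial fan $\nestedFan[][\building']$. A final routine check is that the cones $\con[\nested]$ intersect along common faces, which again transfers verbatim from the fan property of $\nestedFan[][\building']$ since $\con[\nested_1] \cap \con[\nested_2] = \con[\phi(\nested_1)] \cap \con[\phi(\nested_2)] = \con[\phi(\nested_1) \cap \phi(\nested_2)] = \con[\nested_1 \cap \nested_2]$, using that $\phi(\nested_1 \cap \nested_2) = \phi(\nested_1) \cap \phi(\nested_2)$ by injectivity of $\phi$. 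This completes the argument with essentially no new computation beyond what \cref{thm:nestedFan} and \cref{prop:phiCompatible} already provide.
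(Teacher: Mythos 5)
Your proposal is correct and follows essentially the same route as the paper: the paper's own proof simply cites \cite{FeichtnerYuzvinsky2004} and remarks that the statement is a consequence of \cref{def:nestedFan,thm:nestedFan,coro:subrealizations}, which is exactly the derivation you spell out (identifying $\con[\nested]$ with $\con[\phi(\nested)]$ inside the boolean nested fan of the building closure $\building'$ and using the $\phi$-compatibility from \cref{prop:atomicTame,prop:atomicPushable,prop:phiCompatible}). Your additional checks — injectivity of $\phi$ on blocks and closure of the image under faces — are the right points to verify and are handled correctly.
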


\begin{proof}
This result is proved in~\cite{FeichtnerYuzvinsky2004}.
It can alternatively be seen as a consequence of \cref{def:nestedFan,thm:nestedFan,coro:subrealizations}.
\end{proof}

\begin{remark}
Following up on \cref{rem:BackmanDanner}, we note that \cite[Coro.~4.6]{BackmanDanner} is a direct consequence of \cref{coro:subrealizations}.
\end{remark}

%%%%%%%%%%%%%%%%%%%%%%%%%%%%%%%%%%%%%%%

\section{The Bergman embedding}
\label{sec:Bergman}

Generalizing tropicalizations of linear varieties to non-necessarily realizable (unoriented) matroids, the Bergman fan~$\Bergfan[\UOM]$ of a matroid~$\UOM$ was first introduced in \cite[Sect.~9.3]{Sturmfels2002} by B.~Sturmfels, who suggested studying its combinatorial structure.
This was first done by F.~Ardila and C.~Klivans in~\cite{ArdilaKlivans2006}, who showed that the Bergman fan~$\Bergfan[\UOM]$ could be triangulated using the order complex of its flat lattice~$\Flats(\UOM)$ (which is the nested complex of the maximal building set, by \cref{exm:orderComplex}).
E.~M.~Feichtner and B.~Sturmfels then extended this result in \cite{FeichtnerSturmfels} by showing that, for any $\Flats(\UOM)$-building set~$\fbuilding[G]$, the nested fan $\nestedFan[{\Flats(\UOM)}][{\fbuilding[G]}]$ of \cref{prop:subfanFY} triangulates the Bergman fan. The Chow ring of the matroid~$\UOM$ is the toric variety associated to its fan~$\Bergfan[\UOM]$ \cite[Sect.~5]{FeichtnerYuzvinsky2004}, and it would later become a central ingredient for the celebrated proof of the Heron--Rota--Welsh conjecture by K.~Adiprasito, J.~Huh, and E.~Katz~\cite{AdiprasitoHuhKatz}.

Motivated by the theory of total positivity, F.~Ardila, C.~Klivans, and L.~Williams studied the positive Bergman fan of an oriented matroid $\OM$, generalizing the notion of positive tropical linear variety \cite{ArdilaKlivansWilliams2006}.
They proved that the order complex of $\FL(\OM)$ triangulates the positive Bergman fan. Our first result is the extension of this result to arbitrary facial building sets.

Let $\OM$ be an oriented matroid on $\ground$, and consider a weight function $\omega\in\R^\ground$. For any circuit $C\in \circuits[\OM]$, define $\ini_\omega(C)$ to be the elements of $C$ which have the largest weight. Let $\ini_\omega(\circuits[\OM]) \eqdef \set{\ini_\omega(C)}{C\in \circuits[\OM]}$ and define $\OM_\omega$ to be the oriented matroid on $\ground$ whose collection of circuits is in $\ini_\omega(\circuits[\OM])$ (see \cite{ArdilaKlivansWilliams2006}) for details).

The \defn{Bergman fan} $\Bergfan$ and the \defn{positive Bergman fan} $\posBergfan$ are respectively defined as
\[
\Bergfan \eqdef \set{\omega\in\R^\ground}{\OM_\omega\text{ has no loops}}
\qquad\text{and}\qquad
\posBergfan \eqdef \set{\omega\in\R^\ground}{\OM_\omega\text{ is acyclic}}.
\]

\begin{proposition}\label{prop:PosBergmanFacial}
Let $\OM$ be an acyclic oriented matroid and~$\fbuilding$ be a facial building set for~$\OM$.
Then the $\FL(\OM)$-nested fan~$\nestedFan[{\FL(\OM)}][\fbuilding]$ is a triangulation of the positive Bergman fan~$\posBergfan$.
\end{proposition}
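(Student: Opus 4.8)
The strategy is to reduce \cref{prop:PosBergmanFacial} to the known fact that the order complex of $\FL(\OM)$ triangulates the positive Bergman fan~$\posBergfan$ (the result of F.~Ardila, C.~Klivans, and L.~Williams in~\cite{ArdilaKlivansWilliams2006}, which is the case $\fbuilding = \FL(\OM)_{>\botzero}$ of the statement), and then upgrade from the maximal building set to an arbitrary facial building set. The key observation is that all $\FL(\OM)$-nested fans $\nestedFan[{\FL(\OM)}][\fbuilding]$, as $\fbuilding$ ranges over facial building sets, have the same support: by \cref{coro:atomicInBoolean} and \cref{prop:subfanFY} they live inside~$\R^\ground$ with rays the characteristic vectors $\b{e}_F = \sum_{s \in \ground_{\le F}} \b{e}_s$ for $F \in \fbuilding$, and the support of $\nestedFan[{\FL(\OM)}][\fbuilding]$ is $\bigcup_{\nested} \con[\nested]$. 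So the plan is: first establish that $\spa{\nestedFan[{\FL(\OM)}][\fbuilding]} = \spa{\nestedFan[{\FL(\OM)}][\FL(\OM)_{>\botzero}]}$ for every facial building set~$\fbuilding$, and second invoke the base case.

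First I would prove the support equality. For this, the cleanest route is to compare both fans to a common coarsening. Since $\FL(\OM)$ is atomic, \cref{prop:atomicTame} gives the atom-exhaustive order embedding $\phi : \FL(\OM) \to 2^\ground$, $F \mapsto \ground_{\le F}$, and by \cref{prop:atomicPushable} every facial building set $\fbuilding$ is $\phi$-pushable, with $\building' $ the boolean building closure of $\phi(\fbuilding)$ satisfying $\fbuilding = \phi^{-1}(\building')$. Now I recall the classical fact that the nested fans $\nestedFan[][\building']$ of different boolean building sets $\building'$ on the same ground set all refine the braid fan and have a common support (namely, all of $\R^\ground$ up to the lineality), and that $\nestedFan[][\building']$ refines $\nestedFan[][\building'']$ when $\building'' \subseteq \building'$. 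Translating this through~$\phi$: each cone $\con[\nested]$ of $\nestedFan[{\FL(\OM)}][\fbuilding]$ sits inside a cone of $\nestedFan[{\FL(\OM)}][\FL(\OM)_{>\botzero}]$ (the latter being the order fan of $\FL(\OM)$, since chains are exactly the nested sets of the maximal building set by \cref{exm:orderComplex}), because any $\fbuilding$-nested set refines to a flag of faces, and the extremal rays $\b{e}_F$ for $F$ a flat lie in the relative interior of the corresponding order-fan cone. Conversely every chain of faces is a nested set of $\fbuilding$ only after subdivision, so the finer fan covers the same region. Hence the supports agree.

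The second step is then immediate: since $\nestedFan[{\FL(\OM)}][\FL(\OM)_{>\botzero}]$ is the order complex of $\FL(\OM)$ realized as a fan in~$\R^\ground$, and $\posBergfan = \set{\omega \in \R^\ground}{\OM_\omega \text{ is acyclic}}$ is triangulated by it via~\cite{ArdilaKlivansWilliams2006}, the support $\spa{\nestedFan[{\FL(\OM)}][\FL(\OM)_{>\botzero}]}$ equals $\posBergfan$ (as a set). Combined with the support equality from Step~1 and the fact that $\nestedFan[{\FL(\OM)}][\fbuilding]$ is a simplicial fan (\cref{prop:subfanFY}), we conclude that $\nestedFan[{\FL(\OM)}][\fbuilding]$ is a simplicial fan with support~$\posBergfan$, i.e.\ a triangulation of~$\posBergfan$.

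\textbf{Main obstacle.} The delicate point is Step~1, specifically verifying that the rays $\b{e}_F$ of the $\fbuilding$-nested fan actually land on $\posBergfan$ and that the cones $\con[\nested]$ tile it without gaps or overlaps beyond the simplicial structure already guaranteed. The cleanest way to handle this is probably not the common-coarsening argument sketched above but rather a direct induction using the link description of \cref{prop:singleLinkFacialBuildingSet}: one shows that stellar subdivision (combinatorial blow-up, \cref{thm:orientedMatroidRealizationConnected}) preserves the property of being a fan triangulation of a fixed support, using that a stellar subdivision at a ray in the relative interior of a cone does not change the union of cones. Starting from $\fbuilding = \FL(\OM)_{>\botzero}$ and removing blocks one at a time (each removal being the inverse of a combinatorial blow-up, which on fans is an inverse stellar subdivision / edge contraction that again preserves support), one descends to an arbitrary~$\fbuilding$ while keeping the support equal to $\posBergfan$ throughout. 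I would also double-check the boundary case where $\fbuilding$ is disconnected, reducing via \cref{coro:connectedLatticeBuildingSet} and \cref{prop:connectedComponentsDecomposition} to a product of connected pieces, where the positive Bergman fan factors as a product accordingly.
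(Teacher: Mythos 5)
Your proposal follows essentially the same route as the paper: reduce to the Ardila--Klivans--Williams result that the order complex of $\FL(\OM)$ (the nested complex of the maximal facial building set) triangulates $\posBergfan$, and then invoke the fact that nested fans of building sets $\fbuilding \subseteq \fbuilding'$ on the same lattice have equal support --- the paper simply cites \cite[Thm.~4.2]{FeichtnerMuller2005} for this, which is precisely the stellar-subdivision/blow-up argument you sketch in your final paragraph. (Your first attempt at the support equality, transferring the common support of boolean nested fans through the embedding $\phi$, does not work as stated since the facial nested fan is only a proper subfan of the boolean one, but you correctly discard it in favor of the blow-up induction.)
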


\begin{proof}
This is proved in \cite[Cor.~3.5]{ArdilaKlivansWilliams2006} for the maximal building set (whose nested complex is the order complex of $\FL(\OM)$, see \cref{exm:orderComplex}). 
In \cite[Thm.~4.2]{FeichtnerMuller2005}, it is proved that if~$\fbuilding\subseteq \fbuilding'$ are building sets, then~$\nestedFan[{\FL(\OM)}][\fbuilding]$ and $\nestedFan[{\FL(\OM)}][\fbuilding']$ have the same support, which concludes the proof.
\end{proof}

The  construction of~\cite{ArdilaKlivansWilliams2006} sees the positive Bergman fan~$\posBergfan$ as a subfan of the Bergman fan~$\Bergfan$, which implies in particular that any subdivision of the latter induces a subdivision of the former.
They focus specifically on two subdivisions:
\begin{itemize}
\item The `fine' subdivision is associated to the maximal building sets of the lattices of faces and flats of $\OM$, and the construction provides an embedding of the corresponding nested complexes. The `fine' subdivision is a special case of \cref{exm:maximalBuildingSetsCompatible} for the inclusion of the face lattice inside the flat lattice.
\item The `coarse' subdivision is used to study the positive Bergman fan of the braid arrangement. For arbitrary matroids, the `coarse' subdivision is not necessarily associated to a nested complex set, and it might not even be a simplicial complex \cite[Ex.~5.9]{FeichtnerSturmfels}. E.~M.~Feichtner and B.~Sturfmels characterize in \cite[Thm.~5.3]{FeichtnerSturmfels} when it is a nested complex, in which case it is the nested complex of the minimal building set of \cref{exm:minMaxBuildingSet}. In particular, the braid arrangement is an example where the `coarse' subdivision is a nested complex by \cite[Rmk.~5.4]{FeichtnerSturmfels}, thus providing another example of a nested complex embedding from the face to the flat lattices of an oriented matroid. The `coarse' subdivision was used again in \cite{ArdilaReinerWilliams2006} with the positive Bergman fans of Coxeter arrangements, where we have again that the coarse subdivision is a nested complex \cite[Thm 1.2]{ArdilaReinerWilliams2006}.
\end{itemize}
Here, we apply the results of \cref{sec:latticeEmbedding} to study more generally compatible pairs of facial and flatial building sets, beyond these particular examples.

\begin{definition}
A \defn{flat} of an oriented matroid $\OM$ is the zero set of a covector, that is $\ground \ssm \underline{v^*}$ for $v^* \in \covectors$.
The \defn{flat lattice}~$\Flats(\OM)$ is the set of flats of $\OM$ ordered by inclusion.
It is ranked by dimension.
\end{definition}

\begin{remark}
Note that:
\begin{enumerate}
\item The flat lattice~$\Flats(\OM)$ is actually an invariant of the underlying unoriented matroid~$\UOM$.
\item By definition every face in~$\FL(\OM)$ is also a flat. 
\end{enumerate}
\end{remark}

\begin{definition}
A \defn{flatial building set} is a pair~$(\fbuilding[G],\OM)$, where~$\OM$ is an oriented matroid and~$\fbuilding[G]$ is a $\Flats(\OM)$-building set over the flat lattice~$\Flats(\OM)$.
We also often say that~$\fbuilding[G]$ is a flatial building set for~$\OM$.
The \defn{flatial nested complex} of a flatial building set~$(\fbuilding[G],\OM)$ is the~$\Flats(\OM)$-nested complex~$\nestedComplex[\Flats(\OM)][\building]$ of~$\fbuilding[G]$.
\end{definition}

\begin{remark}
Specializing \cref{prop:linksLatticeNestedComplex} to flatial building sets, we recover the link description independently established~\cite[Thm.~1.6]{BraunerEurPrattVlad}.
\end{remark}

The following proposition is immediate.

\begin{proposition}
\label{prop:BergmanTame}
The inclusion map~$\iota : \FL(\OM)\xhookrightarrow{}\Flats(\OM)$ is a ranked order embedding, hence it is tame.
\end{proposition}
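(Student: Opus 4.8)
The plan is to verify directly that $\iota : \FL(\OM) \to \Flats(\OM)$ satisfies the definition of a tame map (\cref{def:tame}), namely that it is an order embedding and satisfies at least one of the three listed properties. Since $\iota$ is literally the inclusion of one subposet of $2^\ground$ into another — both $\FL(\OM)$ and $\Flats(\OM)$ are collections of subsets of $\ground$ ordered by inclusion, and every face is a flat — it is immediate that $F \le_{\FL(\OM)} G$ if and only if $F \subseteq G$ if and only if $\iota(F) \le_{\Flats(\OM)} \iota(G)$; hence $\iota$ is an order embedding.

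For the second requirement, I would show that $\iota$ is rank shifting with shift $0$, which (by the parenthetical remark in \cref{def:tame}) implies it is cover preserving. Both lattices are ranked by dimension: a face $F \in \FL(\OM)$ and the same set $F$ viewed as a flat of $\OM$ have the same rank, since the rank of a face in the Las Vergnas face lattice is its dimension as a cell, which coincides with $\rank[\OM_{|F}]$ (the rank of the flat $F$ in $\Flats(\OM)$). Concretely, $F \in \FL(\OM)$ means $(\ground \ssm F, \varnothing) \in \covectors[\OM]$, and the corresponding flat has rank equal to the matroid rank of $F$; the dimension grading on $\FL(\OM)$ recorded in the definition of the flat lattice ("ranked by dimension") is exactly this matroid rank. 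So $\rank[\iota(F)] - \rank[F] = 0$ for all $F$, and $\iota$ is rank shifting, hence cover preserving.

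Having checked that $\iota$ is an order embedding and cover preserving, it is tame by \cref{def:tame}, which completes the proof. The statement is essentially a bookkeeping verification, so I do not expect any genuine obstacle; the only subtlety worth stating carefully is that the grading on $\FL(\OM)$ used to match ranks is the dimension grading (matroid rank of the face), which is consistent with the convention for polyhedral cones recalled in \cref{exm:coneAcyclicRealizableOM} and with the statement that $\Flats(\OM)$ is "ranked by dimension." Once that identification is spelled out, the rank-shifting property — and with it cover preservation and tameness — follows at once.

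\begin{proof}
The inclusion $\iota$ is an order embedding: both $\FL(\OM)$ and $\Flats(\OM)$ consist of subsets of $\ground$ ordered by inclusion, every face is a flat, and $F \subseteq G$ if and only if $\iota(F) \subseteq \iota(G)$. Moreover, $\iota$ is rank shifting: both lattices are ranked by dimension, and a face $F$ of $\OM$ has the same dimension whether viewed in $\FL(\OM)$ or as a flat in $\Flats(\OM)$, so $\rank[\iota(F)] - \rank[F] = 0$ for all $F \in \FL(\OM)$. By the parenthetical remark in \cref{def:tame}, a rank shifting order embedding is cover preserving. Hence $\iota$ is an order embedding satisfying one of the three properties of \cref{def:tame}, so it is tame.
\end{proof}
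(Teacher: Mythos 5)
Your verification is correct and matches the paper, which simply states this proposition as immediate: the inclusion is an order embedding since faces and flats are both subsets of the ground set ordered by inclusion and every face is a flat, and it preserves the dimension grading (the rank of $F$ is $\rank[\OM_{|F}]$ in both lattices), so it is rank shifting with shift $0$, hence cover preserving and tame. Your explicit spelling out of the rank identification is a fine way to make the "immediate" claim precise.
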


\begin{lemma}
\label{lem:directSumCartesianProductFlats}
The flat lattice of the direct sum is the Cartesian product of flat lattices.
Namely, if~$\OM$ and~$\OM'$ are oriented matroids on disjoint ground sets $\ground$ and $\ground'$, then
\[
\Flats(\OM \oplus \OM') \cong \Flats(\OM) \times \Flats(\OM'),
\]
where the isomorphism is given by $F \mapsto (F \cap \ground, F \cap \ground')$.
Conversely, for an oriented matroid $\OM$ with ground set $\ground \sqcup \ground'$, if~$\Flats(\OM) \cong \Flats(\OM_{|\ground}) \times \Flats(\OM_{|\ground'})$, then $\OM = \OM_{|\ground} \oplus \OM_{|\ground'}$.
\end{lemma}

\begin{proof}
The proof is analogous to that of \cref{prop:directSumCartesianProductFL}.
\end{proof}

\begin{proposition}
\label{prop:BergmanPullable}
For any flatial building set~$\fbuilding[G]$ for~$\OM$, the intersection~$\fbuilding[F] \eqdef \fbuilding[G] \cap \FL(\OM)$ is a facial building set for~$\OM$.
In other words, any $\Flats(\OM)$-building set~$\fbuilding[G]$ is $\iota$-pullable.
\end{proposition}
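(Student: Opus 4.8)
The plan is to verify that the preimage $\fbuilding[F] = \iota^{-1}(\fbuilding[G]) = \fbuilding[G] \cap \FL(\OM)$ satisfies the recursive characterization of a facial building set from \cref{rem:reccursiveDefinitionLatticeBuildingSet}, exploiting the fact that faces are flats and that the face lattice below a face is a direct-sum structure (via \cref{prop:directSumCartesianProductFL,prop:connectedComponentsDecomposition}), which mirrors the product structure of the flat lattice below the same face (via \cref{lem:directSumCartesianProductFlats}). The key point is that, because $\iota$ is a \emph{ranked} (hence cover-preserving) order embedding and because both lattices decompose as the \emph{same} Cartesian product of flat/face lattices of the connected components of $\OM_{|F}$ for any face $F$, the maximal elements of $\fbuilding[G]$ below $F$ that happen to be faces are exactly what is needed to build the product decomposition of $\FL(\OM)_{\le F}$.

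Concretely, I would proceed as follows. First, fix $F \in \FL(\OM)$ and let $C_1, \dots, C_r$ be the connected components of $\OM_{|F}$; by \cref{cor:connectedComponentsOfFacesAreFaces} each $C_i$ is a face of $\OM$, and by \cref{prop:directSumCartesianProductFL,prop:connectedComponentsDecomposition} we have $\FL(\OM)_{\le F} \cong \prod_{i\in[r]} \FL(\OM_{|C_i})$, while \cref{lem:directSumCartesianProductFlats} gives $\Flats(\OM)_{\le F} \cong \prod_{i\in[r]}\Flats(\OM_{|C_i})$. Second, let $\{G_1, \dots, G_k\} = \max(\fbuilding[G]_{\le F})$ be the maximal flatial blocks below $F$; since $\fbuilding[G]$ is a flatial building set, $\Flats(\OM)_{\le F} \cong \prod_{j\in[k]} \Flats(\OM)_{\le G_j}$. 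The crucial claim is that each $G_j$ is itself a face, i.e.\ $G_j \in \FL(\OM)$, so that $\{G_1,\dots,G_k\} = \max(\fbuilding[F]_{\le F})$. To see this, note that the irreducibles of $\Flats(\OM)_{\le F}$ (those whose lower interval has no product decomposition) include the atoms and, since $F$'s lower interval factors through the $\Flats(\OM_{|C_i})$, every irreducible of $\Flats(\OM)_{\le F}$ lies below exactly one $C_i$; combined with the fact that each $C_i$ is a face lying in $\fbuilding[G]$ (it is a union of supports of circuits with connected intersection graph, or a singleton, hence in any building set containing the circuit supports — but here we only know $\fbuilding[G]$ is a flatial building set, so $C_i$ is an irreducible of $\Flats(\OM)$ as it is connected, hence $C_i \in \fbuilding[G]$), one argues as in the proof of \cref{thm:orientedBuildingSetsTofacialBuildingSets} that each $G_j$ coincides with the union $\bigcup_{C_i \subseteq G_j} C_i$ and is therefore a face. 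Third, once we know $\{G_1,\dots,G_k\} = \max(\fbuilding[F]_{\le F})$, restricting the product isomorphism $\Flats(\OM)_{\le F} \cong \prod_j \Flats(\OM)_{\le G_j}$ to the sublattice of faces — which is legitimate because faces are flats and, by the identical component decompositions above, the factor $\FL(\OM)_{\le G_j}$ sits inside $\Flats(\OM)_{\le G_j}$ compatibly — yields $\FL(\OM)_{\le F} \cong \prod_{j\in[k]} \FL(\OM)_{\le G_j}$, which is exactly the defining property of \cref{def:latticeBuildingSet} for $\fbuilding[F]$.

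I expect the main obstacle to be the third step: showing that the product decomposition of $\Flats(\OM)_{\le F}$ indexed by the $G_j$ restricts cleanly to a product decomposition of $\FL(\OM)_{\le F}$, rather than merely giving an inclusion of posets. The cleanest route is probably to reduce to the connected case first (every block decomposes through the $C_i$, and within a single connected restriction the only blocks are the connected component itself and blocks strictly inside), and then to invoke \cref{prop:directSumCartesianProductFL} together with \cref{lem:directSumCartesianProductFlats} to match the two product structures \emph{on the nose} — i.e.\ the component decomposition of $\OM_{|G_j}$ refines both, so the face-lattice decomposition is the restriction of the flat-lattice decomposition. Since $\fbuilding[F] \subseteq \FL(\OM)$ is then an $\FL(\OM)$-building set, the statement follows; the compatibility with nested complexes (that $(\fbuilding[F], \fbuilding[G])$ is $\iota$-compatible) will then be immediate from \cref{prop:BergmanTame} and \cref{prop:phiCompatible}, presumably recorded as a corollary right after this proposition.
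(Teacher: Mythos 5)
Your strategy is sound and ultimately rests on the same two pillars as the paper, namely \cref{prop:directSumCartesianProductFL} and \cref{lem:directSumCartesianProductFlats}, but it takes a detour through the connected components of~$\OM_{|F}$ that the paper avoids. The paper's proof is three lines: with $\{F_1,\dots,F_k\} \eqdef \max(\fbuilding[G]_{\le F})$, the building-set decomposition $\Flats(\OM)_{\le F} \cong \prod_{i \in [k]} \Flats(\OM)_{\le F_i}$ combined with the \emph{converse} part of \cref{lem:directSumCartesianProductFlats} gives directly $\OM_{|F} = \OM_{|F_1} \oplus \dots \oplus \OM_{|F_k}$ (the $F_i$ do partition~$F$: they are pairwise disjoint since meets across factors are~$\varnothing$, and they cover~$F$ since every atom of $\Flats(\OM)_{\le F}$ is irreducible, hence in $\fbuilding[G]$ and below some~$F_i$); then \cref{prop:directSumCartesianProductFL} yields $\FL(\OM)_{\le F} = \FL(\OM_{|F}) \cong \prod_{i \in [k]} \FL(\OM_{|F_i})$, so each $F_i$ is a face and the condition of \cref{def:latticeBuildingSet} holds for $\fbuilding[G] \cap \FL(\OM)$ at~$F$. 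No connected components are needed.

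Two steps of your route need repair before it is a proof. First, your justification that each component $C_i$ of $\OM_{|F}$ lies in $\fbuilding[G]$ (because $C_i$ is an irreducible of $\Flats(\OM)$) is correct in conclusion, but the fact that a connected restriction has an indecomposable lower interval of flats is itself proved via the converse of \cref{lem:directSumCartesianProductFlats} (a decomposition of $\Flats(\OM)_{\le C_i}$ into lower intervals would partition $C_i$ and split $\OM_{|C_i}$ as a direct sum), so the detour does not spare you that lemma. Second, the appeal to the argument of \cref{thm:orientedBuildingSetsTofacialBuildingSets} to get $G_j = \bigcup_{C_i \subseteq G_j} C_i$ does not transfer as stated: that proof uses the boolean axiom that a union of intersecting blocks is a block, which has no analogue for $\Flats(\OM)$-building sets. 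The fix is lattice-theoretic: each $C_i$ is a block of $\fbuilding[G]$ below~$F$, hence lies below a unique maximal block~$G_j$; in the product $\prod_j \Flats(\OM)_{\le G_j}$, the identity $F = \bigvee_i C_i$ forces $G_j = \bigvee_{C_i \le G_j} C_i$, and this join equals the disjoint union because a union of connected components of $\OM_{|F}$ is a flat (indeed a face). Once this is in place, the step you flagged as the main obstacle disappears: $\OM_{|F} = \bigoplus_j \OM_{|G_j}$, and \cref{prop:directSumCartesianProductFL} matches the flat-lattice and face-lattice product structures on the nose. Your closing remark is also accurate: the nested-complex embedding is not part of this statement and is indeed deduced afterwards from \cref{prop:BergmanTame} and \cref{prop:phiCompatible}.
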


\begin{proof}
Let $F \in \FL(\OM)$. 
As~$\fbuilding[G]$ is an $\Flats(\OM)$-building set, $\Flats(\OM)_{\leq F} \cong \prod_{i \in [k]} \Flats(\OM)_{\leq F_i}$, where $\{F_1, \dots, F_k\} \eqdef \max(\fbuilding[G]_{\le F})$.
By \cref{lem:directSumCartesianProductFlats}, $\OM_{|F}=\OM_{|F_1} \oplus \dots \oplus \OM_{|F_k}$.		
Hence, by \cref{prop:directSumCartesianProductFL}, we have~$\FL(\OM_{|F}) = \FL(\OM_{|F_1}) \times \dots \times \FL(\OM_{|F_k})$.
Thus, we have~$F_1, \dots, F_k \in \FL(\OM)$ and~$\fbuilding[F] \eqdef \fbuilding[G] \cap \FL(\OM)$ is a facial building set.
\end{proof}

\begin{proposition}
\label{prop:BergmanPushable}
If all the elements of $\OM$ are in~$\FL(\OM)$, then for any facial building set~$\fbuilding$, the $\Flats(\OM)$-building closure~$\fbuilding[G]$ of~$\fbuilding$ satisfies~$\fbuilding[G] \cap \FL(\OM) = \fbuilding$.
In other words, if~$S \subseteq \FL(\OM)$, any facial building set~$\fbuilding$ is $\iota$-pushable.
\end{proposition}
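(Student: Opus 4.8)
The statement is the ``pushable'' counterpart to \cref{prop:BergmanPullable}, under the extra hypothesis that every element of the ground set $\ground$ is a flat (equivalently, a face) of $\OM$. By \cref{prop:atomicPushable} (which we can invoke since $\Flats(\OM)$ is atomic with atoms the rank-$1$ flats, and here these atoms are exactly the singletons $\{s\}$ for $s\in\ground$ by our hypothesis $\ground\subseteq\FL(\OM)$), it would suffice to produce \emph{some} $\Flats(\OM)$-building set $\fbuilding[G]$ with $\fbuilding[G]\cap\FL(\OM)=\fbuilding$; but actually we want the sharper claim that the $\Flats(\OM)$-building \emph{closure} of $\fbuilding$ does the job. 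So the plan is: first show that the building closure $\fbuilding[G]$ of $\fbuilding$ inside $\Flats(\OM)$ satisfies $\fbuilding[G]\supseteq\fbuilding$ trivially, and then show the reverse inclusion $\fbuilding[G]\cap\FL(\OM)\subseteq\fbuilding$.

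For the nontrivial inclusion, I would take $F\in\fbuilding[G]\cap\FL(\OM)$ and argue that $F\in\fbuilding$. The key tool is the second sentence of \cref{prop:buildingClosure}: any element of $\fbuilding[G]\smallsetminus\fbuilding$ lies in the set of $Y\in\Flats(\OM)$ for which $\Flats(\OM)_{\le Y}\not\cong\prod_{X\in\max(\fbuilding_{\le Y})}\Flats(\OM)_{\le X}$. So it suffices to prove that, for $F\in\FL(\OM)$, one has $\Flats(\OM)_{\le F}\cong\prod_{i\in[k]}\Flats(\OM)_{\le F_i}$ where $\{F_1,\dots,F_k\}\eqdef\max(\fbuilding_{\le F})$. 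Since $\fbuilding$ is a facial building set and $F$ is a face, \cref{prop:directSumCartesianProductFL} (applied to $\OM_{|F}$, whose face lattice is $\FL(\OM)_{\le F}$ by \cref{prop:restrictionContractionFL}) gives the direct sum decomposition $\OM_{|F}=\OM_{|F_1}\oplus\dots\oplus\OM_{|F_k}$, where $F_1,\dots,F_k$ here are the maximal \emph{facial} blocks below $F$. One must first check that $\max(\fbuilding_{\le F})$ computed in $\FL(\OM)$ agrees with $\max(\fbuilding_{\le F})$ computed in $\Flats(\OM)$; this follows because all elements of $\fbuilding$ are faces and the order on faces is induced from the order on flats. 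Then \cref{lem:directSumCartesianProductFlats} applied to the decomposition $\OM_{|F}=\OM_{|F_1}\oplus\dots\oplus\OM_{|F_k}$ yields $\Flats(\OM_{|F})\cong\prod_{i\in[k]}\Flats(\OM_{|F_i})$. It remains to identify $\Flats(\OM_{|F})$ with $\Flats(\OM)_{\le F}$ and $\Flats(\OM_{|F_i})$ with $\Flats(\OM)_{\le F_i}$; this is the flat-lattice analogue of \cref{prop:restrictionContractionFL} and holds for restrictions of oriented matroids (the flats of $\OM_{|F}$ are exactly the intersections with $F$ of flats of $\OM$ contained below $F$). Concatenating these isomorphisms gives exactly the desired product structure, so $F\notin\fbuilding[G]\smallsetminus\fbuilding$, i.e.\ $F\in\fbuilding$.

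I would then conclude by combining the two inclusions: $\fbuilding[G]\cap\FL(\OM)=\fbuilding$. Equivalently, by \cref{def:phiPushablePullable}, any facial building set $\fbuilding$ is $\iota$-pushable, with witness the $\Flats(\OM)$-building closure of $\fbuilding$. (If the paper wants the full conclusion that $(\fbuilding,\fbuilding[G])$ is $\iota$-compatible, this follows immediately from \cref{prop:BergmanTame} and \cref{prop:phiCompatible}, but that is a corollary rather than part of this proposition.)

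\textbf{Main obstacle.}
The routine-looking but genuinely load-bearing step is the bookkeeping that $\max(\fbuilding_{\le F})$ is insensitive to whether it is computed in $\FL(\OM)$ or $\Flats(\OM)$, together with the compatibility of the restriction-to-$F$ operation on both lattices. The hypothesis ``$\ground\subseteq\FL(\OM)$'' is what makes the atoms of $\Flats(\OM)$ below $F$ coincide with the atoms of $\FL(\OM)$ below $F$, which is needed so that the direct-sum decompositions on the face side and the flat side are indexed by the same blocks $F_1,\dots,F_k$; without it, there could be rank-$1$ flats not visible as faces, and then a facial block could split further in $\Flats(\OM)$, breaking the argument. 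I expect the subtlety to be entirely in making this identification precise rather than in any computation.
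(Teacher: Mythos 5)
Your proposal is correct and follows essentially the same route as the paper's proof: both use the hypothesis that all elements are faces to upgrade the face-lattice product below the face in question to a direct-sum decomposition $\OM_{|F}=\OM_{|F_1}\oplus\cdots\oplus\OM_{|F_k}$ via the converse of \cref{prop:directSumCartesianProductFL}, transfer it to the flat lattice by \cref{lem:directSumCartesianProductFlats}, and conclude via \cref{prop:buildingClosure} that such a face is never added to the building closure. The only difference is cosmetic: you apply the second sentence of \cref{prop:buildingClosure} once, directly at $F$, while the paper tracks the iterative construction of the closure through all flats $Y$ below the given face, which amounts to the same argument.
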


\begin{proof}
The inclusion~$\fbuilding[G] \cap \FL(\OM) \supseteq \fbuilding$ holds by definition of building closure.
To prove the inclusion~$\fbuilding[G] \cap \FL(\OM) \subseteq \fbuilding$, consider an element~$Z \in \FL(\OM) \ssm \fbuilding$.
Since~$Z$ does not belong to~$\fbuilding$, and using that all the elements of~$\OM$ are in~$\FL(\OM)$ in \cref{prop:directSumCartesianProductFL}, we know that
\[
\OM_{|Z} = \OM_{|F_1} \oplus \cdots \oplus \OM_{|F_k},
\]
where $\{F_1, \dots, F_k\} \eqdef \max(\fbuilding_{\leq Z})$. 
Following \cref{prop:buildingClosure}, we iteratively construct~$\fbuilding[G]$ by adding from small to large all elements of~$\Flats(\OM)$ whose lower interval does not decompose into a Cartesian product. However, note that for all $Y\subseteq Z$ we have 
\[
\OM_{|Y} = \OM_{|F_1 \cap Y} \oplus \cdots \oplus \OM_{|F_k \cap Y}
\]
and by \cref{lem:directSumCartesianProductFlats}
\[
\Flats(\OM_{|Y})= \Flats(\OM_{|F_1 \cap Y})\oplus \cdots \oplus  \Flats(\OM_{|F_k \cap Y}).
\]
Therefore, if~$Y$ is not contained in any~$F_i$ it has a Cartesian product decomposition and will not be added to~$\building[G]$.
In particular, $Z \notin \building[G]$ as desired.
\end{proof}

\begin{corollary}
Let $\OM$ be an oriented matroid with face lattice~$\FL(\OM)$ and flat lattice~$\Flats(\OM)$.
Then
\begin{enumerate}
\item any flatial building set~$\fbuilding[G]$ for~$\OM$ yields a facial building set~$\fbuilding \eqdef \fbuilding[G] \cap \FL(\OM)$ for $\OM$ and the facial nested complex $\nestedComplex[\FL(\OM)][\fbuilding]$ is a subcomplex of the flatial nested complex~$\nestedComplex[\Flats(\OM)][\fbuilding[G]]$,
\item if all the elements of $\OM$ are in~$\FL(\OM)$, then the facial nested complex $\nestedComplex[\FL(\OM)][\fbuilding]$ of any facial building set~$\fbuilding$ for~$\OM$ is a subcomplex of the flatial nested complex~$\nestedComplex[\Flats(\OM)][\fbuilding[G]]$ of the flatial building set for~$\OM$ obtained as the $\Flats(\OM)$-building closure of~$\fbuilding$.
\end{enumerate}
\end{corollary}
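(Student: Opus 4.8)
The plan is to obtain this corollary as a direct application of the general nested complex embedding machinery of \cref{sec:latticeEmbedding} to the inclusion $\iota : \FL(\OM) \hookrightarrow \Flats(\OM)$, using the three propositions just established. First I would record that $\iota$ is a ranked order embedding, hence tame, by \cref{prop:BergmanTame}; this makes \cref{prop:phiCompatible} applicable, so it will suffice in each of the two cases to exhibit a facial building set~$\fbuilding$ and a flatial building set~$\fbuilding[G]$ with $\fbuilding = \iota^{-1}(\fbuilding[G]) = \fbuilding[G] \cap \FL(\OM)$: for any such pair, \cref{prop:phiCompatible} yields that $(\fbuilding, \fbuilding[G])$ is $\iota$-compatible, which by definition means the facial nested complex~$\nestedComplex[\FL(\OM)][\fbuilding]$ is isomorphic to a subcomplex of the flatial nested complex~$\nestedComplex[\Flats(\OM)][\fbuilding[G]]$.

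For part~(1), I would start from a flatial building set~$\fbuilding[G]$ and set $\fbuilding \eqdef \fbuilding[G] \cap \FL(\OM)$. \cref{prop:BergmanPullable} says exactly that $\fbuilding[G]$ is $\iota$-pullable, \ie that this intersection is a genuine facial building set for~$\OM$ --- and crucially this holds with no hypothesis on~$\OM$. Combining with the tameness of $\iota$ via \cref{prop:phiCompatible} gives the subcomplex embedding. For part~(2), I would instead start from an arbitrary facial building set~$\fbuilding$; here the standing hypothesis $\ground \subseteq \FL(\OM)$ is needed. \cref{prop:BergmanPushable} provides, under this hypothesis, a flatial building set~$\fbuilding[G]$, namely the $\Flats(\OM)$-building closure of~$\fbuilding$ (which exists by \cref{prop:buildingClosure}), satisfying $\fbuilding[G] \cap \FL(\OM) = \fbuilding$, \ie $\fbuilding$ is $\iota$-pushable with $\fbuilding = \iota^{-1}(\fbuilding[G])$; one more application of \cref{prop:phiCompatible} finishes this case.

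The whole argument is a short assembly once the supporting propositions are in place, so I do not expect a serious obstacle. The only genuinely delicate point --- and the only place where the hypothesis of part~(2) is actually used --- is verifying $\fbuilding[G] \cap \FL(\OM) = \fbuilding$, \ie checking that passing to the flat lattice and taking the building closure does not silently drag new faces into the building set. That is precisely the content of \cref{prop:BergmanPushable}, whose proof relies on the compatibility of both the face and the flat lattices with direct sums (\cref{prop:directSumCartesianProductFL,lem:directSumCartesianProductFlats}); for a fully self-contained proof of the corollary one would have to reproduce that verification, but as the paper is organized it may simply be cited.
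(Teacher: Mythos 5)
Your proposal is correct and follows exactly the paper's argument: the paper's proof is precisely the assembly of \cref{prop:BergmanTame,prop:BergmanPullable,prop:BergmanPushable} with \cref{prop:phiCompatible}, with part~(1) using pullability and part~(2) using pushability under the hypothesis $\ground \subseteq \FL(\OM)$. Your identification of where that hypothesis enters (only in verifying $\fbuilding[G] \cap \FL(\OM) = \fbuilding$ for the building closure) matches the paper's structure as well.
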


\begin{proof}
This follows from \cref{prop:BergmanTame,prop:BergmanPullable,prop:BergmanPushable,prop:phiCompatible}.
\end{proof}

%%%%%%%%%%%%%%%%%%%%%%%%%%%%%%%%%%%%%%%

\newpage
\addtocontents{toc}{\vspace{.1cm}}
\section*{Acknowledgments}

We thank Federico Ardila, Spencer Backman, Sarah Brauner, Giovanni Gaiffi, Pavel Galashin, Georg Loho, Luca Moci, and Andrew Sack for various suggestions.
We are grateful to the organizers and participants of the Simons Center Workshop on Combinatorics and Geometry of Convex Polyhedra (Mar.~2023), the Oberwolfach workshop on Geometric, Algebraic, and Topological Combinatorics (Dec.~2023), and of the 36th International Conference on Formal Power Series and Algebraic Combinatorics (FPSAC 24, Bochum, Jul.~2024) for inputs and discussions on presentations of this work.

%%%%%%%%%%%%%%%%%%%%%%%%%%%%%%%%%%%%%%%

\bibliographystyle{alpha}
\bibliography{posetAssociahedra}

\newcommand{\etalchar}[1]{$^{#1}$}
\begin{thebibliography}{HRGZ97}

\bibitem[AHBL17]{ArkaniHamedBaiLam-PositiveGeometries}
Nima Arkani-Hamed, Yuntao Bai, and Thomas Lam.
\newblock Positive geometries and canonical forms.
\newblock {\em J. High Energy Phys.}, (11):039, front matter+121, 2017.

\bibitem[AHK18]{AdiprasitoHuhKatz}
Karim Adiprasito, June Huh, and Eric Katz.
\newblock Hodge theory for combinatorial geometries.
\newblock {\em Ann. of Math. (2)}, 188(2):381--452, 2018.

\bibitem[AK06]{ArdilaKlivans2006}
Federico Ardila and Caroline~J. Klivans.
\newblock The {B}ergman complex of a matroid and phylogenetic trees.
\newblock {\em J. Combin. Theory Ser. B}, 96(1):38--49, 2006.

\bibitem[AKW06]{ArdilaKlivansWilliams2006}
Federico Ardila, Caroline Klivans, and Lauren Williams.
\newblock The positive {B}ergman complex of an oriented matroid.
\newblock {\em European J. Combin.}, 27(4):577--591, 2006.

\bibitem[Alm20]{Almeter}
Jordan Almeter.
\newblock Generalizing nestohedra and graph associahedra for simple polytopes.
\newblock {\em S\'{e}m. Lothar. Combin.}, 84B:Art. 45, 12, 2020.

\bibitem[ARW06]{ArdilaReinerWilliams2006}
Federico Ardila, Victor Reiner, and Lauren Williams.
\newblock Bergman complexes, {C}oxeter arrangements, and graph associahedra.
\newblock {\em S\'em. Lothar. Combin.}, 54A:Art. B54Aj, 25, 2006.

\bibitem[BD24]{BackmanDanner}
Spencer Backman and Rick Danner.
\newblock Convex geometry of building sets.
\newblock Preprint,
  \href{http://arxiv.org/abs/2403.05514}{\texttt{arXiv:2403.05514}}, 2024.

\bibitem[BEPV24]{BraunerEurPrattVlad}
Sarah Brauner, Christopher Eur, Elizabeth Pratt, and Raluca Vlad.
\newblock Wondertopes.
\newblock Preprint,
  \href{http://arxiv.org/abs/2403.04610}{\texttt{arXiv:2403.04610}}, 2024.

\bibitem[BIP19]{BaralicIvanovicPetric}
Djordje Barali\'{c}, Jelena Ivanovi\'{c}, and Zoran Petri\'{c}.
\newblock A simple permutoassociahedron.
\newblock {\em Discrete Math.}, 342(12):111591, 18, 2019.

\bibitem[BLS{\etalchar{+}}99]{BjornerLasVergnasSturmfelsWhiteZiegler}
Anders Bj{\"o}rner, Michel {Las Vergnas}, Bernd Sturmfels, Neil White, and
  G{\"u}nter~M. Ziegler.
\newblock {\em Oriented matroids}, volume~46 of {\em Encyclopedia of
  Mathematics and its Applications}.
\newblock Cambridge University Press, Cambridge, second edition, 1999.

\bibitem[BR18]{BarnardReading}
Emily Barnard and Nathan Reading.
\newblock Coxeter-bi{C}atalan combinatorics.
\newblock {\em J. Algebraic Combin.}, 47(2):241--300, 2018.

\bibitem[CD06]{CarrDevadoss}
Michael~P. Carr and Satyan~L. Devadoss.
\newblock Coxeter complexes and graph-associahedra.
\newblock {\em Topology Appl.}, 153(12):2155--2168, 2006.

\bibitem[CL23]{CastilloLiu-permutoAssociahedron}
Federico Castillo and Fu~Liu.
\newblock The permuto-associahedron revisited.
\newblock {\em European J. Combin.}, 110:Paper No. 103706, 30, 2023.

\bibitem[DCP95]{DeConciniProcesi1995}
Corrado De~Concini and Claudio Procesi.
\newblock Wonderful models of subspace arrangements.
\newblock {\em Selecta Math. (N.S.)}, 1(3):459--494, 1995.

\bibitem[Dev04]{Devadoss2004}
Satyan~L. Devadoss.
\newblock Combinatorial equivalence of real moduli spaces.
\newblock {\em Notices Amer. Math. Soc.}, 51(6):620--628, 2004.

\bibitem[Dev09]{Devadoss}
Satyan~L. Devadoss.
\newblock A realization of graph associahedra.
\newblock {\em Discrete Math.}, 309(1):271--276, 2009.

\bibitem[DHV11]{DevadossHeathVipismakul}
Satyan~L. Devadoss, Timothy Heath, and Wasin Vipismakul.
\newblock Deformations of bordered surfaces and convex polytopes.
\newblock {\em Notices Amer. Math. Soc.}, 58(4):530--541, 2011.

\bibitem[DP11]{DosenPetric}
Kosta Do\v{s}en and Zoran Petri\'{c}.
\newblock Hypergraph polytopes.
\newblock {\em Topology Appl.}, 158(12):1405--1444, 2011.

\bibitem[Fei05]{Feichtner}
Eva~Maria Feichtner.
\newblock De {C}oncini-{P}rocesi wonderful arrangement models: a discrete
  geometer's point of view.
\newblock In {\em Combinatorial and computational geometry}, volume~52 of {\em
  Math. Sci. Res. Inst. Publ.}, pages 333--360. Cambridge Univ. Press,
  Cambridge, 2005.

\bibitem[FK04]{FeichtnerKozlov2004}
Eva-Maria Feichtner and Dmitry~N. Kozlov.
\newblock Incidence combinatorics of resolutions.
\newblock {\em Selecta Math. (N.S.)}, 10(1):37--60, 2004.

\bibitem[FL78]{FolkmanLawrence1978}
Jon Folkman and Jim Lawrence.
\newblock Oriented matroids.
\newblock {\em J. Combin. Theory Ser. B}, 25(2):199--236, 1978.

\bibitem[FM94]{FultonMacPherson1994}
William Fulton and Robert MacPherson.
\newblock A compactification of configuration spaces.
\newblock {\em Ann. of Math. (2)}, 139(1):183--225, 1994.

\bibitem[FM05]{FeichtnerMuller2005}
Eva~Maria Feichtner and Irene M\"{u}ller.
\newblock On the topology of nested set complexes.
\newblock {\em Proc. Amer. Math. Soc.}, 133(4):999--1006, 2005.

\bibitem[FS05]{FeichtnerSturmfels}
Eva~Maria Feichtner and Bernd Sturmfels.
\newblock Matroid polytopes, nested sets and {B}ergman fans.
\newblock {\em Port. Math. (N.S.)}, 62(4):437--468, 2005.

\bibitem[FY04]{FeichtnerYuzvinsky2004}
Eva~Maria Feichtner and Sergey Yuzvinsky.
\newblock Chow rings of toric varieties defined by atomic lattices.
\newblock {\em Invent. Math.}, 155(3):515--536, 2004.

\bibitem[Gai03]{Gaiffi2003}
Giovanni Gaiffi.
\newblock Models for real subspace arrangements and stratified manifolds.
\newblock {\em Int. Math. Res. Not.}, (12):627--656, 2003.

\bibitem[Gai04]{Gaiffi2004}
Giovanni Gaiffi.
\newblock Real structures of models of arrangements.
\newblock {\em Int. Math. Res. Not.}, (64):3439--3467, 2004.

\bibitem[Gai15]{Gaiffi-permutonestohedra}
Giovanni Gaiffi.
\newblock Permutonestohedra.
\newblock {\em J. Algebraic Combin.}, 41(1):125--155, 2015.

\bibitem[Gal24]{Galashin}
Pavel Galashin.
\newblock {$P$}-associahedra.
\newblock {\em Selecta Math. (N.S.)}, 30(1):Paper No. 6, 34, 2024.

\bibitem[HRGZ97]{HenkRichterGebertZiegler1997}
Martin Henk, J\"{u}rgen Richter-Gebert, and G\"{u}nter~M. Ziegler.
\newblock Basic properties of convex polytopes.
\newblock In {\em Handbook of discrete and computational geometry}, CRC Press
  Ser. Discrete Math. Appl., pages 243--270. CRC, Boca Raton, FL, 1997.

\bibitem[Iva20]{Ivanovic}
Jelena Ivanovi\'{c}.
\newblock Geometrical realisations of the simple permutoassociahedron by
  {M}inkowski sums.
\newblock {\em Appl. Anal. Discrete Math.}, 14(1):55--93, 2020.

\bibitem[Kap93]{Kapranov}
Mikhail~M. Kapranov.
\newblock The permutoassociahedron, {M}ac {L}ane's coherence theorem and
  asymptotic zones for the {KZ} equation.
\newblock {\em J. Pure Appl. Algebra}, 85(2):119--142, 1993.

\bibitem[KT99]{KuperbergThurston}
Greg Kuperberg and Dylan~P. Thurston.
\newblock Perturbative $3$-manifold invariants by cut-and-paste topology.
\newblock Preprint,
  \href{http://arxiv.org/abs/math/9912167}{\texttt{arXiv:math/9912167}}, 1999.

\bibitem[Man22]{Mantovani}
Chiara Mantovani.
\newblock {\em Poset associahedra as sections of graph associahedra}.
\newblock PhD thesis, Università di Bologna, 2022.
\newblock \url{https://amslaurea.unibo.it/id/eprint/26383/}.

\bibitem[MP17]{MannevillePilaud-compatibilityFans}
Thibault Manneville and Vincent Pilaud.
\newblock Compatibility fans for graphical nested complexes.
\newblock {\em J. Combin. Theory Ser. A}, 150:36--107, 2017.

\bibitem[MPP23]{Oberwolfach}
Chiara Mantovani, Arnau Padrol, and Vincent Pilaud.
\newblock Acyclonestohedra.
\newblock 2023.
\newblock \url{https://publications.mfo.de/handle/mfo/4147}.

\bibitem[MPP24]{FPSAC}
Chiara Mantovani, Arnau Padrol, and Vincent Pilaud.
\newblock Acyclonestohedra.
\newblock {\em S\'{e}m. Lothar. Combin.}, 91B:Art. 28, 12, 2024.

\bibitem[Ngu24]{Nguyen}
Son Nguyen.
\newblock An identity involving $h$-polynomials of poset associahedra and type
  {$B$} {N}arayana polynomials.
\newblock Preprint,
  \href{http://arxiv.org/abs/2407.04517}{\texttt{arXiv:2407.04517}}, 2024.

\bibitem[NS23a]{NguyenSack2}
Son Nguyen and Andrew Sack.
\newblock Poset associahedra and stack-sorting.
\newblock Preprint,
  \href{http://arxiv.org/abs/2310.02512}{\texttt{arXiv:2310.02512}}, 2023.

\bibitem[NS23b]{NguyenSack1}
Son Nguyen and Andrew Sack.
\newblock The poset associahedron $f$-vector is a comparability invariant.
\newblock Preprint,
  \href{http://arxiv.org/abs/2310.00157}{\texttt{arXiv:2310.00157}}, 2023.

\bibitem[Oxl92]{Oxley}
James~G. Oxley.
\newblock {\em Matroid theory}.
\newblock Oxford Science Publications. The Clarendon Press, Oxford University
  Press, New York, 1992.

\bibitem[Pet14]{Petric}
Zoran Petri\'{c}.
\newblock On stretching the interval simplex-permutohedron.
\newblock {\em J. Algebraic Combin.}, 39(1):99--125, 2014.

\bibitem[Pil17]{Pilaud-removahedra}
Vincent Pilaud.
\newblock Which nestohedra are removahedra?
\newblock {\em Rev. Colombiana Mat.}, 51(1):21--42, 2017.

\bibitem[Pos09]{Postnikov}
Alexander Postnikov.
\newblock Permutohedra, associahedra, and beyond.
\newblock {\em Int. Math. Res. Not. IMRN}, (6):1026--1106, 2009.

\bibitem[PPP23]{PadrolPilaudPoullot-deformedNestohedra}
Arnau Padrol, Vincent Pilaud, and Germain Poullot.
\newblock Deformation cones of graph associahedra and nestohedra.
\newblock {\em European J. Combin.}, 107:103594, 2023.

\bibitem[RZ94]{ReinerZiegler}
Victor Reiner and G\"{u}nter~M. Ziegler.
\newblock Coxeter-associahedra.
\newblock {\em Mathematika}, 41(2):364--393, 1994.

\bibitem[Sac25]{Sack}
Andrew Sack.
\newblock A realization of poset associahedra.
\newblock {\em Comb. Th.}, 5(2):\#5, 2025.

\bibitem[Sta86]{Stanley-posetPolytopes}
Richard~P. Stanley.
\newblock Two poset polytopes.
\newblock {\em Discrete Comput. Geom.}, 1(1):9--23, 1986.

\bibitem[Stu02]{Sturmfels2002}
Bernd Sturmfels.
\newblock {\em Solving systems of polynomial equations}, volume~97 of {\em CBMS
  Regional Conference Series in Mathematics}.
\newblock Conference Board of the Mathematical Sciences, Washington, DC; by the
  American Mathematical Society, Providence, RI, 2002.

\bibitem[Zel06]{Zelevinsky}
Andrei Zelevinsky.
\newblock Nested complexes and their polyhedral realizations.
\newblock {\em Pure Appl. Math. Q.}, 2(3):655--671, 2006.

\bibitem[Zie98]{Ziegler}
G{\"u}nter~M. Ziegler.
\newblock {\em Lectures on Polytopes}, volume 152 of {\em Graduate texts in
  Mathematics}.
\newblock Springer-Verlag, New York, 1998.

\end{thebibliography}
\label{sec:biblio}

\end{document}